\newtheorem{theorem}{Theorem}
\newtheorem{proposition}{Proposition}[section]
\newtheorem{lemma}{Lemma}[section]
\newtheorem{corollary}{Corollary}[section]
\newtheorem{example}{Example}[section]
\renewcommand\theequation%
\newtheorem{definition}{Definition}[section]
\newtheorem{hypothesis}{Hypothesis}[section]
\newtheorem{conjecture}{Conjecture}[section]
\newtheorem{deflemm}{Definition-Lemma}[section]
\newtheorem{remark}{Remark}[section]
\newtheorem{notation}{Notation}[section]
\def\R{\mathbb{R}}
\def\N{\mathbb{N}}
\def\Z{\mathbb{Z}}
\def\Q{\mathbb{Q}}
\def\C{\mathbb{C}}
\let\eps\varepsilon
\let\epsilon\varepsilon
\newcommand{\scal}[2]{\langle #1, #2 \rangle}
\let\oldsum\sum
\renewcommand{\sum}{\displaystyle\oldsum}
\let\oldprod\prod
\renewcommand{\prod}{\displaystyle\oldprod}
\let\oldinf\inf
\renewcommand{\inf}{\displaystyle\oldinf}
\let\oldsup\sup
\renewcommand{\sup}{\displaystyle\oldsup}
\let\leq\leqslant
\let\geq\geqslant
\newlength{\oldparindent}
\newcommand{\myindent}{\hspace{\oldparindent}}
\providecommand{\keywords}[1]
{
  \small	
  \textbf{Keywords---} #1
}
\providecommand{\MSC}[1]
{
  \small	
  \textbf{Mathematics Subject Classification (2020)---} #1
}
\title{Bounds for quasimodes with polynomially narrow bandwidth on surfaces of revolution}
\author{Ambre Chabert\thanks{Département de Mathématiques et Applications, Ecole Normale Supérieure, UMR 8553, 45 rue d’Ulm. 75230 Paris Cedex 05, France. Université Paris Cité and Sorbonne Université, CNRS, IMJ-PRG, F-75013 Paris, France. Corresponding author. Email adress : ambre.chabert@ens.fr}}
\begin{document}
\maketitle

\begin{abstract}
    Given a compact surface of revolution with Laplace-Beltrami operator $\Delta$, we consider the spectral projector $P_{\lambda,\delta}$ on a polynomially narrow frequency interval $[\lambda-\delta,\lambda + \delta]$, which is associated to the self-adjoint operator $\sqrt{-\Delta}$. For a large class of surfaces of revolution, and after excluding small disks around the poles, we prove that the $L^2 \to L^{\infty}$ norm of $P_{\lambda,\delta}$ is of order $\lambda^{\frac{1}{2}} \delta^{\frac{1}{2}}$ up to $\delta \geq \lambda^{-\frac{1}{32}}$. We adapt the microlocal approach introduced by Sogge for the case $\delta = 1$, by using the Quantum Completely Integrable structure of surfaces of revolution introduced by Colin de Verdière. This reduces the analysis to a number of estimates of explicit oscillatory integrals, for which we introduce new quantitative tools. This is the first sharp result in the case $\delta \ll 1$ beyond the case of locally symmetric surfaces (torus, sphere, arithmetic hyperbolic surfaces).
\end{abstract}

\keywords{Spectral projector, surface of revolution, Laplacian, norm, integrable, parametrix, oscillatory integrals, Fourier Integral Operators, stationary phase.}

\MSC{58J50, 35J05, 58J40, 37J35, 47A30}

\tableofcontents

\section{Introduction}\label{secIntro}

\myindent Let $(M,g)$ be a smooth complete Riemannian manifold of dimension $d$. Let $\Delta$ be the Laplace-Beltrami operator. For $\lambda,\delta \geq 0$, let $P_{\lambda,\delta}$ be the spectral projector of $\sqrt{-\Delta}$ on the frequency interval $[\lambda-\delta,\lambda + \delta]$, which can be defined, through standard functional calculus, as
\begin{equation}\label{defPlambdadelta}
    P_{\lambda,\delta} := \mathds{1}_{[\lambda - \delta, \lambda + \delta]} \left(\sqrt{-\Delta} \right).
\end{equation}

\myindent For $2\leq p \leq \infty$, the question of estimating the operator norm of $P_{\lambda,\delta}$, seen as an operator from $L^2(M)$ to $L^p(M)$, has become in the last forty years a major subject of interest in the field of Spectral Geometry. While being of interest in itself, it is also a relaxation of the more classical problem of estimating the $L^p$ norm of $L^2$ normalized eigenfunctions, in the case of a compact manifold. Indeed, this would correspond to the case $\delta = 0$, which is out of reach in most settings. On the contrary, the case $\delta = 1$ is fully understood since the seminal works of Sogge in the 1980s, as we will see below, and this yields the well-known optimal upper bound on the $L^p$ norm of eigenfunctions in the compact case (see below). Hence, the case $\delta > 0$ but very small can be seen as an intermediate problem, for which promising results have appeared recently. In that spirit, $P_{\lambda,\delta}$ can be interpreted as the spectral projector on $L^2$-normalized \textit{quasimodes} of frequency $\lambda$, with a bandwidth $[\lambda-\delta,\lambda +\delta]$. Moreover, this problem enables a unified approach for compact and non-compact manifolds, since eigenfunctions of the Laplacian are not always $L^2$ normalized for the latter, and, in this spirit, it can be viewed as a generalization of the Stein-Thomas Fourier restriction problem (see \cite{stein1993harmonic,tomas1975restriction}). Finally, in the compact case, the interest of this problem is that it captures both the question of the \textit{norm of eigenfunctions}, and of the \textit{distribution of eigenvalues}.

\myindent For compact manifolds, there are essentially only three results available in the regime where the size of the frequency interval $\delta$ is \textit{polynomially small} compared to the frequency $\lambda$, i.e. $\delta$ can be as small as $\lambda^{-\kappa}$ for some $\kappa > 0$ fixed. First, the case of the sphere is immediate, since there is no dependence in $\delta$ for $\delta$ small. On the contrary, improvements have been obtained in the polynomially thin regime only in two locally symmetric settings, namely the flat tori (Bourgain and others, see below), and the arithmetic surfaces (Iwaniec and Sarnack, see below).

\subsection{Results}\label{subsec1Intro}

\myindent In this article, we derive a method for the study of the spectral projector $P_{\lambda,\delta}$ on a frequency interval of size $\delta$ \textit{polynomially} small compared to $\lambda$, for (a class of) surfaces of revolution (see Paragraph \ref{subsubsec12CdV} for a formal definition). As an example of application, we give the first improved upper bound, to our knowledge,  on the operator norm of the spectral projector $P_{\lambda,\delta}$, in the regime where $\delta$ is polynomially small compared to $\lambda$, with a quantitative control, in the context of compact manifolds with positive curvature. Since our goal is to give a new method for studying this kind of problems, we will not aim for optimality, neither in the result nor in the hypotheses. Rather, we will work under many simplifying hypotheses, and postpone until the end a discussion on possible improvements. The typical model that we have in mind is that of \textit{ellipsoids of revolution} (see Paragraph \ref{subsubsec13CdV} for a formal definition). Hence, we will always assume that surfaces of revolution are \textit{simple}, i.e. they have only one equator, and that they are \textit{symmetric} with respect to this equator. 

\myindent The type of results that we are able to prove with our approach is the following theorem, the proof of which will be the main object of this article.

\begin{theorem}[Main result]\label{mainthm}
    In the set of simple symmetric smooth surfaces of revolution, there is an open set $\mathfrak{S}$, which contains all the ellipsoids of revolution which are close enough to, but not equal to the round sphere, such that, if $\mathcal{S} \in \mathfrak{S}$, $\eps > 0$, and if $K_{\eps}$ is the set of those points of $\mathcal{S}$ which are at a distance at least $\eps$ from both poles of $\mathcal{S}$, then there holds, for any $\lambda \geq 1$,
    \begin{equation}
        \forall \delta \geq \lambda^{-\frac{1}{32}}, \qquad \left\|P_{\lambda,\delta}\right\|_{L^2(\mathcal{S}) \to L^{\infty}(K_{\eps})} \lesssim_{\mathcal{S},\eps} \lambda^{\frac{1}{2}} \delta^{\frac{1}{2}}.
    \end{equation}
    Moreover, there is an explicit $\kappa > 0$ (see Corollary \ref{lowerbounds}) such that, in the range $\delta \geq \lambda^{-\kappa}$, this bound is optimal, in the sense that there holds
    \begin{equation}
        \forall \delta \geq \lambda^{-\kappa}, \qquad \left\|P_{\lambda,\delta}\right\|_{L^2(\mathcal{S}) \to L^{\infty}(K_{\eps})} \gtrsim_{\mathcal{S},\eps} \lambda^{\frac{1}{2}} \delta^{\frac{1}{2}}.
    \end{equation}
\end{theorem}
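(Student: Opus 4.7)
My strategy begins with the standard reduction of the operator norm to a diagonal kernel bound: since $P_{\lambda,\delta}$ is an orthogonal projection, $\|P_{\lambda,\delta}\|_{L^2\to L^\infty(K_\eps)}^2 = \sup_{x\in K_\eps}\sum_{|\lambda_k-\lambda|\leq\delta}|\phi_k(x)|^2$, so it suffices to prove the pointwise estimate $\sum |\phi_k(x)|^2 \lesssim \lambda\delta$ uniformly on $K_\eps$. To access oscillatory machinery I dominate the sharp spectral window by a smooth cutoff $\chi\in C_c^\infty(\R)$ with $\chi\geq \mathds{1}_{[-1,1]}$ and use the half-wave Fourier representation
\begin{equation*}
\chi\!\left(\tfrac{\sqrt{-\Delta}-\lambda}{\delta}\right) = \frac{\delta}{2\pi}\int_\R \hat\chi(\delta t)\, e^{-it\lambda}\, e^{it\sqrt{-\Delta}}\, dt,
\end{equation*}
so the problem is reduced to controlling the diagonal of the half-wave propagator smeared over $|t|\lesssim 1/\delta$.

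Next I would exploit the quantum completely integrable structure. In a meridional parametrization $(r,\theta)$ the metric takes the form $dr^2+a(r)^2\,d\theta^2$ and $\Delta$ commutes with $\partial_\theta$; the joint eigenfunctions are $\phi_{m,n}(r,\theta)=e^{im\theta}u_{m,n}(r)$, where $u_{m,n}$ solves a one-dimensional Sturm--Liouville problem with effective potential determined by $a(r)$ and the angular momentum $m$. Colin de Verdière's normal form supplies a quantitative Bohr--Sommerfeld asymptotic for $\lambda_{m,n}$ together with a WKB description of $u_{m,n}$ in the classically allowed interval of $r$; restricting to $x\in K_\eps$ keeps us uniformly inside that interval. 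Substituting into the diagonal kernel yields an explicit expression of the shape
\begin{equation*}
K_{\lambda,\delta}(x,x)\approx \sum_{m,n}\chi\!\left(\tfrac{\lambda_{m,n}-\lambda}{\delta}\right)|u_{m,n}(r)|^2,
\end{equation*}
and a Poisson summation in $(m,n)$ decomposes it into a Weyl-type main term of size $\lambda\delta$ plus a family of oscillatory remainders indexed by the nonzero dual vectors $k\in \Z^2$.

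The heart of the proof is the control of these remainders. Each one takes the form $\int\!\int e^{i\lambda\Phi_k(\sigma,\tau)}a_k(\sigma,\tau)\,d\sigma\,d\tau$, where the amplitude is localized (by the spectral window and the WKB cutoff) to a strip of area $\sim\delta$ in the action variables, and the phase $\Phi_k$ is the generating function of the action--angle structure plus a linear term in $k$. The open set $\mathfrak{S}$ of admissible surfaces should be precisely the one on which a uniform twist (Kolmogorov) non-degeneracy of this generating function holds; the round sphere is excluded because all of its frequencies are resonant and the Hessian of $\Phi_k$ degenerates identically, which is exactly what makes the theorem fail in that case. For $\mathcal{S}\in\mathfrak{S}$, a clean two-dimensional stationary phase yields $\sim \lambda^{-1}$ per remainder off the caustic set, and summing over $k$ recovers the desired $\lambda\delta$ bound.

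The principal obstacle, and the origin of the explicit threshold $\delta\geq \lambda^{-1/32}$, is making this oscillatory analysis genuinely quantitative in the presence of two sources of non-uniformity: the Hessian of $\Phi_k$ degenerates near the caustic circles along which a geodesic is tangent to a parallel, and the amplitude $a_k$ carries oscillations on the scale $\delta$ that inflate its derivatives as $1/\delta$. I plan to handle this through a double dyadic decomposition --- one in $|k|$ to truncate the Poisson sum, another in the distance to the degeneracy set --- combined with van der Corput-type lemmas with explicit control on higher derivatives of $\Phi_k$. Balancing the Poisson truncation against the error terms of the Colin de Verdière normal form, together with the loss incurred on each degenerate scale, produces the exponent $1/32$; below that threshold the dyadic sums no longer close into the target $\lambda\delta$ bound. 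Producing reusable \emph{quantitative} oscillatory integral estimates for these degenerations, which the abstract advertises as the new technical input, is where I expect the bulk of the work to lie.
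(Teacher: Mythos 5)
Your overall architecture is structurally parallel to the paper's (your Poisson dual variable $k\in\Z^2$ plays exactly the role of the paper's lattice of return times $(A,B)\in(2\pi\Z)^2$ coming from the exact $2\pi$-periodicity of $e^{isQ_1},e^{itQ_2}$; the twist condition is the same nondegeneracy input), but you reach it by separation of variables, Bohr--Sommerfeld quantization and WKB for the radial Sturm--Liouville problem rather than by FIO parametrices for the periodized flows, and your opening half-wave reduction over $|t|\lesssim\delta^{-1}$ is precisely the route the paper discards as non-quantitative at polynomial scales (you in fact never use it after stating it). The genuine gap is in the sentence ``restricting to $x\in K_{\eps}$ keeps us uniformly inside the classically allowed interval.'' This is false: staying away from the poles does not keep you away from turning points. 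For a fixed $x=(\theta,\sigma)\in K_{\eps}$, the sum runs over all angular momenta $|m|\lesssim\lambda$, and for $m$ in a window around $\lambda f(\sigma)$ the point $r=\sigma$ sits exactly at (or near) the turning point of the effective potential, where the WKB amplitude blows up and uniform Airy-type asymptotics are required. These terms are not a negligible boundary effect: they correspond to geodesics leaving $x$ tangentially to its parallel, i.e.\ to the fold caustic of the Lagrangian tori at $x$ itself, which in the paper are the branching points $P_0,P_\pi$ where the Hessian of the phase degenerates and which produce the worst (conjecturally dominant) contributions of size $\lambda^{-1/4}$ per dual vector, with polynomial losses in $|k|$ that drive the final exponent. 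Your plan also omits the antipodal refocalisation ($x$ conjugate to $\bar x$ at time $\pi$) and the near-equator degeneration, both of which require separate arguments in the paper.

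Because the entire quantitative content of the theorem lives in exactly these degenerate regimes, the assertion that a double dyadic decomposition plus van der Corput ``produces the exponent $1/32$'' is unsupported as written: without uniform turning-point/caustic estimates carrying explicit polynomial dependence on the dual frequency $|k|$ (the analogue of the paper's Theorem \ref{mixedVdCABZ} and of the anisotropic analysis near $P_\alpha$), the $k$-sum does not close and no threshold $\delta\geq\lambda^{-\kappa}$ can be extracted. A secondary caveat: since the diagonal kernel $\sum_{|\lambda_j-\lambda|\leq\delta}|\phi_j(x)|^2$ is basis-independent, your use of the separated basis is legitimate despite eigenvalue multiplicity, so that objection (raised in the paper against the ODE route for eigenfunctions) does not kill your approach; but you would still need the Bohr--Sommerfeld description of the \emph{joint} spectrum with error $O(\lambda^{-1})$ uniformly in $m/\lambda$, including near $|m|=\lambda$ and $m=0$, before Poisson summation can be applied, and this uniformity is again a caustic-type issue you have not addressed.
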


\myindent In particular, this immediately yields a polynomial improvement compared to the general upper bound of the $L^{\infty}$ norms of eigenfunctions (see below). This polynomial gain was predicted by Bourgain \cite{bourgain1993eigenfunctiona} in the context of generic completely integrable manifolds. To our knowledge, the present article gives the first full rigorous derivation of such a result relying on complete integrability. 

\begin{corollary}\label{mainThmCor}
    Under the hypotheses and notations of Theorem \ref{mainthm}, if $\lambda^2 \geq 0$ is an eigenvalue of $-\Delta$, and $\phi_{\lambda}$ is an $L^2$-normalized eigenfunction associated to $\lambda^2$, there holds
    \begin{equation}
        \|\phi_{\lambda}\|_{L^{\infty}(K_{\eps})} \lesssim_{\mathcal{S},\eps} \lambda^{\frac{1}{2} - \frac{1}{64}}.
    \end{equation}
\end{corollary}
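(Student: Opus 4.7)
The plan is to derive the corollary as a direct application of Theorem \ref{mainthm} to an individual eigenfunction, by choosing the bandwidth parameter $\delta$ as small as the theorem allows. The only conceptual ingredient is the standard observation that, by functional calculus, a bona fide eigenfunction is a fixed point of $P_{\lambda,\delta}$ as soon as its eigenvalue lies in the spectral window, so the operator norm bound of $P_{\lambda,\delta}$ transfers immediately to a pointwise bound on eigenfunctions.

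More precisely, let $\lambda \geq 1$ (the bounded-spectrum case $\lambda \leq 1$ is trivial by adjusting the constant) and let $\phi_\lambda$ be an $L^2$-normalized eigenfunction with $-\Delta \phi_\lambda = \lambda^2 \phi_\lambda$, so that $\sqrt{-\Delta}\,\phi_\lambda = \lambda \phi_\lambda$. Since $\lambda \in [\lambda-\delta,\lambda+\delta]$ for every $\delta \geq 0$, the definition \eqref{defPlambdadelta} gives $P_{\lambda,\delta}\phi_\lambda = \phi_\lambda$. Consequently,
\begin{equation*}
    \|\phi_\lambda\|_{L^\infty(K_\eps)} \;=\; \|P_{\lambda,\delta}\phi_\lambda\|_{L^\infty(K_\eps)} \;\leq\; \|P_{\lambda,\delta}\|_{L^2(\mathcal{S}) \to L^\infty(K_\eps)}\,\|\phi_\lambda\|_{L^2(\mathcal{S})} \;=\; \|P_{\lambda,\delta}\|_{L^2(\mathcal{S}) \to L^\infty(K_\eps)}.
\end{equation*}

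It remains to optimize the choice of $\delta$. Theorem \ref{mainthm} provides the bound $\|P_{\lambda,\delta}\|_{L^2(\mathcal{S}) \to L^\infty(K_\eps)} \lesssim_{\mathcal{S},\eps} \lambda^{1/2}\delta^{1/2}$ for all $\delta \geq \lambda^{-1/32}$, and this right-hand side is minimized at the endpoint $\delta = \lambda^{-1/32}$. Plugging this in yields
\begin{equation*}
    \|\phi_\lambda\|_{L^\infty(K_\eps)} \;\lesssim_{\mathcal{S},\eps}\; \lambda^{1/2}\cdot \lambda^{-1/64} \;=\; \lambda^{1/2 - 1/64},
\end{equation*}
which is precisely the stated estimate. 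There is no real obstacle here: the content of the corollary is entirely contained in Theorem \ref{mainthm}, and the exponent $1/64$ is simply half of the exponent $1/32$ quantifying how narrow a bandwidth the main theorem can handle.
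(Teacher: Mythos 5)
Your proof is correct and is exactly the argument the paper has in mind (the paper treats the corollary as an immediate consequence of Theorem \ref{mainthm}): since $P_{\lambda,\delta}\phi_\lambda=\phi_\lambda$, the operator-norm bound transfers to the eigenfunction, and taking $\delta=\lambda^{-1/32}$ gives the exponent $\frac{1}{2}-\frac{1}{64}$. No issues.
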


\myindent Before giving the main ideas for the proof, let us comment the results.
\begin{itemize}
\item It is necessary to exclude small disks around both poles, since there are no possible improvements in the $\delta$ small regime around them due to zonal eigenfunction concentration, see \eqref{zonalconcentration}.
\item Similarly, there are no possible improvements to the case $\delta =1$ for $L^p$ norms, in the small $p$ regime, around the equator, due to Gaussian beams quasimodes, see \eqref{gaussianbeams}. In particular, we focus on the $p = \infty$ case, and we refer to \cite{chabert2026l4} for a discussion of polynomially improved upper bounds for other values of $p$ away from the poles and the equator, which can be derived using separation of variables, see also \cite{chabert2025infty}. It is indeed very delicate to give estimates near the equator, as we will see in the proof.
    \item The exponent $\frac{1}{32}$ is not optimal, and we are in fact able to improve it greatly. However, in the spirit of \cite{bourgain1993eigenfunctiona}; our first goal is to prove that Sogge's optimal estimate for the spectral projectors (see Theorem \ref{thmSogge}) extends to the polynomially thin regime, and for all target frequency $\lambda$, that is there exists a constant $\kappa > 0$ such that 
    \begin{equation}
    \forall \delta \geq \lambda^{-\kappa}, \qquad \|P_{\lambda,\delta}\|_{L^2(\mathcal{S}) \to L^{\infty}(K_\eps)} \simeq_{\mathcal{S}, \eps} \lambda^{1/2} \delta^{1/2}.
    \end{equation}
    Moreover, our second goal is to prove that, with a more careful approach than in \cite{bourgain1993eigenfunctiona}, we can reach \textit{quantitatively} small polynomially thin frequency intervals. Now, optimizing the constant $\kappa$, while being possible, as we will discuss in Section \ref{subsec2Further}, would greatly increase the length of the present article, and wouldn't be as much of a theoretical improvement since it is mostly a question of detailing computations. Moreover, the precise type of calculations would be very much specific to surfaces of revolution, while, even though we have this precise model in mind, our proof aims to be adaptable to similar settings, and rather rely on new abstract results for oscillatory integrals such as Theorem \ref{mixedVdCABZ}.
    \item In Section \ref{subsec2Further}, we will discuss the (conjectured) optimal constant $\kappa > 0$ for which the theorem holds, and explain how reaching such fine scales illustrates the role of unstable closed geodesics of $\mathcal{S}$, which are usually not seen in spectral projectors estimates. 
    \item In Section \ref{subsec3Further}, we will discuss the hypotheses that we introduce in order to define the set $\mathfrak{S}$. Indeed, similarly to the choice of a non optimal exponent, we choose not ti give the proof for the largest possible $\mathfrak{S}$, and rather briefly explain what needs to be changed in the proof for more general surfaces of revolution. This will culminate in a conjecture for the equivalent of Theorem \ref{mainthm} for general quantum completely integrable manifolds (see definition below). 
    \item In Section \ref{subsec4Further}, we will give a conjecture regarding the pointwise Weyl law, which we are convinced should follow from similar methods than the ones presented in this article, as we will argue in a future work.
\end{itemize}

\subsection{Method of proof}\label{subsec3Intro}

\myindent The idea of the proof for the case $\delta = 1$ (seeTheorem \ref{thmSogge}), and of many works using the microlocal approach, can be resumed as follows.

\myindent i. First, observe that bounding $P_{\lambda,\delta}$ is essentially equivalent to bounding the smoothed version
\begin{equation}
    P^{\flat}_{\lambda,\delta} := \chi\left(\frac{\sqrt{-\Delta} - \lambda}{\delta}\right),
\end{equation}
where $\chi$ is a smooth cutoff function. 

\myindent ii. Second, one can then express the smoothed projector $P^{\flat}_{\lambda,\delta}$ using the group of unitary operators $t\mapsto e^{it\sqrt{-\Delta}}$, i.e. using the Fourier transform
\begin{equation}\label{Psharpfirstformula}
    P^{\flat}_{\lambda,\delta} = \delta \int \hat{\chi}(\delta t) e^{i\lambda t} e^{it\sqrt{-\Delta}} dt.
\end{equation}

\myindent iii. Third, the interest is now that the Schwartz kernel of $e^{it\sqrt{-\Delta}}$ is very well known, thanks to the existence of \textit{parametrices}, which connect this group of operators to the \textit{geodesic flow} on the cotangent bundle of $M$. Thus, one can use a parametrix to reduce the problem of estimating the Schwartz kernel $P^{\flat}_{\lambda,\delta} (x,y)$ into a problem of \textit{oscillatory integrals}.

\quad

\myindent Now, the problem is that one has to control $t\mapsto e^{it\sqrt{-\Delta}}$ roughly for $t \in [-\delta^{-1}, \delta^{-1}]$. Thus, if we want to be able to choose $\delta = \delta(\lambda)$ very small when $\lambda \to \infty$, we have to deal with larger and larger times. Quantifying this approach is possible, but leads only to $\delta$ \textit{logarithmically small} compared to $\lambda$. In particular, when we are dealing with a manifold for which geodesics pass many times by their starting point, or near their starting point, it seems very hard to go up to the size of the frequency interval $\delta$ polynomially small compared to $\lambda$.

\quad

\myindent In order to avoid this difficulty, we use the additional structure which is given by \textit{Quantum Complete Integrability}. We will start in Section \ref{secCdV} with a detailed presentation of Colin de Verdière's construction in \cite{de1980spectre}, with some developments. In Sections \ref{subsec1CdV} and \ref{subsec2CdV}, we start with basic definitions which set the problem, and with a simple geometric description of the geodesics of a simple symmetric surface of revolution.

\myindent The general idea of \cite{de1980spectre} is to consider the joint spectral study of two commuting pseudodifferential operators, namely $P_1 = \sqrt{-\Delta}$ and $P_2$ the infinitesimal generator of the rotation around the axis of $\mathcal{S}$, i.e. $P_2 = \frac{1}{i} \frac{\partial}{\partial \theta}$ if $\theta \in S^1$ is the angular coordinate on $\mathcal{S}$. On the level of their principal symbols $p_1,p_2$, this corresponds to the well-known setting of the \textit{complete integrability} of the geodesic flow, in the sense of Hamiltonian Geometry. Indeed, the geodesic flow on the cotangent bundle is the Hamiltonian flow of the principal symbol of $\sqrt{-\Delta}$, which is the norm of cotangent vectors. In this setting, we are able to have a simple representation of the geodesics through the introduction of \textit{action-angle coordinates} given by the well-known Liouville-Arnold theorem.

\myindent The observation of Colin de Verdière is that, in the case of \textit{simple} surfaces of revolution, action-angle coordinates are defined \textit{globally} on the cotangent bundle $T^*\mathcal{S}$. Precisely, this means that there exist a smooth diffeomorphism $\mathcal{G} : \R^2 \to \R^2$ such that, if we define
\begin{equation}
    (q_1,q_2)(x,\xi) := (\mathcal{G}(p_1(x,\xi),p_2(x,\xi))),
\end{equation}
then $q_1,q_2$ are smooth functions on $T^*\mathcal{S}$, such that their Hamiltonian flows are $2\pi$-periodic. Moreover, there actually holds that $q_2 = p_2$ is the principal symbol of the operator $P_2$ (which already has a $2\pi$-periodic Hamiltonian flow). This construction is presented in Section \ref{subsec3CdV}.

\myindent Now, an important part of \cite{de1980spectre} is that this complete integrability structure of the geodesic flow on the cotangent bundle of a simple surface of revolution fully transfers to the level of pseudodifferential operators. Precisely, there holds the following, which is part of Theorem \ref{CdVthm} given in Section \ref{subsec4CdV}.
\begin{proposition}\label{reducedCdVthmQ1Q2}
    There exists two commuting pseudodifferential operators of order one $Q_1,Q_2$, whose principal symbols are $q_1,q_2$, and such that, on the one hand,
    \begin{equation}\label{exp2ipiQieqIdlocal}
        e^{2i\pi Q_k} = (-1)^k Id \qquad k = 1,2.
    \end{equation}

    \myindent On the other hand there exists a classical elliptic symbol $F$ of order $2$ on $\R^2$ such that
    \begin{equation}
        F \sim F_2 + F_0 + F_{-1} + ...
    \end{equation}
    (the homogeneous component of order $1$ equals zero as the subprincipal symbol equals zero), and such that
    \begin{equation}
        -\Delta = F(Q_1,Q_2).
    \end{equation}
\end{proposition}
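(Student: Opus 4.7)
The plan is to adapt Colin de Verdière's quantum Birkhoff normal form procedure. Since $P_1 := \sqrt{-\Delta}$ and $P_2 := \frac{1}{i}\partial_\theta$ already commute as self-adjoint pseudodifferential operators of order $1$, and the spectrum of $P_2$ is $\Z$, I would set $Q_2 := P_2$, which immediately provides $e^{2i\pi Q_2} = Id$ and has principal symbol $p_2 = q_2$. The substantive work is then the construction of $Q_1$: a self-adjoint pseudodifferential operator of order $1$, commuting with $Q_2$, with principal symbol $q_1$ and half-integer spectrum, so that $e^{2i\pi Q_1} = -Id$.

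To build $Q_1$, I would start from any self-adjoint pseudodifferential operator $Q_1^{(0)}$ of order $1$ with principal symbol $q_1$, chosen to commute with $Q_2$ modulo a lower-order operator (this is possible by averaging along the $P_2$-flow, which is the $\theta$-translation, and using that the average of $q_1$ under this $S^1$-action is $q_1$ itself). Since the Hamiltonian flow of $q_1$ is $2\pi$-periodic, Egorov's theorem ensures that $U_0 := e^{2i\pi Q_1^{(0)}}$ is a pseudodifferential operator of order $0$, whose principal symbol is the Maslov-type phase associated to the classical $q_1$-cycle. The expected value is $e^{i\pi} = -1$: the $q_1$-orbits on a simple symmetric surface of revolution cross two caustics per period (the turning points in the meridional direction), each contributing a $\pi/2$ Maslov shift, yielding a total phase of $-1$. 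I would then correct $Q_1^{(0)}$ iteratively by setting $Q_1^{(n+1)} = Q_1^{(n)} + R_{-n}$, where $R_{-n}$ is a pseudodifferential operator of order $-n$ commuting with $Q_2$, chosen to solve a cohomological equation on the Liouville tori (inverted by a Fourier series on $\mathbb{T}^2$ after averaging out the kernel of the $q_2$-flow) so as to cancel the $n$-th order obstruction in $e^{2i\pi Q_1^{(n)}} + Id$. Borel-summing these corrections yields a candidate $Q_1$ with $e^{2i\pi Q_1} \equiv -Id$ modulo a smoothing operator, and a final averaging argument, applied to this near-involution unitary, upgrades the identity to an exact one.

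Once $Q_1$ and $Q_2$ are in place, the reduction $-\Delta = F(Q_1, Q_2)$ follows from joint spectral theory. The commuting pair admits a joint orthonormal basis of eigenvectors $(\varphi_{k,l})$ with eigenvalues in $(\Z + \tfrac{1}{2}) \times \Z$, and each $\varphi_{k,l}$ is automatically an eigenfunction of $-\Delta$, with some eigenvalue $\mu_{k,l}$. I would define $F$ as a smooth interpolation of the sequence $(\mu_{k,l})$ on $\R^2$, admitting the homogeneous asymptotic expansion $F \sim F_2 + F_0 + F_{-1} + \ldots$: the leading term is fixed by the classical identity $p_1^2 = [\mathcal{G}^{-1}_1(q_1, q_2)]^2$, giving $F_2(x_1, x_2) = [\mathcal{G}^{-1}_1(x_1, x_2)]^2$; the absence of an order-$1$ component reflects the vanishing subprincipal symbol of $\sqrt{-\Delta}$ on a surface; and the remaining $F_{-j}$ are recovered order by order from the subleading asymptotics of $\mu_{k,l}$ as $k \to \infty$.

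The main obstacle, in my view, is the passage from the formal asymptotic normal form to the exact operator identities. Solving each cohomological equation is routine provided one avoids small-divisor issues, which here is ensured by the globally periodic action-angle geometry of simple symmetric surfaces of revolution. The genuinely delicate points are (i) the Maslov computation that pins the sign in $e^{2i\pi Q_1} = -Id$ and accounts for the half-integer shift, which requires a careful local analysis of the parametrix near the caustics of the $q_1$-foliation; and (ii) the construction of a Fourier integral operator realizing the asymptotic conjugation as a genuine unitary, so that $Q_1$ satisfies the periodicity exactly rather than merely modulo smoothing, which is where the functional-calculus upgrading of the asymptotic identity to an exact one must be executed with care.
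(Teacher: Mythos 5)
There is a genuine structural gap: nothing in your construction makes $Q_1$ commute with $-\Delta$. You arrange $[Q_1,Q_2]=0$ (by averaging along the $\theta$-rotation) and you arrange periodicity, but exact commutation with the Laplacian is never imposed, and it is exactly what the conclusion $-\Delta = F(Q_1,Q_2)$ requires. It also undercuts your spectral argument: the claim that each joint eigenfunction of $(Q_1,Q_2)$ ``is automatically an eigenfunction of $-\Delta$'' needs both $[\Delta,Q_i]=0$ and control of the joint multiplicities (a joint eigenspace of $(Q_1,Q_2)$ is only \emph{invariant} under $-\Delta$, not automatically contained in a single $\Delta$-eigenspace). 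The paper's route, following Colin de Verdi\`ere, dissolves this difficulty at the outset by working inside the commutative algebra $\hat{\mathcal{A}} = \{f(P_1,P_2)\}$ (Strichartz functional calculus of the commuting pair $P_1=\sqrt{-\Delta}$, $P_2=\frac1i\partial_\theta$): one sets $T_j := f_j(P_1,P_2)$ with principal symbol $q_j$, gets $\exp(2i\pi(T_j-\mu_j)) = Id + C_j$ with $C_j$ of order $-1$, corrects by an \emph{exact spectral logarithm} of $Id + C_j$, and then proves this logarithm again lies in $\hat{\mathcal{A}}$ (using that any symbol Poisson-commuting with $p_1,p_2$ is a smooth homogeneous function of them). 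Every operator produced this way commutes exactly with $-\Delta$ by construction, and $-\Delta = F(Q_1,Q_2)$ then follows from the statement that $Q_1,Q_2$ \emph{generate} $\hat{\mathcal{A}}$, which contains $-\Delta$ — not from interpolating eigenvalue data. Your definition of $F$ as a smooth interpolation of the sequence $\mu_{k,l}$ leaves the real work undone: showing that such an interpolation exists as a \emph{classical elliptic symbol} with a homogeneous expansion $F_2 + F_0 + F_{-1}+\dots$ is essentially equivalent to the algebra statement you would be trying to bypass.

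A second, related weakness is the exactness of \eqref{exp2ipiQieqIdlocal}. Your Birkhoff-type scheme (cohomological equations plus Borel summation) can only achieve $e^{2i\pi Q_1}=-Id$ modulo a smoothing operator, and the ``final averaging argument'' is not an argument: one must produce an exact correction and verify it preserves the pseudodifferential class, the commutation with $Q_2$ (and with $\Delta$), and membership in the algebra. The paper is explicit that this exactness cannot be obtained iteratively with ever-more-regularizing remainders and is instead achieved spectrally, via the exact logarithm of $Id + C_j$; it is also explicit later (Lemma \ref{smoothingcontrib} and the surrounding discussion) that having \emph{no} smoothing remainder in \eqref{exp2ipiQieqIdlocal} is crucial for the rest of the argument, so this is not a point one can wave away. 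Your identification of the Maslov sign ($-1$ for $Q_1$, $+1$ for $Q_2$) is consistent with the paper, and setting $Q_2=P_2$ matches it, but as written the proposal does not yield the proposition.
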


\myindent Finally, in Section \ref{subsec5CdV}, we will give some additional geometric properties.

\quad

\myindent The main idea of the proof is thus to use Proposition \ref{reducedCdVthmQ1Q2} to twist the standard microlocal approach roughly introduced above, as we will present in Section \ref{secMicro}. First, we express the spectral projector $P_{\lambda,\delta}$ in terms of the groups of unitary operators $s \mapsto e^{isQ_1}$ and $t\mapsto e^{itQ_2}$. Indeed, we will construct in Section \ref{subsec1Micro} an adapted smoothed spectral projector of the form 
\begin{equation}
    P^{\sharp}_{\lambda,\delta} = f_{\lambda,\delta} (Q_1,Q_2),
\end{equation}
where
\begin{equation}
    f_{\lambda,\delta}(q_1,q_2) = \left(d\mu_{\lambda} \ast \rho_{\delta}\right)(q_1,q_2),
\end{equation}
where, if $d\mu$ is the superficial measure on the curve $\{F_2 = 1\}$, and $\rho$ is a smooth bump function, $d\mu_{\lambda} := \lambda d\mu(\lambda^{-1} \cdot)$ and $\rho_{\delta} = \delta^{-1}\rho(\delta^{-1}\cdot)$. This is a smooth bump function around the set
\begin{equation}
    \{(q_1,q_2) \in \R^2 \ \text{such that} \ \lambda- \delta \leq \sqrt{F_2(q_1,q_2)} \leq \lambda + \delta\}.
\end{equation}

\myindent Now, the interest is that there holds, similarly to \eqref{Psharpfirstformula},
\begin{equation}
    P^{\sharp}_{\lambda,\delta} = \lambda \delta \int_{\R^2} \hat{d\mu}(\lambda(s,t)) \hat{\rho}(\delta(s,t)) e^{is Q_1} e^{itQ_2} ds dt.
\end{equation}

\myindent The point is that, in this formula, since the groups $s \mapsto e^{isQ1}$ and $t\mapsto e^{itQ_1}$ are $2\pi$-periodic (up to a sign), the issue of quantitative control of long-term parametrices mentioned above disappears, and we only need to deal with parametrices on times $O(1)$. Intuitively, this means that all the delicate behavior of geodesics for large time is fully encoded in the decaying and oscillatory behavior of $\hat{d\mu}(\lambda \cdot)$. However, this function is now a completely explicit oscillatory integral. Moreover, the theory of parametrices allows to express the Schwartz kernel of $(s,t) \mapsto e^{isQ_1} e^{itQ_2}$ as an oscillatory integral, at least locally. We will construct well-chosen parametrices in Section \ref{subsec2Micro}. 

\myindent Overall, we will thus be able to reduce the problem of bounding $P^{\sharp}_{\lambda,\delta}$, and thus $P_{\lambda,\delta}$, to the problem of estimating a countable number of sufficiently explicit oscillatory integrals. The major part of this article, in terms of volume, is thus devoted to these estimates. In order to give intuition, and also to avoid the necessity to read all the technical computations at first reading, we will present a precise overview of the analysis in Section \ref{subsec3Micro}. Moreover, we will introduce the classical and new oscillatory integral estimates that we need in Section \ref{subsec4Micro}, which culminates in the new result of Theorem \ref{mixedVdCABZ}.

\myindent Sections \ref{secHormPhase} to \ref{secAntipod}, which are much more technical, are fully devoted to the derivation of a rigorous quantitative analysis for the different oscillatory integrals which appear in the analysis. In order to shorten the already long proof, we have chosen to give the full proofs only for one type of integrals, which are also the most difficult to bound, in Sections \ref{secHormPhase} and \ref{secHormQuant}. For the two other types, we have thus chosen to give the main steps of the proof without explaining all the details in Sections \ref{secBicharact} and \ref{secAntipod}.

\myindent Finally, in Section \ref{secFurther}, we will come back to various parts of the proof, in order to motivate further results and conjectures. In Section \ref{subsec1Further}, we will explore \textit{lower bounds} on the $L^2 \to L^{\infty}$ norm of spectral projectors. In Section \ref{subsec2Further}, we will discuss the possible improvements of Theorem \ref{mainthm} (and thus of the other results that the analysis should yield), in particular away from the equator. Indeed, we have chosen to present a method of proof which is the most likely to extend to other settings. Hence, in many estimates, we are actually able to prove much better that what we state, by using specially adapted methods. Thus, we will motivate a conjecture for the \textit{optimal} version of Theorem \ref{mainthm}, i.e. the optimal $\kappa > 0$ such that the estimate of the theorem holds for $\delta > \lambda^{-\kappa}$, at least outside of the equator. As we will explain, this exponent is quite surprising, and we see the difference with the case of the flat torus, which stems from the particular geometry of \textit{non stable closed geodesics} of surfaces of revolution. In Section \ref{subsec3Further}, we will discuss the many simplifying hypotheses that we will introduce in the analysis, and explain how to remove them, and how much more work is needed for the removal of each. This will culminate in the discussion of the possible extensions of Theorem \ref{mainthm} in other Quantum Completely Integrable geometries. Finally, we will mention in Section \ref{subsec4Further} that the same analysis should yield a polynomial quantitative improvement on the remainder of the pointwise Weyl law.

\subsection{Earlier Works}\label{subsec2Intro}

\myindent We briefly recall some of the landmark results of Spectral Geometry, in order to contextualise the question of estimating the norm of spectral projectors. 

\subsubsection{The global and pointwise Weyl laws}

\myindent Consider $(M,g)$ a smooth \textit{compact} Riemannian manifold of dimension $d$, let $0 \leq \lambda_0^2 \leq \lambda_1^2 \leq ...$ be the eigenvalues of $-\Delta$, repeated with multiplicity. A long line of research has been to study the \textit{asymptotic distribution} of the eigenvalues $(\lambda_i)_{i\geq 0}$, and its dependency on the geometry of $M$. This is usually measured by the \textit{eigenvalue counting function}
\begin{equation}
    N(\lambda) := \#\{j\geq 0 \ \text{such that} \ \lambda_j \leq \lambda\}.
\end{equation}

\myindent Then, the \textit{Weyl law} (named after the pioneering article \cite{weyl1912asymptotische}) states that
\begin{equation}\label{GlobWeyl}
    N(\lambda) = (2\pi)^{-d} \omega_d \upsilon(M) \lambda^d + R(\lambda),
\end{equation}
where $R(\lambda) = o(\lambda^d)$, $\omega_d$ is the volume of the unit ball in $\R^d$, and $\upsilon(M)$ is the volume of $M$ for the metric $g$.

\myindent Weyl conjectured that the remainder term $R(\lambda)$ was of order $O\left(\lambda^{d-1}\right)$. This was established with a logarithmic loss by Richard Courant in 1922, and without loss for compact closed manifolds by Levitan in \cite{levitan1953asymptotic}. This estimate cannot be improved in the general case, as if one takes $M = S^d$ the $d$-dimensional sphere, the remainder term $R(\lambda)$ can be shown to be exactly of order $\lambda^{d-1}$, due to the \textit{multiplicity} of eigenvalues. However, it is expected that the remainder term is $o\left(\lambda^{d-1}\right)$ for "generic" manifolds. Thus, the theory has been refined to the study of the remainder term $R(\lambda)$ and its interaction with the geometry of $M$. For example, for the $d$-dimensional flat torus $\mathbb{T} := \R^d /(2\pi \Z)^d$, the eigenvalues are the $|n|^2$, for $n \in \Z^d$. Hence, the problem of estimating $R(\lambda)$ is essentially the $d$-dimensional Gauss problem, i.e. the estimation of the number of points with integer coordinates in a ball of radius $\lambda$. For $d = 2$, it is thus conjectured since \cite{hardy1917average} that $R(\lambda) = O(\lambda^{\frac{1}{2} + \eps})$ for any $\eps > 0$, and \cite{hardy1917dirichlet} establishes that the bound $O(\lambda^{\frac{1}{2}})$ doesn't hold. The best bound today is $O(\lambda^{\frac{131}{208}})$, proved in \cite{huxley2002integer}. 

\myindent From the 1950s onward, the theory of \textit{microlocal analysis} and in particular Hörmander's theory of Fourier Integral Operators (\cite{hormander2009analysis,duistermaat1972fourier,hormander1971fourier}) enabled a new approach of the Weyl law. Indeed, modern presentations rely on the \textit{pointwise Weyl law}. Consider the spectral projector
\begin{equation}
    P_{\lambda} := \mathds{1}_{[0,\lambda]}(\sqrt{-\Delta}).
\end{equation}

\myindent Then, the \textit{pointwise counting function} is 
\begin{equation}\label{defNlambdax}
    N(\lambda,x) = P_{\lambda}(x,x),
\end{equation}
where $P_{\lambda}(x,y)$ is the Schwartz kernel of $P_{\lambda}$, which is a well-defined smooth function of $x$. Indeed, let $(\phi_j)_{j\geq 0}$ be an orthonormal basis of eigenfunctions. Then, thanks to elliptic regularity, the $\phi_j$ are smooth, and there holds
\begin{equation}
    P_{\lambda}(x,x) = \sum_{\lambda_j \leq \lambda} |\phi_j(x)|^2.
\end{equation}

\myindent In particular, since the $(\phi_j)$ are normalized in $L^2$, we find that, if $d\upsilon(x)$ is the volume form induced by the metric $g$ on $M$,
\begin{equation}
    N(\lambda) = \int_M N(\lambda,x) d\upsilon(x).
\end{equation}

\myindent The pointwise Weyl law, stated in the seminal work of Hörmander \cite{hormander1968spectral}, is that
\begin{equation}\label{locWeyl}
    N(\lambda,x) = (2\pi)^{-d}c(x) \lambda^d + R(\lambda,x),
\end{equation}
where $c(x)$ is the volume of the unit ball in the cotangent space at $x$ $T_xX^*$, and $R(\lambda,x) = O\left(\lambda^{d-1}\right)$ uniformly over $M$. The global Weyl law \eqref{GlobWeyl} is thus a straightforward consequence of \eqref{locWeyl}. A textbook presentation of the pointwise Weyl law can be found in \cite[Chapter IV]{sogge2017fourier}.

\myindent The microlocal approach has revealed the deep connection between the distribution of eigenvalues, and the length of the \textit{periodic geodesics} of $M$, see \cite{colin1973spectre,colin1973spectreb}. In particular, Duistermaat and Guillemin proved in the seminal article \cite{duistermaat1975spectrum} that the remainder term for the (global) Weyl law is $o\left(\lambda^{d-1}\right)$ if the set of \textit{periodic geodesics} has measure zero in the cotangent bundle $T^*M$, which was generalized by Ivrii in \cite{ivrii1980second}.

\myindent The question of the distribution of eigenvalues has since known some major developments, and has been generalized to other settings, see the monograph of Ivrii \cite{ivrii2013microlocal}. Let us mention that logarithmic improvements on the remainder have been obtained recently under quite general hypotheses in \cite{canzani2023weyl}.

\subsubsection{The $L^p$ norm of eigenfunctions}

\myindent A second line of research is the study of the \textit{eigenfunctions} $(\phi_j)$ (which are not unique due to the multiplicity of eigenvalues), and, in particular, the asymptotic study of the $L^p$ norms, $p > 2$, of eigenfunctions. Indeed, this is a good way of measuring the asymptotic distribution of mass of eigenfunctions. The classical Hörmander bound, from the already mentionned \cite{hormander1968spectral}, is that 
\begin{equation}\label{hormupperboundeigen}
    \|\phi_j\|_{L^{\infty}(M)} \lesssim \lambda_j^{\frac{d-1}{2}}.
\end{equation}

\myindent This has been generalized to the following much stronger result of Sogge in \cite{sogge1988concerning}, concerning the spectral projector $P_{\lambda,1}$ (see \eqref{defPlambdadelta}).
\begin{theorem}[Sogge]\label{thmSogge}
    If $M$ is a complete Riemannian manifold with bounded geometry, of dimension $d\geq 2$, then for any $2\leq p \leq \infty$,
    \begin{equation}
        \|P_{\lambda,1}\|_{L^2(M)\to L^p(M)} \lesssim \lambda^{\gamma(p)}.
    \end{equation}
    
    \myindent Moreover, there exists a constant $R_0$ such that, for all $\lambda_0$, there exists a $\lambda$ with $|\lambda - \lambda_0|\leq R_0$ such that
    \begin{equation}
        \|P_{\lambda,1}\|_{L^2(M)\to L^p(M)} \gtrsim \lambda^{\gamma(p)}, \qquad 2\leq p \leq \infty.
    \end{equation}
    
    \myindent Here, 
    \begin{equation}\label{defgammap}
        \gamma(p) = \max\left[\frac{d-1}{2} - \frac{d}{p}, \frac{d-1}{2}\left(\frac{1}{2} - \frac{1}{p}\right)\right].
    \end{equation}
\end{theorem}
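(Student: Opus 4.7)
The plan is to dominate $P_{\lambda,1}$ by a smooth spectral multiplier whose symbol is better suited to microlocal analysis. Pick a Schwartz function $\rho$ with $\rho\geq 0$, $\rho\geq 1$ on $[-1,1]$, and $\hat\rho$ compactly supported in an interval of length less than the injectivity radius of $M$ (which exists by bounded geometry). Then $|P_{\lambda,1}f|$ is pointwise bounded by $|\rho(\sqrt{-\Delta}-\lambda)f|$, and by Fourier inversion
\begin{equation*}
\rho(\sqrt{-\Delta}-\lambda) \;=\; \frac{1}{2\pi}\int_{\mathbb{R}} \hat\rho(t)\, e^{it\lambda}\, e^{-it\sqrt{-\Delta}}\, dt.
\end{equation*}
Since $\hat\rho$ is supported where the half-wave operator admits a good parametrix, one inserts the Hadamard/Hörmander representation of $e^{-it\sqrt{-\Delta}}$ as a Fourier integral operator, in local normal coordinates with phase $\langle x-y,\xi\rangle - t|\xi|_{g(y)}$ and a classical amplitude of order $0$. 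Integrating in $t$ localises $|\xi|_{g(y)}$ near $\lambda$, and one rewrites the Schwartz kernel of $\rho(\sqrt{-\Delta}-\lambda)$, modulo a $O(\lambda^{-N})$ tail, as an oscillatory integral over the cosphere at frequency $\lambda$.

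Next I would prove the two endpoint bounds and interpolate. At $p=\infty$, stationary phase along the sphere $\{|\xi|_{g(y)}=\lambda\}$, whose $d-1$ principal curvatures are non-zero, yields $|K_\lambda(x,y)|\lesssim \lambda^{(d-1)/2}$ uniformly, so $\|P_{\lambda,1}\|_{L^2\to L^\infty}\lesssim \lambda^{(d-1)/2}$. At the critical $p_c=2(d+1)/(d-1)$, I would run a $TT^*$ argument: the kernel of $P_{\lambda,1}P_{\lambda,1}^*$ decays like $\lambda^{(d-1)/2}(1+\lambda\,d(x,y))^{-(d-1)/2}$ by the same stationary-phase expansion, which places the problem in the scope of Stein--Tomas/Carleson--Sj\"olin oscillatory integral theory. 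The conclusion is $\|P_{\lambda,1}\|_{L^2\to L^{p_c}}\lesssim \lambda^{1/p_c}=\lambda^{\gamma(p_c)}$. Riesz--Thorin interpolation between $(p,\gamma(p))=(2,0)$ and $(p_c,1/p_c)$ gives the Knapp branch $\gamma(p)=\frac{d-1}{2}(\frac12-\frac1p)$, and between $(p_c,1/p_c)$ and $(\infty,(d-1)/2)$ gives the zonal branch $\gamma(p)=\frac{d-1}{2}-\frac{d}{p}$, reproducing \eqref{defgammap}.

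For the lower bound one constructs two extremal test-function families in a normal coordinate chart (uniform in the base point by bounded geometry). A Knapp-type packet, essentially $f_\lambda(x)=\chi(x_1)\prod_{j\geq 2}\chi(\lambda^{1/2}x_j)e^{i\lambda x_1}$, is microlocalised on a $1\times\lambda^{-1/2}\times\cdots\times\lambda^{-1/2}$ tube tangent to the cosphere at $(0,\lambda e_1)$ and, after application of $\rho(\sqrt{-\Delta}-\lambda)$, saturates the $L^{p_c}$ estimate, which by interpolation gives the Knapp branch of $\gamma$. A zonal-type packet, roughly $f_\lambda(x)=\lambda^{d/2}\chi(\lambda x)e^{i\lambda x_1}$, is concentrated at a point on the scale $\lambda^{-1}$ and saturates the $L^\infty$ estimate, giving the zonal branch. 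Because $f_\lambda$ is not exactly spectrally supported in $[\lambda-1,\lambda+1]$, one only recovers the lower bound at \emph{some} $\lambda$ within a fixed distance $R_0$ of any prescribed $\lambda_0$ --- exactly the loss recorded in the statement.

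I expect the heart of the matter to be the critical-exponent upper bound at $p_c$: the $p=\infty$ estimate follows from a relatively soft stationary-phase computation, but the $p_c$ bound requires a genuine oscillatory integral theorem of Stein--Tomas/Carleson--Sj\"olin type, which rests on the non-degeneracy of the second fundamental form of the cosphere. This is the place where the geometry of $(M,g)$ enters the argument quantitatively and where the FIO parametrix has to be controlled carefully; the interpolation and the construction of the extremal packets are, by comparison, standard once this estimate is in hand.
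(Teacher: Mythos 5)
This theorem is quoted as background: the paper gives no proof and simply cites \cite{sogge1988concerning}, so there is no internal argument to compare yours against. Your outline reproduces the standard proof of that cited result (see also \cite{sogge2017fourier}): reduction to a smoothed projector via a short-time half-wave parametrix, stationary phase for the $p=\infty$ endpoint, a Stein--Tomas/Carleson--Sj\"olin oscillatory-integral bound at the critical exponent $p_c=2(d+1)/(d-1)$, interpolation to get \eqref{defgammap}, and Knapp/zonal packets for the lower bounds, with the spectral leakage of the packets accounting for the constant $R_0$. Two details are stated loosely and should be repaired if you write this out. First, $|P_{\lambda,1}f|$ is not pointwise dominated by $|\rho(\sqrt{-\Delta}-\lambda)f|$; the correct reduction is the factorization $P_{\lambda,1}=\rho(\sqrt{-\Delta}-\lambda)\circ m(\sqrt{-\Delta})$ with $m(\mu)=\mathds{1}_{[\lambda-1,\lambda+1]}(\mu)/\rho(\mu-\lambda)$ bounded by $1$, so that the $L^2\to L^p$ norm of $P_{\lambda,1}$ is controlled by that of the smoothed multiplier. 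Second, the kernel of the smoothed projector has size $\lambda^{d-1}$ on the diagonal (and is $\lesssim \lambda^{d-1}(1+\lambda\,d(x,y))^{-(d-1)/2}$ off it), not $\lambda^{(d-1)/2}$; a uniform pointwise kernel bound would in any case only give an $L^1\to L^\infty$ estimate, and the $L^2\to L^\infty$ bound $\lambda^{(d-1)/2}$ comes from $\sup_x\|K_\lambda(x,\cdot)\|_{L^2_y}$, i.e. the diagonal of the $TT^*$ kernel --- precisely the mechanism the paper itself invokes later in Lemma \ref{bddnormbykernel}. With these corrections your sketch is the intended argument.
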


\myindent In the case where $M$ is compact, this yields immediately the following bound on the eigenfunctions
\begin{equation}\label{Soggeupperboundeigen}
    \|\phi_{\lambda}\|_{L^p} \lesssim \lambda^{\gamma(p)}.
\end{equation}

\myindent This upper bound is optimal in the general case, since, for the sphere $S^d$, one can saturate the bound as follows. If we define the Stein-Thomas exponent $p_{ST} := \frac{2(d+1)}{d-1}$, which discriminates between the two regimes for $\gamma(p)$ defined by \eqref{defgammap}, then

i. For $p_{ST}\leq p \leq \infty$, the eigenfunctions which saturate the upper bound \eqref{Soggeupperboundeigen} focus on a point. This is achieved by the \textit{zonal} spherical harmonics, which concentrate on the Poles of $S^d$.

ii. For $2\leq p \leq p_{ST}$, the eigenfunctions which saturate the upper bound \eqref{Soggeupperboundeigen} focus on a \textit{stable closed geodesic}. This is achieved by the \textit{highest-weight} spherical harmonics, which concentrate on the equator of $S^d$.

\myindent In generic settings, however, it is expected that the upper bound \eqref{Soggeupperboundeigen} can be improved. For the $L^{\infty}$ norm, it was proved in particular in \cite{sogge2002riemannian} that, for generic metrics, $\|\phi_j\|_{L^{\infty}(M)} = o\left(\lambda_j^{\frac{d-1}{2}}\right)$. Moreover, the converse question, namely finding conditions on $M$ for \textit{maximal eigenfunction growth}, i.e. the existence of a sequence of eigenfunctions saturating the bound \eqref{hormupperboundeigen}, has been studied in many works, such as \cite{sogge2002riemannian,sogge2011blowup,sogge2016focal,sogge2016focalb,sogge2001riemannian,canzani2019growth,canzani2021eigenfunction}.

\quad

\myindent With additional assumptions on the geometry of $M$, the upper bound can even be \textit{quantitatively} improved.

\myindent In the case of the regular flat torus, the results of \cite{cooke1971cantor,zygmund1974fourier} yield that, on $\mathbb{T}^2$, $\|\phi_j\|_{L^4} \lesssim 1$, and the case $p > 4$ is still open today. For $d > 2$, Bourgain conjectured in \cite{bourgain1993eigenfunction} that there holds on $\mathbb{T}^d$
\begin{equation}
    \|\phi_j\|_{L^p} \lesssim \lambda_j^{\frac{d}{2} - 1 - \frac{d}{p}} \qquad p > \frac{2d}{d-2}.
\end{equation}

\myindent Some cases of this conjecture were proved in \cite{bourgain2013moment,bourgain2013improved,bourgain2015new}, and with the seminal $l^2$ decoupling estimate of Bourgain and Demeter \cite{bourgain2015proof}.

\myindent For \textit{arithmetic surfaces}, it is conjectured in \cite{iwaniec1995norms} that there should hold $\|\phi_{\lambda_j}\|_{L^{\infty}} \lesssim \lambda_j^{\eps}$. For recent progresses on that conjecture, see \cite{buttcane2017fourth,humphries2018equidistribution,humphries2022p}.

\myindent For more general manifolds with \textit{nonpositive curvature}, many works have obtained explicit \textit{logarithmic} improvements on the upper bound of $L^p$ norms of eigenfunctions, such as \cite{berard1977wave,hassell2015improvement,hezari2016lp,blair2017refined,blair2018concerning,blair2019logarithmic}. Moreover, in the context of \textit{magnetic Laplacians} on hyperbolic surfaces, one observes both saturation of $L^p$ bounds in the low energy regime, and polynomial improvement on $L^p$ norms of eigenfunctions in the critical energy regime, as discussed in \cite{chabert2026zonalstatesimprovedlinfty}, while the high energy regime is similar to the standard laplacian on hyperbolic surfaces.

\myindent In the context of \textit{completely integrable} manifolds (see Paragraph \ref{subsubsec31CdV} for a definition), it is expected that, generically, there should be some \textit{polynomial} improvements on the upper bound of $L^p$ norms of eigenfunctions. Bourgain claimed in  \cite{bourgain1993eigenfunctiona} that, for $M$ completely integrable, and for $
U$ a generic subset of $M$, there should hold $\|\phi_j\|_{L^{\infty}(U)} \lesssim \lambda_j^{\frac{d-1}{2} - \eps}$ for some $\eps > 0$ depending on $M$. In particular, our article proves this result in the case of (some) surfaces of revolution. In the setting of general completely integrable manifolds, the converse question of exhibiting sequences of eigenfunctions with $L^p$ norm quantitatively polynomial in the eigenvalue has been studied in \cite{toth2002riemannian,toth2003lp,toth2003norms}.

\subsubsection{Spectral projectors on thin frequency intervals}

\myindent Few results are available in the recent line of research on the problem of bounding the spectral projectors $P_{\lambda,\delta}$ for $0 < \delta \ll 1$, see the review of Germain \cite{germain2023l2}. For a general flat torus $\R^d/ \Lambda$, where $\Lambda$ is a lattice, it is conjectured in \cite{germain2022boundsBook} that there holds
\begin{equation}\label{conjGermain}
    \|P_{\lambda,\delta}\|_{L^2 \to L^p} \lesssim \lambda^{\frac{d-1}{2} - \frac{d}{p}}\delta^{\frac{1}{2}} + \left(\lambda \delta\right)^{\frac{d-1}{2}\left(\frac{1}{2} - \frac{1}{p}\right)} \qquad 2\leq p \leq \infty, \ \delta > \lambda^{-1}.
\end{equation}

\myindent Several cases of the conjecture have been established in \cite{germain2022bounds,demeter2024l2,hickman2020uniform}. Moreover, results have been obtained for non-compact manifolds, for which we recall that spectral projectors are well-defined even if eigenfunctions are not, see \cite{germain2022bounds2,germain2023spectral}.

\myindent Regarding lower bounds, it is mentioned in Germain's review \cite{germain2023l2} that, from the existing literature (cited in said review) on quasimodes supported on stable closed geodesic (see Colin de Verdière's article \cite{de1977quasi} for the construction of quasimodes), the existence of a stable closed geodesic implies that, for all $N > 0$, along a sequence $\lambda_j$,
\begin{equation}\label{gaussianbeams}
    \|P_{\lambda_j,\delta}\|_{L^2(
    M) \to L^p(M)} \gtrsim_N \lambda_j^{\frac{d-1}{2}\left(\frac{1}{2} - \frac{1}{p}\right)} \qquad \delta > \lambda_j^{-N}.
\end{equation}

\myindent In particular, coming back to Theorem \ref{thmSogge}, there is no possible amelioration for $\delta$ small in the case $2\leq p \leq p_{ST}$. Thus, since, in our study of surfaces of revolution, we include the equator, which is a stable closed geodesic, we focus on the case $p = \infty$.

\subsubsection{Earlier results for surfaces of revolution}

\myindent In our study of surfaces of revolution, we will strongly use the fact that, due to the symmetry of revolution, they are \textit{completely integrable}, and, precisely, that they are in the class of \textit{quantum completely integrable} (QCI) manifolds, as defined in the recent article \cite{eswarathasan2024pointwise}, where a microlocalized Weyl law is established for such manifolds. One can date the study of QCI manifolds back to Colin de Verdière's article \cite{de1980spectre}, where, among many deep results, it is proved the following theorem.
\begin{theorem}[Colin de Verdière]\label{thmCdVRemainders}
    For $\mathcal{S}$ a generic simple surface of revolution, the global Weyl remainder satisfies
    \begin{equation}
        R(\lambda) = O(\lambda^{\frac{2}{3}}).
    \end{equation}
\end{theorem}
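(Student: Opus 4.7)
The plan is to exploit the complete integrability of $\mathcal{S}$ to reduce the Weyl counting function to a two-dimensional lattice point counting problem, and then apply a Hardy--Landau type estimate. First, I would choose coordinates $(r,\theta)$ adapted to the rotational symmetry in which the metric takes the form $dr^2 + a(r)^2 d\theta^2$, so that $\partial_\theta$ commutes with $\Delta$. Decomposing $L^2(\mathcal{S}) = \bigoplus_{m \in \Z} H_m$ into Fourier modes in $\theta$, the Laplacian restricts on each $H_m$ to a one-dimensional Schrödinger operator which, after the usual unitary conjugation $u \mapsto \sqrt{a}\, u$, reads $-\partial_r^2 + V_m(r)$ with effective potential $V_m(r) = m^2/a(r)^2 + O(1)$.

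The second step is a semiclassical analysis of the one-dimensional spectral counting functions $N_m(\lambda) := \#\{E \leq \lambda^2 \text{ in spectrum of } -\partial_r^2 + V_m\}$. For $(m,\lambda)$ in the regime where $V_m$ has a regular nondegenerate well, Bohr--Sommerfeld quantization yields
\begin{equation*}
    N_m(\lambda) = \frac{1}{\pi} \int_{V_m \leq \lambda^2} \sqrt{\lambda^2 - V_m(r)}\, dr + \frac{1}{2} + O(\lambda^{-1}),
\end{equation*}
equivalently, the eigenvalues of $\sqrt{-\Delta}$ in that regime can be parametrised as $\lambda_{n,m} = F(n + \tfrac{1}{2}, m) + O(\lambda^{-1})$ for some smooth function $F$ (the inverse of a classical action). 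Summing over $m$ then identifies, up to $O(1)$ errors per mode, the total counting function with a lattice point count:
\begin{equation*}
    N(\lambda) = \#\{(n,m) \in \N \times \Z : F(n + \tfrac{1}{2}, m) \leq \lambda\} + O(1) \cdot (\text{size of singular strata}).
\end{equation*}
The leading term recovers the area of the phase-space region, which matches Weyl's constant, while the error is governed by lattice points near the boundary curve $F(\xi,\eta) = \lambda$.

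The third step is to apply the classical Hardy--Landau theorem: for a planar domain bounded by a smooth curve of strictly nonvanishing curvature, the number of lattice points in its $\lambda$-dilate differs from its area by $O(\lambda^{2/3})$. This is exactly where the genericity hypothesis enters: one uses genericity of $a(\cdot)$ to ensure that the boundary curve of the joint action image has nonzero curvature (i.e. the Hessian of the classical moment map is nondegenerate). The main obstacle, and the step demanding the most care, is the transition region near $|m| = m_{\max}(\lambda)$, corresponding to trajectories concentrating on the equator $\{r = r_0\}$ where $a$ achieves its maximum: Bohr--Sommerfeld breaks down because the well $V_m$ shrinks to a point, and one must replace it by a harmonic-oscillator normal form (using that the equator is a nondegenerate elliptic closed geodesic, which is again guaranteed generically) in order to count contributions from modes with $m_{\max}(\lambda) - |m| = O(1)$. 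A parallel analysis has to be carried out near the poles, where $a \to 0$ forces $m = 0$ to be treated separately. Once these singular contributions are shown to be $O(\lambda^{2/3})$, combining with Step 3 gives the desired remainder bound $R(\lambda) = O(\lambda^{2/3})$.
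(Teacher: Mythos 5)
Your route is genuinely different from the one behind the statement in the paper. The paper does not reprove this theorem: it is quoted from Colin de Verdière \cite{de1980spectre}, whose argument does not separate variables mode by mode. Instead he constructs the global action operators $Q_1,Q_2$ of Theorem \ref{CdVthm}, with exactly $2\pi$-periodic unitary groups, computes the joint spectrum of $(Q_1,Q_2)$ as a translated lattice up to small corrections, and then counts lattice points in the $\lambda$-dilate of the region bounded by $\gamma_0=\{F_2=1\}\cap\Gamma$. Your plan — Fourier decomposition in $\theta$, uniform Bohr--Sommerfeld quantization for the effective one-dimensional operators, then a planar lattice-point count — is the classical ODE/WKB alternative (much closer in spirit to \cite{bleher1994distribution}), and both schemes ultimately land on the same lattice-counting problem; what the QCI route buys is that the reduction to an exact lattice is global and uniform, with no separate matching analysis at the equator and poles.

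There is, however, a genuine gap in your Step 3. You invoke the Hardy--Landau bound for boundary curves of strictly nonvanishing curvature and assert that genericity of the profile guarantees this nondegeneracy. That is not what ``generic'' means in this theorem, and it cannot be: nonvanishing curvature of the boundary curve is equivalent to the twist condition $\omega'(I)\neq 0$ (Proposition \ref{g'eq-omega}, Hypothesis \ref{twistV2}), which fails on open sets of surfaces of revolution — for instance perturbations of the round sphere for which $\omega'$ changes sign transversally — so it is an open but not dense condition. The genericity in Colin de Verdière's theorem (Remark \ref{defgeneric}) is the weaker requirement that $\gamma_0$ have only \emph{ordinary} inflection points, i.e.\ $\omega''(I)\neq 0$ wherever $\omega'(I)=0$. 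In that generic case the boundary curve does have flat points, the convex-curve lattice estimate does not apply as stated, and the $O(\lambda^{2/3})$ remainder has to be extracted from third-derivative (cubic stationary phase / van der Corput) estimates near the inflection points — precisely the mechanism behind the decay $|\hat{d\mu}(\lambda\cdot)|\lesssim\lambda^{-1/3}$ recorded in \eqref{decaydmuhat}. As written, your argument proves the bound only under the twist hypothesis, not for generic surfaces. A secondary, fixable point: the region near the equator where uniform Bohr--Sommerfeld degenerates (coalescing turning points) has width of order $\lambda^{1/3}$ in $m$, not $O(1)$, and one must check that these $O(\lambda^{1/3})$ modes contribute $O(\lambda^{2/3})$ eigenvalues in total, together with uniformity of the $O(\lambda^{-1})$ eigenvalue error as $m/\lambda$ ranges over the regular region.
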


\myindent This estimate has been refined in \cite{bleher1994distribution} into an explicit asymptotic expansion of the remainder term $R(\lambda)$, for a simple surface of revolution satisfying a twist hypothesis, or more generally a Diophantine hypothesis (see definitions in the cited article). We will come back in the following on the precise meaning of the QCI hypothesis. A similar estimate than Theorem \ref{thmCdVRemainders} has been proved in \cite{colin2010remainder} for the Euclidean disk.

\myindent Another way of seeing the symmetry of revolution for surfaces of revolution is that it is an example of manifolds which are invariant under a group action (here, by the circle $S^1 := \R/(2\pi\Z)$). Donelly studied in \cite{donnelly2001bounds} the problem of growth of eigenfunctions for such manifolds. It is proved moreover in \cite{donnelly1978g} that surfaces of revolution always exhibit \textit{maximal eigenfunction growth} due to concentration at the poles. Indeed, if $\mathcal{S}$ is a surface of revolution, choosing a basis of eigenfunctions which are decomposed on (the equivalent of) spherical harmonics, say $(\phi_j)$, yields that, for $P$ a pole of $\mathcal{S}$, there always hold
\begin{equation}\label{zonalconcentration}
    \sup_{\lambda_j\leq \lambda} |\phi_j(P)| \gtrsim \lambda^{\frac{1}{2}}.
\end{equation}

\myindent Hence, if we wish to prove non trivial improvements on $\|P_{\lambda,\delta}\|_{L^2 \to L^{\infty}}$ for $\delta < 1$, it is necessary to exclude small disks around the poles, as in Theorem \ref{mainthm} and Corollary \ref{mainThmCor}.

\quad

\myindent To conclude this section, let us mention that one can directly prove Corollary \ref{mainThmCor} away from the equator, with a better exponent, and more generally an upper bound on $P_{\lambda,\delta}$ for a choice of $\delta$ polynomially small, by using the \textit{separation of variables}, as discussed in the closely related \cite{chabert2025infty}. Indeed, similarly to the usual diagonalization of the Laplacian on a sphere by the spherical harmonics, one can find a joint basis of eigenfunctions of the Laplacian and of the infinitesimal generator of rotations on $\mathcal{S}$ of the form
\begin{equation}
    \phi_{k,l}(\theta,\sigma) = e^{ik\theta} \Phi_{k,l}(\sigma), \qquad |k| \leq l, \ l \in \{0,1,...\}
\end{equation}
where the coordinates $(\theta,\sigma)$ are respectively the angular and normal coordinates (see Paragraph \ref{subsubsec11CdV} for a formal definition). Thus, the eigenfunction equation reduces to an ODE on $\Phi_{k,l}$, which can be studied through WKB approximation.

\myindent While this approach is more straightforward, shorter, and applies also to the Euclidean disk, in order to prove upper bounds on eigenfunctions, it is of a different interest than ours. Indeed, on the one hand, we are for the moment unable to apply the approach of \cite{chabert2025infty} near the equator. On the other hand, the separation of variables approach proves \textit{non optimal} upper bounds on the spectral projector, and, in particular, it is not possible to derive exact estimate of the spectral projector, or even nontrivial lower bounds from it. Moreover, the question of estimating spectral projectors, or quasimodes, has its own interest. Finally, this approach would not be likely to apply in higher dimensional or more abstract settings. We refer to \cite{schlag2010decay} for another example of results obtained using the separation of variables in another context.

\subsection{Acknowledgments}\label{subsec4Intro}

\myindent I would not have been able to write this article without the help of my PhD advisors, Professor Pierre Germain and Professor Isabelle Gallagher, to whom I wish to express my warmest thanks for the countless discussions that we had, and for continuously giving me feedback on my work and on this manuscript, and believing in me throughout the long time that it took to finish it. I am also very grateful to Professor Jean-Marc Delort, who kindly took of his time on several occasions to help me with certain technical parts of the proof. Finally, I also address sincere thanks to Professors Matthieu Léautaud and Thomas Duyckaerts, who took of their time to discuss some parts of this work.

\myindent I express my warmest thanks to Professor Yves Colin de Verdière and Professor Christopher Sogge, who kindly agreed to give me their opinion and some comments on the first version of the manuscript.

\myindent Preliminary work for this article was done during my stay at Imperial College London as a visiting student. I am financially supported by the Ecole Normale Supérieure, Paris, France.

\section{The Classical and Quantum Complete Integrability : a presentation of Colin de Verdière's construction and some developments}\label{secCdV}

\myindent In this section, we detail the idea of the construction used by Colin de Verdière in \cite{de1980spectre} to prove, among many other results, Theorem \ref{thmCdVRemainders}. As briefly mentioned in the introduction, the main point is to transfer the classical complete integrability of the geodesic flow, to a quantum version, at the level of pseudo differential operators, i.e. Theorem \ref{CdVthm}. We insist that \cite{de1980spectre} deals with more general manifolds than surfaces of revolution, hence we don't give a presentation of the article in its full generality.

\subsection{Notations, definitions, first results}\label{subsec1CdV}

\subsubsection{Notations and a reminder on cotangent bundles }\label{subsubsec11CdV}

\begin{notation}
    Let $M$ be a smooth manifold of dimension $d$. We use the following standard notations
    \begin{equation}
        \begin{split}
            &TM \ \text{is the tangent bundle over} \ M \\
            &T^*M \ \text{is the cotangent bundle over} \ M \\
            &T^*M\backslash \{0\} \ \text{is the bundle of nonzero cotangent vectors over}\ M \ \text{where the zero section is removed}.
        \end{split}
    \end{equation}
\end{notation}

\myindent Let $M$ be a smooth manifold of dimension $f$. The cotangent bundle $T^*M$ is a symplectic manifold for the canonical \textit{Liouville 2-form} $-d\lambda$ which is given on a coordinate patch $(x,\xi)$ by 
\begin{equation}\label{liouvform}
    -d\lambda = \sum_{i=1}^d dx^i \wedge d\xi^i.
\end{equation}

\myindent If $(M,g)$ is a Riemannian manifold, $T^*M$ is equipped with the \textit{norm of cotangent vectors} $p(x,\xi)$, given in local coordinates by 
\begin{equation}\label{normcot}
    p(x,\xi) := \sqrt{\sum_{i,j} g^{ij}(x) \xi_i \xi_j},
\end{equation}
where $g^{ij}(x)$ is the inverse of the matrix  $g(x) = (g_{ij}(x))_{i,j}$ of the scalar product on $T_xM$ in local coordinates.

\begin{notation}
    Let $(M,g)$ be a smooth Riemannian manifold. We define
    \begin{equation}
        \begin{split}
            &SM \ \text{is the unit tangent bundle over} \ M \\
            &S^*M \ \text{is the unit cotangent bundle over} \ M.
        \end{split}
    \end{equation}
\end{notation}
\begin{definition}\label{geodflow}
    Let $(M,g)$ be a compact Riemannian manifold and let $-d\lambda$ be the canonical 2-form on $T^*M$. The geodesic flow $\Phi_t$ on $T^*M$ is the Hamiltonian flow of the norm $p : T^*M \to \R$ defined by \eqref{normcot}, that is in local coordinates, $\Phi_t$ is the unique solution to the Cauchy problem
    \begin{equation}
    \begin{cases}
        \frac{\partial \Phi_t}{\partial t} = \begin{pmatrix} \nabla_{\xi} p \circ \Phi_t \\
       -\nabla_x p\circ \Phi_t \end{pmatrix}\\
       \Phi_0 = Id
    \end{cases}
    \end{equation}
\end{definition}

\myindent We recall that, by standard properties of Hamiltonian flow, $\Phi_t$ preserves the norm $p$. Moreover, since $p$ is homogeneous of degree $1$ in the $\xi$ variable, it is also standard that the geodesic flow is homogeneous of degree zero in the $\xi$ variable on $T^*M\backslash\{0\}$. In particular, it restricts naturally to a flow on $S^*M$.

\myindent The geodesic flow on the cotangent bundle is the natural way to consider geodesics from a microlocal point of view. Indeed, it is related to the standard geodesic flow on $M$ by the following lemma.
\begin{lemma}\label{eqgeod}
    For all $x \in M$, for all $v\in S_x M$, we denote the geodesic starting at $x$ with initial velocity vector $v$ by
    \begin{equation}
        t \mapsto \exp_x(tv) \in M.
    \end{equation}
    \myindent Let $\xi \in S_x^*M$ be canonically associated to $v$ by the isometric isomorphism $S_x M \to S_x^*M$ given by 
    \begin{equation}
        v \in S_x M \mapsto g(x) v \in S_x^*M,
    \end{equation}
    where $g(x)$ is the matrix $(g_{ij}(x))$ giving the scalar product on $T_xM$ in local coordinates. Then there holds
    \begin{equation}
        \forall t \in \R \ \exp_x(tv) = P(\Phi_t(x,\xi)),
    \end{equation}
    where $P : T^*M \to M$ is the bundle projection.
\end{lemma}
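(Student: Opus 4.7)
The plan is to reduce the Hamiltonian flow of $p$ to the geodesic equation via a direct computation in local coordinates, after first replacing $p$ by the slightly more convenient Hamiltonian $H := \tfrac{1}{2} p^2 = \tfrac{1}{2} g^{ij}(x) \xi_i \xi_j$. Because $p$ is homogeneous of degree $1$ in $\xi$, an easy chain-rule computation shows that the Hamiltonian vector fields $X_p$ and $X_H$ are proportional, with $X_H = p \, X_p$; in particular, on every level set $\{p = c\}$ they differ only by the constant factor $c$. Since we are only asked to identify trajectories starting at $(x,\xi) \in S^*M$, where $p = 1$ is preserved by both flows, it suffices to prove the statement for the flow $\Psi_t$ of $H$ restricted to $S^*M$, and then to read it as a statement for $\Phi_t$.

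Next I would write Hamilton's equations for $H$. In local coordinates they read
\begin{equation}
\dot x^k \;=\; \frac{\partial H}{\partial \xi_k} \;=\; g^{kj}(x)\, \xi_j, \qquad
\dot \xi_k \;=\; -\frac{\partial H}{\partial x^k} \;=\; -\tfrac{1}{2}\, \partial_k g^{ij}(x)\, \xi_i \xi_j.
\end{equation}
The first equation already identifies $\xi_j = g_{jk}(x)\, \dot x^k$, which is precisely the musical isomorphism $v \mapsto g(x) v$ applied to $\dot x$. This matches exactly the relation between $v \in S_xM$ and $\xi \in S_x^*M$ stated in the lemma, so $\xi(t)$ remains the musical dual of $\dot x(t)$ at all times.

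Then I would differentiate the relation $\xi_k(t) = g_{kj}(x(t))\, \dot x^j(t)$ once in $t$, substitute the result into the second Hamilton equation, and use the standard identity $\partial_k g^{ij} = - g^{il} g^{jm} \partial_k g_{lm}$ to eliminate derivatives of $g^{ij}$ in favour of derivatives of $g_{ij}$. Regrouping the terms and symmetrizing gives the classical expression of the Christoffel symbols
\begin{equation}
\Gamma^k_{ij} \;=\; \tfrac{1}{2}\, g^{kl}\bigl(\partial_i g_{jl} + \partial_j g_{il} - \partial_l g_{ij}\bigr),
\end{equation}
so that the curve $x(t) = P(\Psi_t(x,\xi))$ satisfies the geodesic equation $\ddot x^k + \Gamma^k_{ij}\, \dot x^i \dot x^j = 0$ with initial conditions $x(0) = x$ and $\dot x(0) = v$. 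By uniqueness of solutions of the geodesic ODE, $x(t) = \exp_x(tv)$.

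I do not anticipate a real obstacle: this is a standard Legendre-transform computation, valid on any Riemannian manifold. The only mild subtlety is the switch between the Hamiltonians $p$ and $H = p^2/2$, which is harmless on $S^*M$ and which I would make explicit at the beginning so that the final identity $P(\Phi_t(x,\xi)) = \exp_x(tv)$ follows directly from the geodesic equation derived for $\Psi_t$.
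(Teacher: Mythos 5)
Your proof is correct. Note that the paper does not actually carry out an argument here: it disposes of the lemma by citing the equivalence of Euler--Lagrange and Hamiltonian systems under the Legendre transformation (Duistermaat, \emph{Fourier Integral Operators}, Section 3.8). Your computation is exactly that equivalence made explicit for the kinetic Hamiltonian: the identification $\xi_j = g_{jk}(x)\dot x^k$ coming from the first Hamilton equation \emph{is} the Legendre transform of the geodesic Lagrangian, and the second Hamilton equation, after the identity $\partial_k g^{ij} = -g^{il}g^{jm}\partial_k g_{lm}$ and symmetrization, reproduces the geodesic equation with the Christoffel symbols, so uniqueness of the geodesic ODE finishes the argument. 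The preliminary reduction from $p$ to $H=\tfrac12 p^2$ is handled correctly: since $dH = p\,dp$, one has $X_H = p\,X_p$, and on the initial data of the lemma $p(x,\xi)=|v|_g=1$, a value conserved by both flows, so the two flows coincide on $S^*M$ for all $t$. What the explicit route buys is a self-contained verification (and it makes transparent why the flow of $p$ rather than $H$ is the unit-speed geodesic flow, which matters elsewhere in the paper where homogeneity of degree one is used); what the paper's citation buys is brevity, since the statement is classical. Either way the lemma is established.
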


\myindent That this correspondence exists is a consequence of the equivalence between Euler-Lagrange systems and Hamiltonian systems under the \textit{Legendre transformation} that arises from Lagrangian mechanics as explained in \cite[section 3.8]{duistermaat1996fourier}. 

\quad

\myindent More generally, let $M$ be a smooth manifold and let $q$ be any \textit{elliptic symbol} on $T^*M$ i.e. $q : T^* M \backslash \{0\} \to \R$ is smooth, homogeneous of degree 1, and positive.

\begin{definition}\label{defbicharacteristics}
    We set $t\mapsto \Phi_t^q$ the Hamiltonian flow of $q$. More precisely, for $(x,\xi) \in T^*M\backslash \backslash\{0\}$, 
    \begin{equation}
        \Phi_t^q(x,\xi) =: (x(t),\xi(t))
    \end{equation}
    is the unique solution to the Cauchy problem
    \begin{equation}\label{explicithamiltonianflow}
        \begin{cases}
            &\dot{x} = \nabla_{\xi} q(x(t),\xi(t)) \in T_{x(t)} M \backslash\{0\} \\
            &\dot{\xi} = - \nabla_x q(x(t),\xi(t)) \in T_{x(t)}^*M\backslash\{0\} \\
            &(x(0),\xi(0)) = (x,\xi)
        \end{cases}.
    \end{equation}
    \myindent We call the curves $t\mapsto (x(t),\xi(t)) \in T^*M\backslash \{0\}$ the \textit{bicharacteristics} of $q$. Moreover, we call their projections $t \mapsto x(t)$, which are curves on $M$, the \textit{bicharacteristic curves} of $q$.
\end{definition}

\subsubsection{Notations and definitions for surfaces of revolution}\label{subsubsec12CdV}

\myindent We fix $\mathcal{S} = (S^2,g)$ a \textit{surface of revolution}. It can be described as a Riemannian structure on the two-sphere $S^2$ defined by a metric $g$ which is invariant under an isometric smooth action of $S^1 := \R /(2\pi\Z)$ on $\mathcal{S}$ with two fixed points, the North and South Poles $N$ and $S$, see Figure \ref{surfrevfig} from \cite{de1980spectre}.

\begin{figure}[h]
\includegraphics[scale = 0.8]{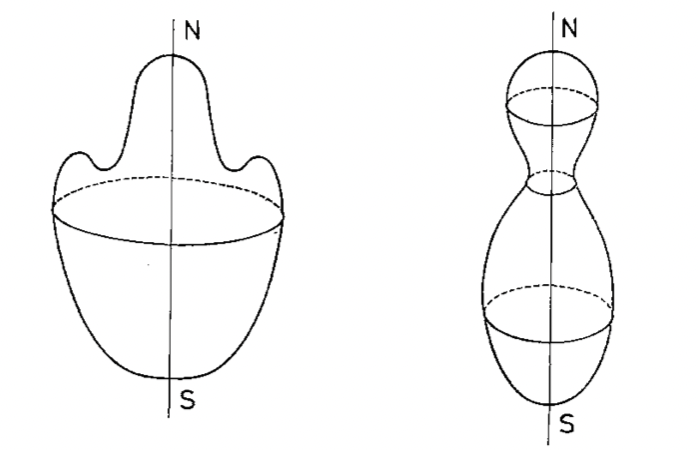}
\centering
\caption{Two examples of surfaces of revolution}
\label{surfrevfig}
\end{figure}

\myindent Outside of the poles, local coordinates are $\nu \in ]0,L[$ the normal coordinate, which is the geodesic distance between a given point and the \textit{South Pole} $S$ following a meridian, and $\theta \in S^1$ the angular coordinate, i.e. the polar angle between a point and an arbitrary fixed meridian. In order to have the correct orientation on $\mathcal{S}$, the local coordinates are $(\theta,\nu)$. We observe that, with our convention, $\nu = 0$ is the South Pole and $\nu = L$ is the North Pole. In the local coordinates $(\theta,\nu)$ the metric is
\begin{equation}
    g = (f(\nu))^2 d\theta^2 + d\nu^2,
\end{equation}
where $f$ is called the \textit{profile} of $\mathcal{S}$, since the equation 
\begin{equation}
    r= f(\nu) \qquad \nu \in [0,L]
\end{equation}
alternatively defines  $\mathcal{S}$ as a surface of $\R^3$ equipped with the induced metric, $r$ being the radial coordinate in usual cylindrical coordinates. In order for $\mathcal{S}$ to be smooth, we impose that $f$ is smooth on $[0,L]$, $f > 0$ on $]0,L[$, $f(0) = f(L) = 0$ and finally $f'(0) = f'(1) = 1$. Thus, for example, any unit speed curve joining $S$ to $N$ with $\theta = \theta_0$ constant is a geodesic curve of length $L$.

\myindent We now detail the additional structure that we need on the surface of revolution $\mathcal{S}$.
\begin{definition}\label{simplehypothesis}
    A smooth surface of revolution $\mathcal{S}$ is $\textit{simple}$ if it admits one and only one equatorial geodesic (the closed path $\nu = \nu_0$ constant, $\theta \in S^1$ is a geodesic). In terms of the profile $f$, this means that $f$ admits a unique non degenerate critical point $\nu_{max} \in ]0,L[$, where $f''(\nu_{max}) < 0$. The geodesic $\nu = \nu_{max}$, $\theta \in S^1$ is the \textit{equator} of $\mathcal{S}$, and we denote it by $\gamma_E$.
\end{definition}

\myindent For example, in Figure \ref{surfrevfig}, the surface of revolution on the left is simple, while the one on the right is not. For the sake of simplicity, we will moreover assume (without loss of generality) that $f(\nu_{max}) = 1$.
\begin{definition}\label{symmetrichypothesis}
    A smooth simple surface of revolution $\mathcal{S}$ is \textit{symmetric} if it is stable under the reflection symmetry with respect to its equator. In terms of the profile, this means on the one hand that $\nu_{max} = L/2$, and on the other hand that $f(\nu) = f(L - \nu)$ for all $\nu \in [0,L]$.
\end{definition}

\myindent For example, the two surfaces of revolution in Figure \ref{surfrevfig} are not symmetric, while the ellipsoids of revolution in Figure \ref{ellipsoids} are symmetric. Until the end, we assume that all the surfaces of revolution we mention are \textit{simple} and \textit{symmetric}. Thanks to this assumption, we can use the following notation.
\begin{notation}
    Let 
    \begin{equation}
        \sigma := \nu - \frac{L}{2} \qquad \nu \in [0,L]
    \end{equation}
    be the algebraic normal distance between a given point and the equator (following a meridian). Until the end, we describe $\mathcal{S}$ with local coordinates $(\theta,\sigma)$ instead of $(\theta,\nu)$. In particular, for any function $g(\nu)$, we will use the abuse of notation $g(\sigma) := g(\sigma + L/2)$. With this convention, the equator is described by the equation $\sigma = 0$, and the profile $f$ is an even function of $\sigma$ defined on $\left[-\frac{L}{2}, \frac{L}{2}\right]$.
\end{notation}
\begin{example}
    In the case where $\mathcal{S}$ is the usual sphere $S^2$, then $L = \pi$ and the profile is
    \begin{equation}
        f(\sigma) = \cos(\sigma).
    \end{equation}
\end{example}

\begin{definition}\label{antipodpoint} For any $x = (\theta,\sigma) \in \mathcal{S}$, we define $\bar{x}$ the \textit{antipodal} point of $x$ by 
\begin{equation}
    \bar{x} = (\theta + \pi, -\sigma).
\end{equation}
\myindent Alternatively, $\bar{x}$ is the point reached after following the meridian passing through $x$ for a time $L$. 

\myindent We can moreover extend this definition to the cotangent bundle, using the canonical extension of a diffeomorphism on a manifold to a diffeomorphism on its cotangent bundle. Denoting $(\bar{x},\bar{\xi})$ this extension for any $(x,\xi) \in T^* M$, there holds
\begin{equation}
    \begin{cases}
        &\text{if} \ x \notin \{N,S\} \ \text{and} \ (x,\xi) = (\theta,\sigma,\Theta,\Sigma) \ \text{then} \ (\bar{x},\bar{\xi}) := (\theta + \pi,-\sigma,  \Theta,-\Sigma) \\
        &\text{if} \ x \in \{N,S\} \ \text{then} \ (\bar{x},\bar{\xi}) := \Phi_{L}(x,\xi)
    \end{cases},
\end{equation}
where $(\theta,\sigma,\Theta,\Sigma)$ are the coordinates induced on the open set $T^*\mathcal{S} \backslash\{T_{N}^*\mathcal{S} \cup T_{S}^* \mathcal{S}\}$ by the coordinates $(\theta,\sigma)$.
\end{definition}

\myindent The local coordinates $(\theta,\sigma)$ extend canonically to local coordinates $(\theta,\sigma,\Theta,\Sigma)$ on the cotangent bundle $T^*\mathcal{S}\backslash\left(T_N^*\mathcal{S} \cup T_S^*\mathcal{S}\right)$. In those coordinates, the norm of cotangent vectors \eqref{normcot} is given by
\begin{equation}\label{normsurfrev}
    p\left(\theta,\sigma,\Theta,\Sigma\right) = \sqrt{\frac{\Theta^2}{f^2(\sigma)} + \Sigma^2}.
\end{equation}
\myindent In particular, we observe that $p$ is \textit{independent of the angular coordinate} $\theta$, as could be expected from the symmetry of revolution. Hence, until the end, we will drop the notation of the $\theta$ dependency and use the abuse of notation $p(\sigma,\Theta,\Sigma)$.

\subsubsection{The model of ellipsoids of revolution}\label{subsubsec13CdV}

\myindent In this paragraph, we define a particular class of surfaces of revolution, the \textit{ellipsoids of revolution}. Throughout the rest of this article, they will always be the model of surfaces of revolution that we have in mind. They are obtained by scaling the sphere $S^2$ along a privileged axis. Precisely, we define
\begin{definition}\label{defellipsoids}
    Let $a,b > 0$. We define $\mathcal{E}(a,b)$ the embedded surface of $\R^3$ defined by the equation, in Cartesian coordinates,
    \begin{equation}
        \frac{x^2}{a^2} + \frac{y^2}{a^2} + \frac{z^2}{b^2} = 1.
    \end{equation}
\end{definition}
\myindent In particular, with this definition, $\mathcal{E}(A,B)$ is a surface of revolution, with North Pole $N := (0,0,b)$ and South Pole $S = (0,0,-b)$. Moreover, it is an example of \textit{simple} surface of revolution in the sense of Definition \ref{simplehypothesis}, its equator being the circle $\{z = 0, \ x^2 + y^2 = a^2\}$. Finally, it is also an example of \textit{symmetric} surface of revolution in the sense of Definition \ref{symmetrichypothesis}. We don't define $\mathcal{E}(A,B)$ through its profile since it is not explicit and it involves special types of \textit{elliptic integrals}.

\myindent There are three important class of ellipsoids of revolution

\myindent i. The case $a = b$ simply gives a \textit{sphere}. We will say that this is a case of \textit{degenerate} ellipsoid of revolution

\myindent ii. The case $a > b$ is called an \textit{oblate} ellipsoid of revolution, and it corresponds to the blue manifold in Figure \ref{ellipsoids} from Wikipedia.

\myindent iii. Conversely, the case $a < b$ is called an \textit{oblong} ellipsoid of revolution, ant it corresponds to the yellow manifold in Figure \ref{ellipsoids}.

\begin{figure}[h]
\includegraphics[scale = 0.8]{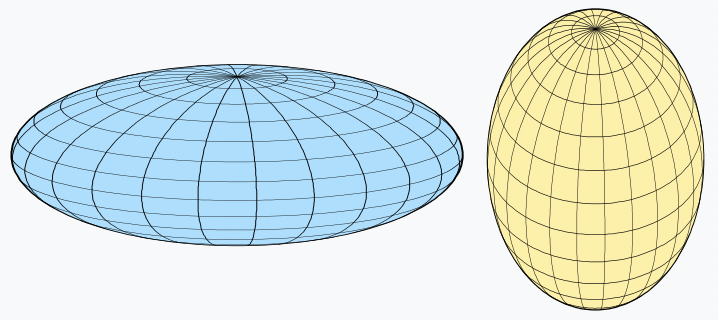}
\centering
\caption{Oblate and oblong ellipsoids of revolution}
\label{ellipsoids}
\end{figure}

\subsection{A simple geometric description of the geodesics}\label{subsec2CdV}

\myindent In this section, we introduce a short and very geometric understanding of the geodesics of a (simple and symmetric) surface of revolution. This presentation is directly adapted from the introduction of \cite{bleher1994distribution}, and, in particular, we use similar notations than the one in this article. Since this presentation is geometric, we won't focus on proofs, rather on intuition. In order to prove rigorous results, one should use the description of geodesics through the \textit{geodesic flow} on $T^{*} \mathcal{S} \backslash \{0\}$.

\quad 

\myindent An important consequence of the \textit{simplicity} of $\mathcal{S}$ (see Definition \ref{simplehypothesis}) is that \textit{every} geodesic (with nonzero velocity) intersects the equator $\gamma_E$. 

\myindent Now, fix $x_0 \in \gamma_E$ defined by  ~$\theta = 0$ (observe that choosing $x_0$ arbitrarily on $\gamma_E$ enables to \textit{define} the section $\theta = 0$). Using the rotational invariance of $\mathcal{S}$, it is enough to describe the geodesics starting at $x_0$ with a direction in $S_{x_0}\mathcal{S}$. We follow the presentation of \cite{bleher1994distribution}, and we reproduce Figure 1 of this article, see Figure \ref{figclairaut}.

\myindent Find 
\begin{equation}
    v_0 := v_{\theta} \frac{\partial}{\partial \theta} + v_{\sigma} \frac{\partial}{\partial \sigma} \in S_{x_0} \mathcal{S}
\end{equation}
a unit vector. A fundamental fact is that \textit{$f^2(\sigma)v_{\theta}$ is constant along the geodesic $t\mapsto \exp_{x_0}(tv)$} (see \cite{pressley2010elementary}[Proposition 9.3.2]). This means that either $v_{\theta} = 0$, and in that case the geodesic is \textit{vertical} i.e. it goes through the poles and is periodic with period $2L$ ; either $v_{\theta} \neq 0$ and in that case the geodesic \textit{doesn't} pass through any pole, thus the equation $f^2(\sigma) v_{\theta} = constant$ is defined in local coordinates for all times. That $f^2(\sigma) v_{\theta}$ is preserved is the \textit{Clairaut Theorem}, and its value is the \textit{Clairaut integral}. It is quite natural from a physical point of view. Indeed, following the general principle under which "geometric symmetries yields quantities conserved by the motion" (Noether's theorem), the Clairaut integral is the conservation law arising from $\mathcal{S}$ being invariant under rotation.

\begin{figure}[h]
\includegraphics[scale = 0.5]{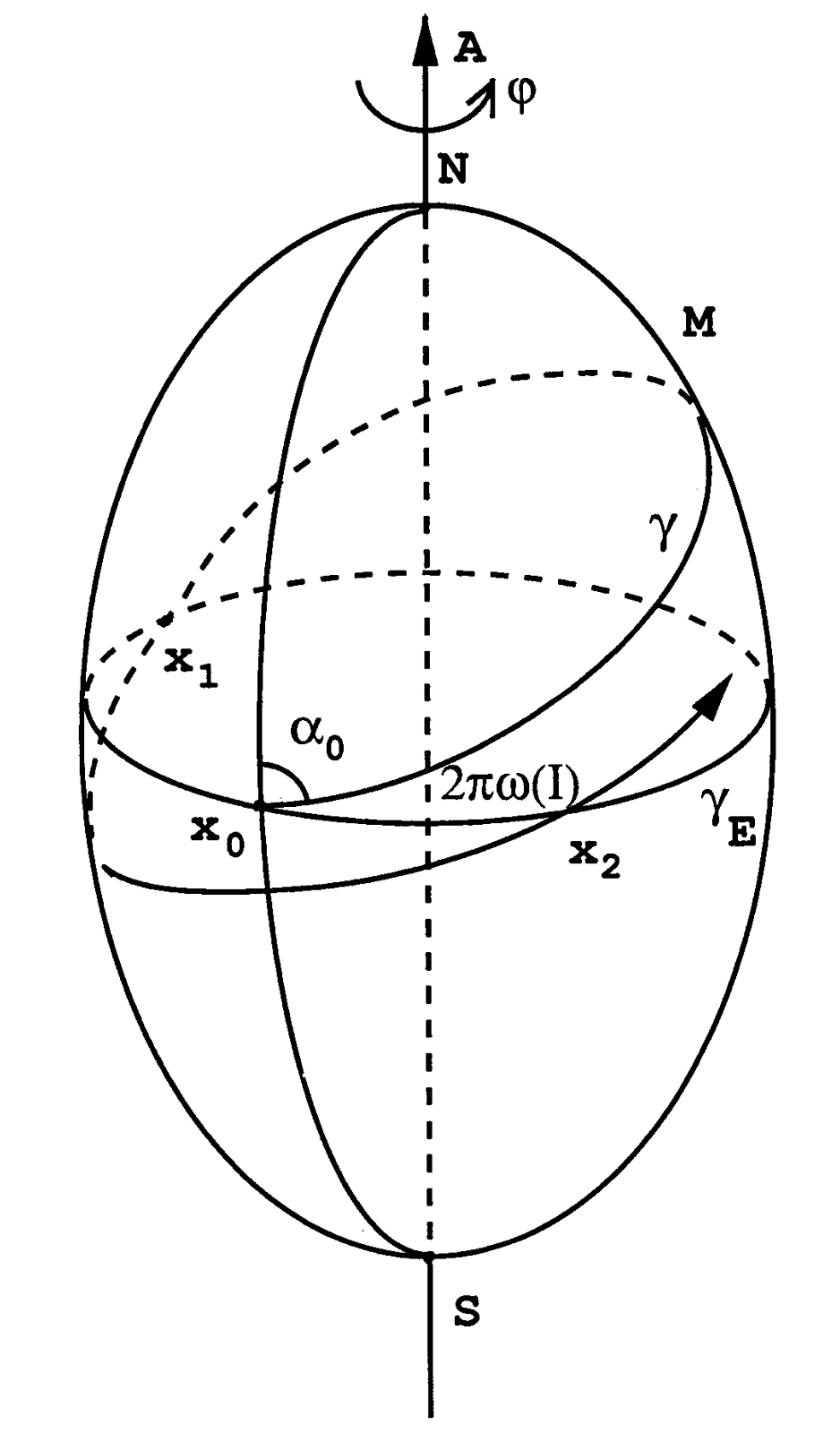}
\centering
\caption{Description of the geodesics by their Clairaut integral}
\label{figclairaut}
\end{figure}

\myindent Since it is an integral of the motion, we denote the Clairaut integral by $I \in [-1, 1]$. Observe that there holds
\begin{equation}
    I = \sin(\alpha_0),
\end{equation}
where $\alpha_0$ is the angle between the vertical axis and the velocity vector $v$ of the geodesic at its starting point $x_0$ (see Figure \ref{figclairaut}). Following the notations of \cite{bleher1994distribution}, we define, for $-1 \leq I \leq 1$, $\gamma(I)$ the geodesic starting from $x_0$ with Clairaut integral $I$, and which is pointing towards the North Pole (as in Figure \ref{figclairaut}).

\myindent Thanks to the existence of the Clairaut integral, one can prove that $\gamma(I)$ will oscillate (forward and backward in time), between two parallels defined by $\sigma_-(I) \leq 0 \leq \sigma_+(I)$, where $\sigma_{\pm}(I)$ are defined through the equation $I = f(\sigma)$. Precisely, forward in time, it first points towards the North Pole until reaching the parallel $\sigma = \sigma_+(I)$. Next, it goes down, pointing towards the South Pole, until reaching the parallel $\sigma = \sigma_-(I)$. In the case of a \textit{symmetric} surface of revolution (see Definition \ref{symmetrichypothesis}), there holds $\sigma_+(I) = -\sigma_-(I)$. More precisely, $\gamma(I)$ intersects a first time the equator at a point $x_1$, its direction being the symmetric to $v_0$ with respect to the equator. Finally, it goes back up again pointing towards the North Pole until intersecting a second time the equator at a point $x_2$. At this moment, its direction is exactly obtained from $v_0$ by a rotation. In particular, using the symmetry of revolution, the trajectory is obtained from the trajectory between $x_0$ and $x_2$ through a rotation. 

\myindent Following the notations of \cite{bleher1994distribution}, we define
\begin{equation}
    \tau(I) := |\gamma[x_0,x_2]|
\end{equation}
the length of $\gamma(I)$ between $x_0$ and $x_2$, and 
\begin{equation}\label{defomegaI}
    \omega(I) = \frac{1}{2\pi} (\theta(x_2) - \theta(x_0))
\end{equation}
the normalized \textit{phase shift} between $x_0$ and $x_2$, which is represented on Figure \ref{figclairaut}. Observe that it is a priori defined modulo $1$, but we may define it uniquely choosing a continuous branch which starts at $\omega(0) = 0$. It can moreover be smoothly extended up to $I = \pm 1$ by continuity. 
\begin{example}\label{tauomegsphere}
    In the case where $\mathcal{S}$ is the usual sphere $S^2$, then
    \begin{equation}
        \forall I \in [-1,1] \qquad \begin{cases}
            \tau(I) = 2\pi \\
            \omega(I) = 0
        \end{cases}.
    \end{equation}
\end{example}
\myindent Now, observe that $\gamma(I)$ is periodic if and only if it passes by $x_0$ again\footnote{In general, there is an important distinction between a \textit{periodic} geodesic, which is a geodesic curve passing through the same point twice with the same velocity vector, and a \textit{geodesic loop}, meaning a geodesic passing twice through the same point with velocity vectors which may differ. For example, in this situation, there could hold that $x_0 = x_1$, but then the velocity vector will be the symmetric of $v_0$ with respect to the horizontal, and in particular not equal to $v_0$. However, in the particular case of a \textit{symmetric} surface of revolution, it is straightforward that if such a situation happens, then the geodesic is periodic since its trajectories in both hemispheres are symmetric.}. Using the definition of $\omega(I)$, for $I \in (-1,1)$, this happens if and only if $\omega(I)$ is a \textit{rational number}. In particular, in view of the work of Sogge and Zelditch mentioned in the introduction, it natural, in order for Theorem \ref{mainthm} to hold, to impose hypotheses on $\mathcal{S}$ such that the set of those $I$ for which $\omega(I) \in \Q$ is well behaved (typically, at least of measure zero). The simplest possible hypothesis is the \textit{twist Hypothesis}, introduced by \cite{bleher1994distribution}.
\begin{hypothesis}[Twist Hypothesis]\label{twisthypothesis}
    The simple symmetric surface of revolution $\mathcal{S}$ satisfies the twist Hypothesis if 
 \begin{equation}
        \forall I \in [0,1] \qquad \omega'(I) \neq 0.
    \end{equation}
\end{hypothesis}
\myindent Indeed, this hypothesis guarantees that $|\omega'| \geq c > 0$ for $c$ some constant. Hence, it is straightforward to estimate the distribution of those $I$ such that $\omega(I)$ is rational, i.e. the distribution of directions of periodic geodesic (and of geodesic loops). As we will see in Paragraph \ref{subsubsec51CdV}, the twist hypothesis can also be reformulated in an analytical way, where the link with the distribution of the eigenvalues of $\mathcal{S}$ is more direct. Observe that the round sphere notably \textit{doesn't} satisfy the twist Hypothesis \ref{twisthypothesis}, thanks to Example \ref{tauomegsphere}.

\myindent Now, the class of simple symmetric surfaces of revolution satisfying the Twist Hypothesis is quite large. Indeed, there holds the following proposition (see \cite{bleher1994distribution}).
\begin{proposition}[Bleher]\label{oblongoblateomega'}
    The set of simple symmetric surfaces of revolution which satisfy the twist Hypothesis \ref{twisthypothesis} is open. Moreover, it contains all the ellipsoid of revolutions except the round sphere. Precisely, if $\mathcal{E}$ is an \textit{oblong} (resp \textit{oblate}) ellispoid of revolution, there holds $\omega'(I) > 0$ (resp $\omega'(I) < 0$).
\end{proposition}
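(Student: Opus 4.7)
The approach is to derive an explicit integral formula for $\omega(I)$ from Clairaut's conservation law and then to exploit this formula both to establish openness and to compute the sign for ellipsoids. Combining the Clairaut relation $f^{2}\dot\theta=I$ with the unit-speed condition $f^{2}\dot\theta^{2}+\dot\sigma^{2}=1$ gives $d\theta/d\sigma=\pm I/[f\sqrt{f^{2}-I^{2}}]$, and integrating over one full $\sigma$-oscillation together with the reflection symmetry $f(\sigma)=f(-\sigma)$ yields
\[
\omega(I)=\frac{2}{\pi}\int_{0}^{\sigma_{+}(I)}\frac{I\,d\sigma}{f(\sigma)\sqrt{f^{2}(\sigma)-I^{2}}},\qquad f(\sigma_{+}(I))=I.
\]
For openness I would remove the moving turning-point singularity by a fixed-bound change of variable (for instance $\sigma=\sigma_{+}(I)\sin\phi$ near the equator and $u=f(\sigma)$ elsewhere); this shows that $\omega$ and $\omega'$ depend continuously on the profile $f$ in the $C^{2}$ topology, uniformly in $I\in[0,1]$, and that $\omega'$ extends continuously to the endpoint $I=1$ (where the local expansion $f(\sigma)=1-\tfrac{1}{2}|f''(0)|\sigma^{2}+O(\sigma^{4})$ yields $\omega(1)=|f''(0)|^{-1/2}$). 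Since the twist hypothesis $\omega'\neq 0$ on the compact interval $[0,1]$ forces $|\omega'|\geq c>0$, this strict lower bound is preserved under small enough $C^{2}$ perturbations, proving openness.

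For the sign on ellipsoids I would parametrize $\mathcal{E}(a,b)$ by colatitude $\phi$ so that $f=a\sin\phi$, $d\sigma=\sqrt{a^{2}\cos^{2}\phi+b^{2}\sin^{2}\phi}\,d\phi$, and $\sin\phi_{\min}=I$. After the substitution $\sin^{2}\phi=I^{2}+(1-I^{2})\sin^{2}\psi$ for $\psi\in[0,\pi/2]$, which brings the integration to fixed bounds, a short computation yields, with $\beta=b/a$, $\mu=\beta^{2}-1$, and $D(\psi,I)=1-(1-I^{2})\sin^{2}\psi$,
\[
\omega(I)=\frac{2I}{\pi}\int_{0}^{\pi/2}\frac{\sqrt{1+\mu D(\psi,I)}}{D(\psi,I)}\,d\psi.
\]
The crucial reformulation is to combine the identity $\sqrt{1+\mu D}=1+\int_{0}^{\mu}\frac{D\,ds}{2\sqrt{1+sD}}$ with the elementary evaluation $\int_{0}^{\pi/2}d\psi/D=\pi/(2I)$ to obtain
\[
\omega(I)=1+\frac{I}{\pi}\int_{0}^{\mu}\int_{0}^{\pi/2}\frac{d\psi\,ds}{\sqrt{1+sD(\psi,I)}},
\]
normalizing the sphere to $\mu=0$. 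Differentiating in $I$ under both integral signs and using the identity $(1+sD)-sI^{2}\sin^{2}\psi=1+s\cos^{2}\psi$ gives
\[
\omega'(I)=\frac{1}{\pi}\int_{0}^{\mu}\int_{0}^{\pi/2}\frac{1+s\cos^{2}\psi}{(1+sD(\psi,I))^{3/2}}\,d\psi\,ds.
\]
For every $s\in(-1,\infty)$ both $1+s\cos^{2}\psi\geq\min(1,1+s)>0$ and $1+sD\geq\min(1,1+s)>0$, so the inner integrand is strictly positive and $\omega'(I)$ has the same sign as $\mu=(b/a)^{2}-1$: strictly positive when $b>a$ (oblong) and strictly negative when $a>b$ (oblate).

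The main obstacle is finding the reformulation that writes $\omega(I)-1$ as an integral from $0$ to $\mu$ of a manifestly positive kernel: the direct path, differentiating the explicit formula for $\omega(I)$ and signing the resulting combination of elliptic integrals, runs into serious cancellations (already at $\mu=0$ the integrand does not vanish pointwise, only its integral does), so the sign is not transparent. Once the representation above is in hand, the positivity of the double integrand is an elementary inequality. The openness part is technical bookkeeping, whose only subtlety is the uniform control of the turning-point behavior at $I=1$.
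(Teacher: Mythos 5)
The paper does not actually prove this proposition: it is imported verbatim from Bleher \cite{bleher1994distribution}, so there is no internal proof to compare against. Your proposal supplies a correct, self-contained argument. I checked the key computation: starting from Clairaut's relation $f^2\dot\theta=I$ one indeed gets $\omega(I)=\frac{2}{\pi}\int_0^{\sigma_+(I)}\frac{I\,d\sigma}{f\sqrt{f^2-I^2}}$ (in the raw phase-shift normalization), the fixed-bound substitution for the ellipsoid gives $\omega(I)=\frac{2I}{\pi}\int_0^{\pi/2}\sqrt{1+\mu D}\,D^{-1}d\psi$ with $D=1-(1-I^2)\sin^2\psi$, the evaluation $\int_0^{\pi/2}D^{-1}d\psi=\pi/(2I)$ and the identity $(1+sD)-sI^2\sin^2\psi=1+s\cos^2\psi$ are both correct, and for $s$ between $0$ and $\mu=(b/a)^2-1\in(-1,\infty)$ both $1+s\cos^2\psi$ and $1+sD$ are bounded below by $\min(1,1+s)>0$, so $\omega'$ has the sign of $\mu$, which is exactly the oblong/oblate dichotomy claimed. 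This is a genuinely elementary route to the sign, arguably cleaner than differentiating the elliptic-integral expression for $g$ (the function \eqref{defglocal} through which the paper actually uses $\omega'=-g''$ in Appendix \ref{AppendixB}).

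Two caveats. First, a normalization point: your representation yields $\omega\equiv 1$ on the round sphere, whereas the paper's branch (fixed by $\omega(0)=0$, see Example \ref{tauomegsphere}) gives $\omega\equiv 0$; your $\omega$ is the paper's $\omega$ plus the integer winding picked up at the two near-pole passages. This is harmless for $\omega'$ and hence for the proposition, but you should state it so your formula does not appear to contradict the paper. Second, the openness half is only sketched: besides the turning point at $I=1$, the endpoint $I\to 0$ (where $\sigma_+(I)$ approaches the pole and the factor $1/f$ degenerates) also needs the uniform fixed-bound treatment before you can claim $C^2$-continuity of $f\mapsto\omega'$ uniformly on $[0,1]$ and run the compactness argument $|\omega'|\geq c>0$; and "open" should be pinned to a stated topology on profiles (any $C^k$, $k\geq 2$, works, which is consistent with the paper's remark after Definition \ref{defmathfrakS} that openness follows from the explicit integral expression of $\omega'$ in terms of $f$). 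These are fixable bookkeeping issues, not gaps in the idea.
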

\subsection{The complete integrability of the geodesic flow : explicit construction}\label{subsec3CdV}

\subsubsection{Complete integrability of the geodesic flow}\label{subsubsec31CdV}

\myindent The analysis of the previous section, which describes the geodesics on a surface of revolution, can be equivalently understood at the level of the geodesic flow on the cotangent bundle $T^*\mathcal{S}\backslash \{0\}$ (see Lemma \ref{eqgeod}), which we recall is the Hamiltonian flow of the norm $p$ of cotangent vectors for the Liouville form $-d\lambda$. Indeed, from the expression \eqref{normsurfrev}, we see that $p$ doesn't depend on the angular coordinate $\theta$. In particular, using the explicit definition of the geodesic flow \eqref{geodflow}, there holds along trajectories of $\Phi_t$
\begin{equation}
    \frac{\partial\Theta(t)}{\partial t} = -\frac{\partial p}{\partial \theta} \circ \Phi_t = 0,
\end{equation}
i.e. the coordinate $\Theta$ is an integral of the motion. Now, along a trajectory, at $p = 1$, there also holds thanks to \eqref{normsurfrev}
\begin{equation}
    \frac{\partial \theta(t)}{\partial t} = \frac{\partial p}{\partial \Theta} \circ \Phi_t = \frac{\Theta}{f^2(\sigma)},
\end{equation}
and we find that $f^2(\sigma) \dot{\theta} = \Theta$ is preserved. For $(x,\xi) = (\theta,\sigma,\Theta,\Sigma) \in T^*\mathcal{S}\backslash\{0\}$, we define its Clairaut integral by
\begin{equation}
    I = \frac{\Theta}{p(\sigma,\Theta,\Sigma)} \in [-1,1].
\end{equation}

\myindent Now, thanks to this integral of the motion, the geodesic flow $\Phi_t$ is completely integrable for a surface of revolution. We now detail the meaning of this assertion.

\myindent First, on $T^*\mathcal{S}$, we define on the one hand
\begin{equation}\label{defp1}
    p_1(x,\xi) := p(x,\xi).
\end{equation}
\myindent On the other hand, we define outside of the Poles
\begin{equation}\label{defp2}
    p_2(\theta,\sigma,\Theta,\Sigma) := \Theta,
\end{equation}
which is smoothly extended near the Poles by
\begin{equation}
    p_2 = x_1\xi_2 - x_2 \xi_1
\end{equation}
in normal coordinates around each Pole. Then, there holds the following two lemmas (see \cite{de1980spectre}[Lemma 6.2] and right after).
\begin{lemma}
    Let 
    \begin{equation}
        \vec{p} : (x,\xi) \in T^* \mathcal{S}\backslash \{0\} \mapsto (p_1,p_2)(x,\xi) \in \R^2 \backslash (0,0).
    \end{equation}
    \myindent Then the singular set $Z$ of $\vec{p}$ is exactly the cotangent bundle of the equator $\gamma_E$, seen as a 2-dimensional subbundle of $T^*\mathcal{S}\backslash \{0\}$, namely
    \begin{equation}\label{defZ}
        Z = \{(\theta,0,\Theta,0) \qquad \theta \in S^1, \ \Theta \in \R^*\}.
    \end{equation}
\end{lemma}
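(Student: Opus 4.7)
\myindent The plan is to work in the coordinate chart $(\theta, \sigma, \Theta, \Sigma)$ on $T^*\mathcal{S} \backslash (T_N^*\mathcal{S} \cup T_S^*\mathcal{S})$, to compute the differentials $dp_1$ and $dp_2$ explicitly and inspect when they become linearly dependent, and then to check separately in normal coordinates that the fibers over the poles contain no singular point. From \eqref{normsurfrev} and \eqref{defp2} one has $dp_2 = d\Theta$ and
\[
dp_1 = \frac{1}{p_1}\left(-\frac{\Theta^2 f'(\sigma)}{f^3(\sigma)}\, d\sigma + \frac{\Theta}{f^2(\sigma)}\, d\Theta + \Sigma\, d\Sigma\right).
\]
Because $p_1$ is independent of $\theta$, the wedge $dp_1 \wedge dp_2$ only involves the basis two-forms $d\sigma \wedge d\Theta$ and $d\Sigma \wedge d\Theta$, so its vanishing is equivalent to the pair of scalar conditions $\partial_\sigma p_1 = 0$ and $\partial_\Sigma p_1 = 0$.

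\myindent The second equation immediately yields $\Sigma = 0$, and the first reduces to $\Theta^2 f'(\sigma) = 0$. The case $\Theta = 0$ is excluded, since combined with $\Sigma = 0$ it would force $\xi = 0$, outside of our domain. Hence $f'(\sigma) = 0$, and the simplicity hypothesis (Definition \ref{simplehypothesis}) forces $\sigma = 0$, as $f$ has a unique critical point on $]-L/2, L/2[$. Inside the chart, the singular set of $\vec{p}$ is therefore exactly $\{\sigma = 0,\ \Sigma = 0,\ \Theta \neq 0\}$, in perfect agreement with \eqref{defZ}.

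\myindent It remains to show that every point of $T^*_P \mathcal{S} \backslash \{0\}$, for $P \in \{N, S\}$, is a regular point of $\vec{p}$. I would pick normal coordinates $(x_1, x_2)$ centered at $P$, in which $g_{ij}(0) = \delta_{ij}$, so that $p_1(0, \xi) = |\xi|$ and $dp_1|_{(0,\xi)} = |\xi|^{-1}(\xi_1\, d\xi_1 + \xi_2\, d\xi_2)$; in the same chart $p_2 = x_1 \xi_2 - x_2 \xi_1$, hence $dp_2|_{(0,\xi)} = \xi_2\, dx_1 - \xi_1\, dx_2$. For $\xi \neq 0$ both covectors are nonzero, and they lie respectively in the pure fiber and the pure base directions of $T^*_{(0,\xi)}(T^*\mathcal{S})$, so they are automatically linearly independent. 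The step I expect to need the most care is precisely this pole analysis, where the coordinates used in the bulk of the argument degenerate; the computation goes through cleanly because the angular momentum form of $p_2$ mixes base and fiber covectors in a direction that is transverse to $dp_1$ at the center of a normal chart.
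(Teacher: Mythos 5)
Your proof is correct. The paper does not actually prove this lemma itself — it simply cites Colin de Verdière's article (Lemma 6.2 of \cite{de1980spectre}) — and your explicit chart computation (reducing $dp_1\wedge dp_2=0$ to $\partial_\sigma p_1=\partial_\Sigma p_1=0$, then invoking simplicity to force $\sigma=0$, $\Sigma=0$, $\Theta\neq 0$) is exactly the expected argument. One small remark on the pole step: the claim that $dp_1|_{(0,\xi)}$ is purely fiber-directed uses that the first derivatives of the metric vanish at the center of a normal chart (which they do), but in fact you only need the weaker facts that the fiber component of $dp_1$ is nonzero for $\xi\neq 0$ while $dp_2|_{(0,\xi)}=\xi_2\,dx_1-\xi_1\,dx_2$ is nonzero and purely base-directed, from which linear independence is immediate.
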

\myindent We also precise the image of $\vec{p}$.

\begin{lemma}
    The image of the application 
    \begin{equation}
        \vec{p} = (p_1,p_2) : T^*\mathcal{S}\backslash \{0\} \to \R^2 \backslash (0,0)
    \end{equation}
    is the open cone
    \begin{equation}\label{defGamma}
    \Gamma := \vec{p}(T^*\mathcal{S}\backslash\{0\}) = \{(\lambda,\mu) \in \R^2\backslash (0,0) \ |\mu|\leq\lambda\}.
    \end{equation}
    \myindent Moreover, $\vec{p}$ sends its singular set $Z$ onto $\partial\Gamma$.
\end{lemma}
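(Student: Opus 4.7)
The plan is to work in two coordinate regimes (away from the poles and near them), establish the inclusion $\vec{p}(T^*\mathcal{S}\setminus\{0\}) \subseteq \Gamma$ from the explicit expressions of $p_1$ and $p_2$, then exhibit surjectivity by producing explicit preimages located on the equator, and finally identify the preimage of the boundary with $Z$.

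First, outside the poles, I would use the coordinates $(\theta,\sigma,\Theta,\Sigma)$ in which $p_1^2 = \Theta^2/f^2(\sigma) + \Sigma^2$ and $p_2 = \Theta$. Subtracting, I get
\begin{equation*}
    p_1^2 - p_2^2 = \Theta^2\left(\frac{1}{f^2(\sigma)} - 1\right) + \Sigma^2.
\end{equation*}
Because $\mathcal{S}$ is simple and symmetric with $f(\nu_{\max}) = 1$, the profile satisfies $0 < f(\sigma) \leq 1$, with equality precisely when $\sigma = 0$. Hence both terms on the right are nonnegative, giving $|p_2| \leq p_1$, and equality holds if and only if $\sigma = 0$ \textbf{and} $\Sigma = 0$, i.e.\ on the set $Z$ defined by \eqref{defZ}. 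Near a pole, working in normal coordinates $(x_1,x_2,\xi_1,\xi_2)$, the formulas $p_2 = x_1\xi_2 - x_2\xi_1$ and $p_1 = |\xi| + O(|x|^2|\xi|)$ (since the metric is Euclidean to first order at the pole) imply $|p_2| \leq |x|\,|\xi| \ll p_1$ for $x$ close enough to the pole, so no point near a pole can lie on $\partial\Gamma$, and the inequality $|p_2| \leq p_1$ extends by continuity to the full bundle.

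For surjectivity, given any $(\lambda,\mu) \in \Gamma$ (so $\lambda > 0$ and $|\mu| \leq \lambda$), I would choose a point on the equator: take $\theta$ arbitrary, $\sigma = 0$ so $f(\sigma) = 1$, $\Theta = \mu$, and $\Sigma = \sqrt{\lambda^2 - \mu^2}$. Then plugging into \eqref{normsurfrev} gives $p_1 = \lambda$ and $p_2 = \mu$, so this covector maps to $(\lambda,\mu)$. This proves $\Gamma \subseteq \vec{p}(T^*\mathcal{S}\setminus\{0\})$, hence equality.

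For the last claim, combining the first step with the explicit equality condition $|p_2| = p_1 \iff (\sigma,\Sigma) = (0,0)$, the preimage of $\partial\Gamma = \{|\mu| = \lambda,\,\lambda > 0\}$ is exactly $Z$; conversely on $Z$, where $\sigma = \Sigma = 0$, one directly computes $p_1 = |\Theta| = |p_2|$, so $\vec{p}(Z) \subseteq \partial\Gamma$, and varying $\Theta \in \R^*$ and the sign in $p_2 = \Theta$ sweeps out all of $\partial\Gamma$. The only genuinely subtle point is handling the poles, where the coordinates $(\theta,\sigma,\Theta,\Sigma)$ degenerate; this is dealt with by passing to normal coordinates there, where the quadratic vanishing of $p_2$ versus the linear size of $p_1$ keeps pole-neighborhoods strictly inside $\Gamma$. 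All the remaining computations are routine.
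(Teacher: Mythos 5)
Your proof is correct. The paper itself gives no argument for this lemma (it simply cites \cite{de1980spectre}), so there is nothing to diverge from; your computation is the natural direct one and is consistent with the explicit fiber description the paper uses later in \eqref{deff-1lambdamu}. The key steps all check out: from \eqref{normsurfrev}, $p_1^2 - p_2^2 = \Theta^2\left(f(\sigma)^{-2} - 1\right) + \Sigma^2 \geq 0$ since the simple, normalized profile satisfies $0 < f \leq 1$ with equality only at $\sigma = 0$, and on the nonzero cotangent bundle equality forces $\Sigma = 0$, $\sigma = 0$, $\Theta \neq 0$, i.e.\ exactly the set $Z$ of \eqref{defZ}; the normal-coordinate estimate $|p_2| \leq |x|\,|\xi| \ll p_1$ correctly handles the pole fibers where $(\theta,\sigma,\Theta,\Sigma)$ degenerate; and the equatorial covectors $(\theta,0,\mu,\sqrt{\lambda^2-\mu^2})$ give surjectivity onto $\Gamma$ and, with $\Sigma=0$, onto $\partial\Gamma$ from $Z$.
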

\myindent Now, there holds that any simple surface of revolution is \textit{completely integrable} in the sense of Colin de Verdière \cite{de1977quasi}[Paragraph 4].
\begin{proposition}[Complete integrability of the geodesic flow]\label{completintegrab}
    On one hand, the function 
    \begin{equation}
        \vec{p} = (p_1,p_2) : \left(T^*\mathcal{S}\backslash \{0\}\right) \backslash Z \to \R^2 \backslash (0,0)
    \end{equation}
    is a proper submersion with connected compact fibers.
    
    \myindent On the other hand, there holds on $T^*\mathcal{S}$
    \begin{equation}
        \{p_1, p_2\} = 0,
    \end{equation}
    where $\{\cdot,\cdot\}$ is the Poisson bracket on the cotangent bundle.
\end{proposition}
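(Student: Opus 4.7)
The plan is to verify the four independent assertions of the proposition — Poisson commutation, submersion off $Z$, properness, and connectedness of fibers — directly from the explicit formulas $p_1(\sigma,\Theta,\Sigma) = \sqrt{\Theta^2/f^2(\sigma) + \Sigma^2}$ and $p_2 = \Theta$ in the chart $(\theta,\sigma,\Theta,\Sigma)$ away from the poles, and from the smooth extension $p_2 = x_1\xi_2 - x_2\xi_1$ in normal coordinates at each pole.

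\textbf{Poisson bracket.} Away from the poles the canonical symplectic bracket reduces to
\begin{equation*}
\{p_1,p_2\} = \partial_\Theta p_1\,\partial_\theta p_2 + \partial_\Sigma p_1\,\partial_\sigma p_2 - \partial_\theta p_1\,\partial_\Theta p_2 - \partial_\sigma p_1\,\partial_\Sigma p_2 = -\partial_\theta p_1,
\end{equation*}
which vanishes since $p_1$ is independent of $\theta$. To extend the identity to the poles I would invoke the geometric interpretation of $p_2$: by construction its Hamiltonian flow is the cotangent lift of the isometric $S^1$-action on $\mathcal{S}$, and since $p_1$ is the norm of cotangent vectors of a rotation-invariant metric it is preserved by this flow; the infinitesimal content of that invariance is exactly $\{p_1,p_2\} = 0$ on all of $T^*\mathcal{S}$.

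\textbf{Submersion.} Writing $dp_2 = d\Theta$ and
\begin{equation*}
p_1\,dp_1 = -\frac{\Theta^2 f'(\sigma)}{f^3(\sigma)}\,d\sigma + \frac{\Theta}{f^2(\sigma)}\,d\Theta + \Sigma\,d\Sigma,
\end{equation*}
the forms $dp_1,dp_2$ are linearly dependent iff the $d\sigma$- and $d\Sigma$-coefficients of $dp_1$ vanish, iff $\Sigma = 0$ and $\Theta^2 f'(\sigma) = 0$. Since $p_1 > 0$ excludes $\Theta = \Sigma = 0$, and simplicity of $\mathcal{S}$ forces $f'(\sigma) = 0 \Leftrightarrow \sigma = 0$, the critical set in this chart is precisely $Z$. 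Near each pole the estimate $p_1^2 - p_2^2 = \Sigma^2 + (f(\sigma)^{-2}-1)\Theta^2 > 0$ (using $f<1$ off the equator) shows the pole-neighborhood maps strictly into the interior of $\Gamma$, and independence of $dp_1,dp_2$ there is a direct computation in normal coordinates.

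\textbf{Properness and connected fibers.} Each fiber is closed in $T^*\mathcal{S}$ and contained in $\{p_1 \leq \lambda_0\}$, which is compact because $\mathcal{S}$ is compact and $p_1$ is the norm of cotangent vectors, so $\vec{p}$ is proper with compact fibers. The domain $(T^*\mathcal{S}\backslash\{0\})\backslash Z$ is connected (removing a codimension-two closed set from a connected $4$-manifold), the open cone $\{|\mu| < \lambda\}$ is connected, and $\vec{p}$ is now known to be a proper submersion onto it; Ehresmann's fibration theorem thus makes $\vec{p}$ a locally trivial fibration, so it suffices to identify one fiber. For $(\lambda_0,\mu_0)$ with $0 < |\mu_0| < \lambda_0$ the fiber in the chart is
\begin{equation*}
\bigl\{\bigl(\theta,\sigma,\mu_0,\pm\sqrt{\lambda_0^2 - \mu_0^2/f^2(\sigma)}\bigr)\ :\ \theta \in S^1,\ \sigma \in [\sigma_-,\sigma_+]\bigr\},
\end{equation*}
where $\sigma_- < 0 < \sigma_+$ are the two solutions of $f(\sigma) = |\mu_0|/\lambda_0$ furnished by simplicity. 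The two $\Sigma$-sheets glue smoothly at the endpoints where $\Sigma = 0$, so the $(\sigma,\Sigma)$-slice is a topological circle, and together with $\theta \in S^1$ the fiber is a smooth $2$-torus, in particular connected.

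\textbf{Expected main obstacle.} The calculations are all elementary; the one subtlety is the joint treatment of the two poles, where the coordinates $(\theta,\sigma,\Theta,\Sigma)$ degenerate. Both smoothness of $p_2$ and the submersion check there are handled by the geometric interpretation of $p_2$ as the generator of the $S^1$-action, which globalizes the local chart picture without requiring an independent hard computation; reducing to Ehresmann's theorem also bypasses having to examine the more awkward $\mu_0 = 0$ fiber (which meets both poles) by hand.
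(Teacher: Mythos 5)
Your route is a self-contained verification of all four claims, whereas the paper does not actually prove this proposition: it cites Colin de Verdi\`ere (the two preceding lemmas on the singular set $Z$ and on the image $\Gamma$ are quoted from \cite{de1980spectre}, and the completely integrable framework from \cite{de1977quasi}), and then only gives the explicit fiber $(\vec p)^{-1}(\lambda,\mu)$ of \eqref{deff-1lambdamu} as a geometric interpretation, remarking that one could instead argue ``looking only at where the differentials of $p_1$ and $p_2$ are linearly independent'' --- which is exactly what you do. Your chart computation of the critical set, the Poisson bracket (extended to the poles via the $S^1$-momentum interpretation of $p_2$, or simply by density), and the identification of the generic fiber as a two-torus all agree with the paper's explicit formulas and are correct; bypassing the $\mu=0$ fiber through Ehresmann over the connected open cone is a legitimate shortcut that the paper's three-case discussion does by hand.

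There is, however, one concrete soft spot: the properness step. Compactness of each individual fiber does not give properness, so the sentence ``each fiber is closed and contained in $\{p_1\le\lambda_0\}$ \dots so $\vec p$ is proper'' does not prove what you need. Worse, with the codomain $\R^2\backslash(0,0)$ as literally written, properness in fact \emph{fails}: the points $(\theta,\tfrac1n,1,0)$ lie in the domain, their images $(1/f(\tfrac1n),1)$ stay in the compact set $\Gamma\cap\{1\le\lambda\le 2\}$, yet the sequence converges to a point of $Z$ which has been removed, so this compact set has non-compact preimage. The statement must be read (as your Ehresmann step implicitly does) with target the image $Int(\Gamma)=\{|\mu|<\lambda\}$: for $K\subset Int(\Gamma)$ compact, $p_1$ is bounded above and below on $\vec p^{-1}(K)$ and $|p_2|/p_1$ is bounded away from $1$, so the preimage stays uniformly away from the zero section and from $Z$, is closed in $T^*\mathcal{S}$ and bounded, hence compact. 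With that one-line repair (and the trivial remark that the image is open and closed in the connected cone, hence all of it), your fibration argument and the torus identification of one fiber complete the proof.
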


\myindent It is well-known that complete integrability yields a foliation of the cotangent bundle by \textit{tori}, which are stable by both the geodesic flow and the Hamiltonian flow of $p_2$, namely by rotation. Indeed, let $a = (\lambda,\mu) \in \Gamma$. Then, 
\begin{equation}
    \Lambda_a := (\vec{p})^{-1}(a)
\end{equation}
is invariant by the Hamiltonian flows of both $p_1$ and $p_2$, and it is a connected compact set, diffeomorphic to a torus.

\myindent Though this is straightforward from an abstract point of view, we give a geometric interpretation of it, in this very explicit case. There holds
\begin{equation}\label{deff-1lambdamu}
    \forall (\lambda,\mu)\in \Gamma \ (\vec{p})^{-1}(\lambda,\mu) = \left\{\left(\theta,\sigma, \mu, \pm \sqrt{\lambda^2 - \frac{\mu^2}{f^2(\sigma)}}\right) \ \theta\in[0,2\pi]\  \sigma_-\left(\frac{\mu}{\lambda}\right) \leq \sigma \leq \sigma_+\left(\frac{\mu}{\lambda}\right)\right\},
\end{equation}
where we recall that $\sigma_+(I) = -\sigma_-(I) \geq 0$ are defined by
\begin{equation}
    f(\sigma_{\pm}(I)) = I.
\end{equation}

\myindent From this formula, there is a very simple geometrical visualization of the fact that $(\vec{p})^{-1}(\lambda,\mu)$ is a torus. Again, this fact can be proved instantly looking only at where the differentials of $p_1$ and $p_2$ are linearly independent. However we find it interesting, in an explicit case, to exhibit the foliation by tori.

\myindent i. in the degenerate case $\mu = \pm \lambda$ it is obvious that
\begin{equation}\label{expltorus}
    (\vec{p})^{-1}(\lambda,\pm \lambda) = \{(\theta, 0,\pm\lambda,0), \ \theta\in S^1\}
\end{equation}
is diffeomorphic to a circle.

\myindent ii. in the generic case $0 <|\mu| < \lambda$, let us observe that the projection of $(\vec{p})^{-1}(\lambda,\mu)$ on $\mathcal{S}$ is the set $C(\sigma_-,\sigma_+)$ of those points $(\theta,\sigma)$ with $\theta \in S^1$ and $\sigma_-\leq \sigma \leq \sigma_+$. Now, this is obviously diffeomorphic to a cylinder. Moreover, for every $x$ in the interior of $C(\sigma_-,\sigma_+)$, there are exactly two points of $(\vec{p})^{-1}(\lambda,\mu)$ in the cotangent space $T_x^*\mathcal{S}$ (depending on the sign of $\Sigma$). Finally, for points $x$ on the boundary of $C(\sigma_-,\sigma_+)$ (which is the reunion of two circles), there is exactly one point of $(\vec{p})^{-1}(\lambda,\mu)$ in $T_x^*\mathcal{S}$ (with $\Sigma = 0$). Thus, one can picture $(\vec{p})^{-1}(\lambda,\mu)$ as the torus embedded into $\R^3$ (i.e. the "geometrical torus"). For a visual proof moreover, one could use the natural isometry between the tangent bundle and the cotangent bundle and project in $\R^3$ to obtain indeed a geometrical torus.

\myindent iii. in the case $\mu = 0$, the image of a geometrical torus still holds, with the exception that the top and bottom circles are entirely contained in the cotangent spaces of the poles. Indeed, the intersection of $(\vec{p})^{-1}(\lambda,0)$ and $T_P^*\mathcal{S}$, when $P$ is a pole, is the circle of radius $\lambda$ in $T_P^*\mathcal{S}$.

\subsubsection{Explicit action-angle coordinates}\label{subsubsec32CdV}

\myindent One can understand locally the geodesic flow on a completely integrable manifold by the introduction of \textit{action-angle coordinates}, i.e. a symplectic change of variable through which the geodesic flow can be seen as simply following a straight line on a $2d$ regular torus. Precisely, a consequence of Proposition \ref{completintegrab} is the following theorem (see \cite{de1977quasi}[Theorem 4.1]).

\begin{theorem}[Action-angle coordinates]\label{torifoliation}
    Let $a \in Int(\Gamma)$. There exists an conic convex open neighborhood $U$ of $a$ in $\R^2 \backslash (0,0)$, and a symplectic diffeomorphism  $\chi$ from a conical open subset $\mathbb{T}^2\times C$ of the cotangent bundle $\mathbb{T}^2\times \R^2$ of $\mathbb{T}^2$, with coordinates $(x,\xi)$, onto $(\vec{p})^{-1}(U)$, such that $\chi$ is 1-homogeneous in the $\xi$ variable and moreover

    \myindent i. $\chi^{-1}$ sends the foliation $(\Lambda_a)_{a\in U}$ of $(\vec{p})^{-1}(U)$ onto the foliation $(T_{\xi})_{\xi \in C}$ where $T_{\xi} = \mathbb{T}^2 \times \{\xi\}$. We define $\xi(a)$ through $T_{\xi(a)} = \chi^{-1}(\Lambda_a)$.

    \myindent ii. In order to compute $\xi(a)$, find $(\gamma_i)$ the canonical basis of $\pi_1(\mathbb{T}^2)$ and let $(\gamma_{i,\xi})$ be the corresponding basis of $\pi_1(T_{\xi})$. There holds
    \begin{equation}\label{abstractformulaforq}
        \xi_i(a) = \frac{1}{2\pi}\int_{\gamma_{i,\xi(a)}} \lambda,
    \end{equation}
    where $\lambda$ is the Liouville 1-form on $T^*S$ i.e. in local coordinates $\lambda = \xi \cdot dx$.

    \myindent iii. $q \circ \chi(x,\xi) = K(\xi)$ is a smooth 1-homogeneous function on $C$. Thus, the geodesic flow is transformed by $\chi^{-1}$ into the flow $t\mapsto (x_0 + tK'(\xi_0),\xi_0)$.
\end{theorem}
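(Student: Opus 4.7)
\myindent The statement is a version of the Liouville-Arnold theorem adapted to the conic setting of cotangent bundles, so the plan is to prove it along those lines. First, by Proposition \ref{completintegrab}, $\vec p$ is a proper submersion with connected compact fibers and $\{p_1,p_2\}=0$ means the Hamiltonian vector fields $X_{p_1}, X_{p_2}$ commute and are tangent to the fibers $\Lambda_a$. As they are independent, complete (since the fibers are compact) and commuting, they generate a transitive action of $\R^2$ on $\Lambda_a$; its stabilizer is a discrete cocompact subgroup, i.e.\ a lattice $\Gamma_a \subset \R^2$, so $\Lambda_a \cong \R^2/\Gamma_a$ is a 2-torus. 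The tangency of $X_{p_1}, X_{p_2}$ and $\{p_1,p_2\}=0$ also show that each $\Lambda_a$ is Lagrangian.

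\myindent For the action coordinates, I would fix $a_0 \in \mathrm{Int}(\Gamma)$ and a basis $(\gamma_1,\gamma_2)$ of $H_1(\Lambda_{a_0};\Z)$. Ehresmann's theorem applied to the proper submersion $\vec p$ gives a local trivialization of the fibration near $a_0$, so the basis extends smoothly to cycles $(\gamma_{i,a})$ on nearby fibers. Set $\xi_i(a) := \frac{1}{2\pi}\int_{\gamma_{i,a}} \lambda$; since $\Lambda_a$ is Lagrangian, $d\lambda$ vanishes on it, so $\xi_i(a)$ depends only on the homology class. A standard Stokes argument, varying $a$ along transverse lifts of $\partial/\partial a_j$, shows that $(d\xi_1, d\xi_2)$ is non-degenerate, so $a\mapsto \xi(a)$ is a local diffeomorphism onto an open set $C\subset\R^2$, which is exactly formula \eqref{abstractformulaforq}. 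Since $p_1, p_2$ and $\lambda$ are all 1-homogeneous in the fibers of $T^*\mathcal S$, $\xi(a)$ is 1-homogeneous in $a$ and $C$ is conic.

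\myindent For the angle coordinates, observe that the Hamiltonian vector fields $X_{\xi_1},X_{\xi_2}$ are tangent to the fibers and commute, and by the $\frac{1}{2\pi}$ normalization in the definition of $\xi_i$ the time-$2\pi$ flow of $X_{\xi_i}$ closes up on each $\Lambda_a$, realizing the basis cycle $\gamma_{i,a}$. Picking a smooth local section $s : C \to (\vec p)^{-1}(U)$ (a smooth choice of base point on each torus parametrized by $\xi$), the map
\begin{equation}
\chi(x_1,x_2,\xi) := \Phi^{X_{\xi_1}}_{x_1}\circ\Phi^{X_{\xi_2}}_{x_2}\bigl(s(\xi)\bigr)
\end{equation}
is then a smooth diffeomorphism from $\mathbb{T}^2\times C$ onto $(\vec p)^{-1}(U)$. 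Symplecticity follows from $\{\xi_i,\xi_j\}=0$ together with the standard Darboux-type computation $\chi^*(-d\lambda) = \sum d\xi_i \wedge dx_i$. For (iii): since $p_1$ is constant on each fiber $\Lambda_a$, $p_1\circ\chi(x,\xi)$ depends only on $\xi$, giving the function $K(\xi)$; its 1-homogeneity is inherited from that of $p_1$ and $\xi$, and Hamilton's equations for $K$ then produce the linearized flow $t\mapsto (x_0 + tK'(\xi_0),\xi_0)$.

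\myindent The hard part, I expect, is the homogeneity/conic compatibility throughout. Ensuring that the neighborhood $U$, the cycles $\gamma_{i,a}$, the section $s$, and hence $\chi$ itself are compatible with the $\R_+^*$-action on $T^*\mathcal S\setminus\{0\}$ is subtler than in the standard Liouville-Arnold statement, but it is feasible because $p_1, p_2$ and the Liouville form $\lambda$ are all 1-homogeneous, so the whole construction can be carried out $\R_+^*$-equivariantly, for instance by first fixing the data on the slice $\{p_1 = 1\}$ and then extending radially.
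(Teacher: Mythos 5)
The paper never proves this statement: it is quoted verbatim from Colin de Verdi\`ere \cite{de1977quasi}[Theorem 4.1] as a known consequence of Proposition \ref{completintegrab}, so there is no internal proof to compare against. Your proposal is essentially the classical Liouville--Arnold construction in its homogeneous (conic) form, which is precisely what the cited source carries out, and the overall architecture — torus fibers from the commuting complete Hamiltonian fields, actions as normalized period integrals of the Liouville form, angles from the flows of the actions, and $\R_+^*$-equivariance obtained by working on the slice $\{p_1=1\}$ and extending radially — is the right one.

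Two steps are stated more glibly than they can be carried out as written. First, the map $\chi(x_1,x_2,\xi) = \Phi^{X_{\xi_1}}_{x_1}\circ\Phi^{X_{\xi_2}}_{x_2}(s(\xi))$ is \emph{not} symplectic for an arbitrary smooth section $s$: one computes $\chi^*(-d\lambda) = \sum_i d\xi_i\wedge dx_i + \beta$, where $\beta$ is a closed $2$-form pulled back from $C$ determined by $s$, and this extra term vanishes only if $s$ is chosen Lagrangian (or if the angles are corrected by a $\xi$-dependent shift, which is the generating-function step in Arnold's proof). This is fixable locally, but the "standard Darboux-type computation" needs that choice to be made explicit. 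Second, the assertion that the time-$2\pi$ flow of $X_{\xi_i}$ closes up along $\gamma_{i,a}$ "by the $\frac{1}{2\pi}$ normalization" is exactly the nontrivial period lemma of Liouville--Arnold: since $X_{\xi_i} = \sum_j \frac{\partial\xi_i}{\partial a_j}X_{p_j}$, it amounts to showing that the Jacobian $\partial\xi/\partial a$ maps the period lattice of $(X_{p_1},X_{p_2})$ onto $2\pi\Z^2$, which follows from the Stokes computation identifying $2\pi\,\partial\xi_i/\partial a_j$ with the period matrix — the same computation you invoke for nondegeneracy, so it should be stated as doing double duty rather than following from the normalization alone. With those two points made precise, the argument is complete and matches the standard proof behind the citation.
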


\myindent A crucial observation of \cite{de1980spectre} is that, in the case of surface of revolution, one has a little bit better than complete integrability. Indeed, one can in fact extend Theorem \ref{torifoliation} \textit{globally}, i.e. define action-angle coordinates on the whole of $T^*\mathcal{S}\backslash \{0\}$. Moreover, the action coordinates are in fact given as smooth \textit{homogeneous} functions of $(p_1,p_2)$. Precisely, there holds the following.

\begin{proposition}[Colin de Verdière]\label{q1q2fctofp1p2}
    There exists a smooth diffeomorphism $G$ from $\Gamma$ onto itself, which is homogeneous of degree $1$, such that if
    \begin{equation}
        (q_1,q_2) := G(p_1,p_2),
    \end{equation}
    then the Hamiltonian flows of $q_1,q_2$, seen as function on $T^*\mathcal{S}\backslash \{0\}$, are $2\pi$ periodic (and they commute).

    \myindent Moreover, $q_2 = p_2$.
\end{proposition}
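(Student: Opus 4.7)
The overall strategy is to deploy the Liouville--Arnold theorem (Theorem \ref{torifoliation}), and to promote the local action-angle construction to a global one by exploiting the rotational $S^1$-symmetry of $\mathcal{S}$. The second coordinate is essentially forced on us: since $p_2 = \Theta$ is already the moment map of rotation, its Hamiltonian vector field $X_{p_2} = \partial_\theta$ is globally $2\pi$-periodic, so I would simply set $q_2 := p_2$ as required. The Poisson-commutation $\{q_1, q_2\} = 0$ alluded to in Proposition \ref{reducedCdVthmQ1Q2} then holds automatically once $q_1$ is constructed as a function of $(p_1, p_2)$, using $\{p_1, p_2\} = 0$ from Proposition \ref{completintegrab}.

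To build the first action I would work directly from the explicit parametrization \eqref{deff-1lambdamu} of the Liouville torus $\Lambda_{(\lambda,\mu)}$. A convenient cycle $\gamma_1 \subset \Lambda_{(\lambda,\mu)}$ transverse to the rotation orbit $\gamma_2$ is obtained by fixing $\theta$ and letting $\sigma$ describe one full oscillation $\sigma_-(\mu/\lambda) \to \sigma_+(\mu/\lambda) \to \sigma_-(\mu/\lambda)$, with $\Sigma$ reversing sign at the two turning circles $\{\Sigma = 0\}$ (on which $\theta$ remains a smooth coordinate, since these circles do not collapse to points). Applying formula \eqref{abstractformulaforq} to this cycle yields the explicit expression
\begin{equation*}
    I_1(\lambda, \mu) := \frac{1}{2\pi}\oint_{\gamma_1} \bigl(\Theta\, d\theta + \Sigma\, d\sigma\bigr) = \frac{1}{\pi}\int_{\sigma_-(\mu/\lambda)}^{\sigma_+(\mu/\lambda)} \sqrt{\lambda^2 - \mu^2/f^2(\sigma)}\, d\sigma,
\end{equation*}
which is manifestly homogeneous of degree $1$ in $(\lambda,\mu)$, smooth on the interior of $\Gamma$ (by smoothness of $\sigma_\pm(I)$ for $|I| < 1$, guaranteed by $f''(0) < 0$), and satisfies $\partial_\lambda I_1 > 0$, so that $(p_1,p_2) \mapsto (I_1, p_2)$ is a local diffeomorphism in the interior. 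Liouville--Arnold then guarantees that the Hamiltonian flow of $I_1$ on each interior torus $\Lambda_{(\lambda,\mu)}$ is $2\pi$-periodic.

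The main difficulty lies at the boundary $\partial\Gamma = \{|\mu|=\lambda\}$, i.e.\ at the equator. There the torus $\Lambda_{(\lambda,\pm\lambda)}$ degenerates to the single circle \eqref{expltorus}, the transverse cycle $\gamma_1$ collapses, and $I_1 \to 0$, while the geodesic flow $X_{p_1}$ itself becomes $2\pi$-periodic on the equator (because $f'(0)=0$ and $f(0)=1$ force $X_{p_1} = \pm X_{p_2}$ there). Consequently $q_1 = I_1$ alone would have a vanishing Hamiltonian vector field on the equator and fail to extend to a diffeomorphism of $\Gamma$. I would correct this by defining $q_1$ as a suitable $1$-homogeneous combination of $I_1$, $p_1$ and $p_2$, chosen so that $G := (q_1, q_2)$ sends $\Gamma$ onto itself, maps $\partial\Gamma$ to $\partial\Gamma$, and produces a globally $2\pi$-periodic Hamiltonian flow for $q_1$. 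The required correction is dictated by the expansion $I_1(\lambda,\mu) \sim (\lambda^2 - \mu^2)/(2|\mu|\sqrt{|f''(0)|})$ near the boundary, obtained by Taylor-expanding $f$ at $\sigma=0$; combined with the symmetry $I_1(\lambda, -\mu) = I_1(\lambda,\mu)$, it should match the Birkhoff normal form of $p_1$ near the stable equator. Verifying the smoothness of the resulting $q_1$ across $\partial\Gamma$, and the bijectivity of $G : \Gamma \to \Gamma$, is the core technical obstacle, and is precisely where one must invoke the finer normal-form arguments of \cite{de1980spectre}.
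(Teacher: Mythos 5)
Your overall strategy --- set $q_2 = p_2$ and obtain $q_1$ from the action formula \eqref{abstractformulaforq} over a cycle of the Liouville tori --- is the same as the paper's, but the specific cycle you choose creates a gap that your proposal does not close. The fixed-$\theta$ oscillation cycle gives $I_1(\lambda,\mu) = \frac{1}{\pi}\int_{\sigma_-}^{\sigma_+}\sqrt{\lambda^2 - \mu^2/f^2(\sigma)}\,d\sigma$, and you assert this is smooth on $\mathrm{Int}(\Gamma)$. It is not: $I_1$ is even in $\mu$ and has a conical singularity along $\{\mu = 0\}$, i.e.\ on the tori passing through the poles. For the round sphere one computes $I_1 = \lambda - |\mu|$, and in general, comparing with the smooth global action of Lemma \ref{formulaofq1}, namely $q_1 = I_1 + |p_2|$, evenness forces $\partial_\mu I_1(\lambda,0^{\pm}) = \mp 1 \neq 0$. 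The reason is homological: your family of fixed-$\theta$ cycles does not vary continuously as a homology class across $\mu = 0$, where the torus contains the cotangent circles over the poles, so $I_1$ is an action variable only on $\{\mu>0\}$ and $\{\mu<0\}$ separately. Hence your candidate $q_1$ fails to be a smooth Hamiltonian precisely on the meridian tori, before one even reaches the boundary $\partial\Gamma$ that you identify as the main difficulty; moreover the image of $(I_1,p_2)$ is a half-plane, not $\Gamma$.

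The missing idea --- which is the actual content of the paper's proof --- is the choice of the other cycle: take $\gamma_1$ to be the lifted meridian on $\Lambda_{(1,0)}$ (equivalently, the curve \eqref{explicitgamma1}, which performs one $\sigma$-oscillation while winding once around the axis), and extend it over all of $\mathrm{Int}(\Gamma)$ using the triviality of the torus fibration over this connected set. Its action is exactly $I_1 + |p_2|$, the two non-smooth pieces cancel, the image of $(q_1,q_2)$ is $\Gamma$ onto itself, and $1$-homogeneity is automatic. Your last paragraph correctly senses that $I_1$ alone degenerates at $\partial\Gamma$ and that some combination with $p_1,p_2$ is needed, but it leaves both the choice of that combination and the smooth extension across the singular set $Z$ (handled in the paper by introducing local action-angle coordinates near the elliptic equatorial circle) to the ``finer normal-form arguments'' of \cite{de1980spectre}; those are precisely the steps that constitute the proof, so as written the argument is incomplete.
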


\begin{proof}
    The idea of the proof of \cite{de1980spectre} is to use the explicit formula \eqref{abstractformulaforq}. Indeed, thanks to this formula, in order to define $q_1,q_2$ on $Int(\Gamma)$, it is enough to find a smooth basis, say $\gamma_1,\gamma_2$, of $\pi_1(\Lambda_a)$ for each leaf $\Lambda_a$, $a =(p_1,p_2) \in Int(\Gamma)$, and to set
    \begin{equation}
        q_i = \frac{1}{2\pi}\int_{\gamma_i} \xi \cdot dx.
    \end{equation}
    \myindent Now, one can build $\gamma_1,\gamma_2$ for the leaf $\Lambda_{1,0}$, where one can explicitly choose for $\gamma_1$ a meridian lifted on $T^*\mathcal{S}$, and for $\gamma_2$ the unit circle of the cotangent space at the North Pole. Then, one can use that the fibration $(\Lambda_a)$ over $Int(\Gamma)$ is \textit{a priori} trivial since this set is connected, in order to extend smoothly $\gamma_1,\gamma_2$. Finally, one concludes by checking that $q_1,q_2$ extend to the singular set $Z$ (see \ref{defZ}) by introducing local action-angle coordinates. The fact that $q_2 = p_2$ is straightforward from this procedure.
\end{proof}

\myindent While this method is is very efficient, it unfortunately doesn't give a precise description of $q_1$, or, rather, of its Hamiltonian flow. However, since we have microlocal analysis in mind, we will need to understand very well the bicharacteristics of $q_1$. Hence, we give an alternative point of view, which differs slightly on the construction of the basis of $\pi_1(\Lambda_a)$.

\myindent To build this basis, find $(\lambda,\mu)\in Int(\Gamma)$. We fix the starting point on the equator pointing toward $N$
\begin{equation}
    O_{\lambda,\mu} := (0, 0, \mu, \sqrt{\lambda^2 - \mu^2}).
\end{equation}

\myindent We recall that $t\mapsto \Phi_t$ is geodesic flow and $t\mapsto \Phi^{p_2}_t$ is the Hamiltonian flow of $p_2$ (which is merely the rotation around the axis between the poles). We know that $(\vec{p})^{-1}(\lambda,\mu)$ is given by $\{\Phi_t \Phi^{p_2}_s(O_{\lambda,\mu})\ (t,s)\in\R^2\}$. Moreover, $(\vec{p})^{-1}(\lambda,\mu)$ is naturally diffeomorphic to $\R^2/\Lambda$ where $\Lambda$ is the set of fixed points i.e. the lattice of those $(t,s)$ such that 
\begin{equation}
    \Phi_t \Phi^{p_2}_s(O_{\lambda,\mu}) = O_{\lambda,\mu}.
\end{equation}

\myindent We can find a basis of the lattice $\Lambda$. Indeed, $t = 0, s= 2\pi$ is always in $\Lambda$ (this corresponds to moving along the equatorial geodesic). Moreover, we recall that we defined $\omega(I)$ the phase shift between $t = 0$, and $t = \tau(I) > 0$ the first time the geodesic crosses again the equator towards the North Pole, where $I = \frac{\mu}{\lambda}$. Thus, another point of $\Lambda$ is $(\tau(I),-2\pi\omega(I))$ and we find that
\begin{equation}
    \Lambda = \Z\left(\tau\left(\frac{\mu}{\lambda}\right), - 2\pi\omega\left(\frac{\mu}{\lambda}\right)\right) \bigoplus \Z(0,2\pi).
\end{equation}

\myindent Thus, it is quite natural to set $\gamma_2$ projecting onto the equatorial geodesic i.e.
\begin{equation}
    \begin{split}
        \gamma_2 &:= \{t \mapsto \Phi^{p_2}_t(O_{\lambda,\mu}) \ 0\leq t \leq 2\pi\}\\
    &= \{\left(\theta, 0, \mu, \sqrt{\lambda^2 - \mu^2}\right) \ 0\leq \theta \leq 2\pi\},
    \end{split}
\end{equation}
and 
\begin{equation}\label{explicitgamma1}
    \gamma_1 := \{t \mapsto \Phi_{\frac{t}{2\pi}\tau(I)} \circ \Phi^{p_2}_{-t\omega(I)} (O_{\lambda,\mu}) \ 0\leq t \leq 2\pi\}.
\end{equation}

\myindent Geometrically, $\gamma_1$ oscillates one time around the axis while oscillating between $\sigma_+(I)$ and $\sigma_-(I)$. We recall that $\omega(0) = 0$, thus, for the torus $(\vec{p})^{-1}(1,0)$, $\gamma_1$ is actually a meridian geodesic of length $2L$, i.e. the same choice than in \cite{de1980spectre}.

\myindent Finally, one can conclude by defining $q_1$ and $q_2$ through the formula \eqref{abstractformulaforq}.

\myindent We observe that the explicit formula for $\gamma_i$ can be extended on the singular set $Z$ of $p$ as $\omega$ and $\tau$ both admit a smooth extension for $I = \pm 1$. Thus, $q_1$ and $q_2$ actually extend into smooth homogeneous functions on $T^*\mathcal{S}\backslash \{0\}$. Similarly, it is straightforward from the construction than $q_2 = p_2$. 

\quad

\myindent An immediate, yet important, corollary of the proposition is the following 

\begin{corollary}\label{q1elliptic}
    There holds
    \begin{equation}
        (q_1,q_2)(T^*\mathcal{S}\backslash \{0\}) = \Gamma,
    \end{equation}
    where $\Gamma$ is the cone defined by \eqref{defGamma}. In particular, $q_1$ is strictly positive on $T^*\mathcal{S}\backslash \{0\}$, hence $q_1$ is an \textit{elliptic} symbol on $T^*\mathcal{S}\backslash \{0\} $.
\end{corollary}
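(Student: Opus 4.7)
The plan is to chain together the facts already established about $\vec{p}$ and $G$, then read off ellipticity from positivity plus homogeneity.

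First, I would invoke the earlier lemma stating that $\vec{p} = (p_1,p_2) : T^*\mathcal{S}\backslash\{0\} \to \R^2\backslash\{(0,0)\}$ has image exactly the open cone $\Gamma$ defined by \eqref{defGamma}. Next, by Proposition \ref{q1q2fctofp1p2}, the map $G : \Gamma \to \Gamma$ is a (smooth, degree-one homogeneous) diffeomorphism of $\Gamma$ onto itself, so in particular it is surjective. Since $(q_1,q_2) = G \circ (p_1,p_2)$ by construction, taking the image on both sides gives
\begin{equation*}
(q_1,q_2)(T^*\mathcal{S}\backslash\{0\}) \;=\; G\bigl(\vec{p}(T^*\mathcal{S}\backslash\{0\})\bigr) \;=\; G(\Gamma) \;=\; \Gamma,
\end{equation*}
which is the first assertion.

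For the second assertion, I would simply observe that every element $(\lambda,\mu)\in\Gamma$ satisfies $|\mu|\leq\lambda$ and $(\lambda,\mu)\neq(0,0)$, which forces $\lambda > 0$. Applied to $(\lambda,\mu) = (q_1,q_2)(x,\xi)$ for any $(x,\xi)\in T^*\mathcal{S}\backslash\{0\}$, this yields $q_1(x,\xi) > 0$.

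Finally, ellipticity is immediate: $q_1$ is smooth on $T^*\mathcal{S}\backslash\{0\}$ (from Proposition \ref{q1q2fctofp1p2}), homogeneous of degree one in $\xi$ (since $G$ is homogeneous of degree one and $p_1,p_2$ are themselves homogeneous of degree one), and strictly positive by what precedes; these are exactly the defining properties of an elliptic symbol of order one on $T^*\mathcal{S}\backslash\{0\}$. There is no real obstacle here — the corollary is a direct bookkeeping consequence of the surjectivity built into Proposition \ref{q1q2fctofp1p2} and of the shape of $\Gamma$; the only point worth double-checking is that $G$ was indeed stated to map $\Gamma$ \emph{onto itself}, which is what allows the image identity rather than a mere inclusion.
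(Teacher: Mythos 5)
Your proposal is correct and follows exactly the route the paper intends: the paper presents this as an "immediate" corollary of Proposition \ref{q1q2fctofp1p2} (with no written proof), and your chaining of the image lemma for $\vec{p}$ with the surjectivity of $G:\Gamma\to\Gamma$, followed by reading off $q_1>0$ from $|\mu|\leq\lambda$, $(\lambda,\mu)\neq(0,0)$, and ellipticity from smoothness, homogeneity, and positivity, is precisely that argument made explicit.
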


\myindent Thanks to the explicit formula for $\gamma_1$, one can find a closed formula for $q_1$.
\begin{lemma}\label{formulaofq1}
    There holds
    \begin{equation}\label{explicitG}
        \begin{split}
            q_1 &= G(p_1,p_2) \\
            &= \frac{1}{\pi}\int_{\sigma_+(I)}^{\sigma_-(I)}\sqrt{p_1^2 - \frac{p_2^2}{f^2(\sigma)}}d\sigma + |p_2|,
        \end{split}
    \end{equation}
    where $I = \frac{p_2}{p_1}$. In particular, $q_1$ is independent of the angular variable $\theta$.
\end{lemma}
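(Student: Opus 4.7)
The approach is to compute the action integral from Theorem~\ref{torifoliation}(ii), specialized to the cycle $\gamma_1$ constructed in \eqref{explicitgamma1}. Concretely, writing $\lambda_{\mathrm{Liouv}} = \Theta\,d\theta + \Sigma\,d\sigma$ for the Liouville $1$-form on $T^*\mathcal{S}$, the starting point is
\[ q_1 \;=\; \frac{1}{2\pi}\int_{\gamma_1}\lambda_{\mathrm{Liouv}}, \]
and the plan is to evaluate the right-hand side explicitly in terms of the profile $f$ and the values $(\lambda,\mu) = (p_1,p_2)$.

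First, I decompose $\gamma_1$ into its two defining pieces, using the commutation of the Hamiltonian flows of $p_1$ and $p_2$ (Proposition~\ref{completintegrab}). Along the rotation arc $s \mapsto \Phi^{p_2}_s(\cdot)$ for $s \in [0,-2\pi\omega(I)]$, the tangent vector is $\partial_\theta$ and $\Theta \equiv \mu$, contributing $-2\pi\mu\omega(I)$ to the action. Along the geodesic arc $s \mapsto \Phi_s(\cdot)$ for $s \in [0,\tau(I)]$, Euler's homogeneity identity applied to the $1$-homogeneous Hamiltonian $p_1$ gives $\xi \cdot \dot x = \xi \cdot \partial_\xi p_1 = p_1 = \lambda$, contributing $\lambda\tau(I)$. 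Therefore
\[ \int_{\gamma_1}\lambda_{\mathrm{Liouv}} \;=\; \lambda\tau(I) - 2\pi\mu\omega(I). \]

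Next, I convert this expression into a $\sigma$-integral by a change of variable along the geodesic. Conservation of $(p_1,p_2)$ implies $\Sigma = \pm\sqrt{\lambda^2 - \mu^2/f^2(\sigma)}$ and $\dot\sigma = \Sigma/\lambda$, which yields the explicit representations
\[ \tau(I) \;=\; 2\lambda\int_{\sigma_-(I)}^{\sigma_+(I)}\frac{d\sigma}{\sqrt{\lambda^2 - \mu^2/f^2(\sigma)}},\qquad \int_0^{\tau(I)}\!\!\dot\theta\,ds \;=\; 2\mu\int_{\sigma_-(I)}^{\sigma_+(I)}\frac{d\sigma}{f^2(\sigma)\sqrt{\lambda^2 - \mu^2/f^2(\sigma)}}. \]
Plugging in and collecting over the common denominator $\sqrt{\lambda^2-\mu^2/f^2}$ reduces $\lambda\tau(I) - 2\pi\mu\cdot(\text{lifted }\omega)$ to $2\int_{\sigma_-(I)}^{\sigma_+(I)}\sqrt{\lambda^2 - \mu^2/f^2(\sigma)}\,d\sigma$. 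Dividing by $2\pi$ then produces \eqref{explicitG}, up to the $|p_2|$ correction explained below. The $\theta$-independence is manifest, as the final expression depends only on $(p_1,p_2)$.

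The main subtlety is the book-keeping of the homology class of $\gamma_1$ in $H_1(\Lambda_{(\lambda,\mu)}) \cong \mathbb{Z}^2$. The literal phase shift $\omega(I)$ defined in \eqref{defomegaI} and continuously extended from $\omega(0)=0$ disagrees with the lifted quantity $\int_0^{\tau(I)}\dot\theta\,ds / (2\pi)$ by an integer $n(I)$; comparison with the round sphere (where $\omega\equiv 0$ by Example~\ref{tauomegsphere}, yet the lifted integral equals $1$) identifies $n(I) = \mathrm{sgn}(\mu)$. This integer discrepancy adds $2\pi|\mu|$ to $\int_{\gamma_1}\lambda_{\mathrm{Liouv}}$, producing precisely the $+|p_2|$ term in \eqref{explicitG}. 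Smooth extension to the singular set $Z$ from \eqref{defZ} is then automatic since $\sigma_\pm(I) \to 0$ as $|I|\to 1$.
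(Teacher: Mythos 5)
Your proposal is correct and follows essentially the route the paper intends: evaluate \eqref{abstractformulaforq} on the explicit cycle $\gamma_1$ of \eqref{explicitgamma1}, split the action into the geodesic contribution $\lambda\tau(I)$ (via Euler's identity for the $1$-homogeneous $p_1$) and the rotation contribution $-2\pi\mu\,\omega(I)$, and use Clairaut conservation to reduce everything to the $\sigma$-integral, with the $+|p_2|$ term coming from the $2\pi\,\mathrm{sgn}(p_2)$ discrepancy between the branch of $\omega$ and the lifted azimuthal advance; your orientation $\int_{\sigma_-(I)}^{\sigma_+(I)}$ is the correct one (the limits in \eqref{explicitG} as printed appear transposed, cf.\ the proof of Lemma \ref{dG1}). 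The only cosmetic refinement I would suggest is to pin down the winding integer $n(I)=\mathrm{sgn}(\mu)$ intrinsically, e.g.\ by the $I\to 0^+$ limit on the given surface (each near-polar passage contributes an azimuthal swing tending to $\pi$ since $f'=1$ at the poles, so the lifted advance tends to $2\pi$ while $\omega(I)\to 0$), rather than by comparison with the round sphere, which tacitly uses invariance of this integer under deformation of the surface.
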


\quad

\myindent Now, while the formula \eqref{explicitgamma1}, and hence the construction of $q_1$, can seem arbitrary, they are actually a canonical choice. Precisely, once we know that there exists some $(q_1,q_2)$ as in Proposition \ref{q1q2fctofp1p2}, and such that $q_2 = p_2$, then the Hamiltonian flow of $q_1$ can be computed explicitly, as we will prove in the following lemma. Now, once we know the Hamiltonian flow of $q_1$, then $q_1$ is fully determined up to a constant, which is fixed by the fact that, in Proposition \ref{q1q2fctofp1p2}, we impose the image of $(q_1,q_2)$. Thus, the following lemma yields an alternate point of view for the construction of $q_1$ than the more abstract point of view in \cite{de1980spectre}. While the latter has the advantage of conciseness, the former has the advantage of being more explicit. In particular, we will use crucially the following lemma in the rest of the present article, since we will need to have a precise control on the bicharacteristics of $q_1$.

\begin{lemma}[Hamiltonian flow of $q_1$]\label{explicitbicharac}
    Assume that $q_1$ is as in Proposition \ref{q1q2fctofp1p2}. Let $(x,\xi) \in T^*\mathcal{S}$, and let $I$ be its Clairaut integral. There holds
    \begin{equation}
        \Phi^{q_1}_t(x,\xi) = \Phi_{\frac{t}{2\pi}\tau(I)}\ \circ \ \Phi^{p_2}_{-t\omega(I)}(x,\xi).
    \end{equation}
\end{lemma}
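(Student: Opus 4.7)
The plan is to compute $\Phi^{q_1}_t$ explicitly by decomposing the Hamiltonian vector field of $q_1 = G(p_1,p_2)$ via the closed-form expression from Lemma \ref{formulaofq1}. First, since $\{p_1, p_2\} = 0$ by Proposition \ref{completintegrab}, any integral curve of $\Phi^{q_1}_t$ preserves both $p_1$ and $p_2$, and the Hamiltonian vector field of $q_1$ is $(\partial_1 G)(p_1,p_2)$ times that of $p_1$ plus $(\partial_2 G)(p_1,p_2)$ times that of $p_2$. Since $\Phi_t = \Phi^{p_1}_t$ and $\Phi^{p_2}_t$ commute (again by $\{p_1, p_2\} = 0$) and the coefficients $a_i := (\partial_i G)(p_1(x,\xi), p_2(x,\xi))$ are constant along the trajectory, a standard composition-of-flows argument yields
\begin{equation*}
\Phi^{q_1}_t(x,\xi) = \Phi_{t a_1} \circ \Phi^{p_2}_{t a_2}(x,\xi),
\end{equation*}
reducing the lemma to showing $a_1 = \tau(I)/(2\pi)$ and $a_2 = -\omega(I)$.

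Next, I differentiate \eqref{explicitG} directly. The boundary contributions at $\sigma = \sigma_{\pm}(I)$ vanish since the integrand is zero there (because $f(\sigma_\pm(I)) = |I|$), so
\begin{equation*}
\partial_{p_1} G = \frac{p_1}{\pi}\int_{\sigma_-(I)}^{\sigma_+(I)} \frac{d\sigma}{\sqrt{p_1^2 - p_2^2/f^2(\sigma)}}, \qquad \partial_{p_2} G = -\frac{1}{\pi}\int_{\sigma_-(I)}^{\sigma_+(I)} \frac{p_2\, d\sigma}{f^2(\sigma)\sqrt{p_1^2 - p_2^2/f^2(\sigma)}} + \mathrm{sign}(p_2).
\end{equation*}
These integrals acquire a geometric meaning when reparametrized along a bicharacteristic of $p_1$ with parameters $(p_1, p_2)$: from the Hamilton equations for \eqref{normsurfrev}, one has $\dot\sigma = \pm\sqrt{p_1^2 - p_2^2/f^2(\sigma)}/p_1$ on monotone branches, so $dt = \pm p_1\, d\sigma/\sqrt{p_1^2 - p_2^2/f^2(\sigma)}$. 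Since the $\sigma$-interval $[\sigma_-(I), \sigma_+(I)]$ is traversed twice over one period $\tau(I)$ (up, then down, as in Section \ref{subsec2CdV}),
\begin{equation*}
\tau(I) = 2\int_{\sigma_-(I)}^{\sigma_+(I)} \frac{p_1\, d\sigma}{\sqrt{p_1^2 - p_2^2/f^2(\sigma)}},
\end{equation*}
hence $a_1 = \tau(I)/(2\pi)$. Similarly, $\dot\theta = p_2/(p_1 f^2(\sigma))$ yields the total angular shift
\begin{equation*}
\Delta\theta = 2\int_{\sigma_-(I)}^{\sigma_+(I)} \frac{p_2\, d\sigma}{f^2(\sigma)\sqrt{p_1^2 - p_2^2/f^2(\sigma)}}
\end{equation*}
over the same period, and by \eqref{defomegaI} one has $\omega(I) \equiv \Delta\theta/(2\pi) \pmod 1$.

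The main obstacle is the branch-fixing of $\omega$: it must be checked that the $\mathrm{sign}(p_2)$ term coming from differentiating $|p_2|$ in $G$ matches exactly the integer shift relating $\Delta\theta/(2\pi)$ to the continuous branch of $\omega$ anchored at $\omega(0) = 0$. The expected verification is local near $I = 0$: a geodesic with $I$ small but nonzero nearly passes through both poles, and each polar swing contributes approximately $\pi\,\mathrm{sign}(p_2)$ to $\Delta\theta$ (so $\approx 2\pi\,\mathrm{sign}(p_2)$ per period), whereas $\omega$ must remain close to $\omega(0) = 0$; this forces the identification $\omega(I) = \Delta\theta/(2\pi) - \mathrm{sign}(p_2)$ for $p_2 \neq 0$. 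Continuity of both sides on $\{|I| < 1\}$, together with smoothness of $G$ and of the bicharacteristic structure, extends the identification globally. Combined with the previous stage, this yields $a_2 = -\omega(I)$ and concludes the proof.
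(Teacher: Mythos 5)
Your proof is correct, but it takes a genuinely different route from the paper's. The paper argues abstractly: it places the geodesic flow and the flows of $q_1,q_2$ on a common invariant torus, passes to the action-angle chart of Theorem \ref{torifoliation} in which all three flows are linear, and reads off the relation $\frac{t_0}{\tau(I)} = \frac{t_1}{2\pi} = \frac{t_2}{\omega(I)}$ from the intercept theorem together with the lattice $\Lambda = \Z\left(\tau(I),-2\pi\omega(I)\right)\oplus\Z(0,2\pi)$; no formula for $q_1$ is used, only the properties listed in Proposition \ref{q1q2fctofp1p2}. You instead decompose $X_{q_1} = (\partial_1 G)X_{p_1} + (\partial_2 G)X_{p_2}$, use conservation of $(p_1,p_2)$ and commutativity of the two flows to reduce the lemma to a frequency-map computation, and identify $\partial_{p_1}G = \tau(I)/(2\pi)$ and $\partial_{p_2}G = -\omega(I)$ by differentiating the closed formula \eqref{explicitG} and reparametrizing the resulting integrals along a unit-speed geodesic; your branch-fixing of $\omega$ via the near-meridian limit ($\Delta\theta \to 2\pi\,\mathrm{sign}(p_2)$ as $I\to 0$, while $\omega\to\omega(0)=0$) together with continuity of the integer-valued difference is sound, and the degenerate cases $p_2=0$ and $|I|=1$ are, just as in the paper's own proof, recovered by continuity. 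The trade-off: your computation is self-contained and quantitative, avoids invoking the Liouville--Arnold chart, and yields the stronger pointwise identity $(\partial_1 G,\partial_2 G)=(\tau/2\pi,-\omega)$ (in particular re-deriving Proposition \ref{g'eq-omega} as a byproduct); on the other hand it establishes the lemma only for the $q_1$ given by the explicit formula of Lemma \ref{formulaofq1}, whereas the paper's formula-free argument applies to any $q_1$ satisfying the conclusions of Proposition \ref{q1q2fctofp1p2}, which is precisely what the surrounding discussion uses to conclude that the explicit construction is canonical. To recover the full stated generality with your method you would add the uniqueness remark (the flow plus the prescribed image $\Gamma$ determine $q_1$), or note that only the explicitly constructed $q_1$ is used in the rest of the paper.
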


\myindent As a consequence, formula \eqref{explicitgamma1} simply expresses the fact that $\gamma_1$ is the bicharacteristic of $q_1$ starting at $O_{\lambda,\mu}$. We now prove the lemma.

\begin{proof}
    It is enough, by continuity, to prove the formula when $(x,\xi)\notin Z$, i.e. when $I\notin \{-1,1\}$. Write $(\lambda,\mu) = (p_1(x,\xi),p_2(x,\xi))$, so that, in particular, $I = \frac{\mu}{\lambda}$. By definition, we know that the three curves
    \begin{equation}
        \begin{split}
            t &\mapsto \Phi_t(x,\xi)\\
            t &\mapsto \Phi^{q_1}_t(x,\xi) \\
            t &\mapsto \Phi^{q_2}_t(x,\xi)
        \end{split}
    \end{equation}
    are included in the set $T_{\lambda,\mu} := (p_1,p_2)^{-1}(\lambda,\mu)$. Now, using action-angle coordinates associated to $(q_1,q_2)$, $T_{\lambda,\mu}$ is diffeomorphic to a regular torus $\mathbb{T}^2$, on which following the Hamiltonian flow of $q_1$ (resp $q_2$) corresponds to travelling at constant speed on the vertical axis (resp horizontal axis). Moreover, in this representation, the geodesic flow is a straight line followed at a constant speed. In other words, the situation is given by Figure \ref{torusbichq1}, where the sides of the square are of length $2\pi$. 
    
\begin{figure}[h]
\includegraphics[scale = 0.5]{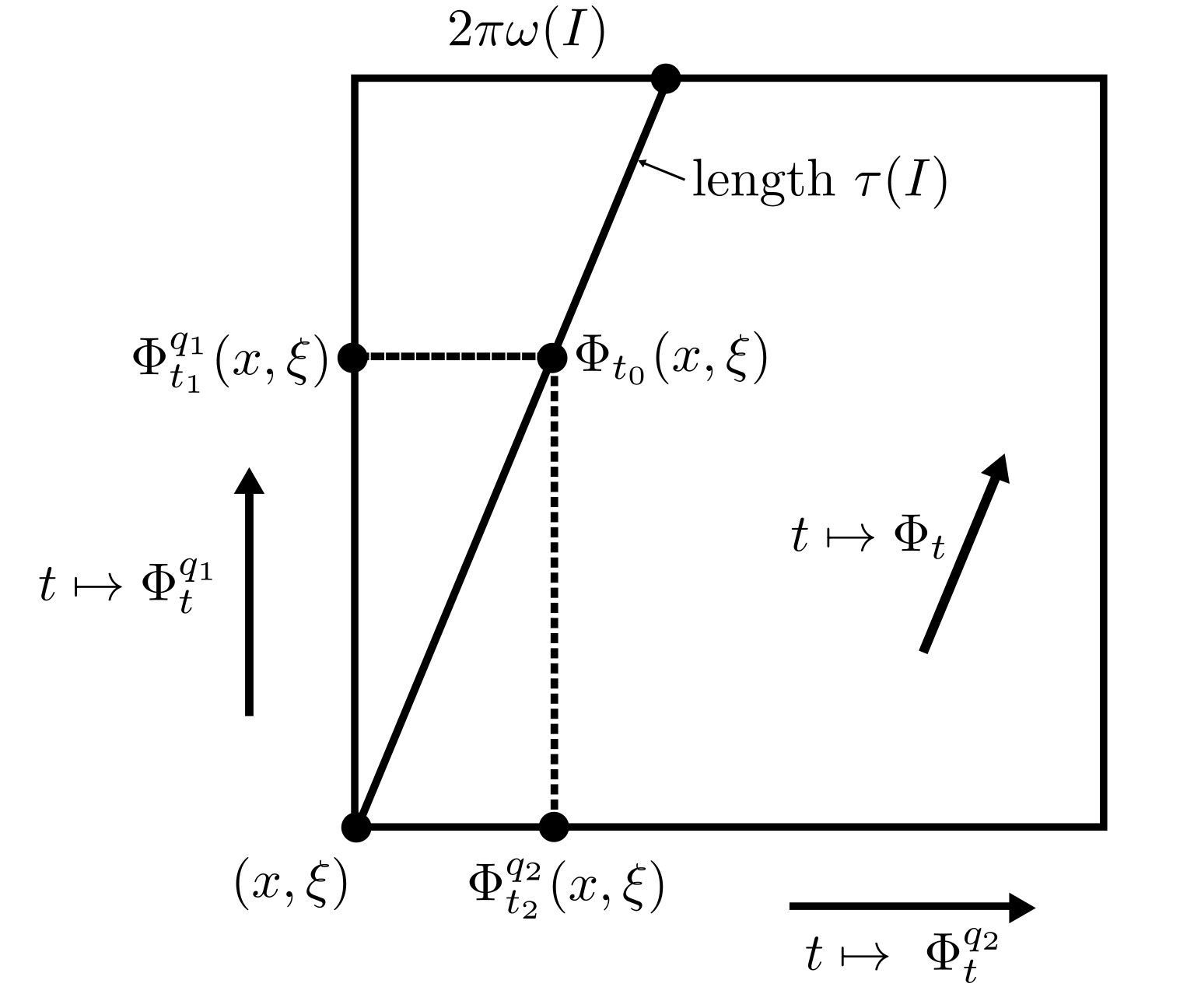}
\centering
\label{torusbichq1}
\caption{The geodesics in action-angle coordinates}
\end{figure}

    \myindent With the notations introduced on Figure \ref{torusbichq1}, this yields, in particular, that
    \begin{equation}
        \Phi_{t_0}(x,\xi) = \Phi_{t_1}^{q_1} \circ \Phi_{t_2}^{p_2}(x,\xi),
    \end{equation} 
    or, equivalently,
    \begin{equation}
        \Phi_{t_1}^{q_1}(x,\xi) = \Phi_{t_0}\circ\Phi_{-t_2}^{p_2}(x,\xi).
    \end{equation}
    
    \myindent Now, an application of the intercept theorem yields that
    \begin{equation}
        \frac{t_0}{\tau(I)} = \frac{t_1}{2\pi} = \frac{t_2}{\omega(I)},
    \end{equation}
    from which formula \eqref{explicitbicharac} follows.
\end{proof}

\myindent Thanks to this lemma, we find that the bicharacteristic curves of $q_1$ have the same behaviour than the geodesics of $\mathcal{S}$. Namely, for each of these curves, we can define the conserved Clairaut integral $I$. Moreover, the curve oscillates between the parallels $\{\sigma = \sigma_+(I)\}$ and $\{\sigma = \sigma_-(I)\}$, with exactly one oscillation for each revolution around the vertical axis. Hence, even though it is not fully explicit, we can really understand the Hamiltonian flow of $q_1$ as a \textit{periodized} version of the geodesic flow on $T^*\mathcal{S}$.

\subsection{Quantum Complete Integrability}\label{subsec4CdV}

\myindent As we announced in the introduction, the main interest of the previous analysis is that it can be lifted to the level of pseudodifferential operators. The presentation is adapted from \cite{de1980spectre}. Define
\begin{equation}
\begin{split}
    P_1 := \sqrt{-\Delta} \\
    P_2 := \frac{1}{i} \frac{\partial}{\partial \theta},
\end{split}
\end{equation}
so that, in particular, the principal symbol of $P_i$, $i = 1,2$, is $p_i$ (see \eqref{defp1} and \eqref{defp2}.

\myindent Writing explicitly the Laplacian in coordinates yields that 
\begin{equation}
    [P_1, P_2 = 0].
\end{equation}

\myindent This implies, though it is strictly stronger, that $\{p_1,p_2\} = 0$, i.e. (the key part of) the complete integrability of $\mathcal{S}$.

\quad

\myindent Now, the construction of $q_1$ and $q_2$ can similarly be "translated" into the framework of pseudo differential operators : let $\hat{\mathcal{A}}$ the algebra of operators formed by the $f(P_1,P_2)$ where $f$ is a classical symbol on $\R^2\backslash\{0\}$. Precisely, define the following commutative algebra of pseudodifferential operators (see Strichartz \cite{strichartz1972functional}) 
\begin{equation}
    \hat{\mathcal{A}} = \{f(P_1,P_2) \ f \in S^{\infty}_{cl}(\R^2\backslash\{0\})\},
\end{equation}
where $S_{cl}^{\infty}(\R^2\backslash\{0\})$ is the set of smooth functions $\R^2\backslash\{0\}\to \R\backslash\{0\}$ which can be asymptotically decomposed into homogeneous components of degree $m-j$ ($m\in \Z$ is fixed) i.e.
\begin{equation}
    f \sim f_m + f_{m-1} + ... + f_{m-j} + ... \ \text{in the sense that} \ f - \sum_{j=0}^{N-1} f_{m-j} = O(\|\xi\|^{m-N}).
\end{equation}

\myindent Then, the following theorem holds (see \cite{de1980spectre}[Theorem 6.1.]).

\begin{theorem}[Colin de Verdière]\label{CdVthm}
    Let $\mathcal{S}$ be a simple surface of revolution. Then, the algebra of operators $\hat{\mathcal{A}}$ admits two generators $Q_1,Q_2 \in \hat{\mathcal{A}}$ with principal symbols $q_1$ and $q_2$ defined by Proposition \ref{q1q2fctofp1p2}, such that moreover
    \begin{equation}\label{exp2ipiQieqId}
        e^{2i\pi Q_k} = (-1)^k Id, \qquad k = 1,2.
    \end{equation}

    \myindent In particular, there exists a classical elliptic symbol $F$ of order $2$ on $\R^2$ such that
    \begin{equation}\label{defFinThmCdV}
        F \sim F_2 + F_0 + F_{-1} + ...
    \end{equation}
    (the homogeneous component of order $1$ equals zero as the subprincipal symbol equals zero) and
    \begin{equation}
        -\Delta = F(Q_1,Q_2).
    \end{equation}

    \myindent In particular, there holds on $\mathcal{T}^*\mathcal{S} \backslash\{0\}$
    \begin{equation}\label{pfunctq}
        p_1 = \sqrt{F_2(q_1,q_2)}.
    \end{equation}

    \myindent Finally, there holds
    \begin{equation}
        Q_2 = P_2.
    \end{equation}
\end{theorem}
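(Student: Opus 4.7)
The plan is to construct $Q_1$, since $Q_2 := P_2 = \tfrac{1}{i}\partial_\theta$ is imposed and $e^{2i\pi P_2}=\mathrm{Id}$ is immediate from the fact that $P_2$ has integer spectrum on $S^1$, which yields the sign $(-1)^2$. As a first approximation I would set $Q_1^{(0)} := G(P_1, P_2)$, where $G$ is the homogeneous diffeomorphism of Proposition \ref{q1q2fctofp1p2}, this being well defined through Strichartz's joint functional calculus \cite{strichartz1972functional} on the commuting pair $(P_1, P_2)$. The operator $Q_1^{(0)}$ is then a self-adjoint classical pseudodifferential operator of order $1$, with principal symbol $q_1 = G(p_1, p_2)$; its subprincipal symbol vanishes, inherited from the vanishing of the subprincipal symbol of $P_1$ and the $\theta$-independence built into the construction.

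Next I would analyse $U := e^{2i\pi Q_1^{(0)}}$. By the general theory of elliptic operators of order $1$ whose principal symbol generates a $2\pi$-periodic Hamiltonian flow (Weinstein, Duistermaat--Guillemin, Colin de Verdi\`ere), combined with the periodicity of $\Phi^{q_1}_t$ from Proposition \ref{q1q2fctofp1p2}, $U$ is a zeroth-order pseudodifferential operator whose principal symbol is locally constant on $T^*\mathcal{S}\backslash\{0\}$ and equal to $e^{-i\pi \mu_1/2}$, with $\mu_1$ the common Maslov index of the closed bicharacteristics of $q_1$. Using the explicit description of $\Phi^{q_1}_t$ in Lemma \ref{explicitbicharac} and the orbit formula \eqref{explicitgamma1}, in particular the fact that for $I=0$ the orbit $\gamma_1$ is a meridian of length $2L$ passing through both poles (each a caustic of multiplicity one), one obtains $\mu_1 = 2$; hence the principal symbol of $U$ equals $-1$, and $U = -(\mathrm{Id} + R_{-1})$ with $R_{-1}$ pseudodifferential of order $-1$.

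To upgrade $Q_1^{(0)}$ to an operator $Q_1$ satisfying $e^{2i\pi Q_1} = -\mathrm{Id}$ exactly, I would iteratively add lower-order corrections. At each order $-k$ the obstruction is an order-$-k$ symbol $s_{-k}$ on $T^*\mathcal{S}\backslash\{0\}$, killed by a corrector $r_{-k}$ solving the cohomological equation $\{q_1, r_{-k}\} = s_{-k}$; by the periodicity of the flow of $q_1$, this equation is solvable after subtracting the orbital average of $s_{-k}$, and that averaged piece, depending only on $(q_1, q_2)$, can be absorbed into a functional-calculus correction $\phi_{-k}(Q_1^{(0)}, Q_2)$. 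Taking a Borel-type asymptotic sum yields $Q_1$ with $e^{2i\pi Q_1} + \mathrm{Id}$ smoothing; since $(Q_1, Q_2)$ commute and their joint spectrum accumulates near $(\tfrac{1}{2}+\mathbb{Z}) \times \mathbb{Z}$ at rate $o(1)$, a final spectral adjustment by a Schwartz function supported in a neighbourhood of this lattice collapses the residue to zero. Once $Q_1, Q_2$ are built, Strichartz's calculus applied to the commuting pair $(Q_1, Q_2)$ yields $-\Delta = F(Q_1, Q_2)$ with leading symbol $F_2$ determined by $F_2 \circ G = p_1^2$, and the vanishing of the order-$1$ homogeneous component of $F$ reflects the vanishing of the subprincipal symbol of the Laplace--Beltrami operator. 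The main obstacle I expect is this iterative step: controlling the cohomological solutions uniformly up to and across the singular locus $Z$ (cotangent fibres over the poles and over the equator), where the Hamiltonian flow of $q_1$ degenerates and the Maslov computation requires a careful blow-up treatment.
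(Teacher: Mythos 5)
Your outline reproduces the skeleton of the paper's (i.e.\ Colin de Verdière's) argument: take $Q_2=P_2$, form the first approximation $G(P_1,P_2)$ via Strichartz's joint calculus, use the $2\pi$-periodicity of the flow of $q_1$ together with the Maslov index (your $\mu_1=2$, from the two pole passages of the meridian cycle $\gamma_1$, is the right count and gives the sign $-1$) to obtain $e^{2i\pi Q_1^{(0)}}=-(Id+C)$ with $C$ of order $-1$. Where you genuinely diverge is the removal of $C$: the paper stresses that the correction is built \emph{spectrally in one stroke} — one takes an exact logarithm of $Id+C$ and the real work is then to show this logarithm lies in $\hat{\mathcal{A}}$, via the characterization that any symbol Poisson-commuting with $p_1,p_2$ is a smooth homogeneous function of $(p_1,p_2)$, applied to successive approximations, plus the fact that smoothing remainders belong to $\hat{\mathcal{A}}$ — whereas you run the classical averaging iteration (which the paper explicitly says it avoids) and only round the spectrum onto $\tfrac12+\Z$ at the end. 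Your hybrid route is workable, and has the merit that the corrections $\phi_{-k}(Q_1^{(0)},Q_2)$ are manifestly in $\hat{\mathcal{A}}$, but note: (i) the cohomological equation $\{q_1,r_{-k}\}=s_{-k}-\langle s_{-k}\rangle$ is superfluous, since $e^{2i\pi Q}$ commutes with $e^{isQ}$ so the obstruction symbol is automatically flow-invariant (and, by rotational equivariance, $q_2$-invariant, which is exactly why it is a function of $(q_1,q_2)$); (ii) the final adjustment is not by a Schwartz function but by a bounded smooth rounding function $h$ supported near $\tfrac12+\Z$, and $h(Q_1)$ is smoothing only because the eigenvalues approach the lattice at rate $O(\lambda^{-\infty})$ (not merely $o(1)$), with finitely many low eigenvalues fixed by a finite-rank correction, and one must still check such smoothing remainders lie in $\hat{\mathcal{A}}$; (iii) your last step, $-\Delta=F(Q_1,Q_2)$ "by Strichartz's calculus", quietly uses the statement that $(Q_1,Q_2)$ generate $\hat{\mathcal{A}}$ (equivalently, that $Q_1=G(P_1,P_2)+\text{lower order}$ can be inverted within the calculus), which is precisely the part the paper calls rather technical and, like you, defers to \cite{de1980spectre}.
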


\myindent Observe that \eqref{exp2ipiQieqId} implies that the Hamiltonian flows of $q_1$ and $q_2$ are $2\pi$-periodic. Indeed, $e^{2i\pi Q_i}$ is a Fourier Integral Operator (FIO) whose canonical relation is given by the canonical transformation given by the Hamiltonian flow of the principal symbol $q_i$ at the time $2\pi$ (see \cite{hormander2009analysis}). One could be surprised that
\begin{equation}
    e^{2i\pi Q_1} = -Id
\end{equation}
(and not $+Id$) while the Hamiltonian flow of $q_1$ is $2\pi$-periodic. This is due to the phenomenon of \textit{Maslov indices}, of which we won't speak since it doesn't affect the presentation. We refer to the presentation give by Hörmander of the relation between the structure of the fibers of $(q_1,q_2)$ and the Maslov indices, see \cite[section 3]{hormander1971fourier}.

\begin{proof}
    We give a sketch of the proof, and refer to \cite{de1980spectre} for the details. 
    
    \myindent The first part is to construct approximate solutions to \eqref{exp2ipiQieqId}. Write $q_i = f_i(p_1,p_2)$ with $f_i$ a smooth homogeneous function. Define $T_i := f_i(P_1,P_2)$, $i = 1,2$. Then, the principal symbol of $T_i$ is $q_i$, thanks to the usual calculus of FIO, and the $2\pi$-periodicity of the Hamiltonian flows of the $q_i$ yields that
    \begin{equation}
        \exp(2i\pi (T_j-\mu_j)) = Id + C_j,
    \end{equation}
    where $\mu_j \in \frac{1}{4}\Z$ is a Maslov index and $C_j$ is a remainder, which is a pseudodifferential operator of order $-1$ (see \cite{de1980spectre}[Lemma 3.4.].

    \quad

    \myindent Now, as we will argue in the following, it is crucial that formula \eqref{exp2ipiQieqId} holds \textit{exactly}, and \textit{not} up to smoothing remainders, as is often the case in microlocal analysis. Hence, the second part, which is more technical, is to build an \textit{exact} suitable logarithm for $Id + C_j$, which can be done spectrally (and not iteratively with remainders of order more and more regularizing, as is often the case in microlocal analysis).
    
    \quad
    
    \myindent Next, one needs to prove that this logarithm is itself an element of $\hat{\mathcal{A}}$. This last property relies on the one hand on proving an equivalent property at the level of principal symbols, which is that any symbol commuting (in terms of Poisson bracket) with $p_1,p_2$ is a smooth homogeneous function of $p_1,p_2$(\cite{de1980spectre}[Proposition 2.3.]). On the other hand, this can be applied to successive approximations, of strictly decreasing orders. Finally, one checks that smoothing remainders themselves are in $\hat{\mathcal{A}}$. 

    \quad

    \myindent Finally, one needs to check that $Q_1,Q_2$ indeed generate the algebra $\hat{\mathcal{A}}$. This part is rather technical, and we refer to \cite{de1980spectre}.
\end{proof}

\begin{remark}\label{Q1elliptic}
    Observe that, since the principal symbol of $Q_1$ is $q_1$, which is an elliptic symbol on the cotangent bundle thanks to Corollary \ref{q1elliptic}, then $Q_1$ is itself an elliptic pseudodifferential operator.
\end{remark}

\begin{remark}
    The most difficult part of \cite{de1980spectre}, of which we won't speak, is actually the precise computation of the joint spectrum of $(Q_1,Q_2)$. Indeed, thanks to Theorem \ref{CdVthm}, this yields a description of the spectrum of $\sqrt{-\Delta}$, i.e. of the eigenvalues of $\mathcal{S}$. On this matter, see also \cite{verdiere1979spectre}.
\end{remark}

\subsection{Geometric properties}\label{subsec5CdV}

\subsubsection{The twist hypothesis}\label{subsubsec51CdV}

\myindent In this paragraph, we detail how the twist Hypothesis \ref{twisthypothesis} can be reformulated in terms of the function $F_2$ defined by Theorem \ref{CdVthm}.
\begin{definition}\label{deflilgammas}
    Let $F_2$ be given by Theorem \ref{CdVthm}. We define the curves
    \begin{equation}\label{deflilgamma}
        \gamma := \{F_2 = 1\},
    \end{equation}
    and 
    \begin{equation}\label{deflilgamma0}
        \gamma_0 := \gamma \cap \Gamma,
    \end{equation}
    where $\Gamma$ is defined by \eqref{defGamma}. 

    \myindent Moreover, we denote by $d\mu$ the superficial measure on the curve $\gamma$.
\end{definition}

\myindent Observe that only $\gamma_0$ has a geometrical meaning, and is fixed by $\mathcal{S}$. Indeed, thanks to \eqref{pfunctq}, it corresponds to the equation $\{p_1 = 1\}$. For $\gamma$, one may choose any smooth extension of $\gamma_0$ as long as it is a simple curve. We refer to Figure \ref{figlilgammas} in order to picture the curves $\gamma_0$ and $\gamma$.

\begin{figure}[h]
\includegraphics[scale = 0.5]{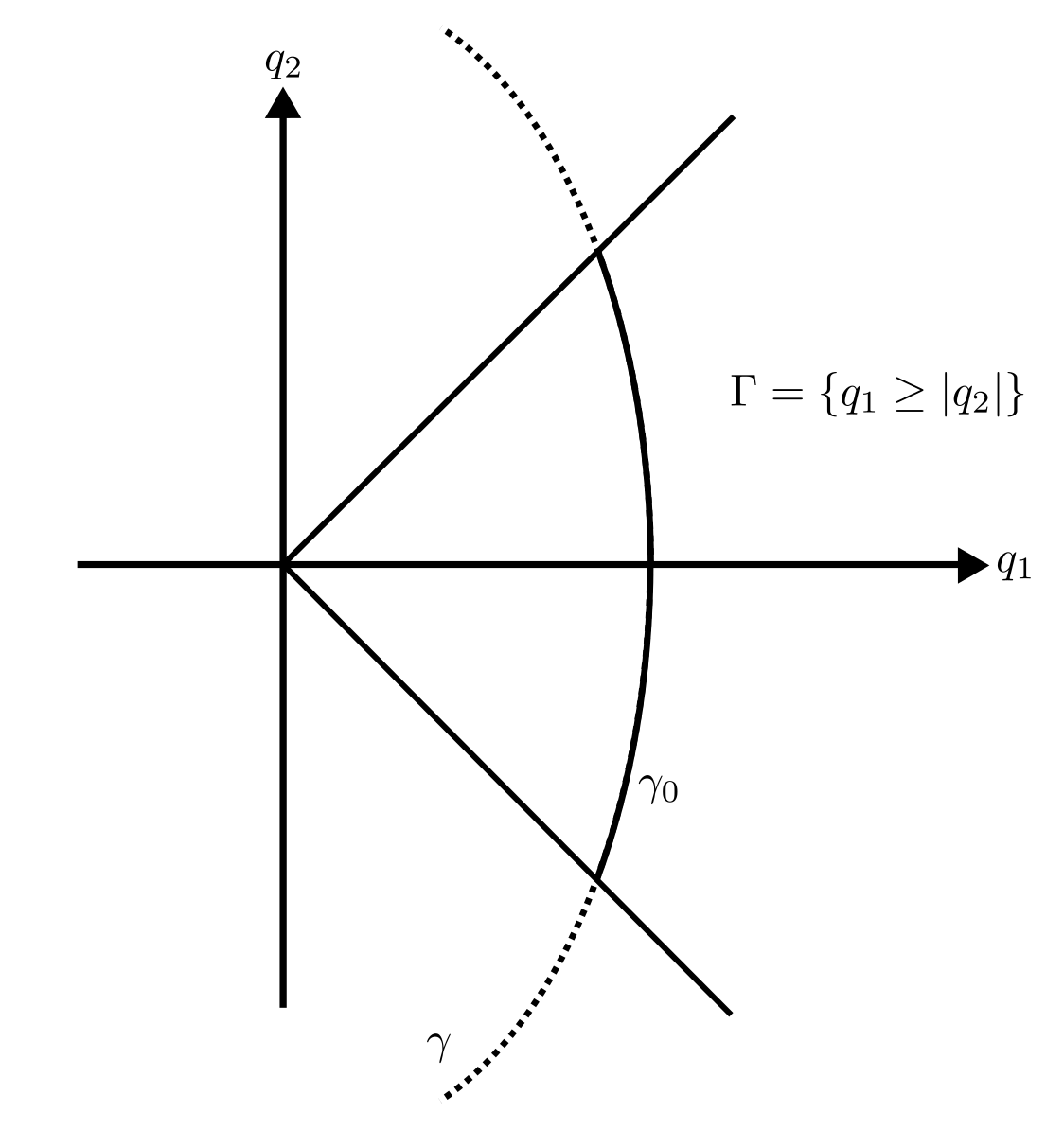}
\centering
\caption{The curves $\gamma$ and $\gamma_0$}
\label{figlilgammas}
\end{figure}

\myindent Now, $\gamma_0$ can be described as the graph of a function $q_1 = g(q_2)$. Indeed, by definition, there holds
\begin{equation}
    \gamma_0 = \{(q_1,q_2) \in \R^2 \ \text{such that} \ F_2(q_1,q_2) = 1 \ \text{and} \ |q_2| \leq q_1\}.
\end{equation}

\myindent Hence, using the homogeneity of $F_2$, there holds
\begin{equation}
    \gamma_0 = \left\{ \left(\frac{1}{F_2(1, I)}, I\right), \qquad I \in [-1,1] \right\}.
\end{equation}

\myindent \cite{bleher1994distribution}[Proposition 6.2] gave the following geometrical interpretation of the derivative of $g$ in terms of $\omega(I)$.

\begin{proposition}[Bleher]\label{g'eq-omega}
    There holds along the curve $\gamma_0$
    \begin{equation}
        g'(I) = \frac{d q_1}{dq_2}_{|q_2 = I} = -\omega(I).
    \end{equation}
\end{proposition}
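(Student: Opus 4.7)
The plan is to derive an explicit expression for $\omega(I)$ as a ratio of partial derivatives of $F_2$ along $\gamma_0$, and then conclude via the implicit function theorem applied to the defining equation $F_2 = 1$ of $\gamma_0$.

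I would work on the Liouville torus $\Lambda_{(1, I)} = \vec{p}^{\,-1}(1, I)$, on which $(q_1, q_2)$ takes the constant value $(g(I), I)$ and on which $\tau(I), \omega(I)$ are constants. Differentiating the identity of Lemma \ref{explicitbicharac} in $t$ at $t = 0$, and using that $q_2 = p_2$ (so $X_{q_2} = X_{p_2}$), yields the pointwise identity between Hamiltonian vector fields
\begin{equation*}
X_{q_1} = \frac{\tau(I)}{2\pi}\, X_{p_1} - \omega(I)\, X_{q_2} \qquad \text{on } \Lambda_{(1, I)}.
\end{equation*}
On the other hand, since $\{q_1, q_2\} = 0$ (from the commutation of the Hamiltonian flows of $Q_1, Q_2$ granted by Theorem \ref{CdVthm}), for any smooth function $H$ of two variables one has $X_{H(q_1, q_2)} = (\partial_1 H)(q_1, q_2)\, X_{q_1} + (\partial_2 H)(q_1, q_2)\, X_{q_2}$. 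Applied to $H = F_2$, and using $X_{F_2(q_1, q_2)} = X_{p_1^2} = 2 p_1 X_{p_1}$, the restriction to $\Lambda_{(1, I)}$ (where $p_1 = 1$) reads
\begin{equation*}
2\, X_{p_1} = A\, X_{q_1} + B\, X_{q_2},
\end{equation*}
where I write $A := (\partial_1 F_2)(g(I), I)$ and $B := (\partial_2 F_2)(g(I), I)$.

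Substituting the first identity into the second and regrouping leads to
\begin{equation*}
\Bigl(2 - \frac{A\, \tau(I)}{2\pi}\Bigr) X_{p_1} = \bigl(B - A\, \omega(I)\bigr)\, X_{q_2} \qquad \text{on } \Lambda_{(1, I)}.
\end{equation*}
For $I \in (-1, 1) \setminus \{0\}$, the vector fields $X_{p_1}$ and $X_{q_2} = X_{p_2}$ are linearly independent at every point of the torus: in the coordinates $(\theta, \sigma, \Theta, \Sigma)$ one has $X_{q_2} = \partial_\theta$, while the $\partial_\sigma$-component of $X_{p_1}$ equals $\Sigma$ and its $\partial_\Sigma$-component equals $I^2 f'(\sigma)/f^3(\sigma)$, so at least one of them is nonzero on $\Lambda_{(1,I)}$ (the only point where both vanish simultaneously requires $\Sigma = 0$ and $f'(\sigma) = 0$, which forces $|I| = 1$ by simplicity of $\mathcal{S}$). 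Since the coefficients above are constants on the leaf, they must both vanish, and the vanishing of the coefficient of $X_{q_2}$ yields
\begin{equation*}
\omega(I) = \frac{(\partial_2 F_2)(g(I), I)}{(\partial_1 F_2)(g(I), I)}.
\end{equation*}

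Finally, the ellipticity of $F_2$ together with the fact that $\gamma_0$ is a smooth curve ensures $\partial_1 F_2 \neq 0$ on $\gamma_0$. Differentiating the defining relation $F_2(g(q_2), q_2) = 1$ at $q_2 = I$ yields $g'(I) = -(\partial_2 F_2)(g(I), I)/(\partial_1 F_2)(g(I), I)$, which combined with the previous display gives $g'(I) = -\omega(I)$. The degenerate cases $I = 0$ and $I = \pm 1$ follow by continuity, since both $g'$ and $\omega$ extend smoothly to the closed interval. The main technical step is the linear independence of $X_{p_1}$ and $X_{q_2}$ on the torus, which is a short explicit computation in local coordinates; the rest is a direct application of the symplectic formalism developed in Section \ref{subsec3CdV} and the implicit function theorem.
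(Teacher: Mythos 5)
Your argument is correct, but it is genuinely different from what the paper does: the paper gives no proof at all here, it simply quotes \cite{bleher1994distribution}[Proposition 6.2], and Bleher's route (the one implicitly available in the paper via Lemma \ref{formulaofq1} and the formula \eqref{defglocal} in Appendix \ref{AppendixB2}) is a direct computation, differentiating the action integral $g(I) = I + \pi^{-1}\int_{\sigma_-(I)}^{\sigma_+(I)}\sqrt{1 - I^2 f(\sigma)^{-2}}\,d\sigma$ under the integral sign (the boundary terms drop because the integrand vanishes at $\sigma_{\pm}(I)$) and comparing with the classical Clairaut integral formula for the phase shift $\omega(I)$. You instead give a coordinate-light dynamical proof: differentiating the flow identity of Lemma \ref{explicitbicharac} at $t=0$ to get $X_{q_1} = \frac{\tau(I)}{2\pi}X_{p_1} - \omega(I)X_{q_2}$ on the Liouville torus, combining it with $X_{F_2(q_1,q_2)} = \partial_1F_2\,X_{q_1} + \partial_2F_2\,X_{q_2}$ and $F_2(q_1,q_2) = p_1^2$ from \eqref{pfunctq}, and using linear independence of $X_{p_1}$ and $X_{p_2}$ at one point of a regular torus to identify $\omega(I) = \partial_2F_2/\partial_1F_2$ along $\gamma_0$; implicit differentiation of $F_2(g(I),I)=1$ then closes the argument. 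Your approach buys a proof that never needs the explicit integral formula for $\omega(I)$, and as a by-product the vanishing of the $X_{p_1}$-coefficient gives the companion identity $\partial_1F_2(g(I),I)\,\tau(I) = 4\pi$, which in particular shows $\partial_1 F_2 \neq 0$ on $\gamma_0$ directly — a cleaner justification than your appeal to ellipticity plus smoothness of $\gamma_0$ (which as stated only gives $\nabla F_2 \neq 0$). Two further cosmetic points: the chain-rule identity for $X_{F_2(q_1,q_2)}$ does not require $\{q_1,q_2\}=0$, only $dF_2(q_1,q_2) = \partial_1F_2\,dq_1 + \partial_2F_2\,dq_2$; and at $I=0$ the independence argument actually still works away from the poles (there $\Sigma = \pm 1$ on $\Lambda_{(1,0)}$), so only $I=\pm1$ truly needs the continuity extension.
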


\myindent As a consequence, critical points of $I \mapsto \omega(I)$ corresponds to inflexion points of $\gamma_0$. In particular, the twist Hypothesis \ref{twisthypothesis} is equivalent to the following assumption, which is closer to the framework of Colin de Verdière.

\begin{hypothesis}[twist Hypothesis V2]\label{twistV2}
    The curve $\gamma_0$ doesn't have any inflexion points, i.e. its curvature is nowhere zero.
\end{hypothesis}

\begin{remark}\label{defgeneric}
    The precise meaning of "generic" in Colin de Verdière's Theorem \ref{thmCdVRemainders} is that the curve $\gamma_0$ only has ordinary points of inflexion, i.e. its third derivative is nonzero if its curvature vanishes. Hence, it can be reformulated by saying that, generically, $\omega''(I) \neq 0$ whenever $\omega'(I) = 0$.
\end{remark}

\begin{remark}\label{curvaturegamma}
    Observe that the function $F_2$ given by Theorem \ref{CdVthm} is only constrained on the set $\Gamma$ defined by \eqref{defGamma}. In particular, if the twist Hypothesis \ref{twisthypothesis} holds, we may always choose $F_2$ such that the curve $\gamma$ has only ordinary points of inflexion. We will assume that this holds in the following.
\end{remark}

\subsubsection{Antipodal and exceptional refocalisations of bicharacteristic curves}\label{subsubsec52CdV}

\myindent In view of applying the analysis of Fourier Integral Operators to the semigroup $s \mapsto e^{isQ_1}$, it is crucial to understand the \textit{crossings} of the bicharacteristic curves of $q_1$ (see Definition \ref{defbicharacteristics}). We give the following definition
\begin{definition}[Crossings of bicharacteristic curves]\label{crossingDef}
    Let $x\in \mathcal{S}$ and let $(x,\xi), (x,\eta) \in S^*_x\mathcal{S}$ be two different directions. We say that the bicharacteristic curves starting at $(x,\xi)$ and $(x,\eta)$ \textit{cross} at a point $y \in \mathcal{S}\backslash\{x\}$ at the time $t \neq 0$ if
    \begin{equation}
        P(\Phi^{q_1}_t(x,\xi)) = P(\Phi^{q_1}_t(x,\eta)) = y,
    \end{equation}
    where we recall that $P : S^*\mathcal{S} \to \mathcal{S}$ is the fiber projection. 
    
    \myindent More generally, we say that these two curves \textit{intersect} at $y$ if there exist two times $s,t$ such that
    \begin{equation}
        P(\Phi_t^{q_1}(x,\xi)) = P(\Phi_s^{q_1}(x,\eta)) = y,
    \end{equation}
    which exactly corresponds to a geometrical intersection point of the bicharacteristic curves.
\end{definition}

\myindent The crossing of bicharacteristic curves corresponds to the well-known analysis of \textit{focal points} in Fourier Integral Operators theory, which is one of the very delicate part of the theory. In order to simplify the analysis, it is important that we limit the number of crossings of bicharacteristic curves. 

\myindent First, one cannot avoid that \textit{all} bicharacteristic curves starting at a point $x$ cross at the antipodal point $\bar{x}$ (see Definition \ref{antipodpoint}), which is thus \textit{conjugated} to $x$ for the Hamiltonian flow of $q_1$. We call this phenomenon the \textit{antipodal refocalisation} of bicharacteristics.

\begin{lemma}[Antipodal refocalisation]\label{antipodrefocus}
    Under the assumption that $\mathcal{S}$ is \textit{symmetric}, all the bicharacteristic curves of $q_1$ starting at $x$ cross at $\bar{x}$ at the time $\pi$. Precisely, with the Definition \ref{antipodpoint}, there holds
    \begin{equation}
        \forall (x,\xi) \in T^*\mathcal{S}\backslash \{0\}, \qquad \Phi_{\pi}^{q_1}(x,\xi) = (\bar{x},\bar{\xi}).
    \end{equation}
\end{lemma}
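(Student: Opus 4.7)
My plan is to reduce the identity to a single computation at a reference point on each invariant torus, exploiting the commutativity of all the relevant flows with the antipodal map, and then to carry out that computation using Lemma \ref{explicitbicharac} together with the geometric description of geodesics from Subsection \ref{subsec2CdV}. The flows $\Phi_t$, $\Phi^{p_2}_s$ and $\Phi^{q_1}_u$ commute pairwise because $\{p_1, p_2\} = \{q_1, q_2\} = 0$, $q_2 = p_2$, and $p_1 = \sqrt{F_2(q_1,q_2)}$ is a function of $q_1,q_2$. The antipodal map is, away from the poles, the composition of the rotation $\theta \mapsto \theta + \pi$ and the reflection $\sigma \mapsto -\sigma$, both isometries of $(\mathcal{S}, g)$ by the rotational symmetry and the assumption that $\mathcal{S}$ is symmetric; hence it commutes with $\Phi_t$, with $\Phi^{p_2}_s$, and therefore with $\Phi^{q_1}_u$.

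It therefore suffices to prove the identity at the reference point $O_{\lambda, \mu} = (0, 0, \mu, \sqrt{\lambda^2 - \mu^2})$ for each $(\lambda, \mu)$ in the interior of $\Gamma$: any $(x,\xi)$ with $\vec{p}(x,\xi) = (\lambda,\mu)$ may be written as $\Phi^{p_2}_s \circ \Phi_t(O_{\lambda, \mu})$, and one concludes by commuting the antipodal map past both flows. The singular set $Z$ and the fibers over the two poles are then handled by continuity from the interior.

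For the calculation at $O_{\lambda,\mu}$, Lemma \ref{explicitbicharac} yields
\[
    \Phi^{q_1}_\pi(O_{\lambda,\mu}) \ = \ \Phi_{\tau(I)/2} \circ \Phi^{p_2}_{-\pi\omega(I)}(O_{\lambda, \mu}), \qquad I = \mu/\lambda.
\]
By the geometric description of geodesics in Subsection \ref{subsec2CdV}, together with the symmetry of $\mathcal{S}$ that forces the time from the equator to the apex $\sigma = \sigma_+(I)$ to equal the return time, the geodesic through $O_{\lambda,\mu}$ is at the point $x_1 = (\theta_1, 0, \mu, -\sqrt{\lambda^2 - \mu^2})$ on the equator at time $\tau(I)/2$. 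Applying the reflection $\sigma \mapsto -\sigma$ to the upper-hemisphere segment from $x_0$ to $x_1$ and then translating by $\theta_1$ in $\theta$ produces the lower-hemisphere segment from $x_1$ to $x_2$, yielding $\theta(x_2) \equiv 2\theta_1 \pmod{2\pi}$; combining with $\theta(x_2) \equiv 2\pi\omega(I) \pmod{2\pi}$ then gives $\theta_1 \equiv \pi\omega(I) \pmod{\pi}$. Applying $\Phi^{p_2}_{-\pi\omega(I)}$ to $x_1$ produces $(\theta_1 - \pi\omega(I), 0, \mu, -\sqrt{\lambda^2-\mu^2})$, which coincides with $\overline{O_{\lambda,\mu}} = (\pi, 0, \mu, -\sqrt{\lambda^2-\mu^2})$ provided one selects the correct lift $\theta_1 \equiv \pi + \pi\omega(I) \pmod{2\pi}$ rather than $\theta_1 \equiv \pi\omega(I) \pmod{2\pi}$.

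The main obstacle is thus pinning down this branch, since the symmetry argument only determines $\theta_1$ modulo $\pi$. I would resolve it by continuity in $I$: as $I \to 0^+$ the geodesic wraps arbitrarily closely around the North Pole so that $\theta_1 \to \pi$, whereas $\omega(I) \to 0$ by the branch convention $\omega(0) = 0$, forcing the lift $\pi + \pi\omega(I)$ throughout $(-1,1)$. Alternatively, one can argue directly via the explicit formula $\theta_1 = 2 \int_0^{\sigma_+(I)} \mu \, d\sigma / \bigl[ f^2(\sigma) \sqrt{\lambda^2 - \mu^2/f^2(\sigma)} \bigr]$ together with an endpoint estimate near the pole.
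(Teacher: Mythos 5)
Your argument is correct and takes essentially the same route as the paper: the paper's (very terse) proof likewise rests on the explicit formula of Lemma \ref{explicitbicharac} combined with the symmetry of the two halves of the trajectory, which is precisely what your equivariance reduction to $O_{\lambda,\mu}$ and the midpoint computation at time $\tau(I)/2$ make explicit. The one point you add beyond the paper — fixing the lift $\theta_1 = \pi + \pi\omega(I)$ (rather than $\pi\omega(I)$) by continuity in $I$ using the limit $I \to 0^+$ — is a detail the paper leaves implicit, and your resolution of it is sound.
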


\begin{proof}
    This is a direct consequence of the explicit formula giving the bicharacteristics of $q_1$ (see Lemma \ref{explicitbicharac}), and of the fact that, thanks to the symmetry of revolution, the trajectory between $t = 0$ and $t = \pi$ is symmetric to the trajectory between $t = \pi$ and $t = 2\pi$.
\end{proof}

\myindent Now, in order to simplify the analysis, we will work under the hypothesis that this is the \textit{only} case of intersection of bicharacteristic curves (and thus of crossing). As it happens, this hypothesis is not really necessary to the analysis, since the framework actually extends to the case where there is no \textit{crossing} of bicharacteristic curves (except the antipodal refocalisation), but there may be some intersections, which is satisfied under the twist Hypothesis \ref{twisthypothesis}. We will discuss this further in Section \ref{secFurther}.

\begin{hypothesis}[Non intersection of bicharacteristic curves]\label{nonintersectHyp}
    The bicharacteristic curves of $q_1$ do not intersect (see Definition \ref{crossingDef}) except for the antipodal refocalisation.
\end{hypothesis}

\myindent While this hypothesis is restrictive, we give the following proposition to see that it is not empty.

\begin{proposition}\label{norefocusq1norm}
    Let $\mathcal{S}$ be a simple symmetric surface of revolution satisfying the twist Hypothesis \ref{twisthypothesis}. Assume that, for all $x \in \mathcal{S}$, the ball
    \begin{equation}
        \{\xi \in \R^2 \qquad q_1(x,\xi) \leq 1\}
    \end{equation}
    is strictly convex. Then $\mathcal{S}$ satisfies the non intersection of bicharacteristic curves Hypothesis \ref{nonintersectHyp}.
    
    \quad
    
    \myindent Moreover, this condition is satisfies in the following settings.
    
    \myindent i. If there holds, with the definition \eqref{defomegaI},
    \begin{equation}\label{caseomega'neg}
        \forall I \in [0,1], \qquad \omega'(I) < 0.
    \end{equation}
    
    \myindent This is satisfied, for example, by all oblate ellipsoids of revolutions, thanks to Proposition \ref{oblongoblateomega'}.
    
    \myindent ii. If the converse holds, but $\mathcal{S}$ is close enough to the round sphere, in the sense that
    \begin{equation}
        \forall I \in [0,1], \qquad 0 < \omega'(I) < 1.
    \end{equation}
    \myindent This is satisfied, for example, by those oblong ellipsoids of revolutions which are close enough to the round sphere.
\end{proposition}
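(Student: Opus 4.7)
The plan is to proceed in two stages: first reduce the non-intersection statement to the classical theory of Zoll manifolds via Finsler duality, and then verify the strict convexity hypothesis under either condition~i or condition~ii by a direct computation.

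For the first stage, I will use that the strict convexity of $\{q_1(x,\cdot)\leq 1\}$, combined with the smoothness, positivity, and $1$-homogeneity in $\xi$ of $q_1$ (Corollary \ref{q1elliptic}), exhibits $q_1$ as the dual Finsler norm of a smooth, strictly convex Finsler metric $F$ on $\mathcal{S}$, whose unit-speed geodesics are precisely the $q_1$-bicharacteristic curves. By Lemma \ref{antipodrefocus} and the $2\pi$-periodicity of $e^{2i\pi Q_1}=-\mathrm{Id}$, every $F$-geodesic is closed of common period $2\pi$ and all $F$-geodesics from $x$ meet at $\bar{x}$ at time $\pi$, so $(\mathcal{S},F)$ is a Finsler Zoll sphere with conjugate locus of $x$ reduced to $\{\bar{x}\}$. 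The classical theory of SC-manifolds (as exposed in Besse's monograph) then yields that any two distinct bicharacteristic curves from $x$ meet only at $x$ and at $\bar{x}$---which is exactly Hypothesis \ref{nonintersectHyp}.

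For the second stage, at $x=(\theta,\sigma)$ I will parameterize $\{q_1(x,\cdot)=1\}\subset T^*_x\mathcal{S}$ by the Clairaut integral $I$. Using the homogeneity of $F_2$ and the defining identity $F_2(g(q_2),q_2)=1$ on $\gamma_0$, one finds $F_2(1,\Theta)=1/g(I)^2$ when $\Theta=I/g(I)$, yielding
\[
\Theta(I)=\frac{I}{g(I)},\qquad \Sigma(I)=\pm\frac{\sqrt{f(\sigma)^2-I^2}}{f(\sigma)\,g(I)},\qquad |I|\leq f(\sigma).
\]
Using $g'(I)=-\omega(I)$ and $g''(I)=-\omega'(I)$ (Proposition \ref{g'eq-omega}), I will compute the signed curvature of this curve as an explicit rational function of $f(\sigma),g(I),\omega(I),\omega'(I)$. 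At the equator, after passing to the polar description $r(\beta)=1/g(\sin\beta)$ and applying $\kappa\propto r^2+2(r')^2-rr''$, strict convexity reduces to the pointwise inequality $g(I)+I\omega(I)-(1-I^2)\omega'(I)>0$ for $I\in[-1,1]$; an analogous inequality involving $f(\sigma)$ handles the general latitude.

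To verify the condition I treat the two cases. Under hypothesis~i ($\omega'<0$), I write $\omega(I)=\int_0^I\omega'(s)\,ds<0$ and $g(I)=g(0)-\int_0^I\omega(s)\,ds$, and deduce positivity by regrouping the integral contributions so that the positive terms $g(I)$ and $-(1-I^2)\omega'(I)$ outweigh the negative $I\omega(I)$. Under hypothesis~ii ($0<\omega'<1$), the surface is a smooth perturbation of the round sphere (for which $\omega\equiv 0$, $g\equiv 1$, and the fiber ball is the strictly convex ellipse $\Theta^2/f^2(\sigma)+\Sigma^2\leq 1$), and the bound $\omega'<1$ is precisely what prevents the perturbative term $(1-I^2)\omega'(I)$ from overwhelming the principal term $g(I)\approx 1$. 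The examples of oblate and near-sphere oblong ellipsoids then follow from Proposition \ref{oblongoblateomega'}.

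The main obstacle I expect is Stage~1: the Finsler--Zoll classification is most developed in the Riemannian setting, and transferring the statement to strictly convex Finsler metrics requires some care. A self-contained alternative would be to exploit the explicit formula $P(\Phi^{q_1}_t(x,\xi))=P(\Phi_{t\tau(I)/(2\pi)}\circ\Phi^{p_2}_{-t\omega(I)}(x,\xi))$ of Lemma \ref{explicitbicharac}: a hypothetical intersection at $y\neq x,\bar{x}$ at times $t_1,t_2\in(0,2\pi)\setminus\{\pi\}$ translates into matching equations for the $\sigma$-oscillation and the effective angular shift $-t\omega(I)+\Delta\theta(t\tau(I)/(2\pi),I)$, from which a contradiction can be drawn using strict convexity of the fiber ball via a winding-number / Jacobi-field argument along the $S^1$-family of initial directions.
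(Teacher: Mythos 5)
Your Stage 1 contains the real gap. Invoking ``the classical theory of SC-manifolds'' to pass from (closed $2\pi$-periodic bicharacteristics $+$ antipodal refocalisation) to non-intersection does not work: Besse's theory is Riemannian, and its relevant statements either build simplicity/non-intersection into the definition of an $SC$-manifold or conclude via Wiedersehen rigidity (Green/Berger--Kazdan) that the metric is round --- a rigidity that must fail in the Finsler setting, since the co-metrics $q_1$ with strictly convex fiber balls are themselves non-round Finsler--Zoll--Wiedersehen examples. Note also that periodicity and antipodal refocalisation (Lemma \ref{antipodrefocus}) hold for \emph{every} simple symmetric surface of revolution, yet the non-intersection Hypothesis \ref{nonintersectHyp} fails for thin oblong ellipsoids (this is exactly the point of Appendix \ref{AppendixB3}); so convexity cannot merely serve to set up a Finsler structure whose abstract properties then do the work --- it must enter the dynamical argument itself. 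In the paper this is the content of the inequality $\det(\xi_1,\xi_2)\det\bigl(\nabla_{\xi}q_1(\sigma,\xi_1),\nabla_{\xi}q_1(\sigma,\xi_2)\bigr)\geq 0$: convexity of $\{q_1\leq 1\}$ makes $\xi\mapsto\nabla_{\xi}q_1$ order-preserving, while conservation of the Clairaut integral fixes the cotangent directions of the two curves at a hypothetical first intersection point $y\notin\{x,\bar x\}$; the sign of the angle between the cotangent data is preserved from $x$ to $y$, whereas the velocity vectors (which are exactly $\nabla_\xi q_1$ of those data) must have switched order at a first crossing --- contradiction. Your fallback paragraph gestures at the right ingredients (Lemma \ref{explicitbicharac}, convexity, a winding-number argument), but it never identifies how convexity is used, which is precisely the missing step; as written, neither route proves the first assertion of the proposition.

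Your Stage 2 is essentially the paper's computation and is on the right track: parameterizing $\{q_1(\sigma,\cdot)=1\}$ through $q_1=p_1\,g(p_2/p_1)$ and using $g'=-\omega$, $g''=-\omega'$ reduces strict convexity to $g''(1-f^{-2}p^2)+[g-pg']f^{-2}>0$, which at the equator is your inequality $g(I)+I\omega(I)-(1-I^2)\omega'(I)>0$. But your verification in case i is not a proof as stated: with $\omega'<0$ one has $I\omega(I)<0$, and ``regrouping the integral contributions'' hides the key structural fact, namely $g-pg'>0$ identically (equivalently $\partial_\Sigma q_1>0$ on the open quadrant, or $g-Ig'=\pi^{-1}\int(1-f^{-2}I^2)^{-1/2}$ from \eqref{explicitG}), after which case i is immediate since $g''=-\omega'\geq 0$. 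In case ii you should not appeal to ``perturbation of the round sphere'' --- the hypothesis is only $0<\omega'<1$ --- but rather to $\omega\geq 0\Rightarrow g'\leq 0$ together with $g(1)\geq 1$, giving $g(p)\geq 1>\omega'(p)$. Finally, strict convexity of the full ball (including the axis points $I=0$, $I=\pm f(\sigma)$) still needs the parity/symmetry argument in $\Theta$ and $\Sigma$ to be recorded.
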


\myindent We leave the proof of this proposition in the Appendix \ref{AppendixB}, since it is quite technical. 

\myindent We are now in position to define the set $\mathfrak{S}$ of simple symmetric surfaces of revolution on which Theorem \ref{mainthm} holds.
\begin{definition}\label{defmathfrakS}
    $\mathfrak{S}$ is the set of simple symmetric surfaces of revolution $\mathcal{S}$ such that
    
    \myindent i. Either 
    \begin{equation}
        \forall I \in [0,1], \qquad \omega'(I) < 0.
    \end{equation}
    
    \myindent ii. Either 
    \begin{equation}
        \forall I\in[0,1] \qquad 0 < \omega'(I) < 1.
    \end{equation}
    
    \myindent This is an open set in the set of simple symmetric surfaces of revolution, since $\omega'(I)$ has an explicit integral expression in terms of the profile $f$.
\end{definition}

\quad

\myindent Now, when $\mathcal{S}$ satisfies the non intersection of bicharacteristic curves Hypothesis \ref{nonintersectHyp}, similarly to the definition of the geodesic distance between two points on a Riemannian manifold, we may define the \textit{bicharacteristic length} between two points.

\begin{definition}\label{defpsi}
    Assume that $\mathcal{S}$ satisfies the non intersection of bicharacteristic curves Hypothesis \ref{nonintersectHyp}. For all $x,y \in \mathcal{S}$, we define $\psi(x,y)$ the bicharacteristic length between $x$ and $y$ by
    \begin{equation}
        \psi(x,y) := \inf \{t\geq 0 \ \text{such that there exists} \ (x,\xi) \in S_x^*\mathcal{S} \ \text{such that} \ P(\Phi_t^{q_1}(x,\xi)) = y\}.
    \end{equation}
\end{definition}

\myindent Thanks, for example, to the explicit description of the bicharacteristics of $q$ given by Lemma \ref{explicitbicharac}, it is straightforward that $\psi$ is well-defined and finite, since there is always at least once bicharacteristic curve joining $x$ to $y$. Now, thanks to the absence of exceptional intersections, we can actually see that $\psi(x,y)$ really behaves like the geodesic distance on a sphere.

\begin{lemma}\label{propertypsi}
    Assume that $\mathcal{S}$ satisfies the non intersection of bicharacteristic curves Hypothesis \ref{nonintersectHyp}. For all $x,y \in M$, there holds
    \begin{equation}
        \psi(x,y) \in [0,\pi].
    \end{equation}
    
    \myindent More precisely,
    \begin{equation}
        \begin{split}
            \psi(x,y) &= 0 \iff y = x \\
         \psi(x,y) &= \pi  \iff y = \bar{x},
        \end{split}
    \end{equation}
    and, if 
    \begin{equation}
        Z_{\psi} := \{(x,y) \in M \times M \qquad \text{such that} \qquad y = x \ \text{or} \ y = \bar{x}\},
    \end{equation}
    then $\psi$ is smooth on $(M\times M) \backslash Z_{\psi}$.
\end{lemma}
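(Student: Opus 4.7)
The strategy is to show that, for each $x\in\mathcal{S}$, the ``exponential-like'' map
\[
\Psi_x : (0,\pi)\times S_x^*\mathcal{S}\longrightarrow \mathcal{S}\setminus\{x,\bar{x}\}, \qquad \Psi_x(t,\xi) := P\bigl(\Phi^{q_1}_t(x,\xi)\bigr),
\]
is a smooth diffeomorphism, from which the four statements of the lemma then follow.

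I would first establish the bound $\psi(x,y)\leq\pi$. The explicit formula of Lemma \ref{explicitbicharac}, together with the observation that the Clairaut integral sweeps $[-1,1]$ and that the associated bicharacteristic visits the latitudinal band $[\sigma_-(I),\sigma_+(I)]$, produces some $\xi\in S_x^*\mathcal{S}$ and $t_0\in[0,2\pi)$ with $P(\Phi^{q_1}_{t_0}(x,\xi))=y$. The closed formula \eqref{explicitG} shows that $q_1$ is an even function of $\xi$, so the time-reversal $(\tilde{x}(t),\tilde{\xi}(t)):=(x(-t),-\xi(-t))$ of any bicharacteristic is again a bicharacteristic. Combined with the periodicity $\Phi^{q_1}_{2\pi}=\mathrm{Id}$, this gives: if $\Phi^{q_1}_{t_0}(x,\xi)=(y,\eta)$, then $\Phi^{q_1}_{2\pi-t_0}(x,-\xi)=(y,-\eta)$. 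Whenever $t_0>\pi$, the second bicharacteristic has forward length $2\pi-t_0<\pi$, so $\psi(x,y)\leq\pi$.

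The central step is to prove that $\Psi_x$ is a diffeomorphism. Lemma \ref{antipodrefocus} collapses $\{0\}\times S_x^*\mathcal{S}$ onto $\{x\}$ and $\{\pi\}\times S_x^*\mathcal{S}$ onto $\{\bar{x}\}$, and the image of $(0,\pi)\times S_x^*\mathcal{S}$ under $\Psi_x$ lies in $\mathcal{S}\setminus\{x,\bar{x}\}$: otherwise a bicharacteristic from $x$ would hit $x$ or $\bar{x}$ at an interior time, which, combined with the refocalisation at $t=\pi$ and the periodicity of $\Phi^{q_1}$, would produce a non-antipodal intersection between two distinct bicharacteristics forbidden by Hypothesis \ref{nonintersectHyp}. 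Injectivity on $(0,\pi)\times S_x^*\mathcal{S}$ is itself a direct translation of the same Hypothesis. Local invertibility of $d\Psi_x$ is verified from Lemma \ref{explicitbicharac} via a Jacobian computation that reduces to the non-vanishing of $\omega'(I)$, which is exactly the twist condition built into the definition of $\mathfrak{S}$ (Definition \ref{defmathfrakS}). Surjectivity onto $\mathcal{S}\setminus\{x,\bar{x}\}$ then follows from a standard open-closed argument, with openness coming from the local diffeomorphism property and closedness from the compactness of $S_x^*\mathcal{S}$ together with the exclusion of $\{0,\pi\}$ as accumulation times by Lemma \ref{antipodrefocus}.

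The four conclusions follow from this diffeomorphism. For $y\in\mathcal{S}\setminus\{x,\bar{x}\}$, the unique preimage $\Psi_x^{-1}(y)=(\psi(x,y),\xi(x,y))\in(0,\pi)\times S_x^*\mathcal{S}$ gives simultaneously $\psi(x,y)<\pi$ and the smoothness of $\psi$ away from $Z_\psi$, with joint smoothness in $(x,y)$ coming from the smooth dependence of $\Phi^{q_1}$ on its initial data. The equivalence $\psi(x,y)=0\Leftrightarrow y=x$ reduces to a compactness argument on $S_x^*\mathcal{S}$ and the continuity of the flow, while $\psi(x,y)=\pi\Leftrightarrow y=\bar{x}$ follows from the antipodal refocalisation bound $\psi(x,\bar{x})\leq\pi$ together with the fact that every other point already lies in $\Psi_x((0,\pi)\times S_x^*\mathcal{S})$. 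The most delicate step will be verifying that $d\Psi_x$ is non-degenerate on the interior: a caustic is a priori compatible with the injectivity guaranteed by Hypothesis \ref{nonintersectHyp}, but it would break the smoothness assertion; the resolution comes from a direct computation using Lemma \ref{explicitbicharac}, which identifies the Jacobian with $\omega'(I)$ up to non-vanishing factors, the control of $\omega'$ being precisely the condition encoded in Definition \ref{defmathfrakS}.
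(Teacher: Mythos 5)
Your overall strategy is the same as the paper's: the paper defines the wavefronts $Q_x(t):=\{P(\Phi_t^{q_1}(x,\xi)):\ (x,\xi)\in S_x^*\mathcal{S}\}$ and argues that, under Hypothesis \ref{nonintersectHyp}, these are circles for $t\in(0,\pi)$ degenerating to $\{x\}$ and $\{\bar x\}$ and forming a smooth partition of $M$; your map $\Psi_x$ is just the polar-coordinate repackaging of that foliation, and your time-reversal/periodicity argument for $\psi\leq\pi$ and the open–closed surjectivity argument are fine.

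The genuine gap is in the step you yourself flag as delicate: the non-degeneracy of $d\Psi_x$. Your claim that the Jacobian of $\Psi_x$ ``identifies with $\omega'(I)$ up to non-vanishing factors'' is incorrect. The round sphere satisfies Hypothesis \ref{nonintersectHyp} (great circles through $x$ meet only at $x$ and $\bar x$), has $\omega'\equiv 0$ by Example \ref{tauomegsphere}, and yet $\Psi_x$ is a diffeomorphism on $(0,\pi)\times S_x^*\mathcal{S}$ (geodesic polar coordinates, with Jacobian comparable to $\sin t$); so the non-degeneracy cannot be carried by $\omega'(I)$. Computing $d\Psi_x$ from Lemma \ref{explicitbicharac} produces a Jacobi-field-type transversal term coming from the geodesic flow at time $\frac{t}{2\pi}\tau(I)$, plus corrections in $\tau'(I)$ and $\omega'(I)$, and it is the first term that is generically responsible for invertibility; setting the whole thing equal to $\omega'(I)$ discards exactly the part that survives on the sphere. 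Relatedly, the lemma is stated under Hypothesis \ref{nonintersectHyp} alone, so appealing to Definition \ref{defmathfrakS} or the twist condition imports hypotheses you are not given (and which the sphere example shows are not what makes the statement true). To close the argument you would need either a direct verification that no focal point of the $q_1$-flow occurs at times in $(0,\pi)$ (e.g.\ a Finsler-type Morse--Schoenberg argument using the convexity of $\{q_1\leq 1\}$, or an explicit Jacobi-field computation from Lemma \ref{explicitbicharac}), or the paper's implicit route of showing directly that the $Q_x(t)$ are embedded circles depending smoothly on $(x,t)$; injectivity of $\Psi_x$ alone does not give the immersion property, as you correctly suspected, and the substitute you propose does not fill that hole.
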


\begin{proof}
    Define, for $x\in M$, and for $t\geq 0$,
    \begin{equation}
        Q_x(t) := \{P(\Phi_t^{q_1}(x,\xi)) \qquad (x,\xi) \in S_x^*\mathcal{S}\}.
    \end{equation}
    
    \myindent Then, Hypothesis \ref{nonintersectHyp} yields that, for $t \in (0,\pi)$, $Q_x(t)$ is a circle (i.e. a 1-dimensional compact submanifold of $M$) which degenerates to $\{x\}$ (resp $\{\bar{x}\}$) as $t \to 0$ (resp $t \to \pi$). Moreover, the circles $Q_x(t), t \in [0,\pi]$ form a partition of $M$, which is smooth in $x$. The lemma is a direct consequence.
\end{proof}

\subsubsection{Miscellaneous}\label{subsubsec53CdV}

\begin{lemma}\label{dG1}
    Let $q_1 = G(p_1,p_2)$. Then $\partial_1 G$ is nowhere zero on $\Gamma$. In particular, it is uniformly bounded from below.
\end{lemma}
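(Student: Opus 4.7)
The plan is to identify $\partial_1 G$ with an explicit geometric quantity, namely $\tau(I)/(2\pi)$ where $I = p_2/p_1$, by comparing two different expressions for the Hamiltonian flow of $q_1$ on each Liouville torus.

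First, I apply the chain rule to $q_1 = G(p_1,p_2)$ at the level of Hamiltonian vector fields: this gives $X_{q_1} = \partial_1 G \cdot X_{p_1} + \partial_2 G \cdot X_{p_2}$. Since $\partial_1 G, \partial_2 G$ depend only on $(p_1,p_2)$, they are constant along each torus $\Lambda_a$, and since $\{p_1,p_2\} = 0$ the flows $\Phi^{p_1}$ and $\Phi^{p_2}$ commute. Hence on each such torus
\begin{equation*}
    \Phi^{q_1}_t = \Phi^{p_1}_{t\partial_1 G} \circ \Phi^{p_2}_{t\partial_2 G}.
\end{equation*}
Next, I compare this with Lemma \ref{explicitbicharac}, which gives the same flow as $\Phi_{t\tau(I)/(2\pi)} \circ \Phi^{p_2}_{-t\omega(I)}$. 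On the interior of $\Gamma$, $X_{p_1}$ and $X_{p_2}$ are linearly independent on each torus, so expanding to first order in $t$ yields the identification
\begin{equation*}
    \partial_1 G(p_1,p_2) = \frac{\tau(I)}{2\pi}, \qquad \partial_2 G(p_1,p_2) = -\omega(I).
\end{equation*}

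To conclude, I will observe that $\tau$ is continuous and strictly positive on $[-1,1]$: geometrically, $\tau(I)$ is the length of a full oscillation of a geodesic between the parallels $\sigma_{\pm}(I)$, and it extends continuously to the boundary with $\tau(\pm 1) = 2\pi$ (the equator) and $\tau(0) = 2L$ (a meridian). Hence $\partial_1 G$ is strictly positive on the interior of $\Gamma$; by continuity (and smoothness of $G$ on all of $\Gamma$ from Proposition \ref{q1q2fctofp1p2}) the identification with $\tau(I)/(2\pi)$ extends up to $\partial \Gamma$. Finally, since $G$ is homogeneous of degree $1$, $\partial_1 G$ is homogeneous of degree $0$, so its infimum over $\Gamma$ is attained on the compact arc $\{p_1 = 1, |p_2| \leq 1\}$ and is therefore strictly positive.

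The main subtlety I expect is at the boundary $|p_2| = p_1$, where $X_{p_1}$ and $X_{p_2}$ become parallel and the decomposition of $X_{q_1}$ loses uniqueness, so the first-order argument does not directly apply. This is handled by the fact that the identification already holds on the open interior and both sides are continuous up to the boundary. As a cross-check, the explicit integral formula of Lemma \ref{formulaofq1} gives $\partial_1 G = (p_1/\pi)\int_{\sigma_-(I)}^{\sigma_+(I)} (p_1^2 - p_2^2/f^2(\sigma))^{-1/2}\, d\sigma$ (the Leibniz boundary terms vanish since the integrand vanishes at $\sigma_{\pm}(I)$), which is manifestly positive on the interior; a short asymptotic analysis based on $f(0)=1$, $f''(0)<0$ via the substitution $\sigma = \sigma_+(I) u$ shows $\partial_1 G \to 1/\sqrt{|f''(0)|}$ as $I \to \pm 1$, confirming the uniform lower bound directly.
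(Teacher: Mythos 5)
Your proof is correct, but it takes a genuinely different route from the paper. The paper's own argument is exactly what you relegate to a ``cross-check'': it differentiates the explicit integral formula of Lemma \ref{formulaofq1} under the integral sign, notes that no Leibniz boundary terms appear because the integrand vanishes at $\sigma_{\pm}(I)$, checks via a Taylor expansion of $f$ near $\sigma=0$ that the integral tends to a nonzero constant on $\partial\Gamma$, and then invokes $0$-homogeneity plus compactness of $\gamma_0$ for the uniform lower bound. Your main argument instead identifies $\partial_1 G$ with a geometric quantity: writing $X_{q_1}=\partial_1 G\,X_{p_1}+\partial_2 G\,X_{p_2}$, using that the coefficients are constant on each Liouville torus and that the flows commute, and comparing with Lemma \ref{explicitbicharac}, you get $\partial_1 G=\tau(I)/(2\pi)$ (and $\partial_2 G=-\omega(I)$) on $\mathrm{Int}(\Gamma)$, whence positivity follows from positivity and continuity of $\tau$ up to $I=\pm 1$, and the boundary case and uniform bound are handled by continuity, homogeneity and compactness exactly as in the paper. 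This buys a conceptual interpretation of $\partial_1 G$ as the normalized return length $\tau/(2\pi)$ (and recovers $\partial_2 G=-\omega$, consonant with Proposition \ref{g'eq-omega}), at the price of relying on Lemma \ref{explicitbicharac} and the nondegeneracy of $(X_{p_1},X_{p_2})$ away from $Z$, whereas the paper's computation is self-contained and purely analytic. One small slip: your claim $\tau(\pm 1)=2\pi$ is not correct for a general surface; the limit is the period of small oscillations about the equator, $2\pi/\sqrt{\lvert f''(0)\rvert}$, which is what your own cross-check limit $\partial_1 G\to 1/\sqrt{\lvert f''(0)\rvert}$ reflects. This does not affect the argument, since only strict positivity and continuity of $\tau$ on $[-1,1]$ are used.
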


\begin{proof}
    In $Int(\Gamma)$, we compute, from \eqref{explicitG}
    \begin{equation}
        \partial_{p_1}G(p_1,p_2) = \frac{1}{\pi}\int_{\sigma_-(I)}^{\sigma_+(I)}\frac{p_1}{\sqrt{p_1^2 - \frac{p_2^2}{f^2(\sigma)}}}d\sigma > 0.
    \end{equation}

    \myindent The only subteltly is that no extra terms comes from derivating the bounds of the integral, as the integrand vanishes when $\sigma = \sigma_{\pm}(I)$.
    
    \myindent Now, using the Taylor expansion of $f$ near $\sigma = 0$, one can check that this converges to a nonzero constant on the boundary $\partial \Gamma$ of $\Gamma$.
    
    \myindent Finally, regarding the uniform bound, it follows from the fact that $\partial_1 G$ is homogeneous of degree $0$ on $\Gamma$, hence one needs only prove it on $\gamma_0$ (see \eqref{deflilgamma0}), where it follows from compactness.
\end{proof}

\begin{lemma}\label{deth'hnotzero}
    For $\vec{p} \in \gamma$, let $\vec{t}(\vec{p})$ be the unit tangent vector to $\gamma$ positively oriented at the point $\vec{p}$. Then there holds
    \begin{equation}
        \inf_{\vec{p}\in\gamma} \det(\vec{p},\vec{t}(\vec{p})) > 0.
    \end{equation}
\end{lemma}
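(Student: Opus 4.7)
The plan is to show that the determinant $\det(\vec{p}, \vec{t}(\vec{p}))$ is pointwise nonzero on $\gamma$, then invoke compactness of $\gamma$ and continuity to conclude that the infimum is strictly positive. The key ingredient is Euler's identity applied to $F_2$, which is homogeneous of degree $2$.

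First, I would check that $\gamma = \{F_2 = 1\}$ is compact. Since $F_2$ is a classical elliptic symbol of order $2$, positive and homogeneous of degree $2$ on $\mathbb{R}^2 \setminus \{0\}$, the continuous function $F_2$ attains a positive minimum $c$ and maximum $C$ on the unit circle, whence for every $\vec{p} \in \gamma$ we have $c|\vec{p}|^2 \leq 1 \leq C|\vec{p}|^2$, so $\gamma$ is contained in the compact annulus $\{1/\sqrt{C} \leq |\vec{p}| \leq 1/\sqrt{c}\}$. Being a regular level set of $F_2$ (regularity follows from ellipticity, as $\nabla F_2$ cannot vanish away from the origin by Euler's identity), $\gamma$ is thus a compact smooth curve.

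Next, I would establish that the quantity $\det(\vec{p}, \vec{t}(\vec{p}))$ never vanishes on $\gamma$. By Euler's identity applied to $F_2$, one has
\begin{equation*}
\vec{p} \cdot \nabla F_2(\vec{p}) = 2 F_2(\vec{p}) = 2 \qquad \text{for every } \vec{p} \in \gamma,
\end{equation*}
so in particular $\nabla F_2(\vec{p})$ is not orthogonal to $\vec{p}$. On the other hand, $\vec{t}(\vec{p})$ is tangent to the level set $\gamma$, so $\vec{t}(\vec{p}) \cdot \nabla F_2(\vec{p}) = 0$. If we had $\det(\vec{p}, \vec{t}(\vec{p})) = 0$, then $\vec{p}$ would be parallel to $\vec{t}(\vec{p})$, forcing $\vec{p} \cdot \nabla F_2(\vec{p}) = 0$, which contradicts the identity above. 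Hence the determinant never vanishes on $\gamma$.

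Finally, since $\vec{p} \mapsto \vec{t}(\vec{p})$ is smooth (as $\gamma$ is a smooth regular curve), the function $\vec{p} \mapsto \det(\vec{p}, \vec{t}(\vec{p}))$ is continuous and nowhere vanishing on the compact connected curve $\gamma$; by the intermediate value theorem it has constant sign, and by the positive orientation convention for $\vec{t}$ that sign is positive. The infimum is thus attained and strictly positive. There is no real obstacle here: the argument is entirely a consequence of homogeneity plus compactness; the only minor point to verify carefully is the orientation convention ensuring the \emph{positive} sign, which amounts to checking the formula at a single convenient point of $\gamma$, for instance on the positive $q_1$-axis where $\vec{t}$ is vertical upward and $\vec{p}$ is horizontal to the right.
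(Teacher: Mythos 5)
Your proof is correct and rests on the same two ingredients as the paper's: Euler's identity for the degree-$2$ homogeneous function $F_2$ and compactness of $\gamma$. The only cosmetic difference is that the paper computes the determinant exactly as $\det(\vec{p},\vec{t}(\vec{p})) = 2/\left|\vec{\nabla}F_2(\vec{p})\right|$, which gives positivity directly, whereas you argue non-vanishing by contradiction and then fix the sign via the orientation at one point — an equally valid route.
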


\begin{proof}
    This is a straightforward consequence of the fact that $F_2$ is a smooth homogeneous function of degree 2 which doesn't vanish on $\R^2 \backslash (0,0)$. Indeed, by definition of $\gamma$, 
    \begin{equation}
        \vec{t}(\vec{p}) = \frac{\left(\vec{\nabla} F_2 (\vec{p})\right)^{\perp}}{\left|\left(\vec{\nabla} F_2 (\vec{p})\right)^{\perp}\right|},
    \end{equation}
    where, for any vector $v = \begin{pmatrix}
    v_1\\
    v_2
\end{pmatrix} \in \R^2$, we define until the end
\begin{equation}
    v^{\perp} := \begin{pmatrix}
        -v_2 \\
        v_1
    \end{pmatrix}
\end{equation}
the direct orthogonal to $v$.
    
    \myindent Now, using the Euler identity for homogeneous function,
    \begin{equation}
            \det(\vec{p},\vec{t}(\vec{p})) = \vec{p} \cdot (-(\vec{t})^{\perp}) = \vec{p} \cdot \frac{\vec{\nabla} F_2 (\vec{p})}{\left|\vec{\nabla} F_2 (\vec{p})\right|} = 2\frac{F_2(\vec{p})}{\left|\vec{\nabla} F_2 (\vec{p})\right|} = \frac{2}{\left|\vec{\nabla} F_2 (\vec{p})\right|}  > 0.
    \end{equation}
    
    \myindent The lemma follows from the compactness of $\gamma$.
\end{proof}

\section{Microlocal reformulation of the problem as an estimate of oscillatory integrals}\label{secMicro}

\subsection{Reduction to bounding integrals involving unitary groups only up to times $O(1)$}\label{subsec1Micro}

\myindent In this section, we explain how to twist the usual microlocal approach using the Schwartz kernel of unitary groups, and Colin de Verdière's construction, as briefly presented in Section \ref{secCdV}, in order to reformulate the problem, as explained in the introduction.

\subsubsection{Reduction to a well-chosen smoothed projector}\label{subsubsec11Micro}

\myindent Thanks to Theorem \ref{CdVthm}, we may rewrite the spectral projector $P_{\lambda,\delta}$ (see \eqref{defPlambdadelta}) using the operators $Q_1$ and $Q_2$ as
\begin{equation}\label{defchilambddelt}
    P_{\lambda,\delta} = \chi_{\lambda,\delta}(Q_1,Q_2),
\end{equation}
where $\chi_{\lambda,\delta}$ is the indicator function of the set
\begin{equation}\label{defEld}
    E_{\lambda,\delta} := \{(x,y)\in \R^2 \ \lambda-\delta \leq \sqrt{F(x,y)} \leq \lambda + \delta\}.
\end{equation}

\myindent Now, let $f_{\lambda,\delta}$ be a smoothed version of $\chi_{\lambda,\delta}$, in the sense that it is nonnegative and smooth on $\R^2$ and satisfies moreover $f_{\lambda,\delta} \geq c > 0$ on $E_{\lambda,\delta}$. We may relax the problem of bounding the $L^2 \to L^{\infty}$ norm of $P_{\lambda,\delta}$ to bounding the norm of the smoothed projector $f_{\lambda,\delta}(Q_1,Q_2)$. Keeping in mind that we eventually want to use the unitary groups of $Q_1$ and $Q_2$, i.e. $s\mapsto e^{isQ_1}$ and $t\mapsto e^{itQ_2}$, we choose $f_{\lambda,\delta}$ so that its Fourier transform will have a simple expression.

\begin{deflemm}\label{deffld}
    Let $d\mu_{\lambda}$ be the superficial measure on the curve $\{\sqrt{F_2} = \lambda\}$ and let $\rho_{\delta} := \delta^{-1}\rho(\delta^{-1}\cdot)$, where $\rho$ is a nonnegative Schwartz function on $\R^2$ such that $\rho \geq 1$ on $B(0,3)$. Set
    \begin{equation}\label{defflambdadelta}
    f_{\lambda,\delta}(x,y) = \left(d\mu_{\lambda} \ast \rho_{\delta}\right)(x,y).
    \end{equation}
    
    \myindent We define the smoothed spectral projector
    \begin{equation}
        P_{\lambda,\delta}^{\sharp} := f_{\lambda,\delta}(Q_1,Q_2).
    \end{equation}

    \myindent Then, whenever $\delta \gtrsim_{F} \lambda^{-1}$, and $\lambda$ is large enough, there holds for any compact subset $K$ of $\mathcal{S}$
    \begin{equation}
        \left\|P_{\lambda,\delta}\right\|_{L^2(\mathcal{S}) \to L^{\infty}(K)} \lesssim \left\|P^{\sharp}_{\lambda,\delta}\right\|_{L^2(\mathcal{S}) \to L^{\infty}(K)}.
    \end{equation}
\end{deflemm}

\begin{proof}
    We need only prove that $f_{\lambda,\delta} \geq c > 0$ on the set $E_{\lambda,\delta}$ introduced in \eqref{defEld}. Now, write
    \begin{equation}
        F \sim F_2 + F_0 + F_{-1} + ... 
    \end{equation}
    as in Theorem \ref{CdVthm}. Let $\tilde{F} := F - F_2$, which is a smooth bounded function on $\R^2$. Then, from the observation that
    \begin{equation}
        \sqrt{F} = \sqrt{F_2}\left(1 + O\left(\frac{\|\tilde{F}\|_{\infty}}{F_2}\right)\right),
    \end{equation}
    one finds that, as long as $\delta \gtrsim_{F} \lambda^{-1}$ and $\lambda\geq \lambda_0$ for a large enough $\lambda_0 > 0$, the set $E_{\lambda,\delta}$ is included in the set
    \begin{equation}
    F_{\lambda,\delta} := \{(x,y)\in\R^2 \ \lambda-2\delta \leq \sqrt{F_2(x,y)} \leq \lambda + 2\delta\}.
\end{equation}

\myindent Now, fix $(x,y) \in F_{\lambda,\delta}$. Then $\rho_{\delta}((x,y) - \cdot) \geq \delta^{-1}$ on the ball centered at $(x,y)$ of radius $3\delta$. Now, this ball intersects the curve $\{\sqrt{F_2} = \lambda\}$ on an arc of length at least $c\delta$ for some universal constant $c > 0$. Thus, there holds $(d\mu_{\lambda}\ast \rho_{\delta})(x,y) \geq c > 0$.
\end{proof}

\subsubsection{Integral formulation}\label{subsubsec12Micro}

\myindent We now need to bound the smoothed spectral projector
\begin{equation}
    P^{\sharp}_{\lambda,\delta} := f_{\lambda,\delta}(Q_1,Q_2).
\end{equation}

\myindent We start by expressing $P^{\sharp}_{\lambda,\delta}$ in terms of the unitary groups $s\mapsto e^{isQ_1}$ and $t \mapsto e^{itQ_2}$. There holds in $L^2(\mathcal{S})$ (recall that $f_{\lambda,\delta}$ is a Schwartz function and that $Q_1$ and $Q_2$ \textit{commute} together)
\begin{equation}
    P^{\sharp}_{\lambda,\delta} = f_{\lambda,\delta}(Q_1,Q_2) = (2\pi)^{-1} \int_{\R^2} \widehat{f_{\lambda,\delta}}(s,t) e^{isQ_1}e^{itQ_2}dsdt.
\end{equation}

\myindent Now, using the special form of $f_{\lambda,\delta}$ given by \eqref{deffld}, there holds
\begin{equation}
    \widehat{f_{\lambda,\delta}}(s,t) = \widehat{d\mu_{\lambda}}(s,t) \widehat{\rho_{\delta}} (s,t).
\end{equation}

\myindent Thanks to the homogeneity of $\sqrt{F_2}$, there holds that $d\mu_{\lambda} = \lambda d\mu(\lambda^{-1}\cdot)$, where we recall that $d\mu$ is the superficial measure on $\gamma$ (see Definition \ref{deflilgamma}). Hence, we may express the Fourier transform of $d\mu_{\lambda}$ (resp. $\rho_{\delta}$) in terms of the Fourier transform of $d\mu$ (resp. $\rho$). There holds finally
\begin{equation}
    \widehat{f_{\lambda,\delta}}(s,t) = \lambda \delta \hat{d\mu}(\lambda(s,t)) \hat{\rho}(\delta(s,t)),
\end{equation}
from which we may deduce that
\begin{equation}\label{intexprflambdadelta}
    P^{\sharp}_{\lambda,\delta} = \lambda \delta (2\pi)^{-2} \int_{\R^2} \hat{d\mu}\left(\lambda(s,t)\right)\hat{\rho}\left(\delta(s,t)\right) e^{isQ_1}e^{itQ_2} dsdt.
\end{equation}

\myindent Now, in order to bound the $L^2 \to L^{\infty}$ norm of $P^{\sharp}_{\lambda,\delta}$, it is classical to introduce its \textit{Schwartz kernel}, which can be expressed, thanks to \eqref{intexprflambdadelta}, via the Schwartz kernels of the operators $e^{isQ_1}$ and $e^{itQ_2}$. We recall the definition of Schwartz kernels.

\begin{definition}
    Let $M$ be a smooth Riemannian manifold of dimension $d$. Let $L : \mathcal{D}(M) \to \mathcal{D}'(M)$ be a continuous linear map, where $\mathcal{D}(M)$ is the set of smooth compactly supported functions on $M$, and $\mathcal{D}'(M)$ is the set of distributions on $M$. Then, the Schwartz kernel of $L$ is the unique distribution $\ell \in \mathcal{D}'(M\times M)$ such that, informally, for any $f \in \mathcal{D}(M)$,
    \begin{equation}
        Lf(x) = \int_M \ell(x,y) f(y) d\upsilon(y).
    \end{equation}
    
    \myindent In particular, we will use the abuse of notation $L(x,y)$ for the Schwartz kernel of $L$.
\end{definition}

\myindent Now, a useful fact, when computing $L^2 \to L^{\infty}$ norms of an operator $L$, is that it depends only on the \textit{diagonal values} of the Schwartz kernel of $LL^{*}$, where $L^{*}$ is the adjoint operator of $L$. Precisely, we give the following lemma.

\begin{lemma}\label{bddnormbykernel}
    Let $M$ be a smooth manifold, and let $L : L^2(M) \to L^2(M)\cap L^{\infty}(M)$ be a bounded linear operator. Let $K$ be any compact subset of $M$. Then, there holds
    \begin{equation}
        \left\|L \right\|_{L^2(M) \to L^{\infty}(K)} = \sup_{x \in K} (LL^{*}(x,x))^{\frac{1}{2}},
    \end{equation}
    where $LL^{*}(x,y)$ is the Schwartz kernel of the operator $LL^{\star}$.
\end{lemma}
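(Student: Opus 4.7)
The plan is to deploy the standard Riesz representation argument, slicing the $L^\infty$ norm into pointwise evaluation functionals on $L^2$, and then to identify the norms of the resulting Riesz representers with the diagonal of the kernel of $LL^*$.

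First, for each $x \in K$ I would define the evaluation functional $\Lambda_x : L^2(M) \to \mathbb{C}$ by $\Lambda_x(f) := (Lf)(x)$. Since the image $Lf$ lies in $L^\infty$, this requires fixing a pointwise representative; the natural one is obtained by pairing $f$ against the Schwartz kernel $L(x,\cdot)$, so implicitly I would set $\Lambda_x(f) = \int L(x,y) f(y) \, d\upsilon(y)$. The hypothesis $L : L^2 \to L^\infty$ gives $|\Lambda_x(f)| \leq \|Lf\|_{L^\infty(K)} \leq \|L\|_{L^2 \to L^\infty(K)} \|f\|_{L^2}$, so $\Lambda_x$ is bounded. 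By the Riesz representation theorem there exists a unique $k_x \in L^2(M)$ such that $\Lambda_x(f) = \langle f, k_x \rangle_{L^2}$ and $\|\Lambda_x\|_{(L^2)^*} = \|k_x\|_{L^2}$. Comparison with the kernel formula identifies $k_x(y) = \overline{L(x,y)}$, which is a genuine element of $L^2(M)$ precisely because $L: L^2 \to L^\infty$ (equivalently $L^* : L^1 \to L^2$).

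Second, I would take the supremum over $x \in K$. By definition of the $L^\infty(K)$ norm,
\begin{equation*}
\|L\|_{L^2(M) \to L^\infty(K)} = \sup_{\|f\|_{L^2} \leq 1} \sup_{x \in K} |\Lambda_x(f)| = \sup_{x \in K} \|\Lambda_x\|_{(L^2)^*} = \sup_{x \in K} \|k_x\|_{L^2},
\end{equation*}
the interchange of suprema being immediate.

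Third, the final step is to recognize $\|k_x\|_{L^2}^2$ as the diagonal of the Schwartz kernel of $LL^*$. Using $L^*(y,x) = \overline{L(x,y)}$ and the composition rule for Schwartz kernels of composable operators,
\begin{equation*}
LL^*(x,x) = \int_M L(x,y)\, L^*(y,x) \, d\upsilon(y) = \int_M |L(x,y)|^2 \, d\upsilon(y) = \|k_x\|_{L^2}^2.
\end{equation*}
Combining with the previous step yields the claimed equality.

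The only genuinely delicate point, and the one I would be most careful about, is justifying the pointwise evaluation: the map $x \mapsto (Lf)(x)$ is a priori defined only almost everywhere. However, $LL^*: L^1 \to L^\infty$ has a kernel which is bounded (indeed continuous, under mild regularity assumptions on $L$, which are amply verified in the setting of the paper, where $L = P^\sharp_{\lambda,\delta}$ is built out of smooth functional calculus). This gives the diagonal $x \mapsto LL^*(x,x)$ an unambiguous continuous meaning on $K$, and permits a canonical choice of representative for $Lf$ so that the sup over $K$ is attained in the classical sense rather than only essentially. Once this is settled, the three steps above give the identity without further calculation.
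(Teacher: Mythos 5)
Your proof is correct and follows essentially the same route as the paper: interchange the two suprema, identify the dual norm of the evaluation functional $f \mapsto (Lf)(x)$ with $\left(\int_M |L(x,y)|^2 \, d\upsilon(y)\right)^{\frac{1}{2}}$, and recognize this as $(LL^{*}(x,x))^{\frac{1}{2}}$; invoking Riesz representation is just a named version of the Cauchy--Schwarz/duality step the paper performs directly. Your closing remark on choosing a good representative so that the essential supremum becomes a genuine supremum is a point the paper leaves implicit, and it is handled exactly as you suggest in the application where the kernel is continuous.
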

\begin{proof}
    There holds
    \begin{equation}
    \begin{split}
        \left\|L \right\|_{L^2(M) \to L^{\infty}(K)} &= \sup_{f \in L^2(M), \|f\| = 1} \left(\sup_{x\in K} |f(x)|\right) \\
        &= \sup_{x \in K} \left(\sup_{f\in L^2(M), \|f\| = 1} |f(x)| \right) \\
        &= \sup_{x \in K} \left(\sup_{f\in L^2(M), \|f\| = 1} \int_M L(x,y) f(y) d\upsilon(y) \right) \\
        &= \sup_{x \in K} \left(\int_M |L(x,y)|^2 d\upsilon(y)\right)^{\frac{1}{2}} \\
        &= \sup_{x \in K} \left(\int_M L(x,y) \overline{L(x,y)}  d\upsilon(y)\right)^{\frac{1}{2}}\\
         &= \sup_{x \in K} \left(\int_M L(x,y) L^{*}(y,x)  d\upsilon(y)\right)^{\frac{1}{2}} \\
          &= \sup_{x \in K} (LL^* (x,x))^{\frac{1}{2}}.
    \end{split}
    \end{equation}
\end{proof}

\myindent In particular, applying this lemma to $L = P_{\lambda,\delta}$, which satisfies $P_{\lambda,\delta}^* = P_{\lambda,\delta}^2 = P_{\lambda,\delta}$, we find that
\begin{equation}
    \|P_{\lambda,\delta}\|_{L^2(\mathcal{S}) \to L^{\infty}(K)} = \sup_{x\in K} \left( P_{\lambda,\delta}(x,x) \right)^{\frac{1}{2}}.
\end{equation}

\myindent Combining this formula with the Definition-Lemma \ref{deffld}, we finally find that

\begin{lemma}\label{boundbyintegral}
    Let $K$ be a compact subset of $\mathcal{S}$. There holds
    \begin{equation}\label{firstintegralexpression}
    \begin{split}
        \left\|P_{\lambda,\delta}\right\|_{L^2(\mathcal{S}) \to L^{\infty}(K)} ^2&\lesssim \sup_{x\in K} P_{\lambda,\delta}(x,x) \\
        &\lesssim \lambda\delta \sup_{x \in K} \int_{\R^2} \hat{d\mu}(\lambda(s,t)) \hat{\rho}(\delta(s,t)) \left(e^{isQ_1} e^{itQ_2}\right)(x,x) ds dt.
    \end{split}
    \end{equation}
\end{lemma}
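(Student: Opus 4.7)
The plan is to chain two facts: apply Lemma \ref{bddnormbykernel} to $P_{\lambda,\delta}$ itself, then use the functional calculus comparison underlying Definition-Lemma \ref{deffld} to dominate $P_{\lambda,\delta}$ by $P^{\sharp}_{\lambda,\delta}$ on the diagonal, and finally insert the integral representation \eqref{intexprflambdadelta}.

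For the first inequality, I note that $P_{\lambda,\delta} = \mathds{1}_{[\lambda-\delta,\lambda+\delta]}(\sqrt{-\Delta})$ is an orthogonal spectral projector, hence self-adjoint and idempotent, so $P_{\lambda,\delta} P_{\lambda,\delta}^{*} = P_{\lambda,\delta}^{2} = P_{\lambda,\delta}$. Applying Lemma \ref{bddnormbykernel} with $L = P_{\lambda,\delta}$ then yields directly $\|P_{\lambda,\delta}\|_{L^2(\mathcal{S}) \to L^{\infty}(K)}^2 = \sup_{x \in K} P_{\lambda,\delta}(x,x)$, which is in fact an equality.

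For the second inequality, I would extract from the proof of Definition-Lemma \ref{deffld} the pointwise comparison $f_{\lambda,\delta} \geq c\, \chi_{\lambda,\delta}$ on $\R^{2}$ for some constant $c>0$. Since $Q_1$ and $Q_2$ commute, joint functional calculus transforms this into the operator inequality $P^{\sharp}_{\lambda,\delta} \geq c\, P_{\lambda,\delta}$. Expanding both operators in an orthonormal basis $(\phi_j)$ that jointly diagonalises $Q_1, Q_2$ (and hence $\sqrt{-\Delta}$ via Theorem \ref{CdVthm}), with joint eigenvalues $(\mu_j^{(1)},\mu_j^{(2)})$, the smooth diagonals read
\begin{equation*}
    P_{\lambda,\delta}(x,x) = \sum_j \chi_{\lambda,\delta}(\mu_j^{(1)},\mu_j^{(2)}) |\phi_j(x)|^2, \qquad P^{\sharp}_{\lambda,\delta}(x,x) = \sum_j f_{\lambda,\delta}(\mu_j^{(1)},\mu_j^{(2)}) |\phi_j(x)|^2,
\end{equation*}
so the desired pointwise bound $P_{\lambda,\delta}(x,x) \lesssim P^{\sharp}_{\lambda,\delta}(x,x)$ follows term by term. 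Substituting the integral formula \eqref{intexprflambdadelta} for $P^{\sharp}_{\lambda,\delta}$ and taking $\sup$ over $x \in K$ then closes the chain.

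The only real subtlety is justifying the exchange of $\int ds\,dt$ with diagonal evaluation at $(x,x)$ in \eqref{intexprflambdadelta}. The cleanest route I see is to use the same eigenbasis expansion: the Schwartz decay of $\hat{\rho}(\delta\cdot)$, combined with the polynomial Weyl-type control on $\sum_{\lambda_j\leq R} |\phi_j(x)|^2$ and with $|\mu_j^{(k)}| \leq \lambda_j$, makes the resulting sum-integral absolutely convergent. Fubini then reduces the required identity to the defining Fourier inversion $f_{\lambda,\delta}(\xi) = (2\pi)^{-1}\int \widehat{f_{\lambda,\delta}}(s,t)\, e^{i(s,t)\cdot \xi}\, ds\,dt$ applied at each joint eigenvalue. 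This is the single technical verification; past it, the lemma is just a clean reformulation.
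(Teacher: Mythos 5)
Your proposal is correct and follows essentially the same route as the paper: the equality $\|P_{\lambda,\delta}\|_{L^2\to L^{\infty}(K)}^2=\sup_{x\in K}P_{\lambda,\delta}(x,x)$ from Lemma \ref{bddnormbykernel} (using $P_{\lambda,\delta}^*=P_{\lambda,\delta}^2=P_{\lambda,\delta}$), the pointwise domination of the diagonal by that of $P^{\sharp}_{\lambda,\delta}$ coming from Definition-Lemma \ref{deffld} (valid under its hypotheses $\delta\gtrsim\lambda^{-1}$, $\lambda$ large), and then the integral representation \eqref{intexprflambdadelta}. Your extra justification via the joint eigenbasis of $(Q_1,Q_2)$ and Fubini only spells out steps the paper leaves implicit, so nothing essential differs.
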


\myindent The point is now that
\begin{equation}
    (s,t) \mapsto e^{isQ_1} e^{itQ_2}
\end{equation}
is $4\pi$-periodic in both $s$ and $t$ (and even $2\pi$-periodic up to a sign coming from the Maslov index). Hence, as announced in the introduction, the decay due to long-term dynamics of the geodesic flow is \textit{not} encoded in any decay of the kernel $e^{isQ_1}e^{itQ_2}(x,x)$ as $|(s,t)| \to \infty$. However, since the curvature of $\gamma$ doesn't vanish at infinite order, there is \textit{a priori} decay of $(s,t) \mapsto \hat{d\mu}(\lambda(s,t))$, both as $|(s,t)|\to \infty$ and, for fixed $(s,t)$, as $\lambda \to \infty$ (see for example \cite{stein1993harmonic}[Theorem 2, Chapter VIII]). This is the crucial point of our method : we have entirely encoded the decay of the kernel $t \mapsto e^{it\sqrt{-\Delta}}(x,x)$, which is \textit{qualitatively} known, into the decay of the semi-explicit function $\hat{d\mu}$, which is \textit{quantitatively} known. This is ultimately why we are able to obtain the quantitative estimate \ref{mainthm}. Another point of view on this periodicity is that the expression \eqref{firstintegralexpression} involves the unitary groups of $Q_1$ and $Q_2$ only up to times $O(1)$, thanks to the periodicity, thus solving the problem of the usual microlocal approach which involves a unitary group on a time $O(\delta^{-1})$ which is arbitrarily large, as explained in the introduction.

\quad

\myindent Now, in order to quantitatively bound the integral in the right-hand side of \eqref{firstintegralexpression}, we need to find suitable expression for the Schwartz kernel
\begin{equation}
    \left(e^{isQ_1} e^{itQ_2}\right)(x,x).
\end{equation}

\myindent First, we observe that $Q_2 = P_2 = \frac{1}{i}\frac{\partial}{\partial \theta}$ (see Theorem \ref{CdVthm}) is the infinitesimal generator of the rotation around the axis of $\mathcal{S}$. Hence, the action of $e^{itQ_2}$ on a given function $f(\theta,\sigma) \in L^2(\mathcal{S})$ is explicit outside of the Poles : denote $R_t$ the rotation of angle $t$ given in local coordinate by 
\begin{equation}
    R_t(\theta,\sigma) := (\theta + t,\sigma).
\end{equation}

\myindent Then,
    \begin{equation}
        e^{itQ_2} f = f \circ R_{t}.
    \end{equation}

\myindent Hence, we find that
    \begin{equation}\label{kernelofeisQ1}
        \begin{split}
        e^{isQ_1}e^{itQ_2}(x,x) &= e^{isQ_1}(x, R_{-t}(x))\\
        &= e^{isQ_1}(R_{t}(x),x),
       \end{split}
    \end{equation}
where we use the symmetry of revolution for the last equality. In particular, we are ultimately concerned only with the kernel of the unitary group generated by $Q_1$. Moreover, thanks to the symmetry of revolution of $\mathcal{S}$, the value of \eqref{kernelofeisQ1} doesn't depend on the polar coordinate $\theta$ of $x$. Hence, without loss of generality, we can assume that $x = (0, \sigma)$ with $\sigma \in \left]-\frac{L}{2} + \eps, \frac{L}{2}-\eps\right[$ and $R_t(x) = (t,\sigma)$. We thus define, for $\sigma,s,t \in \left]-\frac{L}{2},\frac{L}{2}\right[\times S^1 \times S^1$
\begin{equation}\label{defUsigmast}
    U(\sigma,s,t) := e^{isQ_1}((t,\sigma),(0,\sigma)).
\end{equation}

\myindent Now, it is well-known that one can describe the kernel $e^{isQ_1}(x,y)$ locally around any $(s_0,x_0,y_0)$, via the introduction of \textit{parametrices} from the theory of Fourier Integral Operators. The following section is thus devoted to the construction of suitable parametrices.

\subsection{Construction of parametrices}\label{subsec2Micro}

\myindent In this section, we construct a suitable representation of the kernel \eqref{defUsigmast} in the form of an oscillatory integral with an appropriate phase. We will rely on the abstract theory of Fourier Integral Operators (FIO). Since this theory is only needed for the construction of the oscillatory integrals, we will be extremely succinct in our presentation. Hence, the reader who is not familiar with the theory of FIO may skip Paragraphs \ref{subsubsec21Micro} to \ref{subsubsec24Micro} on first reading, and use the Classification Proposition \ref{classification} in Paragraph \ref{subsubsec25Micro} as a black box. We will ourselves use many theorems from the theory of FIO as black boxes, and we won't try to give intuition on their proof, since it is not relevant to the rest of the presentation. Instead, for the reader who would like to understand deeper the construction, we refer them to the textbook presentation of FIO by Duistermaat \cite{duistermaat1996fourierb}, to the seminal article \cite{duistermaat1975spectrum}, or to the more exhaustive \cite{hormander2009analysis}.

\subsubsection{A reminder on parametrices}\label{subsubsec21Micro}

\myindent In this paragraph, we briefly recall some fundamental theorems of the theory of Fourier Integral Operators, which we will use. We refer to the presentation of Duistermaat \cite{duistermaat1996fourierb} for the proofs and developments.

\myindent Fix $M$ a smooth compact manifold of dimension $d$, and $Q$ an elliptic first order classical pseudo differential operator on $M$. The semigroup of operators $s\mapsto e^{isQ}$ has been wildly studied, with most notable contributions coming, among many others, from Hörmander, Duisteermat and Guillemin. A first fundamental result is that this semigroup is itself a Fourier Integral Operator, whose canonical relation is explicit. We give the following theorem from \cite{duistermaat1975spectrum}[Theorem 1.1.].

\begin{theorem}[Duistermaat-Guillemin]\label{DG75thm}
    Let $U(s) := e^{isQ}$. Then, $U$ is a Fourier Integral Operator of class $I^{-\frac{1}{4}}\left(\R\times M, M;C\right)$ defined by the canonical relation
    \begin{multline}\label{canonicrelation}
        C := \{(s,\tau),(x,\xi),(y,\eta) ;\ (x,\xi),(y,\eta)\in T^*M\backslash \{0\},\\ (s,\tau) \in T^*\R\backslash 0,\
        \tau - q(x,\xi) = 0, \ \Phi^q_s(x,\xi) = (y,\eta)\},
    \end{multline}
    where $\Phi^q_s$ denotes the Hamiltonian flow of the principal symbol $q$ of $Q$ on $T^*M\backslash \{0\}$ as in Definition \ref{defbicharacteristics}.
\end{theorem}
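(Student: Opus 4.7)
\myindent \textbf{Proof proposal.} The plan is to construct a microlocal parametrix for the Cauchy problem
\begin{equation*}
\frac{1}{i}\partial_s U(s,x,y) = Q_x U(s,x,y), \qquad U(0,x,y) = \delta_y(x),
\end{equation*}
and then invoke uniqueness modulo smoothing operators to conclude that the exact propagator $e^{isQ}$ coincides with the parametrix up to a smoothing remainder. Since membership in $I^{-1/4}(\R\times M, M; C)$ is a local statement on $\R\times M\times M$, I would work in local coordinates around some base point $(s_0, x_0, y_0)$; to bootstrap from small time to arbitrary time I would use the group law $U(s_1+s_2) = U(s_1) U(s_2)$ combined with the transverse composition theorem for Fourier Integral Operators, which is clean here because the canonical relation $C$ is the graph (inside $T^*\R$ times the twisted graph of $\Phi^q_s$) of a family of canonical transformations.

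\myindent For $|s|$ small I would try the WKB ansatz
\begin{equation*}
U(s,x,y) = (2\pi)^{-d} \int_{\R^d} e^{i\varphi(s,x,y,\eta)} \, a(s,x,y,\eta)\, d\eta,
\end{equation*}
with initial data $\varphi(0,x,y,\eta) = (x-y)\cdot\eta$ and $a(0,\cdot) = 1$. Plugging this into the evolution equation and separating by homogeneity in $\eta$, the top-order term forces the phase to solve the Hamilton--Jacobi equation
\begin{equation*}
\partial_s \varphi + q(x,\partial_x \varphi) = 0,
\end{equation*}
while the classical amplitude $a \sim a_0 + a_{-1} + \ldots$ is determined by a hierarchy of transport equations along the bicharacteristic flow of $q$, with $a_0(0,\cdot) = 1$. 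The Hamilton--Jacobi equation is solved by the method of characteristics, whose characteristic system is precisely the Hamiltonian flow $\Phi^q_s$ of $q$ on $T^*M\backslash\{0\}$; ellipticity and homogeneity of $q$ guarantee a smooth, homogeneous phase function $\varphi$ for small $|s|$. A direct computation shows that the set $\{\partial_\eta \varphi = 0\}$ parametrises exactly the canonical relation $C$ of \eqref{canonicrelation}, namely the twisted graph of $\Phi^q_s$ intersected with $\{\tau = q(x,\xi)\}$.

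\myindent The order $-1/4$ comes from the standard bookkeeping formula for Fourier Integral Operators: the underlying manifold $\R\times M\times M$ has dimension $2d+1$, the phase has $N = d$ fibre variables, and the amplitude is of order $0$, so the FIO order is
\begin{equation*}
0 + \frac{N}{2} - \frac{2d+1}{4} = \frac{d}{2} - \frac{2d+1}{4} = -\frac{1}{4}.
\end{equation*}
To conclude the proof I would verify that the parametrix $\widetilde{U}(s)$ just built and the true propagator $U(s)$ differ by a smoothing operator: since $Q$ is self-adjoint and classical of order one, the difference $R(s) = U(s) - \widetilde{U}(s)$ satisfies $(\frac{1}{i}\partial_s - Q) R(s) \in C^\infty$ with $R(0)$ smoothing, and $L^2$-unitarity of $e^{isQ}$ (plus its action on all Sobolev spaces) propagates this smoothing property for all $s$.

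\myindent The main obstacle I anticipate is global control of the phase. For fixed $|s|$ small, $\varphi$ is a genuine non-degenerate phase in $d$ fibre variables, but as $s$ grows the differential $\partial_x \varphi$ can become singular, i.e. one meets caustics where the projection of $C$ onto $\R\times M\times M$ fails to be locally injective. Two equivalent remedies exist: either cover $C$ by finitely many Lagrangian charts and use phase functions with a different choice of fibre variables on each chart (patching the resulting local FIOs together), or, as mentioned above, iterate short-time parametrices via the group law and use the composition calculus of FIOs with clean (here transversal) canonical relations. Both strategies force one to keep track of the Maslov bundle on $C$, which is the delicate bookkeeping step behind the appearance of the global factor $(-1)^k$ in the analogous formula \eqref{exp2ipiQieqId} of Colin de Verdière, but for the statement of Theorem \ref{DG75thm} itself one only needs that such a globally consistent principal symbol exists, which follows from the general theory.
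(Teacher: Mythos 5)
The paper does not prove this statement at all: it is quoted verbatim as Theorem 1.1 of Duistermaat--Guillemin \cite{duistermaat1975spectrum} and used as a black box, so there is no internal proof to compare against. Your sketch is essentially the classical argument behind that citation (short-time WKB parametrix, Hamilton--Jacobi plus transport hierarchy, identification of $\Lambda_\varphi'$ with $C$, extension to all times by the group law and composition of FIOs with graph-type canonical relations, Maslov bookkeeping, and removal of the smoothing error by Duhamel plus Sobolev boundedness of $e^{isQ}$), and the order count $0 + \frac{d}{2} - \frac{2d+1}{4} = -\frac{1}{4}$ is the correct bookkeeping for a space-time kernel on $\R\times M\times M$ with $d$ fibre variables and an order-zero amplitude. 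As an outline it is sound.

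One sign needs fixing to match the stated canonical relation: for $U(s)=e^{isQ}$, i.e.\ $\left(\tfrac{1}{i}\partial_s - Q\right)U = 0$, the leading-order (eikonal) equation is $\partial_s\varphi = q(x,\partial_x\varphi)$, not $\partial_s\varphi + q(x,\partial_x\varphi)=0$; this is exactly the time-dependent eikonal equation \eqref{eikeqtimedependent} used elsewhere in the paper, and it is what produces the condition $\tau - q(x,\xi)=0$ in \eqref{canonicrelation}. With your sign you would instead parametrise the relation $\{\tau + q(x,\xi) = 0\}$, i.e.\ the propagator $e^{-isQ}$. This is a convention slip rather than a structural gap, but as written the phase you construct is adapted to the wrong canonical relation.
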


\myindent One can then deduce a local representation of the kernel of the operator $(s,x,y) \mapsto e^{isQ}(x,y)$ as an \textit{oscillatory integral}. For that purpose, we recall the following definitions, adapted from \cite{duistermaat1996fourierb}.

\begin{definition}
    Let $X$ be a smooth compact manifold of dimension $n$. Let $N\geq 1$, which is called the number of \textit{angle variables}. A smooth function $\phi(x,\theta)$ defined on an open conic set $U\times C$ of $X\times \R^N$ is called a non degenerate phase function if

    i. $\phi(x,\theta)$ is homogeneous of degree $1$ in the variable $\theta$.

    ii. $d_{\theta}\phi(x,\theta) = 0$, $(x,\theta) \in U\times C$ $\implies$ \ $d_{x,\theta}\frac{\partial \phi(x,\theta)}{\partial_{\theta_j}}$ are linearly independent for $j = 1,...,N$.
\end{definition}

\myindent We introduce moreover useful notations.
\begin{deflemm}
    Let $\phi(x,\theta)$ be a nondegenerate phase function defined in an open conic set $U\times C \subset X \times \R^N$. We define
    \begin{equation}
        C_{\phi} := \{(x,\theta) \in U\times C \qquad d_{\theta}\phi(x,\theta) = 0\},
    \end{equation}
    which is a smooth conic submanifold of $U\times C$ of dimension $n$.

    \myindent Moreover, we define
    \begin{equation}
        \Lambda_{\phi} := \{(x,d_x\phi(x,\theta)), \qquad (x,\phi) \in C_{\phi}\},
    \end{equation}
    which is an immersed n-dimensional lagrangian conic submanifold of $T^*X\backslash \{0\}$.
\end{deflemm}

\begin{definition}
    Let $X,Y$ be two smooth compact manifolds. Let $\Lambda$ be a lagrangian conic submanifold of $T^*X\times T^*Y$. We define
    \begin{equation}
        \Lambda' :=\{(x,\xi,y,-\eta); \qquad (x,\xi,y,\eta) \in \Lambda\}.
    \end{equation}
\end{definition}

\myindent We also recall the definition of symbols.

\begin{definition}\label{symbolestimate}
    Let $X$ be a smooth manifold. Let $N\geq 1$. A smooth function $a(x,\theta)$ defined on an open conic subset $U \times \R^N$ of $X\times \R^N$ is called a \textit{symbol} of order $\mu \in \R$ if for any compact subset $K$ of $U$, for all multi-indices $\alpha,\beta$, there holds
    \begin{equation}
        \left|\left(\frac{\partial}{\partial x}\right)^{\beta} \left(\frac{\partial}{\partial\theta}\right)^{\alpha} a(x,\theta) \right| \lesssim_{\alpha,\beta,K} (1 + |\theta|)^{\mu - |\alpha|},
    \end{equation}
    for $(x,\theta) \in K\times \R^N$.
\end{definition}

\myindent Now, thanks to the theory of equivalence of phase functions (see for example \cite{duistermaat1996fourierb}[Theorem 2.3.4]), one can prove after some work the following corollary of Theorem \ref{DG75thm}.
\begin{corollary}\label{defoscintbyphase}
    Let $(s_0,x_0,y_0) \in \R \times M \times M$. Let $\phi(s,x,y,\zeta)$ be a nondegenerate phase function with angle variable $\zeta \in \R^N$ for some $N$. Assume that the canonical relation $C$ and $\Lambda_{\phi}'$ coincide locally near $(s_0,x_0,y_0)$, i.e. that locally near $(s_0,x_0,y_0)$ there holds
    \begin{equation}\label{adaptdef}
        d_{\zeta}\phi(s,x,y,\zeta) = 0 \iff (s,d_s \phi),(x,d_x\phi), (y,-d_y\phi) \in C.
    \end{equation}

    \myindent Then locally around $(s_0,x_0,y_0)$, $e^{stQ}(x,y)$ is given, modulo a smoothing remainder, by an oscillatory integral of the form
    \begin{equation}
    I(s,x,y) = \int_{\R^N} e^{i\phi(s,x,y,\zeta)} f(s,x,y,\zeta) d\zeta,
\end{equation}
    where $f$ is a smooth symbol of order $\frac{d - N}{2}$.
\end{corollary}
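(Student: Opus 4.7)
The plan is to reduce the result to the abstract local representation theorem for Lagrangian distributions, applied to the Schwartz kernel of $U(s) := e^{isQ}$. By Theorem~\ref{DG75thm} this operator already belongs to the class $I^{-1/4}(\R \times M, M; C)$ with the canonical relation \eqref{canonicrelation}, so its Schwartz kernel is automatically a Lagrangian distribution on $\R \times M \times M$ associated to the twisted Lagrangian $C'$. The remaining task is then only to realize this distribution locally as an oscillatory integral with the specific non-degenerate phase $\phi$ from the statement, and to track the bookkeeping that produces the symbol order $\tfrac{d-N}{2}$.

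First I would unpack the hypothesis \eqref{adaptdef} and check that it says precisely that $\phi$ parametrizes $C'$ locally near $(s_0, x_0, y_0)$. By the definition of $\Lambda_\phi$, the set $\{d_\zeta\phi = 0\}$ maps to $\{((s,x,y), (d_s\phi, d_x\phi, d_y\phi))\}$ in $T^*(\R \times M \times M) \setminus 0$, so after twisting $\eta \mapsto -\eta$ on the $M$-target one obtains
\[
\Lambda_\phi' = \{((s, d_s\phi), (x, d_x\phi), (y, -d_y\phi)) : d_\zeta\phi = 0\},
\]
and \eqref{adaptdef} says exactly that $\Lambda_\phi' = C$ on a conic neighborhood of the base point. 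Since $\phi$ is assumed non-degenerate, this makes it a bona fide parametrization of $C'$ by $N$ angle variables.

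Next I would invoke the H\"ormander local representation theorem together with the equivalence-of-phase-functions result \cite{duistermaat1996fourierb}[Theorem 2.3.4]: any Lagrangian distribution of order $m$ on a manifold of dimension $n$, associated to a Lagrangian parametrized locally by a non-degenerate phase function with $N$ angle variables, can be written modulo $C^\infty$ as an oscillatory integral in that phase, with an amplitude that is a classical symbol in the sense of Definition~\ref{symbolestimate} of order $m + n/4 - N/2$. Here $\R \times M \times M$ has dimension $n = 2d + 1$, and in the standard convention the FIO class $I^{-1/4}(\R \times M, M; C)$ corresponds to a Lagrangian distribution on the product of order $m = -1/4$. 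Substituting gives
\[
m + \frac{n}{4} - \frac{N}{2} \;=\; -\frac{1}{4} + \frac{2d+1}{4} - \frac{N}{2} \;=\; \frac{d - N}{2},
\]
which is exactly the symbol order asserted for $f$. The smoothing remainder in the conclusion absorbs both the wavefront-disjoint part of the kernel and the errors introduced by the partition of unity needed to pass from the local model to the global kernel.

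The main obstacle I anticipate is essentially bookkeeping rather than analysis: the conventions on the order of an FIO, the order of the associated Lagrangian distribution on the product, the half-density and Maslov factors, and the twist $C \mapsto C'$ differ slightly between \cite{duistermaat1996fourierb}, \cite{duistermaat1975spectrum}, and \cite{hormander2009analysis}, and one must fix a consistent convention in order to land on the precise symbol order $\tfrac{d-N}{2}$. Once these conventions are chosen and \eqref{adaptdef} is recognized as the parametrization identity $\Lambda_\phi' = C$, the corollary follows from a direct citation of the abstract representation theorem, without any further local computation being required.
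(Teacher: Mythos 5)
Your proposal is correct and follows essentially the same route as the paper, which states this corollary as a consequence of Theorem \ref{DG75thm} combined with the theory of equivalence of phase functions (\cite{duistermaat1996fourierb}[Theorem 2.3.4]): recognize \eqref{adaptdef} as saying $\phi$ locally parametrizes $C'$, then apply the local representation theorem for Lagrangian distributions with the standard order bookkeeping $-\tfrac14+\tfrac{2d+1}{4}-\tfrac N2=\tfrac{d-N}{2}$. Your explicit order computation is consistent with the paper's usage (order $0$ for $N=d$ in the Hörmander parametrix, order $\tfrac12$ for $N=1$, $d=2$ in the bicharacteristic length parametrix), so no gap remains beyond fixing the conventions you already flag.
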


\myindent As a consequence of that corollary, we will say that the nondegenerate phase function $\phi$ is \textit{adapted} to the canonical relation $C$ around $(s_0,x_0,y_0)$ if \eqref{adaptdef} holds locally.

\quad

\myindent Finally, we recall the following useful Lemma (see \cite{duistermaat1996fourierb}[Lemma 2.3.5]) regarding the \textit{minimum number} of angle variables which are needed.
\begin{lemma}\label{minnumberofdim}
    The number of $\zeta$ variables is greater or equal to the dimension $k$ of the intersection of the tangent space of the canonical relation $C$ and of the fibers of the cotangent bundle $T^*\R\times T^*M \times T^* M$.

    \myindent Moreover, one can find an adapted phase function with exactly $k$ angles variables.
\end{lemma}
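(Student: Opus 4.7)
The plan is to prove both claims using the explicit structure of non-degenerate phase functions, in the spirit of the treatment in \cite{duistermaat1996fourierb}.

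For the inequality $N \geq k$, fix a non-degenerate phase function $\phi(z,\zeta)$ on an open conic subset of $Z\times \R^N$, where I abbreviate $z = (s,x,y)$ and $Z = \R\times M\times M$, chosen so that $\Lambda_\phi$ coincides with $C$ near the point of interest $p$. A tangent vector to $C_\phi = \{d_\zeta \phi = 0\}$ at a point $(z_0,\zeta_0)$ is a pair $(\dot z,\dot\zeta)$ satisfying $\phi''_{\zeta z}\,\dot z + \phi''_{\zeta\zeta}\,\dot\zeta = 0$. Under the diffeomorphism $\iota: C_\phi\to \Lambda_\phi$, $(z,\zeta)\mapsto (z, d_z\phi(z,\zeta))$, this tangent vector is pushed forward to one whose base component equals $\dot z$. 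Hence the image lies in the vertical fiber $V_p\subset T_p(T^*Z)$ if and only if $\dot z = 0$, which forces $\phi''_{\zeta\zeta}\,\dot\zeta = 0$. It follows that
$$\dim(T_p\Lambda_\phi\cap V_p) \;=\; \dim\ker \phi''_{\zeta\zeta}(z_0,\zeta_0) \;\leq\; N,$$
which gives the first claim once we identify this dimension with $k$.

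For the achievability, I would construct a phase function with exactly $k$ angle variables via a generating function. Since $C$ is a conic Lagrangian submanifold of $T^*Z$ of dimension $n=\dim Z$, and $k$ is the dimension of the vertical kernel of the projection $C\to Z$ at $p$, this projection has rank $n-k$. After reordering coordinates, one writes $z=(z',z'')$ with $\dim z'' = k$, so that the restriction of $(z',\zeta'')$ to $C$ yields local coordinates near $p$ (here $\zeta''$ denotes the cotangent variables dual to $z''$). The Lagrangian condition together with Darboux-type normal forms then produces a generating function $S(z',\zeta'')$, homogeneous of degree one in $\zeta''$ by conic invariance of $C$, such that $C$ is locally described by $\zeta'=\partial_{z'}S$ and $z''=-\partial_{\zeta''}S$.

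Setting
$$\phi(z,\zeta'') \;:=\; \langle z'',\zeta''\rangle + S(z',\zeta''),$$
one obtains a smooth function, homogeneous of degree one in $\zeta''$, which is manifestly non-degenerate since the differentials $d(\partial_{\zeta''_j}\phi)$ contain the linearly independent forms $dz''_j$. A direct check gives $d_{\zeta''}\phi = z'' + \partial_{\zeta''}S$, so the critical set is $\{z''=-\partial_{\zeta''}S\}$ and parametrizes $C$; moreover, $d_z\phi = (\partial_{z'}S,\zeta'')$, which recovers the cotangent coordinates $(\zeta',\zeta'')$ on $C$. Hence $\Lambda_\phi = C$ near $p$ with exactly $k$ phase variables. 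The main technical point will be the careful passage from the abstract existence of a local generating function for a conic Lagrangian to its explicit construction in well-adapted coordinates, tracking the homogeneity in $\zeta''$ so that $\phi$ qualifies as a phase function in the FIO sense.
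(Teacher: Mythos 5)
The paper gives no proof of this lemma at all—it is quoted from Duistermaat [Lemma 2.3.5]—and your argument is essentially the standard one behind that reference (Hörmander's construction of a phase function adapted to a conic Lagrangian via a generating function in mixed coordinates); it is correct. Two small points are worth tightening. For the bound $N\geq k$ you only need $\dim(T_p\Lambda_\phi\cap V_p)\leq \dim\ker\phi''_{\zeta\zeta}\leq N$; the equality you assert additionally uses that $\dot\zeta\mapsto \phi''_{z\zeta}\dot\zeta$ is injective on $\ker\phi''_{\zeta\zeta}$, which is exactly the nondegeneracy of the phase (a nonzero $\dot\zeta$ in both kernels would give a nontrivial relation $\sum_j\dot\zeta_j\, d(\partial_{\zeta_j}\phi)=0$). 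In the construction, for $\phi(z,\zeta'')=\langle z'',\zeta''\rangle+S(z',\zeta'')$ to be a phase function one needs $\zeta''\neq 0$ on a conic neighborhood of the point, i.e. $\zeta''_0\neq 0$ at $p$; this is automatic but deserves a line: since $C$ is conic, the radial vector $(0,\zeta_0)$ lies in $T_pC\cap V_p$, and once the splitting is chosen so that the base projection of $T_pC$ is a graph over the $z'$-plane, the Lagrangian (annihilator) relation forces $T_pC\cap V_p$ to be a graph over the $\zeta''$-space, so $\zeta''_0=0$ would give $\zeta_0=0$. Finally, with the paper's convention ``adapted'' means $\Lambda_\phi'=C$, so strictly the argument should be run for the conic Lagrangian $C'$; the fiberwise sign flip changes none of the dimension counts or the generating-function construction.
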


\myindent Looking closely at the definition of the canonical relation \eqref{canonicrelation}, observe that this dimension is at most $d = dim (M)$, and in fact it corresponds locally to the dimension of the set of directions of bicharacteristic curves of $q$ joining $x$ to $y$ in a time $t$. Observe that this dimension can jump.

\quad

\myindent In the following, we apply this theory to the case $M = \mathcal{S}$, $Q = Q_1$. Observe that the theory does apply to $Q_1$ thanks to the crucial fact that it is an \textit{elliptic} operator (see Remark \ref{Q1elliptic}). Since we want a suitable parametrix for \eqref{defUsigmast}, we will focus on finding a nondegenerate phase adapted to the canonical relation $C$ near $(s_0,x_0,y_0) = (s_0,(t_0,\sigma),(0,\sigma))$ for some $(s_0,t_0) \in [-\pi,\pi]^2$.

\subsubsection{Hörmander's parametrix}\label{subsubsec22Micro}

\myindent The problem of finding a parametrix of $s\mapsto e^{isQ}$ for \textit{small times} $|s| \ll 1$ has been extensively studied, and is more classical than the general case. We give the microlocal parametrix introduced by Hörmander in \cite{hormander1968spectral}, in the form of Sogge \cite{sogge2017fourier}[Section 4.1], where one can find a textbook presentation of (a more general version of) the following theorem.

\begin{theorem}\label{Hormthm}
    Let $(M,g)$ be a smooth riemannian manifold of dimension $d$ and let $Q$ be a classical pseudodifferential operator of order $1$ which is elliptic and self-adjoint. Then there exists $\eps>0$ such that, when $|s| < \eps$,
    \begin{equation}
        e^{isQ} = I(s) + R(s),
    \end{equation}
    where the remainder $R(s)$ has kernel $R(s,x,y) \in \mathcal{C}^{\infty}([-\eps,\eps] \times M \times M)$ and the kernel $I(s,x,y)$ is supported in a small neighborhood of the diagonal in $M\times M$. Furthermore, let $\Omega \subset M$ be a sufficiently small local coordinate patch and let $\omega \subset \Omega$ be relatively compact. Then, when $(s,x,y) \in [-\eps,\eps]\times \omega \times \omega$, $I(s,x,y)$ takes the form
    \begin{equation}
        I(s,x,y) = (2\pi)^{-d} \int_{\R^d} e^{i\varphi(x,y,\xi)}e^{isq(y,\xi)} a(s,x,y,\xi) d\xi,
    \end{equation}
    where locally the cotangent space $T^*\omega$ is identified to $\omega \times \R^n$, $q(y,\xi)$ is the principal symbol of $Q$ and $\varphi$ is the unique (local) solution to the \textit{eikonal equation}
    \begin{equation}\label{eikeq}
        q(x,\nabla_x\varphi(x,y,\xi)) = q(y,\xi)
    \end{equation}
    satisfying the control
     \begin{equation}\label{asdevphi}
        \forall \alpha \in \N^n \\ \left|\left(\frac{\partial}{\partial \xi}\right)^{\alpha}\left[\varphi(x,y,\xi) - \scal{x-y}{\xi}\right]\right| \lesssim_{\alpha} |x-y|^2|\xi|^{1 - \alpha}.
    \end{equation}
    
    \myindent In particular, 
    \begin{equation}
        \phi_H(s,x,y,\xi) := sq(y,\xi) + \varphi(x,y,\xi)
    \end{equation}
    is a nondegenerate phase function which is adapted to the canonical relation $C$ around $(0,x,x)$.
    
    \myindent Finally, $a$ is a symbol in $S^0$, i.e.
    \begin{equation}\label{estsymb}
        \left|\left(\frac{\partial}{\partial \xi}\right)^{\alpha}\left(\frac{\partial}{\partial t}\right)^{\beta_1}\left(\frac{\partial}{\partial x}\right)^{\beta_2}\left(\frac{\partial}{\partial y}\right)^{\beta_3}a(t,x,y,\xi)\right| \lesssim_{\alpha,\beta} (1 + |\xi|)^{-|\alpha|},
    \end{equation}
    such that moreover
    \begin{equation}
        a(0,x,x,\xi) = 1 + O((1 + |\xi|)^{-1})
    \end{equation}
    uniformly in $x$.
\end{theorem}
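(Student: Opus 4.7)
The plan is to follow the classical Hörmander construction: one writes down the ansatz $I(s,x,y)$ as an oscillatory integral with phase $\phi_H(s,x,y,\xi) = sq(y,\xi) + \varphi(x,y,\xi)$, then chooses $\varphi$ so that $\phi_H$ generates the correct canonical relation (this forces the eikonal equation \eqref{eikeq}), and finally chooses the amplitude $a$ so that $(i\partial_s - Q)I(s,x,y)$ is smoothing. The remainder then follows from Duhamel.

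First I would solve the eikonal equation locally by the Hamilton--Jacobi method. Fix $y \in \omega$ and $\xi \neq 0$. Since $Q$ is elliptic and of order one, $\nabla_\xi q(y,\xi) \neq 0$, so the Hamiltonian flow of $(x,\eta) \mapsto q(x,\eta) - q(y,\xi)$ is transverse to $\{x=y\}$. Parametrizing the Lagrangian submanifold $\Lambda \subset T^*\omega$ obtained by flowing initial data $(y,\xi)$ along these bicharacteristics, one verifies that $\Lambda$ projects diffeomorphically to $\omega$ near $y$ provided $x$ stays close to $y$; its generating function, normalized by $\varphi(y,y,\xi) = 0$, is the desired $\varphi$. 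Homogeneity of degree $1$ in $\xi$ is inherited from the homogeneity of $q$. The expansion \eqref{asdevphi} comes from Taylor-expanding in $x-y$: at $x=y$ one has $\nabla_x\varphi(y,y,\xi) = \xi$ by the characteristic construction, so $\varphi - \langle x-y, \xi\rangle$ vanishes to second order in $x-y$. The non-degeneracy of $\phi_H$ and the fact that $\Lambda_{\phi_H}'$ equals the canonical relation $C$ of Theorem \ref{DG75thm} near $(0,x,x)$ is a direct verification from the definitions: the critical equation $\nabla_\xi \phi_H = 0$ reads $s\nabla_\xi q(y,\xi) + \nabla_\xi \varphi = 0$, whose solution curve, pushed forward, is exactly a bicharacteristic of $q$ starting at $(y,\xi)$ at time $-s$.

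Next I would construct the amplitude by the standard transport argument. Write $a \sim \sum_{j \geq 0} a_{-j}$ with $a_{-j}$ homogeneous of degree $-j$ in $\xi$. Acting with $i\partial_s - Q$ on $I(s,x,y)$ and using the pseudodifferential calculus expansion for the action of $Q$ on an oscillatory integral whose phase satisfies the eikonal equation, the principal symbol cancels automatically, and the subprincipal term gives a first-order transport equation along the bicharacteristic through $(y,\xi)$ for $a_0$, which is solved with initial datum $a_0(0,x,x,\xi) = 1$. Successive orders give inhomogeneous transport equations for $a_{-j}$ with right-hand side depending on $a_0,\dots,a_{-(j-1)}$, each solvable along the same flow with vanishing initial data. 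Borel summation produces a symbol $a \in S^0$ satisfying \eqref{estsymb} and such that $(i\partial_s - Q)I(s,x,y)$ is a smooth kernel on $[-\eps,\eps] \times \omega \times \omega$.

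To conclude, I would combine this with energy estimates/Duhamel: both $e^{isQ}$ and $I(s)$ solve $(i\partial_s - Q)u = 0$ from the same initial datum $\delta_y$ up to a smoothing source, so their difference $R(s) = e^{isQ} - I(s)$ has a smooth kernel on $[-\eps,\eps] \times M \times M$. The support condition on $I(s,x,y)$ is then enforced by cutting off in $\xi$ to make the phase stationary only near the diagonal, plus a spatial cutoff $\chi(x,y)$ supported near the diagonal, absorbing the discrepancy into $R(s)$. The main obstacle is the geometric part: verifying that the Lagrangian $\Lambda_{\phi_H}'$ really coincides with $C$ of Theorem \ref{DG75thm} near $(0,x,x)$, because this rests on the Hamilton--Jacobi solvability of the eikonal equation and the fact that the bicharacteristic spray from $(y,\xi)$ projects diffeomorphically to $M$ for small times — both statements are local but require the ellipticity of $Q$ in an essential way, and they are what restricts the construction to $|s| < \eps$.
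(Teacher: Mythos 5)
The paper does not actually prove this statement: Theorem \ref{Hormthm} is imported as a known result of H\"ormander, in the formulation of Sogge \cite{sogge2017fourier}[Section 4.1], and is used as a black box. Your outline (Hamilton--Jacobi solution of the eikonal equation \eqref{eikeq} with the normalization \eqref{asdevphi}, transport equations plus Borel summation for the amplitude, and Duhamel to absorb the discrepancy into a smooth remainder) is precisely the standard argument of that reference and is essentially correct; the only loose point is that, for fixed $(y,\xi)$, the Lagrangian $\{(x,\nabla_x\varphi(x,y,\xi))\}$ is the flow-out of $(d-1)$-dimensional Cauchy data on a hypersurface through $y$ inside the level set $\{q=q(y,\xi)\}$, not of the single point $(y,\xi)$, whose flow-out is only a curve.
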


\myindent This theorem applies to $M = \mathcal{S}$, $d = 2$, and $Q = Q_1$ since we recall that $q_1$, its principal symbol, is \textit{elliptic} (see Lemma \ref{q1elliptic}). We use it on the open set $\Omega = \mathcal{S} \backslash \{N,S\}$, which is a coordinate patch with coordinates $(\theta,\sigma)$, and $\omega = K_{\eps}$. Hence, there exists a $\eps_1 > 0$ such that for $\sigma \in \left]-\frac{L}{2} + \eps, \frac{L}{2} - \eps\right[$ and $|s|, |t| < \eps_1$, we can write
\begin{equation}\label{HormUsigmast}
\begin{split}
    U(\sigma,s,t) &= \int_{\R^2} e^{i\phi_H(\sigma,s,t,\xi)} f(\sigma,s,t,\xi) d\xi + R(\sigma,s,t) \\
    &=: I(\sigma,s,t) + R(\sigma,s,t),
\end{split}
\end{equation}
where $f$ is a smooth symbol uniformly of order zero, $R$ is a smooth function, and where we use the abuse of notation for the phase
\begin{equation}\label{hormparam}
    \begin{split}
    \phi_H(\sigma,s,t,\xi) &:= \phi_H(s,(t,\sigma),(0,\sigma),\xi) \\
    &= s q_1(\sigma,\xi) + \varphi((t,\sigma),(0,\sigma),\xi),
    \end{split}
\end{equation}

where $\varphi(x,y,\xi)$ is the unique local solution of the eikonal equation \eqref{eikeq} satisfying the local control \eqref{asdevphi}. We insist that, here, the variable $\xi$ coincides with the covariables $(\Theta,\Sigma)$ associated to $(\theta,\sigma)$.

\subsubsection{Antipodal Hörmander's parametrix}\label{subsubsec23Micro}

\myindent It is usual in the theory of FIO that the most delicate part of finding a parametrix is near the caustics. Now, thanks to the analysis of Paragraph \ref{subsubsec52CdV}, we see that, thanks to the twist Hypothesis \ref{twisthypothesis}, the only case of focal point is given by the \textit{antipodal refocalisation} (see Lemma \ref{antipodrefocus}). More precisely, since we are interested in a parametrix near $(t,\sigma),(0,\sigma)$ which are on the same parallel, the only case of focal point will happen for $\sigma = 0$ and $t = \pi$. Now, in order to find a parametrix near $\sigma = 0, s = t = \pi$, we observe that, actually, thanks to the antipodal refocalisation itself, the geometry near $(\pi,0)$ is exactly the same as near $(0,0)$. In particular, we can use (an adapted version of) the Hörmander parametrix itself. Precisely, we claim the following. We don't claim that we are the first to observe that fact, however we don't know of any work were this construction is explicitely done.

\begin{lemma}\label{antipodparam}
    Let $U$ be a neighborhood of $(x_0, \bar{x_0})$ where $x_0 = (0,0)$ and $\bar{x_0} = (\pi,0)$ is its antipodal point. Let
    \begin{equation}
        \phi_H(s,x,y,\xi) := sq_1(x,\xi) + \varphi(x,y,\xi)
    \end{equation} 
    be the Hörmander phase function near $s = 0$, $x = y = x_0$. Then, we claim that the \textit{antipodal Hörmander phase function} $\phi_{\pi}(s,x,y,\xi)$, defined for $s$ near $\pi$, $(x,y)\in U$ and $\xi \in \R^2 \backslash (0,0)$ by
    \begin{equation}\label{defantipodparam}
        \phi_{\pi}(s,x,y,\xi) := \phi_H(s - \pi,x,\bar{y},\xi),
    \end{equation}
    is a well-defined smooth nondegenerate phase function which is adapted to the canonical relation of $e^{isQ_1}(x,y)$ near $(s_0,x_0,y_0) = (\pi,x_0,\bar{x_0})$ 
\end{lemma}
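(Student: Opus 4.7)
The plan is to verify the three defining requirements of Corollary \ref{defoscintbyphase} for $\phi_\pi$ near $(\pi, x_0, \bar{x}_0)$: smoothness together with homogeneity of degree one in $\xi$, nondegeneracy, and the identification of the associated Lagrangian $\Lambda'_{\phi_\pi}$ with the canonical relation $C$ of Theorem \ref{DG75thm}. The decisive ingredient is the antipodal refocalization (Lemma \ref{antipodrefocus}), $\Phi^{q_1}_\pi(z, \zeta) = (\bar{z}, \bar{\zeta})$, together with the group property $\Phi^{q_1}_s = \Phi^{q_1}_\pi \circ \Phi^{q_1}_{s - \pi}$.

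For the first requirement, note that the antipodal map $y \mapsto \bar{y}$ is a smooth diffeomorphism between a neighborhood of $\bar{x}_0$ and a neighborhood of $x_0$ (in local coordinates $(\theta', \sigma') \mapsto (\theta' + \pi, -\sigma')$). Hence the composition $\phi_\pi(s, x, y, \xi) = \phi_H(s - \pi, x, \bar{y}, \xi)$ is well-defined and smooth on a conic neighborhood of $(\pi, x_0, \bar{x}_0)$, and homogeneity of degree one in $\xi$ is inherited from $\phi_H$. For nondegeneracy, observe that the map $(s, x, y, \xi) \mapsto (s - \pi, x, \bar{y}, \xi)$ is a diffeomorphism whose Jacobian is block-diagonal with invertible blocks. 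Consequently, the vectors $d_{(s, x, y, \xi)} \partial_{\xi_j} \phi_\pi$ are linearly independent on $\{d_\xi \phi_\pi = 0\}$ if and only if the vectors $d_{(s', x, \tilde{y}, \xi)} \partial_{\xi_j} \phi_H$ are linearly independent on $\{d_\xi \phi_H = 0\}$, which is the nondegeneracy of $\phi_H$ from Theorem \ref{Hormthm}.

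The core step is the adaptation to $C$. By the chain rule and the paper's convention from Definition \ref{antipodpoint} for the canonical lift $(y, \eta) \mapsto (\bar{y}, \bar{\eta})$, one verifies in local coordinates the identifications
\begin{equation*}
d_s \phi_\pi = d_{s'} \phi_H \big|_{s' = s - \pi}, \qquad d_x \phi_\pi = d_x \phi_H, \qquad d_y \phi_\pi = \overline{d_{\tilde{y}} \phi_H},
\end{equation*}
where the last equality records the fact that the differential of $y \mapsto \bar{y}$ transposes covectors exactly as the bar operation prescribes (flipping the $\Sigma$-component). In particular $-d_y \phi_\pi = \overline{-d_{\tilde{y}} \phi_H}$. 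Now suppose $d_\xi \phi_\pi(s, x, y, \xi) = 0$, equivalently $d_\xi \phi_H(s - \pi, x, \bar{y}, \xi) = 0$. Since $\phi_H$ is adapted to $C$ near $(0, x_0, x_0)$, this is equivalent to $d_{s'} \phi_H = q_1(x, d_x \phi_H)$ together with $\Phi^{q_1}_{s - \pi}(x, d_x \phi_H) = (\bar{y}, -d_{\tilde{y}} \phi_H)$. Applying $\Phi^{q_1}_\pi$ to both sides of the second equation and using Lemma \ref{antipodrefocus} along with $\overline{\bar{y}} = y$ yields $\Phi^{q_1}_s(x, d_x \phi_\pi) = (y, -d_y \phi_\pi)$; the first equation becomes $d_s \phi_\pi = q_1(x, d_x \phi_\pi)$. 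These are precisely the conditions that $(s, d_s \phi_\pi), (x, d_x \phi_\pi), (y, -d_y \phi_\pi) \in C$. The reverse implication is identical, applying $\Phi^{q_1}_{-\pi}$ instead.

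The only mildly delicate point, and hence the main obstacle, is the bookkeeping of the chain rule identifying $d_y \phi_\pi$ with $\overline{d_{\tilde{y}} \phi_H}$ and the consistent use of the antipodal lift on covectors. In local coordinates $(\theta, \sigma)$ the differential of the antipodal map is $\mathrm{diag}(1, -1)$, which matches exactly the convention of Definition \ref{antipodpoint} that $(\theta, \sigma, \Theta, \Sigma) \mapsto (\theta + \pi, -\sigma, \Theta, -\Sigma)$; once this is written down cleanly, everything else is a direct application of Lemma \ref{antipodrefocus} and the group law of the Hamiltonian flow.
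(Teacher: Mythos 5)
Your proposal is correct and follows essentially the same route as the paper: nondegeneracy is inherited because $(s,x,y,\xi)\mapsto(s-\pi,x,\bar y,\xi)$ is a diffeomorphism acting only on the base variables, and the identification of $\Lambda'_{\phi_\pi}$ with the canonical relation $C$ is obtained by pushing the adaptation property of $\phi_H$ through the antipodal refocalisation $\Phi^{q_1}_{\pi}(z,\zeta)=(\bar z,\bar\zeta)$ and the group law of the flow, exactly as in the paper's proof (which phrases the same computation via the sets $C_{\phi_\pi}$, $\Lambda_{\phi_\pi}$ and the identity $dS(S(y))=(dS(y))^{-1}$). Your explicit check that the transpose of $dS=\mathrm{diag}(1,-1)$ coincides with the covector bar operation of Definition \ref{antipodpoint} is the same bookkeeping the paper handles implicitly, so there is nothing to add.
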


\begin{proof}
    We denote, for $x\in\mathcal{S}$, $S(x) := \bar{x}$. It is obvious from the definition that $\phi_{\pi}$ is a nondegenerate phase function. Hence, we only need to prove that, locally, $\Lambda_{\phi_{\pi}}'$ and $C$ defined by \eqref{canonicrelation} coincide. Now,
    \begin{equation}
        d_{\xi} \phi_{\pi}(s,x,y,\xi) = d_{\xi}\phi_H(s - \pi,x,S(y),\xi).
    \end{equation}
    
    `\myindent Hence,
    \begin{equation}
        C_{\phi_{\pi}} = \{(s,x,y,\xi) \ \text{such that} \ (s - \pi,x,S(y),\xi) \in C_{\phi_H}\}.
    \end{equation}

    \myindent Moreover, since $S \circ S = Id$, and hence $dS(S(y)) = (dS(y))^{-1}$, there holds
    \begin{equation}
        d_{x,y} \phi_{\pi}(s,x,y,\xi) = \left( d_x \phi_H(s - \pi,x,S(y),\xi), (dS(y))^{-1} d_y \phi_H(s - \pi,x,S(y),\xi) \right).
    \end{equation}

    \myindent From which we may deduce that 
    \begin{equation}
        \Lambda_{\phi_{\pi}} = \{(s,\tau),(x,\xi),(y,(dS(y))^{-1}\cdot \eta) \qquad (s-\pi,\tau),(x,\xi),(S(y),\eta) \in \Lambda_{\phi_H}\}.
    \end{equation}

    \quad

    \myindent Thus, in order to prove that $\Lambda_{\phi_{\pi}}' = C$ locally, we need only prove that
    \begin{equation}
    \begin{split}
        &\{(s,\tau),(x,\xi),(y,\zeta) \in T^*\R \backslash \{0\} \times T^*M \backslash \{0\} \times T^*M \backslash \{0\}\ 
    \text{such that} \ \tau - p(x,\xi) = 0, \ (y,\zeta) = \Phi_s^q(x,\xi)\}\\
    &= \{(s,\tau),(x, \xi),(y,(dS(y))^{-1} \cdot \eta) \qquad \tau = p(x,\xi), \ (S(y),\eta) = \Phi^q_{s-\pi}(x,\xi)\}.
    \end{split}
\end{equation}

\myindent Now, this is a consequence of the fact that, for all $(z,\zeta) \in T^*\mathcal{S}\backslash \{0\}$, there holds
\begin{equation}
    \Phi^q_{\pi}(z,\zeta) = (S(z),dS(z)\cdot \zeta),
\end{equation}
which itself follows from the symmetries of $\mathcal{S}$.
\end{proof}

\myindent As a consequence, Corollary \ref{defoscintbyphase} applies : locally around $(\sigma,s,t) = (0,\pi,\pi)$, we can write
\begin{equation}\label{antipodUsigmast}
\begin{split}
    U(\sigma,s,t) &= \int e^{i\phi_{\pi}(\sigma,s,t,\xi)} f(\sigma,s,t,\xi) d\xi + R(\sigma,s,t) \\
    &=: I(\sigma,s,t) + R(\sigma,s,t),
\end{split}
\end{equation}
for some smooth symbol $f$ uniformly of order zero, and some smooth remainder $R$, and where we use the abuse of notation
\begin{equation}\label{antipodhormparam}
    \phi_{\pi}(\sigma,s,t,\xi) = \phi_{\pi}(s,(t,\sigma),(0,\sigma),\xi).
\end{equation}

\begin{remark}
    In order to guess that \eqref{defantipodparam} is a good candidate for a phase function adapted to the canonical relation $C$ near $(s,x,y) = (\pi, x_0, \bar{x_0})$, one can observe that
    \begin{equation}
        e^{i\pi Q_1} = i S,
    \end{equation}
    where $S : M \to M$ is the operator $x\mapsto \bar{x}$. This can be proved by looking at the action of $e^{i\pi Q_1}$ on the joint eigenfunctions of $(Q_1,Q_2)$.
\end{remark}

\subsubsection{The bicharacteristic length parametrix}\label{subsubsec24Micro}

\myindent Now, there only remains to deal with the case where $(\sigma_0,s_0,t_0) \in \left]-\frac{L}{2} + \eps, \frac{L}{2} - \eps \right[ \times S^1 \times S^1$ is away from $s = t = 0$ and from $\sigma = 0$, $s = t = \pi$. We will prove, in this section, that the kernel $U(\sigma,s,t)$ can be expressed in the following way. Similar constructions have already been done in the literature, see for example \cite{colin1974parametrix}.

\begin{proposition}\label{bicharactparamprop}
    Let $(\sigma_0,s_0,t_0) \in \left]-\frac{L}{2} + \eps, \frac{L}{2} - \eps \right[ \times S^1 \times S^1$ such that $(s_0,t_0) \neq (0,0)$ and $(\sigma_0,s_0,t_0) \neq (0,\pi,\pi)$. Then, there exists a neighborhood $I\times J \times K$ of $(\sigma_0,s_0,t_0)$ in $\left]-\frac{L}{2} + \eps, \frac{L}{2} - \eps \right[ \times S^1 \times S^1$ such that

    \myindent i. If
    \begin{equation}\label{eqofsing}
        |s_0| = \psi((\sigma_0,t_0),(\sigma_0,0)),
    \end{equation}
    where we recall that $\psi(x,y)$ is the bicharacteristic length between $x$ and $y$ defined by \eqref{defpsi}, then 
    \begin{equation}\label{bicharlengthparam}
        \phi_{bl}(s,x,y,r) := r(|s| - \psi(x,y)), \qquad (s,x,y,r) \in I\times J \times K \times \R_+^*,
    \end{equation}
    is a nondegenerate phase function adapted to the canonical relation $C$ around $(s_0,x_0,y_0)$ locally.

    \myindent ii. Otherwise, 
    \begin{equation}
        (\sigma,s,t) \mapsto U(\sigma,s,t)
    \end{equation}
    is a smooth function on $I\times J \times K$
\end{proposition}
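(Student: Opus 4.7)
The proof splits along the dichotomy stated: when the bicharacteristic time equation fails I show $U$ is smooth via the standard wavefront-set calculus for Fourier Integral Operators (case (ii)); when it holds, I verify by direct inspection that $\phi_{bl}$ is a nondegenerate phase function adapted to the canonical relation $C$ of $e^{isQ_1}$, the substance of the verification being a Hamilton--Jacobi identity that recognizes $\psi(x,y)$ as a generating function for the time-$\psi(x,y)$ bicharacteristic map.

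\textbf{Case (ii).} By Theorem \ref{DG75thm}, the kernel $e^{isQ_1}(x,y)$ is a Lagrangian distribution whose wavefront set lies in the canonical relation $C$ given by \eqref{canonicrelation}; its singular support is thus contained in the base projection $\pi(C)\subset\R\times \mathcal{S}\times\mathcal{S}$. A triple $(s_0,x_0,y_0)$ belongs to $\pi(C)$ exactly when there is a $q_1$-bicharacteristic joining $x_0$ to $y_0$ in time $s_0$. Using the $2\pi$-periodicity of $\Phi^{q_1}$ from Theorem \ref{CdVthm}, time reversal, Hypothesis \ref{nonintersectHyp} (to rule out multiple bicharacteristic joinings), and Lemma \ref{propertypsi} (which ensures $\psi\in[0,\pi]$), this occurs for $s_0\in(-\pi,\pi]$ iff $|s_0|=\psi(x_0,y_0)$. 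Under the failure of \eqref{eqofsing}, the triple $(s_0,\sigma_0,t_0)$ thus lies outside $\pi(C)$ and $U$ is smooth in a neighborhood.

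\textbf{Case (i).} Set $\varepsilon := \mathrm{sgn}(s_0)\in\{\pm 1\}$, so $|s| = \varepsilon s$ on a small neighborhood of $s_0$. The excluded cases $(s_0,t_0)=(0,0)$ and $(\sigma_0,s_0,t_0)=(0,\pi,\pi)$ correspond exactly to $x_0=y_0$ and $y_0=\bar x_0$, so Lemma \ref{propertypsi} gives smoothness of $\psi$ near $(x_0,y_0)$, and hence of $\phi_{bl}$. With the single angle variable $r>0$, nondegeneracy reduces to $d_{(s,x,y,r)}\partial_r\phi_{bl} = (\varepsilon,-\nabla_x\psi,-\nabla_y\psi,0)\neq 0$, which is immediate. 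For adaptedness, $C_{\phi_{bl}} = \{|s|=\psi(x,y)\}$ and
\[
\Lambda_{\phi_{bl}}' = \bigl\{\bigl((s,\varepsilon r),(x,-r\nabla_x\psi),(y,r\nabla_y\psi)\bigr) : |s|=\psi(x,y),\ r>0\bigr\}.
\]
Matching this to $C$ reduces to two Hamilton--Jacobi identities: $q_1(x,-\nabla_x\psi(x,y))=1$ and $\Phi^{q_1}_{\psi(x,y)}(x,-\nabla_x\psi(x,y)) = (y,\nabla_y\psi(x,y))$. Under Hypothesis \ref{nonintersectHyp} the minimizing direction $\hat\xi(x,y)\in S^*_x\mathcal{S}$ of Definition \ref{defpsi} is unique near $(x_0,y_0)$, and the implicit function theorem applied to $(\xi,t)\mapsto\pi_x\Phi^{q_1}_t(x,\xi)$ shows it depends smoothly on $(x,y)$; rescaling to a unit-$q_1$ covector $\xi_0(x,y)$ (allowed since homogeneity of $q_1$ renders $\pi_x\Phi^{q_1}_t$ invariant under scaling of $\xi$), the classical generating-function computation --- differentiating $\pi_x\Phi^{q_1}_{\psi(x,y)}(x,\xi_0(x,y))=y$ in $(x,y)$ while invoking the Hamilton equations and the conservation of $q_1$ along bicharacteristics --- yields both identities. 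A Lagrangian dimension count then gives local equality $\Lambda_{\phi_{bl}}'=C$.

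\textbf{Main obstacle.} The substantive step is the Hamilton--Jacobi identification of $\psi$ as a generating function. This rests on (a) the smoothness of the minimizing direction $\hat\xi(x,y)$, which crucially uses Hypothesis \ref{nonintersectHyp} to rule out degenerate refocusing (outside the antipodal and diagonal loci, which have been excluded), and (b) the generating-function computation itself, slightly complicated by the fact that Definition \ref{defpsi} normalizes on the unit $p$-sphere while the quantity conserved along $\Phi^{q_1}$-bicharacteristics is $q_1$. Treating $\varepsilon=+1$ and $\varepsilon=-1$ separately (with $r$ taken in $\varepsilon\R_+^*$ so that $\tau=\varepsilon r$ matches the positivity of $q_1$ inside $C$) is only a notational bookkeeping matter.
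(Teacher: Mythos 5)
Your proposal is correct, but it takes the short, direct road where the paper deliberately takes a longer constructive one, so the comparison is worth recording. Your case (ii) coincides with the paper's: if no bicharacteristic joins $x_0$ to $y_0$ in time $s_0$, the fibre over $(s_0,x_0,y_0)$ misses the canonical relation \eqref{canonicrelation} and the kernel is smooth there. In case (i) you take the explicit phase $r(|s|-\psi)$ as given, check nondegeneracy by hand, and reduce adaptedness to the two Hamilton--Jacobi identities $q_1(x,-\nabla_x\psi)=1$ and $\Phi^{q_1}_{\psi(x,y)}\bigl(x,-\nabla_x\psi\bigr)=(y,\nabla_y\psi)$, proved by differentiating the defining relation for $\psi$, with smoothness and uniqueness of the joining direction supplied by Hypothesis \ref{nonintersectHyp} and Lemma \ref{propertypsi}. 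The paper instead \emph{derives} the phase: it invokes Lemma \ref{minnumberofdim} to obtain an adapted phase with a single angle variable, extracts the eikonal equation \eqref{eikeqtimedependent} from \eqref{whyeikonaleqholds}, shows the gradient conditions in \eqref{eqgenerator} are redundant once the zero-set condition and nonvanishing of the gradient hold, and then constructs the normalized generator by the Hamilton--Jacobi/Lagrangian-graph method, identifying it with $\pm(\psi-s_0)$ via the increment computation \eqref{incrbichleng}; the paper itself states that the direct check (your route) is straightforward but prefers the constructive derivation because it explains how to guess the phase and how the construction generalizes to settings where no explicit candidate is available. Your route buys brevity and puts the two Hamilton--Jacobi identities for $\psi$ in plain view; the paper's buys a reusable template, at the cost of length. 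One detail from your write-up is genuinely useful: for $s_0<0$ the phase \eqref{bicharlengthparam} with $r\in\R_+^*$ clashes with the convention \eqref{adaptdef}, since $d_s\phi_{bl}=-r<0$ cannot equal $\tau=q_1>0$; your prescription $r\in\varepsilon\R_+^*$ (equivalently, replacing $\phi_{bl}$ by $-\phi_{bl}$ for $s_0<0$) is the correct bookkeeping, a point the paper passes over by treating only $s_0>0$ and declaring the other case ``exactly similar.'' Your glosses (invertibility of the differential in the implicit function theorem step, i.e.\ absence of conjugate points) are at the same level of rigor as the corresponding steps in the paper's Lagrangian-graph construction, so I do not count them as gaps.
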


\myindent We detail the proof, and explain more precisely how to \textit{guess} the phase function \eqref{bicharlengthparam}. Indeed, once we have the formula \eqref{bicharlengthparam}, it is straightforward to check that it yields a phase function adapted to the canonical relation $C$, but we wish to give (our) intuition on the construction, in the hope that it could be generalized. In particular, the following proof may seem unnecessarily long, but it actually doesn't use many properties of the bicharacteristic length function $\psi$, and, hence, it could be generalized locally in other settings, as we will discuss in Section \ref{secFurther}. Moreover, we will use some of the elements of this proof in order to study the bicharacteristic lentgh function, see Appendix \ref{AppendixD}.

\begin{proof}
The non interesction of bicharacteristic curves Hypothesis \eqref{nonintersectHyp} guarantees that, locally near $(s_0,x_0,y_0) := (s_0, (\sigma_0,t_0), (\sigma_0,0))$, the maximal dimension of the intersection of the canonical relation of $e^{isQ_1}(x,y)$ and of the fibers of the cotangent bundle is one. We even know more : depending on whether there exists a bicharacteristic curve of $q_1$ joining $x$ to $y$ in time $s$, this intersection is either empty, either exactly a half-line.

\quad

\myindent Now, observe first that, for $s_0 \in [-\pi,\pi]$, then 
\begin{equation}
    |s_0| = \psi(x_0,y_0)
\end{equation}
if and only if there exists a bicharacteristic curve of $q_1$ joining $x_0$ to $y_0$ in time $|s_0|$. Hence, in the case ii. of the proposition, one can find a small neighborhood $K\times U \times V$ of $(s_0,x_0,y_0)$ in $S^1 \times \mathcal{S} \times \mathcal{S}$ such that
\begin{equation}
    \forall (s,x,y) \in K \times U \times V \qquad \psi(x,y) \neq |s|.
\end{equation}

\myindent In other words, there is \textit{no} bicharacteristic curve of $q_1$ joining $x$ to $y$ in time $|s|$. Thanks to \eqref{canonicrelation}, we find that the intersection of $C$ and of the fiber $T_{s,x,y}^* S^1\times \mathcal{S} \times \mathcal{S}$ is \textit{empty}. Hence, classical theory of Fourier Integral Operators yield that $e^{isQ_1}(x,y)$ is \textit{smooth} on $K\times U \times V$. As a consequence, we may find small intervalls $I,J$ in $\left]-\frac{L}{2} + \eps, \frac{L}{2} - \eps \right[$ and $ S^1$ respectively such that for $(\sigma,t) \in I \times J$, and $s\in K$, $U(\sigma,s,t)$ is a smooth function.

\quad

\myindent Hence, we focus on the more interesting case i. where \eqref{eqofsing} holds, or, equivalently, where the exists a bicharacteristic curve joining $x_0$ to $y_0$ in time $|s_0|$. Thanks to the hypotheses, we know that $y_0 \notin \{x_0, \bar{x_0}\}$ so, thanks to Lemma \ref{propertypsi}, $|s_0| \in (0,\pi)$ and $\psi$ is smooth around $(s_0,y_0)$. We assume moreover that $s_0 > 0$ in order to clarify the notations, the proof being exactly similar in the case $s_0 < 0$. Now, from Hypothesis \ref{nonintersectHyp}, if $(s,x,y)$ is close to $(s_0,x_0,y_0)$ and such that there exists $(x,\xi) \in S_x^*\mathcal{S}$ such that
\begin{equation}
    P(\Phi_s^{q_1}(x,\xi)) = y,
\end{equation}
then, this $\xi$ is necessarily unique, and hence the intersection of $C$ and of the fiber $T^*_{s,x,y}(\R\times \mathcal{S}\times \mathcal{S})$ equals
\begin{equation}
    \{(s,\tau), (x,r\xi),(y,-r \eta), \qquad r > 0\},
\end{equation}
where we define
\begin{equation}
    \tau := q_1(x,\xi),
\end{equation}
and
\begin{equation}
    (y,\eta) := \Phi_s^{q_1}(x,\xi).
\end{equation}

\myindent As a consequence, Lemma \ref{minnumberofdim} yields that there exists a nondegenerate homogeneous phase function with only \textit{one} angle variable, say
\begin{equation}
    \phi(s,x,y,r),
\end{equation}
defined for $(s,x,y)$ near $(s_0,x_0,y_0)$ and $r \in \R_+^*$, and that there exists a smooth symbol uniformly of order $\frac{1}{2}$ say $f(s,x,y,r)$ such that locally, and modulo a smoothing remainder,
\begin{equation}\label{intexprtheoretic1D}
    e^{isQ_1}(x,y) = \int e^{i\phi(s,x,y,r)}f(s,x,y,r) dr.
\end{equation}

\myindent Now, the fact that there is only one angle variable reduces considerably the uncertainty on the phase function. Indeed, observe first that, by homogeneity,
\begin{equation}
    \phi(s,x,y,r) = r \phi(s,x,y,1),
\end{equation}
so $\Phi$ really is only a function of $(s,x,y)$. Moreover, observe that, thanks to \eqref{intexprtheoretic1D}, there holds
\begin{equation}\label{whyeikonaleqholds}
    \left(\frac{1}{i} \frac{\partial}{\partial s} - Q_1 \right) \left(\int e^{i\phi(s,x,y,r)}f(s,x,y,r) dr\right) \ \in \Psi^{-\infty}(\R\times \mathcal{S} ; \mathcal{S}),
\end{equation}
where $\Psi^{-\infty}$ is the class of smoothing operators. Now, at the main order (i.e. principal symbols), one can compute, using for example \cite{sogge2017fourier}[Theorem 3.2.3], that this ensures the following eikonal equation 
\begin{equation}\label{eikeqtimedependent}
    \partial_s \phi = q_1(x,\nabla_x \phi).
\end{equation}

\myindent In particular, we see that $\Phi$ is uniquely determined from the data of 
\begin{equation}
    \tilde{\psi}(x,y) := \phi(s_0,x,y,1).
\end{equation}

\myindent Moreover, the phase equation
\begin{equation}
    \Lambda_{\phi}' = C
\end{equation}
locally reads, thanks to \eqref{canonicrelation},
\begin{equation}\label{eqgenerator}
    \begin{split}
    \tilde{\psi}(x,y) = 0 \qquad \iff \qquad &\exists (x,\xi) \in S_x^*\mathcal{S} \ \text{such that} \ P(\Phi_{s_0}^{q_1}(x,\xi)) = y\\
    &\text{and in that case} \qquad \exists \lambda > 0 \ (x,\nabla_x\tilde{\psi}) = (x,\lambda \xi) \\
    & \text{and} \ (y,-\nabla_y \tilde{\psi}) = \Phi_{s_0}^{q_1}(x,\nabla_x\tilde{\psi}).
    \end{split}
\end{equation}

\myindent We call the smooth functions $\tilde{\psi}$ satisfying \eqref{eqgenerator} \textit{generators}, since each of them yields locally a phase function adapted to the canonical relation $C$. Now, our claim is that in fact, the second and third line of \eqref{eqgenerator} are redundant. Indeed, we claim that \textit{any} smooth $\tilde{\psi}$ such that on the one hand
\begin{equation}\label{zeroset}
    \tilde{\psi}(x,y) = 0 \qquad \iff \qquad \exists (x,\xi) \in S_x^*\mathcal{S} \ \text{such that} \ P(\Phi_{s_0}^{q_1}(x,\xi)) = y,
\end{equation}
and, on the other hand, such that for any $(x,y)$ such that $\tilde{\psi}(x,y) = 0$ then
\begin{equation}\label{nonzerograd}
    \nabla_{x,y}\tilde{\psi} \neq 0
\end{equation}
satisfies 
\begin{equation}
    \begin{split}
        &\exists \lambda > 0, \ (x,\nabla_x\tilde{\psi}) = (x,\lambda \xi) \\
    & \text{and} \ (y,-\nabla_y \tilde{\psi}) = \Phi_{s_0}^{q_1}(x,\nabla_x\tilde{\psi})
    \end{split}
\end{equation}
whenever $\tilde{\psi}(x,y) = 0$. Indeed, this equation is only an equation on the \textit{direction} of the gradient of $\tilde{\psi}$. Now, the zero set of $\tilde{\psi}$ is locally a three-dimensional submanifold of $\mathcal{S}\times \mathcal{S}$. Since the gradient of $\tilde{\psi}$ is necessarily orthogonal to its zero set, its direction is uniquely determined. Hence, since we know that there \textit{exists} at least one generator $\tilde{\psi}$, we deduce that for \textit{all} generator, the gradient $\nabla_{x,y}\tilde{\psi}$ has the same direction on the zero set (the sign uncertainty can be fixed since we assume that the gradient doesn't vanish on the zero set, which is a connected submanifold).

\myindent Hence, any $\tilde{\psi}$ satisfying the zero set equation \eqref{zeroset} and the nondegeneracy condition \eqref{nonzerograd} is a generator for an appropriate phase function \textit{up to a sign}.

\myindent Thanks to this liberty on the choice of a generator, we claim that there is a natural choice, so that we ultimately have a simple form for the phase function $\phi$. Indeed, if we want it to be similar to Hörmander's phase, i.e. of the form
\begin{equation}
    \text{a linear term in s} \qquad + \qquad \text{a remainder independent of s},
\end{equation}
then, looking at the eikonal equation \eqref{eikeqtimedependent}, the natural condition to impose is that
\begin{equation}
    \nabla_x \left(q_1(x,\nabla_x\tilde{\psi}(x,y))\right) = 0 \qquad \forall x,y,
\end{equation}
and we may further reduce to the easier equation
\begin{equation}\label{reducedeikonal}
    q_1(x,\nabla_x\tilde{\psi}(x,y)) = 1.
\end{equation}

\myindent Indeed, this guarantees that
\begin{equation}
    \phi(s,x,y,r) := r ((s-s_0) + \tilde{\psi}(x,y))
\end{equation}
is an adapted phase function, which is obviously quite nice. Now, we claim that there is locally up to a sign  a \textit{unique} generator $\tilde{\psi}$ satisfying \eqref{reducedeikonal}.

\myindent Indeed, we can apply the Hamilton-Jacobi theory, for example \cite{duistermaat1996fourierb}[Theorem 3.6.3], which can also be approached in a more constructive way since we are in a very explicit case.

\myindent Fix $y$ close to $y_0$. Then,
\begin{equation}
    Q := \{x \in \mathcal{S} \ \text{such that} \ \psi(x,y) = s_0\},
\end{equation}
where $\psi$ is the bicharacteristic length, is in a neighborhood of $x_0$ a submanifold of dimension 1. In particular, for each $x \in Q$, we can define smoothly a unit normal to $Q$ at $x$, namely a smooth
\begin{equation}
        (x,\xi_x) \in Q^{\perp} \qquad \text{such that}\qquad q(x,\xi_x) = 1.
\end{equation}

\myindent Now, consider 
\begin{equation}
    \Lambda := \{\Phi_u^{q_1}(x,\xi_x) \qquad x\in Q, \ |u| \ll 1\},
\end{equation}

which can be visualized as follows : locally, $Q$ is the (projection of the) \textit{wavefront set} at time $s_0$ of the bicharacteristics starting at $x_0$, and we build $\Lambda$ as the union, locally, of the trajectories, see Figure \ref{constructLambda}.
\begin{figure}[h]
\includegraphics[scale = 0.5]{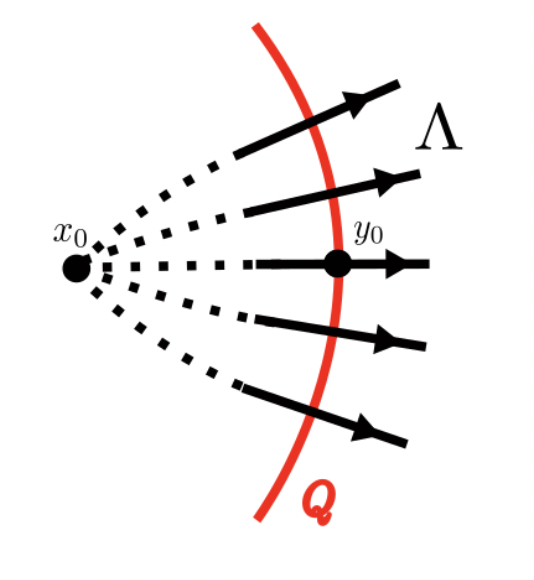}
\centering
\caption{The construction of $\Lambda$}
\label{constructLambda}
\end{figure}

\myindent Then, locally, this is a lagrangian submanifold of $T^*\mathcal{S}$ (see \cite{duistermaat1996fourierb}[Theorem 3.6.2]), and hence, usual theory of Lagrangian submanifolds yields that there exists a function such that 
\begin{equation}
    \Lambda = \{(x,d_x\tilde{\psi})\}
\end{equation}
locally around $x_0$ and moreover such that $\tilde{\psi}$ vanishes on $Q$. Now, it is straightforward to compute $\tilde{\psi}$. Indeed, fix $x \in Q$. We find that
\begin{equation}\label{incrbichleng}
    \begin{split}
    \frac{d}{du} \left(\tilde{\psi}\left(P\left(\Phi_u^{q_1}(x,\xi_x)\right)\right)\right)
    &= d_x \tilde{\psi}\left(P\left(\Phi_u^{q_1}(x,\xi_x)\right)\right) \cdot dP\left(\frac{d}{du} \Phi_u^{q_1}(x,\xi_x) \right) \\
    &= d_x \tilde{\psi}\left(P\left(\Phi_u^{q_1}(x,\xi_x)\right)\right) \cdot \left(d_{\xi} q_1\left(\Phi_u^{q_1}(x,\xi_x)\right)\right) \\
    &= d_x \tilde{\psi}\left(P\left(\Phi_u^{q_1}(x,\xi_x)\right)\right) \cdot \left(d_{\xi} q_1\left(P(\Phi_u^{q_1}(x,\xi_x)),d_x \tilde{\psi}\left(P\left(\Phi_u^{q_1}(x,\xi_x)\right)\right) \right)\right) \\
    &= q_1\left(d_x \tilde{\psi}\left(P\left(\Phi_u^{q_1}(x,\xi_x)\right)\right) \right) \\
    &= 1.
    \end{split}
\end{equation}

\myindent In other words, $\tilde{\psi}$ exactly measures, up to a constant, the increment in the bicharacteristic length, i.e. ultimately (since it vanishes on $Q$)
\begin{equation}
    \tilde{\psi}(x) = \psi(x,y) - s_0.
\end{equation}

\myindent In conclusion,
\begin{equation}
    \tilde{\psi}(x,y) := \pm(\psi(x,y) - s_0)
\end{equation}
appears naturally as the unique choice of generator satisfying \eqref{reducedeikonal}. There is still an uncertainty on the sign, but it can now be seen to be a $-$ sign, since we have an explicit candidate for the phase.
\end{proof}

\myindent Observe that, in the construction, the only important fact is that $\psi(x,y)$ measures locally the increment in the bicharacteristic length, in the sense of \eqref{incrbichleng}. In particular, the construction extends to give local phase functions for parametrices of groups of unitary operators $t\mapsto e^{itQ}$, at least near points $(t,x,y)$ such that the dimension of the intersection of the canonical relation \eqref{canonicrelation} and of the fibers of the cotangent bundle is locally at most one.

\myindent As a consequence of Proposition \ref{bicharactparamprop}, in the case i., Corollary \eqref{defoscintbyphase} applies and we can write locally around $(\sigma_0,s_0,t_0)$ 
\begin{equation}\label{bicharactUsigmast}
\begin{split}
    U(\sigma,s,t) &= \int_{\R_+^*} e^{i\phi_{bl}(\sigma,s,t,r)} f(\sigma,s,t,r) dr + R(\sigma,s,t) \\
    &=: I(\sigma,s,t) + R(\sigma,s,t),
\end{split}
\end{equation}
for some smooth symbol $f$ of order $\frac{1}{2}$ and for some smooth remainder $R$, and where we write
\begin{equation}\label{bicharactparam}
    \phi_{bl}(\sigma,s,t,r) := r(|s| - \psi((t,\sigma,),(0,\sigma)).
\end{equation}

\subsubsection{The resulting classification}\label{subsubsec25Micro}

\myindent As a consequence of Theorem \ref{Hormthm}, Lemma \ref{antipodparam} and Proposition \ref{bicharactparamprop}, and of the compactness of $\left[-\frac{L}{2} + \eps, \frac{L}{2} - \eps\right]\times S^1 \times S^1$, we now have a useful classification of the different expressions for \eqref{defUsigmast}. Before giving the proposition, we introduce some notation.

\begin{notation}
    Let $d \geq 1$. Let $U$ be a bounded subset of $\R^d$ which is invariant by reflexion with respect to a point $C \in U$, which we call its center. For any $k > 0$, we define $k U$ the set obtained from $U$ by a homothety of center $C$ and ratio $k$. For example, if $d= 1$ and $U$ is an interval $[C - a, C+a]$, then $k U = [C- k a, C + k a]$.
\end{notation}

\begin{proposition}\label{classification}
    There exists a covering $\mathfrak{Q}$ of $\mathcal{K} := \left[-\frac{L}{2} + \eps, \frac{L}{2} - \eps\right]\times S^1 \times S^1$ by open rectangular cuboids $\mathcal{Q}$ of the following form.

    \quad

    \myindent First, find a covering of $\mathcal{L} := \left[-\frac{L}{2} + \eps, \frac{L}{2} - \eps\right]$ by small enough open intervals $\mathcal{I}_i, i = 0,...,I$, such that $\mathcal{I}_0$ is centered on $\sigma = 0$ and $0 \notin 2\overline{\mathcal{I}_i}$ for $i \geq 1$. 

    \quad

    \myindent Second, for $i = 0,...,I$, find an open covering of $S^1 \times S^1$ by small enough open rectangles $\mathcal{R}_i^{\bullet}$, of the following form.
    
    \myindent i. Find $\mathcal{R}_i^{(H)}$ a rectangle centered on $(s,t) = 0$, which is small enough so that, if $\mathcal{Q}_i^{(H)} := \mathcal{I}_i \times \mathcal{R}_i^{(H)}$, then the Hörmander parametrix is defined on $2\mathcal{Q}_i^{(H)}$ i.e. for $(\sigma,s,t) \in 2\mathcal{Q}_i^{(H)}$, $U(\sigma,s,t)$ can be written in the form \eqref{HormUsigmast} with the phase \eqref{hormparam}.
    
    \myindent ii. Find $\mathcal{R}_0^{(\pi)}$ a rectangle centered on $(s,t) = (\pi,\pi)$, which is small enough so that, if $\mathcal{Q}_0^{(\pi)} := \mathcal{I}_0 \times \mathcal{R}_0^{(\pi)}$, then the antipodal Hörmander parametrix is defined on $2\mathcal{Q}_0^{(\pi)}$ i.e. for $(\sigma,s,t) \in 2\mathcal{Q}_i^{(\pi)}$, $U(\sigma,s,t)$ can be written in the form \eqref{antipodUsigmast} with the phase \eqref{antipodhormparam}.
    
    \myindent iii. Find $\mathcal{R}_i^{(j)}$, $j = 1,...,J_i$ a family of rectangles which form a covering of the curve $$\{(s,t), \ |s| = \psi((t,\sigma),(0,\sigma))\}$$ from which we remove $(0,0)$ (and $(\pi,\pi)$ in the case $i = 0$), which are small enough so that, if $\mathcal{Q}_i^{(j)} := \mathcal{I}_i \times \mathcal{R}_i^{(j)}$, then the bicharacteristic length parametrix is defined on $2\mathcal{Q}_i^{(j)}$ i.e. for $(\sigma,s,t) \in 2\mathcal{Q}_i^{(j)}$, $U(\sigma,s,t)$ can be written in the form \eqref{bicharactUsigmast} with the phase \eqref{bicharactparam}.
    
    \myindent iv. Finally, find $\mathcal{R}_i^{(\infty,j)}$, $j = 1,...,J^{(\infty)}_i$ a family of rectangles such that $U(\sigma,s,t) =: R(\sigma,s,t)$ is smooth and uniformly bounded on $2\mathcal{Q}_i^{(\infty,j)} := 2\mathcal{I}_i \times 2\mathcal{R}_i^{(\infty,j)}$.
    
    \quad 
    
    \myindent Third, the sets $\mathcal{R}_i^{\bullet}$ are chosen with the following compatibility relations : for all $i$, and for all $j = 1,...,J_i$, $(0,0) \notin 2 \overline{\mathcal{R}_i^{(j)}}$. Moreover, for $i = 0$, $(\pi,\pi) \notin \mathcal{R}_0^{(H)}$ and for $j = 1,...,J_0$, $(\pi,\pi) \notin 2 \overline{\mathcal{R}_0^{(j)}}$.
\end{proposition}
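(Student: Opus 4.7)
The plan is to treat this as an essentially compactness-based gluing argument, combining the three local parametrix constructions (Hörmander, antipodal Hörmander, bicharacteristic length) from Paragraphs \ref{subsubsec22Micro}--\ref{subsubsec24Micro} with the smoothness statement (case ii) of Proposition \ref{bicharactparamprop}. The substance of the proposition is not producing any new analytic object, but verifying that one can choose a \emph{product-type} cover of $\mathcal{K}$ whose pieces simultaneously satisfy all four compatibility conditions (including the ``$2\mathcal{Q}$'' overshoot).

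First I would build the $\sigma$-partition. Since $\mathcal{L}=\left[-\tfrac{L}{2}+\eps,\tfrac{L}{2}-\eps\right]$ is a compact interval, I pick a small open interval $\mathcal{I}_0$ centered on $0$, and then cover the compact complement $\mathcal{L}\setminus\tfrac12\mathcal{I}_0$ by small open intervals $\mathcal{I}_1,\dots,\mathcal{I}_I$ each chosen so that $0\notin 2\overline{\mathcal{I}_i}$. Any further shrinking we do below can be performed on each $\mathcal{I}_i$ without losing the covering property, since only finitely many are involved.

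Next, fix $i$ and construct the cover of $S^1\times S^1$. The Hörmander parametrix from Theorem \ref{Hormthm}, applied in the coordinate chart $\mathcal{S}\setminus\{N,S\}$, is valid on a uniform time neighborhood $|s|<\eps_1$; since the $\sigma$-range $\overline{2\mathcal{I}_i}$ is compact and stays away from the poles, I may choose $\mathcal{R}_i^{(H)}$ as a small enough rectangle around $(0,0)$ so that the representation \eqref{HormUsigmast}--\eqref{hormparam} holds on $2\mathcal{Q}_i^{(H)}$. For $i=0$, Lemma \ref{antipodparam} produces a neighborhood of $(0,\pi,\pi)$ on which the antipodal Hörmander phase is defined; shrinking, I obtain $\mathcal{R}_0^{(\pi)}$ satisfying the ``$2\mathcal{Q}$'' requirement and excluding $(0,0)$. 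For the remaining points $(\sigma_0,s_0,t_0)\in \overline{2\mathcal{I}_i}\times S^1\times S^1$ with $(s_0,t_0)\neq(0,0)$ (and $\neq(\pi,\pi)$ when $i=0$), Proposition \ref{bicharactparamprop} supplies, depending on whether $|s_0|=\psi((\sigma_0,t_0),(\sigma_0,0))$, either a bicharacteristic-length parametrix neighborhood or a neighborhood of smoothness. By compactness of $\overline{2\mathcal{I}_i}\times S^1\times S^1$ minus small open neighborhoods of the already-handled loci, I extract a finite subcover by open rectangular cuboids $\mathcal{Q}_i^{(j)}$ and $\mathcal{Q}_i^{(\infty,j)}$.

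Finally, I enforce the compatibility conditions by a shrinking step. The condition $(0,0)\notin 2\overline{\mathcal{R}_i^{(j)}}$ is automatic for the bicharacteristic-length rectangles since their centers lie on the singular curve $\{|s|=\psi((t,\sigma),(\sigma,0))\}$ with $(0,0)$ explicitly excluded, and any such rectangle is shrunk to a Euclidean distance smaller than $\mathrm{dist}(\text{center},(0,0))/2$. For $i=0$, the same shrinking handles $(\pi,\pi)\notin 2\overline{\mathcal{R}_0^{(j)}}$, and the requirement $(\pi,\pi)\notin\mathcal{R}_0^{(H)}$ is ensured by taking $\mathcal{R}_0^{(H)}$ small. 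The principal subtlety — and the only place one must be careful — is that all the local parametrices depend on the base point $(\sigma_0,s_0,t_0)$ through smooth data (the eikonal solution, the bicharacteristic length $\psi$ which is smooth off $Z_\psi$ by Lemma \ref{propertypsi}, and the canonical relation itself), so the domain on which each local representation holds is open; this openness is exactly what allows an ``$\eps$ becomes $2\eps$'' shrinking, and then a finite subcover, without destroying the product structure $\mathcal{Q}=\mathcal{I}_i\times \mathcal{R}_i^{\bullet}$.
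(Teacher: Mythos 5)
Your proposal is correct and follows essentially the same route as the paper, which presents this proposition with no separate argument beyond asserting it "as a consequence of Theorem \ref{Hormthm}, Lemma \ref{antipodparam} and Proposition \ref{bicharactparamprop}, and of the compactness" of $\mathcal{K}$ — i.e.\ exactly the compactness-and-gluing argument you spell out, with the product structure obtained by fixing the $\sigma$-intervals first and then covering $S^1\times S^1$ slabwise. The only point to phrase carefully is your final shrinking step: rather than shrinking the already-chosen $\mathcal{I}_i$ (which could threaten the covering of $\mathcal{L}$), it is cleaner to subdivide the $\sigma$-partition further when a smaller $\sigma$-extent is needed, which is harmless since the statement only asserts existence of some such covering.
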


\myindent Visually, the meaning of this proposition is simply to partition, for each $\sigma$, the torus $S^1\times S^1$ into well-chosen rectangles in order to discriminate the different types of parametrices needed. Namely, for $\sigma \neq 0$, we know that there are three possible behaviour : near $(s,t) = 0$ we need the Hörmander parametrix; near the curve $\{|s| = \psi((t,\sigma),(0,\sigma))\}$ we need the bicharacteristic length parametrix; and, finally, $U(\sigma,\cdot,\cdot)$ is smooth outside of these zones. This is depicted on Figure \ref{visualpartition}.

\begin{figure}[h]
\includegraphics[scale = 0.5]{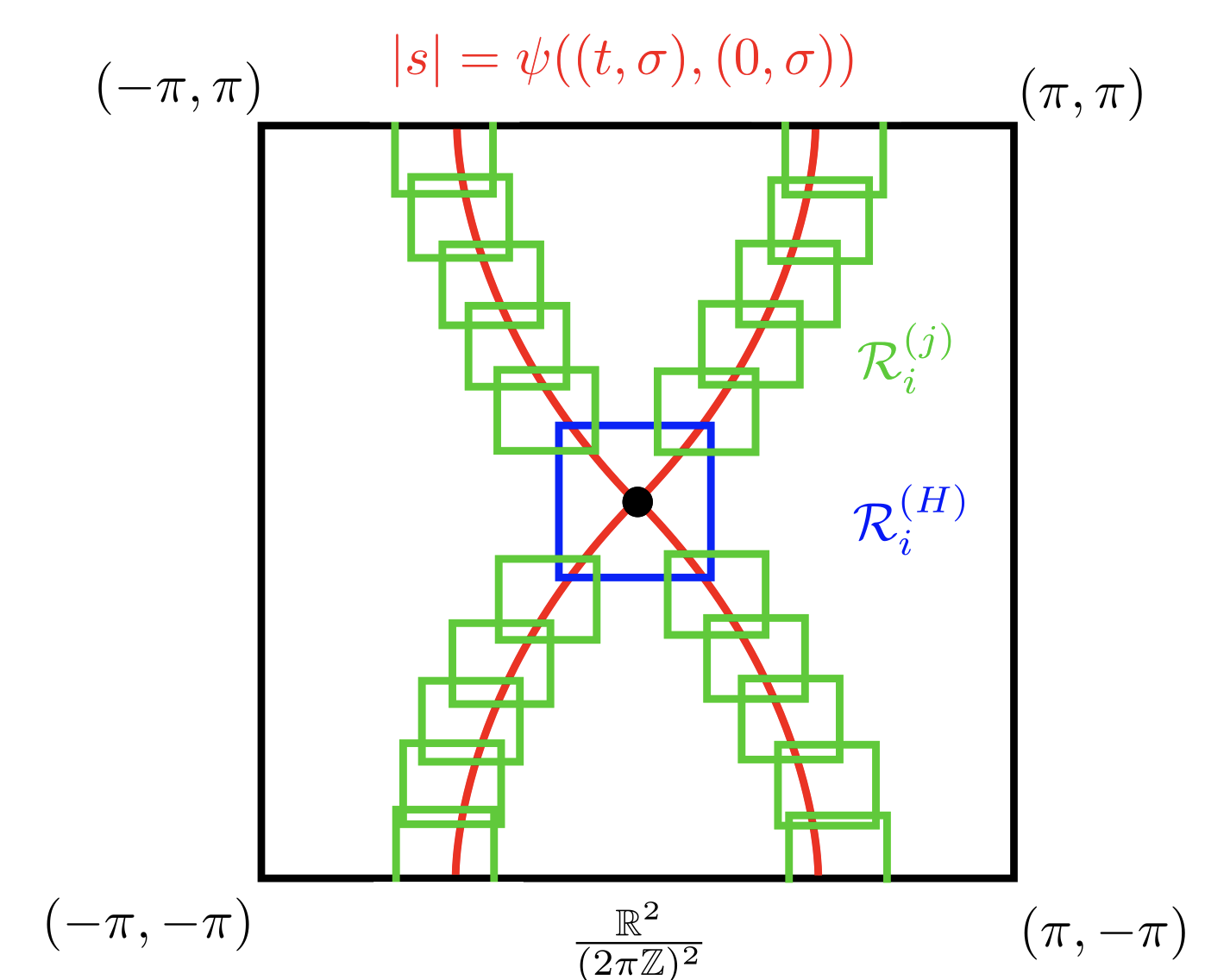}
\centering
\caption{The partition away from the equator}
\label{visualpartition}
\end{figure}

\myindent In the specific case $\sigma = 0$ (and hence nearby), we know that one more behaviour has to be taken into account due to the antipodal refocalisation. Hence, in that case, we need to add a rectangle centered around $(s,t) = (\pi,\pi)$ on which we need the antipodal Hörmander parametrix. This is depicted on Figure \ref{visualpartitionI0}.

\begin{figure}[h]
\includegraphics[scale = 0.5]{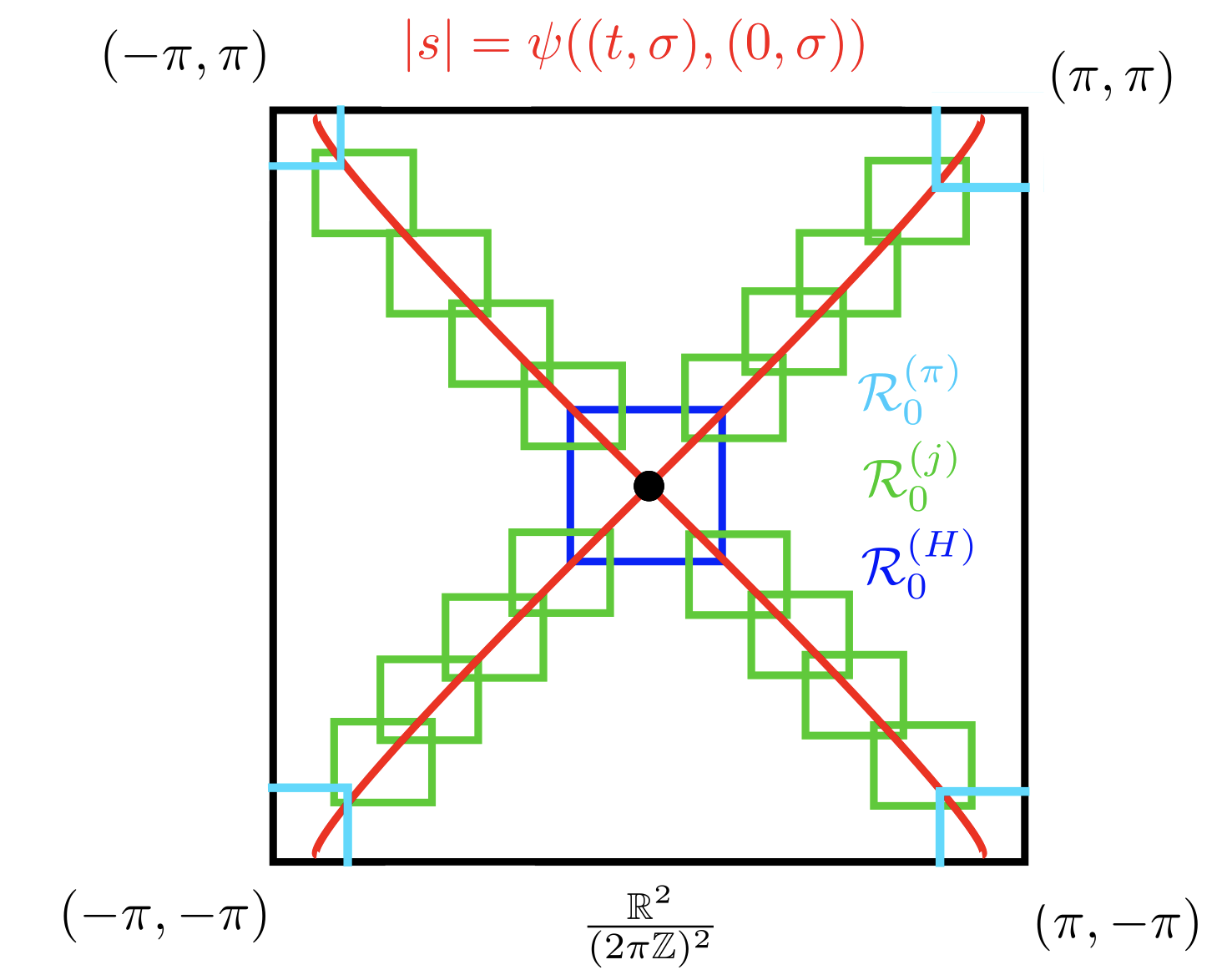}
\centering
\caption{The partition near the equator}
\label{visualpartitionI0}
\end{figure}

\quad

\myindent Now, we can obviously uplift this covering to an adapted periodic covering of $\R^2$. 

\begin{deflemm}
    Let $\pi : \R^2 \to \mathbb{T}^2$ be the canonical projection. Let $i \in \{0,...,I\}$ and $\bullet = (H)$, or $\bullet = (1),...,(J_i)$, or $\bullet = (\infty,1),...,(\infty,J^{(\infty)}_i)$ or $\bullet = (\pi)$ in the case $i = 0$. Then, $\pi^{-1}(\mathcal{R}_i^{\bullet})$ is a disjoint reunion of isometric open rectangles of $\R^2$, say $\tilde{\mathcal{R}}_{i,A,B}^{\bullet}$ which we can index uniquely by $(A,B) \in (2\pi\Z)^2$ such that

    \myindent i. $\tilde{\mathcal{R}}_{i,0,0}^{(H)}$ is centered at $(s,t) = (0,0)$.

    \myindent ii. More generally, there exists a unique $(s_i^{\bullet},t_i^{\bullet}) \in ]-\pi,\pi]^2$ and a rectangle $\tilde{R}_i^{\bullet}$ centered at $(s,t) = (0,0)$ such that
    \begin{equation}
        \tilde{\mathcal{R}}_{i,A,B}^{\bullet} = \tilde{\mathcal{R}}_i^{\bullet} + (s_i^{\bullet} + A, t_i^{\bullet} + B).
    \end{equation}
    
    \myindent We write 
    \begin{equation}
        \tilde{\mathcal{R}}_i^{\bullet} = \mathcal{J}_i^{\bullet} \times \mathcal{K}_i^{\bullet}
    \end{equation}
    for some intervals $\mathcal{J}_i^{\bullet},\mathcal{K}_i^{\bullet} \subset S^1$ centered at $0$.
\end{deflemm}

\myindent We can thus introduce a partition of unity adapted to this covering.

\begin{deflemm}
    For all $i \in \{0,...,I\}$ there exists a smooth partition of unity, say $\tilde{\chi}_{i,A,B}^{\bullet}, \ (A,B)\in (2\pi\Z)^2$, which is

    \myindent i. Adapted to the open covering of $\R^2$ by the rectangles $\tilde{\mathcal{R}}_{i,A,B}^{\bullet}$ i.e.
    \begin{equation}
        \tilde{\chi}_{i,A,B}^{\bullet} = \begin{cases}
            1 \qquad (s,t) \in \tilde{\mathcal{R}}_{i,A,B}^{\bullet} \\
            0 \qquad (s,t) \notin 2\tilde{\mathcal{R}}_{i,A,B}^{\bullet}
        \end{cases}.
    \end{equation}

    \myindent ii. Periodic, in the sense that, there exists a smooth nonnegative $\tilde{\chi}_i^{\bullet}$ which is supported in $2\tilde{\mathcal{R}}_i^{\bullet}$ and equals $1$ on $\tilde{\mathcal{R}}_i^{\bullet}$ such that
    \begin{equation}
        \tilde{\chi}_{i,A,B}^{\bullet}(s,t) = \chi_i^{\bullet}(s - s_i^{\bullet} - A, t - t_i^{\bullet} - B).
    \end{equation}
\end{deflemm}

\quad

\myindent Now, coming back to Lemma \ref{firstintegralexpression}, the interest is that we can write, for all $x = (\theta,\sigma) \in K_{\eps}$ such that $\sigma \in \mathcal{I}_i$,
\begin{equation}
    \begin{split}
        &\int_{\R^2} \hat{d\mu}(\lambda(s,t)) \hat{\rho}(\delta(s,t)) \left(e^{isQ_1} e^{itQ_2}\right)(x,x) ds dt \\
        &= \int_{\R^2} \hat{d\mu}(\lambda(s,t)) \hat{\rho}(\delta(s,t)) U(\sigma,s,t) ds dt \\
        &= \sum_{\bullet} \sum_{(A,B)\in (2\pi\Z)^2} \int \tilde{\chi}_{i,A,B}^{\bullet}(s,t) \hat{d\mu}(\lambda(s,t)) \hat{\rho}(\delta(s,t)) U(\sigma,s,t) ds dt \\
        &= \sum_{\bullet} \sum_{(A,B)\in(2\pi\Z)^2} (-1)^{\frac{A}{2\pi}} \tilde{\mathcal{I}}_{\lambda,\delta,i}^{\bullet}(\sigma,A,B),
    \end{split}
\end{equation}
where the sum $\sum_{\bullet}$ means that we sum on $\bullet = (H)$, $\bullet = (1),...,(J_i)$,$\bullet = (\infty,1),...,(\infty,J_i^{(\infty)})$, and, in the case where $i = 0$, additionally on $\bullet = (\pi)$, and where we define
\begin{multline}
    \tilde{\mathcal{I}}^{\bullet}_{\lambda,\delta,i}(\sigma,A,B) = \\
    \int \chi_i^{\bullet}(s,t) \hat{d\mu}\left(\lambda(s + s_i^{\bullet} + A, t + t_i^{\bullet} + B)\right) \hat{\rho}\left(\delta(s + s_i^{\bullet} + A, t + t_i^{\bullet} + B\right) U(\sigma, s + s_i^{\bullet}, t + t_i^{\bullet}) ds dt.
\end{multline}

\myindent Now, before giving the bounds and their dependence on $i,\bullet,A,B$, we first deal with the smooth remainders. Indeed, for any choice of $i,\bullet$, we know, thanks to the classification Proposition \ref{classification}, that the kernel $U$ can be decomposed on the support of $\chi_i^{\bullet}$ as 
\begin{equation}
    U(\sigma,s + s_i^{\bullet}, t+t_i^{\bullet}) = I(\sigma,s + s_i^{\bullet}, t + t_i^{\bullet}) + R(\sigma, s + s_i^{\bullet}, t + t_i^{\bullet}),
\end{equation}
where $I$ is an explicit oscillatory integral (depending on $i,\bullet$), and $R$ is a smoothing remainder, depending on $i,\bullet$. In the case where $\bullet = (\infty,j)$, we extend this decomposition by setting $I = 0$. Now, since there is ultimately a \textit{finite} number of choices for $i$ and for $\bullet$, the remainders are \textit{all} uniformly bounded in $L^{\infty}$, say by some constant $K > 0$. Thus, we can directly estimate the contribution to the integral of the smoothing remainders.
\begin{equation}
    \begin{split}
        &\sum_{\bullet}\sum_{(A,B) \in (2\pi \Z)^2} \left| \int \chi_i^{\bullet}(s,t) \hat{d\mu}\left(\lambda(s + s_i^{\bullet} + A, t + t_i^{\bullet} + B)\right) \hat{\rho}\left(\delta(s + s_i^{\bullet} + A, t + t_i^{\bullet} + B\right) R(\sigma, s + s_i^{\bullet}, t + t_i^{\bullet}) ds dt \right| \\
        &\leq K\sum_{\bullet}\sum_{(A,B) \in (2\pi \Z)^2} \int \chi_i^{\bullet}(s,t) \left|\hat{d\mu}\left(\lambda(s + s_i^{\bullet} + A, t + t_i^{\bullet} + B)\right) \hat{\rho}\left(\delta(s + s_i^{\bullet} + A, t + t_i^{\bullet} + B\right) \right| ds dt \\
        &\lesssim K \int_{\R^2} |\hat{d\mu}(\lambda(s,t))| |\hat{\rho}(\delta(s,t))| ds dt.
    \end{split}
\end{equation}

\myindent Now, thanks to remark \ref{curvaturegamma}, and to \cite{stein1993harmonic}[Theorem 2, Chapter VIII] , we know that, since $\gamma$ has only ordinary points of inflexion, there holds
\begin{equation}\label{decaydmuhat}
    |\hat{d\mu}(\lambda(s,t))| \lesssim \frac{1}{\lambda^{\frac{1}{3}} |(s,t)|^{\frac{1}{3}}}.
\end{equation}

\myindent Hence,
\begin{equation}
\begin{split}
    &\int_{\R^2} |\hat{d\mu}(\lambda(s,t))| |\hat{\rho}(\delta(s,t))| ds dt \\
    &\lesssim \lambda^{-\frac{1}{3}}\int_{\R^2} \frac{1}{|(s,t)|^{\frac{1}{3}}} |\hat{\rho}(\delta(s,t))| ds dt \\
    &\lesssim \lambda^{-\frac{1}{3}} \delta^{-\frac{5}{3}} \int_{\R^2} \frac{1}{|(s,t)|^{\frac{1}{2}}} |\hat{\rho}| ds dt \\
    &\lesssim \lambda^{-\frac{1}{3}} \delta^{-\frac{5}{3}}.
\end{split}
\end{equation}

\myindent In particular, we have proved the following lemma. We insist that it is crucial that there are only finitely many smoothing remainders, i.e. that they are all uniformly bounded. This is ultimately due to the fact that there is no smoothing remainder in \eqref{exp2ipiQieqId}.
\begin{lemma}\label{smoothingcontrib}
    The contribution of the smoothing remainders to the integral upper bound \eqref{firstintegralexpression} is of order $\lambda^{-\frac{1}{3}} \delta^{-\frac{5}{3}}$, in the sense that
    \begin{equation}
    \begin{split}
        \int_{\R^2} \hat{d\mu}(\lambda(s,t)) \hat{\rho}(\delta(s,t)) \left(e^{isQ_1} e^{itQ_2}\right)(x,x) ds dt &=
        \sum_{\bullet} \sum_{(A,B)\in(2\pi\Z)^2} (-1)^{\frac{A}{2\pi}} \mathcal{I}_{\lambda,\delta,i}^{\bullet}(\sigma,A,B) \\
        &+ O_{\eps}\left(\lambda^{-\frac{1}{3}} \delta^{-\frac{5}{3}}\right),
    \end{split}
    \end{equation}
    
    where we denote by $\mathcal{I}_{\lambda,\delta,i}^{\bullet}(\sigma,A,B)$ the oscillatory part of $\tilde{\mathcal{I}}_{\lambda,\delta,i}^{\bullet}(\sigma,A,B)$, i.e.
    \begin{multline}\label{defIlambdadeltaibullet}
        \mathcal{I}_{\lambda,\delta,i}^{\bullet}(\sigma,A,B) :=\\
        \int \chi_i^{\bullet}(s,t) \hat{d\mu}\left(\lambda(s + s_i^{\bullet} + A, t + t_i^{\bullet} + B)\right) \hat{\rho}\left(\delta(s + s_i^{\bullet} + A, t + t_i^{\bullet} + B\right) I(\sigma, s + s_i^{\bullet}, t + t_i^{\bullet}) ds dt.
    \end{multline}
\end{lemma}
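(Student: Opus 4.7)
The plan is to reduce the sum of smoothing contributions to a single integral on $\R^2$, using finiteness of the covering and the periodicity of the partition of unity, and then to close with the standard stationary-phase decay of $\hat{d\mu}$ combined with a $\delta$-rescaling of $\hat{\rho}$.

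First I would unpack $\tilde{\mathcal{I}}_{\lambda,\delta,i}^{\bullet} = \mathcal{I}_{\lambda,\delta,i}^{\bullet} + \mathcal{E}_{\lambda,\delta,i}^{\bullet}$, where $\mathcal{E}$ is the integral obtained by replacing $I$ by $R$ in \eqref{defIlambdadeltaibullet}. Since the classification Proposition \ref{classification} produces only finitely many charts $(i,\bullet)$, and each $R(\sigma,\cdot,\cdot)$ is smooth and $\sigma$-uniformly bounded on the compact cuboid $2\overline{\mathcal{Q}_i^{\bullet}}$, a single constant $K$ dominates all the $R$'s in $L^{\infty}$. Using the periodic structure $\tilde{\chi}_{i,A,B}^{\bullet}(s,t) = \chi_i^{\bullet}(s - s_i^{\bullet} - A, t - t_i^{\bullet} - B)$ and the fact that $\sum_{A,B}\chi_i^{\bullet}(\,\cdot - s_i^{\bullet} - A,\,\cdot - t_i^{\bullet} - B)$ is uniformly bounded on $\R^2$, the sum over $(A,B)$ collapses into an integral over $\R^2$, giving
\begin{equation*}
\sum_{\bullet}\sum_{(A,B)\in(2\pi\Z)^2}\left|\mathcal{E}_{\lambda,\delta,i}^{\bullet}(\sigma,A,B)\right| \lesssim_{\eps} K \int_{\R^2}\left|\hat{d\mu}(\lambda(s,t))\right|\,\left|\hat{\rho}(\delta(s,t))\right|\,ds\,dt.
\end{equation*}

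Next I would invoke the standard decay estimate for the Fourier transform of the arclength measure on a smooth plane curve whose curvature vanishes only at ordinary inflection points; this is exactly the content of Remark \ref{curvaturegamma}, and the classical van der Corput argument of \cite[Ch.~VIII, Thm.~2]{stein1993harmonic} then yields $|\hat{d\mu}(\lambda\vec{u})| \lesssim \lambda^{-1/3}|\vec{u}|^{-1/3}$. Substituting and performing the rescaling $(s',t') = \delta(s,t)$ converts the right-hand side to $\lambda^{-1/3}\delta^{-5/3}\int_{\R^2}|\hat{\rho}(s',t')|\,|(s',t')|^{-1/3}\,ds'dt'$; this last integral is finite because the $r^{-1/3}$ singularity at the origin is integrable in two dimensions (switch to polar coordinates) while $\hat{\rho}$ is Schwartz at infinity, yielding the claimed bound in $\lambda^{-1/3}\delta^{-5/3}$.

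The only delicate point is logical rather than analytical: it is essential that the number of parametrices, hence of smoothing remainders, be \emph{finite}, so that the uniform $L^{\infty}$ bound $K$ exists independently of $\lambda,\delta$. This is ultimately bought by the exact identities $e^{2i\pi Q_k} = (-1)^k\,\mathrm{Id}$ from Theorem \ref{CdVthm}, which allow the covering $\mathfrak{Q}$ to be taken on the compact $S^1 \times S^1$ rather than producing an unbounded family of remainders as $|s|,|t|\to \infty$; this is the same structural feature that was emphasized in the discussion immediately preceding the lemma.
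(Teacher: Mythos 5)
Your proposal is correct and follows essentially the same route as the paper: split each $\tilde{\mathcal{I}}_{\lambda,\delta,i}^{\bullet}$ into its oscillatory part and a remainder term, use the finiteness of the covering (ultimately due to the exact identities $e^{2i\pi Q_k}=(-1)^k\,\mathrm{Id}$) to get a uniform $L^{\infty}$ bound $K$ on all remainders, collapse the $(A,B)$-sum via the periodic partition of unity into a single integral of $|\hat{d\mu}(\lambda\cdot)|\,|\hat{\rho}(\delta\cdot)|$ over $\R^2$, and close with the $\lambda^{-1/3}|(s,t)|^{-1/3}$ decay from Remark \ref{curvaturegamma} and \cite{stein1993harmonic} together with the $\delta$-rescaling giving $\delta^{-5/3}$. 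Your write-up is in fact slightly cleaner on the change-of-variables step than the paper's displayed computation, but the argument is the same.
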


\myindent Now, the classification Proposition \ref{classification} will enable us to express each $\mathcal{I}_{\lambda,\delta,i}^{\bullet}(\sigma,A,B)$ as an explicit oscillatory integral. The rest of this article, starting with Section \ref{subsec3Micro}., is entirely devoted to estimating quantitatively each of those pieces, since the analysis depends very much of $i, \bullet, A,B$. Precisely, we will prove the following proposition.

\begin{proposition}\label{intermediatethm}
    Up to refining the partition $\mathfrak{Q}$ given by the classification Proposition \ref{classification}, there exists a constant $M_0 > 0$ such that there holds the following for all $\lambda, \delta$ such that $\delta \gtrsim_{\mathcal{S},\eps,\mathfrak{Q}} \lambda^{-\frac{1}{3}}$, for $i \in \{0,...,I\}$, and for all $\sigma \in \mathcal{I}_i$. We set $M = |(A,B)|$.

    \myindent i. In the case where $\bullet, A,B = (H),0,0$, there holds
    \begin{equation}\label{estIH00}
        \mathcal{I}_{\lambda,\delta,i}^{(H)}(\sigma,0,0) = 2\hat{\rho}(0,0)c_W(\sigma) + O_{\mathcal{S},\eps,\mathfrak{Q}}(\lambda^{-1}),
    \end{equation}
    where $c_W(\sigma) > 0$ is the constant of the pointwise Weyl law,i.e.
    \begin{equation}
        c_W(\sigma) = \int_{p_1(\sigma,\xi) \leq 1} d\xi.
    \end{equation}

    \myindent ii. In the case where $\bullet = (H)$ and $(A,B) \neq (0,0)$, there holds
    \begin{equation}\label{estIHAB}
         \mathcal{I}_{\lambda,\delta,i}^{(H)}(\sigma,A,B) = O\left(\lambda^{-\frac{1}{4}} M^{5} + \lambda^{-\frac{1}{3}} M^7 + \lambda^{-\frac{1}{2}}M^{14}\right),
    \end{equation}
    which can be refined, in the case $i \neq 0$, to
    \begin{equation}\label{estIHABi0}
         \mathcal{I}_{\lambda,\delta,i}^{(H)}(\sigma,A,B) = O\left(\lambda^{-\frac{1}{4}} M^5 + \lambda^{-\frac{1}{3}}M^4 + \lambda^{-1} M^9\right).
    \end{equation}
    
    \myindent iii. In the case where $M\leq M_0$, and $\bullet = (1),...,(J_i)$,
    or $i = 0$ and $\bullet = (\pi)$ there holds
    \begin{equation}\label{IlowM}
         \mathcal{I}_{\lambda,\delta,i}^{(\bullet)}(\sigma,A,B) = O_{M_0,\mathcal{S},\eps,\mathfrak{Q}}(1).
    \end{equation}
    
    \myindent iv. In the case where $M\geq M_0$, and $\bullet = (1,...,J_i)$, there holds
    \begin{equation}\label{IBicharactlargeM}
        \mathcal{I}_{\lambda,\delta,i}^{(j)}(\sigma,A,B) = O_{M_0,\mathcal{S},\eps,\mathfrak{Q}}\left(\lambda^{-\frac{1}{3}} M^3 + \lambda^{-\frac{1}{2}} M^6\right),
    \end{equation}
    which can be refined, in the case $i \neq 0$, to
    \begin{equation}
         \mathcal{I}_{\lambda,\delta,i}^{(j)}(\sigma,A,B) = O\left(\lambda^{-\frac{1}{2}}M^{-\frac{1}{2}} +\lambda^{-\frac{3}{2}} \right).
    \end{equation} 
    
    \myindent v. In the case where $M\geq M_0$, $i= 0$, and $\bullet = (\pi)$, there holds
    \begin{equation}\label{IAntipodlargeM}
         \mathcal{I}_{\lambda,\delta,i}^{(\pi)}(\sigma,A,B) = O\left(\lambda^{-\frac{1}{4}} M^5 + \lambda^{-\frac{1}{3}} M^6 + \lambda^{-\frac{1}{2}}M^{14}\right).
    \end{equation}
\end{proposition}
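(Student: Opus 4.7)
In each case I would insert into \eqref{defIlambdadeltaibullet} the explicit local parametrix for $I(\sigma, \cdot, \cdot)$ given by Proposition \ref{classification}, together with the integral representation
$$\hat{d\mu}(\lambda(s, t)) = \int_\gamma e^{-i\lambda (s, t)\cdot (q_1, q_2)}\, d\mu(q_1, q_2)$$
of the Fourier transform of surface measure on $\gamma$. This produces a multiple oscillatory integral whose total phase is the parametrix phase $\phi^\bullet \in \{\phi_H, \phi_\pi, \phi_{bl}\}$ plus the linear phase $-\lambda (s,t)\cdot (q_1, q_2)$, and whose amplitude carries the Schwartz factor $\hat{\rho}\bigl(\delta(s + s_i^\bullet + A, t + t_i^\bullet + B)\bigr)$ --- the latter providing rapid decay in $M = |(A, B)|$. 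All the bounds then follow from a mix of stationary phase, integration by parts, the decay $|\hat{d\mu}(\lambda(s,t))| \lesssim \lambda^{-1/3}|(s,t)|^{-1/3}$ from \eqref{decaydmuhat}, and the new mixed Van der Corput/curvature estimate of Theorem \ref{mixedVdCABZ}.

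\textbf{Hörmander regions (i), (ii), (v).} For (i), rescaling $\xi \mapsto \lambda\xi$ in the Hörmander parametrix turns $\mathcal{I}_{\lambda,\delta,i}^{(H)}(\sigma, 0, 0)$ into a classical oscillatory integral with a nondegenerate critical point along the cosphere $\{q_1(\sigma, \xi) = 1\}$: the critical locus is precisely the intersection of the linear phase's kernel with the cotangent sphere at $x$. Two-dimensional stationary phase then yields $2\hat{\rho}(0,0)c_W(\sigma)$ with an $O(\lambda^{-1})$ remainder, the factor $2$ coming from integration over both signs of $s$ near zero. Cases (ii) and (v) use exactly the same phase structure --- (v) is Hörmander-type by Lemma \ref{antipodparam} --- but translated by $(A, B) \neq 0$, which forces the combined $(s,t)$-gradient to be bounded below by $cM$; geometrically, no bicharacteristic can close up with the wrong period. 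Repeated integration by parts in $(s, t)$ then trades powers of $\lambda M$ against derivatives of the amplitude, while the contribution of the inflection points of $\gamma$ (Remark \ref{curvaturegamma}) is handled by Theorem \ref{mixedVdCABZ}. The sharper bound \eqref{estIHABi0} for $i \neq 0$ exploits the fact that away from the equator the projected bicharacteristic flow is nondegenerate, so that stationary phase in $\xi$ produces genuine two-dimensional gains rather than the one-dimensional gain available at $i = 0$.

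\textbf{Bicharacteristic length cases (iii)–(iv).} Here the parametrix has a single positive angle variable $r$ and phase $r(|s| - \psi(x, y))$. For (iii) with $M \leq M_0$, a crude $L^\infty$ bound on the amplitude over the compact support of $\chi_i^\bullet$, combined with $|\hat{d\mu}| \leq \|d\mu\|$, gives $O(1)$ uniformly; the antipodal piece $\bullet = (\pi)$ for $M \leq M_0$ is handled in the same way. For (iv) with $M \geq M_0$, I would first refine the partition $\mathfrak{Q}$ so that on each refined piece either $\partial_s\psi$ or $\partial_t \psi$ is bounded away from zero, then integrate by parts in $r$ outside a $O(\lambda^{-1})$-neighborhood of the characteristic variety $\{|s| = \psi\}$, and finally apply Van der Corput in $(s, t)$ tangentially to this variety. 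For $i \neq 0$, the transversal direction gives genuine 1D stationary phase against the $\hat{d\mu}$ decay, producing the improved bound; for $i = 0$ the degeneration of $\psi$ as $\sigma \to 0$ forces the use of the mixed estimate Theorem \ref{mixedVdCABZ}.

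\textbf{Main obstacle.} I expect case (v), the antipodal Hörmander region at large $M$, to be the hardest: the phase near $(s,t) = (\pi, \pi)$ is of full Hörmander type (Lemma \ref{antipodparam}) and must be composed with the translated linear phase $-\lambda(s + \pi + A, t + \pi + B)\cdot (q_1, q_2)$, so both the degenerate curvature of $\gamma$ at its inflection points and the periodic translation act simultaneously. Producing the three-term bound \eqref{IAntipodlargeM} with correct powers of $\lambda$ and $M$ requires a careful second-order analysis of the full phase Hessian, combined with Theorem \ref{mixedVdCABZ} to capture the interplay between the Airy-type decay of $\hat{d\mu}$ near inflection points of $\gamma$ and the oscillation of the parametrix symbol; this is exactly the technical content carried out in Sections \ref{secHormPhase} and \ref{secHormQuant}.
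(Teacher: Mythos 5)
Your overall frame (insert the local parametrices, write $\hat{d\mu}$ as an oscillatory integral over $\gamma$, and combine stationary phase with Theorem \ref{mixedVdCABZ}) matches the paper, but the mechanisms you invoke in the key cases are not the ones that work, and two of them fail outright. First, in case (ii) you claim that for $(A,B)\neq(0,0)$ the combined $(s,t)$-gradient is bounded below by $cM$, so that repeated integration by parts in $(s,t)$ closes the estimate. This is false: writing $\Psi=-\scal{h(u)}{(A,B)}+\Psi_0$, the $(A,B)$-dependence sits entirely in the $u$-direction, so $\nabla_{s,t,w,r}\Psi=\nabla_{s,t,w,r}\Psi_0$ is \emph{independent} of $(A,B)$ and vanishes on the set $\mathcal{O}_\sigma$ (the circle $\mathcal{C}_\sigma$ together with the branches $\mathcal{E}_{\sigma,\alpha}$) for every $(A,B)$. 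No integration by parts in $(s,t)$ is available there; the actual gain comes from isolating $u$ and showing the remaining 1D phase $-\scal{h(u)}{(s,t)(u)+(A,B)}$ satisfies a Van der Corput property with constants $\simeq M$, and — the part your sketch never touches — from a separate analysis at the branching points $P_0,P_\pi$ (the caustic of the Lagrangian torus), where the $(s,t,w,r)$-Hessian degenerates, one must isolate $w$ instead, run the dichotomy on $|\partial_u\Psi(P_\alpha)|\lessgtr M^{-1}$, and use that the fourth $w$-derivative of the 1D phase is $\gtrsim M$. That branching-point analysis is precisely what produces the dominant $\lambda^{-1/4}M^5$ term in \eqref{estIHAB}, so without it the stated bounds cannot be reached. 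Relatedly, your reliance on the Schwartz decay of $\hat{\rho}\bigl(\delta(s+s_i^\bullet+A,\,t+t_i^\bullet+B)\bigr)$ "in $M$" is not usable: that factor only kills $M\gtrsim\delta^{-1}$ (the paper takes $\hat\rho$ compactly supported for exactly this truncation), and for $M\lesssim\delta^{-1}$ it gives no decay; the entire difficulty of the proposition is tracking the stationary-phase constants \emph{polynomially} in $M$, which is why Theorem \ref{mixedVdCABZ} exists at all.

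Two further points. In case (iii) your "crude $L^\infty$ bound" does not give $O(1)$: after inserting the bicharacteristic-length parametrix the integral carries a prefactor $\lambda$ and a symbol of order $\tfrac12$, so absolute values give $O(\lambda^{3/2})$, and even estimating $|\hat{d\mu}(\lambda(s,t))|\lesssim\lambda^{-1/3}$ against the (unbounded, $\sim\lambda^{1/2}$) kernel $I(\sigma,s,t)$ leaves a positive power of $\lambda$; the paper gets $O_{M_0}(1)$ by a genuine three-variable stationary phase in $(s,t,r)$ along the curve $\mathcal{E}$ of critical points, which exactly cancels $\lambda\cdot\lambda^{1/2}\cdot\lambda^{-3/2}$, followed by a trivial bound in $u$ (and for (iv) the $M^{-1/2}$ gain again comes from Van der Corput in $u$ with constant $\simeq M$, with a threshold $|\sigma|\gtrsim M^{-1/2}$ separating the regime where the full Hessian is used instead — not from "Van der Corput in $(s,t)$ tangentially to $\{|s|=\psi\}$"). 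Finally, in case (i) the factor $2$ in $2\hat\rho(0,0)c_W(\sigma)$ is not a doubling over the two signs of $s$ (the stationary point is the single point $s=t=0$ for each $w$); it is the polar-coordinate identity $\int_{S^1}r_\sigma(w)^2|\det(g_\sigma'(w),g_\sigma(w))|\,dw=2\int_{\{p_1(\sigma,\xi)\leq1\}}d\xi$ after the $w$-integration of the 4-variable stationary-phase leading term, using that $\Psi(\sigma,0,0,u,0,0,w,r)\equiv0$ so no oscillation in $w$ remains.
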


\myindent We conclude this section by explaining how to conclude the proof of Theorem \ref{mainthm} with Proposition \ref{intermediatethm}. Choose $\rho$ such that $\hat{\rho}$ is compactly supported. Then, there holds, for $|(A,B)| \gtrsim \delta^{-1}$,
\begin{equation}
    \mathcal{I}_{\lambda,\delta,i}^{\bullet}(\sigma,A,B) = 0.
\end{equation}

\myindent In particular, keeping only the worst upper bounds in Proposition \ref{intermediatethm}, there holds
\begin{equation}\label{resummation}
    \begin{split}
        \sum_{\bullet} \sum_{(A,B) \in (2\pi \Z)^2} \left| \mathcal{I}_{\lambda,\delta,i} (\sigma,A,B)\right| &= \sum_{|(A,B)| \leq M_0} 0_{M_0,\mathcal{S},\eps,\mathfrak{Q}}(1)\\
        &+ \sum_{M_0 \leq |(A,B)| \lesssim \delta^{-1}} 0\left(\lambda^{-\frac{1}{4}} M^5 + \lambda^{-\frac{1}{3}} M^7 + \lambda^{-\frac{1}{2}} M^{14} + \lambda^{-1} M^6 \right) \\
        &= O_{M_0, \mathcal{S},\eps,\mathfrak{Q}}(1) + 0\left(\lambda^{-\frac{1}{4}} \delta^{-7} + \lambda^{-\frac{1}{3}} \delta^{-9} + \lambda^{-\frac{1}{2}} \delta^{-16} + \lambda^{-1} \delta^{-8}\right).
    \end{split}
\end{equation}

\myindent In particular, this identity, along with Lemma \ref{smoothingcontrib} and Lemma \ref{boundbyintegral}, yields that
\begin{equation}
    \left\|P_{\lambda,\delta} \right\|_{L^2(\mathcal{S}) \to L^{\infty}(K_{\eps})} \lesssim O_{M_0, \mathcal{S},\eps,\mathfrak{Q}}(1) + 0\left(\lambda^{-\frac{1}{4}} \delta^{-7} + \lambda^{-\frac{1}{3}} \delta^{-9} + \lambda^{-\frac{1}{2}} \delta^{-16} + \lambda^{-1} \delta^{-8}\right).
\end{equation}

\myindent In order to obtain, finally, Theorem \ref{mainthm}, one needs only observe that, for any $\tau,K > 0$, 
\begin{equation}
    \lambda^{-\tau} \delta^{-K} \lesssim 1  \iff \delta \gtrsim \lambda^{-\frac{\tau}{K}}.
\end{equation}

\myindent To conclude, let us insist on the fact that the constant $\frac{1}{32}$ in Theorem \ref{mainthm} is simply the worst term in the upper bound. Hence, any quantitative improvement in Proposition \ref{intermediatethm} instantly yields a quantitative improvement of that constant. We will come back to this in Section \ref{subsec2Further}.

\begin{remark}
    In the following, all the implicit constants may depend on $\mathcal{S}, \eps, \mathfrak{Q}$. Hence, we will not explicitly write this dependence, unless necessary.
\end{remark}

\subsection{The geometry of the Oscillatory Integral Analysis}\label{subsec3Micro}

\myindent This section is a (very) detailed introduction to the quantitative analysis of Sections \ref{secHormPhase},\ref{secHormQuant},\ref{secBicharact},\ref{secAntipod}. We will explain the \textit{idea} behind the proof of the estimates in Proposition \ref{intermediatethm}, i.e. how to view them as estimates on Oscillatory Integrals, and what are the interesting geometric features of those integrals. Indeed, there is an unavoidable technicity in the following sections, and one could get lost in the (long) computations without a general geometric vision of the problem.

\subsubsection{Reformulation of the problem as an estimate of Oscillatory Integrals}\label{subsubsec31Micro}

\myindent Thanks to the classification Proposition \ref{classification}, we have reduced the problem to bounding, for some $(s_0,t_0) \in ]-\pi,\pi]^2$, and $(A,B) \in (2\pi\Z)^2$, an integral of the form
\begin{equation}\label{intexprofprob}
    \mathcal{I}(\lambda,\delta,A,B) := \int \chi(s,t) \hat{d\mu}(\lambda(s + s_0 + A,t + t_0 + B)))\hat{\rho}(\delta(s + s_0 + A, t + t_0 + B)) I(\sigma, s_0 + s, t_0 + t) ds dt,
\end{equation}
where $\chi$ has a very small support. 

\myindent In order to reduce the size of the expressions, we will define up to the end
\begin{equation}\label{defM|AB|}
\begin{split}
    &(\tilde{A}, \tilde{B}) := (s_0 + A, t_0 + B) \\
    &M := |(A,B)|.
\end{split}
\end{equation}

\myindent Now, many terms in the expression \eqref{intexprofprob} can be expressed as oscillatory integrals. Indeed, we know, thanks to the classification Proposition \ref{classification}, that for some smooth nondegenerate phase function, $\phi$ and some smooth symbol $f$,
\begin{equation}\label{oscintexprofkernel}
    I(\sigma,s_0+s,t_0+t) = \int e^{i\phi(\sigma,s,t,\eta)} f(\sigma,s,t,\eta) d\eta,
\end{equation}
where $\eta \in \R^2$ and $f$ is of order $0$ (in the case $s_0 = t_0 = 0)$ or $i = 0$ and $s_0 = t_0 = \pi$) or $\eta \in \R_+^*$ and $f$ is of order $\frac{1}{2}$ (in all other cases). We recall that there is a dependency of $\phi$ on $(s_0,t_0)$ (but not on $(A,B)$), which we do not write since it is fixed in each case.

\quad

\myindent Moreover, it is crucial for the analysis to take into account the oscillations of $\hat{d\mu}$. Now, \eqref{decaydmuhat} gives decay, and one can actually even extract an asymptotic expansion of $\hat{d\mu}$ (see \cite{stein1993harmonic}), but we rather go back to its very definition, which expresses it naturally as an oscillatory integral. Indeed, let us parameterize the curve $\gamma$ defined by Lemma \ref{deflilgamma} by $u \mapsto h(u)$, where, if $\ell$ is the lentgh of $\gamma$, $u \in \R/\ell\Z$ is the curvilinear abscissa along $\gamma$ (i.e. $|h'(u)| = 1$), positively oriented (we turn anti clockwise), such that $h(0) \in \R_+^* (1,0)$ is horizontal. Then, there holds by definition
\begin{equation}\label{oscintexprofhatdmu}
    \hat{d\mu}(\lambda(s + \tilde{A}, t + + \tilde{B})) = \int_{\R / \ell\Z} e^{-i\lambda\scal{h(u)}{(s + \tilde{A}, t+ \tilde{B})}}|\det(h'(u),h(u))|  du.
\end{equation}

\quad

\myindent Replacing the terms in \eqref{intexprofprob} by their expressions as oscillatory integrals \eqref{oscintexprofkernel} and \eqref{oscintexprofhatdmu}, there holds
\begin{equation}
    \mathcal{I}(\lambda,\delta,A,B) = \int e^{i\left(-\lambda\scal{h(u)}{(s + \tilde{A}, t + \tilde{B})} + \phi(\sigma,s,t,\eta) \right)} f(\sigma,s,t,\eta) \chi(s,t) \hat{\rho}(\delta(s + \tilde{A}, t + \tilde{B})) |\det(h'(u),h(u))|  dsdtdud\eta.
\end{equation}

\myindent Now, it is natural to apply the analysis of oscillatory integrals to this expression, in order to find a bound as $\lambda \to \infty$. In order to read it in a more usual setting, it is convenient to change variables
\begin{equation}
    \eta \mapsto \lambda \eta,
\end{equation}
such that, using the homogeneity of $\Phi$ in the variable $\eta$,
\begin{equation}\label{oscintexpr}
    \mathcal{I}(\lambda,\delta,A,B) = \lambda^{N} \int e^{i\lambda \Psi(\sigma,A,B,u,s,t,\eta)}  f(\sigma, s, t, \lambda \eta) \chi(s,t) \hat{\rho}(\delta(s + \tilde{A}, t+\tilde{B})) |\det(h'(u),h(u))|  dsdtdud\eta,
\end{equation}
where $N = 1$ or $N = 2$ is the number of angle variables, and where the phase $\Psi$ is defined by
\begin{equation}\label{abstractdefofPsi}
    \Psi(\sigma,A,B,u,s,t,\eta) := -\scal{h(u)}{(s + s_0 + A, t+ t_0 + B} + \phi(\sigma,s,t,\eta).
\end{equation}

\subsubsection{Strategy of the analysis}\label{subsubsec32Micro}

\myindent Now that we have reduced the problem to bounding the oscillatory integral \eqref{oscintexpr}, we know that the behaviour is governed by the phase $\Psi$ defined by \eqref{abstractdefofPsi}. Hence, our strategy will be to have a very detailed understanding of the phase $\Psi$, depending on its form for each case of the classification Proposition \ref{classification}. In this paragraph, we give the strategy to analyse the phase without proofs, in order to have a guideline for the following more rigorous sections. Recall that we wish to obtain, ultimately, a bound of the form
\begin{equation}\label{formupperboundlocal}
    |\mathcal{I}(\lambda,\delta,A,B)| \lesssim \lambda^{-\tau} M^{K},
\end{equation}
for some constants $\tau, K > 0$, which may depend on $i, \bullet$, at least for $(A,B) \neq (0,0)$.

\quad

\myindent The first ingredient of the analysis is a count on the powers of $\lambda$ : first, in the expression \eqref{oscintexpr}, the symbol $f$ is of order $\frac{2 - N}{2}$, so there loosely holds
\begin{equation}
    f(\sigma,s,t,\lambda\eta) \sim \lambda^{\frac{2 - N}{2}}.
\end{equation}

\myindent Hence, when bounding \eqref{oscintexpr}, because there is an additional power $\lambda^N$ in front of the integral, one already has to account for a power $\lambda^{\frac{2+ N}{2}}$. 

\myindent Now, usually, an oscillatory integral 
\begin{equation}\label{generaloscintlocal}
    \int_{\R^d} e^{i\lambda \Phi(y)} a(y) dy
\end{equation}
is of order $O(\lambda^{-\frac{d}{2}})$ if the phase $\Phi$ is non degenerate, where the implicit constants depends on $a$ and $\Phi$ (see Theorem \ref{hormstatphase}). Counting the dimension in \eqref{oscintexpr}, we find that we have $3+N$ variables of integration. Hence, if the phase was nondegenerate as a function of $(u,s,t,\eta)$, we would finally have a bound on the integral of order
\begin{equation}\label{boundifnondeg}
   O_{\Psi,b}( \lambda^{\frac{2+ N}{2}} \lambda^{-\frac{3 + N}{2}}) = O_{\Psi,b}( \lambda^{-\frac{1}{2}}),
\end{equation}
where the symbol
\begin{equation}\label{defb}
     b:= \lambda^{\frac{2-N}{2}} f(\sigma, s, t, \lambda \eta) \chi(s,t) \hat{\rho}(\delta(s + \tilde{A}, t+\tilde{B})) |\det(h'(u),h(u))|
\end{equation}
depends on $\lambda$ but, thanks to the symbol estimate \eqref{symbolestimate} on $f$, it is bounded along with all its derivatives independently of $\lambda$.

\myindent The point is that \eqref{boundifnondeg} is too good compared to what we want to prove. Hence, there is some space for degeneracy of $\Psi$. For example, if we can find \textit{one} variable so that, if it is fixed, $\Psi$ is nondegenerate seen as a phase function of the $N + 2$ remaining variables, the powers of $\lambda$ would add up to $\mathcal{I}(\lambda,\delta,A,B)$ being \textit{bounded} independently of $\lambda$. Since, obviously, this bound would still depend on $A,B$, this wouldn't allow for the resummation process presented in the end of Paragraph \ref{subsubsec25Micro}. The core of the analysis will be that, although $\Psi$ is unavoidably \textit{not} in general a non degenerate phase function in the full $(u,s,t,\eta)$ variables, it is of a special type of degenerate phase functions with what we call a \textit{finite type degeneracy} (see \cite{stein1993harmonic}). Roughly, we will be able to "isolate" one variable, say $x$, such that, once it is fixed, the $(N+2)$ dimensional phase function in the remaining "free" variables is a nondegenerate stationary phase function. Hence, one can apply the usual stationary phase estimates. Moreover, there are still some oscillations in the "remaining" variable $x$, which are of finite type, in the sense that a derivative of order higher than two, say $p$, doesn't vanish in that direction. Now, the Van der Corput Lemma will thus yield an additional $O(\lambda^{-\frac{1}{p}})$ in the upper bound. Overall, we will thus find a bound of order
\begin{equation}\label{lambdaupperbound}
    O_{\Psi,b}(\lambda^{\frac{2 + N}{2}} \lambda^{-\frac{1}{p} - \frac{2 + N}{2}}) = O_{\Psi,b}(\lambda^{-\frac{1}{p}}).
\end{equation}

\myindent We will developp in Section \ref{subsec4Micro}. a rigorous definition and analysis of this special type of phases, and derive a theorem for precise quantitative upper bounds, see Theorem \ref{mixedVdCABZ}.

\quad

\myindent Now, the second ingredient is to control carefully the dependency of the upper bound \eqref{lambdaupperbound} on $\Psi,b$, or, rather, on $\delta,A,B$. On the one hand, the dependency on $\delta$ will be entirely contained in the fact that, thanks to the term
\begin{equation}
	\hat{\rho}(\delta(s + s_0 +A, t + t_0 + B))
\end{equation}
in $b$ (see \eqref{defb}), then, choosing $\hat{\rho}$ with a compact support, we only sum up to $M \lesssim \delta^{-1}$, as explained in the end of Paragraph \ref{subsubsec25Micro}.

\myindent On the other hand, and this is more delicate, we need to ensure that the implicit constant in \eqref{lambdaupperbound} is at most \textit{polynomial} in $M$ (see \eqref{defM|AB|}), i.e. that we can refine \eqref{lambdaupperbound} into
\begin{equation}
	\mathcal{I}(\lambda,\delta,A,B) = O_{i,\bullet}(\lambda^{-\frac{1}{p}} M^K).
\end{equation}

\myindent Indeed, otherwise, after resummation, we would obviously not be able to obtain a bound of the integral of the form \eqref{resummation}, i.e. where the contribution of $\delta$ is at most a negative power $\delta^{-K}$. Hence, ultimately, we would not be able to reach $\delta$ polynomially small compared to $\lambda$. Now, this is a difficulty in the analysis, since we thus have to deal with countably many phases (depending on $(A,B)$), for which we need to have precise quantitative upper bounds. As we will argue in Section \ref{subsec4Micro}, this actually calls for new estimates, since, in most texts, the dependency of \eqref{lambdaupperbound} is not explicit in terms of $\Psi,b$. 

\myindent Another point of view on that difficulty is that $M= |(A,B)|$ can be as large as $\delta^{-1}$, which we want to choose as large as $\lambda^{\kappa}$ for some $\kappa > 0$. Hence, we are in the situation of an oscillatory integral \eqref{generaloscintlocal}, but where the phase itself can be of order $\lambda^{\kappa}$, hence comparable to the large parameter $\lambda$. Obviously, we thus need to be very careful in tracking the contribution of the derivatives of the phase in the analysis.

\quad

\myindent Finally, the third general ingredient in the analysis is the method for computing the \textit{stationary points} of the phase $\Psi$, which govern the behaviour of the oscillatory integral $\mathcal{I}(\lambda,\delta,A,B)$. Now, a very important structure is that $\Psi$ is the sum of two very different, and competing, parts. Namely, we can write
\begin{equation}\label{decompositionofPsi}
	\Psi(\sigma,A,B,u,s,t,\eta) = \Psi_{G}(u,A,B) + \Psi_0(\sigma,u,s,t,\eta),
\end{equation}
where $\Psi_{G}$ (resp $\Psi_0$), which we call the \textit{global} phase function, (resp the \textit{local} phase function) takes the form
\begin{equation}\label{defPsiGPsi0}
	\begin{split}
		&\Psi_{G}(u,A,B) := -\scal{h(u)}{(A,B)}\\
		&\Psi_0(\sigma,u,s,t,\eta) := -\scal{h(u)}{(s + s_0, t + t_0)} + \phi(\sigma,s,t,\eta).
	\end{split}
\end{equation}

\myindent On the one hand, $\Psi_0$ is \textit{independent} of $(A,B)$, and we think of it as representing the geometry of the problem before the bicharacteristic curves of $q_1$ return to their starting point. In that sense, $\Psi_0$ contains all the "microlocal" part of the problem. On the other hand, $\Psi_G$, which is independent of the "microlocal" variables $(s,t,\eta)$, contains the information that there has already been $\frac{1}{2\pi} A$ wraps around the bicharacteristic flow of $q_1$, and $\frac{1}{2\pi}B$ wraps around the bicharacteristic flow of $q_2$ (i.e. $\frac{1}{2\pi}B$ turns around the vertical axis). Hence, it contains all the information for \textit{long term}, or global, effects, i.e. when the bicharacteristic curves have already returned one, or many times, to their starting point.

\myindent This division of the phase illustrates a large part of the difficulty of the analysis. Indeed, because $(A,B)$ is any point of $(2\pi\Z)^2$, and since we want to be able to choose it as large as $\lambda^{\kappa}$ for some $\kappa > 0$, there is a competition in the analysis between $\Psi_{G}$ and $\Psi_0$, which are somehow independent, and the resulting oscillatory behaviour can be expected to be quite delicate and largely dependent on $(A,B)$. Of course, this reflects the point made in the introduction that the main difficulty to reach polynomial scales on $\delta$ is the long-term behaviour of the geodesic flow on $\mathcal{S}$, especially at times beyond the injectivity radius of $\mathcal{S}$.

\quad

\myindent From an analytical perspective, and, precisely, from the perspective of \textit{computing} the stationary points of $\Psi$, the aforementioned structure of the phase $\Psi$ has the following consequence : observe that the gradient of $\Psi$ (which, again, is the most important object in the analysis), takes the form
\begin{equation}\label{gradientseparation}
	\nabla_{u,s,t,\eta} \Psi = \begin{pmatrix}
	-\scal{h'(u)}{(s + s_0 + A,t + t_0 + B)} \\
	\nabla_{s,t,\eta} \Psi_0 (\sigma,u,s,t,\eta)
	\end{pmatrix}.
\end{equation}

\myindent On the one hand, the set of $(s,t,\eta)$ stationary points of $\Psi$ (i.e. of zeros of $\nabla_{s,t,\eta} \Psi_0$) is thus \textit{independent} of $(A,B)$. More generally, one can guess that any analysis made only on the variables $(s,t,\eta)$ will be independent of $(A,B)$. Hence, in the last paragraph of this section, we focus on the set  $\mathcal{O}_{\sigma}$ of zeros of $\nabla_{s,t,\eta}\Psi_0$, which has naturally a "microlocal" meaning, and we indeed be able to compute it precisely. Moreover, since we have in mind the special type of phase function with a finite type dgeneracy, it is natural to isolate the variable $u$, and, thus, to try and understand $\mathcal{O}_{\sigma}$, as far as possible, as \textit{parameterized} by $u$.

\myindent On the other hand, it harder to compute the set of points where $\partial_u \Psi$ vanishes, say $\mathcal{M}_{\sigma,A,B}$. Indeed, from \eqref{gradientseparation}, we see that this set \textit{depends} on $(A,B)$. In particular, while is it quite simple to prove that $\mathcal{M}_{\sigma,A,B}$ is a smooth surface, it seems difficult to compute its points of intersection with $\mathcal{O}_{\sigma}$ (which are the stationary points of $\Psi$), and the order to which $\mathcal{O}_{\sigma}$ and $\mathcal{M}_{\sigma,A,B}$ are tangential at their intersection points (which obviously governs the total oscillatory behavior). Hence, in the following, in particular in Section \ref{subsec4Micro}, we will present useful tools which make it possible to go around this difficulty and to work in a framework where we do not know exactly where $\partial_u \Psi$ vanishes, but rather we have some lower bounds on the $u$ derivatives of $\Psi$.

\quad

\myindent We conclude this paragraph by mentioning that the stationary points of $\Psi$ in the full $(u,s,t,\eta)$ variables correspond actually exactly to the \textit{periodic geodesics} of $\mathcal{S}$, as analyzed in Remark \ref{nablaPsieq0interpretation}. This is not surprising since, in the usual microlocal approach roughly introduced in Section \ref{subsec3Intro}, the stationary points of the oscillatory integrals are exactly given by the geodesic loops, which are the same than the periodic geodesics since $\mathcal{S}$ is symmetric, see Section \ref{subsec2CdV}. Hence, the condition to have a stationary point of $\Psi$ are actually quite restrictive. However, we won't use that fact much in the following.

\subsubsection{The zero set $\mathcal{O}_{\sigma}$ of $\nabla_{s,t,\eta}\Psi_0$}\label{subsubsec33Micro}

\myindent In this paragraph, we give a semi-rigorous presentation of the structure of the set 
\begin{equation}\label{gendefOsigma}
	\mathcal{O}_{\sigma} := \{(u,s,t,\eta) \ \text{such that} \ \nabla_{s,t,\eta} \Psi_0 (\sigma,u,s,t,\eta) = 0 \},
\end{equation}
and, in particular, of its dependency on $i$ and on $\bullet$. 

\quad

\myindent First, from \eqref{defPsiGPsi0}, observe that
\begin{equation}
    \begin{pmatrix}
        \nabla_{s,t}\Psi_0 \\
        \nabla_{\eta}\Psi_0
    \end{pmatrix} = \begin{pmatrix}
        -h(u) + \begin{pmatrix}
            \partial_s \phi(\sigma,s,t,\eta) \\
            \partial_t \phi(\sigma,s,t,\eta) 
        \end{pmatrix}\\
        \nabla_{\eta}\phi(\sigma,s,t,\eta)
    \end{pmatrix}.
\end{equation}

\myindent Now, we already know the structure of the solution of 
\begin{equation}\label{nablaetaphieq0}
    \nabla_{\eta}\phi(\sigma,s,t,\eta) = 0,
\end{equation}
seen as an equation on $\left(s,t, \frac{\eta}{|\eta|}\right)$ (it is homogeneous of degree zero). Indeed, thanks to Section \ref{subsec2Micro}, we know that this equality holds if and only if there is a bicharacteristic curve of $q_1$ joining $(t + t_0,\sigma)$ to $(0,\sigma)$ of length $s + s_0$, and its direction is $\nabla_x \phi(\sigma,s,t,\eta)$, as in Figure \ref{FigureBicharacteristiccurve}. Hence, we call this equation the \textit{geometric} equation, since it has a geometric interpretation.

\begin{figure}[h]
\includegraphics[scale = 0.4]{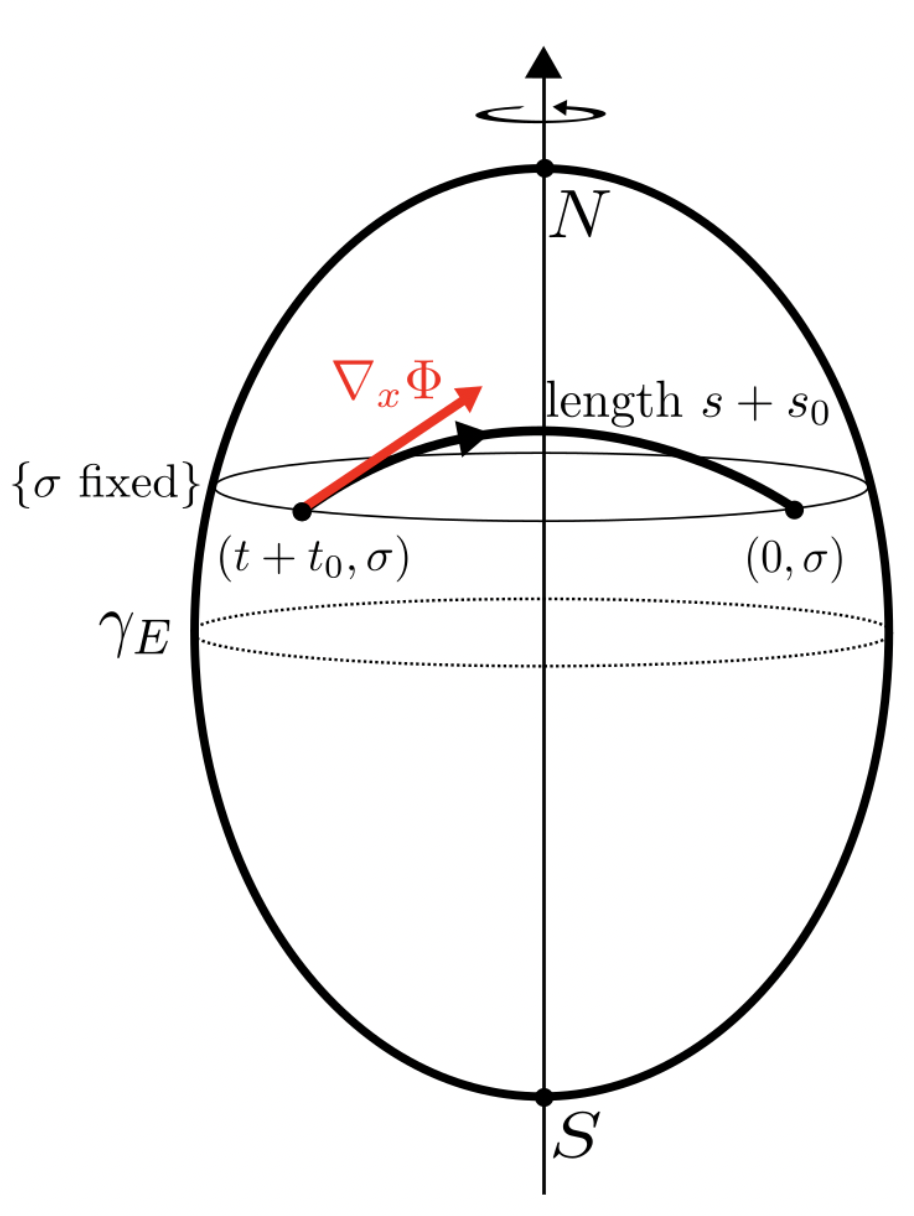}
\centering
\caption{Geometric meaning of the geometric equation}
\label{FigureBicharacteristiccurve}
\end{figure}

\quad

Hence,

\myindent i. In the case $\bullet = (H)$, i.e. when we use the Hörmander parametrix, then $\frac{\eta}{|\eta|} \in S^1$. For $t = 0$, we consider the bicharacteristic curves joining $(0,\sigma)$ to itself of small length $s$. Necessarily, $s = 0$ and any direction works, hence there is a full circle of zeros of $\nabla_{\eta}\Psi_0$. For $t \neq 0$, there is one, and only one, bicharacteristic curve joining $(t,\sigma)$ and $(0,\sigma)$, as seen on Figure \ref{FigureBicharacteristiccurve}. However, since there are two possible orientations, this bicharacteristic curve will give rise to \textit{two} zeros of $\nabla_{\eta}\Psi_0$. In order to visualize the structure of the set of solutions of \eqref{nablaetaphieq0} in that case, we give a representation of a projection of this set in $(t,\xi)$ coordinates, see Figure \ref{txistructurenear0}.

\begin{figure}[h]
\includegraphics[scale = 0.4]{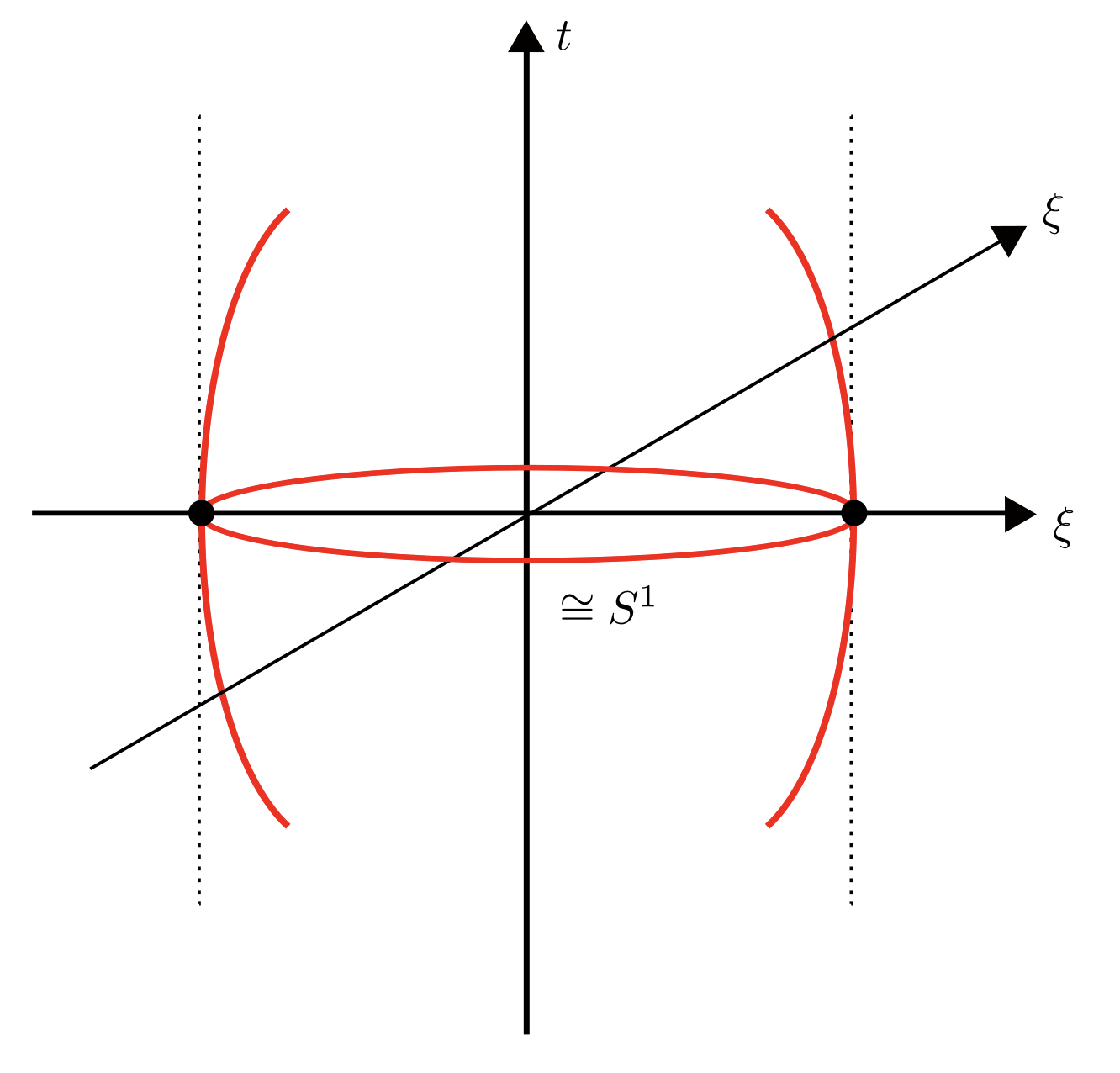}
\centering
\caption{Projection of $\mathcal{O}_{\sigma}$ on $(t,\xi)$ coordinates in the case $
\bullet = (H)$}
\label{txistructurenear0}
\end{figure}

\myindent ii. If $\bullet = (1),...,(J_i)$, i.e. when we can use the bicharacteristic length parametrix, then $\frac{\eta}{|\eta|} =1 $ so the equation \eqref{nablaetaphieq0} is really an equation on $(s,t)$. Now, thanks to Hypothesis \ref{nonintersectHyp}, for all $t$ in $\mathcal{K}_i^{\bullet}$, there is one, and exactly one, bicharacteristic curve joining $\left(t + t_i^{\bullet},\sigma\right)$ to $\left(0,\sigma\right)$. Furthermore, there is only \textit{one} possible solution $s(t)$. Indeed, \eqref{nablaetaphieq0} holds if and only if the bicharacteristic curve joining $\left(t + t_i^{\bullet},\sigma\right)$ to $\left(0,\sigma\right)$ is of length $s_i^{\bullet} + s$ (see Figure \ref{FigureBicharacteristiccurve}). Now, even if there are two possible orientations, the other orientation gives rise to a bicharacteristic curve of length $-s_0 - s$, which is \textit{not} an element in $\mathcal{J}_i^{(j)}$. Hence, the solutions of \eqref{nablaetaphieq0} form a curve in $\tilde{\mathcal{R}}_i^{\bullet}$.

\myindent iii. If $\bullet = (\pi)$, i.e. when we use the antipodal Hörmander parametrix, then, again, $\frac{\eta}{|\eta|}\in S^1$. Now, assume that $\sigma = 0$. Then, from the definition of the antipodal Hörmander parametrix \eqref{defantipodparam}, we find that $\phi$ \textit{equals} the Hörmander parametrix taken at $(\sigma,s,t,\eta) = (0,s,t,\eta)$. In particular, the structure of the solutions of \eqref{nablaetaphieq0} is exactly the same as for the first case i., which reduce, when $\sigma = 0$, to the union of two vertical lines and a circle (figure not included). When $\sigma \neq 0$, the set is "smoothed" into the two curves in Figure \ref{txistructureofOsigmaantipod}, reflecting, as in case ii., the fact that there are exactly two solutions of \eqref{nablaetaphieq0} for each $t$ (even for $t = 0$). In this figure, the blue curve is behind the red curve.

\begin{figure}[h]
\includegraphics[scale = 0.4]{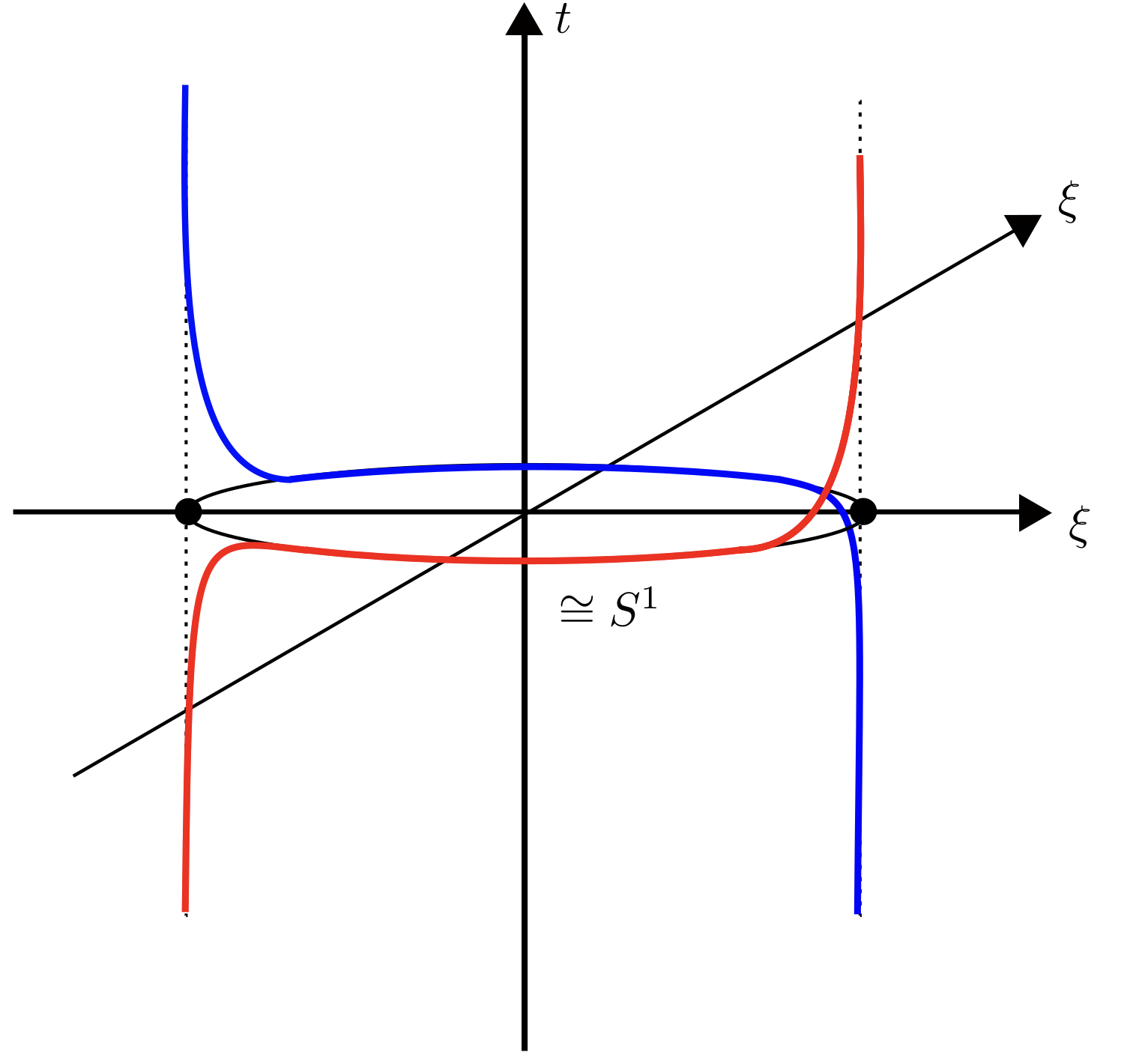}
\centering
\caption{Projection of $\mathcal{O}_{\sigma}$ on $(t,\xi)$ coordinates in the case $
\bullet = (\pi)$}
\label{txistructureofOsigmaantipod}
\end{figure}

\quad

\myindent Now that we have given the structure of the zero set of $\nabla_{\eta}\Phi$, we claim that it fully determines the zero set $\mathcal{O}_{\sigma}$ of $\nabla_{s,t,\eta}\Psi_0$.

\myindent We first observe that the following two-dimensional eikonal equation always holds.
\begin{equation}
	\begin{pmatrix}
		\partial_s \phi(\sigma,s,t,\eta) \\
		\partial_t \phi(\sigma,s,t,\eta)
	\end{pmatrix} = \begin{pmatrix}
		q_1(\sigma,\nabla_x \phi(\sigma,s,t,\eta)) \\
		q_2 (\sigma, \nabla_x \phi(\sigma,s,t,\eta)) 
	\end{pmatrix}.
\end{equation}

\myindent Indeed, for the first coordinate, this is merely the general eikonal equation which is satisfied for \textit{any} adapted phase function for a local parametrix of $e^{isQ_1}$, which follows from \eqref{whyeikonaleqholds} and \cite{sogge2017fourier}[Theorem 3.2.3]. For the second coordinate, this is merely a question of notations. Recall that $\phi(\sigma,s,t,\eta)$ is an abuse of notation for
\begin{equation}
	\Phi : (\sigma,s,t,\eta) \mapsto \phi(s,(t,\sigma),(0,\sigma),\eta),
\end{equation}
where $\phi(s,x,y,\eta)$ is the phase function which is adapted locally to the canonical relation of $e^{isQ_1}$ (see Section \ref{subsec2Micro}). Now, there holds
\begin{equation}
	\partial_t \Phi(\sigma,s,t,\eta) = \partial_{\theta} \phi (s,(t,\sigma),(0,\sigma),\eta) = q_2(\sigma, \nabla_x \phi(s,(t,\sigma),(0,\sigma),\eta)),
\end{equation}
where the last equality follows from the definition of $q_2$, see \eqref{defp2}. 

\myindent Thus, there holds
\begin{equation}\label{nablastPsi0}
    \begin{split}
        \nabla_{s,t}\Psi_0 &= -h(u) + \begin{pmatrix}
            \partial_s \Phi(\sigma,s,t,\eta)\\
            \partial_t \Phi(\sigma,s,t,\eta)
        \end{pmatrix} \\
        &= -h(u) + |\eta| \begin{pmatrix}
            q_1\left(\sigma,\nabla_x \Phi\left(\sigma,s,t,\frac{\eta}{|\eta|}\right)\right) \\
            q_2\left(\sigma,\nabla_x \Phi\left(\sigma,s,t,\frac{\eta}{|\eta|}\right)\right)
        \end{pmatrix}.
    \end{split}
\end{equation}

\myindent Now, by definition, $u \mapsto h(u)$ is the curve $\gamma$ defined by $\{\sqrt{F_2} = 1\}$ (see Definition \ref{deflilgammas}). Now, since by definition (see Theorem \ref{CdVthm})
\begin{equation}
    F_2(q_1(\sigma,\xi),q_2(\sigma,\xi)) = p_1(\sigma,\xi),
\end{equation}
we see that, for all $(\sigma,\xi)$, the equation (on $u$ and $|\xi|$)
\begin{equation}
    h(u) = |\xi|\begin{pmatrix}
        q_1\left(\sigma,\frac{\xi}{|\xi|}\right)\\
        q_2\left(\sigma,\frac{\xi}{|\xi|}\right)
    \end{pmatrix}
\end{equation}
can be interpreted as the equation for the intersection point of the curve $\gamma$ and the half-line $\R_+ \begin{pmatrix}
        q_1\left(\sigma,\frac{\xi}{|\xi|}\right)\\
        q_2\left(\sigma,\frac{\xi}{|\xi|}\right)
    \end{pmatrix}$. There is thus one and only one solution, determined by
    \begin{equation}\label{eqmodulusxi}
        |\xi| = \frac{1}{\sqrt{p_1\left(\sigma,\frac{\xi}{|\xi|}\right)}},
    \end{equation}
and $u$ is uniquely determined. We observe moreover that $u$ is a point of the segment $\mathcal{U}_{\sigma}$ such that $h(\mathcal{U}_{\sigma})$ is the intersection of the curve $\gamma_0$ defined by \eqref{deflilgamma0} and of $(q_1,q_2)(T_{(0,\sigma)}^* \mathcal{S})$. We will come back to this set in Definition \ref{defUsigma}, and denote it as $[u_{\sigma}(0),u_{\sigma}(\pi)]$ without justification for the moment.

\myindent Coming back to \eqref{nablastPsi0}, we thus find that, for \textit{any} fixed $\sigma,s,t,\frac{\eta}{|\eta|}$, the equation
\begin{equation}\label{nablastPsi0eq0}
    \nabla_{s,t}\Psi_0\left(\sigma,u,s,t,|\eta| \frac{\eta}{|\eta|}\right) = 0
\end{equation}
has one and only one solution $u$ and $|\eta|$. We claim that one can see this equation a \textit{correspondence} equation, in the sense that, first it merely fixed the modulus of the angle variable $\eta$, and secondly, it fixes the value of $u$ which \textit{corresponds} to the direction $\nabla_x \Phi(\sigma,s,t,\eta)$ when read through the "coordinates" $(q_1,q_2)$. 
Obviously, $(q_1,q_2)$ are \textit{not} coordinates, so one should rather think that we only fix the \textit{projection} of the direction $\nabla_x \Phi(s,t,\eta)$ onto the first axis. We will detail more this point of view in Paragraph \ref{subsubsec22HormPhase}.

\quad

\myindent Hence, combining the \textit{geometric} equation \eqref{nablaetaphieq0} and the \textit{correspondence} equation \eqref{nablastPsi0eq0}, we can describe the set $\mathcal{O}_{\sigma}$ of zeros of $\nabla_{s,t,\eta}\Psi_0$ as a subset of $\R/\ell\Z \times 2\tilde{\mathcal{R}_{i,j}} \times \R^N$. Keeping in mind that we want to parameterize this set by $u$, we give the following vizualization of $\mathcal{O}_{\sigma}$ as a "function" of $u$, where the vertical axis is the $t$ variable. It seems to us the easiest way to combine intuition on the geometry and on the analysis, since it ultimately represents the direction of the unique bicharacteristic curve joining $(t,\sigma)$ to $(0,\sigma)$ ; with the flaw that this direction is read through $(q_1,q_2)$, i.e. it is \textit{projected} onto the first axis. Indeed, if both \eqref{nablaetaphieq0} and \eqref{nablastPsi0eq0} hold, then $\nabla_x \Phi(\sigma,s,t,\eta)$ is the direction of the unique bicharacteristic curve of $q_1$ joining $(t + t_i^{\bullet},\sigma)$ and $(0,\sigma)$.

\myindent i. First, in the case $\bullet = (H)$, the picture is given by Figure \ref{Hormanderutawayfromeq} when $\sigma \neq 0$, and this degenerates to Figure \ref{Hormanderutateq} when $\sigma = 0$.

\begin{figure}[h]
\includegraphics[scale = 0.5]{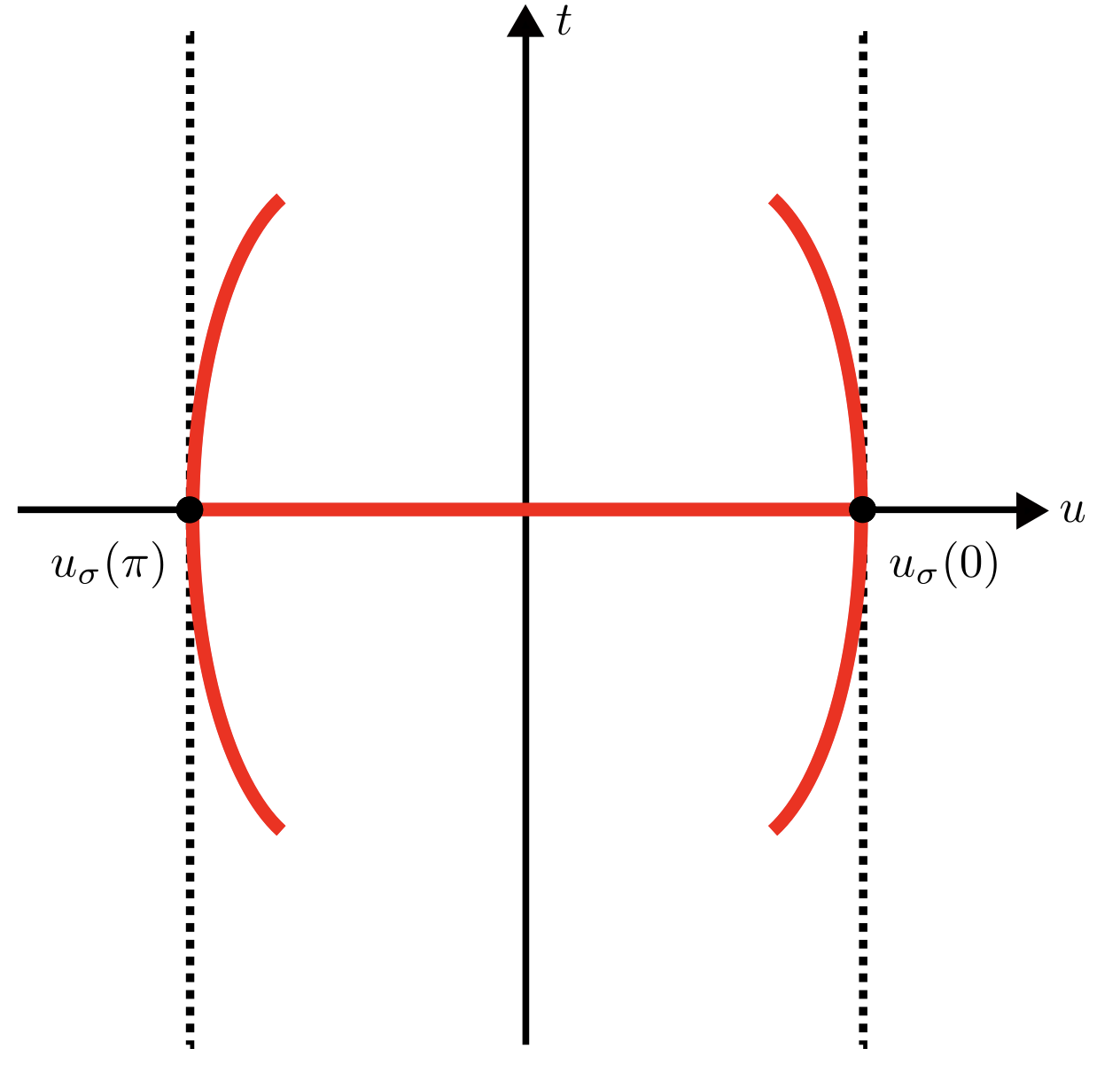}
\centering
\caption{The visualization of $\mathcal{O}_{\sigma}$ when $\bullet = (H)$ and $\sigma \neq 0$}
\label{Hormanderutawayfromeq}
\end{figure}

\begin{figure}[h]
\includegraphics[scale = 0.5]{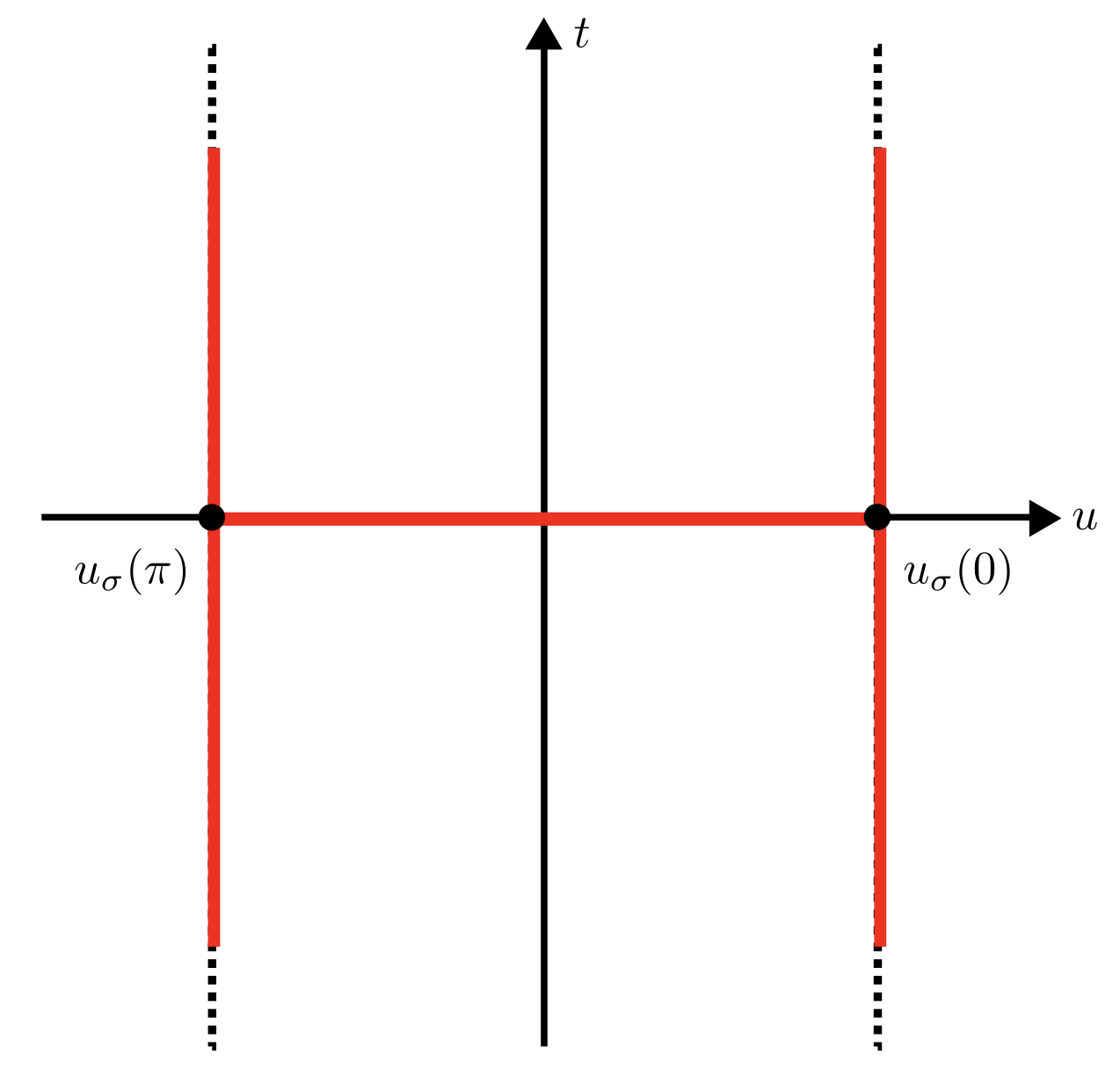}
\centering
\caption{The visualization of $\mathcal{O}_{\sigma}$ when $\bullet = (H)$ and $\sigma = 0$}
\label{Hormanderutateq}
\end{figure}

\myindent ii. In the case $\bullet = (1),...,(J_i)$, the picture is given by Figure \ref{bichutawayfromeq} when $\sigma \neq 0$, and this degenerates to a vertical line above $u_{\sigma}(0)$ or $u_{\sigma}(\pi)$ when $\sigma = 0$ (figure not included).

\begin{figure}[h]
\includegraphics[scale = 0.5]{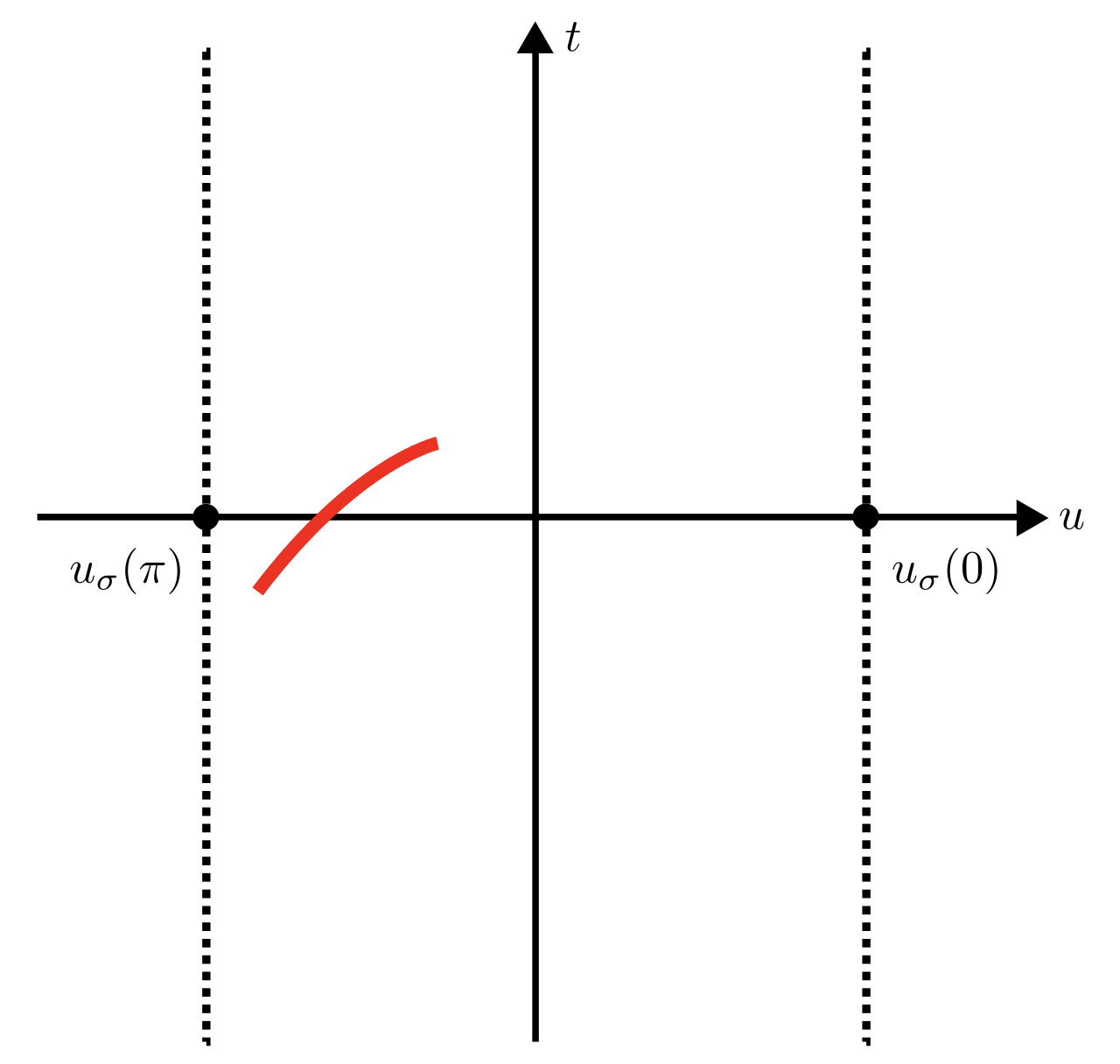}
\centering
\caption{The visualization of $\mathcal{O}_{\sigma}$ when $\bullet = (1),...,(J_i)$ and $\sigma \neq 0$}
\label{bichutawayfromeq}
\end{figure}

\myindent iii. In the case $\bullet = (\pi)$, the picture is given by Figure \ref{antipodut}, where the sharp turn degenerates to a right angle when $\sigma = 0$, at which the picture is again the same as the "H" of Figure \ref{Hormanderutateq}.

\begin{figure}[!h]
\includegraphics[scale = 0.5]{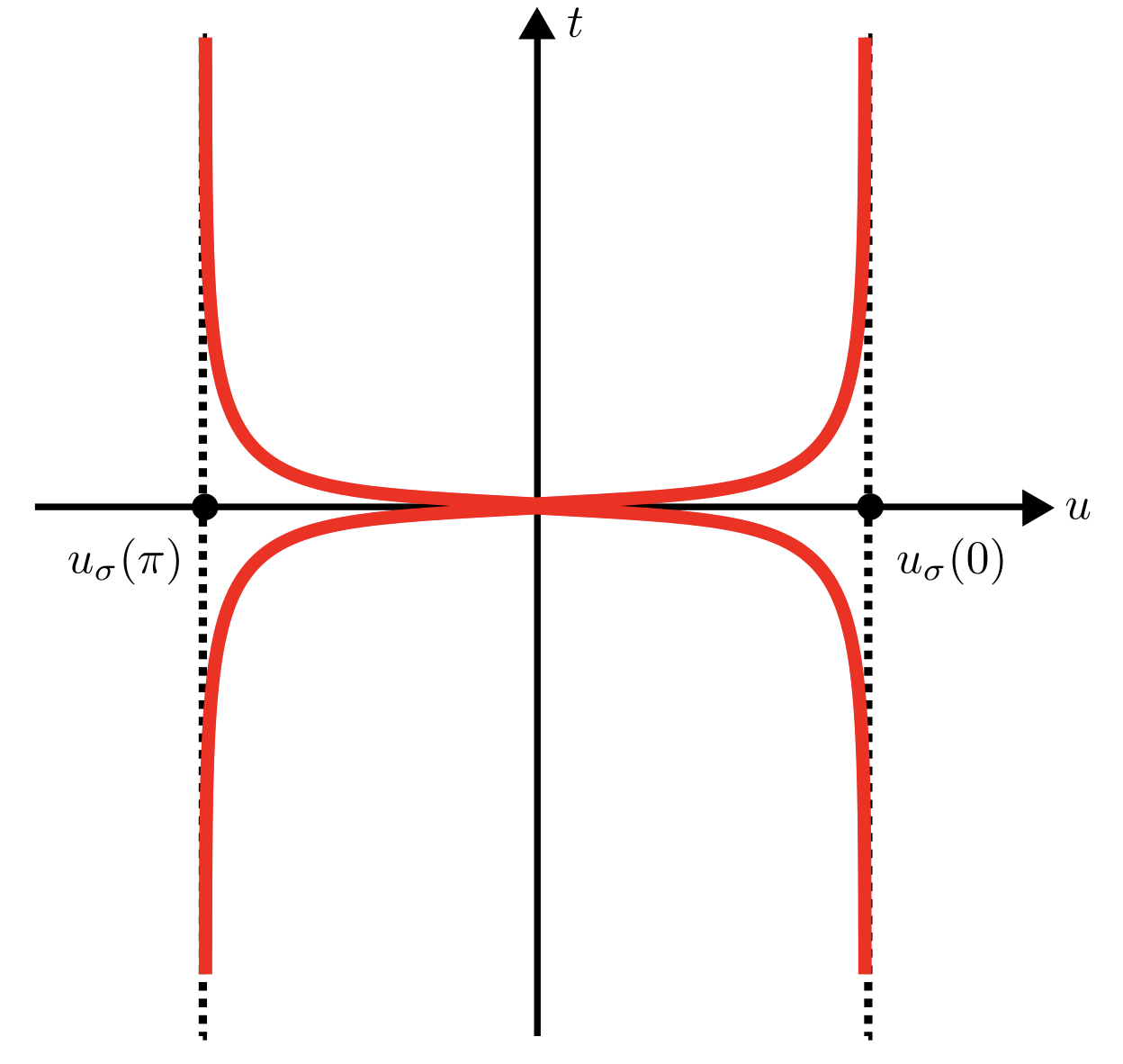}
\centering
\caption{The visualization of $\mathcal{O}_{\sigma}$ when $\bullet = (\pi)$ and $\sigma \neq 0$}
\label{antipodut}
\end{figure}

\quad

\myindent Those pictures yields the strategy that we will use : on each, a portion of $\mathcal{O}_{\sigma}$ is locally a curve parameterized by $u$. For those portions, the strategy of isolating the variable $u$ and applying the analysis of phase functions with a finite type degeneracy (cf the following section) will nicely apply, with the order of the degeneracy $p$ depending on the particular portion. However, there are some unavoidable exceptional cases. First, when $\sigma = 0$ is on the equator, or near the equator, there are vertical or almost vertical lines in the pictures, for which we will need a different argument. Secondly, for \textit{all} $\sigma$, there are exceptional points at which $\mathcal{O}_{\sigma}$ is \textit{not} a smooth curve, namely the points $P_{\pi}$ and $P_0$ depicted in Figure \ref{Hormanderutawayfromeq}. As we will see, those points play a highly nontrivial role in the asymptotics, and we will need a specific analysis near those points.

\quad

\myindent As a conclusion of this descriptive paragraph, let us observe that the above pictures are all a zoom on different parts of the following pictures, which represent the projected direction of the bicharacteristic curve joining $(t,\sigma)$ to $(0,\sigma)$ in terms of $u$. Those pictures are well-defined for \textit{all} $t$ since they don't involve the microlocal variable $\eta$. We still call the curve obtained $\mathcal{O}_{\sigma}$, even if it is rather obtained by gluing together the projections of each curve $\mathcal{O}_{\sigma}$ locally defined in the $(u,t)$ plan.

\myindent i. In the generic case, when $\sigma$ is away from the equator, the picture is given by Figure \ref{globalOsignoteq}.

\begin{figure}[h]
\includegraphics[scale = 0.5]{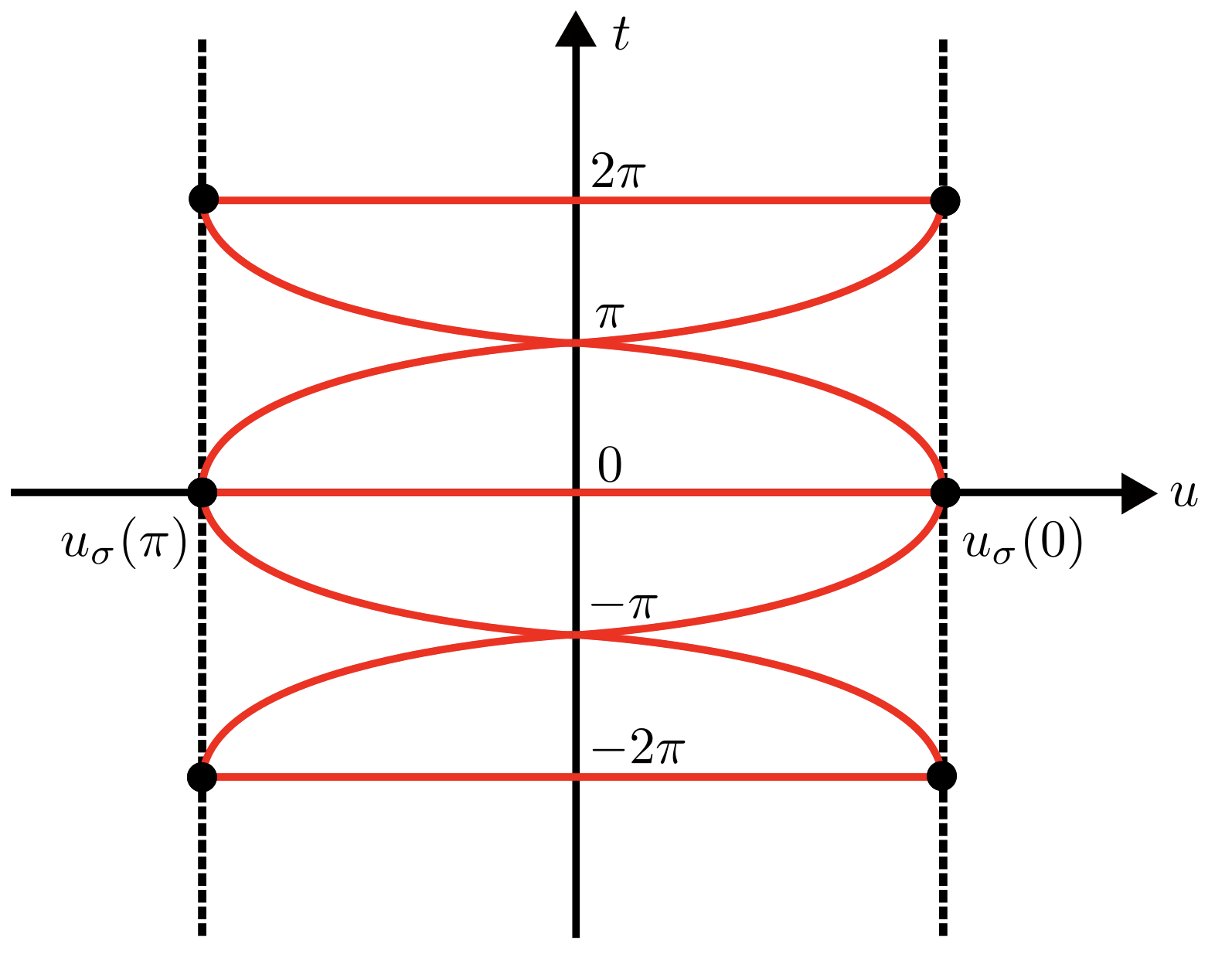}
\centering
\caption{The global visualization of $\mathcal{O}_{\sigma}$  when $\sigma \neq 0$}
\label{globalOsignoteq}
\end{figure}

\myindent ii. When $\sigma = 0$ is on the equator, in a sharp contrast, the picture is given by Figure \ref{globalOsigoneq}.

\begin{figure}[h]
\includegraphics[scale = 0.5]{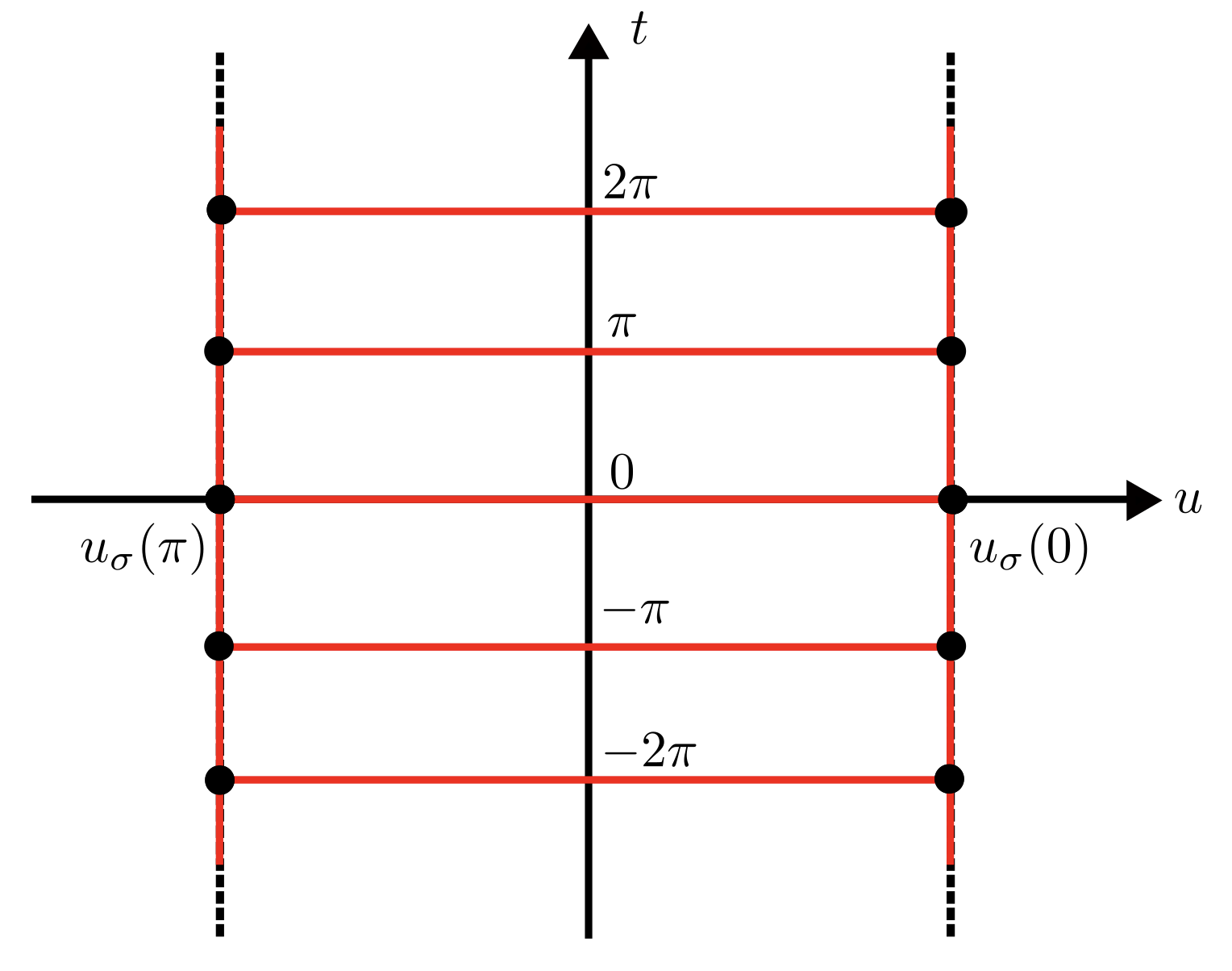}
\centering
\caption{The global visualization of $\mathcal{O}_{\sigma}$  when $\sigma = 0$}
\label{globalOsigoneq}
\end{figure}

\myindent iii. Since this picture is \textit{continuous} in $\sigma$, there is necessarily a transitional regime, when $\sigma$ is close to the equator but not at the equator, which is given by Figure \ref{globalOsigtransitional}. 

\begin{figure}[h]
\includegraphics[scale = 0.5]{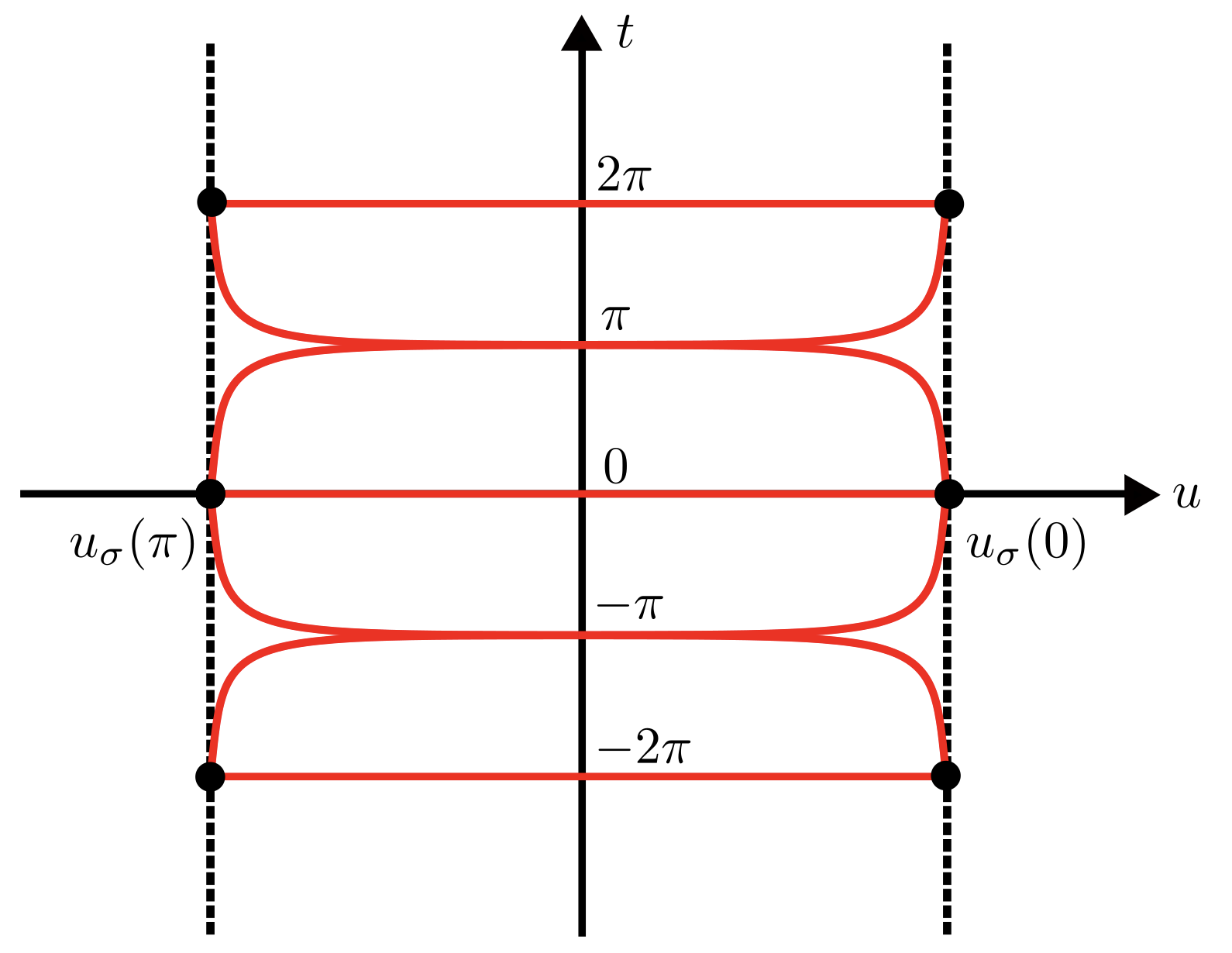}
\centering
\caption{The transitional regime for $\mathcal{O}_{\sigma}$}
\label{globalOsigtransitional}
\end{figure}

\myindent The existence of two regimes introduces a difficulty in the analysis, since it needs to be quantified via the introduction of a threshold of $\sigma$. Obviously, this will depend on $(A,B)$, making the analysis quite technical.

\subsection{Stationary phase estimates}\label{subsec4Micro}

\myindent In this section, we introduce rigorously the special type of phase functions which will appear in the analysis, namely the stationary phase functions with a finite type degeneracy. We moreover give a general useful quantitative bound for oscillatory integrals with this type of phase function, namely Theorem \ref{mixedVdCABZ}.

\subsubsection{On Van der Corput's lemma}\label{subsubsec41Micro}

\myindent First, we recall the important Van der Corput lemma, with a little twist compared to the usual version introduced by Elias Stein (see \cite{stein1993harmonic}[Section VIII, Proposition 2]).

\begin{theorem}[Van der Corput's lemma]\label{VdCthm}
    Let $[a,b]$ be a segment, and $\phi : [a,b] \to \R$ be a smooth function. Assume that there exists a constant $c > 0$ and an integer $p\geq 1$ such that
    \begin{equation}\label{eqvdc}
        \forall x \in [a,b] \qquad |\phi^{(p)}(x)| \geq c.
    \end{equation}
    
    \myindent Then there holds
    \begin{equation}
        \left|\int_a^b e^{i\lambda \phi(x)} dx \right| \leq \begin{cases}
            c_p (c\lambda)^{-\frac{1}{p}} \qquad \text{if} \ p\geq 2 \\
            c_p(c\lambda)^{-1} \left(1 + c^{-1}|b-a| \|\phi''\|_{L^{\infty}}\right) \qquad \text{if} \ p = 1
        \end{cases},
    \end{equation}
    where $c_k$ is a constant independent of $\lambda,\phi,a,b$.
\end{theorem}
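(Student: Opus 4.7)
The plan is to handle the two cases separately: direct integration by parts for $p=1$, and an induction on $p$ for $p \geq 2$, based on partitioning $[a,b]$ around the (at most one) zero of $\phi^{(p-1)}$.

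For the $p = 1$ case, I would write
\begin{equation*}
\int_a^b e^{i\lambda\phi(x)}\,dx = \int_a^b \frac{1}{i\lambda \phi'(x)}\,\frac{d}{dx}\bigl(e^{i\lambda\phi(x)}\bigr)\,dx,
\end{equation*}
and integrate by parts. The boundary term is bounded by $2/(c\lambda)$ using $|\phi'| \geq c$, and the bulk term is
\begin{equation*}
\left|\int_a^b \frac{\phi''(x)}{\lambda(\phi'(x))^2}\,e^{i\lambda\phi(x)}\,dx\right| \leq \frac{|b-a|\,\|\phi''\|_{L^\infty}}{c^2 \lambda},
\end{equation*}
yielding the stated bound $c_1 (c\lambda)^{-1}(1 + c^{-1}|b-a|\|\phi''\|_{L^\infty})$.

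For the induction step $p \geq 2$, I would first establish the auxiliary fact (a cleaner $p=1$ statement) that if $\phi'$ is monotonic and $|\phi'| \geq \alpha$, then $|\int_a^b e^{i\lambda\phi}\,dx| \leq 3/(\alpha\lambda)$. This follows from the same IBP: monotonicity makes $\phi''$ of constant sign, so $\int_a^b |\phi''|/\phi'^2\,dx$ telescopes to $|[-1/\phi']_a^b| \leq 2/\alpha$. Given this, suppose $|\phi^{(p)}| \geq c$. Then $\phi^{(p-1)}$ is strictly monotonic, hence vanishes at most at one point $x_0 \in [a,b]$. For a parameter $\delta > 0$ to be chosen, split the integral over $J_\delta := [a,b]\cap[x_0 - \delta, x_0 + \delta]$ and its complement. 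The contribution of $J_\delta$ is trivially bounded by $2\delta$. On each connected component of the complement, $|\phi^{(p-1)}| \geq c\delta$; for $p \geq 3$ the inductive hypothesis gives a bound $c_{p-1}(c\delta\lambda)^{-1/(p-1)}$, while for $p = 2$ the monotonic auxiliary fact gives $3(c\delta\lambda)^{-1}$. Optimizing $\delta \sim (c\lambda)^{-1/p}$ balances the two contributions and yields the desired $c_p(c\lambda)^{-1/p}$.

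The main obstacle, or subtlety, is that the stated $p=1$ bound is not clean enough to be directly used as the base of the induction: naively feeding it into the $p=2$ step would introduce an uncontrolled $|b-a|\|\phi''\|_\infty$ factor. This is why I separate out the monotonic auxiliary lemma, which is genuinely needed to keep the inductive bound polynomial. The remaining details — verifying that $x_0$ exists and is unique, handling the case where $x_0 \notin [a,b]$ (which only makes things easier since there is no zero to excise), and checking that the constants $c_p$ obtained recursively are finite — are routine and do not affect the structure of the argument.
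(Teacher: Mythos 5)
Your proposal is correct, and it is essentially the classical argument: the paper gives no proof of this statement at all, quoting it as Stein's Van der Corput lemma ``with a little twist'' and citing \cite{stein1993harmonic}[Chapter VIII, Proposition 2]. What you supply is exactly what that citation leaves implicit: the non-monotone $p=1$ bound with the extra factor $1 + c^{-1}|b-a|\|\phi''\|_{L^\infty}$ comes from the crude estimate $\int_a^b |\phi''|/\phi'^2 \leq |b-a|\|\phi''\|_\infty/c^2$ after one integration by parts, while the $p\geq 2$ case runs on Stein's induction, whose base must be the \emph{monotone} $p=1$ lemma (your ``auxiliary fact'') precisely for the reason you identify — the twisted $p=1$ bound is not stable under the excision-and-optimize step. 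Your choice $\delta \sim (c\lambda)^{-1/p}$ and the resulting recursion for $c_p$ are the standard ones and check out.

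One small inaccuracy: the parenthetical claim that the case where $\phi^{(p-1)}$ does not vanish on $[a,b]$ is ``easier since there is no zero to excise'' is misleading. In that case $|\phi^{(p-1)}|$ can still be arbitrarily small near the endpoint where it attains its minimum, so you cannot simply apply the inductive hypothesis on all of $[a,b]$; you must excise the $\delta$-neighborhood of that endpoint and use $|\phi^{(p-1)}(x)| \geq c\,|x-x_0|$ (valid because $\phi^{(p-1)}$ is monotone and does not change sign) on the remainder, exactly as in the interior-zero case. The fix is one line and does not change the structure, but as written the remark would not survive scrutiny, e.g.\ for $p=2$ with $\phi'(a)$ tiny and positive.
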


\myindent Now, this theorem still holds if we suppose, more generally, that \eqref{eqvdc} holds not necessarily for the \textit{same} $p$ for all $x\in [a,b]$, but if it holds for \textit{some} $p$, as long as the value of $p$ is uniformly bounded. Precisely, we define

\begin{definition}\label{VdCN}
    Let $[a,b]$ be a segment of $\R$ and $\phi \in \mathcal{C}^{\infty}([a,b])$. Let $p\geq 1$ be an integer. We say that $\phi$ satisfies the property $(VdC)_p$ with constants $C,c$ if there exists a partition of $[a,b]$ into a finite family of intervals, say $I_1,...,I_K$, such that for all $i = 1,...,K$, there exists $p_i \in\{1,...,p\}$ such that
    \begin{equation}
        \forall x \in I_i \qquad |\phi^{(p_i)}(x)|\geq c,
    \end{equation}
    and, moreover, if $p_i = 1$ then
    \begin{equation}
        \|\phi''\|_{L^{\infty}(I_i)} \leq C.
    \end{equation}
\end{definition}

\myindent Then, an immediate consequence of Theorem \ref{VdCthm} is the following.

\begin{corollary}\label{intermediateVdCp}
    Let $[a,b]$ be a segment of $\R$ and $\phi \in \mathcal{C}^{\infty}([a,b])$. Let $p\geq 1$. Assume that $\phi$ satisfies the property $(VdC)_p$ with constants $C,c$ (see Definition \ref{VdCN}). Then there holds
    \begin{equation}
        \left|\int_a^b e^{i\lambda \phi(x)} dx\right| \leq c_{p,K} (c\lambda)^{-\frac{1}{p}} \left(1 + |b-a| \frac{C}{c}\right),
    \end{equation}
    where $c_{p,K}$ is a universal constant, and $K$ is the number of intervals in Definition \ref{VdCN}.
\end{corollary}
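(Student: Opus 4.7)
The plan is to apply Van der Corput's lemma (Theorem \ref{VdCthm}) piecewise on the partition $I_1,\ldots,I_K$ supplied by the hypothesis $(VdC)_p$, and to sum the resulting bounds. By the triangle inequality, it suffices to estimate each $\int_{I_i}e^{i\lambda\phi(x)}\,dx$ separately. On each $I_i$, the hypothesis provides an integer $p_i\in\{1,\ldots,p\}$ with $|\phi^{(p_i)}|\geq c$ throughout, so Theorem \ref{VdCthm} applies directly and yields a bound of $c_{p_i}(c\lambda)^{-1/p_i}$ when $p_i\geq 2$, and $c_1(c\lambda)^{-1}\bigl(1+c^{-1}|I_i|\,\|\phi''\|_{L^{\infty}(I_i)}\bigr)$ when $p_i=1$. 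In the latter case, I would use the extra hypothesis $\|\phi''\|_{L^\infty(I_i)}\leq C$ to replace the parenthesis by $1+|I_i|C/c$.

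Next, I would unify the various exponents into the worst one. Since $p_i\leq p$, the inequality $(c\lambda)^{-1/p_i}\leq(c\lambda)^{-1/p}$ holds in the meaningful regime $c\lambda\geq 1$, so every piece is dominated by $c_p(c\lambda)^{-1/p}(1+|I_i|C/c)$, the factor $|I_i|C/c$ being understood as absent (or trivially bounded by the additive $1$) in the case $p_i\geq 2$. Summing over $i$ and using $\sum_i|I_i|=|b-a|$ then gives the announced bound $c_{p,K}(c\lambda)^{-1/p}(1+|b-a|C/c)$, with a constant depending only on $p$ and on the cardinality $K$ of the partition. For the less interesting regime $c\lambda<1$, the right-hand side is already at least $c_{p,K}$, and the claim follows from the trivial estimate $\bigl|\int_a^b e^{i\lambda\phi}\,dx\bigr|\leq|b-a|$ after enlarging the constant.

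I do not anticipate any serious obstacle, as the corollary is essentially a bookkeeping consequence of the classical Van der Corput lemma. The only real care needed is in treating the $p_i=1$ case on an equal footing with the $p_i\geq 2$ cases, using the bound on $\|\phi''\|_{L^\infty(I_i)}$ to convert the piece-wise length factors into the global length $|b-a|$, and in verifying that the accumulated constant truly depends only on $p$ and on $K$, not on the particular partition.
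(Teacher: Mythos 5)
Your argument --- triangle inequality over the partition, Theorem \ref{VdCthm} on each piece with the hypothesis $\|\phi''\|_{L^\infty(I_i)}\leq C$ handling the $p_i=1$ case, then unifying the exponents via $(c\lambda)^{-1/p_i}\leq(c\lambda)^{-1/p}$ and summing $|I_i|$ to $|b-a|$ --- is exactly the intended one: the paper gives no written proof and presents the corollary as an immediate consequence of Theorem \ref{VdCthm}, and in the regime $c\lambda\geq 1$ your proof is complete and correct.

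The one step that does not hold up is your patch for $c\lambda<1$. Bounding the integral trivially by $|b-a|$ only yields the claim if $|b-a|\lesssim 1+|b-a|\,C/c$ with a universal implied constant, which fails when $|b-a|$ is large and $C/c$ is small; note that if no interval has $p_i=1$, the definition of $(VdC)_p$ places no constraint on $C$, so one may even take $C=0$. In fact, in that regime the stated inequality itself can fail: for $\phi(x)=x^2/2$ on $[0,R]$ the property $(VdC)_3$ holds with $c=1$, $C=0$ (a single interval with $p_i=2$), yet $\left|\int_0^R e^{i\lambda x^2/2}\,dx\right|\sim \lambda^{-1/2}\gg \lambda^{-1/3}$ once $\lambda$ is small and $R\sqrt{\lambda}$ is large. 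So no argument could close that case; the corollary should be read, as it is used throughout the paper, with $c\lambda\gtrsim 1$ implicit, and under that reading your proof coincides with the paper's.
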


\begin{remark}
    The constant $c_{p,k}$ in Corollary \ref{intermediateVdCp} depends a priori on the number of intervals $K \geq 1$. However, in the following, the number of intervals can always be uniformly bounded. Hence, we won't further mention, or note, the dependency on $K$.
\end{remark}

\myindent We may finally deduce the following useful proposition.

\begin{proposition}\label{usefulVdC}
    Let $I$ be a segment of $\R$, $\phi \in \mathcal{C}^{\infty}([a,b])$, $f$ a continuous function on $[a,b]$ such that $f' \in L^1([a,b])$. Assume that $\phi$ satisfies the property $(VdC)_p$ for some $p\geq 1$, with constants $C,c > 0$. Then there holds
    \begin{equation}
        \left|\int_a^be^{i\lambda \phi(x)} f(x) dx \right| \leq c_p (c\lambda)^{-\frac{1}{p}} \left(1 + |b-a| \frac{C}{c}\right) \left(\|f\|_{L^{\infty}([a,b])} + \|f'\|_{L^{1}([a,b])} \right).
    \end{equation}
\end{proposition}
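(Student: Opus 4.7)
The plan is to deduce this from Corollary \ref{intermediateVdCp} by a standard integration by parts, using the fact that $f$ is absolutely continuous. Define the primitive of the oscillatory factor by
\[
F(y) := \int_a^y e^{i\lambda \phi(x)}\,dx, \qquad y \in [a,b],
\]
and the main step is to show that the uniform bound from Corollary \ref{intermediateVdCp} applies to $F(y)$ for \emph{every} $y \in [a,b]$, with the same constants as on $[a,b]$.

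First, I would check that the property $(VdC)_p$ is inherited by subintervals: given the partition $I_1,\dots,I_K$ of $[a,b]$ witnessing $(VdC)_p$ with constants $C,c$, the intersection with $[a,y]$ produces a partition of $[a,y]$ into at most $K$ pieces (one of which may be truncated at the right), on each of which the lower bound $|\phi^{(p_i)}|\geq c$ and, in the case $p_i=1$, the upper bound $\|\phi''\|_{L^\infty}\leq C$ are preserved by restriction. Thus $\phi$ satisfies $(VdC)_p$ on $[a,y]$ with the same constants $C,c$. Corollary \ref{intermediateVdCp}, applied on $[a,y]$ and using $|y-a| \leq |b-a|$, then gives
\[
|F(y)| \leq c_p (c\lambda)^{-1/p}\Bigl(1 + |b-a|\tfrac{C}{c}\Bigr)
\]
uniformly in $y \in [a,b]$.

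Next, I would perform the integration by parts. Since $f$ is continuous on $[a,b]$ with $f' \in L^1([a,b])$, $f$ is absolutely continuous, so
\[
\int_a^b e^{i\lambda \phi(x)} f(x)\,dx = F(b)f(b) - \int_a^b F(y) f'(y)\,dy,
\]
as one checks by writing $f(x) = f(b) - \int_x^b f'(y)\,dy$ and applying Fubini. The triangle inequality, combined with the uniform bound on $|F(y)|$, yields
\[
\Bigl|\int_a^b e^{i\lambda \phi(x)} f(x)\,dx\Bigr| \leq c_p (c\lambda)^{-1/p}\Bigl(1 + |b-a|\tfrac{C}{c}\Bigr)\Bigl(|f(b)| + \|f'\|_{L^1([a,b])}\Bigr).
\]
Bounding $|f(b)| \leq \|f\|_{L^\infty([a,b])}$ gives the stated estimate.

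There is no real obstacle here: the argument is a textbook combination of the Van der Corput bound with integration by parts. The one point that deserves attention, and which I would highlight, is the verification that the constants $C,c$ in Definition \ref{VdCN} transfer to all subintervals $[a,y]$ uniformly. Note that if one only wanted the weaker bound $c_p(c\lambda)^{-1/p}(1 + |b-a|C/c)\|f\|_{BV}$, one could replace $|f(b)| + \|f'\|_{L^1}$ by the total variation of $f$; the form chosen here is more convenient for the symbolic estimates of the forthcoming sections, where $f$ arises as a symbol and $\|f\|_{L^\infty}$ and $\|f'\|_{L^1}$ are directly controllable.
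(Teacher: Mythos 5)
Your argument is correct and is exactly the standard derivation the paper intends (the proposition is stated as a direct consequence of Corollary \ref{intermediateVdCp}, with no separate proof given): a uniform Van der Corput bound on the primitive $F(y)=\int_a^y e^{i\lambda\phi}$, justified by noting that the $(VdC)_p$ property with constants $C,c$ restricts to every subinterval $[a,y]$, followed by integration by parts using the absolute continuity of $f$. Your explicit check that the constants transfer to subintervals is a worthwhile detail to make explicit, and the final bound $|f(b)|+\|f'\|_{L^1}\leq \|f\|_{L^\infty}+\|f'\|_{L^1}$ gives precisely the stated estimate.
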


\subsubsection{Oscillatory integrals with a nondegenerate stationary phase}\label{subsubsec42Micro}

\myindent In this paragraph, we recall standard results regarding the behavior of oscillatory integrals with a non degenerate stationary phase. Precisely, let $d\geq 1$ be an integer, let $U \subset \R^d$ be an open set and let $K\subset U$ be a compact set. Let $\phi \in \mathcal{C}^{\infty}(U)$ be a real valued phase function, and let $a \in \mathcal{C}^{\infty}_0(K)$ be a test function. We are concerned with the asymptotic behavior of the oscillatory integral
\begin{equation}\label{defoscintstatphase}
    \mathcal{I}(\lambda) := \int_U e^{i\lambda \phi(y)} a(y) dy
\end{equation}
when $\lambda \to \infty$.

\myindent Now, it is well-known that this behavior is governed by the \textit{stationary points} of $\phi$, i.e. the $y \in U$ for which
\begin{equation}\label{statpointeq}
    \nabla \phi(y) = 0.
\end{equation}
\myindent For simplicity, we assume that $\phi$ has a unique stationary point $y_0 \in U$. Moreover, we assume that, at this stationary point, $\phi$ is \textit{non degenerate}, i.e. that
\begin{equation}\label{nondegeq}
    \nabla^2 \phi(y_0) \in GL_d(\R).
\end{equation}

\myindent Observe that this assumption can be seen as the "generic" kind of stationary phase $\phi$. Now, the stationary phase Lemma is the general statement that, under those hypotheses, then
\begin{equation}
    \mathcal{I}(\lambda) = \left(\frac{2\pi}{\lambda}\right)^{\frac{d}{2}}\frac{e^{i\lambda \phi(y_0)}}{ |\det( \nabla^2 \phi(y_0))|^{\frac{1}{2}}} e^{i\frac{\pi}{4} sgn(\nabla^2\phi(y_0))} a(y_0) + O_{\phi,a}(\lambda^{-\frac{d}{2}- 1}).
\end{equation}

\myindent In the reference textbook presentation of this formula by Hörmander in \cite{hormanderanalysis}[Theorem 7.7.5.], the remainder is quantified as follows\footnote{Actually, Hörmander's Theorem is that one has an asymptotic expansion for $\mathcal{I}(\lambda)$ in terms of decreasing powers of $\lambda$, whose coefficients are semi-explicit, and this up to any order with a bound on the remainder. However, we won't use the full asymptotic expansion}.

\begin{theorem}[Stationary phase approximation]\label{hormstatphase}
    Under the hypotheses written above, there holds
    \begin{equation}
        \left|\mathcal{I}(\lambda) - \left(\frac{2\pi}{\lambda}\right)^{\frac{d}{2}}\frac{e^{i\lambda \phi(y_0)}}{ |\det( \nabla^2 \phi(y_0))|^{\frac{1}{2}}} e^{i\frac{\pi}{4} sgn(\nabla^2\phi(y_0))} a(y_0)\right| \leq C \lambda^{-1} \sum_{|\alpha| \leq 2} \sup_K |D^{\alpha} a|,
    \end{equation}
    where the constant $C$ is bounded when $\phi$ remains in a bounded set in $\mathcal{C}^4(U)$ and $\frac{|y - y_0|}{|\nabla \phi(y)|}$ has a uniform bound
\end{theorem}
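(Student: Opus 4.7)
The plan is the standard three-step stationary phase proof: cut off away from $y_0$ using a smooth bump, kill the non-stationary piece by repeated integration by parts, and reduce the stationary piece to an explicit Gaussian integral via the Morse lemma. Concretely, I fix $r > 0$ small enough that the Morse lemma applies on $B(y_0,r) \subset U$, pick $\chi \in C_0^\infty(B(y_0,r))$ with $\chi \equiv 1$ near $y_0$, and split $\mathcal{I}(\lambda) = \mathcal{I}_1(\lambda) + \mathcal{I}_2(\lambda)$, where $\mathcal{I}_1$ carries $\chi a$ and $\mathcal{I}_2$ carries $(1-\chi)a$. Since $y_0$ is the unique stationary point and $|y-y_0|/|\nabla\phi(y)|$ is uniformly bounded by hypothesis, $|\nabla\phi| \geq c > 0$ on the support of $(1-\chi)a$. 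With $L u := (i\lambda)^{-1}|\nabla\phi|^{-2}\nabla\phi\cdot\nabla u$, so that $L(e^{i\lambda\phi}) = e^{i\lambda\phi}$, integration by parts twice yields $|\mathcal{I}_2(\lambda)| \lesssim \lambda^{-2}\sum_{|\alpha|\leq 2}\sup_K|D^\alpha a|$, which is absorbed in the claimed bound.

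The Morse lemma next produces a smooth diffeomorphism $\psi: V \to B(y_0,r)$ with $\psi(0) = y_0$ and $\phi(\psi(z)) = \phi(y_0) + \tfrac{1}{2}\langle Q z, z\rangle$, where $Q$ is diagonal with entries $\pm 1$ so that $|\det Q|=1$ and $\operatorname{sgn}(Q)=\operatorname{sgn}(\nabla^2\phi(y_0))$. Differentiating twice at $z = 0$ gives $Q = D\psi(0)^\top \nabla^2\phi(y_0)\, D\psi(0)$, whence $|\det D\psi(0)| = |\det\nabla^2\phi(y_0)|^{-1/2}$. Setting $b(z) := (\chi a)(\psi(z))|\det D\psi(z)|$, the change of variables reduces the stationary part to
\begin{equation*}
\mathcal{I}_1(\lambda) = e^{i\lambda\phi(y_0)} \int_{\R^d} e^{i\lambda\langle Qz,z\rangle/2}\, b(z)\, dz.
\end{equation*}

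For this Gaussian integral I would apply Parseval together with the Fresnel-Gaussian Fourier transform
\begin{equation*}
\widehat{e^{i\lambda\langle Q\cdot,\cdot\rangle/2}}(\xi) = \left(\frac{2\pi}{\lambda}\right)^{d/2} e^{i\pi\operatorname{sgn}(Q)/4}\, e^{-i\langle Q^{-1}\xi,\xi\rangle/(2\lambda)},
\end{equation*}
to obtain
\begin{equation*}
\int e^{i\lambda\langle Qz,z\rangle/2}\, b(z)\, dz = \left(\frac{2\pi}{\lambda}\right)^{d/2} e^{i\pi\operatorname{sgn}(Q)/4}\bigl(b(0) + R(\lambda)\bigr),
\end{equation*}
with $R(\lambda) = (2\pi)^{-d}\int\bigl(e^{-i\langle Q^{-1}\xi,\xi\rangle/(2\lambda)}-1\bigr)\hat b(\xi)\, d\xi$. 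The elementary bound $|e^{is}-1|\leq|s|$ then gives $|R(\lambda)| \lesssim \lambda^{-1}\int|\xi|^2|\hat b(\xi)|\,d\xi$, controlled by a fixed-order Sobolev norm of $b$ and hence, via the chain rule applied to $\psi$, by $\sum_{|\alpha|\leq 2}\sup_K|D^\alpha a|$. Finally, $b(0) = |\det\nabla^2\phi(y_0)|^{-1/2}\, a(y_0)$ matches the leading coefficient in the claim.

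The delicate point is the \emph{uniformity} of the constant $C$ in $\phi \in C^4$: this requires quantitatively controlling the Morse diffeomorphism $\psi$ and its first few derivatives in terms of $\|\phi\|_{C^4}$ and $|\det \nabla^2\phi(y_0)|$, and verifying that the integration-by-parts constant for $\mathcal{I}_2$ depends only on the stated lower bound for $|\nabla\phi|/|y-y_0|$ rather than on a higher-order norm of $\phi$. Once these quantitative versions of the Morse lemma and of the non-stationary estimate are in place, the rest of the argument is routine bookkeeping.
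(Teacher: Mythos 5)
You should first be aware that the paper does not prove this statement at all: it is quoted verbatim (the case $k=1$) from Hörmander's Theorem 7.7.5 and used as a black box, so your attempt can only be measured against the standard textbook proof. Your outline follows exactly that classical route (cutoff near $y_0$, non-stationary phase by integration by parts on the support of $(1-\chi)a$, Morse lemma reduction to a quadratic phase, Fresnel/Parseval identity), and it does correctly produce the leading term with the right factors $|\det\nabla^2\phi(y_0)|^{-1/2}$ and $e^{i\frac{\pi}{4}\mathrm{sgn}(\nabla^2\phi(y_0))}$, as well as an $O(\lambda^{-1})$ remainder.

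The genuine gap is in the last estimate, and it is precisely the quantitative content of the statement. From $|e^{is}-1|\leq|s|$ you get $|R(\lambda)|\lesssim\lambda^{-1}\int|\xi|^2|\hat b(\xi)|\,d\xi$, and you claim this is "controlled by a fixed-order Sobolev norm of $b$ and hence by $\sum_{|\alpha|\leq 2}\sup_K|D^\alpha a|$". The first half is true, the second is not: bounding $\int|\xi|^2|\hat b|\,d\xi$ requires roughly $d+3$ sup-norm derivatives of $b$ (or an $H^{2+d/2+\varepsilon}$ norm), and this quantity is \emph{not} dominated by $\|b\|_{C^2}$ — e.g.\ smooth compactly supported $b$ whose second derivative transitions at a small scale $\delta$ have $\|b\|_{C^2}=O(1)$ while $\int|\xi|^2|\hat b|\,d\xi\to\infty$ as $\delta\to 0$. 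So your constant silently involves higher derivatives of $a$, and, through the derivatives of the Morse chart $\psi$ entering $b$, more than a $C^4$ bound on $\phi$; but the whole point of the statement is that only two derivatives of $a$ and a $C^4$ bound on $\phi$ appear (Hörmander's general $k$: $2k$ derivatives of the amplitude, $C^{3k+1}$ of the phase). Closing this gap requires replacing the Parseval step by the finer argument for the quadratic model (Hörmander's Lemma 7.7.3): Taylor-expand the amplitude at the critical point, evaluate the polynomial part exactly, and handle the remainder, which vanishes to order $2k$ at the origin, by repeated integration by parts against the quadratic phase so that each vanishing pair of factors is traded for one power of $\lambda^{-1}$ and a single derivative; only then does the bookkeeping close at $\sum_{|\alpha|\leq 2}\sup_K|D^\alpha a|$ and $\phi$ bounded in $\mathcal{C}^4$, which is also what resolves the uniformity issue you flagged but left open.
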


\myindent Now, usually, this theorem is enough to work with, since one deals with only one phase function. However, as we have mentioned in Paragraph \ref{subsubsec32Micro}, we will have to deal with countably many different phase functions, depending on $(A,B) \in (2\pi \Z)^2$, which won't obviously be uniformly bounded since the norms of their derivatives are of order $|(A,B)|$ (see formula \eqref{abstractdefofPsi}). Hence, we need to refine Theorem \ref{hormstatphase}, and, precisely, to quantify the constant $C$ which appears in the upper bounds in terms of the derivatives of $\phi$.

\quad

\myindent To our knowledge, one of the most precise quantitative bounds for the oscillatory integral \eqref{defoscintstatphase} is the result of Alazard, Burq and Zuily in \cite{alazard2017stationary}. Before giving the result, we recall the notations in their paper.

\myindent Let $\eta > 0$ be a small constant, and let
\begin{equation}
    K_{\eta} := \{y \in \R^d : dist(y,K) \leq \eta\},
\end{equation}
where "dist" is the sup distance on $\R^d$. 

\myindent For $k\geq 2$ and $l\geq 0$, let
\begin{equation}
    \begin{split}
        &\mathcal{M}_k := \sum_{2 \leq |\alpha|\leq k} \sup_{K_{\eps_0}} |D^{\alpha} \phi|\\
        &\mathcal{N}_l := \sum_{|\alpha|\leq l} \sup_K |D^{\alpha} a|.
    \end{split}
\end{equation}

\myindent Set
\begin{equation}
    a_0 := \inf_{K_{\eps_0}} |\det(\nabla^2 \phi)|.
\end{equation}

\myindent When $\mathcal{M}_2 > 0$, set
\begin{equation}
    \begin{split}
        &\delta_{\eps_0} := \frac{a_0}{4(C_1\mathcal{M}_2)^{d-1} C_2 \mathcal{M}_3} \\
        &\delta := \min(\delta_{\eps_0}, \frac{\eps_0}{4}),
    \end{split}
\end{equation}
where $C_1,C_2$ are constants depending only on the dimension $d$, the definition of which we do not recall.

\myindent Then, we can state the following theorem.

\begin{theorem}[Alazard, Burq, Zuily]\label{ABZthm}
    Assume 
    \begin{equation}
        \begin{split}
            &\mathcal{M}_{d+2} < + \infty \\
            &\mathcal{N}_{d+1} < + \infty \\
            & a_0 > 0.
        \end{split}
    \end{equation}
    
    \myindent Then, there exists $C> 0$ depending only on the dimension $d$ such that, for all $\lambda \geq 1$,
    \begin{equation}
        |\mathcal{I}(\lambda)| \leq \frac{C |K_{\eps_0}|}{a_0 \delta^d} \left(1 + \mathcal{M}_{d+2}^{\frac{d}{2}}\right) \mathcal{N}_{d+1} \lambda^{-\frac{d}{2}}.
    \end{equation}
\end{theorem}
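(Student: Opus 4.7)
The plan is to follow the classical microlocal proof of the stationary phase lemma while keeping explicit quantitative control on every constant in terms of $a_0$, $\mathcal{M}_k$, and $\mathcal{N}_l$. I would first introduce a smooth cutoff $\chi$ equal to $1$ on $B(y_0,\delta)$, supported in $B(y_0,2\delta)$, and satisfying $|\partial^\alpha\chi|\lesssim\delta^{-|\alpha|}$, and split $\mathcal{I}(\lambda)=\mathcal{I}_{\mathrm{in}}(\lambda)+\mathcal{I}_{\mathrm{out}}(\lambda)$ according to whether the amplitude is multiplied by $\chi$ or by $1-\chi$. The explicit value of $\delta$ in the statement is calibrated so that both halves of the argument below work simultaneously: the Morse normal form is available on $B(y_0,2\delta)\subset K_{\eps_0}$, and a useful pointwise lower bound on $|\nabla\phi|$ is valid on its complement inside $K_{\eps_0}$.

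For $\mathcal{I}_{\mathrm{out}}(\lambda)$, a Taylor expansion of $\nabla\phi$ at $y_0$ combined with $|\det\nabla^2\phi(y_0)|\geq a_0$ gives $|\nabla\phi(y)|\gtrsim(a_0/\mathcal{M}_2^{d-1})|y-y_0|$, and hence a lower bound of order $(a_0/\mathcal{M}_2^{d-1})\delta$ on the support of $(1-\chi)a$. Integrating by parts $N=d+1$ times against the non-stationary phase operator $L^\ast f:=(i\lambda)^{-1}\mathrm{div}(\nabla\phi\cdot f/|\nabla\phi|^2)$ produces a bound of order $\lambda^{-(d+1)}$ times a polynomial in $\mathcal{M}_{d+2}$, $\mathcal{N}_{d+1}$, $a_0^{-1}$, and $\delta^{-1}$, which is subdominant relative to the target $\lambda^{-d/2}$.

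For $\mathcal{I}_{\mathrm{in}}(\lambda)$, I would apply the quantitative Morse lemma to produce a diffeomorphism $\kappa:B(0,\eta)\to B(y_0,2\delta)$ with $\kappa(0)=y_0$, $D\kappa(0)=I$, and $\phi\circ\kappa(z)=\phi(y_0)+\tfrac{1}{2}\langle Qz,z\rangle$ where $Q:=\nabla^2\phi(y_0)$. Setting $b(z):=(a\chi)(\kappa(z))|\det D\kappa(z)|$ reduces the inner integral to the Gaussian oscillatory integral
\begin{equation*}
\mathcal{I}_{\mathrm{in}}(\lambda)=e^{i\lambda\phi(y_0)}\int_{\mathbb{R}^d}e^{i\lambda\langle Qz,z\rangle/2}b(z)\,dz,
\end{equation*}
which Plancherel and the explicit Fourier transform of a Gaussian rewrite as
\begin{equation*}
\mathcal{I}_{\mathrm{in}}(\lambda)=\Bigl(\frac{2\pi}{\lambda}\Bigr)^{d/2}\frac{e^{i\lambda\phi(y_0)+i\pi\,\mathrm{sgn}(Q)/4}}{|\det Q|^{1/2}}\int_{\mathbb{R}^d}e^{-i\langle Q^{-1}\xi,\xi\rangle/(2\lambda)}\hat b(\xi)\,\frac{d\xi}{(2\pi)^d}.
\end{equation*}
The remaining integral is controlled by $\|\hat b\|_{L^1}\lesssim\|b\|_{W^{d+1,1}}$, and the Fa\`a di Bruno formula applied to $b=(a\chi)\circ\kappa\cdot|\det D\kappa|$ bounds each derivative of $b$ up to order $d+1$ by $\mathcal{N}_{d+1}(1+\mathcal{M}_{d+2}^{d/2})$ times appropriate powers of $a_0^{-1}$ and $\delta^{-1}$; integrating against the support then produces the volumetric factor $|K_{\eps_0}|/(a_0\delta^d)$ appearing in the statement.

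The main obstacle is the quantitative Morse lemma itself: one must construct $\kappa$ on the specific neighbourhood of radius $\sim\delta$ with $C^{d+2}$-norm polynomially controlled by $a_0^{-1}$ and $\mathcal{M}_{d+2}$. This would be carried out via a quantitative implicit function theorem applied to the symmetric normal form $\phi(y)-\phi(y_0)=\tfrac{1}{2}\langle R(y)(y-y_0),y-y_0\rangle$ coming from Hadamard's lemma (so that $R(y_0)=Q$), followed by a careful diagonalization of $R(y)$ and a matrix square root of the diagonal blocks; the explicit size $\delta_{\eps_0}\sim a_0/(\mathcal{M}_2^{d-1}\mathcal{M}_3)$ in the statement is exactly the radius on which this construction, together with its derivative bounds, remains valid, and the exponent $d/2$ on $\mathcal{M}_{d+2}$ in the conclusion reflects the chain-rule bookkeeping needed to estimate $\|b\|_{W^{d+1,1}}$.
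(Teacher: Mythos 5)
The paper does not prove Theorem \ref{ABZthm} at all: it is quoted verbatim from Alazard--Burq--Zuily \cite{alazard2017stationary}, and the closest in-house argument is the proof of Theorem \ref{mixedVdCABZ} in Appendix \ref{AppendixC}, which follows the ABZ strategy. That strategy is quite different from yours: one never straightens the phase. Instead one splits the integral with a cutoff $\chi(\eps^{-1}\nabla_y\phi)$ according to the \emph{size of the gradient}, bounds the near-stationary piece by absolute value after the linear change of variables $w=H(y-y_0)$ (which is where the factors $a_0^{-1}$ and $\eps^d$ come from), performs non-stationary integration by parts $d+1$ times on the complementary piece with the operator $\nabla_y\phi\cdot\nabla_y/|\nabla_y\phi|^2$, and finally optimizes in $\eps\sim\lambda^{-1/2}$. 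This elementary route is exactly what produces the clean constant $\frac{C|K_{\eps_0}|}{a_0\delta^d}\left(1+\mathcal{M}_{d+2}^{\frac d2}\right)\mathcal{N}_{d+1}$ and is designed precisely to avoid the step you identify as your main obstacle.

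Your route (quantitative Morse lemma plus the Gaussian/Plancherel computation) has two genuine gaps. First, a hypothesis mismatch: the theorem, as in \cite{alazard2017stationary}, does not assume that $\phi$ has a stationary point, let alone a unique one; $a_0$ is an infimum of $|\det\nabla^2\phi|$ over all of $K_{\eps_0}$, and the bound must hold uniformly also when there is no critical point in $K$ or when there are several. Your entire decomposition (a single cutoff on $B(y_0,2\delta)$, a single Morse chart at $y_0$, and the Taylor lower bound $|\nabla\phi(y)|\gtrsim (a_0/\mathcal{M}_2^{d-1})|y-y_0|$ on the exterior region) is anchored to a presumed unique $y_0$ and does not cover these cases; one needs either a covering argument over balls of radius $\sim\delta$ or a split by the size of $|\nabla\phi|$ as above. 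Second, the quantitative Morse lemma itself is not established but only sketched, and it is the entire difficulty: building $\kappa$ on a ball of radius $\sim\delta$ with $C^{d+2}$ bounds that are \emph{polynomial} in $a_0^{-1}$, $\mathcal{M}_{d+2}$ and with the precise exponents needed so that the Fa\`a di Bruno bookkeeping for $\|b\|_{W^{d+1,1}}$ reproduces exactly the factor $(1+\mathcal{M}_{d+2}^{d/2})\mathcal{N}_{d+1}/(a_0\delta^d)$ is a nontrivial claim that you assert rather than prove (the diagonalization and matrix square root steps typically lose additional powers of $\|Q^{-1}\|\le \mathcal{M}_2^{d-1}/a_0$, and nothing in your sketch shows the losses stop at the stated powers). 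Also note that $\delta_{\eps_0}\sim a_0/(\mathcal{M}_2^{d-1}\mathcal{M}_3)$ is calibrated in ABZ so that $|\det\nabla^2\phi|$ stays comparable to $a_0$ on each ball of that radius, not as the radius of validity of a Morse chart, so interpreting it as the latter would itself require proof.
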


\subsubsection{Oscillatory integrals with a stationary phase with a finite type degeneracy}\label{subsubsec43Micro}

\myindent We now introduce the special type of oscillatory integral in $(1+d)$ dimension that we will consider. Before giving a rigorous statement, we explain the intuition of the result, and why we unfortunately can't deduce it from the above theorems.

\myindent Let $\mathcal{U} \subset \R^{1+d}$ be an open set, let $K \subset \mathcal{U}$ be a compact set. Let $\phi \in \mathcal{C}^{\infty}(\mathcal{U})$ be a real valued phase function, and let $a \in \mathcal{C}_0^{\infty}(K)$ be a test function. Let
\begin{equation}
    \mathcal{I}(\lambda) := \int_{\mathcal{U}} e^{i\lambda \phi(z)} a(z) dz.
\end{equation}

\myindent Now, let us still assume that there exists a $z_0 \in \mathcal{U}$ such that $\phi$ is stationary at $z_0$, i.e. \eqref{statpointeq} holds. However, let us assume that $\phi$ is \textit{degenerate} at $z_0$, i.e. that \eqref{nondegeq} \textit{doesn't} hold. Instead, let us assume what one can think of as the "minimal" kind of degeneracy of $\phi$, i.e. that
\begin{equation}
    rk(\nabla^2 \phi(z_0)) = d,
\end{equation}
where $rk$ is the rank. Now, without loss of generality, we can thus assume that, if we denote the points of $\R^{1+d}$ as $z = (x,y) \in \R \times \R^d$, then, if $z_0 = (x_0,y_0)$,
\begin{equation}
    \nabla_y^2 \phi(x_0,y_0) \in GL_d(\R).
\end{equation}

\myindent Now, thanks to the implicit function theorem, we know that, for any $x$ close enough to $x_0$, there is a unique $y(x)$ such that
\begin{equation}
    \nabla_y \phi(x,y(x)) = 0,
\end{equation}
and, moreover, $x\mapsto y(x)$ is a smooth function. Hence, up to reducing to a small neighborhood of $z_0$, we can assume that $\mathcal{U} = I \times U$, where $I\subset \R$ is an open interval and $U \subset \R^d$ is an open set, such that moreover $x\mapsto y(x)$ is defined on all $I$ and $y(x) \in U$, and finally
\begin{equation}
    \forall x \in I \qquad \nabla_y \phi(x,y(x)) \in GL_d(\R).
\end{equation}

\myindent It is thus natural, in order to estimate $\mathcal{I}(\lambda)$, to first factorise it as
\begin{equation}\label{firstfactorisation}
    \mathcal{I}(\lambda) = \int_I  \left(\int_U  e^{i\lambda \phi_x(y)} a_x(y) dy\right) dx
    =: \int_I I(x,\lambda) dx,
\end{equation}
where, for any function $f(x,y)$, and any fixed $x \in I$, we define
\begin{equation}\label{defphix}
    f_x : y \in U \to f(x,y).
\end{equation}

\myindent Indeed, the stationary phase Lemmalready yields that
\begin{equation}
    I(x,\lambda) = \left(\frac{2\pi}{\lambda}\right)^{\frac{d}{2}}\frac{e^{i\lambda \phi(x,y(x))}}{ |\det( \nabla^2 \phi(x,y(x)))|^{\frac{1}{2}}} e^{i\frac{\pi}{4} sgn(\nabla^2\phi(x,y(x)))} a(x,y(x)) + O_{\phi,a}(\lambda^{-\frac{d}{2}- 1}).
\end{equation}

\myindent In particular, we find that
\begin{equation}\label{firstgaininlambda}
    \mathcal{I}(\lambda) = \lambda^{-\frac{d}{2}} \int_I e^{i\lambda \phi(x,y(x))} b(x) dx + O_{\phi,a,\mathcal{U}} (\lambda^{-\frac{d}{2} - 1}),
\end{equation}
where 
\begin{equation}
    b(x) := \left(2\pi\right)^{\frac{d}{2}}\frac{e^{i\frac{\pi}{4} sgn(\nabla^2\phi(x,y(x)))}}{ |\det( \nabla^2 \phi(x,y(x)))|^{\frac{1}{2}}}  a(x,y(x)).
\end{equation}

\myindent Hence, we have reduced the analysis to the asymptotic analysis of a 1D oscillatory integral, where the phase function is the \textit{1D remaining phase function}, which we define as
\begin{equation}\label{defphi1D}
    \phi^{1D}(x) := \phi(x,y(x)).
\end{equation}

\myindent Now, we already know that this phase is \textit{stationary} at the point $x_0$. Indeed, since $\nabla_{x,y}\phi(x_0,y_0) = 0$ by hypothesis, there holds
\begin{equation}
    (\phi^{1D})'(x_0) = \partial_x \phi(x_0,y_0) + \nabla_y \phi(x_0,y_0) \cdot y'(x_0) = 0.
\end{equation}

\myindent We are interested in the case where, even if $\phi$ is degenerate at $(x_0,y_0)$, there are still some oscillations of $\phi$ in the $x$ direction, or, more precisely, of $\phi^{1D}$. Hence, we will study the case where $\phi^{1D}$ is of \textit{finite type} at $x_0$, i.e. the case where there exists an order $p \geq 2$ such that
\begin{equation}
    (\phi^{1D})^{(p)}(x_0) \neq 0.
\end{equation}

\myindent Observe that, thanks to the Van der Corput Lemma \ref{VdCthm}, this finally ensures, thanks to \ref{firstgaininlambda} that 
\begin{equation}
    |\mathcal{I}(\lambda)| = O_{\phi,a,\mathcal{U}}(\lambda^{-\frac{d}{2} - \frac{1}{p}}).
\end{equation}

\myindent Moreover, this upper bound is \textit{optimal} in terms of powers of $\lambda$, since one can actually improve the upper bound given by the Van der Corput Lemma into an asymptotic expansion (see \cite{stein1993harmonic}).

\myindent We remark that there also holds
\begin{equation}
    (\phi^{1D})''(x_0) = 0.
\end{equation}

\myindent Indeed, one can compute that
\begin{equation}
    (\phi^{1D})''(x) = \frac{\det(\nabla_{x,y}^2 \phi(x,y(x)))}{\det(\nabla_y^2 \phi(x,y(x)))},
\end{equation}

and $\phi$ is degenerate at $(x_0,y_0)$ be hypothesis. Hence, the order $p$ is actually greater than or equal to $3$.

\quad

\myindent Now, this reasoning, which can be seen as a mix between Theorem \ref{VdCthm} and Theorem \ref{hormstatphase}, gives the order of $\mathcal{I}(\lambda)$, in terms of powers of $\lambda$. However, as we have mentioned in paragraph \ref{subsubsec32Micro}, this is not enough for the analysis, since we will need to bound the implicit constant explicitly in terms of $\phi$ and $a$.

\myindent In order to obtain a more quantitative upper bound, it is natural to try to use, or adapt, the quantitative upper bound given by Theorem \ref{ABZthm}. However, using the notations introduced above, this theorem only yields a bound 
\begin{equation}\label{boundIxlambda}
    |I(x,\lambda)| \leq C(\phi,a) \lambda^{-\frac{d}{2}},
\end{equation}
where $C(\phi,a)$ is explicit. Hence, in this upper bound, we loose any information regarding the remaining phase function $\phi^{1D}$, i.e. we loose the fact that $I(x,\lambda)$ is an oscillatory function at the main order. Now, it is natural to try and fix that issue by changing the factorisation \eqref{firstfactorisation} into the factorisation
\begin{equation}
    \mathcal{I}(\lambda) = \int_I e^{i\lambda \phi^{1D}(x)} \left(\int_U e^{i\lambda(\phi_x(y) - \phi^{1D}(x))} a_x(y) dy \right) dx =: \int_I e^{i\lambda \phi^{1D}(x)} J(x,\lambda) dx,
\end{equation}
where we extract the oscillatory part of $I(x,\lambda)$. If we try and apply the Van der Corput Lemma \ref{VdCthm}, we will obtain
\begin{equation}
    |\mathcal{I}(\lambda)| \leq C(I,p) \lambda^{-\frac{1}{p}}(\sup_{x\in I} |J(x,\lambda)| + |J'(x,\lambda)|).
\end{equation}

\myindent Now, $J(x,\lambda)$ will satisfy the same bound \eqref{boundIxlambda} than $J(x,\lambda)$, so there holds
\begin{equation}
    C(I,p) \lambda^{-\frac{1}{p}}\sup_{x\in I} |J(x,\lambda)|  \leq C(I,p,\phi,a) \lambda^{-\frac{d}{2} - \frac{1}{p}},
\end{equation}
with $C(I,p,\phi,a)$ an explicit constant. However, there is a major issue regarding $J'(x,\lambda)$. Indeed, we compute
\begin{equation}
    J'(x,\lambda) = \lambda \int_U e^{i\lambda(\phi_x(y) - \phi^{1D}(x))} (\partial_x \phi(x,y) - (\phi^{1D})'(x)) a_x(y) dy + \int_U e^{i\lambda(\phi_x(y) - \phi^{1D}(x))}  \partial_x a(x,y) dy.
\end{equation}

\myindent Hence, if we directly apply Theorem \ref{ABZthm} to bound $J'(x,\lambda)$, an additional factor $\lambda$ will appear, and, even if we gain on the explicit constant, we loose on the correct order of $\lambda$ in the estimate.

\myindent Thus, the main difficulty of the analysis, and the new important feature that we add in the proof, is the estimate 
\begin{equation}
     \left|\lambda \int_U e^{i\lambda(\phi_x(y) - \phi^{1D}(x))} (\partial_x \phi(x,y) - (\phi^{1D})'(x)) a_x(y) dy\right| \leq C(\phi,a) \lambda^{-\frac{d}{2}},
\end{equation}
with $C(\phi,a)$ an explicit constant. The mains novelty of the proof is a well-chosen integration by parts, in order to recover a factor $\lambda^{-1}$, before adapting the standard techniques giving upper bounds for oscillatory integrals with a non degenerate phase function, for which we follow a similar approach that \cite{alazard2017stationary}.

\quad

\myindent We now give a precise statement, after introducing a few notations. The proof of the statement, the main novelty of it being the integration by parts mentionned above, is in the Appendix \ref{AppendixC}, since it is quite technical. 

\myindent Let us start with some definitions. Let $I \subset \R$ be an interval (not necessarily open), and let $U \subset \R^d$ be an open set.

\begin{notation}
    For any two integers $0 \leq k \leq l$, and any $f \in \mathcal{C}^{\infty}(U)$, we define
    \begin{equation}
        \mathcal{M}_{k,l}(f) := 1 + \sum_{k \leq |\alpha| \leq l} \sup_{y\in U} |D^{\alpha} f(y)|.
    \end{equation}
\end{notation}

\myindent We generalize this notation for smooth functions of $(1+d)$ variables.
\begin{notation}\label{defM1+d}
    Let $\phi \in \mathcal{C}^{\infty}(\R \times \R^d)$. We denote by $(x,y)$ the points of $\R \times \R^d$. We define for any two integers $0 \leq k \leq l$
    \begin{equation}
    \begin{split}
        \mathcal{M}_{k,l}^{(y)}(\phi) &:= \sup_{x \in I} \mathcal{M}_{k,l}(\phi_x)
        = 1 + \sum_{k\leq |\alpha| \leq l} \sup_{(x,y) \in I\times U} |D^{\alpha}_y \phi(x,y)|,
    \end{split}
    \end{equation}
    where $\phi_x$ is defined by \eqref{defphix}.
\end{notation}

\begin{definition}\label{defHandN}
    Let $\phi \in \mathcal{C}^{\infty}(\R\times \R^d)$. Assume that, for all $x\in I$, the function $\phi_x$ (defined in \eqref{defphix}) has one, and only one, stationary point $y(x) \in U$ at which its Hessian
    \begin{equation}\label{defHx}
        H(x) := (\nabla_y^2 \phi)(x,y(x))
    \end{equation}
    is nondegenerate (in particular,$ x \mapsto y(x)$ is a smooth function). We define
    \begin{equation}
    \begin{split}
        &\mathcal{D}(\phi) := \inf_{x\in I} |\det H(x)| \\
        &\mathcal{N}(\phi) := \sup_{x \in I} \|H(x)^{-1} \|
    \end{split}
    \end{equation}
\end{definition}

\begin{definition}\label{defadapted}
    Let $d\geq 1$ be an integer, and let $B = B[C,r]$ be the closed ball of center $C \in \R^d$ and of radius $r > 0$. We say that $\zeta \in \mathcal{C}^{\infty}(\R^d)$ is a smooth localizer which is adapted to the ball $B$ if there holds 
    
    \myindent i. First, $0\leq \zeta \leq 1$.
    
    \myindent ii. Second, 
    \begin{equation}
        \zeta(y) = \begin{cases}
            1 \qquad y \in B[C,\frac{r}{2}] \\
            0 \qquad y \notin B[C,r]
        \end{cases}.
    \end{equation}
    
    \myindent iii. Third, for any integer $k \geq 1$,
    \begin{equation}
        \left\| \nabla^k \zeta \right\|_{L^{\infty}(\R^d)} \lesssim_d r^{-k}.
    \end{equation}
\end{definition}

\begin{theorem}\label{mixedVdCABZ}
	Let $I$ be an interval of $\R$, and $U$ be an open subset of $\R^d$. Let $\phi(x,y)$ be a smooth phase function defined in a neighborhood of $I\times U$. Let $a \in \mathcal{C}_0^{\infty}(\R\times \R^d)$. Assume that, with the notation \eqref{defphix}, for all $x\in I$,
	\begin{equation}
		\phi_x : y \mapsto \phi(x,y)
	\end{equation}
	has a unique stationary point $y(x)\in U$, at which $H(x)$ defined by \eqref{defHx} is nondegenerate. 
	
	\myindent Assume moreover that, for some integer $p\geq 1$, the 1D remaining phase function $\phi^{1D}$ defined by \eqref{defphi1D} satisfies the Property $(VdC)_p$ (see definition \ref{VdCN}) with constants $C,c$.
	
	\myindent Let $\zeta(x,y)$ be a smooth function such that, for all $x \in I$, $\zeta(x,\cdot)$ is a smooth localizer adapted to the ball (see Definitions \ref{defadapted}, \ref{defM1+d}, and \ref{defHandN})
    \begin{equation}\label{defballmixedvdcabz}
        \mathcal{B}(x) := \left\{y\in U \ \text{such that} \qquad |y - y(x)| \leq \frac{1}{2} \left(\mathcal{M}_{3,3}^{(y)}(\phi)\mathcal{N}(\phi)\right)^{-1} \right\}.
    \end{equation}

    \myindent Define the oscillatory integral
	\begin{equation}
		\mathcal{I}(\lambda) := \int_{I\times U} e^{i\lambda\phi(x,y)} \zeta(x,y) a(x,y) dx dy.
	\end{equation}
	
	\myindent Assume finally that
	\begin{equation}\label{technicalhyp}
	    \left(\mathcal{N}(\phi)\right)^2\mathcal{M}_{2,d+4}^{(y)}(\phi) \leq \lambda.
	\end{equation}
	
	\myindent Then there holds
    \begin{multline}\label{quantitativeboundmixedABZVdC}
        |\mathcal{I}(\lambda)|\lesssim_d \lambda^{-\frac{d}{2} - \frac{1}{p}} c^{-\frac{1}{p}} \left(1 + |I|\left(1+ \frac{C}{c}\right)\right)^2 \left(\mathcal{D}(\phi)\right)^{-1}\left(\mathcal{N}(\phi)\right)^{ 2} \left(\mathcal{M}_{2,d+4}^{(y)}(\phi)\right)^{\frac{d}{2} + 1}\\
        \times \mathcal{M}_{1,d+3}^{(y)}(\partial_x \phi) \left(\mathcal{M}_{0,d+2}^{(y)}(a) + \mathcal{M}_{0,d+1}^{(y)}(\partial_x a)\right).
    \end{multline}
\end{theorem}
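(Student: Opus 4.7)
The strategy is stationary phase in $y$ combined with Van der Corput in $x$, interlocked through the identity $(\phi^{1D})'(x) = \partial_x\phi(x,y(x))$ that follows from the stationarity of $\phi_x$ at $y(x)$. I begin by factoring
\begin{equation*}
\mathcal{I}(\lambda) = \int_I e^{i\lambda\phi^{1D}(x)}\,J(x,\lambda)\,dx,\qquad J(x,\lambda) := \int_U e^{i\lambda(\phi(x,y)-\phi^{1D}(x))}\zeta(x,y)a(x,y)\,dy,
\end{equation*}
and applying Proposition \ref{usefulVdC} to the outer integral using the $(VdC)_p$ property of $\phi^{1D}$. This reduces the theorem to proving $O(\lambda^{-d/2})$ bounds, with the stated explicit constants, on $\sup_{x\in I}|J(x,\lambda)|$ and on $\int_I|\partial_x J(x,\lambda)|\,dx$.

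The sup bound is a direct application of Theorem \ref{ABZthm} to the inner phase $y\mapsto \phi(x,y)-\phi^{1D}(x)$: its unique stationary point in $U$ is $y(x)$, the radius of $\mathcal{B}(x)$ in \eqref{defballmixedvdcabz} is the scale on which the perturbed Hessian $\nabla_y^2\phi(x,y)$ remains invertible (differing from $H(x)$ by at most $\tfrac{1}{2}\mathcal{N}(\phi)^{-1}$), and the hypothesis \eqref{technicalhyp} is the smallness condition required to run ABZ. Differentiating $J$ produces a tame term $\int e^{i\lambda(\phi-\phi^{1D})}\partial_x(\zeta a)\,dy$ plus a dangerous term $i\lambda\int e^{i\lambda(\phi-\phi^{1D})}(\partial_x\phi - (\phi^{1D})')\zeta a\,dy$ carrying a surplus factor $\lambda$. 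The key observation — and the only genuinely new ingredient — is that $(\phi^{1D})'(x)=\partial_x\phi(x,y(x))$, so by Taylor's formula
\begin{equation*}
\partial_x\phi(x,y) - (\phi^{1D})'(x) = R(x,y)\cdot(y-y(x)),\qquad R(x,y) := \int_0^1\nabla_y\partial_x\phi\bigl(x,y(x)+t(y-y(x))\bigr)\,dt.
\end{equation*}
On $\mathcal{B}(x)$ the averaged Hessian $\tilde H(x,y):=\int_0^1\nabla_y^2\phi(x,y(x)+t(y-y(x)))\,dt$ is invertible and $\nabla_y\phi(x,y)=\tilde H(x,y)(y-y(x))$; setting $\vec v := \tilde H^{-T} R$ yields $\partial_x\phi-(\phi^{1D})' = \vec v\cdot\nabla_y\phi$ and hence
\begin{equation*}
i\lambda\bigl(\partial_x\phi-(\phi^{1D})'\bigr)e^{i\lambda(\phi-\phi^{1D})} = \vec v\cdot\nabla_y\!\left(e^{i\lambda(\phi-\phi^{1D})}\right).
\end{equation*}
An integration by parts in $y$ — whose boundary terms vanish thanks to $\zeta$ — converts the dangerous piece into $-\int_U e^{i\lambda(\phi-\phi^{1D})}\nabla_y\cdot(\vec v\,\zeta a)\,dy$, the factor $\lambda$ is gone, and a second application of ABZ delivers the required $O(\lambda^{-d/2})$ bound.

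Combining these ingredients via Proposition \ref{usefulVdC} yields \eqref{quantitativeboundmixedABZVdC}. The hardest part of the proof is not the qualitative scheme but the quantitative bookkeeping: one must verify that every constant coming out of ABZ (the stationary contribution as well as the non-stationary tail obtained by integrations by parts in $y$ away from $y(x)$) is bounded by a fixed polynomial in $\mathcal{D}(\phi)^{-1}$, $\mathcal{N}(\phi)$, and the $\mathcal{M}$ quantities, uniformly in $x\in I$; and in the $\partial_x J$ analysis one must check that the derivatives of $\tilde H^{-1}$ and $R$ stay within the regularity class measured by $\mathcal{M}^{(y)}_{2,d+4}(\phi)$ and $\mathcal{M}^{(y)}_{1,d+3}(\partial_x\phi)$, which dictates the precise orders appearing in the final estimate. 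The squared length factor $(1+|I|(1+C/c))^2$ arises because estimating $\int_I|\partial_x J|\,dx$ via the pointwise ABZ bound costs one extra length, and then Proposition \ref{usefulVdC} contributes the VdC length factor on top.
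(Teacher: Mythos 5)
Your overall scheme is the paper's: the same factorization $\mathcal{I}(\lambda)=\int_I e^{i\lambda\phi^{1D}(x)}J(x,\lambda)\,dx$ with Proposition \ref{usefulVdC} applied to the outer integral, and the same key device for the dangerous term of $\partial_x J$, namely writing $\partial_x\phi(x,y)-\partial_x\phi(x,y(x))=v(x,y)\cdot\nabla_y\phi(x,y)$ with $v=M^{-1}w$ built from the averaged mixed Hessian and the averaged Hessian $M(x,y)$ (invertible on the ball \eqref{defballmixedvdcabz} by a Neumann-series argument), followed by an integration by parts in $y$ to absorb the surplus factor $\lambda$. Up to that point the proposal matches Appendix \ref{AppendixC} essentially verbatim.

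The gap is in the final quantitative step. You close both the $\sup_x|J|$ bound and the post-integration-by-parts bound by invoking Theorem \ref{ABZthm} as a black box, asserting that its constants are ``a fixed polynomial'' in $\mathcal{D}(\phi)^{-1}$, $\mathcal{N}(\phi)$ and the $\mathcal{M}$'s. That is true but does not prove the theorem as stated: the constant in Theorem \ref{ABZthm} scales like $|K_{\eps_0}|\,a_0^{-1}\delta^{-d}\bigl(1+\mathcal{M}_{d+2}^{d/2}\bigr)\mathcal{N}_{d+1}$ with $\delta\sim a_0\bigl(\mathcal{M}_2^{\,d-1}\mathcal{M}_3\bigr)^{-1}$, i.e. it costs roughly $\mathcal{D}(\phi)^{-(d+1)}$ and much higher powers of the $\mathcal{M}$'s than the exponents $\mathcal{D}(\phi)^{-1}\mathcal{N}(\phi)^{2}\bigl(\mathcal{M}_{2,d+4}^{(y)}(\phi)\bigr)^{d/2+1}$ claimed in \eqref{quantitativeboundmixedABZVdC}, so the stated bound cannot be reached this way. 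Relatedly, you misplace the role of \eqref{technicalhyp}: Theorem \ref{ABZthm} has no smallness hypothesis on $\lambda$ beyond $\lambda\ge 1$, so \eqref{technicalhyp} is not ``the condition required to run ABZ''. In the paper's proof it enters only at the end of a from-scratch ABZ-style argument for the model integral $\int e^{i\lambda\phi(x,y)}b(y)\,dy$: one splits via a cutoff $\chi(\eps^{-1}\nabla_y\phi)$, bounds the near-stationary piece by the measure of $\{|\nabla_y\phi|\lesssim\eps\}$ after the change of variables $w=H(x)(y-y(x))$, integrates by parts with $X=\nabla_y\phi\cdot\nabla_y/|\nabla_y\phi|^2$ on the complement, and then optimizes $\eps$; hypothesis \eqref{technicalhyp} decides which term dominates and permits the choice $\eps=\lambda^{-1/2}\bigl(\mathcal{M}_{2,d+4}^{(y)}(\phi)\bigr)^{1/2}$ that yields exactly the exponents in \eqref{quantitativeboundmixedABZVdC} (without it one only gets the weaker variant of Remark \ref{resultnotechnicalhypo}). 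So to complete your argument you must replace the black-box citations of Theorem \ref{ABZthm} by this explicit splitting-and-optimization argument, carried out uniformly in $x\in I$ for the three integrands $\zeta a$, $\partial_x(\zeta a)$ and $\nabla_y\cdot(v\,\zeta a)$, tracking the derivatives of $M^{-1}$ and $w$ through the bounds on $\mathcal{M}_{1,d+3}^{(y)}(\partial_x\phi)$ and $\mathcal{M}_{2,d+4}^{(y)}(\phi)$.
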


\myindent We put the proof of this Lemma in Appendix \ref{AppendixC} since it is quite technical. Also in the appendix, in Remark \ref{resultnotechnicalhypo}, we will further discuss the Theorem, and, in particular, the technical hypothesis \eqref{technicalhyp}.

\begin{remark}
    Observe that, in the hypotheses of the theorem, we do not ask that $\phi$ actually has a stationary point on $I\times U$. More precisely, the theorem requires to compute the $y$ stationary points of $\phi$ (i.e. the points where $\nabla_y \phi = 0$), but allows for some flexibility on the $x$ stationary points of $\phi$. Indeed, since we only ask that the 1D remaining phase function satisfies the Property $(VdC)_p$, one typically needs only prove that the first $p$ derivatives of $\phi^{1D}$ cannot vanish at the same time.
\end{remark}

\begin{remark}
    In the case where $\phi$ actually has a stationary point, this type of degenerate phase function has been extensively studied by Arnold, and actually constitute a $A_p$ singularity, see \cite{arnold1976local}.
\end{remark}

\begin{remark}\label{nonisotropicmixedABZVdC}
    We have stated the theorem in an isotropic setting, since it is easier as one needs only compute the quantities $\mathcal{M}_{3,3}^{(y)}(\phi)$ and $\mathcal{N}(\Phi)$ to know how close to the $y$ stationary point one needs to localize. However, the proof that we have given extends, with the same conclusion \eqref{quantitativeboundmixedABZVdC}, with the following : let $\zeta(x,y)$ be a smooth function such that $\zeta(x,\cdot)$ is a smooth localizer around $y(x)$ such that, on the one hand, 
    \begin{equation}\label{firstconditionnonisotropic}
        \begin{split}
            \mathcal{M}_{0,l}^{(y)}(\zeta) &\lesssim \left(\mathcal{M}_{3,3}^{(y)}(\phi)\mathcal{N}(\phi)\right)^l \\
            \mathcal{M}_{0,l}^{(y)}(\partial_x\zeta) &\lesssim \sup_x |y'(x)| \left(\mathcal{M}_{3,3}^{(y)}(\phi)\mathcal{N}(\phi)\right)^{l+1},
        \end{split}
    \end{equation}
    and, on the other hand,
    \begin{equation}\label{secondconditionnonisotropic}
        \forall (x,y) \in supp(\zeta) \qquad \|M(x,y) - H(x)\| \leq \frac{1}{2} \|H(x)^{-1}\|^{-1},
    \end{equation}

    where $M(x,y)$ is defined by \eqref{definitionofM}. Indeed, in the proof, one needs only prove that $M(x,y)$ satisfies this inequality on the support of $\zeta$, since its invertibility follows directly, and the coercivity of $\phi$ on the support of $\zeta$ as well. The computations are then the same since $\zeta$ satisfies the same estimates.

    \myindent The interest is that \eqref{secondconditionnonisotropic} can hold on larger, non isotropic, neighborhoods than the ball \eqref{defballmixedvdcabz}.
\end{remark}

\section{The case $\bullet = (H)$ : analysis of the phase}\label{secHormPhase}

\myindent In this section, we derive a rigorous analysis of the phase $\Psi$ appearing in the oscillatory integral $\mathcal{I}_{\lambda,\delta,i}^{(H)}(\sigma,A,B)$, expressed in the form \eqref{oscintexpr}, in the case $\bullet = (H)$, i.e. when $s_0 = t_0 = 0$. In particular, we will prove those of the results mentioned in paragraph \ref{subsubsec33Micro} which concerns this case. Since we consider this section as the core of the technical part of Theorem \ref{mainthm}, once it has been reduced to an estimate of oscillatory integrals as in Proposition \ref{intermediatethm}, we will detail essentially all the proofs, and try to show the important features of the geometry of the oscillatory integral analysis.

\subsection{General description}\label{subsec1HormPhase}

\myindent Following the strategy presented in Paragraph \ref{subsubsec32Micro}, let us decompose the phase into
\begin{equation}
    \Psi(\sigma,A,B,u,s,t,\xi) = -\scal{h(u)}{(A,B)} + \Psi_0(\sigma,u,s,t,\xi),
\end{equation}
where we recall that, thanks to the classification Proposition \ref{classification} there holds that
\begin{equation}
    \Psi_0(\sigma,u,s,t,\xi) = -\scal{h(u)}{(s,t)} + sq_1(\sigma,\xi) + \varphi((t,\sigma,),(0,\sigma), \xi)
\end{equation}
is independent of $A,B$. Here, $\varphi$ is defined by Theorem \ref{Hormthm}. Hence, any phase analysis using the $(s,t,\xi)$ gradient of $\Psi$ is independent of $(A,B)$. 

\myindent In order to navigate Section \ref{secHormPhase} in general, we start with a detailed presentation of the following computations in Paragraph \ref{subsubsec11HormPhase}.

\subsubsection{Outline of the analysis}\label{subsubsec11HormPhase}

\myindent In this paragraph, we give a few more details on the geometry of the set
\begin{equation}\label{defOsigma}
    \mathcal{O}_{\sigma} := \{(u,s,t,\xi)\in (\R/\ell \Z)\times 2\tilde{\mathcal{R}}_i^{(H)}\times \R^2 \qquad \text{such that} \qquad \nabla_{s,t,\xi} \Psi_0(\sigma,u,s,t,\xi) = 0\}.
\end{equation}

\myindent From the analysis of Paragraph \ref{subsubsec33Micro}, we recall that the geometry of this set is computed first by understanding the set where the \textit{geometric equation} \eqref{nablaetaphieq0} holds, i.e.
\begin{equation}
    \left\{\left(s,t,\frac{\xi}{|\xi|}\right) \qquad \nabla_{\xi} \Psi_0 \left(\sigma,s,t,\frac{\xi}{|\xi|}\right) = 0\right\},
\end{equation}
and then by applying the \textit{correspondence equation} $\nabla_{s,t}\Psi_0 = 0$ (see \eqref{nablastPsi0eq0}), which fixes the value of $u$ and $|\xi|$ as functions of $\left(s,t,\frac{\xi}{|\xi|}\right)$. As we will see, $|\xi|$ is never too small (near $0$) or too large (near infinity) on $\mathcal{O}_{\sigma}$. In particular, we may restrict the analysis to an annulus $|\xi| \in [\beta^{-1},\beta]$ provided $\beta > 0$ is large enough. The interest is double : on the one hand, this enables to restrict the $(u,s,t,\xi)$ domain of integration to a \textit{compact} domain of integration. On the other hand, and perhaps more importantly, this enables to make a \textit{polar change of coordinate} on $\xi$, which we decompose into an angle variable $w \in S^1$ and a modulus variable $r$. We will moreover use a particular polar decomposition, which is well adapted to the special form of the phase. These arguments will be presented in Paragraph \ref{subsubsec12HormPhase} In particular, in the rest of the present paragraph, we use the Notation \ref{defwrho}.

\quad

\myindent Now, regarding the computation of $\mathcal{O}_{\sigma}$, we recall that $\nabla_{w,r}\Psi_0$ vanishes at $(\sigma,s,t,w)$ if and only if there exist a bicharacteristic joining $(t,\sigma)$ and $(0,\sigma)$ in time $s$, and its direction is $\R_+^* \nabla_x \varphi((t,\sigma),(0,\sigma),w)$. Now, we recall that there are essentially two cases

\myindent i. For $t=0$, studied in Section \ref{subsec2HormPhase}, we consider bicharacteristics joining $(0,\sigma)$ to itself : they have length $s = 0$ and \textit{any} direction $w\in S^1$ is suitable. Hence, $s = t = 0$ and $w\in S^1$ arbitrary yields a zero of $\nabla_{w,r}\Psi_0$. In Paragraph \ref{subsubsec21HormPhase}, we will prove this result directly thanks to the asymptotic formula \eqref{asdevphi}, and, using moreover the \textit{correspondence equation}, we will describe a first subset of $\mathcal{O}_{\sigma}$ which is an (immersed) \textit{circle}, hence its name $\mathcal{C}_{\sigma}$. While this circle is naturally parameterized by $w$, in view of Theorem \ref{mixedVdCABZ}, we will moreover explain how to reparameterize $\mathcal{C}_{\sigma}$ by $u$ in Paragraph \ref{subsubsec22HormPhase}. Finally, we will study the $(s,t,w,r)$ Hessian of $\Psi_0$ on $\mathcal{C}_{\sigma}$ in Paragraph \ref{subsubsec23HormPhase}.

\myindent ii. For $t$ small but non zero, studied in Section \ref{subsec3HormPhase}, there is one, and exactly one, bicharacteristic joining $(t,\sigma)$ to $(0,\sigma)$. Its length is a continuous function of $t$, vanishing at $t = 0$. Moreover, the choice of direction of the bicharacteristics gives rise to two zeros. Observe that, in any case, the direction $\nabla_x \varphi$ approaches smoothly the \textit{horizontal axis} as $t \to 0$. Hence, the elements of $\mathcal{O}_{\sigma}$ with $t\neq 0$ can be seen as belonging to curves, naturally parameterized by $t$, which bifurcate from the circle $\mathcal{C}_{\sigma}$ at the points $P_0, P_{\pi} \in \mathcal{C}_{\sigma}$ corresponding respectively to $w = 0$ and $w = \pi$ (after using the correspondence equation to determine $r$ and $u$), hence their names $\mathcal{E}_{\sigma,0}$ and $\mathcal{E}_{\sigma,\pi}$. We will prove the existence and uniqueness of those exceptional branches of $\mathcal{O}_{\sigma}$ using \eqref{asdevphi} in Paragraph \ref{subsubsec31HormPhase}. While these curves are naturally parameterized by $t$, in order to apply Theorem \ref{mixedVdCABZ}, we will study how to reparameterize them by the variable $u$ when $\sigma \neq 0$ in Paragraph \ref{subsubsec32HormPhase}. We will afterwards in Paragraph \ref{subsubsec33HormPhase} study the $(s,t,w,r)$ Hessian of the phase on the branches. We will also explain how to deal with the case $\sigma$ close to the equator. Finally, when it is indeed possible to parameterize by $u$ (a portion of) the curves $\mathcal{E}_{\sigma,\alpha}$, the framework of Theorem \ref{mixedVdCABZ} requires an analysis of the \textit{1D remaining phase function}. This analysis is performed in Paragraph \ref{subsubsec34HormPhase}.

\quad

\myindent Now, as already mentioned in Paragraph \ref{subsubsec33Micro} a crucial feature of the analysis, which, as we will see, is linked to deep geometrical properties, is that $\mathcal{O}_{\sigma}$ is \textit{not} a smooth curve. Precisely, the smooth curves $\mathcal{C}_{\sigma}$ and $\mathcal{E}_{\sigma,0}$ (resp $\mathcal{E}_{\sigma,\pi}$) are \textit{transervsal} at their point of  intersection $P_0$ (resp $P_{\pi}$). One can guess that this totally prevents the strategy of Theorem \ref{mixedVdCABZ} from applying by isolating the variable $u$ near these points. Indeed, the fact that $\mathcal{O}_{\sigma}$ is not a manifold at $P_{\alpha}$ implies that the Hessian $\nabla^2_{s,t,w,r}\Psi_0$ is \textit{degenerate} at $P_{\alpha}$. Hence, one needs to resort to a very different argument near $P_{\alpha}$, and actually to isolate the variable $w$ instead of $u$. This part is probably the most delicate of the analysis, and we conjecture that it is actually responsible for the dominant term in the integral $\mathcal{I}_{\lambda,\delta,i}^{(H)}(\sigma,A,B)$, and we will explain it thoroughly in Section \ref{subsec4HormPhase}.

\begin{remark}
    From a geometrical point of view, the branching points $P_0$ and $P_{\pi}$ are directly linked to the fact that the Lagrangian torus $(\vec{p})^{-1}(\lambda,\mu)$, where $\frac{\mu}{\lambda} = f(\sigma)$ (see \eqref{deff-1lambdamu}) has a \textit{caustic} at the point $(0,\sigma)$ (see \cite{duistermaat1974oscillatory}). Hence, it is not surprising that they are responsible for the most delicate part of the analysis. Let us observe that the caustic is of fold-type, as was studied for example in \cite{colin1976regularite}.
\end{remark}

\myindent Overall, there are thus three regions of interest on $\mathcal{O}_{\sigma}$, as depicted in Figure \ref{Threeregion}.

\begin{figure}[h]
\includegraphics[scale = 0.5]{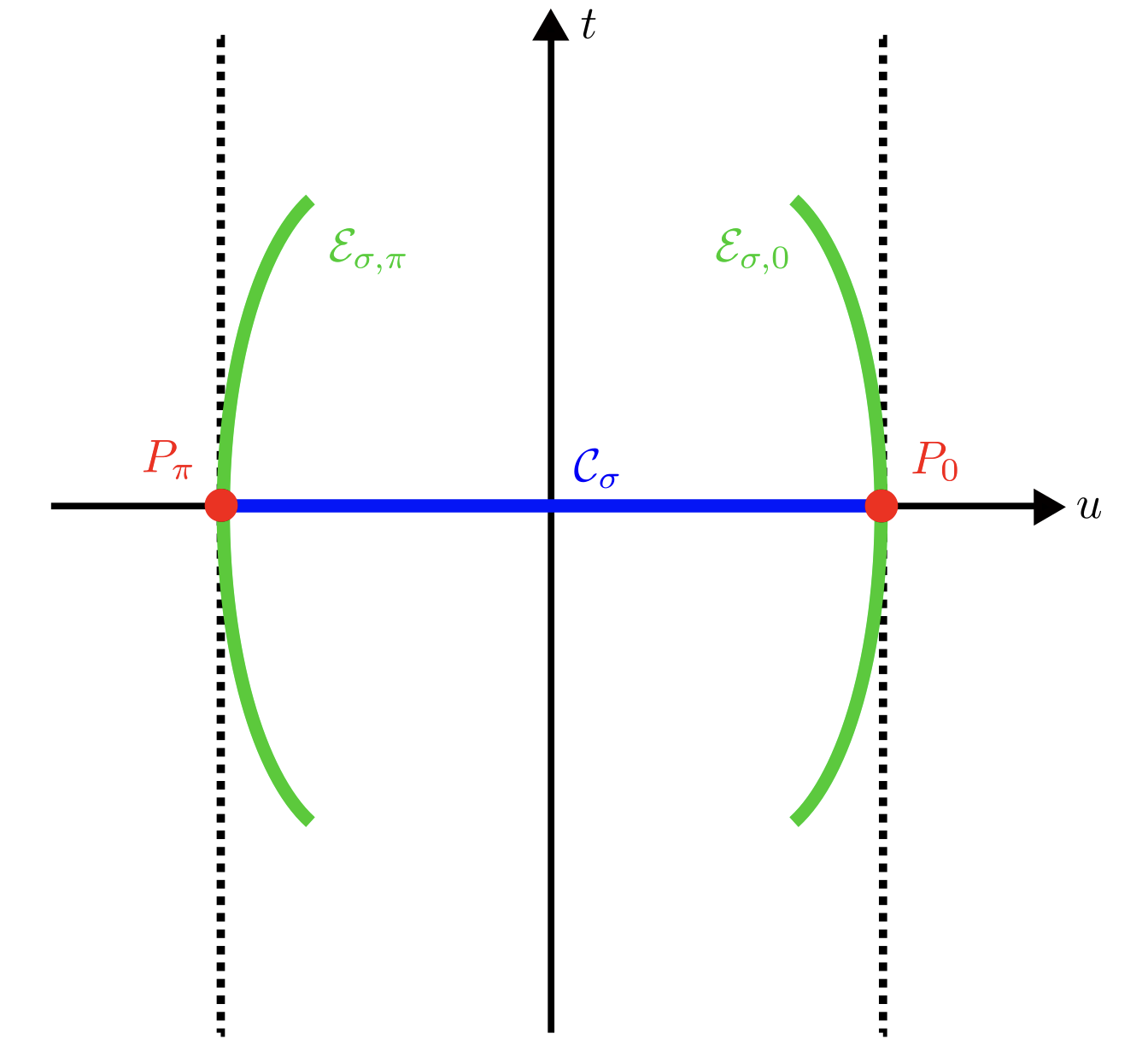}
\centering
\caption{The three region of interest of $\mathcal{O}_{\sigma}$ when $\bullet = (H)$}
\label{Threeregion}
\end{figure}

\subsubsection{Reducing to a compact domain of integration with polar coordinates}\label{subsubsec12HormPhase}

\myindent The first observation is that, thanks to the \textit{correspondence equation} \eqref{nablastPsi0eq0} we can deal with those $\xi$ which are either close to zero, or large enough, which are outside of $\mathcal{O}_{\sigma}$ i.e. at which the phase is non stationary. In particular, this will help us reduce the domain of integration on which there are singularities to a compact set where $|\xi| \sim 1$. 

\myindent Using the analysis of Paragraph \ref{subsubsec33Micro}, we know that, on the set where there are singularities, the value of $|\xi|$ is fixed by the correspondence equation \eqref{nablastPsi0eq0}. In this case, the equation \eqref{nablastPsi0} for the $(s,t)$ gradient can be written as
\begin{equation}\label{nablastpsi0expr}
    \begin{split}
    \nabla_{s,t}\Psi_0 &= -h(u) + \begin{pmatrix}
        q_1(\sigma,\xi)\\
        \partial_{\theta}\varphi((t,\sigma,),(0,\sigma),\xi)
    \end{pmatrix}\\
    &= -h(u) + \begin{pmatrix}
        q_1(\sigma,\nabla_x\varphi)\\
        q_2(\sigma,\nabla_x\varphi)
    \end{pmatrix}.
    \end{split}
\end{equation}

\myindent Now, we have already argued that this can vanish if and only if \eqref{eqmodulusxi} holds, i.e. in this case if and only if
\begin{equation}
    p_1(\sigma,\nabla_x\varphi((t,\sigma,),(0,\sigma),\xi)) = 1,
\end{equation}
and $u$ is fixed by the equation $h(u) =\begin{pmatrix}
        q_1(\sigma,\nabla_x\varphi)\\
        q_2(\sigma,\nabla_x\varphi)
    \end{pmatrix}$. However, thanks to the asymptotic expansion \eqref{asdevphi}, we know that
\begin{equation}\label{devnablaxphi}
    \nabla_x \varphi((t,\sigma,),(0,\sigma),\xi) = \xi + O(t|\xi|).
\end{equation}

\myindent Hence, since $|t| \ll 1$, there holds
\begin{equation}
    p_1(\sigma,\nabla_x \varphi ((t,\sigma,),(0,\sigma),\xi) \sim p_1(\sigma,\xi).
\end{equation}

\myindent In particular, for $\xi$ outside of a neighborhood of $\{\xi, p_1(\sigma,\xi) = 1\}$, the gradient $\nabla_{s,t}\Psi_0$ cannot vanish. Precisely, there holds the following.

\begin{lemma}\label{nicebound!}
    Let $\beta > 0$ be large enough, depending on $\mathcal{S}$. Then, there holds for any $(\sigma,s,t) \in 2\tilde{\mathcal{Q}}_i^{(H)}$, and any $(u,\xi)$,
    \begin{equation}
        |\nabla_{s,t}\Psi_0(\sigma,u,s,t,\xi)|\gtrsim \begin{cases}
            1 \qquad |\xi| \leq 2\beta^{-1} \\
            |\xi| \qquad |\xi| \geq \frac{1}{2}\beta
        \end{cases}.
    \end{equation}
\end{lemma}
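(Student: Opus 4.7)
The plan is to exploit the explicit formula for $\nabla_{s,t}\Psi_0$ given just before the lemma in \eqref{nablastpsi0expr}, namely
\begin{equation*}
\nabla_{s,t}\Psi_0(\sigma,u,s,t,\xi) = -h(u) + \begin{pmatrix} q_1(\sigma,\nabla_x\varphi) \\ q_2(\sigma,\nabla_x\varphi) \end{pmatrix},
\end{equation*}
and to treat this as a competition between a bounded term $h(u)$ on the curve $\gamma$ and a term whose size is comparable to $|\xi|$. The key input is the asymptotic expansion \eqref{asdevphi} which I would specialize to $x = (t,\sigma)$, $y = (0,\sigma)$ to get $\nabla_x\varphi((t,\sigma),(0,\sigma),\xi) = \xi + O(t|\xi|)$. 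Since the rectangle $\mathcal{R}_i^{(H)}$ is centered at $(0,0)$ and chosen small enough in Proposition~\ref{classification}, on $2\tilde{\mathcal{Q}}_i^{(H)}$ we have $|t|$ arbitrarily small, so $|\nabla_x\varphi| = |\xi|(1+O(|t|))$, and in particular $\tfrac{1}{2}|\xi| \le |\nabla_x\varphi| \le 2|\xi|$.

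Next I would record two compactness facts which fix universal constants depending only on $\mathcal{S}$. First, because $\gamma = \{F_2 = 1\}$ is a smooth compact curve which does not meet the origin (as $F_2$ is elliptic of order $2$), there exist $0 < m \le M$ with $m \le |h(u)| \le M$ for all $u \in \R/\ell\Z$. Second, because $q_1$ is an elliptic symbol (Corollary~\ref{q1elliptic}) and $q_2 = p_2$, the map $\xi \mapsto (q_1(\sigma,\xi), q_2(\sigma,\xi))$ is continuous, positively homogeneous of degree $1$, and vanishes only at $\xi = 0$; by homogeneity and compactness of the unit sphere in $\xi$ (and of the compact set in which $\sigma$ lives, namely $\mathcal{I}_i \subset \left]-\frac{L}{2}+\eps, \frac{L}{2}-\eps\right[$), there exist universal constants $0 < c \le C$ with $c|\xi| \le |(q_1,q_2)(\sigma,\xi)| \le C|\xi|$ for all relevant $\sigma$.

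Putting these together and applying both bounds to $\nabla_x\varphi$ in place of $\xi$, we get $\tfrac{c}{2}|\xi| \le |(q_1,q_2)(\sigma,\nabla_x\varphi)| \le 2C|\xi|$, once $|t|$ is small enough. The two regimes then fall out by the triangle inequality: in the regime $|\xi| \le 2\beta^{-1}$, we bound $|(q_1,q_2)(\sigma,\nabla_x\varphi)| \le 4C\beta^{-1}$, so $|\nabla_{s,t}\Psi_0| \ge m - 4C\beta^{-1} \ge m/2$ provided $\beta$ is chosen large enough that $4C\beta^{-1} \le m/2$. In the regime $|\xi| \ge \beta/2$, we bound from below $|(q_1,q_2)(\sigma,\nabla_x\varphi)| \ge \tfrac{c}{2}|\xi|$, giving $|\nabla_{s,t}\Psi_0| \ge \tfrac{c}{2}|\xi| - M \ge \tfrac{c}{4}|\xi|$, provided $\beta$ is chosen so large that $\tfrac{c}{4} \cdot \tfrac{\beta}{2} \ge M$, i.e.\ $\beta \ge 8M/c$. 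Choosing $\beta$ larger than both thresholds yields the claim.

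There is no real obstacle here; the only point that needs care is to make sure the $O(t|\xi|)$ remainder from \eqref{asdevphi} is genuinely controlled uniformly on $2\tilde{\mathcal{Q}}_i^{(H)}$, which is handled by first selecting the Hörmander rectangle $\mathcal{R}_i^{(H)}$ small enough (a freedom we already have in Proposition~\ref{classification}) and only then fixing $\beta$.
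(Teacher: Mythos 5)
Your proof is correct and follows essentially the same route as the paper, which establishes the lemma via the identity \eqref{nablastpsi0expr}, the expansion $\nabla_x\varphi = \xi + O(t|\xi|)$ from \eqref{asdevphi}, and the fact that $-h(u)$ is bounded on $\gamma$ while $(q_1,q_2)(\sigma,\nabla_x\varphi)$ is comparable to $|\xi|$ by ellipticity and homogeneity. Your version merely makes the compactness constants and the two triangle-inequality regimes explicit, which is exactly the quantification the paper leaves implicit.
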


\myindent Hence, outside of a neighborhood of $|\xi| \sim 1$, we will be able to integrate by parts in $(s,t)$ and find that the contribution of this region is $O(\lambda^{-\infty})$, as we will prove in Paragraph \ref{subsubsec11HormQuant} Hence, we now analyse the phase only in the region
\begin{equation}
    |\xi| \in [\beta^{-1},\beta].
\end{equation}

\myindent Observe that this means that $q_1(\sigma,\xi)$ is bounded away from zero and from $\infty$ (indeed, $q_1$ is elliptic, see Corollary \ref{q1elliptic}). In particular, we can make a \textit{polar} change of coordinates in the region $|\xi| \in [\beta^{-1},\beta]$ which is well adapted to the problem.

\begin{figure}[h]
\includegraphics[scale = 0.5]{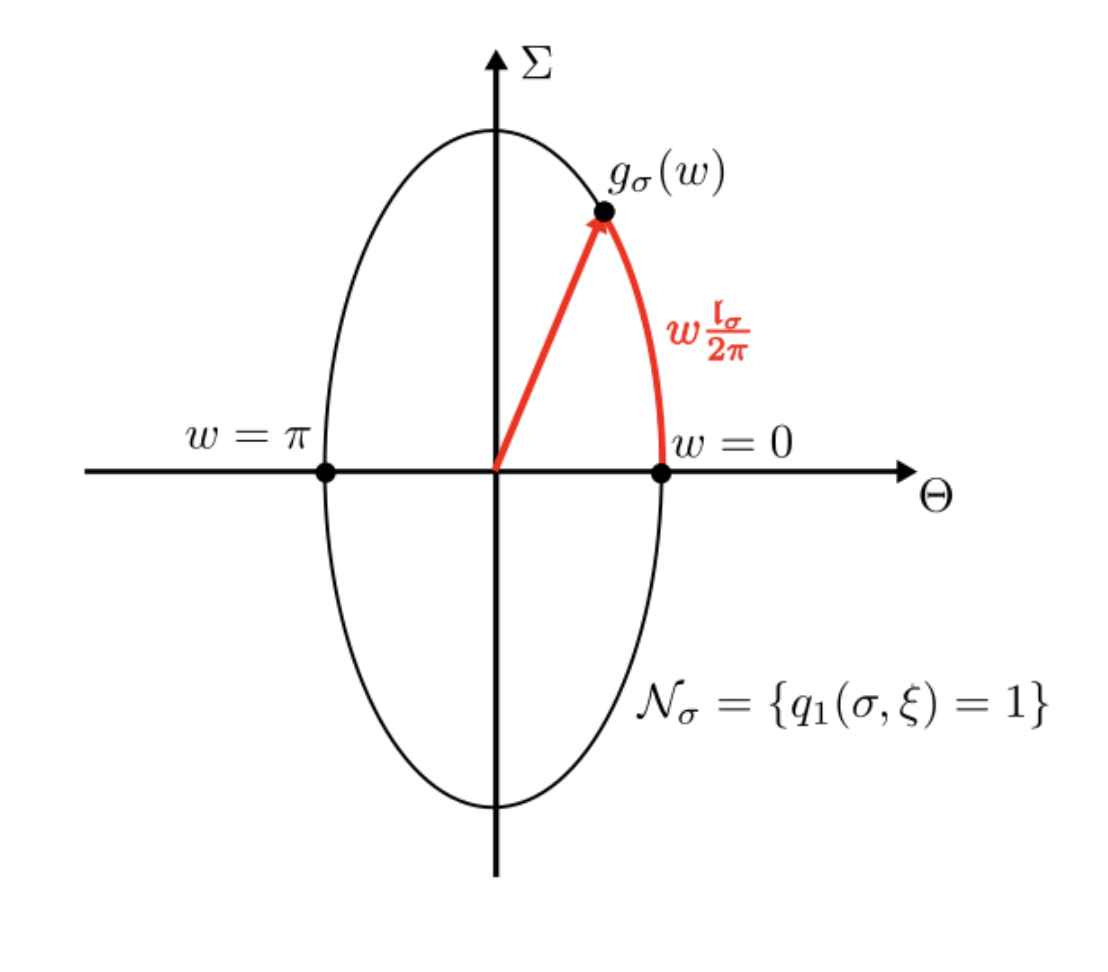}
\centering
\caption{The parameterization of the curve $\mathcal{N}_{\sigma}$}
\label{pictureNsigma}
\end{figure}

\begin{notation}\label{defwrho}
    We parameterize the curve 
    \begin{equation}\label{defNsigma}
        \mathcal{N}_{\sigma} := \{\xi \in \R^2 \qquad q_1(\sigma,\xi) = 1\} \subset \R^2 \backslash\{0\}
    \end{equation}
    by
    \begin{equation}
        w\in S^1 \mapsto g_{\sigma}(w)
    \end{equation}
    positively oriented, and such that $|g_{\sigma}'(w)| = cste =: \frac{\mathfrak{l}_{\sigma}}{2\pi}$ and $g_{\sigma}(0)\in \R_+^* (1,0)$, as in Figure \ref{pictureNsigma}. Here, $\mathfrak{l}_{\sigma}$ is the \textit{length} of the curve $\mathcal{N}_{\sigma}$ in $\R^2$.

    \myindent Vectors $\xi \in \R^2$ can thus be described via the following polar coordinates
    \begin{equation}
        \xi = r g_{\sigma}(w), \qquad r \in \R_+, w \in S^1,
    \end{equation}
    and, if $F(\xi)$ is a function of $\xi$, we use the notation $F(w,r)$ to denote the function $(w,r) \mapsto F(r g_{\sigma}(w))$.
\end{notation}

\myindent The interest of this description is that $\Psi_0$ takes the nice form
\begin{equation}
    \Psi_0(\sigma,u,s,t,w,r) = -\scal{h(u)}{(s,t)} + r\left(s + \varphi((t,\sigma,),(0,\sigma),w)\right),
\end{equation}

where we use the homogeneity of $\varphi$ in the $\xi$ variable and the notation $\varphi((t,\sigma,),(0,\sigma),w) := \varphi((t,\sigma,),(0,\sigma), g_{\sigma}(w))$. The reader may also observe that this form is very close to the expression \eqref{bicharlengthparam}.

\myindent We also define
\begin{equation}\label{defKsigma}
    \mathcal{K}_{\sigma} := \left\{(w,r) \in S^1 \times \R_+;\ \text{such that}\ |rg_{\sigma}(w)| \in [\beta^{-1},\beta]\right\}
\end{equation}

\subsection{The circle $\mathcal{C}_{\sigma}$ of $(s,t,w,r)$ stationary points of $\Psi_0$ with $s = t = 0$}\label{subsec2HormPhase}

\myindent In this section, we define rigorously the first part of the set $\mathcal{O}_{\sigma}$ (see \eqref{defOsigma}), namely we prove that those points $(u,s,t,w,r) \in \mathcal{O}_{\sigma}$ such that $s = t = 0$ form an embedded circle $\mathcal{C}_{\sigma}$, naturally parameterized by $w\in S^1$. Moreover, we study the properties of $\mathcal{C}_{\sigma}$ that we will need for Section \ref{secHormQuant}.

\subsubsection{Definition of $\mathcal{C}_{\sigma}$}\label{subsubsec21HormPhase}

\myindent Let us observe that, at a point $(u,s,t,w,r)$ such that $s = t = 0$, there holds using \eqref{asdevphi}
\begin{equation}
    \nabla_{s,t,w,r} \Psi_0(\sigma,u,0,0,w,r) = \begin{pmatrix}
        -h(u)_1 + r  \\
        -h(u)_2 + r q_2(\sigma,w,1)\\
        0 \\
        0
    \end{pmatrix},
\end{equation}
which vanishes if and only if
\begin{equation}
    h(u) = r\begin{pmatrix}
        1\\
        q_2(\sigma,w,1)
    \end{pmatrix}.
\end{equation}

\myindent Now, the curve $u\mapsto h(u)$ describes the set $F_2 = 1$, where by definition $F_2(q_1,q_2) = p_1^2$. Hence, this equality holds if and only if $p_1(\sigma,w,r) = 1$ i.e. if and only if $r = \frac{1}{p_1(\sigma,w,1)} =: r_{\sigma}(w)$ and $u = u_{\sigma}(w)$ is uniquely defined by $w$. We have thus proved the following lemma.

\begin{lemma}\label{defcirclestatpoints}
    For all $w \in S^1$, and all $\sigma$, let $u_{\sigma}(w)$ and $r_{\sigma}(w)$ be defined by
    \begin{equation}
        \begin{split}
        h(u_{\sigma}(w)) &= \begin{pmatrix}
        q_1(\sigma,w,r_{\sigma}(w))\\
        q_2(\sigma,w,r_{\sigma}(w))
        \end{pmatrix}\\
        &= \frac{1}{p_1(\sigma,w,1)}\begin{pmatrix}
            1\\
            q_2(\sigma,w,1)
        \end{pmatrix},
        \end{split}
    \end{equation}
    so that $w \mapsto u_{\sigma}(w)$ (resp $w \mapsto r_{\sigma}(w)$) is a smooth function from $S^1$ to $\R/\ell\Z$ (resp $\R_+^*$). Then for any $\sigma$ the set of $(s,t,w,r)$ stationary points of $\Psi_0$ with $s = t = 0$, defined by
    \begin{equation}
    \begin{split}
        &\bigg\{(u,s,t,w,r) \in \mathcal{O}_{\sigma} \qquad \text{such that} \qquad s = t = 0 \bigg\} \\
        = &\bigg\{(u,0,0,w,r) \qquad \ \text{such that} \ \nabla_{s,t,w,r}\Psi_0(\sigma,u,0,0,w,r) = 0\bigg\},
    \end{split}
    \end{equation}
    is the circle
    \begin{equation}
        \mathcal{C}_{\sigma} = \bigg\{\left(u_{\sigma}(w),0,0,w,r_{\sigma}(w)\right) \qquad w \in S^1\bigg\}.
    \end{equation}
\end{lemma}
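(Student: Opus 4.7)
The plan is to directly compute the four partial derivatives of $\Psi_0$ at $s = t = 0$ and then solve the resulting algebraic system. Using homogeneity of $q_1$ and $\varphi$ in $\xi$, the polar coordinates of Notation \ref{defwrho} yield
$$\Psi_0(\sigma, u, s, t, w, r) = -\langle h(u), (s,t)\rangle + r\bigl(s + \varphi((t,\sigma),(0,\sigma), g_\sigma(w))\bigr).$$
The key analytical input is Hörmander's asymptotic expansion \eqref{asdevphi}: specialized to $x = (t,\sigma)$ and $y = (0,\sigma)$, it gives $\varphi((t,\sigma),(0,\sigma),\xi) = t\xi_1 + O(t^2|\xi|)$ together with analogous controls on every $\xi$-derivative. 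In particular $\varphi((0,\sigma),(0,\sigma),\xi) \equiv 0$ in $\xi$.

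From this, I would first verify that the $w$- and $r$-components of $\nabla_{s,t,w,r}\Psi_0$ vanish automatically at $s = t = 0$: both reduce either to an evaluation or to a $w$-derivative of the identically zero function $\xi \mapsto \varphi((0,\sigma),(0,\sigma),\xi)$. A direct computation then gives $\partial_s \Psi_0|_{s=t=0} = -h(u)_1 + r$ and, using the $t$-Taylor expansion of $\varphi$, $\partial_t \Psi_0|_{s=t=0} = -h(u)_2 + r\,(g_\sigma(w))_1$. I would then identify $(g_\sigma(w))_1$ with $q_2(\sigma, g_\sigma(w))$ via the definition \eqref{defp2} of $p_2 = \Theta$ in local coordinates together with the identity $Q_2 = P_2$ from Theorem \ref{CdVthm}. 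This reproduces exactly the column vector displayed in the excerpt.

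It then remains to solve $h(u) = r\,(1, q_2(\sigma, g_\sigma(w)))^\top$. Since $F_2$ is a positive elliptic symbol homogeneous of degree $2$, the curve $\gamma = \{F_2 = 1\} = h(\mathbb{R}/\ell\mathbb{Z})$ is a smooth simple closed curve strictly enclosing the origin, as is essentially captured by Lemma \ref{deth'hnotzero}; hence each half-line from the origin meets $\gamma$ at exactly one point, yielding unique $r_\sigma(w) > 0$ and $u_\sigma(w) \in \mathbb{R}/\ell\mathbb{Z}$. The explicit value of $r_\sigma(w)$ follows from homogeneity of $F_2$ and the relation $F_2(q_1, q_2) = p_1^2$ (Theorem \ref{CdVthm}): the equation $r^2 F_2(1, q_2(\sigma, g_\sigma(w))) = 1$ reduces to $r_\sigma(w) = p_1(\sigma, g_\sigma(w))^{-1}$, matching the stated formula. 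Smoothness of $w \mapsto (u_\sigma(w), r_\sigma(w))$ is then an immediate consequence of the smoothness of $h$, $g_\sigma$, $q_2$ and the implicit function theorem, the relevant Jacobian being nonzero by transversality of the half-line to $\gamma$.

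The argument is entirely routine and I do not anticipate any genuine obstacle. The only step requiring a moment of care is the identification $\partial_t \varphi((t,\sigma),(0,\sigma), g_\sigma(w))|_{t=0} = q_2(\sigma, g_\sigma(w))$, which combines the asymptotic expansion \eqref{asdevphi} with the explicit form $p_2 = \Theta$ and with $Q_2 = P_2$.
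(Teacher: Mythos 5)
Your proposal is correct and follows essentially the same route as the paper: compute $\nabla_{s,t,w,r}\Psi_0$ at $s=t=0$ using \eqref{asdevphi} (so that the $w$- and $r$-derivatives vanish identically and $\partial_t\varphi((0,\sigma),(0,\sigma),g_\sigma(w)) = q_2(\sigma,w,1)$), reduce the stationarity condition to $h(u) = r\,(1,q_2(\sigma,w,1))^{\top}$, and solve it uniquely via the homogeneity of $F_2$ and the relation $F_2(q_1,q_2)=p_1^2$, giving $r_\sigma(w)=1/p_1(\sigma,w,1)$ and a uniquely determined, smooth $u_\sigma(w)$. The only difference is presentational: you spell out the star-shapedness of $\gamma$ and the smoothness/implicit-function step, which the paper leaves implicit.
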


\myindent We call this set a circle since it is obviously an embedded circle in a five-dimensional manifold.

\subsubsection{Parameterization of the circle $\mathcal{C}_{\sigma}$ by $u$}\label{subsubsec22HormPhase}

\myindent Since $\mathcal{C}_{\sigma}$ is a curve of $(s,t,w,r)$ stationary points of $\Psi_0$, in order to apply the strategy of Theorem \ref{mixedVdCABZ}, we need to reparameterize it, at least locally, by $u$. In particular, we will be able to do so wherever $w \mapsto u_{\sigma}(w)$ is a bijection. Now, a first observation is that, since, by Lemma \ref{formulaofq1},
\begin{equation}
    q_1(\sigma,\Theta,\Sigma) = G(p_1(\sigma,\Theta,\Sigma),\Theta),
\end{equation}
and since, by definition, $p_1(\sigma,\Theta,\Sigma)$ is an \textit{even} function of $\Sigma$ (see \eqref{normsurfrev}), then $q_1$ is an even function of $\Sigma$. In particular, the curve $\mathcal{N}_{\sigma}$ (defined by \eqref{defNsigma}) parameterized by $w \mapsto g_{\sigma}(w)$ is symmetric with respect to the $\Theta$ axis. This implies that $w\mapsto p_1(\sigma,w,1)$ and $w \mapsto q_2(\sigma,w,1)$ are \textit{even} functions of $w \in S^1$. Thus, $u_{\sigma}(w)$ is also an \textit{even} function of $w$. In particular, $u_{\sigma}'(0) = u_{\sigma}'(\pi) = 0$. This hints to the fact that the points $w = 0,\pi$ accumulate most of the singularities, which we will detail below.

\begin{definition}\label{defUsigma}
    Let $\mathcal{U}_{\sigma}$ be the segment of $h^{-1}(\gamma_0$) with extremities $u_{\sigma}(\pi)$ and $u_{\sigma}(0)$, where we recall that, for $\alpha = 0, \pi$
    \begin{equation}
        h(u_{\sigma}(\alpha)) = \frac{1}{p_1(\sigma,\alpha,1)}\begin{pmatrix}
            1\\
            q_2(\sigma,\alpha,1)
        \end{pmatrix}.
    \end{equation}

    \myindent In particular, $h(u_{\sigma}(0))$ and $h(u_{\sigma}(\pi))$ are symmetric with respect to the axis $\R(1,0)$.
\end{definition}

\myindent Geometrically, by identifying $u \in \R/\ell\Z$ and $h(u) \in \gamma$ (see Definition \ref{deflilgammas}), the segment $\mathcal{U}_{\sigma}$ can be seen as the part of the curve $\gamma$ (and even of $\gamma_0$) which is reached by  $(q_1,q_2)$ over the cotangent space at $(\theta,\sigma)$ for any $\theta \in S^1$. Observe that, when $\sigma = 0$, then $\mathcal{U}_{\sigma} = \gamma_0$ is maximal, and, when $\sigma \to \pm\frac{L}{2}$, then $\mathcal{U}_{\sigma}$ converges to a single point $\R_+^*(1,0)\cap \gamma$. In particular, $\mathcal{U}_{\sigma}$ is the only part of $\gamma$ which is relevant to the analysis at $\sigma$ fixed, since the \textit{correspondence equation} \eqref{nablastPsi0eq0} yields that the phase $\Psi(\sigma,\cdot)$ is never stationary at $u \notin \mathcal{U}_{\sigma}$. Now, an unavoidable fact is that $\mathcal{N}_{\sigma}\to \mathcal{U}_{\sigma}$ is obviously \textit{not} a bijection. Indeed, as can be seen on Figure \ref{NsigmatoUsigma}, $g_{\sigma}(w) \mapsto q_2(\sigma,w,1)$ acts like a sort of cosine. Then, $q_2(\sigma,w,1) \mapsto h(u_{\sigma}(w))$ is a bijection. Overall, there holds the following.

\begin{figure}[h]
\includegraphics[scale = 0.5]{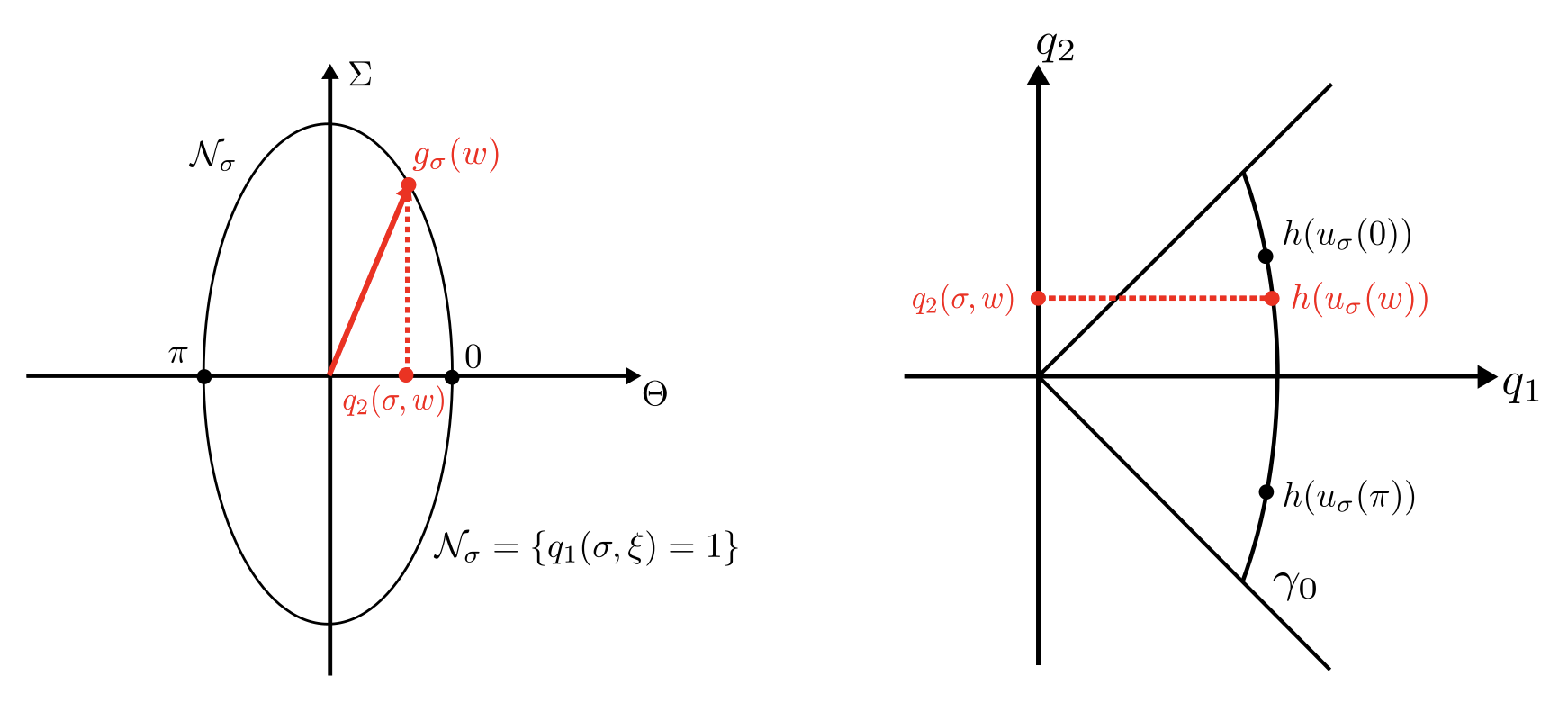}
\centering
\caption{The twofold surjection $\mathcal{N}_{\sigma} \to \mathcal{U}_{\sigma}$}
\label{NsigmatoUsigma}
\end{figure}

\begin{lemma}\label{locinverseuw}
    $w \mapsto u_{\sigma}(w)$ is a smooth even surjection from $S^1 = \R/2\pi\Z$ to $\mathcal{U}_{\sigma} = [u_{\pi}(\sigma),u_0(\sigma)]$, such that
    \begin{equation}
        u_{\sigma}'(w) = 0 \iff w \in \{0,\pi\}.
    \end{equation}
    
    \myindent In particular, $u_{\sigma}$ admits two right inverses, depending on whether we turn clockwise or counterclockwise on $S^1$, say
    \begin{equation}
        u \mapsto \begin{cases}
            w_{+}(\sigma,u) \in [0,\pi] \\
            w_{-}(\sigma,u) \in [-\pi, 0],
        \end{cases}
    \end{equation}
    such that $w_{+}(\sigma,u_{\sigma}(\pi)) = w_{-}(\sigma,u_{\sigma}(0)) = 0$. Moroever, $u_{\sigma}''(0) \neq 0$ and $u_{\sigma}''(\pi) \neq 0$, hence there holds locally near $u_{\sigma}(\alpha)$, for $\alpha = 0,\pi$
    \begin{equation}
        |w_{\pm}(\sigma,u)| \simeq \sqrt{|u - u_{\sigma}(\alpha)|}.
    \end{equation}
\end{lemma}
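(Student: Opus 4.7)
The plan is to write $h(u_\sigma(w)) = (G(1, \Theta(w)), \Theta(w))$ where $\Theta(w) := q_2(\sigma, r_\sigma(w) g_\sigma(w))$ is the $\Theta$-coordinate of the point $\xi(w) := r_\sigma(w) g_\sigma(w)$, which by construction lies on the unit cotangent circle $\{p_1(\sigma, \cdot) = 1\}$. This identity is the crucial simplification: since $q_2 = \Theta$ is linear and $p_1 \equiv 1$ along $\xi(w)$, one has $q_1(\sigma, \xi(w)) = G(p_1, p_2)(\sigma, \xi(w)) = G(1, \Theta(w))$, reducing the whole analysis of $u_\sigma$ to the scalar function $\Theta$.

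Smoothness of $u_\sigma$ will be immediate from the smoothness of $\xi$, of $(q_1, q_2)$, and of the local inverse of the arclength parametrization $h$. Evenness $u_\sigma(-w) = u_\sigma(w)$ will follow from the symmetry of $\mathcal{N}_\sigma$ with respect to the $\Theta$-axis (itself a consequence of $q_1$ being even in $\Sigma$), combined with the evenness of $p_1$ in $\Sigma$, which forces $\xi(-w)$ to be the $\Sigma$-reflection of $\xi(w)$ and hence $\Theta(-w) = \Theta(w)$. The central step is the critical-point analysis: I will use that $\{p_1(\sigma, \cdot) = 1\}$ is the ellipse $\{\Theta^2/f(\sigma)^2 + \Sigma^2 = 1\}$, parametrized near the two horizontal-axis intersections by $\Theta = \pm f(\sigma)\sqrt{1 - \Sigma^2}$. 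Since the tangent of $\mathcal{N}_\sigma$ at $g_\sigma(0)$ (resp.\ $g_\sigma(\pi)$) is vertical by symmetry, and $r_\sigma'$ vanishes at $0$ and $\pi$ by evenness, the $\Sigma$-coordinate of $\xi(w)$ has a nonvanishing derivative at these two points; this forces $\Theta'(0) = \Theta'(\pi) = 0$ and $\Theta''(0), \Theta''(\pi) \neq 0$. Differentiating $h(u_\sigma(w)) = (G(1, \Theta(w)), \Theta(w))$ then yields $u_\sigma'(w) = 0$ at $w = 0, \pi$, and the second derivative $u_\sigma''(\alpha)$ can be read off by projecting onto the unit tangent $h'(u_\sigma(\alpha))$; non-degeneracy follows once I check that $h'(u_\sigma(\alpha))$ is genuinely parallel to $(\partial_2 G(1, \pm f(\sigma)), 1)$, the tangent direction to $\gamma$ recovered by differentiating the identity $F_2(G(p_1, p_2), p_2) = p_1^2$. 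The converse non-vanishing $u_\sigma'(w) \neq 0$ off $\{0, \pi\}$ follows from a much easier argument: off the horizontal axis, $\partial_\Sigma q_1 = \partial_1 G \cdot \partial_\Sigma p_1 \neq 0$ (using Lemma \ref{dG1} and $\partial_\Sigma p_1 = \Sigma/p_1$), so $d(q_1, q_2)$ is nonsingular, while $\xi'(w) \neq 0$ from regularity of the smooth parametrization of $\{p_1 = 1\}$.

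The remaining statements will be routine consequences: surjectivity onto $\mathcal{U}_\sigma = [u_\sigma(\pi), u_\sigma(0)]$ follows from the intermediate value theorem combined with the strict monotonicity of $u_\sigma$ on $[0, \pi]$ and on $[-\pi, 0]$ (since $u_\sigma'$ vanishes only at the endpoints of each), and the existence of the two right-inverses $w_\pm$ together with the square-root asymptotics $|w_\pm(\sigma, u)| \simeq \sqrt{|u - u_\sigma(\alpha)|}$ is the classical Morse normal form at a nondegenerate critical point applied to the Taylor expansion $u_\sigma(w) - u_\sigma(\alpha) = \tfrac{1}{2} u_\sigma''(\alpha)(w - \alpha)^2 + O((w-\alpha)^4)$. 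The main obstacle I anticipate is the second-derivative non-degeneracy: guaranteeing that projecting $\Theta''(\alpha) (\partial_2 G(1, \pm f(\sigma)), 1)$ onto the unit tangent $h'(u_\sigma(\alpha))$ produces a nonzero factor, and in particular that this persists uniformly at the equator $\sigma = 0$ where $f(\sigma) = 1$ and the target point $(G(1, 1), 1) = (1, 1)$ lies on the boundary $\partial\Gamma$ of $\Gamma$. The key point there will be that $\gamma$ itself has a smooth well-defined tangent direction at $(1,1)$ because $F_2$ is defined and smooth on all of $\R^2$ by Theorem \ref{CdVthm}, so the boundary character of $\partial \Gamma$ is not an obstruction to the computation.
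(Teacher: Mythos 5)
Your proposal is correct in substance, but it takes a noticeably different route from the paper, so a comparison is worth recording. The paper differentiates the defining relation $h(u_{\sigma}(w)) = p_1(\sigma,w,1)^{-1}\,(1,\,q_2(\sigma,w,1))$ and takes the determinant against $h(u_{\sigma}(w))$, which by Lemma \ref{deth'hnotzero} yields the clean factorization $\det(h'(u_\sigma(w)),h(u_\sigma(w)))\,u_{\sigma}'(w) = p_1(\sigma,w,1)^{-2}\,\partial_w q_2(\sigma,w,1)$; everything then reduces to $\partial_w q_2(\sigma,w,1) \simeq \sin w$, proved in Lemma \ref{strangewayofsaying} via $\det(\nabla q_1,\nabla q_2)=\partial_{\Sigma}q_1 \simeq \Sigma$ (Lemma \ref{dersigq1lemma}). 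You instead transfer everything to the explicit ellipse $\{p_1(\sigma,\cdot)=1\}$ through the radial projection $\xi(w)=r_{\sigma}(w)g_{\sigma}(w)$, write $h(u_{\sigma}(w))=(G(1,\Theta(w)),\Theta(w))$, and read the critical-point structure off the quadric together with the graph structure of $\gamma_0$ over $q_2$ (the identity $F_2(G(1,p_2),p_2)=1$, Proposition \ref{g'eq-omega}). Both arguments ultimately rest on the same two facts ($\partial_{\Sigma}q_1$ vanishes only on the horizontal axis, via $\partial_1 G\neq 0$ from Lemma \ref{dG1}, and the level curves are never tangent to the radial direction), but your version makes the degeneracy at $w=0,\pi$ elementary at the cost of an extra transfer step, whereas the paper's determinant identity buys the quantitative statement $u_{\sigma}'(w)=F(\sigma,w)\sin w$ with $F$ smooth and uniformly nonvanishing in $(\sigma,w)$, which is reused later (e.g. Lemma \ref{stwrhohessofPsionCx}, Corollary \ref{norminversehessCsig}); your route gives pointwise nondegeneracy and recovers the uniformity only after an extra (easy) remark on smooth $\sigma$-dependence and compactness. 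Your treatment of the boundary case $\sigma=0$ is the right concern and the right fix: only $\partial_2 G(1,\pm f(\sigma))=-\omega(\pm f(\sigma))$ enters (the $\partial_{22}G$ term is multiplied by $\Theta'(\alpha)^2=0$), and $\omega$ extends smoothly to $\pm 1$.

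One step needs a more precise justification than ``regularity of the smooth parametrization of $\{p_1=1\}$'': your off-axis argument requires $\xi'(w)\neq 0$ for all $w$, i.e.\ that $g_{\sigma}'(w)$ is never parallel to $g_{\sigma}(w)$, and this is not a consequence of the smoothness of the curves alone since the parametrization of the ellipse you use is induced by radial projection from $\mathcal{N}_{\sigma}$, not by its own arclength. It does follow in one line from the Euler identity for the $1$-homogeneous symbol $q_1$ (the same computation as in Lemma \ref{deth'hnotzero}): $g_{\sigma}(w)\cdot\nabla_{\xi} q_1(\sigma,g_{\sigma}(w))=q_1(\sigma,g_{\sigma}(w))=1\neq 0$ while $g_{\sigma}'(w)\perp\nabla_{\xi} q_1$, so $g_{\sigma}'$ cannot be radial. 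With that line added, the composition of the nonsingularity of $d_\xi(q_1,q_2)$ off the axis with $\xi'(w)\neq0$ closes the ``$u_{\sigma}'(w)\neq 0$ for $w\notin\{0,\pi\}$'' direction, and the rest of your plan goes through.
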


\begin{proof}
    We drop the $\sigma$ index in the notation in the proof. We compute 
    \begin{equation}
        h'(u(w))u'(w) = -\frac{\partial_w p_1(\sigma,w,1)}{p_1(x,w,1)} h(u(w)) + \frac{1}{p_1(\sigma,w,1)}\begin{pmatrix}
            0\\
            \partial_w q_2(\sigma,w,1)
        \end{pmatrix}.
    \end{equation}

    \myindent Taking the determinant against $h(u(w)) = \frac{1}{p_1(\sigma,w,1)}\begin{pmatrix}1 \\
    q_2(\sigma,w,1)\end{pmatrix}$ we find that
    \begin{equation}
        \begin{split}
        \det(h'(u(w)),h(u(w))) u'(w) &= \frac{1}{p_1(\sigma,w,1)^2} \det(\begin{pmatrix}
            0\\
            \partial_w q_2(\sigma,w,1)\end{pmatrix}, \begin{pmatrix}
                1 \\
                q_2(\sigma,w,1)
            \end{pmatrix})\\
        &= \frac{1}{p_1(\sigma,w,1)^2} \partial_w q_2(\sigma,w,1).
        \end{split}
    \end{equation}

    \myindent Now, using Lemma \ref{deth'hnotzero}, we find that
    \begin{equation}
        \frac{1}{p_1(\sigma,w,1)^2 \det(h'(u(w)),h(u(w)))} \simeq 1.
    \end{equation}

    \myindent Moreover, we claim that there holds
    \begin{equation}\label{partialwq2issin}
        \partial_w q_2(\sigma,w,1) \simeq \sin(w),
    \end{equation}
    in the sense that there exists a smooth function which doesn't vanish, say $f(\sigma,w)$, such that
    \begin{equation}
        \partial_w q_2(\sigma,w,1) = f(\sigma,w) \sin(w).
    \end{equation}

    \myindent We postpone the proof of this assertion to Lemma \ref{strangewayofsaying}. In particular, we find that 
    \begin{equation}
        u'(w) = 0 \iff w \in \{0,\pi\},
    \end{equation}
    which proves the first part of the lemma.

    \myindent Moreover, we find that, for small $w$,
    \begin{equation}
        u'(w) = C w + O(w^2),
    \end{equation}
    where 
    \begin{equation}
        C = \frac{f(\sigma,0)}{p_1(\sigma,0,1)\det(h'(u(0)),h(u(0)))} \neq 0.
    \end{equation}

    \myindent Thus, because $u$ is smooth, we find that $u''(0) \neq 0$. One proves similarly that $u''(\pi)\neq 0$.
\end{proof}

\myindent We now prove the identity \eqref{partialwq2issin}.

\begin{lemma}\label{strangewayofsaying}
    The function
    \begin{equation}
        w \mapsto \frac{\partial_w q_2(\sigma,w,1)}{\sin(w)}
    \end{equation}
    is well-defined on $S^1$, smooth, and doesn't vanish. In particular, it is uniformly bounded away from zero in absolute value for $(\sigma,w) \in \mathcal{L}\times S^1$.
\end{lemma}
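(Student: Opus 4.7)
The plan is to exploit the symmetry of $\mathcal{N}_{\sigma}$ with respect to the $\Theta$-axis. Writing $g_\sigma(w) = (\Theta(w),\Sigma(w))$, one has $\partial_w q_2(\sigma,w,1) = \Theta'(w)$ since $q_2 = p_2 = \Theta$. Because $q_1 = G(p_1, p_2)$ and $p_1$ depends on $\Sigma$ only through $\Sigma^2$ (see \eqref{normsurfrev}), $q_1$ is even in $\Sigma$, so $\mathcal{N}_\sigma$ is invariant under $(\Theta,\Sigma)\mapsto (\Theta,-\Sigma)$. Combined with $g_\sigma(0) \in \R_+^*(1,0)$ and the positively oriented constant-speed parameterization, this forces $\Theta(-w) = \Theta(w)$ and $\Theta(\pi+w) = \Theta(\pi-w)$. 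Hence $\Theta'$ is odd about $0$ and about $\pi$, so its Taylor expansion at each of these points contains only odd powers. Since $\sin$ enjoys the same property, the quotient $\Theta'(w)/\sin(w)$ extends smoothly across $0$ and $\pi$; smoothness elsewhere is immediate since $\sin(w)\neq 0$ there.

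The substantive work is non-vanishing. Away from $\{0,\pi\}$, I would first show that $\Theta'(w) \neq 0$. Since $G$ is a diffeomorphism of $\Gamma$ with $\partial_1 G \neq 0$ (Lemma \ref{dG1}), the equation $G(p_1,\Theta) = 1$ can be solved locally as $p_1 = H_\sigma(\Theta)$ for a smooth $H_\sigma$, so on $\mathcal{N}_\sigma$ one has the identity $\Sigma^2 = \Phi_\sigma(\Theta) := H_\sigma(\Theta)^2 - \Theta^2/f(\sigma)^2$. The upper arc $\{g_\sigma(w) : w \in (0,\pi)\}$ is then the graph $\Sigma = +\sqrt{\Phi_\sigma(\Theta)}$ over $\Theta \in (\Theta_\pi,\Theta_0)$, and the constant-speed relation gives $dw/d\Theta = c^{-1}\sqrt{1 + (\Phi_\sigma'(\Theta)/(2\sqrt{\Phi_\sigma(\Theta)}))^2}$ with $c = \mathfrak{l}_\sigma/(2\pi)$, which is finite and positive on the interior. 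Hence $\Theta'(w) \neq 0$ on $(0,\pi)$; the case $w \in (\pi,2\pi)$ follows by reflection.

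The main obstacle is the non-vanishing of the quotient at $w \in \{0,\pi\}$, which amounts to $\Theta''(\alpha) \neq 0$ for $\alpha \in \{0,\pi\}$. Differentiating $\Sigma(w)^2 = \Phi_\sigma(\Theta(w))$ twice at $w = 0$, using $\Sigma(0) = 0$, $\Theta'(0) = 0$, $\Sigma'(0) = c$, yields $2c^2 = \Phi_\sigma'(\Theta_0)\,\Theta''(0)$, so the question reduces to $\Phi_\sigma'(\Theta_0) \neq 0$, i.e.\ the zero of $\Phi_\sigma$ at the endpoint $\Theta_0$ being simple. I would establish this from the ellipticity of $q_1$ (Corollary \ref{q1elliptic}): by the Euler identity $\xi\cdot \nabla_\xi q_1 = q_1 = 1$ on $\mathcal{N}_\sigma$, so $\nabla_\xi q_1(\sigma,\Theta_0,0) \neq 0$; and a direct computation using $\partial_1 G \neq 0$ writes $F(\Theta,\Sigma) := \Sigma^2 - \Phi_\sigma(\Theta) = (q_1(\sigma,\Theta,\Sigma) - 1)\,K(\sigma,\Theta,\Sigma)$ with $K$ smooth and nonzero at $(\Theta_0, 0)$. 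Hence $\nabla F(\Theta_0,0) \neq 0$, and since $\partial_\Sigma F(\Theta_0,0) = 0$, one deduces $\Phi_\sigma'(\Theta_0) \neq 0$. The argument at $\alpha = \pi$ is identical, and the uniform lower bound over $(\sigma,w) \in \mathcal{L} \times S^1$ then follows from compactness together with the smooth dependence of the extended quotient on both variables.
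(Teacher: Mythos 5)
Your proof is correct, but it follows a genuinely different route from the paper's. The paper settles the lemma in two lines: writing $\xi = r g_{\sigma}(w)$, it observes that $\partial_w f(w,r) = \frac{\mathfrak{l}_{\sigma}}{2\pi} r \det\bigl(\nabla q_1/|\nabla q_1|, \nabla f\bigr)$ for any $f$, so that $\partial_w q_2(\sigma,w,1)$ is a nonvanishing factor times $\det(\nabla q_1,\nabla q_2) = \partial_{\Sigma} q_1$, and then uses $\partial_{\Sigma} q_1 = \partial_1 G \cdot \Sigma/p_1 \simeq \Sigma \simeq \sin(w)$ (Lemmas \ref{dG1} and \ref{dersigq1lemma}). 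You instead prove nonvanishing of $\Theta'(w)$ on the open arcs through the graph representation $\Sigma^2 = \Phi_{\sigma}(\Theta)$ and the constant-speed relation, and treat $w \in \{0,\pi\}$ by the second-derivative identity $2c^2 = \Phi_{\sigma}'(\Theta_0)\,\Theta''(0)$; this is sound and more geometric, and note that this identity alone already forces both factors to be nonzero, so your separate verification of $\Phi_{\sigma}'(\Theta_0)\neq 0$ via the factorization $F = (q_1-1)K$ is redundant. The paper's computation buys brevity and stays entirely with the globally smooth function $q_1(\sigma,\Theta,\Sigma)$; your argument buys a self-contained picture (parity, graph, arclength) that never needs the determinant identity, at the cost of length and of the implicit-function construction of $H_{\sigma}$.

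One point you pass over silently: when $\sigma = 0$ the endpoint $(\Theta_0,0)$ corresponds to $(p_1,p_2) = (\Theta_0,\Theta_0) \in \partial\Gamma$, so defining $H_{\sigma}$ by the implicit function theorem, differentiating $\Phi_{\sigma}$ twice at $\Theta_0$, and a fortiori writing $F = (q_1-1)K$ on a two-sided neighborhood of $\Theta_0$, all require $G$ to be smooth up to the boundary of $\Gamma$ (with $\partial_1 G$ nonvanishing there) --- this is available from Proposition \ref{q1q2fctofp1p2} and Lemma \ref{dG1}, but it should be invoked, and it is also what makes your final compactness argument uniform as $\sigma \to 0$. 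Alternatively you can bypass the issue by differentiating $q_1(\sigma,\Theta(w),\Sigma(w)) = 1$ twice at $w = 0$: with $\Theta'(0)=0$ and $\partial_{\Sigma}q_1(\sigma,\Theta_0,0)=0$ this gives $\Theta''(0) = -\Theta_0\, c^2\, \partial_{\Sigma\Sigma} q_1(\sigma,\Theta_0,0) \neq 0$ by Lemma \ref{LemmapartSigSigq_1}, which is essentially the ingredient the paper itself relies on.
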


\begin{proof}
    First, let us observe that, for any function $f(\xi) = f(w,r)$, there holds the following formula
    \begin{equation}
        \partial_w f (w,r) = \frac{\mathfrak{l}_{\sigma}}{2\pi}r \det(\frac{\nabla q_1(\sigma,w,1)}{|\nabla q_1(\sigma,w,1)|},\nabla f(w,r)),
    \end{equation}

    where we recall that the constant $\mathfrak{l}_{\sigma}$ is defined by the Notation \ref{defwrho}. Indeed, writing $\xi = r g(w)$, one can compute
    \begin{equation}
        \begin{split}
            \partial_w f(w,r) &= \nabla f(w,r)\cdot r g'(w) \\
            &= \frac{\mathfrak{l}_{\sigma}}{2\pi} r \nabla f(w,r) \cdot \left(\frac{\nabla q_1(\sigma,w,1)}{|\nabla q_1(\sigma,w,1)|}\right)^{\perp} \\
            &= \frac{\mathfrak{l}_{\sigma}}{2\pi}r \det(\frac{\nabla q_1(\sigma,w,1)}{|\nabla q_1(\sigma,w,1)|},\nabla f(w,r)),
        \end{split}
    \end{equation}
    where we use that $g'(w)$ is the tangent vector to the curve $q_1 = 1$, so it is the orthogonal of the normal direction to this curve, given by $\frac{\nabla q_1(\sigma,w,1)}{|\nabla q_1(\sigma,w,1)|}$ (the sign comes from our choice of orientation of the curve $q_1 = 1$).

    \myindent Since $w \mapsto |\nabla q_1(\sigma,w,1)|$ doesn't vanish, it is enough to study
    \begin{equation}
        w \mapsto \det(\nabla q_1(\sigma,w,1),\nabla q_2(\sigma,w,1)).
    \end{equation}

    \myindent Now, we compute in $(\Theta,\Sigma)$ coordinates
    \begin{equation}
        \det(\nabla q_1(\sigma,w,1),\nabla q_2(\sigma,w,1)) = \begin{vmatrix}
            \partial_{\Sigma} q_1 & 0 \\
            \partial_{\Theta}q_1 & 1 
        \end{vmatrix} = \partial_{\Sigma}q_1.
    \end{equation}

    \myindent Now, it is a direct consequence of Lemma \ref{dersigq1lemma} that
    \begin{equation}
        \partial_{\Sigma}q_1(\sigma,w,1) \simeq \sin(w).
    \end{equation}
\end{proof}

\subsubsection{The $(s,t,w,r)$ Hessian of $\Psi_0$ on the circle $\mathcal{C}_{\sigma}$}\label{subsubsec23HormPhase}

\myindent Since $\mathcal{C}_{\sigma}$ is a set of zeros of $\nabla_{s,t,w,r}\Psi_0$, it is natural to compute its Hessian is the variable $(s,t,w,r)$, and in particular the invertibility of its Hessian, on $\mathcal{C}_{\sigma}$. Indeed, this allows to perform a stationary phase analysis in the variables $(s,t,w,r)$. 

\begin{lemma}\label{stwrhohessofPsionCx}
    Let $\mathcal{C}_{\sigma}$ be parameterized by $w \in S^1$ as in Lemma \ref{defcirclestatpoints}. Then there holds along $\mathcal{C}_{\sigma}$
    \begin{equation}
        \det(\nabla_{s,t,w,r}^2 \Psi_0(\sigma,u_{\sigma}(w), 0,0,w,r_{\sigma}(w))) \simeq (\sin(w))^2,
    \end{equation}
    in the sense that the function
    \begin{equation}
        (\sigma,w) \mapsto \frac{\det(\nabla_{s,t,w,r}^2 \Psi_0(\sigma,u_{\sigma}(w), 0,0,w,r_{\sigma}(w)))}{(\sin(w))^2}
    \end{equation}
    is smooth and uniformly bounded from zero for $(\sigma,w) \in\mathcal{L}\times S^1$.
\end{lemma}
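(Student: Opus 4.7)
The plan is to compute the Hessian directly from the explicit polar form of $\Psi_0$, exploit the many zero entries coming from the fact that $\varphi$ vanishes identically when $x=y$, and reduce the final determinant to a quantity already controlled by Lemma~\ref{strangewayofsaying}.

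First I would write $\Psi_0$ in polar coordinates: using that $g_\sigma$ parameterizes $\{q_1=1\}$ and $\varphi$ is homogeneous of degree one in $\xi$, we have
\begin{equation*}
\Psi_0(\sigma,u,s,t,w,r) = -\langle h(u),(s,t)\rangle + rs + r\varphi_\sigma(t,w),
\end{equation*}
where $\varphi_\sigma(t,w) := \varphi((t,\sigma),(0,\sigma),g_\sigma(w))$. The asymptotic expansion \eqref{asdevphi} implies that $\varphi((0,\sigma),(0,\sigma),\xi) = 0$ identically in $\xi$, hence $\varphi_\sigma(0,w) = 0$ and all its $w$-derivatives at $t=0$ vanish.

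Next I would write the $(s,t,w,r)$-Hessian at a point of $\mathcal{C}_\sigma$. Using that at such a point $s=t=0$, $r=r_\sigma(w)$, and using the identities above, the Hessian reduces to
\begin{equation*}
H = \begin{pmatrix} 0 & 0 & 0 & 1 \\ 0 & r\,\partial_t^2\varphi_\sigma & r\,\partial_{tw}\varphi_\sigma & \partial_t\varphi_\sigma \\ 0 & r\,\partial_{tw}\varphi_\sigma & 0 & 0 \\ 1 & \partial_t\varphi_\sigma & 0 & 0 \end{pmatrix},
\end{equation*}
all partials being evaluated at $(t,w)=(0,w)$. Expanding along the first row and then along the first column of the resulting $3\times 3$ minor gives, after straightforward cofactor bookkeeping,
\begin{equation*}
\det H = r_\sigma(w)^2 \bigl(\partial_{tw}\varphi_\sigma(0,w)\bigr)^2.
\end{equation*}

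It remains to identify $\partial_{tw}\varphi_\sigma(0,w)$. By \eqref{asdevphi}, $\varphi((t,\sigma),(0,\sigma),\xi) = t\Theta + O(t^2|\xi|)$, so $\partial_t\varphi_\sigma(0,w) = (g_\sigma(w))_1$. Since $q_2(\sigma,\xi) = \Theta$ (i.e.\ the first coordinate of $\xi$) and $q_2(\sigma,w,1) = q_2(\sigma,g_\sigma(w))$ in our convention, we get $\partial_t\varphi_\sigma(0,w) = q_2(\sigma,w,1)$, whence $\partial_{tw}\varphi_\sigma(0,w) = \partial_w q_2(\sigma,w,1)$. Lemma~\ref{strangewayofsaying} then yields $\partial_w q_2(\sigma,w,1) \simeq \sin(w)$, and combined with the fact that $w \mapsto r_\sigma(w)$ is smooth and uniformly bounded away from zero on the compact set $\mathcal{L}\times S^1$, this gives $\det H \simeq (\sin w)^2$, as claimed. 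The main technical point is the cleanup of the $3\times 3$ minor that appears after the first expansion; since two of its entries are $0$ thanks to $\partial_w^2\varphi_\sigma(0,w) = 0$ and $\partial_w\varphi_\sigma(0,w) = 0$, the computation collapses to a single squared term, which is why no sign issue can spoil the lower bound.
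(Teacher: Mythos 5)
Your proof is correct and follows essentially the same route as the paper's: write out the $(s,t,w,r)$-Hessian along $\mathcal{C}_{\sigma}$ using the expansion \eqref{asdevphi} (which kills the $\partial_{ww}$, $\partial_{wr}$ and $\partial_{rr}$ entries), collapse the determinant to $r_{\sigma}(w)^2\bigl(\partial_w q_2(\sigma,w,1)\bigr)^2$, and conclude with Lemma \ref{strangewayofsaying} together with the smoothness and positivity of $r_{\sigma}(w)$. The only cosmetic difference is that you keep the entries as $\partial_t\varphi_\sigma$, $\partial_{tw}\varphi_\sigma$ and identify them with $q_2$, $\partial_w q_2$ at the end, whereas the paper substitutes via \eqref{asdevphi} first; the content is identical.
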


\begin{proof}
We drop the $\sigma$ index in the notations in the proof. Let $(u(w),0,0,w,r(w))\in \mathcal{C}_{\sigma}$, there holds
\begin{equation}\label{explicitstrhohessonCsig}
\begin{split}
    &\nabla_{s,t,w,r}^2\Psi_0(\sigma,u(w),0,0,w,r(w))\\
    &= 
    \begin{pmatrix}
        0 & 0 & 0 & 1 \\
        0 & r(w)\partial_{\theta\theta}\varphi((0,\sigma,),(0,\sigma),w) & r(w)\partial_{\theta w}\varphi((0,\sigma,),(0,\sigma),w) & \partial_{\theta}\varphi((0,\sigma,),(0,\sigma),w) \\
        0 & r(w)\partial_{\theta w}\varphi((0,\sigma,),(0,\sigma),w) & r(w)\partial_{ww}\varphi((0,\sigma,),(0,\sigma),w) & 0 \\
        1 & \partial_{\theta}\varphi((0,\sigma,),(0,\sigma),w) & 0 & 0
    \end{pmatrix} \\
    &= \begin{pmatrix}
        0 & 0 & 0 & 1 \\
        0 & r(w)\partial_{\theta\theta}\varphi((0,\sigma,),(0,\sigma),w) & r(w) \partial_w q_2(\sigma,w,1) & q_2(\sigma,w,1) \\
        0 & r(w) \partial_w q_2(\sigma,w,1) & 0 & 0 \\
        1 & q_2(\sigma,w,1) & 0 & 0
    \end{pmatrix},
\end{split}
\end{equation}

where we use \eqref{asdevphi} for the last equality. Now, the Hessian is thus, modulo an exchange of the first and last column, a block triangular matrix, hence its determinant is easy to compute. Using moreover Lemma \ref{strangewayofsaying}, we find that 
\begin{equation}
    \begin{split}
    \det(\nabla_{s,t,w,r}^2\Psi_0(\sigma,u(w),0,0,w,r(w)))  &= r(w)^2 (\partial_w q_2(\sigma,w,1))^2 \\
    &= F(\sigma,w) \sin(w)^2,
    \end{split}
\end{equation}
for some smooth positive function $F$ uniformly bounded from zero in $(\sigma,w)$.
\end{proof}

\myindent Now, this fact has two consequences

\myindent i. First, $\nabla_{s,t,w, r}^2 \Psi_0$ is \textit{degenerate} at $(u(w),0,0,w,r(w)) \in \mathcal{C}_{\sigma}$ if and only if $w = 0$ or $w = \pi$. This is natural since, if we define, for $\alpha = 0,\pi$,
\begin{equation}\label{defPalpha}
    P_{\alpha} := (u_{\sigma}(\alpha), 0,0,\alpha, r_{\sigma}(\alpha)),
\end{equation}
we have mentioned in Paragraph \ref{subsubsec11HormPhase} that there are singularities of the set $\mathcal{O}_{\sigma}$ at the branching points $P_0$ and $P_{\pi}$, hence the Hessian \textit{has} to degenerate.

\myindent ii. Second, using the two branches of inverse of $u(w)$ given by Lemma \ref{locinverseuw}, we find that, for $u \in (u_{\sigma}(\pi),u_{\sigma}(0))$, $(s,t,w, r)\mapsto \Psi_0(\sigma,u,s,t,w, r)$ has exactly two stationary points at which the Hessian is nondegenerate. Thus, half of the conditions of Theorem \ref{mixedVdCABZ} with the $u$ variable isolated are satisfied. It remains to study the remaining phase function. We will perform this analysis in Paragraph \ref{subsubsec22HormQuant}.

\quad

\myindent Now, in view of Theorem \ref{mixedVdCABZ}, we need more precisely a quantitative bound on the norm of the inverse of the $(s,t,w,r)$ Hessian of $\Psi_0$ on $\mathcal{C}_{\sigma}$. Lemma \ref{stwrhohessofPsionCx} yields the following corollary.

\begin{corollary}\label{norminversehessCsig}
    There holds on $\mathcal{C}_{\sigma}$
    \begin{equation}
        \left\|\left(\nabla_{s,t,w,r}^2 \Psi_0(\sigma, u_{\sigma}(w), 0,0, w,r_{\sigma}(w))\right)^{-1} \right\| \lesssim \frac{1}{|\sin(w)|}.
    \end{equation}
\end{corollary}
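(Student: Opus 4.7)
The plan is to exploit the sparse structure of the Hessian $H(w) := \nabla_{s,t,w,r}^2 \Psi_0(\sigma, u_{\sigma}(w), 0, 0, w, r_{\sigma}(w))$ displayed in \eqref{explicitstrhohessonCsig} in order to compute $H(w)^{-1}$ by hand and bound each of its entries. Denoting the nontrivial block entries by $A := r_{\sigma}(w)\,\partial_{\theta\theta}\varphi((0,\sigma),(0,\sigma),w)$, $B := r_{\sigma}(w)\,\partial_w q_2(\sigma,w,1)$, and $C := q_2(\sigma,w,1)$, I would solve $Hx = y$ row by row: the first and last rows of $H$ each contain a single nonzero entry outside of the coupled $A$-$B$-$C$ block, which eliminates $x_1$ and $x_4$ immediately, and the result is a closed-form $H^{-1}$ whose only entries are $0$, $\pm 1$, $\pm C/B$, $\pm 1/B$, and $-A/B^2$.

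By Lemma \ref{strangewayofsaying} and the uniform positivity of $r_{\sigma}(w)$ one has $B \simeq \sin(w)$, while $C = q_2(\sigma,w,1)$ is smooth and bounded on $\mathcal{L}\times S^1$. Consequently the entries $\pm C/B$ and $\pm 1/B$ of $H^{-1}$ are already of size $O(1/|\sin(w)|)$. The only entry that threatens the estimate is $-A/B^2$: with the naive bound $A = O(1)$ it would be of size $O(1/\sin^2(w))$, which is exactly the operator norm bound obtained by dividing $\|\mathrm{adj}(H)\|$ by $|\det H| \simeq \sin^2(w)$. Closing the gap thus requires the refined estimate $A = O(\sin(w))$, and this is the main obstacle.

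To prove this refined bound I would differentiate the eikonal equation \eqref{eikeq}, $q_1(x,\nabla_x\varphi(x,y,\xi)) = q_1(y,\xi)$, once in $x_1 = \theta$ and use the rotational symmetry $\partial_\theta q_1 \equiv 0$, which follows from $q_1 = G(p_1,p_2)$ (Lemma \ref{formulaofq1}) together with the $\theta$-independence of $p_1$ visible in \eqref{normsurfrev}. Evaluating the resulting relation at $x = y = (0,\sigma)$ and $\nabla_x\varphi = g_{\sigma}(w)$ produces the algebraic identity
\[ (\partial_{\Theta}q_1)(\sigma,w,1)\,\partial_{\theta\theta}\varphi((0,\sigma),(0,\sigma),w) + (\partial_{\Sigma}q_1)(\sigma,w,1)\,\partial_{\theta\sigma}\varphi((0,\sigma),(0,\sigma),w) = 0. \]
Since $\partial_{\theta\sigma}\varphi$ is uniformly bounded thanks to \eqref{asdevphi}, and $\partial_{\Theta}q_1$ is bounded away from zero in a neighborhood of $w \in \{0,\pi\}$ (by ellipticity of $q_1$, as $\partial_{\Sigma}q_1$ vanishes there by the $\Sigma$-parity of $q_1$), the size of $A$ is governed by $\partial_{\Sigma}q_1(\sigma,w,1)$. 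Lemma \ref{dersigq1lemma}, already invoked in the proof of Lemma \ref{strangewayofsaying}, gives $\partial_{\Sigma}q_1(\sigma,w,1) \simeq \sin(w)$, hence $A = O(\sin(w))$ near $\{0,\pi\}$; away from $\{0,\pi\}$ the bound is trivial because $|\sin(w)|$ is bounded below. With $A = O(\sin(w))$, the entry $-A/B^2$ becomes $O(1/|\sin(w)|)$, all entries of $H^{-1}$ enjoy the same bound, and the desired operator norm estimate follows.
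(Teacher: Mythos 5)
Your proposal is correct, and it rests on the same two pillars as the paper's proof: the explicit sparse form \eqref{explicitstrhohessonCsig} of the Hessian on $\mathcal{C}_{\sigma}$, and the crucial refined bound $|r_{\sigma}(w)\,\partial_{\theta\theta}\varphi((0,\sigma),(0,\sigma),w)| \lesssim |\sin(w)|$, without which one only gets $O(1/\sin^2(w))$. The packaging differs slightly: the paper keeps the adjugate formula $H^{-1} = \det(H)^{-1}\,{}^t\mathrm{Com}(H)$, uses Lemma \ref{stwrhohessofPsionCx} for $|\det H|\gtrsim \sin^2(w)$, and obtains the key bound qualitatively, by noting that $\partial_{\theta\theta}\varphi$ (and $\partial_{\theta w}\varphi$) vanish at $w=0,\pi$ (Lemma \ref{miracleequation} together with \eqref{asdevphi}) and invoking smoothness and compactness in $(\sigma,w)$; your explicit row-by-row inversion is an equivalent substitute for the cofactor computation, and your derivation of the key bound via the identity $\partial_{\Theta}q_1\,\partial_{\theta\theta}\varphi + \partial_{\Sigma}q_1\,\partial_{\theta\sigma}\varphi = 0$ is in effect the proof of Lemma \ref{miracleequation} (equation \eqref{usefeq1}) carried out at general $w$ rather than only at $w\in\{0,\pi\}$, combined with $\partial_{\Sigma}q_1(\sigma,w,1)\simeq\sin(w)$ from Lemma \ref{dersigq1lemma}. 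This buys you a quantitative identity rather than a vanishing-plus-smoothness argument, at the cost of having to control $\partial_{\Theta}q_1$ from below near $w\in\{0,\pi\}$ (which you do correctly) and away from $\{0,\pi\}$ to fall back on the trivial bound. One small attribution to fix: the uniform boundedness of $\partial_{\theta\sigma}\varphi((0,\sigma),(0,\sigma),w)$ does not follow from \eqref{asdevphi}, which only controls $\xi$-derivatives of $\varphi(x,y,\xi)-\langle x-y,\xi\rangle$; it follows from the smoothness of $\varphi$ and compactness of the parameter set $(\sigma,w)\in\mathcal{L}\times S^1$ (together with homogeneity in $\xi$), which is harmless but should be stated as such.
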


\begin{proof}
    Let us write
    \begin{equation}
        H(w) := \nabla_{s,t,w,r}^2 \Psi_0(\sigma,u_{\sigma}(w), 0,0, w,r_{\sigma}(w)).
    \end{equation}
    
    \myindent Then, we know that
    \begin{equation}
        (H(w))^{-1} = \frac{1}{\det(H(w))} ^tCom(H(w)),
    \end{equation}
    where $^t Com(H(w))$ is the matrix of cofactors of $H(w)$. Now, thanks to Lemma \ref{stwrhohessofPsionCx}, we know that
    \begin{equation}
        \left|\frac{1}{\det(H(w))}\right| \lesssim \frac{1}{\sin(w)^2}.
    \end{equation}
    
    \myindent However, looking at the explicit form of $H(w)$ given by \eqref{explicitstrhohessonCsig}, one can see that all the cofactors of $H(w)$ are $\lesssim |\sin(w)|$. Indeed, this comes from the observation that
    \begin{equation}
        \left|(r(w)\partial_{\theta\theta}\varphi((0,\sigma,),(0,\sigma),w), r(w)\partial_{\theta w}\varphi((0,\sigma,),(0,\sigma),w))\right| \lesssim |\sin(w)|,
    \end{equation}
    since the quantity on the LHS vanishes at $w= 0,\pi$ thanks to \eqref{asdevphi} and Lemma \ref{miracleequation}.
\end{proof}

\subsection{The exceptionnal branches $\mathcal{E}_{\sigma,\alpha}$ of $(s,t,w,r)$ stationary points of $\Psi_0$ with $s,t \neq 0$}\label{subsec3HormPhase}

\myindent In this section, we define rigorously the second part of the set $\mathcal{O}_{\sigma}$, which is formed of the two exceptionnal branches $\mathcal{E}_{\sigma,\alpha}$, $\alpha= 0,\pi$, of zeros with $(s,t) \neq (0,0)$, with branch from $\mathcal{C}_{\sigma}$ at $P_0$ and $P_{\pi}$.

\subsubsection{Definition of $\mathcal{E}_{\sigma,\alpha}$}\label{subsubsec31HormPhase}

\myindent In this paragraph, we will prove the following lemma.

\begin{lemma}\label{newhorriblelemma}
    For all $\sigma \in \mathcal{L}$, there exist two disjoint smooth curves of zeros of $\nabla_{s,t,w,r}\Psi_0$ parameterized by $t$ 
    \begin{equation}
        \mathcal{E}_{\sigma,\alpha} := \bigg\{\left(u_{\alpha}(\sigma,t), s_{\alpha}(\sigma,t), t, w_{\alpha}(\sigma,t),r_{\alpha}(\sigma,t) \right)\qquad |t| \ll 1\bigg\},
    \end{equation}
    where $\alpha \in \{0,\pi\}$, and $u_{\alpha},s_{\alpha},w_{\alpha},r_{\alpha}$ are smooth functions of $(\sigma,t)$ such that, with the definition \eqref{defPalpha}
    \begin{equation}
        (u_{\alpha}(\sigma,0), s_{\alpha}(\sigma,0), w_{\alpha}(\sigma,0), r_{\alpha}(\sigma,0)) = (u_{\sigma}(\alpha), 0,\alpha, r_{\sigma}(\alpha)) = P_{\alpha} \in C_{\sigma}.
    \end{equation}
    
    \myindent Moreover, all the zeros of $\nabla_{s,t,w,r} \Psi_0$ with $(s,t) \neq 0$ are on $\mathcal{E}_{\sigma,0}\cup \mathcal{E}_{\sigma,\pi}$ in the sense that
    \begin{equation}
        \forall (u,s,t,w,r) \ \text{such that} \ (s,t) \neq 0: \qquad \nabla_{s,t,w,r} \Psi_0(\sigma,u,s,t,w,r) = 0 \iff (u,s,t,w,r) \in \mathcal{E}_{\sigma,0}\cup \mathcal{E}_{\sigma,\pi}.
    \end{equation}
\end{lemma}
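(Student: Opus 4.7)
The plan is to reduce the system $\nabla_{s,t,w,r}\Psi_0 = 0$ to a single scalar equation in $(w, t)$, and then apply the implicit function theorem in that one-dimensional setting. In polar coordinates one has
\[
\Psi_0(\sigma, u, s, t, w, r) = -\langle h(u), (s,t)\rangle + r\bigl[s + \varphi((t,\sigma),(0,\sigma), g_\sigma(w))\bigr],
\]
so the four stationary conditions split naturally. The pair $\partial_s \Psi_0 = \partial_t \Psi_0 = 0$ is the correspondence equation \eqref{nablastPsi0eq0}, which given $(\sigma, t, w)$ determines $(u, r)$ uniquely and smoothly as explained in Paragraph \ref{subsubsec33Micro}. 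The condition $\partial_r \Psi_0 = 0$ then yields $s = -\varphi((t,\sigma),(0,\sigma), g_\sigma(w))$ smoothly, and the only genuine equation remaining is the scalar equation
\[
\tilde{F}(\sigma, w, t) := \partial_w \bigl[\varphi((t,\sigma),(0,\sigma), g_\sigma(w))\bigr] = 0.
\]

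The key observation is that the initial condition $\varphi(x, x, \xi) \equiv 0$, contained in the expansion \eqref{asdevphi}, forces $\tilde{F}(\sigma, w, 0) \equiv 0$, so a smooth factorization $\tilde{F}(\sigma, w, t) = t\, G(\sigma, w, t)$ holds by Taylor's formula. The trivial zero set $\{t = 0\}$ thus reproduces exactly the circle $\mathcal{C}_\sigma$ already treated in Lemma \ref{defcirclestatpoints}, and all zeros with $t \neq 0$ are precisely the zeros of $G$. A direct computation using \eqref{asdevphi} and $q_2(\sigma,\xi) = \xi_1$ gives
\[
G(\sigma, w, 0) = \partial_w q_2(\sigma, g_\sigma(w)),
\]
and by Lemma \ref{strangewayofsaying} this equals a smooth non-vanishing function of $(\sigma, w)$ times $\sin(w)$. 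Hence $G(\sigma, \cdot, 0)$ vanishes on $S^1$ exactly at $w = 0$ and $w = \pi$, both transversally. The implicit function theorem applied at $(w, t) = (\alpha, 0)$ for $\alpha \in \{0, \pi\}$ then produces a unique smooth local solution $w_\alpha(\sigma, t)$; substituting into the correspondence and $\partial_r$ equations yields the remaining smooth functions $u_\alpha, r_\alpha, s_\alpha$ with the announced boundary values at $t = 0$.

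For the global uniqueness statement, I would first note that at $t = 0$ the equation $\partial_r \Psi_0 = 0$ already forces $s = 0$, so the condition $(s,t) \neq 0$ reduces to $t \neq 0$. For $t$ small and non-zero, zeros correspond to $G(\sigma, w, t) = 0$; a compactness argument based on the uniform lower bound on $|G(\sigma, w, 0)|$ outside arbitrary neighborhoods of $\{0, \pi\}$ shows that, for $|t|$ sufficiently small, these zeros are confined to those neighborhoods and are hence captured by the two local branches.

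The main obstacle, or rather the conceptual point on which everything hinges, is to identify \emph{which} variable to single out. A naive application of the implicit function theorem to the full system $\nabla_{s,t,w,r}\Psi_0 = 0$ in the four variables $(s,t,w,r)$ fails exactly at the branching points $P_\alpha$, since Lemma \ref{stwrhohessofPsionCx} shows that $\det \nabla^2_{s,t,w,r}\Psi_0 \simeq \sin^2(w)$ degenerates precisely there. The degeneracy is concentrated in the $w$-direction, and the correct strategy is to extract the trivial factor $t$ from the $\partial_w$ equation --- this transforms a degenerate four-dimensional problem into a transversal one-dimensional implicit equation to which the standard IFT applies.
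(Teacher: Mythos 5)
Your proposal is correct and follows essentially the same route as the paper: reduce to the scalar equation $\partial_w\varphi((t,\sigma),(0,\sigma),w)=0$, factor out the trivial zero $t$ (your $G$ is exactly the paper's auxiliary function $F=\frac{1}{t}\partial_w\varphi$), apply the implicit function theorem at $(w,t)=(\alpha,0)$ using the transversal vanishing of $\partial_w q_2(\sigma,\cdot,1)$ at $w=0,\pi$, and conclude global uniqueness by the uniform lower bound on $G(\sigma,\cdot,0)$ away from $\{0,\pi\}$. The only cosmetic difference is that you justify the non-degeneracy via Lemma \ref{strangewayofsaying} where the paper invokes Lemma \ref{thirdthetawderivaticevarphi}; these amount to the same computation.
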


\begin{proof}
    First, we recall that the \textit{correspondence equation} $\nabla_{s,t}\Psi_0 = 0$ only fixes the value of $u$ and $r$ as functions of $\sigma,s,t,w$. Moreover, the equation $\partial_{r}\Psi_0 = 0$ only fixes $s$ as a function of $w$ and $t$ since it reads
    \begin{equation}
        s + \varphi((t,\sigma),(0,\sigma),w) = 0.
    \end{equation}
    
    \myindent Hence, the only difficulty is to compute the zero set of
    \begin{equation}
        \partial_w \Psi_0 = r \partial_w \varphi((t,\sigma,),(0,\sigma),w),
    \end{equation}
    i.e., dropping the $r$, we need only study the zero set of $\partial_w \varphi((t,\sigma),(0,\sigma),w)$. We start by proving the existence of the branch $\mathcal{E}_{\sigma,\alpha}$ around $P_{\alpha}$, i.e. we want to prove that for small $t$ there is a smooth function
    \begin{equation}\label{ttowalphat}
        t \mapsto w_{\alpha}(\sigma,t)
    \end{equation}
    such that
    \begin{equation}
        \partial_w \varphi((t,\sigma),(0,\sigma),w_{\alpha}(\sigma,t)) = 0.
    \end{equation}
    
    \myindent This is obviously reminiscent of the implicit function theorem. However, the problem is that, using \eqref{asdevphi}, there holds
    \begin{equation}
        \partial_{ww} \varphi((0,\sigma),(0,\sigma),w) = 0
    \end{equation}
   for all $w$, thus we may not apply the implicit function theorem (indeed, this would give that the \textit{only} zeros of $\partial_w \varphi((t,\sigma),(0,\sigma),w)$ are given by \eqref{ttowalphat}, which is obviously false since $t = 0, w \in S^1$ is another branch of zeros). However, since we have in mind to investigate the zero set of $\partial_w \varphi$ with nonzero $t$, we may rather look at the auxiliary function
   \begin{equation}
       F(\sigma,t,w) := \frac{1}{t} \partial_w \varphi((t,\sigma),(0,\sigma),w).
   \end{equation}
   
   \myindent The asymptotic expansion \eqref{asdevphi} ensures that this is a smooth function of $(\sigma,t,w)$ such that
   \begin{equation}
       F(\sigma,0,\alpha) = 0.
   \end{equation}
   
   \myindent Moreover, we claim that
   \begin{equation}
       \partial_w F(\sigma,0,\alpha) = \partial_{\theta w w} \varphi((0,\sigma),(0,\sigma),\alpha) \neq 0.
   \end{equation}
   
   \myindent The proof of this assertion is not difficult, but, since we will later need to use the exact value of $\partial_{\theta w w} \varphi$, we refer the reader to Lemma \ref{thirdthetawderivaticevarphi}.
   
   \myindent Thus, the implicit function theorem yields that there exists a neighborhood $\mathcal{U}$ of $t = 0, w = 0$ and a smooth function $t \mapsto w_{\alpha}(\sigma,t)$ such that
   \begin{equation}
       \forall (t,w) \in \mathcal{U}, \ F(\sigma,t,w) = 0 \iff w = w_{\alpha}(\sigma,t).
   \end{equation}
   
   \quad
   
   \myindent We also need to prove that there are no zeros of $F$ which are not close to $w = 0$ or $w = \pi$. However, we observe that, thanks to \eqref{asdevphi},
   \begin{equation}
   \begin{split}
       F(\sigma,t,w) &= \partial_{\theta w}\varphi((0,\sigma,),(0,\sigma),w) + O(t) \\
       &= \partial_w q_2(\sigma,w,1) + O(t)
    \end{split}
   \end{equation}
   uniformly for all $w\in S^1$ and all small enough $t$. Now, using Lemma \ref{partialwq2issin}, it is obvious that $F$ cannot vanish for $w$ not close to one of $0$ or $\pi$.
   
   \quad 
   
   \myindent Coming back to the equation $\nabla_{s,t,r}\Psi_0 = 0$, we finally have the useful following exact formula for the branches $\mathcal{E}_{\sigma,\alpha}$
   \begin{equation}\label{defsrhoualpha}
   \begin{cases}
       s_{\alpha}(\sigma,t) = - \varphi((t,\sigma,),(0,\sigma),w_{\alpha}(\sigma,t)) \\
       r_{\alpha}(\sigma,t) = \frac{1}{p_1(\sigma,\nabla_x\varphi((t,\sigma,),(0,\sigma),w_{\alpha}(\sigma,t)))} \\
       h(u_{\alpha}(\sigma,t)) = r_{\alpha}(\sigma,t) \begin{pmatrix} 1 \\
       \partial_{\theta}\varphi((t,\sigma,),(0,\sigma) w_{\alpha}(\sigma,t))\end{pmatrix}
    \end{cases}.
   \end{equation}
\end{proof}

\myindent As a corollary of the proof, there holds the following.
 
\begin{lemma}\label{quantitativelowerboundnablastwr}
    There holds
    \begin{equation}
        |\nabla_{s,t,w,r}\Psi_0(\sigma,u,s,t,w,r)| \gtrsim |(s,t)| \min_{\alpha = 0,\pi} |(u,s,w,r) - (u_{\alpha}(\sigma,t), s_{\alpha}(\sigma,t), w_{\alpha}(\sigma,t), r_{\alpha}(\sigma,t))|
    \end{equation}
\end{lemma}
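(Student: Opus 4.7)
The plan is to bound each of the four components of $\nabla_{s,t,w,r}\Psi_0$ separately and then combine them, exploiting that on the exceptional branches the a priori relation $s = O(|t|)$ holds.

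First, the pair $(\partial_s\Psi_0,\partial_t\Psi_0) = (-h_1(u)+r,\,-h_2(u)+r\partial_\theta\varphi((t,\sigma),(0,\sigma),w))$ vanishes exactly when $h(u) = r(1,\partial_\theta\varphi)$, which by the implicit function theorem defines smooth functions $u^*(\sigma,t,w),\,r^*(\sigma,t,w)$. The Jacobian of this map in $(u,r)$ has determinant proportional to $\det(h'(u^*),h(u^*))$, which is bounded below in absolute value by Lemma \ref{deth'hnotzero}. A standard compactness argument then gives, on the bounded region of integration, $|(\partial_s,\partial_t)\Psi_0| \gtrsim |(u-u^*,r-r^*)|$. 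Similarly $\partial_r\Psi_0 = s + \varphi((t,\sigma),(0,\sigma),w) = s - s^*(\sigma,t,w)$ with $s^* := -\varphi$. Finally, using the factorization $\partial_w\varphi = t\,F(\sigma,t,w)$ established in the proof of Lemma \ref{newhorriblelemma}, together with the fact that $F$ has exactly two simple zeros $w_0(\sigma,t),\,w_\pi(\sigma,t)$ on $S^1$ for $|t|$ small, one obtains $|\partial_w\Psi_0| = r|t|\,|F(\sigma,t,w)| \gtrsim |t|\min_\alpha|w - w_\alpha(\sigma,t)|$, since $r$ is bounded below by the restriction to the annulus $\mathcal{K}_\sigma$.

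Writing $a := |(u-u^*,r-r^*)|$, $b := |s-s^*|$, $c := \min_\alpha|w - w_\alpha(\sigma,t)|$, these three estimates combine into $|\nabla_{s,t,w,r}\Psi_0|^2 \gtrsim a^2 + b^2 + t^2 c^2$. On the other hand, since $Z_\alpha(\sigma,t) = (u^*(w_\alpha),s^*(w_\alpha),w_\alpha,r^*(w_\alpha))$ and $u^*,s^*,r^*$ are uniformly smooth in $w$ on the compact domain, the triangle inequality yields $\min_\alpha|(u,s,w,r) - Z_\alpha(\sigma,t)|^2 \lesssim a^2 + b^2 + c^2$. Therefore the lemma reduces to the pointwise inequality $(s^2+t^2)(a^2+b^2+c^2) \lesssim a^2 + b^2 + t^2 c^2$. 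Since $|s|,|t| \lesssim 1$ on the domain, all terms except $s^2 c^2$ are immediately absorbed, and this remaining term is handled by invoking the first-order asymptotic expansion \eqref{asdevphi}, which forces
\begin{equation*}
|s^*(\sigma,t,w)| = |\varphi((t,\sigma),(0,\sigma),w)| \lesssim |t|
\end{equation*}
uniformly in $w$. Hence $|s| \leq b + |s^*| \lesssim b + |t|$, so $s^2c^2 \lesssim (b^2 + t^2)c^2 \lesssim b^2 + t^2 c^2$ using $c \lesssim 1$, which closes the estimate.

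The conceptual heart of the argument, and its main subtlety, lies in the appearance of the factor $|(s,t)|$ rather than merely $|t|$: it is compatible with the degeneracy of the Hessian in the $w$-direction on $\mathcal{C}_\sigma$ only through the a priori constraint $|s^*| = O(|t|)$ that is inherited from the first-order expansion of $\varphi$ near the diagonal. Without this observation the same argument would yield only the weaker bound with a $|t|$ factor on the right-hand side, which would be insufficient for the later resummation. The remaining technical point is the global nature of the lower bound $|(\partial_s,\partial_t)\Psi_0| \gtrsim |(u-u^*,r-r^*)|$; this is verified by a compactness and covering argument using the uniform invertibility of the Jacobian described above.
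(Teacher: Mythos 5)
Your proof is correct, and it is essentially the argument the paper intends: the lemma is stated there as a direct corollary of the proof of Lemma \ref{newhorriblelemma}, and your estimates (nondegenerate correspondence equation for $(u,r)$ via Lemma \ref{deth'hnotzero}, $\partial_r\Psi_0 = s-s^*$, and the factorization $\partial_w\varphi = tF$ with simple zeros at $w_\alpha$) are exactly the quantitative unpacking of that construction. Your extra observation that $|s^*|\lesssim |t|$ from \eqref{asdevphi}, which upgrades the factor $|t|$ to $|(s,t)|$, is a detail the paper leaves implicit but is consistent with its claim.
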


\subsubsection{Parameterization of the branches $\mathcal{E}_{\sigma,\alpha}$ by $u$}\label{subsubsec32HormPhase}

\myindent Similarly to Paragraph \ref{subsubsec22HormPhase}, since $\mathcal{E}_{\sigma,\alpha}$ is a curve of zeros of $\nabla_{s,t,w,r}\Psi$, naturally parameterized by $t$, we need to reparameterize it, where it is possible, by the variable $u$, in order to implement the strategy of Theorem \ref{mixedVdCABZ}. We may use that description if and only if $t \mapsto u_{\alpha}(\sigma,t)$ is (locally) invertible, which, geometrically, means that the curve $\mathcal{E}_{\sigma,\alpha}$ is sufficiently curved (see Paragraph \ref{subsubsec33Micro}). For that purpose, we give the following lemma

\begin{lemma}
    Let $\alpha = 0, \pi$. Up to choosing $\tilde{\mathcal{R}}_i^{\bullet}$ smaller, the function $u_{\alpha}(\sigma,t)$ given by Lemma \ref{newhorriblelemma} has the following behaviour.
    
    \quad
    
    \myindent i. If $i \neq 0$, there holds the following Taylor expansion
    \begin{equation}
        u_{\alpha}(\sigma,t) = u_{\alpha}(\sigma,0) + t^2 F_{\alpha}(\sigma,t),
    \end{equation}
    where $F$ is a smooth function which doesn't vanish.
    
    \myindent ii. If $i = 0$, there holds the following Taylor expansion
    \begin{equation}
        u_{\alpha}(\sigma,t) = u_{\alpha}(\sigma,0) + t^2 \sigma^2 F_{\alpha}(\sigma,t),
    \end{equation}
    where $F$ is a smooth function which doesn't vanish.
\end{lemma}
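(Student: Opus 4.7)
Setting $Q(\sigma, t) := \partial_\theta \varphi((t, \sigma), (0, \sigma), g_\sigma(w_\alpha(\sigma, t)))$ and $R(\sigma, t) := r_\alpha(\sigma, t)$, the defining relation \eqref{defsrhoualpha} reads $h(u_\alpha(\sigma, t)) = R(\sigma, t)(1, Q(\sigma, t))$. Differentiating in $t$ and taking the determinant against $h(u_\alpha)$ to eliminate $\partial_t R$ gives
\begin{equation*}
    \det(h'(u_\alpha), h(u_\alpha))\,\partial_t u_\alpha = -R^2 \partial_t Q,
\end{equation*}
and Lemma \ref{deth'hnotzero} guarantees that the determinant is uniformly bounded away from $0$. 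It therefore suffices to study $\partial_t Q$. The chain rule gives $\partial_t Q|_{t = 0} = \partial_{\theta\theta}\varphi|_{x = y, \xi = g_\sigma(\alpha)} + \partial_w q_2(\sigma, \alpha, 1)\,\partial_t w_\alpha|_{t = 0}$, in which the second summand vanishes by Lemma \ref{strangewayofsaying}. For the first summand, one $\theta$-differentiation of the eikonal equation $q_1(\sigma, \nabla_x \varphi) = q_1(\sigma, \xi)$ at $x = y$ yields $\partial_\Theta q_1 \cdot \partial_{\theta\theta}\varphi + \partial_\Sigma q_1 \cdot \partial_{\theta\sigma}\varphi = 0$; since $q_1$ is even in $\Sigma$ and $\xi = g_\sigma(\alpha)$ lies on $\{\Sigma = 0\}$, the coefficient $\partial_\Sigma q_1$ vanishes there, so $\partial_{\theta\theta}\varphi|_{x = y, \xi = g_\sigma(\alpha)} = 0$. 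Taylor's theorem then yields the desired expansion $u_\alpha(\sigma, t) - u_\alpha(\sigma, 0) = t^2 F_\alpha(\sigma, t)$ with $F_\alpha$ smooth.

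For case (i), with $\sigma \in \mathcal{I}_i$ bounded away from $0$, I would next show $F_\alpha(\sigma, 0) = \tfrac{1}{2}\partial_{tt} u_\alpha(\sigma, 0) \neq 0$. Differentiating the key identity once more in $t$ and using that both $\partial_t u_\alpha$ and $\partial_t Q$ vanish at $t = 0$ reduces the problem to computing $\partial_{tt}Q|_{t = 0}$, which involves $\partial_\theta^3 \varphi|_{x = y, \xi = g_\sigma(\alpha)}$ and $\partial_t w_\alpha|_{t = 0}$. A second $\theta$-differentiation of the eikonal (combined with one $\sigma$-differentiation, which gives $\partial_{\theta\sigma}\varphi|_{x = y, \xi = g_\sigma(\alpha)} = -\partial_\sigma q_1/\partial_\Theta q_1$) produces a closed-form expression for $\partial_{tt}u_\alpha(\sigma, 0)$ whose contributions are each proportional to $(\partial_\sigma q_1(\sigma, g_\sigma(\alpha)))^2 \propto (f'(\sigma))^2$, because $q_1$ depends on $\sigma$ only through $f(\sigma)$. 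Since $\mathcal{S}$ is simple (Definition \ref{simplehypothesis}), $\sigma = 0$ is the unique critical point of $f$, so $f'(\sigma) \neq 0$ for $\sigma \neq 0$; refining $\tilde{\mathcal R}_i^\bullet$ to avoid the generically isolated zeros of this closed-form expression then gives $F_\alpha(\sigma, 0) \neq 0$ uniformly on $\mathcal{I}_i$, whence $F_\alpha \neq 0$ on $\mathcal{I}_i \times \{|t| \ll 1\}$ by continuity.

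For case (ii), around $\sigma = 0$, two additional ingredients enter. First, the $\sigma \to -\sigma$ isometry of the symmetric surface $\mathcal{S}$ forces $u_\alpha(\cdot, t)$ to be even in $\sigma$, and hence $F_\alpha$ is even in $\sigma$. Second, and crucially, $u_\alpha(0, t)$ is independent of $t$: at $\sigma = 0$ the same $\sigma$-isometry makes $w \mapsto \varphi((t, 0), (0, 0), g_0(w))$ even, so $w_\alpha(0, t) = \alpha$ for all $t$; and the bicharacteristic of $q_1$ from $(0, 0)$ with direction $g_0(\alpha)$ is, by Lemma \ref{explicitbicharac}, the equatorial geodesic itself, which never leaves $\{\sigma = 0\}$, so conservation of $q_2 = \Theta$ along $q_1$-bicharacteristics forces $\nabla_x \varphi((t, 0), (0, 0), g_0(\alpha)) = g_0(\alpha)$ for all small $t$; hence $Q(0, t)$ and $R(0, t)$ are constant in $t$, and so is $u_\alpha(0, t)$. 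Thus $F_\alpha(0, t) = 0$, and combined with evenness in $\sigma$ this yields $F_\alpha(\sigma, t) = \sigma^2 G_\alpha(\sigma, t)$ with $G_\alpha$ smooth. The non-vanishing of $G_\alpha(0, 0)$ is the $\sigma^2$-leading-order content of the case (i) formula, using $f'(\sigma) = f''(0)\sigma + O(\sigma^2)$ with $f''(0) < 0$ (again by simplicity).

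The principal obstacle I anticipate is establishing the generic non-cancellation of the two terms in the closed formula for $\partial_{tt}u_\alpha(\sigma, 0)$, and equivalently the non-vanishing of $G_\alpha(0, 0)$ in case (ii). Ruling out an accidental cancellation between the $\partial_\theta^3 \varphi$ contribution and the quadratic correction $(\partial_t w_\alpha)^2 \partial_{ww} q_2$ requires combining the positivity $\partial_\lambda G > 0$ from Lemma \ref{dG1} with the explicit expression \eqref{explicitG} for $G$ and the sign of the curvature of the level set $\{q_1 = 1\}$ in order to control the relative signs of the two contributions; this is the only technically delicate calculation in the argument.
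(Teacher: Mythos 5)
Your overall route is essentially the paper's intended rigorous argument rather than a new one: the paper itself only lists the facts $\partial_t u_{\alpha}(\sigma,0)=0$, the parity in $\sigma$, and the non-vanishing of $\partial_{tt}u_{\alpha}(\sigma,0)$ and $\partial_{tt\sigma\sigma}u_{\alpha}(0,0)$, defers the substantive computation to the proof of Lemma \ref{exactstwrhohess}, and then presents a geometric heuristic. Your completed steps are correct and match the paper's computations: the identity $\det(h'(u_{\alpha}),h(u_{\alpha}))\,\partial_t u_{\alpha}=-r_{\alpha}^2\,\partial_t Q$ is the same device as in Lemma \ref{locinverseuw}; $\partial_t Q(\sigma,0)=0$ follows exactly as you say from $\partial_w q_2(\sigma,\alpha,1)=0$ and $\partial_{\theta\theta}\varphi(x,x,g_{\sigma}(\alpha))=0$ (this is Lemma \ref{miracleequation}); and the case ii structure (evenness in $\sigma$ together with $u_{\alpha}(0,\cdot)$ constant, coming from the equatorial bicharacteristic) is the right mechanism for the extra $\sigma^2$.

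The genuine gap is the non-vanishing of $F_{\alpha}$ (equivalently of $\partial_{tt}u_{\alpha}(\sigma,0)$, resp.\ of $G_{\alpha}$), which you flag but do not establish, and your fallback cannot repair it: the lemma only permits shrinking $\tilde{\mathcal{R}}_i^{\bullet}$, i.e.\ the $(s,t)$ rectangle, not the interval $\mathcal{I}_i$, so ``refining to avoid generically isolated zeros'' would not rescue the statement if $\partial_{tt}u_{\alpha}(\sigma_0,0)$ vanished at some interior $\sigma_0\in\mathcal{I}_i$; the conclusion must hold for every $\sigma\in\mathcal{I}_i$, and anyway a zero at $t=0$ cannot be removed by shrinking in $t$. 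In fact no genericity is needed, and the decisive inputs are not the convexity/curvature considerations you invoke but the closed forms of Lemma \ref{thirdthetawderivaticevarphi} together with $w'(0)=-\partial_{\theta\theta w}\varphi/(2\,\partial_{\theta ww}\varphi)$ obtained from the implicit equation (as in the proof of Lemma \ref{exactstwrhohess}). Writing $\partial_{tt}Q(\sigma,0)=\partial_{\theta\theta\theta}\varphi+2w'(0)\,\partial_{\theta\theta w}\varphi+(w'(0))^2\,\partial_{\theta ww}\varphi$ (the $\partial_{\theta w}\varphi\,w''$ term drops since $\partial_w q_2(\sigma,\alpha,1)=0$) and substituting those formulas, one finds that the $\partial_{\theta\theta\theta}\varphi$ term and the cross term cancel \emph{exactly}, and the surviving $(w'(0))^2\,\partial_{\theta ww}\varphi$ term gives $\partial_{tt}Q(\sigma,0)=-\tfrac14\,\partial_{\Sigma\Sigma}q_1\,(\partial_{\sigma}q_1)^2/(\partial_{\Theta}q_1)^3$ evaluated at $(\sigma,g_{\sigma}(\alpha))$. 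By Lemmas \ref{dersigq1lemma} and \ref{LemmapartSigSigq_1} this is nonzero for every $\sigma\neq 0$ and is comparable to $\sigma^2$ near the equator, which settles case i on all of $\mathcal{I}_i$ and gives $G_{\alpha}(0,0)\neq 0$ in case ii (the paper then handles uniformity near $\sigma=0$ by Taylor with integral remainder, using that the expression vanishes to second order in $\sigma$). So the cancellation you worry about does occur, but it is harmless; carrying out this explicit computation is precisely what your proof is missing.
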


\begin{proof}
    This lemma rigorously follows from the Taylor formula and the observations
    \begin{equation}
        \begin{cases}
            \partial_t u_{\alpha}(\sigma, 0) &= 0 \qquad \forall \sigma \\
            \partial_{\sigma t t} u_{\alpha}(0,t) &= 0 \qquad \forall t  \\
            \partial_{tt} u_{\alpha}(\sigma,0) &\neq 0 \qquad \forall \sigma \\
            \partial_{tt \sigma \sigma} u_{\alpha}(0,0) &\neq 0
        \end{cases},
    \end{equation}
    which themselves can be deduced from the parity properties (for the first two) or from Proposition \ref{exactstwrhohess} in the next paragraph and the implicit equation satisfied by $u_{\alpha}$ (for the last two). However, we wish to give a more geometric argument.
    
    \myindent Indeed, recall that $u_{\alpha}(\sigma,t)$ is defined by the equation (see \eqref{defsrhoualpha})
    \begin{equation}
    \begin{split}
    h(u_{\alpha}(\sigma,t)) &= r_{\alpha}(\sigma,t) \begin{pmatrix} 1 \\
       \partial_{\theta}\varphi((t,\sigma,),(0,\sigma) w_{\alpha}(\sigma,t))\end{pmatrix} \\
       &= r_{\alpha}(\sigma,t) \begin{pmatrix}
           q_1(\sigma, \nabla_x \varphi((t,\sigma,),(0,\sigma), w_{\alpha}(\sigma,t))) \\
           q_2(\sigma,\nabla_x \varphi((t,\sigma,),(0,\sigma), w_{\alpha}(\sigma,t)))
       \end{pmatrix}.
    \end{split}
    \end{equation}
    
    \myindent Now, as we have already argued in Paragraph \ref{subsubsec22HormPhase}, this means that $u_{\alpha}(\sigma,t)$ behaves like a cosine of the angle between $\nabla_x \varphi((t,\sigma,),(0,\sigma)), w_{\alpha}(\sigma,t))$ and the horizontal axis, i.e.
    \begin{equation}
        u_{\alpha}(\sigma,t) - u_{\alpha}(\sigma,0) \approx -\Omega(v_H, \nabla_x \varphi((t,\sigma,),(0,\sigma),w_{\alpha}(\sigma,t)))^2 =: -\Omega_t^2,
    \end{equation}
    where, for $v_1,v_2 \in \R^d \backslash \{0\}$, we define
    \begin{equation}
        \Omega(v_1,v_2) = \frac{\det(v_1,v_2)}{|v_1||v_2|},
    \end{equation}
    and where we define $v_H = \begin{pmatrix}
        1\\
        0
    \end{pmatrix}$.
    
    \myindent Now, we know moreover by definition that $\nabla_x \varphi((t,\sigma,),(0,\sigma),w_{\alpha}(\sigma,t))$ is. the direction of the only bicharacteristic curve joining $(t,\sigma)$ to $(0,\sigma)$, i.e. the angle $\Omega_t$ is roughly given by Figure \ref{Omegat}
    
    \begin{figure}[h]
    \includegraphics[scale = 0.5]{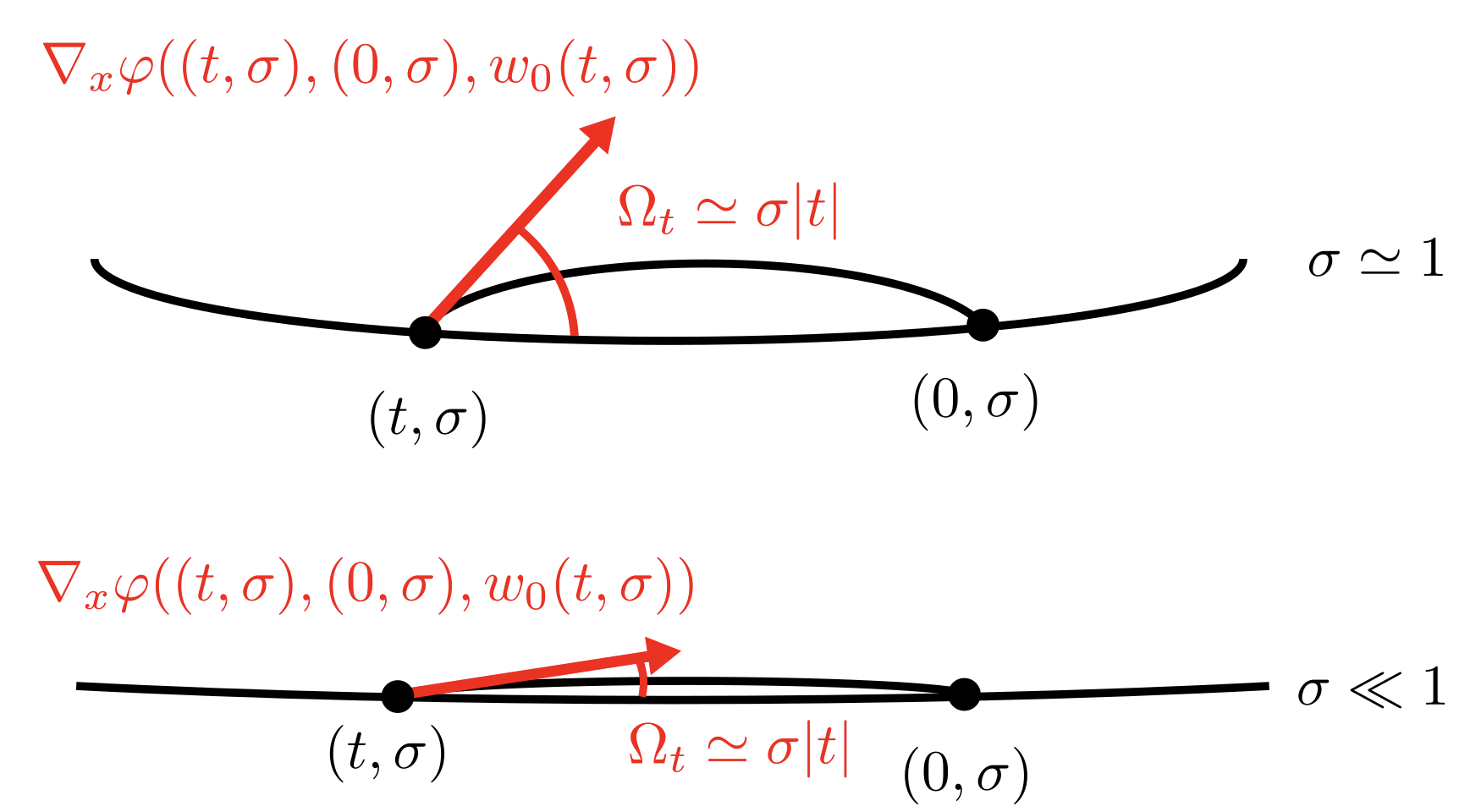}
    \centering
    \caption{The order of the angle $\Omega_t$ when $\sigma \to 0$}
    \label{Omegat}
    \end{figure}
    
    \myindent Hence, the geometric meaning of the lemma is merely that, for any $t_0$, 
    \begin{equation}
        |\Omega_{t_0}| \approx |\sigma t_0|.
    \end{equation}
    
    \myindent Now, an intuitive way to understand that relation is to come back to the definition of the Hamiltonian flow (Subsusbection \ref{subsubsec11CdV}). Indeed, if we let the bicharacteristic be defined by $t\mapsto (\theta(t),\sigma(t),\Theta(t),\Sigma(t))$, there holds
    \begin{equation}
        \Omega_{t_0} \approx \Sigma(0).
    \end{equation}
    
    \myindent Now, there holds (see Lemma \ref{partsig})
    \begin{equation}\label{dotsigmalocal}
        \dot{\sigma}(t) = - \frac{\partial q_1}{\partial \Sigma} \approx -\Sigma(t),
    \end{equation}
    and
    \begin{equation}
            \dot{\Sigma}(t) = \frac{\partial q_1}{\partial \sigma}\approx \sigma(t),
    \end{equation}
    i.e., since $\sigma(t)$ only varies of second order between the time $t = 0$ and the time $t = t_0$ thanks to \eqref{dotsigmalocal}, there holds roughly
    \begin{equation}
            \Sigma(t_0) - \Sigma(0) \approx \sigma t_0.
    \end{equation}
    
    \myindent Now, geometrically, $\Sigma(t_0) = - 
    \Sigma(0)$, hence we may conclude that
    \begin{equation}
        \Omega_{t_0} \approx \Sigma(0) \approx \sigma t_0.
    \end{equation}
\end{proof}

\myindent Thanks to this lemma, we may locally invert the function $t \mapsto u_{\alpha}(\sigma,t)$, similarly to Lemma \ref{locinverseuw}.

\begin{corollary}\label{deftpmu}
    Assume that $\sigma \neq 0$. Then the function
    \begin{equation}
        t \in \mathcal{K}_i^{(H)} \mapsto u_{\alpha}(\sigma,t)
    \end{equation}
    is a smooth and even surjection from $\mathcal{K}_i^{(H)}$ to some interval $[u_{\sigma}(\pi), u_{\sigma}(\pi) + \sigma^2 c(\sigma)]$ (if $\alpha = \pi$) or $[u_{\sigma}(0) - \sigma^2 c(\sigma), u_{\sigma}(0)]$ (if $\alpha = 0$), where $c(\sigma)$ is a smooth function which doesn't vanish when $\sigma \to 0$. Moreover, it admits two inverses
    \begin{equation}
        u \mapsto \begin{cases}
            t_+(\sigma,u) \in \mathcal{K}_i^{(H)} \\
            t_-(\sigma,u) \in \mathcal{K}_i^{(H)}
        \end{cases}.
    \end{equation}
    
    \myindent Finally, for $K = 0, 1, 2$,
    \begin{equation}
        (\partial_u)^K(t_{\pm})(u) \simeq \pm \frac{(-1)^{K-1}}{\sigma} \left(|u - u_{\sigma}(\alpha)|\right)^{\frac{1}{2} - K}.
    \end{equation}
\end{corollary}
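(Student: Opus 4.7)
The plan is to reduce the inversion of $t \mapsto u_\alpha(\sigma,t)$ to a standard application of the implicit function theorem in the variable $v := t^2$, exploiting the evenness of the map, and then recover the claimed power-type behaviour for $t_\pm$ and its derivatives by the chain rule.

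First I would establish that $u_\alpha(\sigma,\cdot)$ is even. This should follow from the symmetry with respect to the equator and the fact that $\varphi((t,\sigma),(0,\sigma),\xi)$ is invariant under the simultaneous reversal $(t,\Theta)\mapsto(-t,-\Theta)$ (which preserves both endpoints, exchanges the two orientations of the unique bicharacteristic, and is compatible with the branching chosen at $\alpha=0,\pi$); combined with the definition \eqref{defsrhoualpha} of $u_\alpha$, this gives $u_\alpha(\sigma,-t)=u_\alpha(\sigma,t)$. Consequently, the Taylor expansion provided by the preceding lemma can be upgraded, using smoothness and evenness in $t$, to
\[
u_\alpha(\sigma,t) - u_\sigma(\alpha) = t^2\, H_\alpha(\sigma,t^2),
\]
where $H_\alpha(\sigma,v)$ is smooth in $(\sigma,v)$, and satisfies $H_\alpha(\sigma,0)\simeq 1$ in case $i\neq 0$ (then $|\sigma|\simeq 1$ so the $1/\sigma$ factor is harmless), while $H_\alpha(\sigma,0)\simeq\sigma^2$ in case $i=0$. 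In both situations $H_\alpha(\sigma,0)$ is nonzero as soon as $\sigma\neq 0$, and we may write $H_\alpha(\sigma,v) = \sigma^2 \tilde H_\alpha(\sigma,v)$ with $\tilde H_\alpha(\sigma,0)\simeq 1$ uniformly.

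Second I would invert the relation $y = v\,H_\alpha(\sigma,v)$ with respect to $v$, where $y:=u-u_\sigma(\alpha)$. Since $\partial_v [vH_\alpha(\sigma,v)]|_{v=0} = H_\alpha(\sigma,0)\neq 0$ for $\sigma\neq 0$, the implicit function theorem yields a unique smooth $v=V(\sigma,y)$ with $V(\sigma,0)=0$ and
\[
V(\sigma,y) = \frac{y}{H_\alpha(\sigma,0)}\bigl(1 + O(y)\bigr) \simeq \frac{y}{\sigma^2}.
\]
In particular $V(\sigma,y)\geq 0$ precisely when $y$ has the appropriate sign (positive for $\alpha=\pi$, negative for $\alpha=0$), which, together with the shape of $\mathcal{K}_i^{(H)}$, identifies the surjection image as the interval $[u_\sigma(\pi), u_\sigma(\pi)+\sigma^2 c(\sigma)]$ or $[u_\sigma(0)-\sigma^2 c(\sigma), u_\sigma(0)]$ claimed in the statement, with $c(\sigma) \simeq 1$. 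The two inverses are then defined by
\[
t_\pm(\sigma,u) = \pm \sqrt{V(\sigma,u-u_\sigma(\alpha))}.
\]

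Third I would compute the derivatives by the chain rule. Writing $V = V(\sigma,y)$ and differentiating $y = V H_\alpha(\sigma,V)$ in $y$ gives $\partial_y V = [H_\alpha(\sigma,V) + V\partial_v H_\alpha(\sigma,V)]^{-1}$, hence $\partial_y V \simeq 1/\sigma^2$ uniformly for $|y|$ small. Since $\sqrt{V}\simeq |y|^{1/2}/|\sigma|$, a direct computation yields
\[
\partial_u t_\pm = \pm\frac{\partial_y V}{2\sqrt{V}} \simeq \pm \frac{1}{\sigma}\,|u-u_\sigma(\alpha)|^{-1/2},
\]
and one more differentiation, using $\partial_y^2 V = O(1/\sigma^4)$ together with the same reasoning, gives the $K=2$ bound $\partial_u^2 t_\pm \simeq \mp \sigma^{-1}|u-u_\sigma(\alpha)|^{-3/2}$. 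The case $K=0$ is immediate from $t_\pm = \pm\sqrt{V}$.

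The main technical point is the uniformity of all these estimates as $\sigma\to 0$ in the case $i=0$: one must track that all factors of $\sigma$ combine cleanly as $H_\alpha(\sigma,0)\simeq \sigma^2$, so that the only surviving singular factor is the $1/\sigma$ appearing in the statement. All other steps are routine consequences of smoothness, evenness, and the implicit function theorem.
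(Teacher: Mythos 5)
Your overall route is the intended one: the paper states this corollary without a separate proof, as a direct inversion of the Taylor expansion $u_{\alpha}(\sigma,t)=u_{\sigma}(\alpha)+t^{2}\sigma^{2}F_{\alpha}(\sigma,t)$ from the preceding lemma ("similarly to Lemma \ref{locinverseuw}"), and your steps two and three (pass to $v=t^{2}$, invert $y=vH_{\alpha}(\sigma,v)$ by the implicit function theorem, then chain rule, with the $(\partial_{y}V)^{2}V^{-3/2}$ term dominating $\partial_{y}^{2}V\cdot V^{-1/2}$ on the small interval) are exactly that computation, at the same level of rigor as the paper. The sign of the image (to the right of $u_{\sigma}(\pi)$, to the left of $u_{\sigma}(0)$), which you assert, is most cleanly justified by noting that the correspondence equation forces $u_{\alpha}(\sigma,t)\in\mathcal{U}_{\sigma}=[u_{\sigma}(\pi),u_{\sigma}(0)]$ (Definition \ref{defUsigma}), so the branch can only bend inward.

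The one step that does not work as written is your evenness argument. The symmetry $(t,\Theta)\mapsto(-t,-\Theta)$ is indeed an invariance of $\varphi$ (it is the reflection $\theta\mapsto-\theta$, under which $q_{1}$ is invariant), but on the angle variable it acts as $w\mapsto\pi-w$, so it maps a critical point $w_{0}(\sigma,t)$ near $w=0$ to one near $w=\pi$: it \emph{exchanges} the branches $\mathcal{E}_{\sigma,0}$ and $\mathcal{E}_{\sigma,\pi}$ rather than preserving each, and since $\partial_{\theta}\varphi$ changes sign it only yields the cross-branch relation $h(u_{\pi}(\sigma,-t))=$ mirror of $h(u_{0}(\sigma,t))$ about $\R(1,0)$, not $u_{\alpha}(\sigma,-t)=u_{\alpha}(\sigma,t)$. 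Evenness of each $u_{\alpha}(\sigma,\cdot)$ instead follows from rotation invariance together with the conservation of $(q_{1},q_{2})$ along the flow of $q_{1}$: rotating by $t$ identifies the bicharacteristic joining $(-t,\sigma)$ to $(0,\sigma)$ with the one joining $(0,\sigma)$ to $(t,\sigma)$, which is the same unoriented curve as the one joining $(t,\sigma)$ to $(0,\sigma)$; reading its direction in the fixed orientation $w$ near $\alpha$ at either endpoint gives the same values of $(q_{1},q_{2})$ (they are integrals of the motion, cf.\ Lemma \ref{explicitbicharac} and \eqref{defsrhoualpha}), hence the same $u$. With this replacement, the factorization $u_{\alpha}(\sigma,t)-u_{\sigma}(\alpha)=t^{2}H_{\alpha}(\sigma,t^{2})$ and the rest of your argument go through; note also that evenness is not actually needed for the existence of the two inverses and the derivative bounds, which only use the expansion and $\partial_{t}u_{\alpha}\neq0$ for $t\neq0$.
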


\subsubsection{The $(s,t,w,r)$ Hessian of $\Psi_0$ on the branches $\mathcal{E}_{\sigma,\alpha}$}\label{subsubsec33HormPhase}

\myindent Similarly to Paragraph \ref{subsubsec23HormPhase}, in order to implement the strategy of Theorem \ref{mixedVdCABZ}, we now wish to study the $(s,t,w,r)$ Hessian of $\Psi_0$ along the branches $\mathcal{E}_{\sigma,\alpha}$, since they are formed of zeros of $\nabla_{s,t,w,r}\Psi_0$. We first give the following lemma.

\begin{lemma}\label{exactstwrhohess}
    Assume that $i \neq 0$ (i.e. $\sigma$ is away from the equator). Then, along the branch $\mathcal{E}_{\sigma,\alpha}$ there holds
    \begin{equation}
        \det(\nabla_{s,t,w,r}^2 \Psi_0(\sigma,s_{\alpha}(\sigma,t),t,w_{\alpha}(\sigma,t),r_{\alpha}(\sigma,t))) = t^2 F(\sigma,s,t,w,r)
    \end{equation}
    for some smooth non vanishing function $F$.

    \quad

    \myindent In the case $i = 0$, i.e. $\sigma$ is close to the equator, this is modified into
    \begin{equation}
         \det(\nabla_{s,t,w,r}^2 \Psi_0(\sigma,s_{\alpha}(\sigma,t),t,w_{\alpha}(\sigma,t),r_{\alpha}(\sigma,t))) = t^2 \sigma^2 F(\sigma,s,t,w,r),
    \end{equation}
    where $F$ is a smooth non vanishing function.
\end{lemma}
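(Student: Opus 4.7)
The plan is to compute the $(s,t,w,r)$ Hessian of $\Psi_0 = -\langle h(u),(s,t)\rangle + r\bigl(s + \tilde\varphi(\sigma,t,w)\bigr)$, where $\tilde\varphi(\sigma,t,w) := \varphi((t,\sigma),(0,\sigma),w)$, and to exploit its sparsity. The variable $u$ appears only in the linear term, so the first row/column have a single non-zero entry (equal to $1$) at positions $(1,4)$ and $(4,1)$; along $\mathcal{E}_{\sigma,\alpha}$, the stationarity condition $\partial_w \tilde\varphi = 0$ kills the $(3,4)$ and $(4,3)$ entries. Expanding the determinant first along the $u$-column and then along the first row of the surviving $3\times 3$ minor reduces it to a $2\times 2$ determinant:
\[
\det \nabla^2_{s,t,w,r}\Psi_0\big|_{\mathcal{E}_{\sigma,\alpha}} \;=\; r^2 \bigl[(\partial_{tw}\tilde\varphi)^2 - (\partial_{tt}\tilde\varphi)(\partial_{ww}\tilde\varphi)\bigr], \qquad \text{evaluated at } w = w_\alpha(\sigma,t).
\]

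The central geometric input is the following. The eikonal equation $q_1(x,\nabla_x\varphi) = q_1(y,\xi) = 1$ combined with the rotational invariance $\partial_\theta q_1\equiv 0$ forces $\nabla_x\varphi((t,\sigma),(0,\sigma),w)$ to remain on $\mathcal{N}_\sigma$ for all $t$, so one may parameterize it as $\nabla_x\varphi((t,\sigma),(0,\sigma),w) = g_\sigma(w(t))$ for a smooth $t\mapsto w(t)$ with $w(0)=w$ and $w'(0) =: \beta(\sigma,w)$. Differentiating the identity $\partial_\theta\varphi = q_2(\sigma,\nabla_x\varphi) = [\nabla_x\varphi]_1$ in $t$ at $t=0$ (since $q_2 = \xi_1$ away from the poles) yields
\[
T_2(\sigma,w) := \partial_{\theta\theta}\varphi\big|_{(0,\sigma),(0,\sigma),w} = \beta(\sigma,w)\,\partial_w q_2(\sigma,w,1),
\]
and an analogous calculation gives $T_3(\sigma,\alpha) := \partial_{\theta\theta\theta}\varphi\big|_{(0,\sigma),(0,\sigma),\alpha} = \beta(\sigma,\alpha)^2\,\partial_{ww}q_2(\sigma,\alpha,1)$ — here the tangential $w''(0)$-contribution to $T_3$ drops out because $\partial_w q_2(\sigma,\alpha,1) = 0$ by Lemma \ref{strangewayofsaying}.

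Next, Taylor expanding $\tilde\varphi = tq_2(\sigma,w,1) + \tfrac{t^2}{2}T_2(\sigma,w) + \tfrac{t^3}{6}T_3(\sigma,w) + O(t^4)$, extracting $w_\alpha(\sigma,t) = \alpha -\tfrac{1}{2}\beta(\sigma,\alpha)\,t + O(t^2)$ from the branch equation $\partial_w\tilde\varphi(\sigma,t,w_\alpha) = 0$ via the implicit function theorem, and substituting, the $O(1)$ contribution cancels because $T_2(\sigma,\alpha) = 0$, and after a short computation the $O(t)$ contribution cancels as well, leaving
\[
(\partial_{tw}\tilde\varphi)^2 - (\partial_{tt}\tilde\varphi)(\partial_{ww}\tilde\varphi) \;=\; -\tfrac{1}{4}\,t^2\,\beta(\sigma,\alpha)^2\,\bigl(\partial_{ww}q_2(\sigma,\alpha,1)\bigr)^2 + O(t^3),
\]
the decisive cancellation being between $\tfrac{3}{4}\beta^2(\partial_{ww}q_2)^2$ and $T_3\,\partial_{ww}q_2 = \beta^2(\partial_{ww}q_2)^2$. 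Since $\partial_{ww}q_2(\sigma,\alpha,1)\simeq \cos\alpha\neq 0$, the proof reduces to the analysis of $\beta(\sigma,\alpha)^2$.

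The main obstacle is this last analysis. Geometrically, $\beta(\sigma,\alpha)$ is the first-order drift along $\mathcal{N}_\sigma$ of the cotangent direction $\nabla_x\varphi$ as $\theta$ leaves zero, which is governed by the Hamiltonian flow of $q_1$. Because the equator $\{\sigma = 0\}$ is itself a bicharacteristic curve of $q_1$ for initial cotangent directions $g_0(\alpha)$ with $\alpha \in \{0,\pi\}$ (which lie along the $\Theta$-axis), this drift vanishes identically at $\sigma = 0$, giving $\beta(0,\alpha) = 0$. The reflection symmetry $\sigma\to-\sigma$ of $\mathcal{S}$, preserving $\alpha\in\{0,\pi\}$, forces $\beta(\cdot,\alpha)$ to be odd in $\sigma$; hence $\beta(\sigma,\alpha) = \sigma\,\tilde\beta(\sigma,\alpha)$ with $\tilde\beta$ smooth, and the non-vanishing $\tilde\beta(0,\alpha) \neq 0$ may be traced back to the non-vanishing curvature of $\gamma_0$ at $h(u_\sigma(\alpha))$, i.e.\ to the twist Hypothesis \ref{twisthypothesis} which is enforced for every $\mathcal{S}\in\mathfrak{S}$. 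Combining these facts: in the regime $i\neq 0$, $\sigma$ is bounded away from $0$ and $\beta(\sigma,\alpha)^2$ is bounded below, yielding $\det\nabla^2\Psi_0 = t^2 F$ with $F$ non-vanishing; in the regime $i = 0$, $\beta(\sigma,\alpha)^2 = \sigma^2 \tilde\beta(\sigma,\alpha)^2$ with $\tilde\beta(0,\alpha)\neq 0$, yielding $\det\nabla^2\Psi_0 = t^2\sigma^2 F$ with $F$ non-vanishing.
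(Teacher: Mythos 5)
Your reduction to the $2\times2$ minor, the $t$-expansion along the branch, and the leading coefficient $-\tfrac14 t^2\beta^2(\partial_{ww}q_2)^2$ are essentially the paper's computation (your parameterization of $\nabla_x\varphi((t,\sigma),(0,\sigma),w)$ along $\mathcal{N}_\sigma$ is an equivalent, and rather clean, derivation of the third-derivative identities of Lemma \ref{thirdthetawderivaticevarphi}, and your coefficient matches $\det A'(0)$ in \eqref{detA'0}). However, there are two genuine gaps. First, and most importantly, the case $i=0$ does not follow from what you wrote: your expansion only shows that the \emph{coefficient of $t^2$} is $\simeq \sigma^2$, with an $O(t^3)$ remainder that you have not shown to carry a factor $\sigma^2$. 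For $\sigma$ small compared to $t$ (and $t$ ranges over a fixed small interval $\mathcal{K}_0^{(H)}$, not $t\to 0$), the unquantified $O(t^3)$ term can dominate $t^2\sigma^2$, so you cannot conclude $\det = t^2\sigma^2 F$ with $F$ smooth and non-vanishing. The paper explicitly flags exactly this ("one obviously cannot directly deduct the second part of the proposition from that fact") and closes it with a Taylor expansion with integral remainder in $\sigma$, using that the relevant determinant vanishes identically at $\sigma=0$ for all $t$ and that its first $\sigma$-derivative vanishes by parity; some argument of this kind is needed and is absent from your proposal.

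Second, your justification of $\tilde\beta(0,\alpha)\neq 0$ is misattributed. The Hessian $\nabla^2_{s,t,w,r}\Psi_0$ does not see $h''$ at all ($h(u)$ enters $\Psi_0$ only linearly in $(s,t)$), so the curvature of $\gamma_0$, i.e.\ the twist Hypothesis \ref{twisthypothesis}, is not what makes $\beta$ non-degenerate. Since $\beta(\sigma,\alpha)$ is (up to a harmless normalization) $\partial_\sigma q_1/\partial_\Theta q_1$ at $(\sigma,g_\sigma(\alpha))$, its vanishing to exactly first order in $\sigma$ comes from $\partial_\sigma q_1 = \partial_1 G\,\partial_\sigma p_1$ with $\partial_\sigma p_1 \simeq f'(\sigma)\simeq\sigma$ and $f''(0)\neq 0$, i.e.\ the \emph{simplicity} of $\mathcal{S}$ (non-degenerate equator) together with the ellipticity bound $\partial_1 G>0$ of Lemma \ref{dG1}; the factor $\partial_{ww}q_2(\sigma,\alpha,1)\neq 0$ likewise rests on $\partial_{\Sigma\Sigma}q_1\neq 0$ (Lemma \ref{LemmapartSigSigq_1}), not on any twist condition. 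As stated, the key non-vanishing claim that the whole lemma hinges on is asserted with an incorrect source; it should be replaced by the explicit computation of $\beta$ (equivalently, by invoking the formulas of Lemma \ref{thirdthetawderivaticevarphi} together with Lemmas \ref{dersigq1lemma} and \ref{LemmapartSigSigq_1}).
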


\begin{proof}
    We first give the proof in the case $i\neq 0$. Observe that, for any $(s,t,w,r)$, there holds
    \begin{equation}\label{explicitstwrhohess}
    \begin{split}
         &\det(\nabla_{s,t,w,r}^2 \Psi_0(\sigma,s,t,w,r)) \\ 
         &= \begin{vmatrix} 0 & 0 & 0 & 1 \\
         0 & r \partial_{\theta\theta} \varphi((t,\sigma,),(0,\sigma),w) & r \partial_{\theta w} \varphi((t,\sigma,),(0,\sigma),w) &  \partial_{\theta} \varphi((t,\sigma,),(0,\sigma),w) \\
         0 & r \partial_{\theta w} \varphi((t,\sigma,),(0,\sigma),w) & r \partial_{w w} \varphi((t,\sigma,),(0,\sigma),w) & \partial_{w} \varphi((t,\sigma,),(0,\sigma),w) \\
         1 &  \partial_{\theta} \varphi((t,\sigma,),(0,\sigma),w) & \partial_{w} \varphi((t,\sigma,),(0,\sigma),w)  & 0 \end{vmatrix} \\
         &= r^2 \begin{vmatrix}
          \partial_{\theta\theta} \varphi((t,\sigma,),(0,\sigma),w)  &  \partial_{\theta w} \varphi((t,\sigma,),(0,\sigma),w)  \\
           \partial_{\theta w} \varphi((t,\sigma,),(0,\sigma),w)  &  \partial_{w w} \varphi((t,\sigma,),(0,\sigma),w) 
           \end{vmatrix}.
    \end{split}
    \end{equation}
    
    \myindent Let us hence define 
    \begin{equation}\label{defAalphasigmat}
    	A_{\alpha}(\sigma,t) := \begin{pmatrix}
	 \partial_{\theta\theta} \varphi((t,\sigma,),(0,\sigma),w_{\alpha}(\sigma,t)))  &  \partial_{\theta w} \varphi((t,\sigma,),(0,\sigma),w_{\alpha}(\sigma,t))  \\
           \partial_{\theta w} \varphi((t,\sigma,),(0,\sigma),w_{\alpha}(\sigma,t))  &  \partial_{w w} \varphi((t,\sigma,),(0,\sigma),w_{\alpha}(\sigma,t)) 
           \end{pmatrix}.
     \end{equation}
            
    \myindent We will drop the explicit $\alpha,\sigma$ dependency of $A$ in the notations. Moreove, we give the proof for $\alpha = 0$ without loss of generality. Then, using Lemma \ref{newhorriblelemma}, we first observe, thanks to \eqref{miracleequation}, and \eqref{asdevphi}, that
    \begin{equation}
           	A(0) =
           	\begin{pmatrix}
		\partial_{\theta\theta}\varphi((0,\sigma,),(0,\sigma),0) & \partial_{\theta w}\varphi((0,\sigma,),(0,\sigma),0) \\
		\partial_{\theta w}\varphi((0,\sigma,),(0,\sigma),0) & \partial_{ww}\varphi((0,\sigma,),(0,\sigma),0)
		\end{pmatrix}= 0.
	\end{equation}
	
	\myindent In particular, thanks to the usual formula for differentiating a determinant, we know that
	\begin{equation}
		\det(A(t)) = \frac{t^2}{2} \det(A'(0)) + O(t^3).
	\end{equation}
	
	\myindent Hence, in order to prove the first part of the lemma, we need only prove that, for $i\neq 0$, 
	\begin{equation}
		\det(A'(0)) \neq 0.
	\end{equation}
	
	\myindent Now, from the explicit form of $A(t)$, one can compute that
	\begin{equation}
		A'(0) = \begin{pmatrix}
			\partial_{\theta \theta \theta} \varphi + w'(0) \partial_{\theta \theta w} \varphi & \partial_{\theta \theta w} \varphi + w'(0) \partial_{\theta w w} \varphi \\
			\partial_{\theta \theta w} \varphi + w'(0) \partial_{\theta w w} \varphi & \partial_{\theta w w} \varphi + w'(0) \partial_{w w w} \varphi
		\end{pmatrix},
	\end{equation}
	where all the terms depending on $\varphi$ are taken at $(0,\sigma,),(0,\sigma),0$.
	
	\myindent Now, thanks to Lemma \ref{thirdthetawderivaticevarphi}, we know that
	\begin{equation}
		\begin{cases}
			\partial_{\theta \theta \theta} \varphi((0,\sigma,),(0,\sigma),0) &= - \frac{\partial_{\Sigma \Sigma} q(\sigma,0,1) (\partial_{\sigma} q (\sigma, 0, 1))^2}{(\partial_{\Theta} q(\sigma,0,1))^3} \\
			\partial_{\theta \theta w} \varphi((0,\sigma,),(0,\sigma),0) &= - \frac{\mathfrak{l}_{\sigma}}{2\pi}\frac{\partial_{\Sigma \Sigma} q(\sigma,0,1) \partial_{\sigma} q (\sigma, 0, 1)}{(\partial_{\Theta} q(\sigma,0,1))^2}  \\
			\partial_{\theta w w} \varphi((0,\sigma,),(0,\sigma),0) &= - \left(\frac{\mathfrak{l}_{\sigma}}{2\pi}\right)^2 \frac{\partial_{\Sigma \Sigma} q_1(\sigma, g_{\sigma}(0))}{\partial_{\Theta} q_1(\sigma, g_{\sigma}(0))}  \\
			\partial_{w w w} \varphi((0,\sigma,),(0,\sigma),0) &= 0
		\end{cases}.
	\end{equation}
	
	\myindent Moreover, recall that $w(t)$ is \textit{defined} in the proof of Proposition \ref{newhorriblelemma} through the equation
	\begin{equation}\label{localFtwteqzero}
		F(t,w(t)) = 0,
	\end{equation}
	where
	\begin{equation}
		F(t,w) = \frac{1}{t} \partial_w \varphi ((t,\sigma,),(0,\sigma),w).
	\end{equation}
	
	\myindent In particular,
	\begin{equation}\label{localintexprofFtw}
		F(t,w) = \int_0^1 \partial_{\theta w} \varphi ((tv,\sigma),(0,\sigma),w) dv.
	\end{equation}
	
	\myindent Hence, on the one hand, differentiating \eqref{localFtwteqzero} in $t$, there holds
	\begin{equation}
		\partial_t F(0,0) + w'(0) \partial_w F(0,0) = 0,
	\end{equation}
	and, on the other hand, differentiating \eqref{localintexprofFtw} in $t$ and $w$, 
	\begin{equation}
		\begin{cases}
			\partial_t F(0,0) &= \frac{1}{2} \partial_{\theta \theta w} \varphi((0,\sigma,),(0,\sigma),0)  \\
			\partial_w F(0,0) &=  \partial_{\theta w w} \varphi((0,\sigma,),(0,\sigma),0) 
		\end{cases}.
	\end{equation}
	
	\myindent Thus, thanks to \eqref{localthrdderivatives} we finally find that 
	\begin{equation}
		w'(0) = -\frac{1}{2}\frac{\partial_{\sigma}q(\sigma,0,1)}{\partial_{\Theta}q(\sigma,0,1)}.
	\end{equation}
		
	\myindent Overall, there holds
	\begin{equation}\label{detA'0}
		\det(A'(0)) = \frac{1}{4}\left(\frac{\mathfrak{l}_{\sigma}}{2\pi}\right)^2\frac{(\partial_{\Sigma \Sigma} q(\sigma,0,1) )^2(\partial_{\sigma} q (\sigma, 0, 1))^2}{(\partial_{\Theta} q(\sigma,0,1))^4},
	\end{equation}
	which is nonzero for $\sigma \neq 0$ thanks to Lemma \ref{LemmapartSigSigq_1}.
	
	\quad
	
	\myindent Observe that the formula \eqref{detA'0} yields that, when $\sigma \to 0$,
	\begin{equation}
		\det(A'(0)) \simeq \sigma^2,
	\end{equation}
	in the sense that the quotient of the two is a nonzero smooth function. Unfortunately, one obviously cannot directly deduct the second part of the proposition from that fact. Instead, one should be a little more careful, and use Taylor formula with integral remainder, the key ingredient being the observation that
	\begin{equation}
		\forall t, \qquad \det(A'(\sigma = 0, t)) \equiv 0,
	\end{equation}
	which is obvious from the explicit expression of the phase at $\sigma = 0$, and that
	\begin{equation}
		\forall t, \qquad \frac{\partial}{\partial_{\sigma}} \left(\det(A'(\sigma, t)) \right) (\sigma = 0) \equiv 0,
	\end{equation}
	which comes from the \textit{parity} in $\sigma$.
\end{proof}

\myindent Now, this yields the following corollary, which is similar to Corollary \ref{norminversehessCsig}.

\begin{corollary}\label{norminversehessEsigalph}
    When $i \neq 0$, there holds along the branch $\mathcal{E}_{\sigma,\alpha}$
    \begin{equation}\label{localcaseineq1cor62}
        \left\| \left(\nabla_{s,t,w,r}^2 \Psi_0(\sigma, s_{\alpha}(\sigma,t), t, w_{\alpha}(\sigma,t), r_{\alpha}(\sigma,t))\right)^{-1} \right\| \lesssim \frac{1}{|t|}.
    \end{equation}
    
    \myindent When $i = 0$, this is modified into
    \begin{equation}\label{localcaseieq1cor62}
        \left\| \left(\nabla_{s,t,w,r}^2 \Psi_0(\sigma, s_{\alpha}(\sigma,t), t, w_{\alpha}(\sigma,t), r_{\alpha}(\sigma,t))\right)^{-1} \right\| \lesssim \frac{1}{\sigma^2|t|}.
    \end{equation}
\end{corollary}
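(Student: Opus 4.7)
The plan is to deduce the bound on $\|H^{-1}\|$, where $H := \nabla^2_{s,t,w,r}\Psi_0$ evaluated on $\mathcal{E}_{\sigma,\alpha}$, from Lemma \ref{exactstwrhohess} together with the cofactor formula $H^{-1} = (\det H)^{-1}\,{}^t\mathrm{Com}(H)$. First I would exploit the fact that $\mathcal{E}_{\sigma,\alpha}$ is defined by $\nabla_{s,t,w,r}\Psi_0 = 0$, so in particular $\partial_w\Psi_0 = r\,\partial_w\varphi = 0$ on the branch. This kills the $(3,4)$ and $(4,3)$ entries in the expression \eqref{explicitstwrhohess}. Setting $a := r\partial_{\theta\theta}\varphi$, $b := r\partial_{\theta w}\varphi$, $c := \partial_\theta\varphi$, $d := r\partial_{ww}\varphi$ evaluated at the branch point, the Hessian thus takes the form
\begin{equation*}
H = \begin{pmatrix} 0 & 0 & 0 & 1 \\ 0 & a & b & c \\ 0 & b & d & 0 \\ 1 & c & 0 & 0 \end{pmatrix}.
\end{equation*}

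Next, I would check that $a$, $b$, $d$ are each $O(|t|)$ uniformly in $\sigma$ on the branch, while $c$ remains bounded away from $0$ and $\infty$. Indeed, by Hörmander's asymptotic expansion \eqref{asdevphi}, at $t=0$ one has $\partial_{\theta\theta}\varphi = \partial_{ww}\varphi = 0$ and $\partial_{\theta w}\varphi((0,\sigma),(0,\sigma),w) = \partial_w q_2(\sigma,w,1)$, which by Lemma \ref{strangewayofsaying} vanishes at $w = \alpha \in \{0,\pi\}$. Since $w_\alpha(\sigma,0) = \alpha$ and these quantities are smooth in $t$ along the branch, Taylor's formula gives $a,b,d = O(|t|)$; the bound on $c = q_2(\sigma,\alpha,1) + O(|t|)$ is immediate.

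A direct but routine computation of the sixteen $3\times 3$ cofactors of $H$ then reveals the structural point: apart from the cofactor $M_{14} = ad - b^2 = \pm\det H$ (which contributes only $O(1)$ to $\|H^{-1}\|$ after division), every nonzero cofactor is an explicit monomial in $a,b,c,d$ containing at least one of the factors $a,b,d$, and is therefore $O(|t|)$. Dividing by $|\det H|$ and invoking Lemma \ref{exactstwrhohess} then finishes both cases: when $i\neq 0$ we have $|\det H| \simeq t^2$, giving \eqref{localcaseineq1cor62}; when $i = 0$ we have $|\det H| \simeq \sigma^2 t^2$, giving \eqref{localcaseieq1cor62}.

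The only delicate point I anticipate is tracking uniformity in $\sigma$ for the bound $a, b, d = O(|t|)$ near the equator (case $i=0$), where the bound on the determinant is much sharper. I expect this to cause no real obstacle, since the individual entries of the $2\times 2$ block $A_\alpha(\sigma,t)$ of \eqref{defAalphasigmat} vanish at $t=0$ for \emph{every} $\sigma$ (by the computation above), so their $O(|t|)$ bound holds jointly in $(\sigma,t)$; the additional $\sigma^2$ enters only through the cancellation $\det A_\alpha \simeq \sigma^2 t^2$ already established in Lemma \ref{exactstwrhohess}, and is therefore localized to the denominator in the cofactor formula.
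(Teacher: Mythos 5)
Your proposal is correct and follows essentially the paper's own route: the cofactor formula $H^{-1}=(\det H)^{-1}\,{}^t\mathrm{Com}(H)$ combined with the determinant bound of Lemma \ref{exactstwrhohess}, with the key observation (matching the paper's remark about the $(2,2)$ cofactor $-r\partial_{ww}\varphi$) that the cofactors not tied to the determinant are $O(|t|)$ uniformly in $\sigma$, so the extra $\sigma^2$ near the equator enters only through $\det H$. One small citation slip worth fixing: the vanishing of $\partial_{\theta\theta}\varphi((0,\sigma),(0,\sigma),\alpha)$ does not follow from \eqref{asdevphi} alone (which controls only $\xi$-derivatives); it is the eikonal-equation identity of Lemma \ref{miracleequation}, exactly as the paper invokes in the proof of Lemma \ref{exactstwrhohess}.
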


\begin{proof}
    The proof of the estimate \eqref{localcaseineq1cor62} is really the same as the proof of Corollary \ref{norminversehessCsig}, hence we leave it to the reader. The only subtlety is that, in the estimate \eqref{localcaseieq1cor62}, there is a $\frac{1}{\sigma^2}$ in the upper bound and not a $\frac{1}{\sigma}$. The point is that, looking at the explicit formula for the $(s,t,w,r)$ Hessian along $\mathcal{E}_{\sigma,\alpha}$ given by  \eqref{explicitstwrhohess}, one sees that the cofactor $(2,2)$ is given by
    \begin{equation}
        \begin{vmatrix}
            0 & 0 & 1 \\
            0 & r_{\alpha}(\sigma,t)\partial_{ww}\varphi((t,\sigma,),(0,\sigma),w_{\alpha}(\sigma,t) & 0 \\
            1 & 0 & 0
        \end{vmatrix} = - r_{\alpha}(\sigma,t)\partial_{ww}\varphi((t,\sigma,),(0,\sigma),w_{\alpha}(\sigma,t)).
    \end{equation}
    
    \myindent Now, as $\sigma \to 0$, thanks to \eqref{asdevphi}, this coefficient is uniformly of order $t$, and \textit{not} of order $\sigma t$, hence, the coefficient $(2,2)$ in $\left(\nabla_{s,t,w,r}^2 \Psi_0(\sigma, s_{\alpha}(\sigma,t), t, w_{\alpha}(\sigma,t), r_{\alpha}(\sigma,t))\right)^{-1}$ is of order $\frac{1}{\sigma^2 |t|}$.
\end{proof}

\quad

\myindent Another consequence of Lemma \ref{exactstwrhohess} is that the determinant of the $(s,t,w,r)$ Hessian of $\Psi_0$ uniformly vanishes along $\mathcal{E}_{\sigma,\alpha}$ when $\sigma \to 0$, which is expected since the branches tend to vertical lines. Hence, as already announced, the argument for estimating the integral near $\mathcal{E}_{\sigma,\alpha}$ has to be different when $\sigma$ is very small. Now, in that case, there actually holds the following.

\begin{lemma}\label{FullHessiannearequator}
	Let $C,c> 0$. Assume that $|\sigma| \leq CM^{-\frac{1}{2}}$. Then, up to refining $\mathcal{Q}_i^{(H)}$, if
	\begin{equation}\label{defDnearequator}
	    \mathcal{D}^{\pm}_{\alpha} := \left\{(u,s,t,w,r)\ \text{such that} \ \begin{cases}
	        |u - u_{\sigma}(\alpha)| \lesssim_{c,C} 1\\
	       \pm t \geq cM^{-1} \\
	        |s| \lesssim_{c,C} 1 \\
	       |w - \alpha| \lesssim_{c,C} M^{-1} \\
	       (w,r) \in \mathcal{K}_{\sigma}
	    \end{cases}\right\},
	\end{equation}
	then for all $(u,s,t,w,r) \in \mathcal{D}^{\pm}_{\alpha}$,
	\begin{equation}
	    \left|\det(\nabla_{u,s,t,w,r}^2 \Psi(\sigma,A,B,u,s,t,w,r)) \right| \gtrsim_{c,C} M^{-1}.
	\end{equation}
\end{lemma}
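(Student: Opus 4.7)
The plan is to compute the $5 \times 5$ determinant $\det H$, where $H := \nabla^2_{u,s,t,w,r}\Psi$, by a bordered-determinant expansion and to extract a lower bound of size $|t| \geq cM^{-1}$ from the coupling between the $u$-direction and the $(s,t)$-block. Using the splitting $\Psi = \Psi_G(u, A, B) + \Psi_0$ with $\partial_u\Psi_0 = -\langle h'(u), (s, t)\rangle$, one has $\partial_u\partial_w\Psi = \partial_u\partial_r\Psi = 0$, so $H$ takes the block form
\[
H = \begin{pmatrix} h_{uu} & v^T \\ v & B \end{pmatrix}, \qquad v = (-h_1'(u), -h_2'(u), 0, 0)^T, \qquad B = \nabla^2_{s,t,w,r}\Psi_0,
\]
with $h_{uu} = -\langle h''(u), (A, B) + (s, t)\rangle$. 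The bordered-determinant identity gives $\det H = h_{uu}\det B - v^T \operatorname{adj}(B)\, v$.

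Expanding $v^T \operatorname{adj}(B) v$ using the explicit form \eqref{explicitstwrhohess} of $B$, only the top-left $2 \times 2$ block of $\operatorname{adj}(B)$ contributes; a direct cofactor computation yields $[\operatorname{adj}(B)]_{tt} = -r\partial_{ww}\varphi$, $[\operatorname{adj}(B)]_{st} = -r(\partial_{\theta w}\varphi\,\partial_w\varphi - \partial_{ww}\varphi\,\partial_\theta\varphi)$ and $[\operatorname{adj}(B)]_{ss} = r[2\partial_\theta\varphi\,\partial_{\theta w}\varphi\,\partial_w\varphi - (\partial_\theta\varphi)^2\partial_{ww}\varphi - (\partial_w\varphi)^2\partial_{\theta\theta}\varphi]$. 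After collecting the terms proportional to $\partial_{ww}\varphi$, these combine into the key factorization
\[
-v^T \operatorname{adj}(B) v = r\,\partial_{ww}\varphi\,\bigl(h_1'\partial_\theta\varphi - h_2'\bigr)^2 \;+\; R,
\]
where $R = -2r h_1'(h_1'\partial_\theta\varphi - h_2')\partial_{\theta w}\varphi\,\partial_w\varphi + r(h_1')^2 (\partial_w\varphi)^2 \partial_{\theta\theta}\varphi$ contains only terms involving $\partial_w\varphi$.

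Next, evaluate the main term in $\mathcal{D}^\pm_\alpha$ via the Taylor expansion of the second derivatives of $\varphi$ about $(t, \sigma, w) = (0, 0, \alpha)$. The asymptotic \eqref{asdevphi} together with Lemma \ref{miracleequation} gives $\partial_{ww}\varphi|_{(t,w)=(0,\alpha)} = 0$, and Lemma \ref{thirdthetawderivaticevarphi} gives $\partial_{www}\varphi|_{(t,w)=(0,\alpha)} = 0$ and $\partial_{\theta w w}\varphi|_{(t,w)=(0,\alpha)} = -(\mathfrak{l}_\sigma/2\pi)^2 \partial_{\Sigma\Sigma}q_1/\partial_\Theta q_1$. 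Using the formula \eqref{explicitG}, a direct computation gives $\partial_{\Sigma\Sigma}q_1(0, g_0(\alpha)) = \partial_1 G(|\Theta_0|, |\Theta_0|)/|\Theta_0| > 0$ by Lemma \ref{dG1}, hence $\partial_{ww}\varphi = -t\,C(\sigma) + O(t^2 + t|w-\alpha|)$ with $|C(\sigma)| \gtrsim 1$ uniformly. By Lemma \ref{deth'hnotzero}, $h_1 h_2' - h_2 h_1' > 0$ at $h(u_\sigma(\alpha)) = (1/p_1, q_2/p_1)$, which translates to $h_2' - q_2 h_1' > c_1 > 0$; since $\partial_\theta\varphi = q_2$ at $(t, w) = (0, \alpha)$ and $u$ ranges in a bounded set, the squared factor $(h_1'\partial_\theta\varphi - h_2')^2 \gtrsim 1$ uniformly on $\mathcal{D}^\pm_\alpha$. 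Therefore $|r\,\partial_{ww}\varphi\,(h_1'\partial_\theta\varphi - h_2')^2| \gtrsim |t| \geq cM^{-1}$.

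Finally, the residue $R$ is controlled using $\partial_w\varphi = O(|t||w-\alpha|)$ (since $\partial_w\varphi|_{t=0}\equiv 0$ and $\partial_{\theta w}\varphi|_{t=0,w=\alpha} = 0$), $\partial_{\theta w}\varphi = O(|w-\alpha| + |t\sigma|)$, and $\partial_{\theta\theta}\varphi = O(|t|\sigma^2 + |\sigma||w-\alpha|)$, giving $|R| \lesssim |t|M^{-3/2}$, which is $o(|t|)$ for $M$ large. The term $h_{uu}\det(B)$ is bounded by $M \cdot \sigma^2 t^2 r^2 \lesssim M \cdot M^{-1} \cdot t^2 = t^2$ using Lemma \ref{exactstwrhohess}; after refining $\mathcal{Q}_i^{(H)}$ so that $|t| \leq c_0$ with $c_0$ sufficiently small, this is $\leq c|t|/4$. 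Combining the three estimates yields $|\det H| \geq c|t|/2 \gtrsim cM^{-1}$. The main obstacle is establishing the uniform non-vanishing of $\partial_{\Sigma\Sigma}q_1$ at the equator $\sigma = 0$ (which is the fundamental geometric input allowing the main contribution to survive uniformly as $\sigma \to 0$) and a careful bookkeeping of the Taylor remainders to rule out any spurious cancellation between the main term and $R$ across the non-isotropic region $\mathcal{D}^\pm_\alpha$.
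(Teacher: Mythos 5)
Your proof follows essentially the same route as the paper's: expand the $5\times 5$ Hessian determinant so as to isolate the term $r\,\partial_{ww}\varphi\,\bigl(h_1'\partial_\theta\varphi-h_2'\bigr)^2$, show it is $\gtrsim |t|\geq cM^{-1}$ using $\partial_{\theta ww}\varphi\neq 0$ (i.e.\ the uniform non-vanishing of $\partial_{\Sigma\Sigma}q_1$, Lemma \ref{LemmapartSigSigq_1}) together with Lemma \ref{deth'hnotzero}, and show the competing term $h_{uu}\,r^2\det\bigl(\nabla^2_{\theta,w}\varphi\bigr)$ is $\ll |t|$ in the regime $|\sigma|\lesssim M^{-1/2}$. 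In fact your bordered-determinant identity is more complete than the paper's displayed formula \eqref{randomcomputationofdet}, since you keep the remainder $R$ containing $\partial_w\varphi$ and verify it is negligible, which the paper passes over.

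Two small repairs are needed. First, to bound $h_{uu}\det B$ you invoke Lemma \ref{exactstwrhohess}, but that lemma evaluates $\det\bigl(\nabla^2_{s,t,w,r}\Psi_0\bigr)$ only \emph{along the branch} $\mathcal{E}_{\sigma,\alpha}$; at a general point of $\mathcal{D}^{\pm}_{\alpha}$ one has, from the very Taylor bounds you list for $R$, $\det\bigl(\nabla^2_{\theta,w}\varphi\bigr)=O\bigl(\sigma^2t^2+\sigma|w-\alpha|\,|t|+|w-\alpha|^2\bigr)$, so that $h_{uu}\det B=O\bigl(C^2t^2+Cc'M^{-1/2}|t|+c'^2M^{-1}\bigr)$, where $c'$ is the implicit constant in the window $|w-\alpha|\lesssim M^{-1}$. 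This is \emph{not} $\lesssim t^2$ in general (for $t\sim M^{-1}$ the last term is of size $c'^2M^{-1}\gg M^{-2}$); nevertheless, since $M^{-1}\leq t/c$, each term is a small multiple of $|t|$ once $c'$ and the $t$-window are chosen small relative to $c,C$ (this is exactly the paper's use of an "arbitrarily small" $c'(c,C)$), so your conclusion survives with ingredients you already have — just replace the citation of Lemma \ref{exactstwrhohess} by this direct off-branch estimate. Second, the uniform lower bound $\bigl(h_1'\partial_\theta\varphi-h_2'\bigr)^2\gtrsim 1$ requires $|u-u_\sigma(\alpha)|$ to be \emph{small} (so that $h'(u)$ stays transversal to the fixed direction $h(u_\sigma(\alpha))$), not merely bounded as you state; this is the intended reading of the constraint $|u-u_\sigma(\alpha)|\lesssim_{c,C}1$ in \eqref{defDnearequator}, and with that reading your estimate is correct.
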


\begin{proof}
    Observe that
    \begin{equation}\label{randomcomputationofdet}
        \begin{split}
            \det(\nabla_{u,s,t,w,r}^2\Psi) &= \begin{vmatrix}
                -\scal{h''(u)}{(s,t) + (A,B)} & - h'(u)_1 & -h'(u)_2 & 0 & 0 \\
                -h'(u)_1 & 0 & 0 & 0 & 1 \\
                -h'(u)_2 & 0 & r\partial_{\theta\theta}\varphi & r\partial_{\theta w} \varphi & \partial_{\theta} \varphi \\
                0 & 0 & r\partial_{w\theta} \varphi & r\partial_{ww} \varphi & \partial_{w}\phi \\
                0 & 1 & \partial_{\theta}\varphi & \partial_{w}\varphi & 0 
            \end{vmatrix} \\
            &= -\scal{h''(u)}{(s,t) + (A,B)} r^2\begin{vmatrix}
                \partial_{\theta\theta}\varphi & \partial_{\theta w}\varphi \\
                \partial_{\theta w}\varphi & \partial_{ww}\varphi
            \end{vmatrix} + r\partial_{ww}\varphi \begin{vmatrix}
                -h'(u)_1 & -h'(u)_2 \\
                1 & \partial_{\theta}\varphi
            \end{vmatrix}.
        \end{split}
    \end{equation}
    
    \myindent Now, since there holds thanks to \eqref{asdevphi}, \eqref{deth'hnotzero} and Lemma \ref{defcirclestatpoints},
    \begin{equation}
        r_{\sigma}(\alpha) \begin{vmatrix}
                -h'(u_{\sigma}(\alpha))_1 & -h'(u_{\sigma}(\alpha))_2 \\
                1 & \partial_{\theta}\varphi((0,\sigma,),(0,\sigma),\alpha)
            \end{vmatrix} = \det(h'(u_{\sigma}(\alpha)), h(u_{\sigma}(\alpha)))\neq 0,
    \end{equation}
    there obviously holds, for any $t$ small, for $|w-\alpha| \ll 1$, for $(w,r) \in \mathcal{K}_{\sigma}$ and $|u - u_{\alpha}(\sigma)| \ll 1$ that
    \begin{equation}
        \left|r\varphi \begin{vmatrix}
                -h'(u)_1 & -h'(u)_2 \\
                1 & \partial_{\theta}\varphi ((t,\sigma,),(0,\sigma),w)
            \end{vmatrix} \right| \gtrsim 1.
    \end{equation}
    
    \myindent Now, thanks to \eqref{asdevphi}, there holds moreover that
    \begin{equation}
        |\partial_{ww} \varphi((t,\sigma,),(0,\sigma),w) | \gtrsim |t|
    \end{equation}
    for any $|w| \ll |t| \ll 1$. In particular, the second term in \eqref{randomcomputationofdet} is of order $\gtrsim |t|$. 
    
    \myindent Now, regarding the first term, observe that, thanks to Lemma \ref{thirdthetawderivaticevarphi},
    \begin{equation}
        \begin{cases}
            &\partial_{\theta\theta}\varphi ((t,\sigma,),(0,\sigma),w) \lesssim \sigma^2 |t| + |\sigma| |w| \\
            &\partial_{\theta w}\varphi ((t,\sigma,),(0,\sigma),w) \lesssim |w| + |\sigma| |t| \\
            &\partial_{ww}\varphi ((t,\sigma,),(0,\sigma),w) \lesssim |t|
        \end{cases}.
    \end{equation}
    
    \myindent In particular,
    \begin{equation}
        \begin{split}
            \begin{vmatrix}
                \partial_{\theta\theta}\varphi & \partial_{\theta w}\varphi \\
                \partial_{\theta w}\varphi & \partial_{ww}\varphi
            \end{vmatrix} &= \begin{vmatrix}
                O(\sigma^2 t) + O(\sigma w) & O(w) + O(\sigma t)  \\
                O(w) + O(\sigma t) & O(t) 
            \end{vmatrix} \\
            &= O(\sigma^2 t^2) + O(\sigma w t) + O(w^2).
        \end{split}
    \end{equation}
    
    \myindent In the regime of the lemma, we may deduce that
    \begin{equation}
    \begin{split}
        \begin{vmatrix}
                \partial_{\theta\theta}\varphi & \partial_{\theta w}\varphi \\
                \partial_{\theta w}\varphi & \partial_{ww}\varphi
            \end{vmatrix} &= O(C^2 M^{-1} t^2) + O(C c' M^{-\frac{3}{2}} t) + O(c'^2 M^{-2}),
    \end{split}
    \end{equation}
    where $c'(c,C)$ is an arbitrarily small constant.  Hence, the first term is of order
    \begin{equation}
            -\scal{h''(u)}{(s,t) + (A,B)} r^2\begin{vmatrix}
                \partial_{\theta\theta}\varphi & \partial_{\theta w}\varphi \\
                \partial_{\theta w}\varphi & \partial_{ww}\varphi
            \end{vmatrix} = O\left( C^2 t^2 + Cc'M^{-\frac{1}{2}} t + c'^2 M^{-1}\right)\ll |t|,
    \end{equation}
    provided $|t|$ is small enough depending on $C$ (hence up to refining $\mathcal{Q}_i^{(H)}$), and $c'$ is small enough depending on $c$. Hence, in the regime of the lemma, the second term in \eqref{randomcomputationofdet} wins and there holds
    \begin{equation}
        |\det(\nabla_{u,s,t,w,r}^2 \Psi(\sigma,A,B,u,s,t,w,r))| \gtrsim |t| \gtrsim c M^{-1}.
    \end{equation}
\end{proof}

\subsubsection{The 1D remaining phase function along the branch $\mathcal{E}_{\sigma,\alpha}$}\label{subsubsec34HormPhase}

\myindent Now, thanks to Corollary \ref{deftpmu}, we may apply the strategy of Theorem \ref{mixedVdCABZ} to the curve $\mathcal{E}_{\sigma,\alpha}$, at least when $\sigma$ is not too small. Indeed, let us define
\begin{equation}\label{defesigmaalphapm}
    \mathcal{E}_{\sigma,\alpha}^{\pm} := \left\{ (u_{\alpha}(\sigma,t), s_{\alpha}(\sigma,t), t, w_{\alpha}(\sigma,t), r_{\alpha}(\sigma,t)) \qquad 0 < \pm t \ll 1\right\}.
\end{equation}

\myindent Then, for $u$ near $u_{\sigma}(\alpha)$, there is a one-to-one smooth mapping $u \mapsto Z(u) \in \mathcal{E}_{\sigma,\alpha}^{\pm}$. We know that $\mathcal{E}_{\sigma,\alpha}^{\pm}$ is composed of $(s,t,w,r)$ stationary points with nonvanishing $(s,t,w,r)$ hessian (provided that $\sigma \neq 0$). In order to implement the strategy of Theorem \ref{mixedVdCABZ}, we thus need that the remaining 1D phase function has a nonvanishing $p$th derivative for some $p$. We will only need this analysis in the case $(A,B) \neq (0,0)$, hence, we assume that $M\geq 1$.

\begin{lemma}\label{Psi1DnearEalpha}
    Let $\alpha = 0$ or $\pi$ and $|\sigma| \geq C M^{-\frac{1}{2}}$ where $C> 0$ is large enough. We define the remaining 1D phase function for $u$ close to $u_{\sigma}(\alpha)$ on the branch $\mathcal{E}_{\sigma,\alpha}^{\pm}$ by
    \begin{equation}
        \begin{split}
        \Psi^{1D} : u \mapsto &\Psi(\sigma,A,B,u,s_{\alpha}(\sigma,t_{\pm}(u)),t_{\pm}(u), w_{\alpha}(\sigma,t_{\pm}(u)), r_{\alpha}(\sigma,t_{\pm}(u)) \\
        &= -\scal{h(u)}{(s_{\alpha}(\sigma,t_{\pm}(u)),t_{\pm}(u)) + (A,B)}.
        \end{split}
    \end{equation}

    \myindent Then, there exist universal constants $c_1, c_2 > 0$ such that if $I := [u_{\sigma}(\pi),u_{\sigma}(\pi) + \sigma^2 c(\sigma)]$ (if $\alpha = \pi$), or $I =[u_{\sigma}(0) - \sigma^2 c(\sigma), u_{\sigma}(0)]$ (if $\alpha = 0$) then $\Psi^{1D}$ satisfies Property $(VdC)_3$ on $I$ with constants $c_1M, c_2M$ (see Definition \ref{VdCN}), for some universal $c_1,c_2 > 0$.
 \end{lemma}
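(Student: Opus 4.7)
The plan is to decompose
\[
\Psi^{1D}(u) = G(u) + L(u), \qquad G(u) := -\scal{h(u)}{(A,B)}, \qquad L(u) := -\scal{h(u)}{(s_\alpha(\sigma, t_\pm(u)), t_\pm(u))},
\]
where $G$ is the ``global'' contribution of size $M$ and $L$ is the ``local'' correction coming from the exceptional branch; then to establish the property $(VdC)_3$ for $G$ alone on the (short) interval $I$, and to absorb $L$ as a controlled perturbation using the hypothesis $|\sigma| \geq CM^{-1/2}$ with $C$ large.

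Since $I$ has length $\sim \sigma^2 \leq 1$, the vectors $h^{(k)}$ are essentially constant on $I$, so the $(VdC)_3$ analysis for $G$ reduces to the following geometric fact. By the twist Hypothesis~\ref{twistV2} and the presence of only ordinary inflection points on $\gamma$ (Remark~\ref{curvaturegamma}), at every $u\in\gamma$ the three vectors $h'(u), h''(u), h'''(u)$ span $\R^2$: away from inflection points, $h'(u)$ and $h''(u) = \kappa(u)\, h'(u)^\perp$ are linearly independent since $\kappa\neq 0$; at an ordinary inflection point, $h''$ vanishes to first order while $h''' \perp h'$ is nonzero. By compactness of $S^1\times\gamma$, there exists a universal $c_0 > 0$ such that $\max_{k\in\{1,2,3\}}|\scal{h^{(k)}(u)}{\hat n}| \geq c_0$ for every $\hat n\in S^1$. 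Setting $(A,B) = M\hat n$ yields a partition of $I$ of uniformly bounded cardinality into sub-intervals on each of which $|G^{(p_i)}(u)| \geq \tfrac{c_0}{2} M$ for some $p_i \in \{1,2,3\}$, with the automatic upper bound $\|G''\|_{L^\infty(I)} \leq C_0 M$.

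For the local term $L$, using Corollary~\ref{deftpmu} and the parity $s_\alpha(\sigma,-t) = s_\alpha(\sigma,t)$ (which gives $s_\alpha = O(t^2)$), the dominant contribution to the derivatives of $L$ near $u = u_\sigma(\alpha)$ is $-h_2(u) t_\pm(u)$, yielding
\[
|L^{(k)}(u)| \lesssim \sigma^{-1} |u-u_\sigma(\alpha)|^{1/2 - k}, \qquad k = 1, 2, 3,
\]
and $L''(u)$ having a definite sign near $u_\sigma(\alpha)$, namely that of $\mp h_2(u_\sigma(\alpha))/\sigma$, which does not vanish since $q_2(\sigma, \alpha, 1) \neq 0$ at $\alpha \in \{0, \pi\}$. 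I would then fix a small $\eta > 0$ and define a threshold $u_*$ by $|L''(u_*)| = \eta M$, splitting $I$ into a \emph{near} region $[u_\sigma(\alpha), u_*]$ where $L''$ dominates $G''$ with fixed sign, yielding $|(\Psi^{1D})''| \geq c_1 M$ and allowing $(VdC)_2$; and a \emph{far} region where the bound $|\sigma|\geq CM^{-1/2}$ with $C$ large forces $|L^{(k)}| \ll M$ for $k = 1, 2$, so that $L$ is a harmless perturbation of $G$ and the partition of $G$ transfers to $\Psi^{1D}$ with only slightly worsened, still $O(M)$, constants.

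The main obstacle is the interface between the two regions and the possibility of sign opposition between $G''$ and $L''$, at which point $(\Psi^{1D})''$ vanishes at an isolated transition point $u_{**}$ with $|L''(u_{**})| \simeq |G''(u_{**})| \simeq M$. In a small neighborhood of $u_{**}$ one switches to $p_i = 3$: using $|u_{**} - u_\sigma(\alpha)| \sim (M\sigma)^{-2/3}$, a direct computation gives $|L'''(u_{**})| \simeq M^{5/3} \sigma^{2/3} \gtrsim C^{2/3} M^{4/3}$, which dominates $|G'''| \lesssim M$, so $|(\Psi^{1D})'''| \geq c_1 M$ near $u_{**}$. The universal constants $c_1, c_2$ are then obtained by tracking the bookkeeping across the finite union of sub-intervals so produced, and the hypothesis $|\sigma| \geq CM^{-1/2}$ with $C$ sufficiently large is what simultaneously controls the smallness of $L$ on the far region, the dominance of $L''$ on the near region, and the dominance of $L'''$ at the transition point.
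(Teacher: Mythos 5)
Your overall architecture (split $\Psi^{1D}=G+L$, treat the branch contribution $L$ as a perturbation of the ``global'' term $G$ away from $u_\sigma(\alpha)$, use a sign-definite second derivative of $L$ close to $u_\sigma(\alpha)$, and rescue the transition zone with the third derivative) is in the same spirit as the paper's argument, which shows that if $|(\Psi^{1D})'|,|(\Psi^{1D})''|\ll M$ then necessarily $|t'(u)|\simeq M$, hence by Corollary \ref{deftpmu} and $|\sigma|\geq CM^{-1/2}$ one gets $|t''(u)|\gtrsim C^2M^2\gg \max(M,|t'(u)|^2)$ and the term $-t''(u)\scal{h'(u)}{(\partial_t s,1)}$ dominates $(\Psi^{1D})'''$. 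However, the quantitative core of your proposal rests on a false claim: $s_\alpha(\sigma,t)$ is \emph{not} even in $t$ and is not $O(t^2)$. From \eqref{defsrhoualpha} and \eqref{asdevphi}, $\partial_t s_\alpha(\sigma,0)=-\partial_\theta\varphi((0,\sigma),(0,\sigma),\alpha)$, which equals minus the $\Theta$-component of $g_\sigma(\alpha)$ and has modulus $\simeq 1$; the even, $O(\sigma^2 t^2)$ quantity is $u_\alpha(\sigma,t)-u_\alpha(\sigma,0)$, not $s_\alpha$. So you cannot reduce $L$ to $-h_2(u)\,t_\pm(u)$.

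This error propagates into all your derivative estimates for $L$. Differentiating $L(u)=-\scal{h(u)}{(s_\alpha(\sigma,t_\pm(u)),t_\pm(u))}$ correctly, the would-be most singular terms cancel exactly, because along the branch $\scal{h(u_\alpha(\sigma,t))}{(\partial_t s_\alpha(\sigma,t),1)}=0$ (indeed $h(u_\alpha)=r_\alpha(1,\partial_\theta\varphi)$ while $(\partial_t s_\alpha,1)=(-\partial_\theta\varphi,1)$, using $\partial_w\varphi=0$ on the branch); equivalently, $(\Psi^{1D})'(u)=\partial_u\Psi$ at the branch point by stationarity in $(s,t,w,r)$. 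Hence the true sizes are $|L''(u)|\simeq \sigma^{-1}|u-u_\sigma(\alpha)|^{-1/2}$ and $|L'''(u)|\simeq \sigma^{-1}|u-u_\sigma(\alpha)|^{-3/2}$, one full power of $|u-u_\sigma(\alpha)|$ better than the $|u-u_\sigma(\alpha)|^{1/2-k}$ scaling you use; the dominant constant governing the sign of $L''$ is $\scal{h'(u_\sigma(\alpha))}{(\partial_t s(0),1)}\propto\det\bigl(h'(u_\sigma(\alpha)),h(u_\sigma(\alpha))\bigr)$ (Lemma \ref{deth'hnotzero}), not $h_2(u_\sigma(\alpha))$; and your threshold and transition-point numerology ($|u_{**}-u_\sigma(\alpha)|\sim (M\sigma)^{-2/3}$, $|L'''(u_{**})|\simeq M^{5/3}\sigma^{2/3}$) is consequently wrong (with the correct sizes one finds $|u_{**}-u_\sigma(\alpha)|\sim(M\sigma)^{-2}$ and $|L'''(u_{**})|\simeq M^3\sigma^2\gtrsim C^2M^2$). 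The three-region scheme could be repaired with these corrected inputs, but as written the proof does not stand: it discards the linear-in-$t$ part of $s_\alpha$, which is precisely the ingredient the argument must exploit through $\scal{h'(u)}{(\partial_t s,1)}\simeq 1$.
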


\begin{proof}
    We drop the explicit notation of the $\alpha, \sigma, \pm$ dependency. By definition
    \begin{equation}
        \nabla_{s,t,w,r}\Psi(u,s(t(u)),t(u),w(t(u)),r(t(u)) = 0.
    \end{equation}

    \myindent Hence, we can compute
    \begin{equation}\label{partuPsi1Dlocal}
        \begin{split}
            \partial_u \Psi^{1D}(u) &= \partial_u\Psi(x,A,B,u,s(u),t(u),w(u),r(u)) \\
            &= -\scal{h'(u)}{(s(t(u)),t(u)) + (A,B)}.
        \end{split}
    \end{equation}

    \myindent From here, we can compute
    \begin{equation}\label{expressionpartial23Psi1D}
        \begin{split}
            (\partial_u)^2 \Psi^{1D}(u) &= -\scal{h''(u)}{(s(t(u)),t(u)) + (A,B)} - t'(u) \scal{h'(u)}{(\partial_t s(t(u)),1)} \\
            (\partial_u)^3 \Psi^{1D}(u) &= -t''(u)\scal{h'(u)}{(\partial_t s(t(u)),1)} \\
            &- t'(u)^2\scal{h'(u)}{((\partial_t)^2s(t(u)),0)}\\
            &-\scal{h^{(3)}(u)}{(s(t(u)),t(u)) + (A,B)} - 2t'(u) \scal{h''(u)}{(\partial_t s(t(u)),1)}.
        \end{split}
    \end{equation}
    
    \myindent We claim that
    \begin{equation}\label{maxpartu,uu,uuugeqM}
        \forall u \in I, \qquad \max(|\partial_u\Psi^{1D}(u)|, |\partial_{uu} \Psi^{1D}(u)|, |\partial_{uuu}\Psi^{1D}(u)|) \gtrsim M.
    \end{equation}
    
    \myindent Indeed, assume first that
    \begin{equation}\label{localpartpsi1Dueq0}
        |\partial_u \Psi^{1D}(u)| \ll M.
    \end{equation}
    
    \myindent Then, from \eqref{partuPsi1Dlocal}, and from the fact that $h'(u)$ and $h''(u)$ are orthogonal, and that their norms are bounded away from zero, this implies that
    \begin{equation}
        |\scal{h''(u)}{(s(t(u)),t(u)) + (A,B)}| \simeq M.
    \end{equation}
    
    \myindent In particular, if moreover we impose that 
    \begin{equation}\label{localpartpsi1Duueq0}
        |\partial_{uu} \Psi^{1D}(u)| \ll M,
    \end{equation}
    
    then, from equation \eqref{expressionpartial23Psi1D}, we can deduce that
    \begin{equation}\label{t'usimeqM}
        |t'(u)| \simeq M.
    \end{equation}
    
    \myindent Indeed, from \eqref{defsrhoualpha}, we find that 
    \begin{equation}
        \partial_t s(0) = \partial_{\theta} \varphi ((t,\sigma,),(0,\sigma),w_{\alpha}(\sigma,0)),
    \end{equation}
    thus, from \eqref{asdevphi}, Lemma \ref{defcirclestatpoints} and Lemma \ref{deth'hnotzero}, there holds
    \begin{equation}
        \begin{split}
        \scal{h'(u_{\sigma}(\alpha))}{(\partial_t s(t(u_{\sigma}(\alpha))), 1)} &= \scal{h'(u_{\sigma}(\alpha))}{(1, \partial_{\theta} \varphi((0,\sigma,),(0,\sigma),\alpha)} \\
            &= \scal{h'(u_{\sigma}(\alpha)}{\frac{1}{r_{\sigma}(\alpha)} h(u_{\sigma}(\alpha))} \\
            &\neq 0.
        \end{split}
    \end{equation}
    
    \myindent Hence, in particular, for all $u$ close to $u_{\sigma}(\alpha)$, 
    \begin{equation}\label{localusefuleqPsi1DuVdC3}
        |\scal{h'(u)}{(\partial_t s(t(u)),1)}| \simeq 1.
    \end{equation}
    
    \quad
    
    \myindent Now, we wish to prove that, if \eqref{localpartpsi1Dueq0} and \eqref{localpartpsi1Duueq0} hold, then
    \begin{equation}\label{localpartpsi1DuuugeqM}
        |\partial_{uuu} \Psi^{1D}(u)| \gtrsim M.
    \end{equation}
    
    \myindent Now, from Lemma \ref{deftpmu}, \eqref{t'usimeqM} implies that
    \begin{equation}
    \begin{split}
        \frac{1}{\sqrt{|u - u_{\sigma}(\alpha)|}} &\simeq |\sigma t'(u)| \\
        &\simeq |\sigma| M\\
        &\gtrsim C M^{-\frac{1}{2}}.
    \end{split}
    \end{equation}
    
    \myindent Hence, we find that, using again Lemma \ref{deftpmu},
    \begin{equation}
    \begin{split}
        |t''(u)| &\simeq \frac{1}{\sigma |u - u_{\sigma}(\alpha)|^{\frac{3}{2}}} \\
        &\simeq t'(u)^2 \frac{\sigma}{\sqrt{|u - u_{\sigma}(\alpha)|}} \\
        & \simeq M^2 \frac{\sigma}{\sqrt{|u - u_{\sigma}(\alpha)|}}\\
        &\gtrsim C^2M^2.
    \end{split} 
    \end{equation}
    
    \myindent In particular, $|t''(u)| \gg M$ and $|t''(u)| \gg |t'(u)|^2$ if $C$ is large enough. Hence, looking at the expression of $\partial_{uuu} \Psi^{1D}(u)$ given by \eqref{expressionpartial23Psi1D}, we see that the first term wins over the second and third terms, using once again \eqref{localusefuleqPsi1DuVdC3}, which yields \eqref{localpartpsi1DuuugeqM}.
    
    \quad
    
    \myindent To conclude the proof of the lemma, observe that equation \eqref{maxpartu,uu,uuugeqM} can be relaxed into finding a partition of $I$ by a finite number of intervals, on each of which one of the three first derivatives of $\Psi^{1D}$ is larger in modulus than $cM$ for $c$ a small enough constant. Moreover, one can do that choice so that on those intervals where it is \textit{only} the first derivative which is larger than $cM$, the the second derivative is smaller than $CM$ if $C$ is a large enough constant. Indeed, from the proof, we see that, if it is only the first derivative which is very large, then this means that 
    \begin{equation}
        |t'(u)| \lesssim M.
    \end{equation}
    
    \myindent Hence, coming back to \eqref{expressionpartial23Psi1D}, we find that $|(\partial_u)^2 \Psi^{1D}| \lesssim M$ as desired.
\end{proof}

\subsection{How to estimate near the branching points $P_0$ and $P_{\pi}$}\label{subsec4HormPhase}

\myindent In this section, we explain the method to deal with the branching points $P_0$ and $P_{\alpha}$, for which we need a different analysis than above. This section is more delicate, and technical, than Sections \ref{subsec2HormPhase} and \ref{subsec2HormPhase}, hence we will try to provide some intuition on our method. Moreover, as we will only need this method in the case $(A,B) \neq (0,0)$, we assume throughout Section \ref{subsec4HormPhase} that $M = |(A,B)| \geq 1$.

\subsubsection{Why it is natural to isolate the variable $w$ rather than $u$}\label{subsubsec41Hormphase}

\myindent The analysis of Sections \ref{subsec2HormPhase} and \ref{subsec3HormPhase}, respectively, naturally leads to apply the stationary phase analysis of Theorem \ref{mixedVdCABZ} on the circle $\mathcal{C}_{\sigma}$, and along the branches $\mathcal{E}_{\sigma,\alpha}$, as long as we are away from the two branching points, which we recall are defined by
\begin{equation}
    P_{\alpha} = \left(u_{\sigma}(\alpha),0,0,0,r_{\sigma}(\alpha)\right) \qquad \alpha = 0,\pi,
\end{equation}
where we recall that $u_{\sigma}(\alpha), r_{\sigma}(\alpha)$ are defined by
\begin{equation}
    h(u_{\sigma}(\alpha)) = r_{\sigma}(\alpha)\begin{pmatrix}
        1 \\
        q_2(\sigma,\alpha,1)
    \end{pmatrix}.
\end{equation}

\myindent Now, there holds, thanks to both Lemma \ref{stwrhohessofPsionCx} and Lemma \ref{exactstwrhohess}, that
\begin{equation}
    \det(\nabla_{s,t,w,r}^2\Psi_0(P_{\alpha})) = 0,
\end{equation}
which reflects the unavoidable geometric fact that the set $\mathcal{O}_{\sigma}$ of zeros of $\nabla_{s,t,w,r}\Psi_0$ is \textit{singular} at $P_{\alpha}$, as explained in Paragraph \ref{subsubsec11HormPhase}. In particular, the strategy of applying Theorem \ref{mixedVdCABZ} by isolating the variable $u$ fails near the points $P_{\alpha}$. Hence, in order to perform an oscillatory integral analysis, we need to take into account the role of the full $(u,s,t,w,r)$ variables, and, in particular, of
\begin{equation}\label{partupsipalpha}
    \partial_u \Psi(P_{\alpha}),
\end{equation}
where we use the notation 
\begin{equation}
    \Psi(P_{\alpha}) := \Psi\left(\sigma,A,B,u_{\sigma}(\alpha),0,0,\alpha,r_{\sigma}(\alpha)\right).
\end{equation}

\myindent Now, roughly two scenarios may happen : either the quantity \eqref{partupsipalpha} is nonzero, and, in that case, we may integrate by parts in $u$ in a neighborhood of $P_{\alpha}$ and find that the contribution of this region is $O(\lambda^{-\infty})$, or this quantity vanishes, and in that case we need to isolate another variable, namely $w$. Before quantifying this dichotomy, we justify that $w$ is the natural variable to isolate in the second case.

\myindent First, observe that this implication of $\partial_u \Psi(P_{\alpha}) = 0$ is that $P_{\alpha}$ is a stationary point in the full $(u,s,t,w,r)$ variables (since it is already a point of $\mathcal{O}_{\sigma}$). Moreover, the full Hessian of $\Psi$ in the variables of integration is given at $P_{\alpha}$ by (using Lemma \ref{miracleequation})
\begin{multline}\label{fullhessianatPalpha}
    \nabla_{u,s,t,w,r}^2 \Psi\left(\sigma,A,B,u_{\sigma}(\alpha),0,0,\alpha,r_{\sigma}(\alpha)\right) = \\
    \begin{pmatrix}
        -\scal{h''(u_{\sigma}(\alpha))}{(A,B)} & - h'(u_{\sigma}(\alpha))_1 & -h'(u_{\sigma}(\alpha))_2 & 0 & 0\\
        -h'(u_{\sigma}(\alpha))_1 & 0 & 0 & 0 & 1 \\
        -h'(u_{\sigma}(\alpha))_2 & 0 & 0 & 0 & \partial_{\theta}\varphi((0,\sigma,),(0,\sigma),\alpha) \\
        0 & 0 & 0 & 0 & 0\\
        0 & 1 & \partial_{\theta}\varphi((0,\sigma,),(0,\sigma),\alpha) & 0 & 0
    \end{pmatrix}.
\end{multline}

\myindent We observe that this matrix is degenerate (as expected), but that its kernel in $\R^5$ is exactly given by the fourth axis, corresponding to the $w$ variable. Indeed, if we isolate the $w$ variable, we find that 
\begin{equation}\label{ustrhohessPalpha}
    \begin{split}
        \det(\nabla_{u,s,t,r}^2\Psi(P_{\alpha})) &= \begin{vmatrix}
            -\scal{h''(u_{\sigma}(\alpha))}{(A,B)} & - h'(u_{\sigma}(\alpha))_1 & -h'(u_{\sigma}(\alpha))_2  & 0\\
        -h'(u_{\sigma}(\alpha))_1 & 0  & 0 & 1 \\
        -h'(u_{\sigma}(\alpha))_2 & 0  & 0 & \partial_{\theta}\varphi((0,\sigma,),(0,\sigma),\alpha)\\
        0 & 1 & \partial_{\theta}\varphi((0,\sigma,),(0,\sigma),\alpha)  & 0
        \end{vmatrix} \\
        &= \begin{vmatrix}
            -h'(u_{\sigma}(\alpha))_1 & 1 \\
            -h'(u_{\sigma}(\alpha))_2 & \partial_{\theta}\varphi((0,\sigma,),(0,\sigma),\alpha)
        \end{vmatrix}^2\\
        &= r_{\sigma}(\alpha)^{-2} \det(h'(u_{\sigma}(\alpha)),h(u_{\sigma}(\alpha)))^2 \\
        &\neq 0.
    \end{split}
\end{equation}

\myindent Hence, we are exactly in the situation of a degenerate Hessian in $1+d$ dimensions, with a rank $d$, which was introduced in Paragraph \ref{subsubsec43Micro}. Following the analysis of that paragraph, it is quite natural that, should there be a zero of $\nabla_{u,s,t,w,r}\Psi$ at, or near $P_{\alpha}$, one should isolate the $w$ variable and perform a stationary phase analysis on the remaining four variables $(u,s,t,r)$, and, afterwards, try and prove that the remaining 1D phase function will satisfy the $(VdC)_p$ hypothesis (see Definition \ref{VdCN}) for some $p \geq 3$.

\myindent The main difficulty for quantifying the previous heuristic is that, since we need to take into account the role of $(A,B)$, we need to relax the dichotomy between $\partial_u P_{\alpha} \neq 0$ and $\partial_u P_{\alpha} = 0$ into a threshold condition on how small $\partial_u \Psi(P_{\alpha})$ is, compared to (negative powers of) $|(A,B)|$. Now, the problem is that, in the case $|\partial_u \Psi(P_{\alpha})| \ll_{|(A,B)|} 1$, when trying to apply Theorem \ref{mixedVdCABZ} by isolating the variable $w$, there doesn't hold necessarily that
\begin{equation}
    \nabla_{u,s,t,r} \Psi(P_{\alpha}) = 0,
\end{equation}
but rather that this quantity is very small. Hence, we are not directly in the setting presented in Paragraph \ref{subsubsec43Micro}. Thus, a difficulty is to prove the existence of a $(u,s,t,r)$ stationary point near $P_{\alpha}$, all in a quantitative way depending on $|(A,B)|$.

\subsubsection{The case $|\partial_u\Psi(P_{\alpha})| \ll 1$ : rigorous statement and idea of the proof}\label{subsubsec42Hormphase}

\myindent First, in the most delicate case, that is when $|\partial_u \Psi(P_{\alpha})| \ll 1$, there holds the following proposition.

\begin{proposition}\label{delicatecase}
    Let $i \in \{0,...,I\}$, let $\alpha = 0$ or $\alpha = \pi$, and let $\sigma \in \mathcal{I}_i$. Let $c_{P_{\alpha}} > 0$ be a small enough constant, depending only on $\mathcal{S},\eps,\mathfrak{Q}$. Assume that
    \begin{equation}\label{smallpartuPsiPalpha}
        |\partial_u \Psi(P_{\alpha})| \lesssim_{c_{P_{\alpha}}} M^{-1}.
    \end{equation}
    
    \myindent Define
    \begin{equation}\label{defIUdelicatecase}
        \begin{split}
            &I := \{w \in S^1 \ \text{such that} \ |w - \alpha| \lesssim_{c_{P_{\alpha}}} M^{-1} \} \\
            &U := \left\{(u,s,t,r) \in (\R/\ell\Z)\times \tilde{\mathcal{R}}_i^{(H)} \times \R_+ \ \text{such that} \ \begin{cases}
                |u - u_{\sigma}(\alpha)| \leq c_{P_{\alpha}} M^{-2} \\
                |(s,t,r) - (0,0, r_{\sigma}(\alpha)| \leq c_{P_{\alpha}} M^{-1}
            \end{cases}
            \right\}.
        \end{split}
    \end{equation}
    
    \myindent Then, for all $w \in I$, the function
    \begin{equation}
        \Psi_w : (u,s,t,r) \in U \mapsto \Psi(\sigma,A,B, u,s,t,w,r) 
    \end{equation}
    has a unique stationary point (i.e. $\nabla_{u,s,t,r} \Psi_w$ has a unique zero) 
    \begin{equation}\label{defZsigAB}
        Z_{\sigma,A,B}(w) := (u,s,t,r)(w) \in U,
    \end{equation}
    at which its Hessian 
    \begin{equation}
        H(w) := \nabla_{u,s,t,r}^2 \Psi(\sigma,A,B, u(w),s(w),t(w),w,r(w))
    \end{equation}
    is nondegenerate. Moreover, the following estimates holds. First, using the Notations \ref{defM1+d} and \ref{defHandN}, for all $0\leq k \leq l$,
    \begin{equation}\label{upperboundMNphidelicatecase}
    \begin{split}
        &\mathcal{M}_{k,l}^{(w)}(\Psi) \lesssim M \\
        &\mathcal{D}(\Psi) \gtrsim 1 \\
        &\mathcal{N}(\Psi) \lesssim M.
    \end{split}
    \end{equation}
    
    \myindent Second, for all $w \in I$, for all $(u,s,t,r) \in U$, using the notation \ref{definitionofM}, there holds
    \begin{equation}\label{oscintconditiondelicatecase}
        \|M(w,(u,s,t,r)) - H(w)\| \leq \frac{1}{2}\|H(w)^{-1}\|^{-1}.
    \end{equation}
    
    \myindent Third, using the definition of the 1D remaining phase function introduced in \ref{defphi1D}, there holds
    \begin{equation}\label{4thderivativesPsi1Ddelicatecase}
        \forall w\in I \qquad |(\partial_w)^4 \Psi^{1D}(\sigma,A,B,w)| \gtrsim_{c_{P_{\alpha}}} M.
    \end{equation}
\end{proposition}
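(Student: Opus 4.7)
The strategy is to isolate the variable $w$ and treat the system $\nabla_{u,s,t,r}\Psi = 0$ as an implicit equation for $Z(w) = (u(w),s(w),t(w),r(w))$, exploiting the non-degeneracy of the $4\times 4$ Hessian at $P_\alpha$ recorded in equation \eqref{ustrhohessPalpha}, whose determinant equals $r_\sigma(\alpha)^{-2}\det(h'(u_\sigma(\alpha)),h(u_\sigma(\alpha)))^2 \neq 0$ and is \emph{independent} of $(A,B)$. The whole analysis must track carefully the anisotropic scaling: $\partial_u\Psi_G = -\scal{h'(u)}{(A,B)}$ is of size $M$, hence the entry $\partial_{uu}\Psi \sim M$ inflates $\|H^{-1}\|$ to order $M$, and forces the $u$-neighborhood of $P_\alpha$ to be of size $M^{-2}$ while the other variables stay at scale $M^{-1}$, so that the quadratic correction $\partial_{uu}\Psi \cdot (\Delta u)^2 \lesssim M\cdot M^{-4}$ remains subdominant in the Taylor expansion.

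\textbf{Step 1 (existence and localization of $Z(w)$).} At $(w,u,s,t,r)=(\alpha,P_\alpha)$ one computes directly from the expressions for the partial derivatives of $\Psi_0$, together with \eqref{asdevphi} and the definition of $P_\alpha$, that $\nabla_{s,t,r}\Psi(P_\alpha)=0$, while $\partial_u\Psi(P_\alpha) = -\scal{h'(u_\sigma(\alpha))}{(A,B)}$ has size $\lesssim M^{-1}$ by hypothesis \eqref{smallpartuPsiPalpha}. A quantitative Newton iteration (or Banach fixed-point applied to $Z\mapsto Z - H(P_\alpha)^{-1}\nabla_{u,s,t,r}\Psi(Z,w)$) then furnishes a unique zero $Z_{\sigma,A,B}(w)$ in $U$ for each $w\in I$; the anisotropic convergence is ensured because the row of $H^{-1}$ corresponding to the $u$-component picks up an additional factor $M^{-1}$ from the $M$-scaling of the $(u,u)$ entry, producing the displacement $|u(w)-u_\sigma(\alpha)|\lesssim M^{-2}$.

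\textbf{Steps 2--3 (estimates \eqref{upperboundMNphidelicatecase} and coercivity \eqref{oscintconditiondelicatecase}).} The bound $\mathcal{M}_{k,l}^{(w)}(\Psi)\lesssim M$ is immediate from $\Psi_G = -\scal{h(u)}{(A,B)}$. For $\mathcal{D}(\Psi)\gtrsim 1$ and $\mathcal{N}(\Psi)\lesssim M$ one writes $H(w) = H(P_\alpha) + E(w)$ where $E(w)$ collects Taylor corrections; in the chosen $U$, each column of $E$ is small enough that the determinant remains comparable to the value $r_\sigma(\alpha)^{-2}\det(h',h)^2$, while all cofactors stay bounded by $M$ (the only $M$-dependent entry being $\partial_{uu}\Psi$). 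The coercivity bound \eqref{oscintconditiondelicatecase} is not isotropic: third derivatives of $\Psi$ reach size $M$ only when one $u$-derivative is present, so the variation of $H$ over $U$ is $\lesssim M \cdot M^{-2} + 1\cdot M^{-1}\lesssim M^{-1}$, matching $\tfrac12\|H^{-1}\|^{-1}\gtrsim M^{-1}$ up to a small universal constant. This is exactly the anisotropic setting covered by Remark \ref{nonisotropicmixedABZVdC}, which we will invoke.

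\textbf{Step 4 (the 4th derivative estimate, main obstacle).} The envelope identity $(\Psi^{1D})'(w)=\partial_w\Psi(Z(w),w)$ combined with $Z'(w) = -H(w)^{-1}(\partial_w\nabla_{u,s,t,r}\Psi)$ lets us iterate and express $(\partial_w)^k\Psi^{1D}$ as a polynomial in the partial derivatives of $\Psi$ up to order $k+1$ and the entries of $H^{-1}$. The vanishing identities from \eqref{asdevphi} (namely $\tilde\varphi$ and all its pure $w$-derivatives vanish at $t=0$, and $\tilde\varphi(t,w)=t\,q_2(\sigma,w,1)+O(t^2)$) force the purely $\Psi_0$-originated contributions at orders $2$ and $3$ to be either zero or $O(1)$, hence negligible compared with the $M$-scale we seek. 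The $M$ comes exclusively from the $\Psi_G$-component, which enters $\Psi^{1D}$ only through the composition $\Psi_G(u(w),A,B)$; a direct computation shows that at the 4th order one finds a term proportional to $\partial_{uu}\Psi_G \cdot (u'(w))^2 \cdot (\text{bounded coefficient})$, with $u'(w)$ of order $1$ after the reparameterization, giving the claimed lower bound $\gtrsim_{c_{P_\alpha}} M$. The delicate point, and the principal technical obstacle, is to check that the $\Psi_0$-originated fourth-order contributions, which might a priori cancel the $\Psi_G$-term, are controlled independently of $M$ (so that, for $c_{P_\alpha}$ small enough and $|w-\alpha|\lesssim M^{-1}$, they cannot compete). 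This verification requires the explicit third-order derivatives of $\varphi$ established in the referenced Lemma (analogous to Lemma \ref{thirdthetawderivaticevarphi}) together with the non-vanishing relation \eqref{partialwq2issin} established in Lemma \ref{strangewayofsaying}, exactly as in the computation of $\det A'(0)$ in the proof of Lemma \ref{exactstwrhohess}.
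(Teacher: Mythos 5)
Your Steps 1--3 follow essentially the same route as the paper: the nondegeneracy and $M$-scaling of $\nabla_{u,s,t,r}^2\Psi$ near $P_{\alpha}$ (this is Lemma \ref{nondegenerateustrhohess}), the anisotropic control of the variation of the Hessian on the box $|u-u_{\sigma}(\alpha)|\lesssim M^{-2}$, $|(s,t,r)-(0,0,r_{\sigma}(\alpha))|\lesssim M^{-1}$ (Lemma \ref{scalingofnieghb}), and a contraction-mapping construction of $Z_{\sigma,A,B}(w)$ (Lemma \ref{exunicofzero}). What you elide there is the verification that the iteration maps the anisotropic box into itself; in the paper this requires the explicit $M$-scalings of the rows of $\mathcal{H}(w)^{-1}$ and a specific ordering of the smallness constants (the analogues of $c_1,c_2$ are chosen small \emph{after} $c_3$), which is tedious but compatible with your sketch.

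The genuine gap is in Step 4, i.e. precisely the estimate \eqref{4thderivativesPsi1Ddelicatecase} which is the heart of the proposition. Your claimed dominant term $\partial_{uu}\Psi_G\cdot(u'(w))^2$ with $u'(w)$ \emph{of order} $1$ cannot be correct: on $I\times U$ the $u$-component of the stationary point varies by at most $O(M^{-2})$ while $w$ ranges over an interval of length $O(M^{-1})$, so $|u'(w)|\lesssim M^{-1}$, and in the model case $\partial_u\Psi(P_{\alpha})=0$ one has exactly $Z'(\alpha)=0$ (differentiating $\nabla_{u,s,t,r}\Psi(Z(w),w)=0$ once gives $H(P_{\alpha})Z'(\alpha)=0$). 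Hence $\partial_{uu}\Psi_G\,(u')^2\lesssim M\cdot M^{-2}=M^{-1}$, which cannot yield a lower bound of size $M$; moreover there is no reparameterization by $u$ available near $P_{\alpha}$ — that is exactly why $w$ is isolated in this region. The paper's mechanism is different: differentiating the stationarity system twice in $w$ and using that $(A,B)$ is (nearly) collinear to $h''(u_{\sigma}(\alpha))$, so that $|\scal{h''(u_{\sigma}(\alpha))}{(A,B)}|\simeq M$, one obtains $|u''(\alpha)|\simeq 1$, $|r''(\alpha)|\simeq 1$ but $|s''(\alpha)|\simeq M$ and $|t''(\alpha)|\simeq M$; the cancellations forced by Lemma \ref{miracleequation} and \eqref{asdevphi} then reduce the fourth derivative to $(\partial_w)^4\Psi^{1D}(\alpha)=t''(\alpha)\,\partial_{ww\theta}\varphi((0,\sigma),(0,\sigma),\alpha)$ up to controlled terms, which is $\simeq M$ by the nonvanishing of $\partial_{ww\theta}\varphi$ from Lemma \ref{thirdthetawderivaticevarphi}. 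In other words, the size $M$ enters through the \emph{second} derivatives of the stationary path $Z(w)$, not through $(u')^2$; without this computation your Step 4 does not establish \eqref{4thderivativesPsi1Ddelicatecase}, and one must additionally check, as the paper indicates, that the bound persists for all $|w-\alpha|\lesssim M^{-1}$ and when \eqref{smallpartuPsiPalpha} holds only approximately rather than exactly.
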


\myindent Before giving the proof of this proposition, let us insist on the two main difficulties. First, in the setting on the proposition, and in sparking contrast to the previous sections, we don't have a natural candidate for the stationary point $Z_{\sigma,A,B}(w)$ defined by \eqref{defZsigAB}. Indeed, we only know a priori that $|\nabla_{u,s,t,r}\Psi| \ll 1$ on the domain $I\times U$ (since it is close to $\mathcal{O}_{\sigma}$, and thanks to \eqref{smallpartuPsiPalpha}), and that the Hessian $\nabla_{u,s,t,r}^2 \Psi$ is nondegenerate, and we need to deduce the existence of $Z_{\sigma,A,B}$ from those facts. Second, there is an obvious anisotropy in the scaling between the variable $u$ and the other variables. As we will see in the proof, this anisotropy is an unavoidable feature of the analysis.

\quad

\myindent Now, we decompose the proof of Proposition \ref{delicatecase} into several steps. For the sake of simplicity, we define for the proof the following notations 

\myindent i. Firstly, in the spirit of Theorem \ref{mixedVdCABZ}, we define the new variable
\begin{equation}
    y := (u - u_{\sigma}(\alpha), s,t,r - r_{\sigma}(\alpha)) \in \R^4,
\end{equation}
and thus the variables $(u,s,t,w,r) \in \R^5$ are changed into $(w,y) \in \R \times \R^4$. In particular, in these variables, the points $P_{\alpha}$ is simply the origin $(0,0) \in \R \times \R^4$.

\myindent ii. Secondly, we use the notation
\begin{equation}
    \forall (w,y) = (w, u-u_{\sigma}(\alpha),s,t,r - r_{\sigma}(\alpha)) \qquad \phi(w,y) := \Psi(\sigma,A,B,u,s,t,w,r).
\end{equation}

\myindent iii. Thirdly, we \textit{adapt} the notations introduced in Definition \ref{defHandN} and \eqref{definitionofM}, and define locally
\begin{equation}\label{defmathcalHlocal}
    \begin{split}
        \mathcal{H}(w) &:= \nabla_y^2 \phi(w,0) \\
        &= \nabla_{u,s,t,r}^2 \Psi(\sigma,A,B,u_{\sigma}(\alpha),0,0,w,r_{\sigma}(\alpha)),
    \end{split}
\end{equation}
and 
\begin{equation}\label{defmathcalMlocal}
\begin{split}
    \mathcal{M}(w,y) &:= \int_0^1 \nabla_y^2 \phi(w,(1-v)y)dv \\
    &= \int_0^1 \nabla_{u,s,t,r}^2 \Psi(\sigma,A,B,vu_{\sigma}(\alpha) + (1-v)u,(1-v)s,(1-v)t,w,v r_{\sigma}(\alpha) + (1 - v)r) dv.
\end{split}
\end{equation}

In particular, there holds
\begin{equation}\label{nablayphi}
    \nabla_y\phi(w,y) = \nabla_y \phi(w,0) + \mathcal{M}(w,y) \cdot y.
\end{equation}

\quad

\myindent Now, the idea of the proof comes from very elementary analysis : indeed, the point is that, since
\begin{equation}
    \nabla_{u,s,t,r}^2 \Psi(P_{\alpha}) \in GL_4(\R),
\end{equation}
then, for any $|w| \ll 1$, the map
\begin{equation}
    |y| \ll 1 \mapsto \nabla_y \phi(w,y)
\end{equation}
is a diffeomorphism between two open sets (this is merely the inverse function theorem). Now, if we assume moreover that
\begin{equation}
    \partial_u \Psi(P_{\alpha}) \ll 1,
\end{equation}
then, since
\begin{equation}
    \nabla_{s,t,r} \Psi(P_{\alpha}) = 0,
\end{equation}
there holds in $(w,y)$ variables that
\begin{equation}
    |\nabla_y \phi(0,0)| \ll 1.
\end{equation}

\myindent Now, if $\nabla_y \phi(0,0)$ is sufficiently small, then obviously for all small enough $w$ there exists a $y(w)$ such that
\begin{equation}
    \nabla_y \phi(w,y(w)) = 0,
\end{equation}
and, on an appropriate neighborhood of $(w,y) = (0,0)$, $\phi$ will satisfy the hypotheses of Theorem \ref{mixedVdCABZ}, or rather of Remark \ref{nonisotropicmixedABZVdC} (once we check \eqref{4thderivativesPsi1Ddelicatecase}. The real subtlety of the proof is actually to quantify, in terms of exponents of $M^{-1}$, the previous "small" conditions.

\subsubsection{Quantitative estimates on $\nabla_{u,s,t,r}^2 \Psi$ near $P_{\alpha}$}\label{subsubsec43Hormphase}

\myindent We start with the following lemma, which quantifies the neighborhood of $P_{\alpha}$ on which $\nabla_{u,s,t,r}^2 \Psi$ is non degenerate.

\begin{lemma}\label{nondegenerateustrhohess}
    Let $i \in \{0,...,I\}$, let $\alpha = 0$ or $\alpha = \pi$, and  let$\sigma \in \mathcal{I}_i$. There holds
    \begin{equation}\label{lowerineqondetustrho}
        \forall |(u,s,t,w,r) - P_{\alpha}| \lesssim M^{-1}, \qquad |\det(\nabla_{u,s,t,r}^2 \Psi(\sigma,A,B,u,s,t,w,r))| \gtrsim 1.
    \end{equation}
    
    \myindent As a consequence, for all such $(u,s,t,w,r)$, there holds
    \begin{equation}\label{upperboundinversehessustrho}
        \left\|\left(\nabla_{u,s,t,r}^2 \Psi(\sigma,A,B,u,s,t,w,r)\right)^{-1} \right\| \lesssim M.
    \end{equation}
\end{lemma}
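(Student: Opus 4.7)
The plan is to reduce both estimates to a careful book-keeping of the cofactor structure of the $4\times 4$ matrix $H(u,s,t,w,r) := \nabla_{u,s,t,r}^2 \Psi$, whose entries are already explicit thanks to the computation behind \eqref{fullhessianatPalpha}.

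First, I would extend the computation of $\det H$ performed at $P_\alpha$ in \eqref{ustrhohessPalpha} to a general point of the ball $\{|(u,s,t,w,r) - P_\alpha|\leq c_0 M^{-1}\}$. Setting
\begin{equation*}
a := -\scal{h''(u)}{(s,t)+(A,B)}, \qquad b := -h'(u)_1, \qquad c := -h'(u)_2,
\end{equation*}
\begin{equation*}
d := r\,\partial_{\theta\theta}\varphi((t,\sigma),(0,\sigma),w), \qquad e := \partial_\theta\varphi((t,\sigma),(0,\sigma),w),
\end{equation*}
a direct cofactor expansion of the $4\times 4$ matrix collapses to the compact identity
\begin{equation*}
\det H \;=\; (be - c)^2 \,-\, a\,d,
\end{equation*}
which makes transparent the reason the large entry $a$ (of size $M$) does not contaminate the determinant at $P_\alpha$: it is multiplied by $d$, and $d$ vanishes at $P_\alpha$ thanks to \eqref{asdevphi}.

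Second, I would quantify each of the scalars $a,b,c,d,e$ on the ball of radius $c_0 M^{-1}$ around $P_\alpha$. The functions $b,c,e$ are smooth in $(u,w)$, uniformly bounded along with all their derivatives, with values at $P_\alpha$ dictated by Lemma \ref{defcirclestatpoints}; hence they stay $O(1)$ and deviate from their values at $P_\alpha$ by $O(c_0 M^{-1})$. The expansion \eqref{asdevphi} gives $|\partial_{\theta\theta}\varphi| \lesssim |t|$, so $|d| \lesssim c_0 M^{-1}$; and $|a| \leq C(M + c_0 M^{-1}) \lesssim M$, so that $|ad| \lesssim c_0$. On the other hand, Lemma \ref{deth'hnotzero} together with the identity $(be-c)^2|_{P_\alpha} = r_\sigma(\alpha)^{-2}\det(h'(u_\sigma(\alpha)),h(u_\sigma(\alpha)))^2$ already used in \eqref{ustrhohessPalpha} gives a uniform lower bound $(be-c)^2|_{P_\alpha} \gtrsim 1$, preserved by continuity on the whole ball up to shrinking $c_0$. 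Taking $c_0$ sufficiently small (depending only on $\mathcal{S},\eps,\mathfrak{Q}$) then gives $|\det H| \geq (be - c)^2 - |ad| \gtrsim 1$, i.e.\ \eqref{lowerineqondetustrho}.

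Third, \eqref{upperboundinversehessustrho} will follow from the cofactor formula $H^{-1} = (\det H)^{-1}\,{}^{t}\mathrm{Com}(H)$. The only entry of $H$ larger than $O(1)$ is the $(1,1)$-entry $a$, of size $O(M)$. Consequently every cofactor obtained by deleting row $1$ or column $1$ is a $3\times 3$ determinant of $O(1)$ entries, hence itself $O(1)$; the remaining cofactors still contain the $(1,1)$-entry and are $O(M)$. Combined with $|\det H|\gtrsim 1$, this yields $\|H^{-1}\| \lesssim M$ as claimed. The sole genuine difficulty is the competition between the large entry $a\sim M$ and the small factor $d\lesssim M^{-1}$ multiplying it in the determinant: this is precisely what forces the isotropic scale $M^{-1}$ for the ball and explains why one cannot enlarge the $u$-direction further without losing the uniform lower bound on $|\det H|$.
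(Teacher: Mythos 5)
Your proposal follows essentially the same route as the paper: the compact identity $\det H = (be-c)^2 - ad$ is exactly the paper's algebraic fact $\begin{vmatrix} K & a & b & 0 \\ a & 0 & 0 & 1 \\ b & 0 & e & c \\ 0 & 1 & c & 0 \end{vmatrix} = (ac-b)^2 - Ke$ under relabeling, the lower bound comes from the same competition between the $O(M)$ entry and the vanishing factor $\partial_{\theta\theta}\varphi$, and the inverse bound is obtained by the same cofactor computation (your refinement that the cofactors avoiding the $(1,1)$ entry are $O(1)$ while the others are $O(M)$ is a slightly sharper bookkeeping than the paper's "all cofactors are $O(M)$", but it is the same argument).

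There is, however, one genuinely flawed justification: you assert that ``the expansion \eqref{asdevphi} gives $|\partial_{\theta\theta}\varphi| \lesssim |t|$''. The estimate \eqref{asdevphi} controls only $\xi$-derivatives of $\varphi(x,y,\xi)-\scal{x-y}{\xi}$ and says nothing about pure $x$-derivatives; indeed the paper explicitly stresses (in the proof of Lemma \ref{miracleequation}) that $\nabla_x^2\varphi(x,x,\pm1,0)$ cannot be computed from \eqref{asdevphi} and that one must go back to the eikonal equation. The fact you actually need is $\partial_{\theta\theta}\varphi((0,\sigma),(0,\sigma),\alpha)=0$, which is precisely Lemma \ref{miracleequation} (valid only at the horizontal directions $w=0,\pi$), and then smoothness gives $|d| \lesssim |t| + |w-\alpha|$ — not $\lesssim |t|$ alone, since $\partial_{\theta\theta w}\varphi((0,\sigma),(0,\sigma),\alpha)\neq 0$ for $\sigma\neq 0$ by Lemma \ref{thirdthetawderivaticevarphi}. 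On the ball of radius $c_0 M^{-1}$ both contributions are $O(c_0M^{-1})$, so your conclusion $|ad|\lesssim c_0$ and hence \eqref{lowerineqondetustrho} survives once the citation is corrected; but as written the key smallness of $d$ is unsupported, and without the vanishing at $(t,w)=(0,\alpha)$ the term $ad$ would be of size $M$ and the whole lower bound would collapse.
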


\begin{proof}
    \myindent We use the following useful algebraic fact
    \begin{equation}
        \begin{vmatrix}
            K & a & b & 0 \\
            a & 0 & 0 & 1 \\
            b & 0 & e & c \\
            0 & 1 & c & 0
        \end{vmatrix} = (ac - b)^2 - Ke.
    \end{equation}

    \myindent In particular, since
    \begin{multline}
        \nabla_{u,s,t,r}^2 \Psi(\sigma,A,B,u,s,t,w,r) =\\ \begin{pmatrix}
        -\scal{h''(u)}{(A,B) + (s,t)} & - h'(u)_1 & -h'(u)_2 & 0 \\
        -h'(u)_1 & 0 & 0 & 1 \\
        -h'(u)_2 & 0 & r \partial_{\theta\theta}\varphi((t,\sigma,),(0,\sigma),w)& \partial_{\theta}\varphi((t,\sigma,),(0,\sigma),w)  \\
        0 & 1 & \partial_{\theta}\varphi((t,\sigma,),(0,\sigma),w) & 0
    \end{pmatrix},
    \end{multline}
    there holds that
    \begin{equation}
    \begin{split}
        \det(\nabla_{u,s,t,r}^2 \Psi(\sigma,A,B,u,s,t,w,r)) &= \begin{vmatrix}
            -h'(u)_1 & 1 \\
            -h(u)_2 & \partial_{\theta}\varphi((t,\sigma,),(0,\sigma),w)
        \end{vmatrix}^2\\
        &+ r  \left(\scal{h''(u)}{(a + s, b +t)}\right) (\partial_{\theta\theta}\varphi((t,\sigma,),(0,\sigma),w)) \\
        &= D(\sigma,t,u,w)^2 + r K(A,B,u,s,t) e(\sigma,t,w),
    \end{split}
    \end{equation}
    where we use obvious notations. Now, we know on the one hand that
    \begin{equation}
    \begin{split}
        D(\sigma,u_{\alpha}(\sigma),0,\alpha) &= \begin{vmatrix}
            -h'(u_{\alpha}(\sigma))_1 & 1 \\
            -h'(u_{\alpha}(\sigma))_2 & \partial_{\theta}\varphi((0,\sigma,),(0,\sigma),\alpha)
        \end{vmatrix}\\
        &= r_{\alpha}(\sigma)^2 \det(h'(u_{\alpha}(\sigma)),h(u_{\alpha}(\sigma)))^2 \\
        &> 0.
    \end{split}
    \end{equation}
    
    \myindent Hence, there are universal constants $c,c' > 0$ such that
    \begin{equation}
        \forall (u,t,w) \ \text{such that} \ |(u,t, w) - (u_{\alpha}(\sigma), 0, \alpha)| \leq c, \qquad D(\sigma,u,t,w) \geq c'.
    \end{equation}

    \myindent On the other hand, thanks to Lemma \ref{miracleequation},
    \begin{equation}
            e(\sigma,0,\alpha) = \partial_{\theta\theta} \varphi((t,\sigma,),(0,\sigma),\alpha) = 0.
    \end{equation}

    \myindent Hence, there is a constant $c''$ such that
    \begin{equation}
        \forall (u,s,t,w,r) \ \text{such that} |(u,s,t,w,r) - (u_{\alpha}(\sigma),0,0,\alpha,r_{\alpha}(\sigma))| \leq c''M^{-1} \qquad \left|r K(A,B,u,s,t) e(\sigma,t,w) \right| \leq \frac{1}{2} c',
    \end{equation}
    which obviously concludes the proof of \eqref{lowerineqondetustrho}.

    \myindent Now, regarding \eqref{upperboundinversehessustrho}, recall that for any invertible matrix $\mathcal{M}$, there holds
    \begin{equation}
        \mathcal{M}^{-1} = \frac{1}{\det(\mathcal{M})}  Com^T(\mathcal{M}),
    \end{equation}
    where $Com(\mathcal{M})$ is the cofactor matrix of $\mathcal{M}$. Now, fix $\mathcal{M} = \nabla_{u,s,t,r)}^2 \Psi(\sigma,A,B,u,s,t,w,r)$. We have proved that $(\det \mathcal{M})^{-1}$ is bounded independently of $M$ with the hypotheses of \eqref{lowerineqondetustrho}. Moreover, since only one coefficient of $\mathcal{M}$ is $O(M)$, while all of the others are $O(1)$, the cofactors of $\mathcal{M}$ are all $O(M)$. Hence, there holds ultimately \eqref{upperboundinversehessustrho}.
\end{proof}

\myindent This lemma already yields a restriction on how close to $P_{\alpha}$ we need to choose $(u,s,t,w,r)$, that is on an isotropic ball of radius of order $M^{-1}$. Now, assuming that we can prove the existence of the stationary point $Z_{\sigma,A,B}(w)$, with the notations of Proposition \ref{delicatecase}, Lemma \ref{nondegenerateustrhohess} yields that 
\begin{equation}
    \forall |w -\alpha| \lesssim M^{-1} \qquad \|H(w)^{-1}\| \lesssim M^{-1},
\end{equation}
i.e. this already proves that
\begin{equation}
    \mathcal{N}(\Psi) \lesssim M.
\end{equation}

\quad

\myindent Now, in order for Theorem \ref{mixedVdCABZ} to apply, in the extended version given by Remark \ref{nonisotropicmixedABZVdC}, the right kind of condition is equation \eqref{oscintconditiondelicatecase}, which reads in terms of the variables $(w,y)$
\begin{equation}
    \|M(w,y) - H(w)\| \leq \frac{1}{2} \|H(w)^{-1}\|^{-1}.
\end{equation}

\myindent In that spirit, there actually holds the following stronger version.

\begin{lemma}\label{scalingofnieghb}
    Let $i \in \{0,...,I\}$, $\alpha = 0, \pi$ and $\sigma \in \mathcal{I}_i$. Let
    \begin{equation}\label{localdefmathcalU}
        \mathcal{U} := \left\{ (u,s,t,w,r) \ \text{such that} \ \begin{cases}
            |(s,t,w,r) - (0,0,\alpha, r_{\sigma}(\alpha))| \lesssim M^{-1} \\
            |u - u_{\sigma}(\alpha)| \lesssim M^{-2} 
        \end{cases} \right\}.
    \end{equation}
    
    \myindent Then, there holds for all $(u,s,t,w,r), (u',s',t',w,r') \in \mathcal{U}$ 
    \begin{equation}\label{differenceofhessians}
        \|\nabla_{u,s,t,r}^2 \Psi(\sigma,A,B,u,s,t,w,r) - \nabla_{u,s,t,r}^2 \Psi(\sigma,A,B, u',s',t',w,r') \|  \leq \frac{1}{2} \left \| \left(\nabla_{u,s,t,r}^2 \Psi(\sigma,A,B,u,s,t,w,r)\right)^{-1}\right\|^{-1}.
    \end{equation}
\end{lemma}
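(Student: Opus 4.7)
My plan is to prove this lemma by direct entry-by-entry inspection of the $(u,s,t,r)$-Hessian of $\Psi$, exploiting the carefully chosen anisotropic scaling in the definition of $\mathcal{U}$. The starting observation is that, thanks to Lemma \ref{nondegenerateustrhohess}, on the whole of $\mathcal{U}$ one has $\|(\nabla_{u,s,t,r}^2\Psi)^{-1}\| \lesssim M$, so the right-hand side of \eqref{differenceofhessians} is $\gtrsim M^{-1}$. Hence the target inequality reduces to showing that the operator-norm difference of the two Hessians is $\leq cM^{-1}$ for a sufficiently small absolute constant $c$, which can be arranged by shrinking the implicit constants in \eqref{localdefmathcalU}.

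I would first write out $\nabla_{u,s,t,r}^2 \Psi(\sigma,A,B,u,s,t,w,r)$ in the explicit $4\times 4$ form recalled in the proof of Lemma \ref{nondegenerateustrhohess}. A key observation is that the only entry carrying a factor of $M$ is the $(1,1)$ entry $-\scal{h''(u)}{(A,B)+(s,t)}$, while every other entry is a smooth function of $(\sigma,u,s,t,w,r)$ of size $O(1)$. Since the operator norm of a $4\times 4$ matrix is controlled by a dimensional multiple of its largest entry, it suffices to show that each entry of the difference of the two Hessians is $O(M^{-1})$.

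The heart of the proof is the bookkeeping for the $(1,1)$ entry. Varying $u$ to $u'$ with $|u-u'|\lesssim M^{-2}$ changes $h''$ in $\mathcal{C}^0$ by $O(M^{-2})$ via $h'''$, so $\scal{h''(u)-h''(u')}{(A,B)}$ is of order $M^{-2}\cdot M=M^{-1}$; varying $(s,t)$ on scale $M^{-1}$ additionally contributes $\scal{h''}{(s,t)-(s',t')}=O(M^{-1})$. This is exactly the reason for imposing the $M^{-2}$ scale on $u$ in \eqref{localdefmathcalU}: any wider $u$-neighborhood would blow up the variation of the $(1,1)$ entry beyond $M^{-1}$ and destroy the coercivity of $\mathcal{M}(w,\cdot)$ in a neighborhood of $P_\alpha$. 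For the remaining entries the estimates are milder: the off-diagonal entries $-h'(u)_k$ are Lipschitz in $u$ and therefore vary by $O(M^{-2})$, whereas the entries involving $\partial_\theta\varphi$ and $r\,\partial_{\theta\theta}\varphi$ are smooth functions of $(t,w,r)$ and vary by $O(M^{-1})$ when those variables move on scale $M^{-1}$ (the small intrinsic size of $r\,\partial_{\theta\theta}\varphi$ near $(t,w)=(0,\alpha)$, coming from Lemma \ref{miracleequation}, is not even needed here).

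Summing these contributions, each entry of $\nabla_{u,s,t,r}^2\Psi(\sigma,A,B,u,s,t,w,r)-\nabla_{u,s,t,r}^2\Psi(\sigma,A,B,u',s',t',w,r')$ is bounded by $CM^{-1}$, where $C$ depends only on the $\mathcal{C}^3$ norms of $h$ and of $\varphi$ on a fixed neighborhood of $(0,\sigma,\alpha)$. The conclusion \eqref{differenceofhessians} then follows upon shrinking the constants hidden in \eqref{localdefmathcalU} by a suitable absolute factor. I do not expect a substantive obstacle here; the one conceptual point is that the anisotropy $u\sim M^{-2}$ versus $(s,t,w,r)\sim M^{-1}$ is \emph{forced} by the fact that the $(1,1)$ entry of the Hessian carries a factor of $M$, and that this is precisely the scale at which $\mathcal{U}$ has been defined.
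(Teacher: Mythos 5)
Your proposal is correct and is essentially the paper's own argument: the paper bounds the Hessian difference by third derivatives times increments, noting that every third derivative $\partial_{ijk}\Psi$ other than $(\partial_u)^3\Psi$ is independent of $(A,B)$ (hence $O(1)$), while $(\partial_u)^3\Psi = O(M)$ pairs with $|u-u'|\lesssim M^{-2}$, and then concludes via Lemma \ref{nondegenerateustrhohess} exactly as you do. Your entry-by-entry bookkeeping of the $4\times4$ matrix is just an explicit rendering of the same estimate, with the same identification of the anisotropic $u\sim M^{-2}$ scale as the decisive point.
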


\myindent Observe that, with the definition of $M(w,y)$ given by \ref{definitionofM}, this Lemma implies equation \eqref{oscintconditiondelicatecase} once we prove the existence of the stationary point $Z_{\sigma,A,B}(w)$. 

\begin{proof}
    There holds
    \begin{multline}\label{localdifferenceofhess}
        \|\nabla_{u,s,t,r}^2 \Psi(\sigma,A,B,u,s,t,w,r) - \nabla_{u,s,t,r}^2 \Psi(\sigma,A,B, u',s',t',w,r') \| \leq \|(\partial_u)^3 \Psi\|_{\infty} |u - u'| \\
        + \sum_{(i,j,k) \in \{u,s,t,r\}^3 \backslash (u,u,u)} \|\partial_{ijk} \Psi\|_{\infty} |k - k'|,
    \end{multline}
    where the $\|\cdot\|_{\infty}$ norms are taken on $\mathcal{U}$. Now, the point is that, thanks to the special decomposition of the phase $\Psi$ given by \eqref{decompositionofPsi}, then whenever $i,j,k$ are three variables among $(u,s,t,r)$ which are not all equal to $u$, then $\partial_{i,j,k} \Psi$ is \textit{independent} of $(A,B)$. In particular, 
    \begin{equation}
        \|\partial_{i,j,k} \Psi\|_{\infty} \lesssim_ 1.
    \end{equation}
    
    \myindent Moreover, from the explicit expression of $\Psi$, there obviously holds that
    \begin{equation}
        \|(\partial_u)^3 \Psi\|_{\infty} \lesssim M.
    \end{equation}
    
    \myindent Hence, the point is that, provided the implicit constants in the definition \eqref{localdefmathcalU} of $\mathcal{U}$ are small enough, then the RHS of \eqref{localdifferenceofhess} can be made arbitrarily smaller than $M^{-1}$, which concludes the proof thanks to Lemma \ref{nondegenerateustrhohess}.
\end{proof}

\subsubsection{The existence and uniqueness of the $(u,s,t,r)$ stationary point}\label{subsubsec44Hormphase}

\myindent Lemma \ref{scalingofnieghb} already hints to the scaling of the natural neighborhood of $P_{\alpha}$ on which Remark \ref{nonisotropicmixedABZVdC} applies. Now, we prove that this scaling is actually the natural one to obtain the \textit{existence} of the stationary point $Z_{\sigma,A,B}(w)$. 

\begin{lemma}\label{exunicofzero}
    One can choose the implicit constants in Proposition \ref{delicatecase} so that
    \begin{equation}
        \forall w \in I\  \exists ! \ Z_{\sigma,A,B} (w) = (u,s,t,r)(w) \in U \ \text{such that} \ \nabla_{u,s,t,r} \Psi(\sigma,A,B,u(w),s(w),t(w),w,r(w)) = 0.
    \end{equation}
\end{lemma}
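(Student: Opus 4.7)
The plan is to apply a quantitative Banach fixed-point argument in the translated variables $y := (u - u_\sigma(\alpha), s, t, r - r_\sigma(\alpha))$, writing $\phi(w,y) := \Psi(\sigma, A, B, u, s, t, w, r)$. In these coordinates the desired zero is a fixed point of the Newton-type map $T(y) := y - \mathcal{H}(w)^{-1}\nabla_y\phi(w,y)$, with $\mathcal{H}(w)$ as in \eqref{defmathcalHlocal}, and the anisotropic box $\widetilde U := \{|y_1| \leq c_{P_\alpha} M^{-2},\ |(y_2,y_3,y_4)| \leq c_{P_\alpha} M^{-1}\}$ is exactly the set $U$ of the proposition. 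The difficulty, to be resolved in what follows, is that $\|\mathcal{H}(w)^{-1}\| = O(M)$ while $|\nabla_y\phi(w,0)|$ is naively only $O(M^{-1})$, so an isotropic Newton analysis cannot possibly land in $\widetilde U$ --- the closure of the argument must rely on structural cancellations producing the required anisotropy.

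The first step is a componentwise estimate of the initial residual $\nabla_y\phi(w,0)$. Direct computation from the explicit form of $\Psi_0$ and the expansion \eqref{asdevphi} gives $\partial_s\phi(w,0) = r_\sigma(\alpha) - h(u_\sigma(\alpha))_1 \equiv 0$ and $\partial_r\phi(w,0) = \varphi((0,\sigma),(0,\sigma),w) \equiv 0$. The $u$-component equals $\partial_u\Psi(P_\alpha)$ and is bounded by $c_{P_\alpha} M^{-1}$ by hypothesis \eqref{smallpartuPsiPalpha}. Finally $\partial_t\phi(w,0) = r_\sigma(\alpha)(q_2(\sigma,w,1) - q_2(\sigma,\alpha,1))$; since $\partial_w q_2(\sigma,\alpha,1) = 0$ by Lemma \ref{strangewayofsaying} (a consequence of the evenness of $q_1$ in $\Sigma$), Taylor's formula yields $\partial_t\phi(w,0) = O((w-\alpha)^2) = O(c_{P_\alpha}^2 M^{-2})$. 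Hence $\nabla_y\phi(w,0)$ has the very specific anisotropic form $(O(c_{P_\alpha} M^{-1}),\, 0,\, O(c_{P_\alpha}^2 M^{-2}),\, 0)$.

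The second step is to exploit a specific sparsity pattern in $\mathcal{H}(w)^{-1}$. Decomposing $\mathcal{H}(\alpha)$ into blocks with $(1,1)$ entry $K := -\langle h''(u_\sigma(\alpha)), (A,B)\rangle = O(M)$ and $3\times 3$ bottom-right block $A_0$ read from \eqref{ustrhohessPalpha}, a direct expansion gives $\det A_0 = 0$, whereas $\det \mathcal{H}(\alpha) = r_\sigma(\alpha)^{-2}\det(h',h)^2 = O(1)$. Cramer's rule therefore produces $(\mathcal{H}(\alpha)^{-1})_{11} = \det A_0/\det\mathcal{H}(\alpha) = 0$. Solving the $4\times 4$ system $\mathcal{H}(w) y = z$ explicitly for $z$ of the form produced by step 1 yields $y_1, y_4 = O(z_3) = O(c_{P_\alpha}^2 M^{-2})$ and $y_2, y_3 = O(Mz_3 + z_1) = O(c_{P_\alpha} M^{-1})$; this matches exactly the anisotropic scales in $\widetilde U$, and for $c_{P_\alpha}$ small enough places $T(0)$ well inside $\widetilde U$.

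For the contraction, Taylor's formula gives $T(y_1) - T(y_2) = \mathcal{H}(w)^{-1}[\mathcal{H}(w) - \widetilde{\mathcal{M}}(y_1,y_2)](y_1 - y_2)$ with $\widetilde{\mathcal{M}}(y_1,y_2) := \int_0^1 \nabla_y^2\phi(w, y_2 + v(y_1-y_2))\,dv$. From the form of $\phi$, every third derivative other than $\partial_u^3\phi = O(M)$ is bounded uniformly in $M$; combined with Lemma \ref{scalingofnieghb} this shows that each entry of $E := \mathcal{H}(w) - \widetilde{\mathcal{M}}(y_1,y_2)$ is $O(c_{P_\alpha} M^{-1})$ on $\widetilde U$. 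Feeding $E(y_1 - y_2)$ through the inversion pattern of step 2 yields, in the anisotropic norm $\|y\|_\star := M^2|y_1| + M\sum_{j\geq 2}|y_j|$ naturally adapted to $\widetilde U$, the contraction estimate $\|T(y_1) - T(y_2)\|_\star \leq \tfrac{1}{2}\|y_1 - y_2\|_\star$ for $c_{P_\alpha}$ small enough. Banach's fixed point theorem then delivers the unique $Z_{\sigma,A,B}(w) \in \widetilde U$ with $\nabla_y\phi(w,Z_{\sigma,A,B}(w)) = 0$. The main obstacle throughout is the anisotropic bookkeeping of step 3: the entire closure depends on the three exact cancellations $\partial_s\phi(w,0) \equiv \partial_r\phi(w,0) \equiv 0$, $\partial_w q_2(\sigma,\alpha,1) = 0$, and $\det A_0 = 0$, which collectively manufacture the missing factors of $M^{-1}$ that turn a would-be $O(M^{-1})$ Newton step in $u$ into the true size $O(c_{P_\alpha}^2 M^{-2})$ demanded by $\widetilde U$.
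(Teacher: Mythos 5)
Your route is, in substance, the paper's own: your Newton map $T(y) = y - \mathcal{H}(w)^{-1}\nabla_y\phi(w,y)$ coincides exactly with the paper's map $F_w$ (since $\nabla_y\phi(w,y) = \nabla_y\phi(w,0) + \mathcal{M}(w,y)y$), the three cancellations you isolate ($\partial_s\phi(w,0)=\partial_r\phi(w,0)=0$, the $O((w-\alpha)^2)$ bound on $\partial_t\phi(w,0)$ via $\partial_w q_2(\sigma,\alpha,1)=0$, and the degenerate cofactor making $(\mathcal{H}^{-1})_{11}$ small) are the same ones the paper uses to place $\mathcal{H}(w)^{-1}\nabla_y\phi(w,0)$ in the anisotropic box, and your uniqueness-via-contraction is equivalent to the paper's injectivity argument based on Lemma \ref{scalingofnieghb} and the Neumann-series Lemma \ref{inversematrixlemma}. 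Two remarks of detail: the identity $(\mathcal{H}(\alpha)^{-1})_{11}=0$ holds only at $w=\alpha$; for $w\in I$ one only gets $(\mathcal{H}(w)^{-1})_{11} = O(|w-\alpha|)$ (from $\partial_{\theta\theta}\varphi((0,\sigma),(0,\sigma),\alpha)=0$ and smoothness), so your "$y_1=O(z_3)$" silently drops the term $(\mathcal{H}(w)^{-1})_{11}z_1$, which is of the same size; this is harmless but should be said.

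The genuine gap is the constant bookkeeping in the self-mapping step. You use a single constant $c_{P_\alpha}$ simultaneously for the hypothesis \eqref{smallpartuPsiPalpha}, the width of $I$, and the half-widths of $\widetilde U$, and then claim that "for $c_{P_\alpha}$ small enough" $T(0)$ lands well inside $\widetilde U$. In the $(s,t,r)$-directions this does not follow: by your own step 2, those components of $T(0)=-\mathcal{H}(w)^{-1}\nabla_y\phi(w,0)$ contain $(\mathcal{H}(w)^{-1})_{j1}\,\partial_u\Psi(P_\alpha)$ with $(\mathcal{H}(w)^{-1})_{j1}$ a universal $O(1)$ quantity ($j=2,3$), hence are of size $K\,c_{P_\alpha}M^{-1}$ with $K$ a fixed constant that may well exceed $\tfrac12$. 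Shrinking $c_{P_\alpha}$ shrinks this bound and the box at the same linear rate, so neither $T(0)\in\tfrac12\widetilde U$ nor $T(\widetilde U)\subset\widetilde U$ (nor, after Banach, membership of the fixed point in $U$) can be forced by smallness of one constant; the same obstruction reappears in your anisotropic norm, where $\|T(0)\|_\star \sim K c_{P_\alpha}$ must be compared with a $\star$-ball of radius $\sim c_{P_\alpha}$ contained in $\widetilde U$. The cure is exactly the point the paper's proof is built around, and which the lemma's phrase "one can choose the implicit constants" encodes: keep separate constants $c_1$ (hypothesis \eqref{smallpartuPsiPalpha}), $c_2$ (width of $I$) and $c_3$ (size of $U$), and choose $c_1,c_2$ small \emph{depending on} $c_3$ after all constants are small; with that decoupling your estimates $y_1 = O((c_1+c_2^2)M^{-2})$ and $y_{2,3,4} = O((c_1+c_2^2)M^{-1})$ do land in $\tfrac12 U$, and the rest of your argument closes.
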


\begin{proof}
    We use the variables $(w,y)$ in the proof. In particular, we set $I \times \mathcal{U}$ the image of the neighborhood of $P_{\alpha}$ $I\times U$ in the coordinates $(w,y)$.
    
    \quad
    
    \myindent Let us first prove the uniqueness. We claim that, provided the implicit constants in the definition of $I$ and $U$ \eqref{defIUdelicatecase} are small enough, then, for all $w \in I$, 
    \begin{equation}\label{injectivegradient}
        y \in \mathcal{U} \mapsto \nabla_y \phi(w,y) 
    \end{equation}
    is injective. Indeed, for any $y,y' \in U$, there holds
    \begin{equation}\label{differencenablayphi}
    \begin{split}
        \nabla_y \phi(w,y') - \nabla_y\phi(w,y) &= \left(\int_0^1 \nabla_y^2 \phi(w, vy + (1-v)y') dv\right) (y-y') \\
        &= \left[\nabla_y^2 \phi(w,y) + \left(\int_0^1 \nabla_y^2 \phi(w, vy + (1-v)y') dv - \nabla_y^2 \phi(w,y) \right) \right] (y- y').
    \end{split}
    \end{equation}
    
    \myindent Now, we can choose the implicit constants small enough so that $I\times U \subset \mathcal{U}$ where $\mathcal{U}$ is defined by Lemma \ref{scalingofnieghb}. In particular, on $I\times U$, equation \eqref{differenceofhessians} applies, which, in terms of the coordinates $(w,y)$, obviously implies that 
    \begin{equation}
        \left\|\int_0^1 \nabla_y^2 \phi(w, vy + (1-v)y') dv - \nabla_y^2 \phi(w,y) \right\| \leq \frac{1}{2} \left\|\left(\nabla_y^2 \phi(w,y)\right)^{-1}\right\|^{-1}.
    \end{equation}
    
    \myindent In particular, we are in the setting of Lemma \ref{inversematrixlemma}. Thus, we can deduce that
    \begin{equation}
        \int_0^1 \nabla_y^2 \phi(w, vy + (1-v)y') dv
    \end{equation}
    is an invertible matrix. Coming back to equation \eqref{differencenablayphi}, we can conclude to the injectivity of \eqref{injectivegradient}, hence the uniqueness part of the lemma.
    
    \quad
    
    \myindent The most delicate part is the existence. Indeed, it ultimately amounts to finding a quantitative version of the inverse function theorem. Actually, it is easier to come back to the proof of this theorem, and the elementary contraction mapping theorem. Recall that, thanks to \eqref{nablayphi}, there holds 
    \begin{equation}
        \nabla_y \phi(w,y) = \nabla_y \phi(w,0) + \mathcal{M}(w,y) y,
    \end{equation}
    where $|\nabla_y \phi(w,0)| \ll 1$ by hypothesis. Now, the condition for $y$ to be a stationary point is thus that
    \begin{equation}\label{condstatpoint}
        \nabla_y \phi(w,0) + \mathcal{M}(w,y) y = 0.
    \end{equation}
    
    \myindent We want to express that as a fixed point condition. Now, in order not to invert $\mathcal{M}(w,y)$ in the fixed point argument, let us rewrite \eqref{condstatpoint} in the form
    \begin{equation}
        \nabla_y \phi(w,0) + \mathcal{H}(w) y + \left(\mathcal{M}(w,y) - \mathcal{H}(w)\right) y = 0,
    \end{equation}
    which is equivalent to 
    \begin{equation}
        y = -\mathcal{H}(w)^{-1} \nabla_y \phi(w,0) - \left(\mathcal{H}(w)^{-1} \mathcal{M}(w,y) - I_4\right) y. 
    \end{equation}
    
    \myindent Thus, if we define, for $w\in I$, the map
    \begin{equation}
        F_w : y \in \mathcal{U} \mapsto -\mathcal{H}(w)^{-1} \nabla_y \phi(w,0) - \left(\mathcal{H}(w)^{-1} \mathcal{M}(w,y) - I_4\right) y, 
    \end{equation}
    we find that
    \begin{equation}
        \nabla_y \phi(w,y) = 0 \qquad \iff \qquad F_w(y) = y.
    \end{equation}
    
    \myindent Now, in order to prove the existence part of the lemma, we thus need only prove that we can choose the implicit constant in the definitions of $I$ and $U$ so that for all $w\in I$, $F_w$ is a contraction on $U$. 
    
    \quad
    
    \myindent First, observe that
    \begin{equation}\label{localusefulequniqueness}
        F_w(y) - F_w(y') = \mathcal{H}(w)^{-1} (\mathcal{M}(w,y) - \mathcal{M}(w,y')) y + (\mathcal{H}(w)^{-1}\mathcal{M}(w,y) - I_4)(y-y').
    \end{equation}
    
    \myindent Thanks to Lemma \ref{scalingofnieghb}, we already know that
    \begin{equation}
        \|\mathcal{H}(w)^{-1} \mathcal{M}(w,y) - I_4\| \leq \frac{1}{2}.
    \end{equation}
    
    \myindent Hence,
    \begin{equation}\label{localusefulequniqueness1}
        \left|(\mathcal{H}(w)^{-1}\mathcal{M}(w,y) - I_4)(y-y')\right| \leq \frac{1}{2} |y- y'|.
    \end{equation}
    
    \myindent Now, we claim that, provided the implicit constants in the definition of $I$ and $U$ are small enough, then there holds for any $w \in I$ and $y,y' \in \mathcal{U}$ that
    \begin{equation}\label{localusefulequniqueness2}
        \left|\mathcal{H}(w)^{-1} (\mathcal{M}(w,y) - \mathcal{M}(w,y')) y\right| \leq \frac{1}{4} |y - y'|.
    \end{equation}
    
    \myindent Indeed, write
    \begin{equation}
    \begin{split}
        &y = (u, s,t,r) \\
        &y' = (u', s',t',r').
    \end{split}    
    \end{equation}
    
    \myindent Then, there holds, providing the implicit constants in the definition of $I$ and $U$ are small enough, and thanks to Lemma \ref{nondegenerateustrhohess},
    \begin{equation}
    \begin{split}
        |(\mathcal{M}(w,y) - \mathcal{M}(w,y'))y| &\leq \|(\partial_u)^3 \Psi\|_{\infty} |u - u'| |u| + \sum_{(i,j,k,l)\in\{u,s,t,r\}^3, (i,j,k) \neq (u,u,u)} \|\partial_{ijk}\Psi\|_{\infty} |k - k'||l|\\
        &\ll M |u - u'|  M^{-2} + |y - y'| M^{-1} \\
        & \leq \frac{1}{4} \|\mathcal{H}(w)^{-1}\|^{-1} |y-y'|.
    \end{split}
    \end{equation}
    
    \myindent Now, coming back to \eqref{localusefulequniqueness}, equations \eqref{localusefulequniqueness1} and \eqref{localusefulequniqueness2} thus ensures ultimately that
    \begin{equation}
        \forall w \in I, \ \forall y,y' \in \mathcal{U} \qquad |F_w(y) - F_w(y')| \leq \frac{3}{4} |y-y'|.
    \end{equation}
    
    \quad
    
    \myindent In order to apply the contraction mapping theorem, it only remains to prove that, choosing the implicit constants carefully, there holds that
    \begin{equation}\label{inclusionequation}
        F_w (\mathcal{U}) \subset \mathcal{U}.
    \end{equation}
    
    \myindent Now, this part is actually quite tedious. Indeed, up to now, the only conditions on the implicit constants were that they are small enough. In this part, we actually consider how they depend one on another. For that purpose, let us write explicitly the implicit constants in Proposition \ref{delicatecase} in the following form : let us write condition \eqref{smallpartuPsiPalpha} in the form
    \begin{equation}
        |\partial_u \Psi(P_{\alpha})| \leq c_1 M^{-1},
    \end{equation}
    and the definitions of $I$ and $U$ in the form
    \begin{equation}
        \begin{split}
            I &:= \{w \in S^1 \ \text{such that} \ |w-\alpha| \leq c_2 M^{-1} \} \\
            U &:= \left\{(u,s,t,r) \in \R / \ell \Z \times \tilde{\mathcal{R}}_i^{(H)} \times \R_+ \ \text{such that} \ \begin{cases}
                |u - u_{\sigma}(\alpha)| \leq c_3 M^{-2} \\
                |(s,t,r) - (0,0, r_{\sigma}(\alpha)| \leq c_3 M^{-1}
            \end{cases}
            \right\}.
        \end{split}
    \end{equation}
    
    \myindent First, we claim that we can impose that for all $(w,y) \in I\times \mathcal{U}$, there holds
    \begin{equation}
        (\mathcal{H}(w)^{-1} \mathcal{M}(w,y) - I_4)y \in \frac{1}{2} \mathcal{U} := \left\{(u,s,t,r) \ \text{such that} \ \begin{cases}
            |u| \leq \frac{1}{2} c_3 M^{-2} \\
            |(s,t,r)| \leq \frac{1}{2} c_3 M^{-1}
        \end{cases} \right\}.
    \end{equation}
    
    \myindent Indeed, on the one hand, Lemma \ref{scalingofnieghb} ensures that, provided the constants are small enough,
    \begin{equation}
        |(\mathcal{H}(w)^{-1} \mathcal{M}(w,y) - I_4)y| \leq \frac{1}{2} |y|.
    \end{equation}
    
    \myindent Now, this already yields the desired inequality on the $(s,t,r)$ coordinates (we recall that $M \geq 1$ so $c_2M^{-1}\leq c_2$).
    
    \myindent Regarding the $u$ coordinates, the argument is a little tedious. Actually, from direct computation, one can prove that, for any $(w,y) \in I \times U$, there holds
    \begin{equation}
        \mathcal{M}(w,y) - \mathcal{H}(w) = \begin{pmatrix}
            c_{11}M^{-1} & c_{12} M^{-2} & c_{13} M^{-2} & 0 \\
            c_{12} M^{-2} & 0 & 0 & 0 \\
            c_{13}M^{-2} & 0 & c_{33}M^{-1} & c_{34} M^{-2} \\
            0 & 0 & c_{34}M^{-2} & 0
        \end{pmatrix},
    \end{equation}
    where there holds, for some universal constant $K> 0$,
    \begin{equation}
        \begin{split}
        |c_{11}|,|c_{12}|,c_{13}|, |c_{33}|, |c_{34}| \leq K c_3.
        \end{split}
    \end{equation}
    
    \myindent Now, a direct computation of cofactors (along with \eqref{lowerineqondetustrho}) yields that
    \begin{equation}
        \mathcal{H}(w)^{-1} = \begin{pmatrix}
            O(M^{-1}) & O(1) & O(1) & O(M^{-1}) \\
           \ & * & \ & \
        \end{pmatrix}.
    \end{equation}

    \myindent Hence, the first line of $\mathcal{H}(w)^{-1}\mathcal{M}(w,y) - I_4$ is of the form
    \begin{equation}
        \begin{pmatrix}
            \eps_1 M^{-2} & \eps_2 M^{-3} & \eps_3 M^{-1} & \eps_4M^{-2}
        \end{pmatrix},
    \end{equation}
    where, up to taking $K$ bigger,
    \begin{equation}
        |\eps_1|,|\eps_2|,|\eps_3|, |\eps_4| \leq Kc_3.
    \end{equation}

    \myindent Overall, the $u$ coordinate of
    \begin{equation}
        (\mathcal{H}(w)^{-1}\mathcal{M}(w,y) - I_4) y 
    \end{equation}
    is bounded (up to taking $K$ bigger) by
    \begin{equation}
        Kc_3^2 M^{-2}.
    \end{equation}
    
    \myindent Thus, provided $c_3$ is small enough, depending only on the universal constant $K$ the $u$ coordinate is bounded by $\frac{1}{2} c_3 M^{-2}$ as desired.

    \quad

    \myindent Hence, to conclude the proof of \eqref{inclusionequation}, it is enough to have
    \begin{equation}
        H(w)^{-1} \nabla_y \phi(w,0) \in \frac{1}{2}\mathcal{U}.
    \end{equation}

    \myindent Now, observe that
    \begin{equation}
        \nabla_y\phi(w,0) = \begin{pmatrix}
            \partial_u \Psi(P_{\alpha}) \\
            0 \\
            -h'(u_{\alpha}(\sigma))_2 + r_{\alpha}(\sigma) \partial_{\theta} \varphi((0,\sigma,),(0,\sigma),w) \\
            0
        \end{pmatrix}.
    \end{equation}

    \myindent Thanks to the hypothesis \eqref{smallpartuPsiPalpha}, we know that
    \begin{equation}
        |\partial_u\Psi(P_{\alpha})| \leq c_1 M^{-1}.
    \end{equation}

    \myindent Moreover, thanks to \eqref{asdevphi}, we know that
    \begin{equation}
        \partial_{\theta w} \varphi ((0,\sigma,),(0,\sigma),0) = 0.
    \end{equation}
    
    \myindent Hence, 
    \begin{equation}
        \begin{split}
            -h'(u_{\alpha}(\sigma))_2 + r_{\alpha}(\sigma) \partial_{\theta} \varphi((0,\sigma,),(0,\sigma),w) &=-h'(u_{\alpha}(\sigma))_2 + r_{\alpha}(\sigma) \partial_{\theta} \varphi((0,\sigma,),(0,\sigma),0) + O(|w|^2) \\
            &\lesssim c_2^2M^{-2},
        \end{split}
    \end{equation}
    since, by definition, there holds
    \begin{equation}
        -h'(u_{\alpha}(\sigma))_2 + r_{\alpha}(\sigma) \partial_{\theta} \varphi((0,\sigma,),(0,\sigma),0) = 0.
    \end{equation}

    \myindent Now, a direct computation yields that
    \begin{equation}
        \mathcal{H}(w)^{-1} = \begin{pmatrix}
            O(M^{-1}) & * & O(1) & * \\
            O(1) & * & O(M) & * \\
            O(1) & * & O(M) & * \\
            O(1) & * & O(1) & *
        \end{pmatrix}.
    \end{equation}

    \myindent Hence,
    \begin{equation}
        H(w)^{-1}\nabla_y \phi(w,0) = \begin{pmatrix}
            O((c_1 + c_2)^2M^{-2}) \\
            O((c_1 + c_2^2) M^{-1}) \\
            O((c_1 + c_2^2) M^{-1})\\
            O((c_1 + c_2^2) M^{-1})
        \end{pmatrix},
    \end{equation}
    which obviously belong to $\frac{1}{2} \mathcal{U}$ if we choose $c_1$ and $c_2$ small enough depending on $c_3$.
    
    \quad
    
    \myindent To conclude, in order for the proof to work, we need to make the choices in the following order : 
    
    \myindent i. First, all constants must be very small.
    
    \myindent ii. Then $c_1$ and $c_2$ have to be chosen very small \textit{depending} on $c_3$.
\end{proof}

\subsubsection{The 1D remaining phase function}\label{subsubsec45Hormphase}

\myindent Last, but not least, there remains to prove the estimate on the 1D phase function \eqref{4thderivativesPsi1Ddelicatecase}.

\begin{proof}
    We only give the idea of the proof in the case where
\begin{equation}\label{simplifiedcase}
        \partial_u \Psi(P_{\alpha}) = 0,
    \end{equation}
    since the argument in the case that this quantity is only $\ll M^{-1}$ only differs by tracking the exponents of $M$. This allows to understand better why one \textit{needs} to go up to the \textit{fourth} derivative of $\Psi^{1D}$ in order to find a uniformly nonzero derivative. For simplicity, we assume that $\alpha = 0$.

    \myindent When \eqref{simplifiedcase} holds, observe that
    \begin{equation}\label{nicecasevalueofustrho0}
        (u(0), s(0),t(0),r(0)) = (u_{\sigma}(0),0,0,r_{\sigma}(0)).
    \end{equation}

    \myindent In particular, the first derivative of $\Psi^{1D}$ vanishes since
    \begin{equation}
        \begin{split}
            \partial_w \Psi^{1D}(0) &= \partial_w \Psi(\sigma,A,B,u_{\sigma}(0),0,0,0,r_{\sigma}(0))\\
            &= 0,
        \end{split}
    \end{equation}
    and since $P_0$ is a $(s,t,w,r)$ stationary point.

    \myindent The second derivative of $\Psi^{1D}$ vanishes as well since, thanks to \eqref{fullhessianatPalpha},
    \begin{equation}
        \begin{split}
            (\partial_w)^2 \Psi^{1D}(0) &= \frac{\det(\nabla_{u,s,t,w,r}^2\Psi(\sigma,A,B,u_{\sigma}(0),0,0,0,r_{\sigma}(0)))}{\det(\nabla_{u,s,t,r}^2\Psi(P_0))} \\
            &= 0.
        \end{split}
    \end{equation}

    \myindent Assume for a moment, moreover, that $\sigma = 0$ is on the equator. Then, using the symmetries, there holds that 
    \begin{equation}
        w \mapsto (u,s,t,r)(w) \qquad \text{is even}.
    \end{equation}

    \myindent In particular, $\Psi^{1D}$ is also an even function of $w$. Thus, the \textit{third} derivative of $\Psi^{1D}$ vanishes as well at $w = 0$. 

    \myindent These short observations explain why it is necessary to consider at least the \textit{fourth} derivative of $\Psi^{1D}$. We now prove that it doesn't vanish at $w = 0$. Observe that, since
    \begin{equation}
        \nabla_{u,s,t,r} \Psi(u(w),s(w),t(w),w,r(w)) = 0,
    \end{equation}
    there holds
    \begin{equation}\label{1derinw}
        \begin{split}
            0 &= \left(\partial_w \nabla_{u,s,t,r} \Psi\right) (u(w),s(w),t(w),w,r(w)) + \nabla_{u,s,t,r}^2\Psi(u(w),s(w),t(w),w,r(w)) \cdot \begin{pmatrix}
                u'(w)\\
                s'(w)\\
                t'(w)\\
                r'(w)
            \end{pmatrix}\\
            &= \begin{pmatrix}
                0 \\
                0 \\
                r(w)\partial_{w\theta}\varphi(x + t(w)v_T,x,w,1)\\
                \partial_w \phi((t,\sigma,),(0,\sigma),w)
            \end{pmatrix} + \nabla_{u,s,t,r}^2 \Psi \cdot \begin{pmatrix}
                u' \\
                s'\\
                t'\\
                r'
            \end{pmatrix}.
        \end{split} 
    \end{equation}

    \myindent Now, thanks to \eqref{nicecasevalueofustrho0}, at $w = 0$, this equation yields
    \begin{equation}\label{u's't'rho'eq0}
        \nabla_{u,s,t,r}^2\Psi(P_0) \cdot \begin{pmatrix}
            u'(0)\\
            s'(0)\\
            t'(0)\\
            r'(0)
        \end{pmatrix} = 0,
    \end{equation}
    i.e. $u'(0) = s'(0) = t'(0) = r'(0) = 0$.

    \myindent Now, differentiating one time in $w$ the equation \eqref{1derinw} yields
    \begin{equation}\label{der2w}
    \begin{split}
        0&= \left(\partial_{ww}\nabla_{u,s,t,r} \Psi\right) (u(w),s(w),t(w),w,r(w)) +  \left(\partial_w\nabla_{u,s,t,r}^2\right)\Psi(u(w),s(w),t(w),w,r(w)) \cdot \begin{pmatrix}
                u'(w)\\
                s'(w)\\
                t'(w)\\
                r'(w)
            \end{pmatrix}\\
        &+ \begin{pmatrix} u'(w) & s'(w) & t'(w) & r'(w) \end{pmatrix}\cdot \nabla_{u,s,t,r}^3 \Psi (u(w),s(w),t(w),w,r(w)) \cdot\begin{pmatrix} u'(w) \\ s'(w) \\ t'(w) \\ r'(w) \end{pmatrix}\\
        &+ \nabla_{u,s,t,r}^2\Psi(u(w),s(w),t(w),w,r(w)) \cdot \begin{pmatrix}
                u''(w)\\
                s''(w)\\
                t''(w)\\
                r''(w)
            \end{pmatrix}.
        \end{split}
    \end{equation}

    \myindent Now, there holds
    \begin{equation}
        \begin{split}
            \partial_{ww}\nabla_{u,s,t,r}\Psi &= \begin{pmatrix}
                0 \\
                0 \\
                r \partial_{ww\theta}\varphi \\
                \partial_{ww}\varphi
            \end{pmatrix} \\
            \partial_w \nabla_{u,s,t,r}^2 \Psi &= \begin{pmatrix}
                0 & 0 & 0 & 0 \\
                0 & 0 & 0 & 0\\
                0 & 0 & r\partial_{\theta\theta w} \varphi & \partial_{w\theta}\varphi \\
                0 & 0 & \partial_{w\theta}\varphi & 0
            \end{pmatrix}.
        \end{split}
    \end{equation}

    \myindent Hence, at $w = 0$, using \eqref{u's't'rho'eq0}, equation \eqref{der2w} reads
    \begin{equation}
        \begin{split}
        0 &= \begin{pmatrix}
            0 \\
            0 \\
            r_{\sigma}(0) \partial_{ww\theta}\varphi((0,\sigma,),(0,\sigma),0)\\
            0
        \end{pmatrix} + \nabla_{u,s,t,r}^2\Psi(P_0)\cdot \begin{pmatrix}
            u''(0)\\
            s''(0) \\
            t''(0)\\
            r''(0)
        \end{pmatrix} \\
        &= \begin{pmatrix}
            0 \\
            0 \\
            r_{\sigma}(0) \partial_{ww\theta}\varphi((0,\sigma,),(0,\sigma),0)\\
            0
        \end{pmatrix} + \begin{pmatrix}
            -\scal{h''(u_{\sigma}(0))}{(A,B)} & -h'(u_{\sigma}(0))_1 & -h'(u_{\sigma}(0))_2 & 0 \\
            -h'(u_{\sigma}(0))_1 & 0 & 0 & 1 \\
            -h'(u_{\sigma}(0))_2 & 0 & 0 & q_2(\sigma,0,1)\\
            0 & 1 & q_2(\sigma,0,1) & 0
        \end{pmatrix}\cdot \begin{pmatrix}
            u''(0)\\
            s''(0) \\
            t''(0)\\
            r''(0)
        \end{pmatrix}.
        \end{split}
    \end{equation}

    \myindent From this equation, and the observation that $(A,B)$ is colinear to $h''(u_{\sigma}(0))$ (nonzero by Hypothesis \ref{twisthypothesis}), since they are both orthogonal to $h'(u_{\sigma}(0))$ (from \eqref{simplifiedcase}),and the fact that, thanks to Lemma \ref{thirdthetawderivaticevarphi},
    \begin{equation}
        \partial_{ww\theta}\varphi((0,\sigma,),(0,\sigma),0) \neq 0,
    \end{equation}
    one can deduce that
    \begin{equation}\label{u''rho''s''t''0}
        \begin{split}
            &|u''(0)| \simeq 1 \\
            &|r''(0)|\simeq 1\\
            &|s''(0)| \simeq M\\
            &|t''(0)|\simeq M.
        \end{split}
    \end{equation}

    \myindent Now, there holds
    \begin{equation}
        \begin{split}
            \partial_w\Psi^{1D}(w) &= \partial_w \Psi(u(w),s(w),t(w),w,r(w))\\
            &= r(w)\partial_w\varphi((t,\sigma,),(0,\sigma),w).
        \end{split}
    \end{equation}

    \myindent Using all the equation on vanishing quantities, one can deduce from differentiating this equation that
    \begin{equation}
        (\partial_w)^i \Psi^{1D}(0) = 0 \qquad i = 1,2,3
    \end{equation}
    (in particular, the third derivative vanishes even if $\sigma \neq 0$). Moreover, differentiating carefully, one can see that 
    \begin{equation}
        \begin{split}
            (\partial_w)^4 \Psi^{1D}(0) &= t''(0) \partial_{ww\theta}\varphi((0,\sigma,),(0,\sigma),0) \\
            &\simeq M,
        \end{split}
    \end{equation}
    where we use \eqref{u''rho''s''t''0} and Lemma \ref{thirdthetawderivaticevarphi}. 

    \myindent The full proof of \eqref{4thderivativesPsi1Ddelicatecase} doesn't use more refined ideas than this computation, it is only more painful since one needs to check that the above reasoning still holds for $|w| \ll M^{-1}$ (and not only $w = 0$), and when there doesn't hold exactly \eqref{simplifiedcase} but rather \eqref{smallpartuPsiPalpha}.
\end{proof}

\subsubsection{The case $|\partial_u(P_{\alpha})|$ large enough}\label{subsubsec46Hormphase}

\myindent To conclude Section \ref{subsec4HormPhase}, it remains to deal with the easier case where $|\partial_u \Psi(P_{\alpha})|$ is large enough. There holds the following.

\begin{lemma}\label{easycase}
     Let $i \in \{0,...,I\}$, let $\alpha = 0$ or $\alpha = \pi$, and let $\sigma \in \mathcal{I}_i$. Assume that
    \begin{equation}\label{easiercasefirsteq}
        |\partial_u \Psi(P_{\alpha})| \geq  cM^{-1},
    \end{equation}
    where $c$ is the implicit constant given by Proposition \ref{delicatecase} in equation \eqref{smallpartuPsiPalpha}. Let
    \begin{equation}\label{defDeasycase}
        \mathcal{D} := \left\{ (u,s,t,w,r) \ \text{such that} \ \begin{cases}
            |u - u_{\sigma}(\alpha)| \lesssim_{c} M^{-2} \\
            |(s,t)|\lesssim_{c} M^{-1}
        \end{cases} \right\}.
    \end{equation}
    
    \myindent Then, there holds
    \begin{equation}\label{easiercasesecondeq}
        \forall (u,s,t,w,r) \in \mathcal{D} \qquad
        |\partial_u \Psi(\sigma,A,B,u,s,t,w,r)| \gtrsim_{c} M^{-1}.
    \end{equation}. 
\end{lemma}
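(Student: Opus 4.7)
The plan is to exploit the particularly simple dependence of $\Psi$ on $u$. Because $\Psi(\sigma,A,B,u,s,t,w,r) = -\scal{h(u)}{(A,B)} + \Psi_0(\sigma,u,s,t,w,r)$ and $\Psi_0$ depends on $u$ only through the term $-\scal{h(u)}{(s,t)}$ (see the decomposition \eqref{decompositionofPsi} and the explicit form of $\Psi_0$ in Paragraph \ref{subsec1HormPhase}), one differentiation in $u$ yields
\begin{equation}
\partial_u \Psi(\sigma,A,B,u,s,t,w,r) = -\scal{h'(u)}{(A,B) + (s,t)}.
\end{equation}
In particular, $\partial_u\Psi(P_\alpha) = -\scal{h'(u_\sigma(\alpha))}{(A,B)}$, since at $P_\alpha$ we have $s=t=0$.

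The key observation is then that the difference
\begin{equation}
\partial_u\Psi(\sigma,A,B,u,s,t,w,r) - \partial_u\Psi(P_\alpha) = -\scal{h'(u) - h'(u_\sigma(\alpha))}{(A,B)} - \scal{h'(u)}{(s,t)}
\end{equation}
can be made arbitrarily small compared to $M^{-1}$ by shrinking the size of $\mathcal{D}$. Indeed, by the mean value theorem and the smoothness of $h$ (so that $\|h''\|_\infty$ is an absolute constant depending only on $\mathcal{S}$), the first term is bounded by $\|h''\|_\infty |u - u_\sigma(\alpha)| \cdot |(A,B)| \lesssim_c M^{-2}\cdot M = M^{-1}$ with an implicit constant proportional to the implicit constant in the definition \eqref{defDeasycase}. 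The second term is bounded by $\|h'\|_\infty |(s,t)| \lesssim_c M^{-1}$, again with an implicit constant proportional to the implicit constant in \eqref{defDeasycase}.

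Choosing the implicit constants in the definition of $\mathcal{D}$ sufficiently small depending on $c$ (and on $\mathcal{S},\eps,\mathfrak{Q}$), we can therefore ensure that the correction is bounded by $\tfrac{c}{2}M^{-1}$ on $\mathcal{D}$. The reverse triangle inequality then gives
\begin{equation}
|\partial_u \Psi(\sigma,A,B,u,s,t,w,r)| \geq |\partial_u\Psi(P_\alpha)| - \tfrac{c}{2}M^{-1} \geq \tfrac{c}{2}M^{-1},
\end{equation}
which is the desired conclusion \eqref{easiercasesecondeq}. There is no genuine obstacle here; the proof is essentially a one-step Taylor expansion, and its only content is that the anisotropic scaling of $\mathcal{D}$ (with $|u - u_\sigma(\alpha)| \lesssim M^{-2}$ instead of $M^{-1}$) is precisely what is needed to absorb the factor $M$ coming from the size of $(A,B)$ when propagating the lower bound from $P_\alpha$ to all of $\mathcal{D}$.
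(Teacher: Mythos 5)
Your proposal is correct and follows essentially the same route as the paper: both compute $\partial_u\Psi = -\scal{h'(u)}{(s,t)+(A,B)}$, bound the difference $|\partial_u\Psi(P_\alpha)-\partial_u\Psi(\sigma,A,B,u,s,t,w,r)| \lesssim M|u-u_\sigma(\alpha)| + |(s,t)|$, and conclude by choosing the implicit constants in $\mathcal{D}$ small relative to $c$. The only difference is cosmetic: you spell out the mean value step and the triangle inequality, which the paper leaves as ``obvious.''
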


\begin{proof}
    Observe that
    \begin{equation}
        \partial_u \Psi(\sigma,A,B,u,s,t,w,r) = -\scal{h'(u)}{(s,t) + (A,B)}.
    \end{equation}
    
    \myindent Hence, there holds
    \begin{equation}
        \left| \partial_u \Psi(P_{\alpha}) - \partial_u \Psi(\sigma,A,B,u,s,t,w,r)\right| \lesssim M |u - u_{\sigma}(\alpha)| + |(s,t)|,
    \end{equation}
    from which the lemma is obvious.
\end{proof}

\section{The case $\bullet = (H)$ : quantitative estimate of the oscillatory integral}\label{secHormQuant}

\subsection{Quantitative estimate of the oscillatory integral : the case where $(A,B) = (0,0)$}\label{subsec1HormQuant}

\myindent In this section, we prove the estimate \eqref{estIH00}, that is we prove that
\begin{equation}\label{exactformulalocal}
\begin{split}
    &\mathcal{I}_{\lambda,\delta,i}^{(H)}(\sigma,0,0)\\
    &= \lambda^2 \int \int \int e^{i\lambda \Psi(\sigma,0,0,u,s,t,\xi)} \chi(s,t) \hat{\rho}(\delta(s,t)) a(s,(t,\sigma),(0,\delta),\lambda\xi) |\det(h'(u),(h(u)))| dudsdtd\xi\\
    &=2\hat{\rho}(0,0) c_{W}(\sigma) + O(\lambda^{-1}).
\end{split}
\end{equation}

\myindent As we have already mentioned several times in Section \ref{secHormPhase}, the analysis is a little different in the case $(A,B) = (0,0)$, since, as we will see, the phase doesn't satisfy the hypotheses of Theorem \ref{mixedVdCABZ}. Actually, we will come back to a more intuitive and elementary stationary phase analysis, observing that, since $(A,B)$ is fixed, we don't have to worry about controlling the implicit constants in the stationary phase lemma (see Paragraphs \ref{subsubsec32Micro} and \ref{subsubsec43Micro}).

\subsubsection{Reducing to a compact domain of integration}\label{subsubsec11HormQuant}

\myindent The first difficulty in estimating $\mathcal{I}_{\lambda,\delta,i}^{(H)}(\sigma,0,0)$ is that the domain of integration is a priori
\begin{equation}
    (\R / \ell \Z) \times 2\tilde{\mathcal{R}}_i^{(H)} \times \R^2,
\end{equation}
which is non compact. In particular, the integral only converges in the sense of oscillatory integrals, and we can't directly apply the usual theorems. 

\myindent Now, Lemma \ref{nicebound!} indicates that the integral decays as $\lambda^{-\infty}$ on the zones where $\xi$ is either close to zero, or large enough, thanks to the non-existence of stationary points in $(s,t)$ of the phase $\Psi$. This allows to reduce the $\xi$ domain of integration to a compact domain. Indeed, write the following partition of unity
\begin{equation}
    1 = \chi_1(\xi) + \chi_2(\xi),
\end{equation}
where $\chi_1,\chi_2$ are smooth, and
\begin{equation}\label{defpartunitcompact}
    \begin{split}
    \chi_1(\xi) &= \begin{cases}
        &0 \qquad |\xi| \leq \frac{1}{2}\beta^{-1} \ \text{or} \ |\xi| \geq 2\beta \\
        & 1 \qquad \beta^{-1} \leq |\xi| \leq \beta
    \end{cases}\\
    \chi_2(\xi) &= \begin{cases}
        & 1 \qquad |\xi| \leq \frac{1}{2}\beta^{-1} \ \text{or} \ |\xi| \geq 2\beta \\
        & 0 \qquad \beta^{-1} \leq |\xi| \leq \beta
    \end{cases}.
    \end{split}
\end{equation}

\myindent Then, on the support of $\chi_2$, since $\nabla_{s,t} \Psi$ doesn't vanish, we may integrate by parts in $(s,t)$ thanks to Lemma \ref{nicebound!} to find that, for any $N \geq 0$,
\begin{multline}
    \lambda^2 \int \int \int e^{i\lambda\Psi(\sigma,0,0,u,s,t,\xi)} \chi(s,t) \chi_2(\xi) \hat{\rho}(\delta(s,t)) a(s, (t,\sigma,),(0,\sigma), \lambda\xi)  |\det(h'(u),h(u))| du ds dt d\xi \\
    = \lambda^{2 - N} \int \int \int e^{i\lambda\Psi} \chi_2 \left(\left(\frac{\nabla_{s,t}\Psi \cdot \nabla_{s,t}}{i|\nabla_{s,t}\Psi|^2}\right)^t\right)^N \left(\chi(s,t) \hat{\rho}(\delta(s,t)) a(s, (t,\sigma,),(0,\sigma),\lambda\xi)  |\det(h'(u),h(u))| \right) du ds dt d\xi.
\end{multline}

\myindent Now, observe that, since $\chi,\hat{\rho}$ and $a$ are bounded along with all their $(s,t)$ derivatives, and thanks to Lemma \ref{nicebound!}, the integrand is $O((1+|\xi|)^{-N})$. Hence, for $N\geq 3$, the integral is absolutely converging. As a consequence,
\begin{multline}\label{reducetocompact}
    \mathcal{I}_{\lambda,\delta,i}^{(H)}(\sigma,A,B) = \lambda^2 \int \int \int e^{i\lambda \Psi(\sigma,0,0,u,s,t,\xi)} \chi(s,t) \chi_1(\xi) \hat{\rho}(\delta(s,t)) a(s, (t,\sigma,),(0,\sigma), \lambda\xi) |\det(h'(u),h(u))|  dudsdt  d\xi\\ + O\left(\lambda^{2-N}\right),
\end{multline}
i.e. we need only estimate the integral where the $\xi$ support has been reduced to $\xi \sim 1$. On that region, we may introduce the change of variable given by \eqref{defwrho}
\begin{equation}
    \xi = r g_{\sigma}(w).
\end{equation}

\myindent In particular, there holds
\begin{multline}\label{defIlambadeltaABeqzero}
    \lambda^2 \int \int \int e^{i\lambda \Psi(\sigma,0,0,u,s,t,\xi)} \chi(s,t) \chi_1(\xi) \hat{\rho}(\delta(s,t)) a(s, (t,\sigma,),(0,\sigma), \lambda\xi)  |\det(h'(u),h(u))|  dudsdt d\xi \\
    =\lambda^2 \int \int \int e^{i\lambda \Psi(\sigma,0,0,u,s,t,w,r)} b(\lambda, \delta, u,s,t,w,r) dudsdt dw dr =: \mathcal{I}(\lambda,\delta),
\end{multline}
where 
\begin{multline}
b(\lambda,\delta,u,s,t,w,r) =\\ \chi(s,t) \chi_1(w,r) \hat{\rho}(\delta(s,t)) a(s, (t,\sigma,),(0,\sigma), \lambda r g_{\sigma}(w)) r |\det(g_{\sigma}'(w),g_{\sigma}(w))| |\det(h'(u),h(u))|.
\end{multline}

\subsubsection{The $(u,s,t,w,r)$ stationary points of the phase $\Psi$}\label{subsubsec12HormQuant}

\myindent As announced in the introduction Section \ref{subsec1HormQuant}, in order to estimate the oscillatory integral $\mathcal{I}(\lambda,\delta)$ defined by \eqref{defIlambadeltaABeqzero}, let us come back to an elementary analysis, that is let us first ask the question of the stationary points of $\Psi$. 

\myindent In Section \ref{secHormPhase}, we have computed the set $\mathcal{O}_{\sigma} = \{\nabla_{s,t,w,r}\Psi = 0\}$, which we recall is the reunion of the circle $\mathcal{C}_{\sigma}$ defined in Lemma \ref{defcirclestatpoints} and of the two branches $\mathcal{E}_{\sigma,\alpha}$ defined in Lemma \ref{newhorriblelemma}. Hence, in order to compute the stationary points of $\Psi$, we need only compute those points of $\mathcal{O}_{\sigma}$ at which $\partial_u \Psi = 0$. We give the following lemma, which is actually not necessary for proving the estimate \eqref{estIH00}, but gives some intuition on the geometry of the oscillatory integral in the case $(A,B) = (0,0)$, compared to the case $(A,B) \neq (0,0)$.

\begin{lemma}\label{zerosetnablaustrhoabeq0}
    Let $(A,B) = (0,0)$. Then, the zero set of the full gradient of $\Psi$, that is
    \begin{equation}
        \{(u,s,t,w,r) \qquad \text{such that} \qquad \nabla_{u,s,t,w,r}\Psi(\sigma,0,0,u,s,t,w,r) = 0\},
    \end{equation}
    is exactly the circle $\mathcal{C}_{\sigma}$ defined in Lemma \ref{defcirclestatpoints}.
\end{lemma}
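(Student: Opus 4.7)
The plan is to exploit the complete description of the zero set $\mathcal{O}_\sigma = \{\nabla_{s,t,w,r}\Psi_0 = 0\}$, which is independent of $(A,B)$, already obtained in Sections~\ref{subsec2HormPhase}--\ref{subsec3HormPhase}: by Lemmas~\ref{defcirclestatpoints} and~\ref{newhorriblelemma}, one has $\mathcal{O}_\sigma = \mathcal{C}_\sigma \cup \mathcal{E}_{\sigma,0} \cup \mathcal{E}_{\sigma,\pi}$, the two branches intersecting $\mathcal{C}_\sigma$ exactly at the points $P_\alpha$ (i.e.\ at $t=0$). Since any zero of $\nabla_{u,s,t,w,r}\Psi$ lies a fortiori in $\mathcal{O}_\sigma$, only the extra scalar equation $\partial_u \Psi = 0$ remains to be analyzed on each piece.

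The containment $\mathcal{C}_\sigma \subset \{\nabla_{u,s,t,w,r}\Psi = 0\}$ will be immediate: starting from $\partial_u \Psi = -\langle h'(u),(s,t)+(A,B)\rangle$, the assumption $(A,B)=(0,0)$ together with $s=t=0$ on $\mathcal{C}_\sigma$ force $\partial_u \Psi = 0$ everywhere on $\mathcal{C}_\sigma$. The substantive step is the reverse containment, i.e.\ to rule out any zero lying on $\mathcal{E}_{\sigma,\alpha}$ with $t\neq 0$. Parametrising by $t$ as in Lemma~\ref{newhorriblelemma}, I would plug the explicit formulas~\eqref{defsrhoualpha} together with the expansion~\eqref{asdevphi} into $\partial_u\Psi$; a short computation yields $s_\alpha(\sigma,t) = -t\,(g_\sigma(\alpha))_1 + O(t^2)$ and $u_\alpha(\sigma,t) = u_\sigma(\alpha) + O(t)$, and hence
\begin{equation*}
\partial_u\Psi\big|_{\mathcal{E}_{\sigma,\alpha}} \;=\; -t\,\bigl\langle h'(u_\sigma(\alpha)),\,\bigl(-(g_\sigma(\alpha))_1,\,1\bigr)\bigr\rangle \;+\; O(t^2).
\end{equation*}

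The only genuinely geometric point, and the main obstacle, is the non-vanishing of the leading coefficient, which I would deduce as follows. By Lemma~\ref{defcirclestatpoints}, $h(u_\sigma(\alpha)) = r_\sigma(\alpha)\bigl(1,(g_\sigma(\alpha))_1\bigr)$, so the vector $\bigl(-(g_\sigma(\alpha))_1,\,1\bigr)$ is precisely the $90^{\circ}$-rotation of $h(u_\sigma(\alpha))/r_\sigma(\alpha)$ and is therefore orthogonal to $h(u_\sigma(\alpha))$. On the other hand, Lemma~\ref{deth'hnotzero} ensures that $h(u_\sigma(\alpha))$ and $h'(u_\sigma(\alpha))$ are linearly independent in $\R^2$; since no non-zero vector in $\R^2$ can be orthogonal to two linearly independent directions, we obtain $\bigl\langle h'(u_\sigma(\alpha)),(-(g_\sigma(\alpha))_1,1)\bigr\rangle \neq 0$. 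After possibly shrinking $\tilde{\mathcal{R}}_i^{(H)}$ so that the branches are parametrised only over a range of $t$ on which the leading order dominates, this shows $\partial_u\Psi \neq 0$ uniformly on $\mathcal{E}_{\sigma,\alpha}\setminus\{P_\alpha\}$, and hence completes the proof.
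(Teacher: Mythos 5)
Your proposal is correct and follows essentially the same route as the paper: it reduces to the known decomposition of $\mathcal{O}_\sigma$ into $\mathcal{C}_\sigma$ and the branches $\mathcal{E}_{\sigma,\alpha}$, checks $\partial_u\Psi=0$ on $\mathcal{C}_\sigma$, and rules out zeros on $\mathcal{E}_{\sigma,\alpha}\setminus\{P_\alpha\}$ by expanding $\partial_u\Psi$ to first order in $t$ with a nonvanishing leading coefficient. Your orthogonality argument for that coefficient is just a rephrasing of the paper's computation, which identifies it as $r_\sigma(\alpha)^{-1}\det\bigl(h'(u_\sigma(\alpha)),h(u_\sigma(\alpha))\bigr)\neq 0$ via Lemma \ref{deth'hnotzero}.
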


\begin{proof}
    On one hand, since
    \begin{equation}
        \partial_u \Psi(\sigma,0,0,u,s,t,w,r) = -\scal{h'(u)}{(s,t)},
    \end{equation}
    we see that, for any
    \begin{equation}
        (u_{\sigma}(w),0,0,w,r_{\sigma}(w)) \in \mathcal{C}_{\sigma},
    \end{equation}
    there holds 
    \begin{equation}
        \partial_u \Psi(\sigma,0,0,u_{\sigma}(w),0,0,w,r_{\sigma}(w)) = 0.
    \end{equation}

    \myindent On the other hand, we claim that $\partial_u \Psi$ doesn't vanish along the branches $\mathcal{E}_{\sigma,\alpha}$ except at the branching points $P_{\alpha}$. Indeed, we compute along the branches
    \begin{equation}\label{localpartuPsialongbranch}
        \begin{split}
            \partial_u \Psi(\sigma,0,0,u_{\alpha}(\sigma,t),s_{\alpha}(\sigma,t),t,w_{\alpha}(\sigma,t),r_{\alpha}(\sigma,t)) &= -\scal{h'(u_{\alpha}(\sigma,t))}{(s_{\alpha}(\sigma,t),t)} \\
            &= -t\left(\scal{h'(u_{\alpha}(\sigma,0))}{(\partial_t s_{\alpha}(\sigma,0),1)} + O(t)\right).
        \end{split}
    \end{equation}
    
    \myindent Now, using \eqref{defsrhoualpha}, there holds
    \begin{equation}
    \begin{split}
        \begin{pmatrix}\partial_t s_{\alpha}(\sigma,0)\\
        1
        \end{pmatrix}
        &= \begin{pmatrix}
            -\partial_{\theta} \varphi((0,\sigma,),(0,\sigma),\alpha) \\
            1
        \end{pmatrix} \\
        &= \begin{pmatrix}
            q_1(\sigma,\nabla_x \varphi((0,\sigma,),(0,\sigma),\alpha)) \\
            q_2(\sigma,\nabla_x \varphi((0,\sigma,),(0,\sigma),\alpha))
        \end{pmatrix}^{\perp} \\
        &= \frac{1}{r_{\sigma}(\alpha)} \left(h(u_{\alpha}(\sigma,0))\right)^{\perp}.
    \end{split}
    \end{equation}
    
    \myindent Hence, we find that
    \begin{equation}
        \begin{split}
            -\scal{h'(u_{\alpha}(\sigma,0))}{(\partial_t s_{\alpha}(\sigma,0),1)} &= \frac{1}{r_{\sigma}(\alpha)}\det(h'(u_{\alpha}(\sigma,0)), h(u_{\alpha}(\sigma,0))) \\
            &\neq 0,
        \end{split}
    \end{equation}
    where we use Lemma \ref{deth'hnotzero} for the last equality. Hence, coming back to \eqref{localpartuPsialongbranch}, we finally find that $\partial_u \Psi$ cannot vanish along $\mathcal{E}_{\sigma,\alpha} \backslash\{P_{\alpha}\}$ provided $|t|$ is small enough.
\end{proof}

\myindent As a consequence, the set of stationary points of $\Psi$, $\{\nabla \Psi = 0\}$, is a \textit{curve}, naturally parameterized by $w$. This is in sharp contrast to the case $(A,B) \neq (0,0)$, for which the stationary points of $\Psi$ are \textit{isolated}. Now, since the stationary points of $\Psi$ are not isolated, there holds necessarily
\begin{equation}
    \forall (u_{\sigma}(w),0,0,w,r_{\sigma}(w))\in \mathcal{C}_{\sigma} \qquad \det(\nabla_{u,s,t,w,r}^2\Psi (u_{\sigma}(w),0,0,w,r_{\sigma}(w))) = 0,
\end{equation}
and one cannot apply the stationary phase lemma in the full 5 variables. However, it is natural to isolate the variable $w$, since it parameterize the set $\{\nabla \Psi = 0\}$, and to try and apply the stationary phase lemma in the remaining 4 variables. For that purpose, we prove the following

\begin{lemma}\label{statpointandhessianabzero}
    For $w \in S^1$, let 
    \begin{equation}
        \Psi_{\sigma,w} : (u,s,t,r) \in (\R/\ell\Z) \times 2 \tilde{\mathcal{R}}_i^{(H)} \times (Supp(\chi_1(w,\cdot))) \mapsto \Psi(\sigma,0,0,u,s,t,w,r).
    \end{equation}

    \myindent Then, provided $\tilde{\mathcal{R}}_i^{(H)}$ is small enough, for any $w \in S^1$ and $\sigma$ has one, and exactly one, stationary point given by
    \begin{equation}
        (u,s,t,r) = (u_{\sigma}(w),0,0,r_{\sigma}(w)).
    \end{equation}

    \myindent Moreover, at this stationary point, there holds
    \begin{equation}
        \begin{split}
            \det(\nabla_{u,s,t,r}^2\Psi_{w,\sigma}(u_{\sigma}(w),0,0,r_{\sigma}(w))) &= \det(\nabla_{u,s,t,r}^2 \Psi(\sigma,0,0,u_{\sigma}(w),0,0,w,r_{\sigma}(w))) \\
            &= r_{\sigma}(w)^{-2}\det(h'(u_{\sigma}(w)),h(u_{\sigma}(w)))^2
        \end{split}.
    \end{equation}

    \myindent In particular, this determinant is uniformly bounded away from zero.

    \myindent Finally, there holds
    \begin{equation}
        sgn(\nabla_{u,s,t,r}^2\Psi_{w,\sigma}(u_{\sigma}(w),0,0,r_{\sigma}(w))) = 0.
    \end{equation}
\end{lemma}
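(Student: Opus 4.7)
The plan is to work directly from the explicit polar form of the phase at $(A,B) = (0,0)$,
\[
\Psi(\sigma, 0, 0, u, s, t, w, r) = -\langle h(u), (s,t)\rangle + r\bigl(s + \varphi((t,\sigma),(0,\sigma), w)\bigr),
\]
obtained from the polar change of variables and the homogeneity of $\varphi$. Writing out $\nabla_{u,s,t,r}\Psi_{\sigma,w} = 0$, the equations $\partial_s\Psi = 0$ and $\partial_r\Psi = 0$ immediately give $r = h(u)_1$ and $s = -\varphi((t,\sigma),(0,\sigma),w)$, while $\partial_t\Psi = 0$ is the correspondence equation $h(u)_2 = h(u)_1\, \partial_\theta\varphi((t,\sigma),(0,\sigma),w)$; since the curve $\gamma_0$ is crossed exactly once by each ray from the origin in the interior of $\Gamma$, these three equations determine $(u, r, s)$ as smooth functions of $(t,w)$ with $(u(0,w), r(0,w)) = (u_\sigma(w), r_\sigma(w))$ by the very definition of $\mathcal{C}_\sigma$ in Lemma \ref{defcirclestatpoints}. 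The remaining equation $\partial_u\Psi = 0$ reduces to a single scalar equation $G(t,w) := \langle h'(u(t,w)), (s(t,w), t)\rangle = 0$, which is trivially satisfied at $t = 0$. Using \eqref{asdevphi} (which gives $\partial_\theta\varphi((0,\sigma),(0,\sigma),g_\sigma(w)) = q_2(\sigma,w,1)$) together with $h(u_\sigma(w)) = r_\sigma(w)(1, q_2(\sigma,w,1))$, a short computation yields $\partial_t G(0, w) = -\det(h'(u_\sigma(w)), h(u_\sigma(w)))/r_\sigma(w)$, which is uniformly bounded away from zero by Lemma \ref{deth'hnotzero}; the implicit function theorem then gives local uniqueness of $t=0$, and shrinking $\tilde{\mathcal{R}}_i^{(H)}$ in the $t$-direction turns this into the global uniqueness claimed in part (i).

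For the Hessian formula, the second partials are read off directly from the polar expression above, and at $(u_\sigma(w), 0, 0, r_\sigma(w))$ the entry $\partial_{uu}\Psi = -\langle h''(u), (s,t)\rangle$ vanishes because $(s,t) = (0,0)$; using again \eqref{asdevphi} one obtains
\[
H = \begin{pmatrix} 0 & -h'_1 & -h'_2 & 0 \\ -h'_1 & 0 & 0 & 1 \\ -h'_2 & 0 & r_\sigma(w)\partial_{\theta\theta}\varphi & q_2 \\ 0 & 1 & q_2 & 0 \end{pmatrix},
\]
with the shorthand $h' = h'(u_\sigma(w))$, $q_2 = q_2(\sigma,w,1)$ and $\partial_{\theta\theta}\varphi = \partial_{\theta\theta}\varphi((0,\sigma),(0,\sigma),g_\sigma(w))$. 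Applying the $4\times 4$ determinant identity used in the proof of Lemma \ref{nondegenerateustrhohess}, namely $\det\begin{pmatrix} K & a & b & 0 \\ a & 0 & 0 & 1 \\ b & 0 & e & c \\ 0 & 1 & c & 0 \end{pmatrix} = (ac - b)^2 - Ke$, with $K = 0$, $a = -h'_1$, $b = -h'_2$, $c = q_2$ and $e = r_\sigma(w)\partial_{\theta\theta}\varphi$, one finds $\det H = (h'_1 q_2 - h'_2)^2$; substituting $h(u_\sigma(w)) = r_\sigma(w)(1, q_2)$ rewrites this as $r_\sigma(w)^{-2}\det(h'(u_\sigma(w)), h(u_\sigma(w)))^2$, and the uniform lower bound follows from Lemma \ref{deth'hnotzero}.

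For the vanishing signature, I would reorder the variables as $(u, t, s, r)$, which is a permutation congruence and therefore preserves the signature, so that $H$ takes the block form $\begin{pmatrix} A & B \\ B^T & D \end{pmatrix}$ with $A = \begin{pmatrix} 0 & -h'_2 \\ -h'_2 & r_\sigma(w)\partial_{\theta\theta}\varphi \end{pmatrix}$, $B = \begin{pmatrix} -h'_1 & 0 \\ 0 & q_2 \end{pmatrix}$ and $D = \begin{pmatrix} 0 & 1 \\ 1 & 0 \end{pmatrix}$. Since $D$ is invertible with $D^{-1} = D$ and signature $0$, the Schur complement makes $H$ congruent to $D \oplus (A - BDB^T)$; a direct calculation gives $A - BDB^T = \begin{pmatrix} 0 & h'_1 q_2 - h'_2 \\ h'_1 q_2 - h'_2 & r_\sigma(w)\partial_{\theta\theta}\varphi \end{pmatrix}$ with determinant $-(h'_1 q_2 - h'_2)^2 < 0$, so this $2\times 2$ block has one positive and one negative eigenvalue, and hence signature $0$; adding signatures yields $\mathrm{sgn}(H) = 0$. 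The only really delicate step is the global uniqueness in part (i), because the implicit function theorem only provides uniqueness near $t = 0$ and one must rule out further stationary points $(u, s, t, r)$ coming from the slice at fixed $w$ of the branches $\mathcal{E}_{\sigma,\alpha}$ of Lemma \ref{newhorriblelemma}; this last point is handled exactly by the argument of Lemma \ref{zerosetnablaustrhoabeq0}, which shows that for $(A,B) = (0,0)$ the function $\partial_u \Psi$ is uniformly nonzero along $\mathcal{E}_{\sigma,\alpha} \setminus \{P_\alpha\}$, so that shrinking $\tilde{\mathcal{R}}_i^{(H)}$ suffices to exclude any such spurious stationary points.
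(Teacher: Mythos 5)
Your proof is correct, and it reproduces the paper's determinant computation essentially verbatim, but it differs from the paper on the other two points in ways worth noting. For the uniqueness of the stationary point, the paper simply declares it a direct consequence of Lemma \ref{zerosetnablaustrhoabeq0}; strictly speaking that lemma describes the zero set of the \emph{full} gradient $\nabla_{u,s,t,w,r}\Psi$, whereas a stationary point of $\Psi_{\sigma,w}$ only requires $\nabla_{u,s,t,r}\Psi=0$ and need not satisfy $\partial_w\Psi=0$, so your self-contained route — solving the correspondence equations $\partial_s\Psi=\partial_t\Psi=\partial_r\Psi=0$ for $(u,s,r)$ as functions of $(t,w)$ and then applying the implicit function theorem to $G(t,w)=\scal{h'(u(t,w))}{(s(t,w),t)}$, with $\partial_tG(0,w)=-r_{\sigma}(w)^{-1}\det(h'(u_{\sigma}(w)),h(u_{\sigma}(w)))$ uniformly nonzero by Lemma \ref{deth'hnotzero} — actually closes this small gap and makes your closing appeal to the branches $\mathcal{E}_{\sigma,\alpha}$ redundant (any fixed-$w$ slice of a branch is already a point of your $t$-parameterized curve, hence covered by the IFT argument). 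For the signature, the paper deforms the $(t,t)$ entry $X$, uses that the determinant is a nonzero square independent of $X$ to conclude the inertia is constant in $X$, and then reads off signature $0$ from the vanishing trace at $X=0$; your argument instead reorders the variables and uses the Schur complement with respect to the invertible off-diagonal block $D=\begin{pmatrix}0&1\\1&0\end{pmatrix}$, obtaining a congruence to $D\oplus(A-BDB^{T})$ where both blocks have negative determinant, so Sylvester's law of inertia gives signature $0$ directly. Both are valid; yours avoids the deformation/continuity-of-eigenvalues step and is somewhat more mechanical, while the paper's deformation trick has the advantage of showing at once that the signature is insensitive to the (irrelevant) entry $r_{\sigma}(w)\partial_{\theta\theta}\varphi$.
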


\begin{proof}
    The first part of the lemma is a direct consequence of Lemma \ref{zerosetnablaustrhoabeq0}. For the second part, we first compute
    \begin{equation}
        \begin{split}
            \det(\nabla_{u,s,t,r}^2 \Psi(\sigma,0,0,u_{\sigma}(w),0,0,w,r_{\sigma}(w))) &= \begin{vmatrix}
                0 & -h'(u_{\sigma}(w))_1 & -h'(u_{\sigma}(w))_2 & 0 \\
                -h'(u_{\sigma}(w))_1 & 0 & 0 & 1\\
                -h'(u_{\sigma}(w))_2 & 0 & \partial_{\theta\theta}\varphi((0,\sigma,),(0,\sigma),w) & q_2(\sigma,w,1) \\
                0 & 1 & q_2(\sigma,w,1) & 0
            \end{vmatrix} \\
            &= \begin{vmatrix}
                -h'(u_{\sigma}(w))_1 & 1 \\
                -h'(u_{\sigma}(w))_2 & q_2(\sigma,w,1)
            \end{vmatrix}^2 \\
        &= r_{\sigma}(w)^{-2}\det(h'(u_{\sigma}(w)),h(u_{\sigma}(w)))^2,
        \end{split}
    \end{equation}
    by definition of $u_{\sigma}(w)$ and $r_{\sigma}(w)$ (see Lemma \ref{defcirclestatpoints}).

    \myindent More generally, this is the determinant of the matrix
    \begin{equation}
        \begin{pmatrix}
            0 & -h'(u_{\sigma}(w))_1 & -h'(u_{\sigma}(w))_2 & 0 \\
            -h'(u_{\sigma}(w))_1 & 0 & 0 & 1 \\
            -h'(u_{\sigma}(w))_2 & 0 & X & q_2(\sigma,w,1) \\
            0 & 1 & q_2(\sigma,w,1) & 0
        \end{pmatrix}
    \end{equation}
    for any value of $X \in \R$. Hence, for any $X \in \R$, this matrix either has 4 nonzero eigenvalues of the same sign, or two negative and two positive eigenvalues. In particular, the sign of this matrix is necessarily independent of $X$. If we choose $X = 0$, we find that the trace vanishes. Hence, there are necessarily two negative eigenvalues and two positive eigenvalues i.e. the sign of the matrix is $0$.
\end{proof}

\subsubsection{Exact result for the integral}\label{subsubsec13HormQuant}

\myindent We are now in a position to estimate $\mathcal{I}(\lambda,\delta)$. Isolating the variable $w$, we need to evaluate
\begin{equation}\label{IlambdadeltawisolatedABeqzero}
\mathcal{I}(\lambda,\delta) = \int_{S^1} dw\left( \lambda^2 \int \int \int e^{i\lambda\Psi(\sigma,0,0,u,s,t,w, r)} b(\lambda,\delta, u,s,t,w,r) du ds dt dr\right).
\end{equation}

\myindent Now, the usual stationary phase Lemma \ref{hormstatphase}, and Lemmas \ref{statpointandhessianabzero} along with the very important observation that for all $\sigma,u,w, r$
\begin{equation}
    \Psi(\sigma,0,0,u,0,0,w, r) = 0,
\end{equation}
ensure that the inner integral equals
\begin{multline}
    \lambda^2 \int \int \int e^{i\lambda\Psi(\sigma,0,0,u,s,t,w, r)}  b(\lambda,\delta, u,s,t,w,r)du ds dt dr \\
    = (2\pi)^{-2} \frac{1}{\left|r_{\sigma}(w)^{-1}\det(h'(u_{\sigma}(w)),h(u_{\sigma}(w)))\right|}   b(\lambda,\delta, u_{\sigma}(w),0,0,w,r_{\sigma}(w)) + R(\lambda,\sigma, w),
\end{multline}
where
\begin{equation}
    R(\lambda,\sigma,w) = O(\lambda^{-1})
\end{equation}
uniformly in $\delta,\sigma,w$. Indeed, in order to prove this last assertion, the only subtle point is that
\begin{equation}
    \|\nabla_{u,s,t,r} b \|_{L^{\infty}} \lesssim 1
\end{equation}
uniformly in $\lambda,\delta$, which itself is a direct consequence of the fact that
\begin{equation}\label{nolosslambdadera}
    \|\nabla_{\xi}^K \left(a(s,(t,\sigma,),(0,\sigma), \lambda \xi\right)\|_{L^{\infty}(supp(\chi) \times supp(\chi_1))} \lesssim 1,
\end{equation}
uniformly in $\lambda$ on the domain of integration. Indeed, when differentiating in $\xi$, while one loses a factor $\lambda$, one gains a factor $\lambda^{-1}|\xi|^{-1}$ through the \textit{symbol estimate} \eqref{estsymb} satisfied by $a$. Now, this is why we have ensured that $\xi$ is bounded away from zero on the domain of integration, since otherwise one \textit{could not} avoid losing some negative powers of $|\xi|$ through the symbol estimate. 

\quad

\myindent Now, there holds
\begin{multline}
    b(\lambda,\delta,u_{\sigma}(w),0,0,w,r_{\sigma}(w)) \\ = \hat{\rho}(0,0) a(0,(0,\sigma,),(0,\sigma),\lambda r_{\sigma}(w) g_{\sigma}(w)) r_{\sigma}(w) |\det(g_{\sigma}'(w),g_{\sigma}(w))| |\det(h'(u_{\sigma}(w)),h(u_{\sigma}(w)))|.
\end{multline}

\myindent Since, moreover, there holds
\begin{equation}
    \forall x \in \mathcal{S}, \forall  |\xi|\gtrsim 1 \qquad a(0,x,x,\lambda \xi) = 1 + O(\lambda^{-1})
\end{equation}
uniformly (see Theorem \ref{Hormthm}), one finds in fact that for all $w\in S^1$,  
\begin{multline}\label{exactcompute}
    \lambda^2 \int \int \int e^{i\lambda\Psi(\sigma,0,0,u,s,t,w, r)} b(\lambda,\delta,u,s,t,w,r) du ds dt dr \\
    = (2\pi)^{-2} r_{\sigma}(w)^2|\det(g_{\sigma}'(w),g_{\sigma}(w))| \hat{\rho}(0,0) + O(\lambda^{-1}),
\end{multline}
where the remainder is bounded uniformly in $\sigma,w,\delta$.

\quad

\myindent Overall, from \eqref{reducetocompact}, \eqref{IlambdadeltawisolatedABeqzero}, and \eqref{exactcompute}, we find that
\begin{equation}\label{explicitI10quasicomplete}
    \mathcal{I}_{\lambda,\delta,i}^{(H)}(\sigma,0,0) = (2\pi)^{-2} \hat{\rho}(0,0) \int_{S^1} r_{\sigma}(w)^2|\det(g_{\sigma}'(w),g_{\sigma}(w))|dw + O(\lambda^{-1}).
\end{equation}

\myindent In order to conclude to the exact formula \eqref{exactformulalocal}, it remains to observe that, thanks to Lemma \ref{defcirclestatpoints},
\begin{equation}
    r_{\sigma}(w) = \frac{1}{p_1(\sigma,w,1)}.
\end{equation}

\myindent Now, this can also be expressed in the form
\begin{equation}
    q_1\left(\sigma, \frac{g_{\sigma}(w)}{p_1(\sigma,g_{\sigma}(w))}\right) = r_{\sigma}(w).
\end{equation}

\myindent In other words, the curve $\{p_1(\sigma,\xi) = 1\}$ can be parameterized, in the polar coordinates $(w,r)$, as 
\begin{equation}
    \{p_1(\sigma,\xi) = 1\} = \left\{ (w, r_{\sigma}(w)), \qquad w \in S^1 \right\}.
\end{equation}

\myindent Thus, after the polar change of coordinate $\xi \mapsto (w,r)$, there holds
\begin{equation}
    2c_W(\sigma) = \int_{\{p_1(\sigma,\xi) = 1\}} d\xi = \int_{S^1} r_{\sigma}^2(w) |\det(g_{\sigma}'(w), g_{\sigma}(w))| dw.
\end{equation}

\myindent Coming back to \eqref{explicitI10quasicomplete}, we thus find the exact constant in \eqref{exactformulalocal}. 

\begin{remark}
    If we compare with the general intuition of Paragraph \ref{subsubsec43Micro}, with the notations of this paragraph, we have used the fact that
\begin{equation}
    \forall w \in S^1 \qquad \Psi^{1D}(w) = 0.
\end{equation}

\myindent In particular, there are \textit{no} remaining oscillations in the variable $w$, i.e. the phase $\Psi$ is degenerate without a finite type of degeneracy. This last part is actually a direct consequence of Lemma \ref{zerosetnablaustrhoabeq0}, which implies that $\Psi^{1D}(w)$ is a constant. This explains, on a heuristic level, why there is no possible $O(\lambda^{-\frac{1}{p}})$ improvements in the estimate \eqref{exactformulalocal}, compared to the case $(A,B) \neq (0,0)$.
\end{remark}

\subsection{Quantitative estimate of the oscillatory integral : the case where $(A,B) \neq (0,0)$}\label{subsec2HormQuant}

\myindent In this section, using the extensive analysis of the phase developed in Section \ref{secHormPhase}, we prove the estimates \eqref{estIHAB}, that is we prove that, for $(A,B) \neq (0,0)$,
\begin{equation}\label{notexactformulalocal}
\begin{split}
    &\mathcal{I}_{\lambda,\delta,i}^{(H)}(\sigma,A,B)\\
    &= \lambda^2 \int \int \int e^{i\lambda \Psi(\sigma,A,B,u,s,t,\xi)} \chi(s,t) \hat{\rho}(\delta(s + A,t + B)) a(s,(t,\sigma),(0,\delta),\lambda\xi) |\det(h'(u),(h(u)))| dudsdtd\xi\\
    &= O\left(\lambda^{-\frac{1}{4}} M^{5} + \lambda^{-\frac{1}{3}} M^7 + \lambda^{-\frac{1}{2}}M^{14}\right),
\end{split}
\end{equation}
and we also prove the refinement \eqref{estIHABi0} in the case $i = 0$.

\subsubsection{Strategy for quantitative bounds}\label{subsubsec21HormQuant}

\myindent This paragraph is a brief introduction to the following  paragraphs.

\myindent First, that the argument in Paragraph \ref{subsubsec11HormQuant} \textit{doesn't} depend on $(A,B,\sigma)$. In particular, we may already write, similarly to \eqref{reducetocompact},
\begin{equation}\label{reducetocompactnonzeroab}
\begin{split}
    &\mathcal{I}_{\lambda,\delta,i}^{(H)}(\sigma,A,B) = \\
    &\lambda^2 \int \int \int e^{i\lambda \Psi(\sigma,A,B,u,s,t,w,r)} \chi(s,t) \chi_1(w,r) \hat{\rho}(\delta(s + A, t + B)) a(s, (t,\sigma,),(0,\sigma), \lambda\xi)r |\det(g'_{\sigma}(w), g_{\sigma}(w))| dudsdt dw dr \\
    &+ O\left(\lambda^{2-N}\right),
\end{split}
\end{equation}
where $\chi_1$ localizes in $|\xi| \in [\frac{1}{2} \beta^{-1}, 2 \beta]$. Hence, we wish to estimate the integral term in the RHS of \eqref{reducetocompactnonzeroab}. In order to have clearer notations, we define
\begin{multline}
    b(\lambda,A,B,\sigma,u,s,t,w,r) := \\\chi(s,t) \chi_1(w,r) \hat{\rho}(\delta(s + A, t + B)) a(s, (t,\sigma,),(0,\sigma), \lambda\xi)r |\det(g'_{\sigma}(w), g_{\sigma}(w))|  |\det(h'(u),h(u))|,
\end{multline}
and we already observe that for all $K\geq 1$,
\begin{equation}\label{boundallderivativesb}
    \|(\nabla_{u,s,t,w,r})^K b\|_{L^{\infty}} \lesssim_K 1.
\end{equation}

\myindent Indeed, the only subtelties are that $\hat{\rho}$ is a Schwartz function, hence it is uniformly bounded along with all its derivatives, and that no $\lambda$ appears when differentiating $a$, which we have already observed in \eqref{nolosslambdadera}. We define moreover the local notation, for this section,
\begin{equation}\label{defIlocal}
    \mathcal{I} := \lambda^2 \int \int \int e^{i\lambda \Psi(\sigma,A,B,u,s,t,w,r)} b(\lambda,A,B,\sigma,u,s,t,w,r) du ds dt dw dr,
\end{equation}
which is ultimately the interesting quantity that we need to bound, using the extensive analysis of the Phase $\Psi$ that we have presented in Section \ref{secHormPhase}. 

\quad

\myindent Thanks to Section \ref{secHormPhase}, we know that there are three regions of interest

\myindent i. First, when $(u,s,t,w,r)$ are near the circle $\mathcal{C}_{\sigma}$ but away from the branching points $P_0$ and $P_{\pi}$, Section \ref{subsec2HormPhase} yields that we can apply the strategy of Theorem \ref{mixedVdCABZ} by isolating the variable $u$. Precisely, those paragraphs help to define a \textit{quantitative} region where this analysis works, in terms of powers of $M^{-1}$. This will be presented in Paragraph \ref{subsubsec22HormQuant}.

\myindent ii. Second, when $(u,s,t,w,r)$ are near the branches $\mathcal{E}_{\sigma,\alpha}$, and away from the branching points $P_0$ and $P_{\pi}$, we can similarly use the strategy of Theorem \ref{mixedVdCABZ} by isolating the variable $u$, thanks to Section \ref{subsec3HormPhase}. However, a difficulty is that this strategy only works for $\sigma \neq 0$, as we have already seen in the above. Hence, we need to quantify moreover a threshold on $\sigma$, in terms of powers of $M^{-1}$, and to distinguish two cases : the case when $\sigma$ is far away enough from $0$, which we will present in Paragraph \ref{subsubsec23HormQuant}; and the case when $\sigma$ is close to $0$, which we will present in Paragraph \ref{subsubsec24HormQuant}.

\myindent iii. Third, in any case, the most delicate part is to deal with the branching points $P_0$ and $P_{\pi}$. We will explain in Paragraph \ref{subsubsec25HormQuant} how to use the tools of Section \ref{subsec4HormPhase} in order to apply the strategy of Theorem \ref{mixedVdCABZ}, but with the twist of Remark \ref{nonisotropicmixedABZVdC}. Indeed, as we will see, it is this region which gives the \textit{dominant term} in the upper bound \eqref{estIHABi0}, namely it is responsible for the term $\lambda^{-\frac{1}{4}}$. Hence, even if this paper does \textit{not} aim for optimality, we still want to give an example of how looking closer at the phase yields improved estimates compared to what would be expected with Theorem \ref{mixedVdCABZ}.

\myindent iv. The first three points put together yield a quantitative neighborhood of $\mathcal{O}_{\sigma}$, the contribution of which to $\mathcal{I}$ we can control. Hence, the last point is to bound the region where $(u,s,t,w,r)$ is \textit{not} in the neighborhood of $\mathcal{O}_{\sigma}$, i.e. where $\nabla_{s,t,w,r}\Psi_0$ is never zero. Though this part is easier since the contribution to the integral is $O(\lambda^{-\infty})$, we still need to be a little careful in controlling the powers of $M$ which appear in the estimates. This will be presented in Paragraph \ref{subsubsec26HormQuant}.

\subsubsection{Quantitative estimate near the Circle $\mathcal{C}_{\sigma}$, away from $P_0$ and $P_{\pi}$}\label{subsubsec22HormQuant}

Near $\mathcal{C}_{\sigma} \backslash\{P_0,P_{\pi}\}$, we can apply Theorem \ref{mixedVdCABZ}, isolating the $u$ variable (since it is a circle of $(s,t,w,r)$ stationary points). Indeed, Section \ref{subsec2HormPhase} yield that, away from both $P_{\alpha}$, for small $(s,t)$, there are exactly two $(s,t,w,r)$ stationary points for each $u$. The remaining phase function is then
\begin{equation}
    \Psi^{1D}(u) = -\scal{h(u)}{(A,B)},
\end{equation}
whose first and second derivative don't vanish together (in sharp contrast with the case $(A,B) = (0,0)$. Hence, we find a bound of the form $\lambda^{-\frac{1}{2}}$ for the integral. Precisely, observe first that $\mathcal{C}_{\sigma} \backslash\{P_0,P_{\pi}\}$ has two connected components, which we define, using the notations of Lemma \ref{defcirclestatpoints}, by
\begin{equation}
\begin{split}
    \mathcal{C}_{\sigma}^+ &:= \left\{\left( u_{\sigma}(w), 0, 0, w, r_{\sigma}(w) \right) \qquad w \in (0,\pi)\right\}\\
    \mathcal{C}_{\sigma}^- &:= \left\{\left( u_{\sigma}(w), 0, 0, w, r_{\sigma}(w) \right) \qquad w \in (-\pi, 0)\right\}.
\end{split}
\end{equation}

\myindent Then, there holds the following, using the notations of Lemma \ref{locinverseuw}.

\begin{lemma}\label{quantitativestimateICx}
    Let $i \in \{0,...,I\}$ and $\sigma \in \mathcal{I}_i$. Let $c_1 > 0$ be a small constant. Let
    \begin{equation}\label{defIsigma}
        \begin{split}
        I_{\sigma} &:= \left(u_{\sigma}(\pi) + \frac{1}{2}c_1M^{-2}, u_{\sigma}(0) - \frac{1}{2}c_1M^{-2}\right)\\
         U^{\pm} &:= \{(s,t,w,r) \ \text{such that} \ \pm w \in (0,\pi) \}.
        \end{split}
    \end{equation}

    \myindent There exists a smooth function $\zeta(u,s,t,w,r)$ such that, for all $u \in I_{\sigma}$, $\zeta(u,\cdot)$ is a smooth localizer adapted to a ball
    \begin{equation}
        |(s,t,w,r) - (0,0,w_{\pm}(u,\sigma),r_{\pm}(u,\sigma)| \lesssim M^{-1},
    \end{equation}
    and such that, if
    \begin{equation}
        \chi_{\mathcal{C}_{\sigma}^{\pm}} = \chi_{I_{\sigma}}(u) \zeta(u,s,t,w,r),
    \end{equation}
    where $\chi_{I_{\sigma}}$ is the characteristic function of $I_{\sigma}$, there holds the following  
    \begin{multline}\label{defICpmx}
        \mathcal{I}_{\mathcal{C}^{\pm}_{\sigma}} := \lambda^2 \int\int\int e^{i\lambda \Psi(\sigma,A,B,u,s,t,w,r)} \chi_{\mathcal{C}_{\sigma}^{\pm}}(u,s,t,w,r) b(\lambda,A,B,\sigma,u,s,t,w,r) dudsdtdwdr \\
        = O\left(\lambda^{-\frac{1}{2}} M^{4}\right).
    \end{multline}
\end{lemma}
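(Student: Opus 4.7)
The plan is to apply Theorem \ref{mixedVdCABZ} directly, isolating the variable $u$ and treating $y=(s,t,w,r)\in\R^4$ as the stationary variables. Concretely, for $u\in I_\sigma$ on the branch $\mathcal{C}_\sigma^\pm$, Lemma \ref{defcirclestatpoints} combined with the local inverse of Lemma \ref{locinverseuw} produces a unique $y$-stationary point
\begin{equation*}
y(u)=\left(0,0,w_\pm(u,\sigma),r_\sigma(w_\pm(u,\sigma))\right),
\end{equation*}
and Corollary \ref{norminversehessCsig} gives the nondegeneracy of the $y$-Hessian along this branch. One then defines $\zeta(u,s,t,w,r)$ as any smooth localizer (as in Definition \ref{defadapted}) adapted, at each $u\in I_\sigma$, to a ball of radius $\sim M^{-1}$ centered at $y(u)$; this radius is exactly the isotropic scale $\bigl(\mathcal{M}_{3,3}^{(y)}(\Psi)\mathcal{N}(\Psi)\bigr)^{-1}$ required in \eqref{defballmixedvdcabz}.

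First I would collect the quantitative scalings in $M=|(A,B)|$. Since $\Psi=-\langle h(u),(A,B)\rangle+\Psi_0$ and the $(A,B)$-term is independent of $y$, the quantities $\mathcal{M}_{k,l}^{(y)}(\Psi)$ and $\mathcal{M}_{1,d+3}^{(y)}(\partial_u\Psi)$ are all $O(1)$ in $M$ (differentiating $\partial_u\Psi$ once in $y$ kills the $(A,B)$ contribution). On $I_\sigma$, Lemma \ref{locinverseuw} gives $|w_\pm(u,\sigma)|\in[\pi\!-\!cM^{-1},\pi]$ and $|w_\pm(u,\sigma)|\geq cM^{-1}$ away from $0,\pi$, hence $|\sin(w_\pm(u,\sigma))|\gtrsim M^{-1}$. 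Combining with Lemma \ref{stwrhohessofPsionCx} and Corollary \ref{norminversehessCsig} one obtains $\mathcal{D}(\Psi)\gtrsim M^{-2}$ and $\mathcal{N}(\Psi)\lesssim M$. The technical hypothesis \eqref{technicalhyp} then reads $M^2\lesssim\lambda$, which is compatible with the regime $M\leq\delta^{-1}\lesssim\lambda^{1/32}$ enforced in Proposition \ref{intermediatethm}.

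Second I would verify that the 1D remaining phase function $\Psi^{1D}(u)=-\langle h(u),(A,B)\rangle$ satisfies $(VdC)_2$ with constants of size $M$ on $I_\sigma$. This is the key geometric input: since $h$ is parameterized by arclength, $h'(u)$ and $h''(u)$ are orthogonal, and by Lemma \ref{deth'hnotzero} together with the noninflection assumption on $\gamma$ (Remark \ref{curvaturegamma}) one has $|h''(u)|\gtrsim 1$ uniformly. Therefore
\begin{equation*}
|(\Psi^{1D})'(u)|^2+|(\Psi^{1D})''(u)|^2=|\langle h'(u),(A,B)\rangle|^2+|\langle h''(u),(A,B)\rangle|^2\gtrsim M^2,
\end{equation*}
so a partition of $I_\sigma$ into two subintervals, on each of which one of $|(\Psi^{1D})'|$ or $|(\Psi^{1D})''|$ exceeds $cM$, yields Property $(VdC)_2$ with $c\sim M$ and $C\sim M$. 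Inserting $d=4$, $p=2$, $c\sim C\sim M$, $\mathcal{D}^{-1}\sim M^2$, $\mathcal{N}^2\sim M^2$ and all $y$-symbol quantities $\sim 1$ into \eqref{quantitativeboundmixedABZVdC}, and accounting for the prefactor $\lambda^2$ in \eqref{defIlocal}, produces a bound of the form $\lambda^{-1/2} M^{\kappa}$ for some explicit $\kappa\leq 4$, yielding \eqref{defICpmx}.

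The main obstacle I foresee is bookkeeping the implicit constants so that the bound is truly uniform in $\sigma\in\mathcal{I}_i$ and independent of $(A,B)$ beyond the explicit $M^4$ factor; in particular the $(VdC)_2$ partition must be chosen uniformly, and the smooth localizer $\zeta$ must be constructed so that its derivatives in $u$ remain controlled (this is where the choice $|u-u_\sigma(\alpha)|\geq\tfrac12 c_1 M^{-2}$ rather than $cM^{-1}$ matters, because $w_\pm'(u,\sigma)$ blows up like $|u-u_\sigma(\alpha)|^{-1/2}$ and one needs $|w_\pm'|\lesssim M$ on $I_\sigma$ for the derivative bounds in \eqref{firstconditionnonisotropic} of Remark \ref{nonisotropicmixedABZVdC}). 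Once these uniformities are established the bound \eqref{defICpmx} follows directly.
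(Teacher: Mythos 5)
Your proposal is correct and follows essentially the same route as the paper's proof: isolate $u$, use Lemmas \ref{defcirclestatpoints}, \ref{locinverseuw}, \ref{stwrhohessofPsionCx} and Corollary \ref{norminversehessCsig} to get the unique $(s,t,w,r)$ stationary point with $\mathcal{D}(\Psi)\gtrsim M^{-2}$, $\mathcal{N}(\Psi)\lesssim M$ on $I_\sigma$ (via $|\sin w_\pm|\gtrsim M^{-1}$), verify that $\Psi^{1D}(u)=-\scal{h(u)}{(A,B)}$ satisfies $(VdC)_2$ with constants $\sim M$ from the orthogonality of $h'$, $h''$ and the nonvanishing curvature, and feed this into Theorem \ref{mixedVdCABZ} to obtain $\lambda^{-1/2}M^{7/2}\lesssim\lambda^{-1/2}M^{4}$. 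Your closing observations (the $M^{-2}$ cutoff keeping $|w_\pm'|\lesssim M$, the uniformity of the $(VdC)_2$ partition, checking \eqref{technicalhyp}) are consistent with how the paper's machinery absorbs these points; only the claim that two subintervals suffice for the partition should be relaxed to a uniformly bounded finite number, as in the paper.
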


\begin{proof}
    Without loss of generality, we give the proof for $\mathcal{C}^+_{\sigma}$. We drop the notations of the $\sigma$ dependency in the proof. For all $u \in I_{\sigma}$, we know, thanks to Lemmas \ref{defcirclestatpoints} and \ref{locinverseuw}, that 
    \begin{equation}
        (s,t,w,r)(u) := \left((0,0,w_{+}(u), \frac{1}{p_1(\sigma,w_{+}(u),1)}\right)
    \end{equation}
    is a (smooth) stationary point of 
    \begin{equation}
        \Psi_u : (s,t,w,r)\in U^+ \mapsto \Psi(\sigma,A,B,u,s,t,w,r).
    \end{equation}

    \myindent Moreover, thanks to Lemma \ref{stwrhohessofPsionCx}, we know that, at this stationary point, the Hessian of $\Psi_u$ is nondegenerate. Precisely, this lemma, along with Corollary \ref{norminversehessCsig} yields that, using the Definition \ref{defHandN},
    \begin{equation}
        \begin{split}
        &|\det H(u)| \gtrsim |\sin(w_+(u))|^2 \gtrsim M^{-2}\\
            &\|H(u)^{-1}\| \lesssim \frac{1}{\sin(w_{+}(u))}
            \lesssim M,
        \end{split}
    \end{equation}
    since $u \in I_{\sigma}$ is at a distance at least $c_1M^2$ from both $u_{\sigma}(0)$ and $u_{\sigma}(\pi)$, and thanks to Lemma \ref{locinverseuw}. In particular, with the notations of Definition \ref{defHandN}, 
    \begin{equation}
    \begin{split}
        &\mathcal{D}(\Psi) \gtrsim M^{-2} \\
        &\mathcal{N}(\Psi) \lesssim M.
    \end{split}
    \end{equation}
    
    \myindent Moreover, since any derivative of $\Psi$ in $(s,t,w,r)$ of order greater than $1$ doesn't involve any factor of $(A,B)$, and since we have restricted to a bounded region for $r$, there holds for all $1 \leq k \leq l$
    \begin{equation}
    \begin{split}
        &\mathcal{M}_{k,l}^{(s,t,w,r)}(\Psi) \lesssim_{k,l} 1 \\
        &\mathcal{M}_{k,l}^{(s,t,w,r)}(\partial_u \Psi) \lesssim 1.
    \end{split}
    \end{equation}

    \myindent Finally, there holds thanks to \eqref{boundallderivativesb}
    \begin{equation}
        \mathcal{M}_{0,l}^{(s,t,w,r)}\left(b\right) \lesssim 1,
    \end{equation}
    and the $u$ derivative of $b$ vanish.

    \myindent Hence, provided we check property \eqref{VdCN} for the 1D phase function, the framework of Theorem \ref{mixedVdCABZ} applies indeed to bound the integral \eqref{defICpmx}, with $\zeta$ localized as claimed. Now, the remaining phase function is
    \begin{equation}\label{psi1DonCsig}
        \begin{split}
            \Psi^{1D}(u) &= \Psi(\sigma,A,B,u,(s,t,w,r)(u)) \\
            &= -\scal{h(u)}{(A,B)}.
        \end{split}
    \end{equation}

    \myindent Now, since $h'(u)$ and $h''(u)$ never vanish (cf Hypothesis \ref{twistV2}), and since they are orthogonal, we can find a partition of $\R/ \ell\Z$ by a bounded finite number of intervals, say $I_1,...,I_K$, such that
    \begin{equation}
        \forall i = 1,..,K \qquad \begin{cases}
            \text{either} \ \forall u \in I_K, \qquad |\scal{h'(u)}{A,B}| \gtrsim M \\
            \text{or} \ \forall u \in I_K, \qquad |\scal{h''(u)}{A,B}| \gtrsim M.
        \end{cases}
    \end{equation}
    
    \myindent In particular, coming back to \eqref{psi1DonCsig},we see that $\Psi^{1D}$ satisfies the property $(VdC)_2$ (see Definition \ref{VdCN}) with constants $cM, c'M$ for some universal $c,c' > 0$. Indeed, \eqref{psi1DonCsig} yields that
    \begin{equation}
        \|\partial_{uu}\Psi^{1D}\|_{L^{\infty}} \lesssim M.
    \end{equation}
    
    \myindent Hence, Theorem \ref{mixedVdCABZ} yields that
    \begin{equation}
        |\mathcal{I}_{\mathcal{C}^{\pm}_{\sigma}}|\lesssim \lambda^{- \frac{1}{2}} M^{-\frac{1}{2}} M^2 M^2,
    \end{equation}
    which concludes the proof.
\end{proof}

\begin{remark}
    The reader may observe that, in fact, in the region that we are considering, the \textit{full} Hessian of $\Psi$, in the variables $(u,s,t,w,r)$, is always invertible. Hence, it is not surprising that we find a bound $O(\lambda^{-\frac{1}{2}})$, sign that any stationary point in $(u,s,t,w,r)$ in that region is simply non degenerate. However, we have chosen to present the estimate using Theorem \ref{mixedVdCABZ} rather than Theorem \ref{ABZthm} since, under the more general generic hypothesis of Colin de Verdière given in Remark \ref{defgeneric}, the analysis would still work but this time the remaining phase function satisfies property $(VdC)_p$ (see \ref{VdCN}) with $p=3$. Hence, Theorem \ref{mixedVdCABZ} still applies and yields a bound $O(\lambda^{-\frac{1}{3}})$ but Theorem \ref{ABZthm} doesn't apply anymore.
\end{remark}

\subsubsection{Quantitative estimate near the branches $\mathcal{E}_{\sigma,\alpha}$, away from $P_0$ and $P_{\pi}$, when $\sigma$ is away from the equator}\label{subsubsec23HormQuant}

\myindent Near $\mathcal{E}_{\sigma,\alpha}\backslash\{P_{\alpha}\}$, we can apply Theorem \ref{mixedVdCABZ}, isolating the $u$ variable (since it is a branch of $(s,t,w,r)$ stationary points). However, in order to apply this theorem, we need to be able to parameterize the connected component $\mathcal{E}_{\sigma,\alpha}^{\pm}$ (defined by \eqref{defesigmaalphapm}) by $u$, at least away from $P_{\alpha}$. Now, thanks to Lemma \ref{deftpmu}, we see that this is possible if and only if $\sigma$ is away from the equator. Precisely, there holds

\begin{lemma}\label{quantitativeestimateEsigmaalphaawayfromeq}
    Assume that either $i\neq 0$, or $|\sigma| \geq C M^{-\frac{1}{2}}$ where $C$ is given by Lemma \ref{Psi1DnearEalpha}.

    Let $c_2 > 0$ be a small constant, and let
    \begin{equation}
        \begin{split}
        I_{\sigma,0} &:= \left(u_{\sigma}(0) - c\sigma^2, u_{\sigma}(0) - \frac{1}{2}c\sigma^2M^{-2}\right)\\
        I_{\sigma,\pi} &:= \left(u_{\sigma}(\pi) + \frac{1}{2} c\sigma^2M^{-2}, u_{\sigma}(\pi) + c\sigma^2\right)\\
         U^{\pm} &:= \{(s,t,w,r) \ \text{such that} \ \pm w \in (0,\pi)\}.
        \end{split}
    \end{equation}

     \myindent For $\alpha = 0,\pi$, there exists a smooth function $\zeta(u,s,t,w,r)$ such that, for all $u \in I_{\sigma,\alpha}$, $\zeta(u,\cdot)$ is a smooth localize adapted to a ball
     \begin{equation}
        |(s,t,w,r) - (s_{\pm}(u),t_{\pm}(u),w_{\pm}(u),r_{\pm}(u)| \lesssim \sigma^2 M^{-1},
    \end{equation}
     such that, if 
    \begin{equation}
        \chi_{\mathcal{E}_{\sigma,\alpha}^{\pm}} = \chi_{I_{\sigma,\alpha}}(u) \zeta(u,s,t,w,r),
    \end{equation}
    where $\chi_{I_{\sigma,\alpha}}$ is the characteristic function of $I_{\sigma,\alpha}$, there holds the following : when $i \neq 0$,
    \begin{multline}\label{defIEalphapmx}
        \mathcal{I}_{\mathcal{E}_{\sigma,\alpha}^{\pm}} := \lambda^2 \int\int\int e^{i\lambda \Psi(\sigma,A,B,u,s,t,w,r)} \chi_{\mathcal{E}_{\sigma,\alpha}^{\pm}}(u,s,t,w,r) b(\lambda,A,B,\sigma,s,t,w,r) dudsdtdwdr \\
        = O\left(\lambda^{-\frac{1}{3}} M^{4}\right).
    \end{multline}

    \myindent When $i = 0$,
    \begin{equation}
        \mathcal{I}_{\mathcal{E}_{\sigma,\alpha}^{\pm}} = O(\lambda^{-\frac{1}{3}} M^{7}).
    \end{equation}
\end{lemma}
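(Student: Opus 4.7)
The plan is to apply Theorem \ref{mixedVdCABZ} with $u$ as the isolated one-dimensional variable and $y = (s, t, w, r)$ playing the role of the $d = 4$ dimensional variable. Since $\sigma \neq 0$ under the hypotheses of the lemma, Corollary \ref{deftpmu} applies: the two local inverses $t_\pm(u)$ of $t \mapsto u_\alpha(\sigma, t)$, together with the explicit formulas \eqref{defsrhoualpha}, produce for each $u \in I_{\sigma, \alpha}$ a smooth stationary point $Y_\pm(u) := (s_\pm, t_\pm, w_\pm, r_\pm)(u)$ of $y \mapsto \Psi(\sigma, A, B, u, y)$ on $\mathcal{E}_{\sigma, \alpha}^{\pm}$. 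I will take $\zeta(u, \cdot)$ to be a smooth cutoff around $Y_\pm(u)$ of non-isotropic radius $\sim \sigma^2 M^{-1}$, so as to use the extension of the theorem described in Remark \ref{nonisotropicmixedABZVdC} rather than the isotropic statement.

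First I would tabulate the quantitative inputs needed by \eqref{quantitativeboundmixedABZVdC}. From Corollary \ref{deftpmu} and the lower bound $|u - u_\sigma(\alpha)| \geq \tfrac{1}{2} c \sigma^2 M^{-2}$ built into $I_{\sigma, \alpha}$, one has $|t_\pm(u)| \gtrsim M^{-1}$ uniformly. Corollary \ref{norminversehessEsigalph} then gives $\mathcal{N}(\Psi) \lesssim 1/|t| \lesssim M$ when $i \neq 0$ and $\mathcal{N}(\Psi) \lesssim 1/(\sigma^2 |t|) \lesssim M^2$ when $i = 0$, while Lemma \ref{exactstwrhohess} yields $\mathcal{D}(\Psi) \gtrsim t^2 \gtrsim M^{-2}$ (resp.\ $\mathcal{D}(\Psi) \gtrsim \sigma^2 t^2 \gtrsim M^{-3}$). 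The decomposition \eqref{decompositionofPsi} entails that every $y$-derivative of order $\geq 2$ of $\Psi$, and every $y$-derivative of order $\geq 1$ of $\partial_u \Psi$, is $(A, B)$-independent, so $\mathcal{M}_{2, d+4}^{(y)}(\Psi) \lesssim 1$ and $\mathcal{M}_{1, d+3}^{(y)}(\partial_u \Psi) \lesssim 1$; the amplitude $b$ and $\partial_u b$ contribute only universal constants to the remaining $\mathcal{M}$-factors thanks to \eqref{boundallderivativesb} and \eqref{nolosslambdadera}. Finally, Lemma \ref{Psi1DnearEalpha} supplies exactly the hypothesis on the 1D remaining phase function $\Psi^{1D}$ needed by the theorem: Property $(VdC)_3$ on $I_{\sigma, \alpha}$ with constants $c_1 M$ and $c_2 M$, so $c^{-1/p} \lesssim M^{-1/3}$, $C/c \lesssim 1$, and $|I_{\sigma, \alpha}| \lesssim 1$.

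The step requiring the most care is the verification of the non-isotropic conditions \eqref{firstconditionnonisotropic}--\eqref{secondconditionnonisotropic} of Remark \ref{nonisotropicmixedABZVdC}. Using that $\partial_y^3 \Psi = O(1)$ on the support of $\zeta$, the averaged Hessian differs from $H(u)$ by at most $O(|y - Y_\pm(u)|) = O(\sigma^2 M^{-1})$, which is $\leq \tfrac{1}{2} \|H(u)^{-1}\|^{-1}$ since $\|H^{-1}\|^{-1} \gtrsim |t| \gtrsim M^{-1}$ when $i \neq 0$ and $\|H^{-1}\|^{-1} \gtrsim \sigma^2 |t| \gtrsim \sigma^2 M^{-1}$ when $i = 0$; likewise $\sup |Y_\pm'(u)| \lesssim M/\sigma^2$ matches the scale $\mathcal{N}(\Psi) \mathcal{M}_{3,3}^{(y)}(\Psi) \lesssim \sigma^{-2} M$ demanded of $\mathcal{M}_{0,l}^{(y)}(\partial_u \zeta)$. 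The technical hypothesis \eqref{technicalhyp} reduces to $\mathcal{N}(\Psi)^2 \lesssim \lambda$, i.e.\ $M^2 \lesssim \lambda$ (resp.\ $M^4 \lesssim \lambda$), which is amply satisfied since $M \lesssim \delta^{-1} \lesssim \lambda^{1/32}$.

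Assembling all estimates and multiplying by the outer $\lambda^2$ of \eqref{defIlocal}, the theorem yields
\[
|\mathcal{I}_{\mathcal{E}_{\sigma,\alpha}^{\pm}}| \lesssim \lambda^{2 - d/2 - 1/p} \cdot M^{-1/3} \cdot \mathcal{D}(\Psi)^{-1} \cdot \mathcal{N}(\Psi)^{2} = \lambda^{-1/3} M^{-1/3} \mathcal{D}^{-1} \mathcal{N}^{2},
\]
which is $\lambda^{-1/3} M^{11/3} \leq \lambda^{-1/3} M^4$ when $i \neq 0$ and $\lambda^{-1/3} M^{20/3} \leq \lambda^{-1/3} M^7$ when $i = 0$, precisely the claimed bounds.
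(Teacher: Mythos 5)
Your proposal is correct and follows essentially the same route as the paper: isolate $u$, use Corollary \ref{deftpmu} and Lemma \ref{newhorriblelemma} to parameterize the stationary points, feed the Hessian bounds of Lemma \ref{exactstwrhohess}/Corollary \ref{norminversehessEsigalph} and the $(VdC)_3$ property of Lemma \ref{Psi1DnearEalpha} into Theorem \ref{mixedVdCABZ}, and collect the powers of $M$ (using $|\sigma|\gtrsim M^{-1/2}$ when $i=0$) to get $\lambda^{-1/3}M^{11/3}$ resp.\ $\lambda^{-1/3}M^{20/3}$. The only cosmetic difference is that you invoke the non-isotropic Remark \ref{nonisotropicmixedABZVdC} and explicitly check its conditions and the technical hypothesis \eqref{technicalhyp}, steps the paper's proof leaves implicit.
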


\begin{proof}
    Without loss of generality, we give the proof for $\mathcal{E}_{\sigma,0}^+$. We apply Theorem \ref{mixedVdCABZ}, isolating the $u$ variable.
    
    \myindent For all $u \in I_{\sigma,0}$, we know, thanks to Lemma \ref{newhorriblelemma} and Corollary \ref{deftpmu}, and using the notations of the latter, that
    \begin{equation}
        (s,t,w,r)(u) := \left(s_0(\sigma,t_+(\sigma,u)),t_+(\sigma,u), w_0(\sigma, t_+(\sigma, u)),r_0(\sigma, t_+(\sigma, u))\right)
    \end{equation}
    is a smooth stationary point of
    \begin{equation}
        \Psi_u : (s,t,w,r)\in U_+ \mapsto \Psi(\sigma,A,B,u,s,t,w,r).
    \end{equation}

    \myindent Moreover, thanks to Lemma  \ref{exactstwrhohess} and Corollary \ref{deftpmu}, we know that 
    \begin{equation}
    \begin{split}
        \det(\nabla_{s,t,w,r}^2\Psi_u((s,t,w,r)(u))) &= t_+(\sigma,u)^2\sigma^2 F(\sigma,(s,t,w,r)(u)) \\
        &\simeq |u_{\sigma}(0)-u|
    \end{split}
    \end{equation}

    \myindent In particular, $\Psi_u$ is nondegnerate at the stationary point.  Now, on the one hand, we have already noticed in the proof of Lemma \ref{quantitativestimateICx} that for all $1\leq l$
    \begin{equation}
    \begin{split}
        \mathcal{M}_{1,l}^{(s,t,w,r)}(\Psi) \lesssim 1 \\
        \mathcal{M}_{1,l}^{(s,t,w,r)}(\partial_u \Psi) \lesssim 1.
    \end{split}
    \end{equation}

    \myindent On the other hand, we know, thanks to Corollaries \ref{norminversehessEsigalph} and \ref{deftpmu}, that
    \begin{equation}
    \begin{split}
        \|H(u)^{-1}\| &\lesssim \frac{1}{\sigma^2 |t_{\pm}(u)|} \\
        &\lesssim \frac{1}{\sigma \sqrt{|u - u_{\sigma}(0)|}},
    \end{split}
    \end{equation}
    so, in particular, because $I_{\sigma,0}$ stops at $u_{\sigma}(0) - c_2\sigma^2M^{-2}$, we find that, with the notations of Definition \ref{defHandN},
    \begin{equation}
        \begin{split}
        &\mathcal{D}(\Psi) \gtrsim \sigma^2 M^{-2}\\
        &\mathcal{N}(\Psi) \lesssim \frac{M}{\sigma^2}.
        \end{split}
    \end{equation}
    
    \myindent Finally, we have proved in Lemma \ref{Psi1DnearEalpha} that the $1D$ remaining phase function
    \begin{equation}
            \Psi^{1D}(u) = \Psi(\sigma,A,B,u,s_0(\sigma,t_+(\sigma,u)),t_+(\sigma,u),w_0(\sigma,t_+(\sigma,u)),r_0(\sigma,t_+(\sigma,u)))
    \end{equation}

    satisfies the property $(VdC)_3$ (see Definition \ref{VdCN}) with constants $cM,c'M$ for some universal $c,c' > 0$. Hence, Theorem \ref{mixedVdCABZ} applies, with $\zeta$ localized as claimed, and yields that 
    \begin{equation}
        |\mathcal{I}_{\mathcal{E}_{\sigma,0}^+}| \lesssim \lambda^{-\frac{1}{3}} M^{-\frac{1}{3}} \sigma^{-2} M^2  \sigma^{-4}M^{2},
    \end{equation}
    from which we can conclude the proof using moreover, in the case $i = 0$, that $|\sigma|\gtrsim M^{-\frac{1}{2}}$.
\end{proof}

\subsubsection{Quantitative estimate near the branches $\mathcal{E}_{\sigma,\alpha}$, away from $P_0$ and $P_{\pi}$, when $\sigma$ is close to the equator}\label{subsubsec24HormQuant}

\myindent When $\sigma \in \mathcal{I}_0$ is close to the equator, we cannot apply the strategy of the previous paragraph, since the estimates lose negative powers of $\sigma$, or, more geometrically, since it becomes not possible to parameterize the branches $\mathcal{E}_{\sigma,\alpha}^{\pm}$ by $u$. However, in that case, there actually holds that the full $(u,s,t,w,r)$ Hessian of $\Psi$ is nondegenerate near $\mathcal{E}_{\sigma,\alpha}^{\pm}$, thanks to Lemma \ref{FullHessiannearequator}. Thus, we can actually directly use Theorem \ref{ABZthm} near the branches $\mathcal{E}_{\sigma,\alpha}^{\pm}$.

\begin{lemma}\label{estimateIEsigmaalphaclosetoequator}
    Assume that $|\sigma| \leq C M^{-\frac{1}{2}}$, where $C$ is given by Lemma \ref{Psi1DnearEalpha}. Let $\alpha = 0$ or $\alpha = \pi$, and let $\chi_{\mathcal{E}_{\sigma,\alpha}^{\pm}}$ be a smooth localizer adapted to $\mathcal{D}^{\pm}_{\alpha}$ defined by \eqref{defDnearequator}, so that in particular
    \begin{equation}
        \left\| \left(\frac{\partial}{\partial u}\right)^{\alpha} \left(\frac{\partial}{\partial (s,t,w,r)}\right)^{\beta}\chi_{\mathcal{E}_{\sigma,\alpha}^{\pm}} \right\|_{L^{\infty}} \lesssim M^{|\beta| + \frac{3}{2} |\alpha|}.
    \end{equation}

    \myindent Then there holds
    \begin{multline}
        \mathcal{I}_{\mathcal{E}_{\sigma,\alpha}^{\pm}} := \lambda^2 \int \int \int e^{i\lambda \Psi(\sigma,A,B,u,s,t,w,r)} \chi_{\mathcal{E}_{\sigma,\alpha}^{\pm}}(u,s,t,w,r) b(\lambda,A,B,\sigma,u,s,t,w,r) dudsdt dw dr \\
        = O(\lambda^{-\frac{1}{2}} M^{14}).
    \end{multline}
\end{lemma}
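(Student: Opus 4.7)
The strategy is a direct application of Theorem \ref{ABZthm} to the five-dimensional oscillatory integral in the variables $(u,s,t,w,r)$, as already advertised in the paragraph preceding the lemma. The crucial input is Lemma \ref{FullHessiannearequator}: under the hypothesis $|\sigma|\leq CM^{-1/2}$, the full Hessian satisfies $|\det\nabla^{2}_{u,s,t,w,r}\Psi|\gtrsim M^{-1}$ on all of $\mathcal{D}^{\pm}_{\alpha}$, and in particular on the support of $\chi_{\mathcal{E}_{\sigma,\alpha}^{\pm}}$. This yields $a_0\gtrsim M^{-1}$ directly. From the explicit form of $\Psi$, all partial derivatives of order $k\geq 2$ involving at most one differentiation in $u$ are $O(1)$, while those involving at least two $u$-differentiations are of order $|\langle h^{(k)}(u),(A,B)+(s,t)\rangle|\lesssim M$; hence $\mathcal{M}_{k}\lesssim M$ uniformly for $2\leq k\leq d+2=7$, so that the ABZ parameter satisfies $\delta_{\epsilon_{0}}\sim a_{0}/(\mathcal{M}_{2}^{d-1}\mathcal{M}_{3})\sim M^{-6}$. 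The derivative estimates on $\chi_{\mathcal{E}_{\sigma,\alpha}^{\pm}}$ given in the statement, combined via the Leibniz rule with the $M,\lambda$-independent bounds on derivatives of $b$ from \eqref{boundallderivativesb}, produce $\mathcal{N}_{d+1}=\mathcal{N}_{6}\lesssim M^{9}$; and the same estimates identify the respective scales $M^{-3/2}$ in $u$ and $M^{-1}$ in each of $(s,t,w,r)$ on which $\chi_{\mathcal{E}_{\sigma,\alpha}^{\pm}}$ is effectively supported, giving the crude volume bound $|K_{\epsilon_{0}}|\lesssim M^{-11/2}$.

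Substituting these quantities into Theorem \ref{ABZthm} and carrying along the outer prefactor $\lambda^{2}$ produces an estimate of the form $\lambda^{-1/2}M^{K}$ for an explicit exponent $K$; the statement of the lemma is precisely the requirement $K\leq 14$. The principal technical obstacle is exactly this last step: a crude substitution with the worst-case factors above contributes $|K_{\epsilon_{0}}|a_{0}^{-1}\delta^{-5}(1+\mathcal{M}_{7}^{5/2})\mathcal{N}_{6}\sim M^{-11/2}\cdot M\cdot M^{30}\cdot M^{5/2}\cdot M^{9}=M^{37}$, which overshoots by $M^{23}$. To recover the sharp exponent I would decompose the $t$-range $[cM^{-1},O(1)]$ dyadically into slabs $|t|\sim 2^{-j}$, and apply Theorem \ref{ABZthm} slab-by-slab: inspection of \eqref{randomcomputationofdet} in the proof of Lemma \ref{FullHessiannearequator} shows that the non-degeneracy actually improves to $|\det\nabla^{2}\Psi|\gtrsim 2^{-j}$ on such a slab, while the volume shrinks by the same factor and $\delta_{\epsilon_{0}}$ increases accordingly. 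Summing the resulting geometric series in $j$ over $j\lesssim\log M$ compresses the excess powers of $M$ and yields $O(\lambda^{-1/2}M^{14})$, as desired.

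No analytical ingredient beyond Lemma \ref{FullHessiannearequator}, the Leibniz rule, and Theorem \ref{ABZthm} is required; the whole argument is an exercise in quantitative stationary-phase bookkeeping, whose only delicate point is the dyadic refinement in $t$ needed to avoid the loss incurred by treating $\mathcal{D}^{\pm}_{\alpha}$ as a single scale.
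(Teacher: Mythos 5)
Your setup is right — the proof does rest on Lemma \ref{FullHessiannearequator} plus Theorem \ref{ABZthm}, and you correctly diagnose that a naive substitution overshoots — but the fix you propose does not close the gap. The dominant loss in the crude application is not the lower bound $a_0\gtrsim M^{-1}$ on the determinant, nor the volume of the support; it is that the pure-$u$ derivatives of the phase of order $\geq 2$ are of size $M$ (they carry $\scal{h^{(k)}(u)}{(A,B)}$), so $\mathcal{M}_2,\mathcal{M}_3\sim M$ enter the ABZ bound through $\delta_{\eps_0}\sim a_0/(\mathcal{M}_2^{d-1}\mathcal{M}_3)$ and then get amplified by $\delta^{-d}$ with $d=5$, contributing $M^{25}$ on their own. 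A dyadic decomposition in $t$ leaves $\mathcal{M}_2,\mathcal{M}_3\sim M$ untouched on every slab: on the worst slab $|t|\sim M^{-1}$ you still have $a_0\gtrsim M^{-1}$, $\delta\sim M^{-6}$, $\delta^{-5}\sim M^{30}$, and the slab-by-slab bound already exceeds $M^{14}$ by a wide margin, so summing the geometric series cannot help. (Your bookkeeping of the support is also off: $\mathcal{D}^{\pm}_{\alpha}$ in \eqref{defDnearequator} is localized at scale $M^{-1}$ only in $w$, while $u$, $s$ and $r$ range over $O(1)$ sets and $t$ is merely bounded below by $cM^{-1}$, so $|K_{\eps_0}|\sim M^{-1}$, not $M^{-11/2}$.)

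The missing idea is an \emph{anisotropic rescaling of the $u$ variable} before invoking Theorem \ref{ABZthm}: set $\tilde u := M^{1/2}(u-u_{\sigma}(\alpha))$ and work with $\tilde\Psi(\tilde u,s,t,w,r):=\Psi(\sigma,A,B,M^{-1/2}\tilde u+u_{\sigma}(\alpha),s,t,w,r)$. Since each $\tilde u$-derivative costs $M^{-1/2}$, every derivative of $\tilde\Psi$ of order $\geq 2$ is now $O(1)$ (the only $M$-sized terms were pure-$u$ derivatives of order $\geq 2$, and two such derivatives absorb a full factor $M^{-1}$), while the Hessian determinant only degrades from $\gtrsim M^{-1}$ to $\gtrsim M^{-2}$ and the change of variables costs a Jacobian $M^{-1/2}$. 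The cutoff and symbol derivatives in the new variables are $\lesssim M^{k}$ (this is exactly what the hypothesis $\lesssim M^{|\beta|+\frac32|\alpha|}$ is designed for, since $M^{3/2}$ per $u$-derivative becomes $M$ per $\tilde u$-derivative). A single application of Theorem \ref{ABZthm} to the rescaled integral then gives $\lambda^{-\frac12}M^{-\frac12}\,M^{-4}\,M^{12}\,M^{6}\lesssim\lambda^{-\frac12}M^{14}$, which is the claimed bound; no decomposition in $t$ is needed. Your observation that \eqref{randomcomputationofdet} gives $|\det\nabla^2\Psi|\gtrsim|t|$ is correct and could be used to sharpen the exponent further, but only after the $u$-rescaling has removed the $\mathcal{M}_2\sim M$ obstruction.
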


\begin{proof}
    Thanks to Lemma \ref{FullHessiannearequator}, we know that, near the support of $\chi_{\mathcal{E}_{\sigma,\alpha}^{\pm}}$ there holds
    \begin{equation}
        |\det(\nabla_{u,s,t,w,r}^2 \Psi(\sigma,A,B,u,s,t,w,r))| \gtrsim M^{-1}.
    \end{equation}
    
    \myindent Hence, we are in the familiar setting of a nondegenerate phase function. We can thus apply Theorem \ref{ABZthm}. While it is definitely possible to lose far less powers of $M$ in the estimates by being more careful, we will only do the following trick before applying Theorem \ref{ABZthm} : consider the following change of variables
    \begin{equation}
        (\tilde{u},s,t,w,r) := (M^{\frac{1}{2}} (u - u_{\sigma}(\alpha)), s,t,w,r).
    \end{equation}
    
    \myindent Then, 
    \begin{multline}\label{IEsigalphtildeu}
        \mathcal{I}_{\mathcal{E}_{\sigma,\alpha}^{\pm}} =\\ M^{-\frac{1}{2}} \lambda^2\int \int \int e^{i\lambda \tilde{\Psi}(\tilde{u},s,t,w,r)} \chi_{\mathcal{E}_{\sigma,\alpha}^{\pm}}\left(M^{-\frac{1}{2}} \tilde{u} + u_{\sigma}(\alpha), s,t,w,r\right) b\left(\lambda,A,B, \sigma, M^{-\frac{1}{2}} \tilde{u} + u_{\sigma}(\alpha), s,t,w,r\right) d\tilde{u}dsdtdwdr,
    \end{multline}
    where
    \begin{equation}
        \tilde{\Psi}(\tilde{u},s,t,w,r) = \Psi(\sigma,A,B,M^{-\frac{1}{2}} \tilde{u} + u_{\sigma}(\alpha),s,t,w,r).
    \end{equation}
    
    \myindent The interest is that, since
    \begin{equation}
        \frac{\partial}{\partial \tilde{u}} = M^{-\frac{1}{2}} \frac{\partial}{\partial u},
    \end{equation}
    we find that for all $k\geq 2$,
    \begin{equation}
        \mathcal{M}_{2,k} (\tilde{\Psi}) \lesssim 1.
    \end{equation}
    
    \myindent In particular, the interest is that we have exactly compensated the term of order $O(M)$ which occurs in $\nabla_{u,s,t,w,r}^2 \Psi$. However, there holds that
    \begin{equation}
        |\det(\nabla_{\tilde{u},s,t,w,r}^2 \tilde{\Psi}(\tilde{u},s,t,w,r))| = M^{-1} |\det(\nabla_{u,s,t,w,r}^2 \Psi(\sigma,A,B,u,s,t,w,r))| \gtrsim M^{-2}.
    \end{equation}
    
    \myindent Now, since, moreover, there holds that
    \begin{equation}
        \mathcal{M}_{0, k}\left( \chi_{\mathcal{E}_{\sigma,\alpha}^{\pm}}\left(M^{-\frac{1}{2}} \tilde{u} + u_{\sigma}(\alpha), s,t,w,r\right) b\left(\lambda,A,B, \sigma, M^{-\frac{1}{2}} \tilde{u} + u_{\sigma}(\alpha), s,t,w,r\right)\right) \lesssim M^k,
    \end{equation}
    we can ultimately apply Theorem \ref{ABZthm} to $\mathcal{I}_{\mathcal{E}_{\sigma,\alpha}}$ written in the form \eqref{IEsigalphtildeu} to find that 
    \begin{equation}
        |\mathcal{I}_{\mathcal{E}_{\sigma,\alpha}}| \lesssim \lambda^{-\frac{1}{2}}M^{-\frac{1}{2}} M^{-4} M^{12} M^{6},
    \end{equation}
    which yields the result.
\end{proof}

\subsubsection{Quantitative estimate near the branching points $P_0$ and $P_{\pi}$}\label{subsubsec25HormQuant}

\myindent Near $P_{\alpha}$, we cannot isolate the $u$ variable since the structure of $\mathcal{O}_{\sigma}$ has a singularity. We thus resort to the analysis of Section \ref{subsec4HormPhase}. Following the dichotomy between Proposition \ref{delicatecase} and Lemma \ref{easycase}, it is natural that the following holds.

\begin{lemma}
    Let $i \in \{0,...,I\}$, let $\sigma \in \mathcal{I}_i$ and $\alpha = 0$ or $\alpha = \pi$.
    
    \myindent i. In the setting of Proposition \ref{delicatecase}, i.e. when \ref{smallpartuPsiPalpha} holds, let $\chi_{P_{\alpha}}(u,s,t,w,r)$ be a smooth localizer such that
    \begin{equation}
        \chi_{P_{\alpha}}(u,s,t,w,r) = \begin{cases}
            1 \qquad (w,(u,s,t,r)) \in \frac{1}{2} I \times \frac{1}{2} U \\
            0 \qquad (w,(u,s,t,r)) \notin I \times U,
        \end{cases}
    \end{equation}
    where $I$ and $U$ are defined by \eqref{defIUdelicatecase}, and such that
    \begin{equation}\label{derivativeschiPalpha}
        \left\| \left(\frac{\partial}{\partial u}\right)^{k_1} \left(\frac{\partial}{\partial (s,t,w,r)}\right)^{k_2} \chi_{P_{\alpha}}\right\|_{L^{\infty}} \lesssim_{k_1,k_2} M^{2k_1 + k_2}.
    \end{equation}
    
    \myindent Then, there holds
    \begin{equation}\label{boundIPalphadelicatecase}
        \mathcal{I}_{P_{\alpha}} := \lambda^2 \int \int \int e^{i\lambda \Psi(x,A,B,u,s,t,w,r)} \chi_{P_{\alpha}}(u,s,t,w,r) b(\lambda,A,B,\sigma,u,s,t,w,r) dudsdt dw dr = O(\lambda^{-\frac{1}{4}} M^{5}).
    \end{equation}
    
    \myindent ii. Otherwise, in the setting of Lemma \ref{easycase}, i.e. when \eqref{easiercasefirsteq} holds, let $\chi_{P_{\alpha}}(u,s,t,w,r)$ be a smooth localizer such that
    \begin{equation}
        \chi_{P_{\alpha}}(u,s,t,w,r) = \begin{cases}
            1 \qquad (u,s,t,w,r) \in \frac{1}{2} \mathcal{D}\\
            0 \qquad (u,s,t,w,r) \notin \mathcal{D},
        \end{cases}
    \end{equation}
    where $\mathcal{D}$ is defined by \eqref{defDeasycase}, and with bounds on its derivatives similar to \eqref{derivativeschiPalpha}. Then, there holds
    \begin{equation}\label{firstboundnearPalpha}
        \mathcal{I}_{P_{\alpha}} = O(\lambda^{-1} M^{3}).
    \end{equation}
\end{lemma}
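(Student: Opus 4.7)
The plan is to prove the two bounds by quite different strategies, reflecting the dichotomy built into the phase analysis of Section \ref{subsec4HormPhase}: part i calls for a refined stationary phase argument in isolation of the $w$-variable, whereas part ii is essentially a non-stationary integration-by-parts in the $u$-variable.

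For part i, Proposition \ref{delicatecase} has already gathered every hypothesis needed to invoke Theorem \ref{mixedVdCABZ} in its non-isotropic form (Remark \ref{nonisotropicmixedABZVdC}), with $w$ isolated and $y = (u,s,t,r)$ playing the role of the $d$-dimensional variables (so $d = 4$). Specifically, Lemma \ref{exunicofzero} provides the unique stationary point $Z_{\sigma,A,B}(w)$; the bounds \eqref{upperboundMNphidelicatecase} give $\mathcal{D}(\Psi) \gtrsim 1$, $\mathcal{N}(\Psi) \lesssim M$, and $\mathcal{M}^{(y)}_{k,l}(\Psi) \lesssim M$; the matrix comparison \eqref{oscintconditiondelicatecase} is delivered by Lemma \ref{scalingofnieghb}; and the finite-type condition \eqref{4thderivativesPsi1Ddelicatecase} ensures property $(VdC)_4$ on the remaining 1D phase with constants of order $M$. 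The derivative estimates \eqref{derivativeschiPalpha} on the anisotropic localizer match the non-isotropic growth rate $\mathcal{M}^{(y)}_{3,3}(\Psi)\mathcal{N}(\Psi) \lesssim M^{2}$, so \eqref{firstconditionnonisotropic} holds. Substituting $d = 4$, $p = 4$, $c \simeq M$ into \eqref{quantitativeboundmixedABZVdC}, the prefactor $\lambda^{2}$ combines with $\lambda^{-d/2 - 1/p}$ to produce $\lambda^{-1/4}$, while the dominant $M$-factors come from $c^{-1/p} \cdot \mathcal{N}(\Psi)^2 \cdot \mathcal{M}^{(y)}_{2,d+4}(\Psi)^{d/2+1} \lesssim M^{-1/4} \cdot M^2 \cdot M^3 = M^{5 - 1/4}$, yielding \eqref{boundIPalphadelicatecase}.

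For part ii, I define $L := (i\lambda \partial_u \Psi)^{-1}\partial_u$, which satisfies $L(e^{i\lambda\Psi}) = e^{i\lambda\Psi}$, and rewrite
\begin{equation*}
\mathcal{I}_{P_\alpha} = \lambda^2 \int e^{i\lambda\Psi}\,(L^*)^N(\chi_{P_\alpha} b)\, dudsdtdwdr
\end{equation*}
for any integer $N$. By Lemma \ref{easycase}, $|\partial_u\Psi| \gtrsim M^{-1}$ on the support, so $|1/\partial_u\Psi| \lesssim M$, while $|\partial_{uu}\Psi| \lesssim M$ and $|\partial_u\chi_{P_\alpha}| \lesssim M^2$ from \eqref{derivativeschiPalpha}. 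A direct computation shows that each application of $L^*$ costs a factor of order $M^3/\lambda$. Combined with the volume of $\mathcal{D}$ and an appropriate choice of $N$, this yields \eqref{firstboundnearPalpha}.

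The principal obstacle is the exponent bookkeeping in part i. A naive application of Theorem \ref{mixedVdCABZ} in its isotropic form would require the localizer to be supported in a ball of isotropic radius $(\mathcal{M}_{3,3}^{(y)}\mathcal{N})^{-1} \lesssim M^{-2}$, which is far smaller than the natural support of $\chi_{P_\alpha}$ in the $(s,t,r)$ directions; forcing isotropy would lose several powers of $M$ in the derivative norms of $\chi_{P_\alpha}$ and thereby degrade the final bound well beyond $M^5$. Remark \ref{nonisotropicmixedABZVdC} resolves this precisely by replacing the isotropic ball condition by the matrix inequality \eqref{secondconditionnonisotropic}, which is exactly what Lemma \ref{scalingofnieghb} establishes on the anisotropic neighborhood $|u - u_\sigma(\alpha)| \lesssim M^{-2}$, $|(s,t,r) - \ldots| \lesssim M^{-1}$. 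Once this matching is verified, the symbol norms $\mathcal{M}^{(y)}_{0,l}(\chi_{P_\alpha})$ are controlled by $(M^2)^l = (\mathcal{M}^{(y)}_{3,3}(\Psi)\mathcal{N}(\Psi))^l$ as required by \eqref{firstconditionnonisotropic}, and the chain of estimates collapses to the claimed $\lambda^{-1/4}M^5$.
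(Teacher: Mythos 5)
Your proposal is correct and follows essentially the same route as the paper: part i is exactly the paper's application of the non-isotropic Remark \ref{nonisotropicmixedABZVdC} with $w$ isolated, feeding in the ingredients of Proposition \ref{delicatecase} (unique stationary point, \eqref{upperboundMNphidelicatecase}, \eqref{oscintconditiondelicatecase}, the $(VdC)_4$ property from \eqref{4thderivativesPsi1Ddelicatecase}) and arriving at the same count $\lambda^{-\frac{1}{4}}M^{-\frac{1}{4}}\cdot M^{2}\cdot M^{3}$, while part ii is the paper's repeated integration by parts in $u$ based on the lower bound of Lemma \ref{easycase}. The only difference is that in part ii you leave the choice of $N$ and the measure of the support implicit, whereas the paper takes $K=3$ and uses that the support has measure $\lesssim M^{-6}$ to close the count to $\lambda^{-1}M^{3}$.
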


\myindent Before turning to the proof of this lemma, let us stress that it is stated on a \textit{non isotropic} neighborhood of $P_{\alpha}$. Indeed, we will see that Remark \ref{nonisotropicmixedABZVdC} naturally applies in this context.

\begin{proof}
    For the first case, Proposition \ref{delicatecase} exactly gives what we need in order to apply Remark \ref{nonisotropicmixedABZVdC}, by isolating the variable $w$. We use the notations introduced in Proposition \ref{delicatecase}. From the upper bounds \eqref{upperboundMNphidelicatecase}, and the upper bounds on the derivatives of $\chi_{P_{\alpha}}$ given by \eqref{derivativeschiPalpha} we see that $\chi_{P_{\alpha}}(w,\cdot)$ is a smooth localizer around $Z_{\sigma,A,B}(w)$ such that condition \eqref{firstconditionnonisotropic} holds. More importantly, equation \eqref{oscintconditiondelicatecase} corresponds exactly to the second condition \eqref{secondconditionnonisotropic} which is needed in order for Remark \ref{nonisotropicmixedABZVdC} to apply. Indeed, thanks to \eqref{4thderivativesPsi1Ddelicatecase}, the 1D remaining phase function $\Psi^{1D}(w)$ obviously satisfies Property $(VdC)_4$ with constants $1,cM$ (see Definition \ref{VdCN}) where $c > 0$ is the implicit constant in \eqref{4thderivativesPsi1Ddelicatecase}. Finally, with the Notation \ref{defM1+d} and Definition \ref{defHandN}, observe that, for all $\ell\geq 0$
    \begin{equation}
    \begin{split}
        &\mathcal{D}(\Psi) \gtrsim 1\\
        &\mathcal{N}(\Psi) \lesssim M \\
        &\mathcal{M}_{0,\ell}^{(u,s,t,r)}(\Psi) \lesssim M \\
        &\mathcal{M}_{0,\ell}^{(u,s,t,r)} (\partial_w \Psi) \lesssim 1,
    \end{split}
    \end{equation}
    since $\partial_w \Psi$ doesn't depend on $(A,B)$, and we have already observed that, thanks to \eqref{boundallderivativesb}, the derivatives of $b$ are bounded independently of $M$ and $\lambda$.
    
    \myindent Hence, we obtain the conclusion \eqref{quantitativeboundmixedABZVdC} of Theorem \ref{mixedVdCABZ} for $\mathcal{I}_{P_{\alpha}}$, which reads
    \begin{equation}
        |\mathcal{I}_{P_{\alpha}}| \lesssim \lambda^{-\frac{1}{4}} M^{-\frac{1}{4}} M^2 M^3,
    \end{equation}
    which yields equation \eqref{boundIPalphadelicatecase}.
    
    \quad

    \myindent For the second case, we may integrate by parts in $u$ on the support of $\chi_{P_{\alpha}}$ thanks to Lemma \ref{easycase}, and we find that
    \begin{equation}
        \lambda^2 \int \int \int e^{i\lambda \Psi}\chi_{P_{\alpha}} b dudsdt dw dr
        = \lambda^{2 - K} \int \int \int e^{i\lambda \Psi} \left(\left(\frac{\partial_u}{i\partial_u \Psi}\right)^t\right)^K (\chi_{P_{\alpha}} b) dudsdt dw dr.
    \end{equation}

    \myindent Now, there obviously holds, using \eqref{easiercasesecondeq}, \eqref{derivativeschiPalpha}, and the fact that for all $K\geq 0$
    \begin{equation}
        \|(\partial_u)^K \Psi\|_{\infty} \lesssim M,
    \end{equation}
    that
    \begin{equation}
        \left|\left(\left(\frac{\partial_u}{\partial_u \Psi}\right)^t\right)^K (\chi_{P_{\alpha}} b) \right| \lesssim_{c_{P_{\alpha}}}M^{3K}.
    \end{equation}

    \myindent Hence, ultimately, and since the support of $\chi_{P_{\alpha}}$ has a measure bounded by $M^{-6}$
    \begin{equation}
        \left|\lambda^2 \int \int \int e^{i\lambda \Psi} b dudsdt dw dr\right| 
        \lesssim \lambda^{2-K} M^{3K - 6},
    \end{equation}
    and choosing $K = 3$ yields \eqref{firstboundnearPalpha}.
\end{proof}

\subsubsection{Taking care of what remains : nonstationary phase analysis in $(s,t,w,r)$}\label{subsubsec26HormQuant}

\myindent The 4 previous paragraphs put together yield full control of the integral $\mathcal{I}$ defined by \eqref{defIlocal} near the set $\mathcal{O}_{\sigma}$ of zeros of $\nabla_{s,t,w,r}\Psi$. In particular, away from this set, one can integrate by parts in $(s,t,w,r)$ and find a contribution $O(\lambda^{-\infty})$ to $\mathcal{I}$. Precisely, there holds

\begin{lemma}
    Assume that the constants $c_1$ (defined by Lemma \ref{quantitativestimateICx}), $c_2$ (defined by Lemma \ref{quantitativeestimateEsigmaalphaawayfromeq}) and $c$ defined by Lemma \ref{FullHessiannearequator} are small enough, depending on the constants $C$ defined by Lemma \ref{Psi1DnearEalpha} and $c_{P_{\alpha}}$ defined by Proposition \ref{delicatecase}, and on the implicit constants in the definition of $\mathcal{D}$ \eqref{defDeasycase}. Let
    \begin{equation}
        \chi_{^c\mathcal{O}}(u,s,t,w,r) :=  1 - \sum \left(\chi_{P_{\alpha}}(u,s,t,w,r) + \chi_{\mathcal{C}_{\sigma}^{\pm}}(u,s,t,w,r) + \chi_{\mathcal{E}_{\sigma,\alpha}^{\pm}} (u,s,t,w,r)\right),
    \end{equation}
    where the sum is taken over all choice of $\alpha \in \{0,\pi\}$ and of sign $\pm$. Then, $\chi_{^c\mathcal{O}}$ is supported outside of $\mathcal{O}_{\sigma}$ and there holds
    \begin{equation}\label{boundawayO}
        \lambda^2 \int \int \int e^{i\lambda \Psi(\sigma,A,B,u,s,t,w,r)}\chi_{^c\mathcal{O}}(u,s,t,w,r) b(\lambda,A,B,\sigma,u,s,t,w,r) dudsdt dw dr = O(\lambda^{-1} M^{9}).
    \end{equation}
\end{lemma}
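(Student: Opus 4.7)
The plan is to exploit the decomposition $\Psi = \Psi_G(u,A,B) + \Psi_0(\sigma,u,s,t,w,r)$ with $\Psi_G = -\scal{h(u)}{(A,B)}$, which gives the crucial identity
\begin{equation*}
    \nabla_{s,t,w,r}\Psi = \nabla_{s,t,w,r}\Psi_0,
\end{equation*}
so all $(s,t,w,r)$-derivatives of $\Psi$ of any positive order are independent of $(A,B)$ and uniformly $O(1)$. The $M$-growth in the final bound can therefore only come from dividing by $|\nabla_{s,t,w,r}\Psi|$ and from differentiating the cutoffs.

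The first step is to verify that, once $c_1, c_2, c$ are shrunk depending on $c_{P_\alpha}$, $C$, and the implicit constants in $\mathcal D$, the half-supports of $\chi_{P_\alpha}, \chi_{\mathcal C_\sigma^\pm}, \chi_{\mathcal E_{\sigma,\alpha}^\pm}$ together cover $\mathcal O_\sigma$, so that $\chi_{{}^c\mathcal O}$ vanishes on a neighborhood of $\mathcal O_\sigma$. This is a combinatorial matching of the $u$-ranges $I_\sigma, I_{\sigma,\alpha}$ and the $u$-width $\tfrac12 c_{P_\alpha}M^{-2}$ of the $\chi_{P_\alpha}$-cutoff, together with a matching of the radial widths in $(s,t,w,r)$. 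The second step is to establish a quantitative lower bound
\begin{equation*}
    |\nabla_{s,t,w,r}\Psi_0(\sigma,u,s,t,w,r)| \gtrsim M^{-k_0}
\end{equation*}
on $\mathrm{supp}(\chi_{{}^c\mathcal O})$, for some fixed integer $k_0$. Introducing an auxiliary partition of unity in $(s,t,w,r)$ that selects, at each point, the coordinate $j\in\{s,t,w,r\}$ with the largest $|\partial_j\Psi_0|$, one works out the bound region by region: near $\mathcal C_\sigma\setminus\{P_\alpha\}$ via Taylor expansion combined with Lemma \ref{stwrhohessofPsionCx}, near $\mathcal E_{\sigma,\alpha}$ directly via Lemma \ref{quantitativelowerboundnablastwr}, and near $P_\alpha$ via \eqref{fullhessianatPalpha} combined with the explicit identities $\partial_s\Psi_0 = r-h(u)_1$ and $\partial_r\Psi_0 = s + \varphi((t,\sigma),(0,\sigma),w)$, which detect small $u$- and $(s,w)$-displacements along directions transverse to the degenerating Hessian.

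The third step is a standard non-stationary phase integration by parts. For each piece of the partition of unity I apply the operator $L_j^t = -\partial_j\bigl(\,(i\partial_j\Psi)^{-1}\cdot\bigr)$ exactly three times. Each application of $L_j^t$ to $\chi_{{}^c\mathcal O}\,b$ loses at most $M^2$ from $(\partial_j\Psi)^{-2}$ and at most one factor $M$ from differentiating $\chi_{{}^c\mathcal O}$ (since every cutoff is adapted to a ball of radius $\gtrsim M^{-1}$ in $(s,t,w,r)$), while derivatives of $b$ are uniformly $O(1)$ in both $M$ and $\lambda$ by \eqref{boundallderivativesb} and \eqref{nolosslambdadera}, and the higher derivatives of $\Psi$ in $(s,t,w,r)$ that appear through $\partial_j\Psi_0$ are all $O(1)$ by the structural identity above. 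Each IBP therefore costs at most $M^3$; after three IBPs the integrand is bounded by $M^9$ pointwise, and multiplying by the prefactor $\lambda^{2-3} = \lambda^{-1}$ and integrating over the bounded domain yields \eqref{boundawayO}.

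The main obstacle is the gradient lower bound of Step 2, because the Hessian of $\Psi_0$ in $(s,t,w,r)$ degenerates on $\mathcal C_\sigma$ at rate $|\sin w|$ (Lemma \ref{stwrhohessofPsionCx}) and on $\mathcal E_{\sigma,\alpha}$ at rate $|\sigma t|$ (Lemma \ref{exactstwrhohess}). A blanket Taylor expansion against the nearest point of $\mathcal O_\sigma$ therefore yields only $|\nabla_{s,t,w,r}\Psi_0| \gtrsim M^{-3}$, which after three integrations by parts would give $\lambda^{-1}M^{18}$ — too large. Using instead the coordinate-by-coordinate analysis described above, choosing at each point the privileged partial derivative whose ellipticity is not destroyed by the nearby degeneracy (typically $\partial_s\Psi_0 = r-h(u)_1$ away from the branches, and $\partial_w\Psi_0$ near $P_\alpha$ using the nonvanishing of $\partial_{ww\theta}\varphi$ established in Lemma \ref{thirdthetawderivaticevarphi}), keeps $k_0$ small enough to fit the target $M^9$.
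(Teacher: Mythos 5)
Your overall skeleton (cover $\mathcal{O}_{\sigma}$ by the half-supports of the localizers, prove a quantitative lower bound on $\nabla_{s,t,w,r}\Psi_0=\nabla_{s,t,w,r}\Psi$ on the complement, then integrate by parts three times in $(s,t,w,r)$ only) is exactly the paper's. The gap is in the two quantitative inputs on which your $M^9$ count rests. First, it is not true that "every cutoff is adapted to a ball of radius $\gtrsim M^{-1}$ in $(s,t,w,r)$": the localizer $\chi_{\mathcal{E}_{\sigma,\alpha}^{\pm}}$ of Lemma \ref{quantitativeestimateEsigmaalphaawayfromeq} is adapted to a ball of radius $\lesssim \sigma^{2}M^{-1}$, which is as small as $M^{-2}$ at the threshold $\sigma^{2}\sim M^{-1}$; hence each $(s,t,w,r)$-derivative of $\chi_{{}^c\mathcal{O}}$ can cost $M^{2}$, not $M$ (this is why the paper works with $\|\nabla^{K}_{s,t,w,r}\chi_{{}^c\mathcal{O}}\|_{\infty}\lesssim M^{2K}$). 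Second, the privileged-coordinate lower bound $|\partial_j\Psi_0|\gtrsim M^{-1}$ cannot hold on all of $\mathrm{supp}(\chi_{{}^c\mathcal{O}})$: just outside the $\mathcal{E}_{\sigma,\alpha}$-localizer, at distance $\sim\sigma^{2}M^{-1}$ from the branch along the degenerate direction of the Hessian (whose smallest eigenvalue is only $\gtrsim\sigma^{2}|t|$ by Lemma \ref{exactstwrhohess} and Corollary \ref{norminversehessEsigalph}), the \emph{full} gradient is genuinely of size $\sim\sigma^{2}|t|\cdot\sigma^{2}M^{-1}$, i.e. as small as $M^{-3}$ — which is precisely what Lemma \ref{quantitativelowerboundnablastwr} (your own Step 2 input for that region) records. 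Since every partial derivative is dominated by the full gradient, no choice of $j$ can beat $M^{-3}$ there, so your claimed per-step cost $M^{2}\cdot M=M^{3}$ is not available; your proposal is internally inconsistent on this point, and the honest count from your own Step-2 inputs reproduces the $\lambda^{-1}M^{18}$ you set out to avoid. Your suggestion for the region near $P_{\alpha}$ has the same defect: $\partial_w\Psi_0=r\,\partial_w\varphi((t,\sigma),(0,\sigma),w)$ vanishes identically at $t=0$ and is $O(|t||w-\alpha|+t^{2})=O(M^{-2})$ on the relevant box (by \eqref{asdevphi} and Lemma \ref{thirdthetawderivaticevarphi}), so it cannot serve as an elliptic direction of size $M^{-1}$; the paper instead exploits the $(s,t)$-components (the correspondence equation) there.

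For comparison, the paper's own proof does not attempt any coordinate-privileging refinement: it proves the crude bound $|\nabla_{s,t,w,r}\Psi|\gtrsim M^{-3}$ on $\mathrm{supp}(\chi_{{}^c\mathcal{O}})$ (with the case analysis in $|(s,t)|$, $u\in\mathcal{U}_{\sigma}$ or not, and $|w-\alpha|$), uses $\|\nabla^{K}\chi_{{}^c\mathcal{O}}\|\lesssim M^{2K}$, and takes $K=3$; note that even its displayed per-step cost is $M^{6}$, so the exponent $9$ in \eqref{boundawayO} is not something your route (or a naive count) produces — reaching it would require either more integrations by parts combined with the rapid decay of $\hat{\rho}$, or a genuinely finer treatment of the thin shell around $\mathcal{E}_{\sigma,\alpha}$ (e.g. exploiting the small measure of the region where the cutoff derivatives are large), neither of which your proposal supplies. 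As written, your argument establishes at best a bound of the form $O(\lambda^{-1}M^{18})$, not \eqref{boundawayO}.
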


\begin{proof}
    First, we claim that, provided $c_1$ and $c_2$ are chosen small enough, then for all $(u,s,t,w,r)$ in the support of $\chi_{^c\mathcal{O}}$, there holds
    \begin{equation}\label{lowerboundnablastwrnotO}
        |\nabla_{s,t,w,r}\Psi|\gtrsim M^{-3}.
    \end{equation}

    \myindent Indeed, find $(u,s,t,w,r) \in supp(\chi_{^c\mathcal{O}})$. Then

    \myindent i. If $|(s,t)| \geq c' M^{-1}$, where $c' > 0$ is a small constant, then we may apply Lemma \ref{quantitativelowerboundnablastwr},
    \begin{equation}
        |\nabla_{s,t,w,r}\Psi(u,s,t,w,r)| \gtrsim |(s,t)| |(u,s,w,r) - (u,s,w,r)(t)|.
    \end{equation}

    \myindent Now, if $c_2$ and $c$ are small enough depending on $c'$, by definition of $\chi_{\mathcal{E}_{\alpha}^{\pm}}$, we are in the support of $1 - \chi_{\mathcal{E}_{\alpha}^{\pm}}$, hence there holds
    \begin{equation}
        |(u,s,w,r) - (u,s,w,r)(t)| \gtrsim \sigma^{2}M^{-1}
    \end{equation}
    when $|\sigma|\gtrsim M^{-\frac{1}{2}}$, or, otherwise, 
    \begin{equation}
        |(u,s,w,r) - (u,s,w,r)(t)| \gtrsim M^{-1}.
    \end{equation}

    \myindent Hence, in any case
    \begin{equation}
        |\nabla_{s,t,w,r}\Psi|\gtrsim M^{-3}.
    \end{equation}

    \quad

    \myindent ii. If $|(s,t)|\leq  c'M^{-1}$, and $u \notin \mathcal{U}_{\sigma}$ defined by Definition \ref{defUsigma}, then, observe that
    \begin{equation}\label{lowerboundunotinUsigma}
        |\nabla_{s,t}\Psi(u,s,t,w,r)| = \left| -h(u) + r\begin{pmatrix}
            1\\
            \partial_{\theta}\varphi
        \end{pmatrix}\right| \gtrsim d(u,\mathcal{U}_{\sigma}).
    \end{equation}
    
    \myindent Indeed, recall that, thanks to the eikonal equation \eqref{nablastPsi0}, there holds that
    \begin{equation}
        r \begin{pmatrix}
            1 \\
            \partial_{\theta}\varphi
        \end{pmatrix} \in \left\{ \begin{pmatrix}
            q_1(\sigma,\xi)\\
            q_2(\sigma,\xi)
        \end{pmatrix} \qquad \xi \in \R^2\right\} =: \Lambda.
    \end{equation}
    
    \myindent Then, since, by definition, $\mathcal{U}_{\sigma}$ is the intersection between the cone $\Lambda$ and the curve $\gamma$ (see Definition \ref{deflilgammas}), equation \eqref{lowerboundunotinUsigma} follows.
    
    \myindent Now, if $c_1$ is small enough depending on $c_{P_{\alpha}}$ or on $\mathcal{D}$ defined by \eqref{defDeasycase}, and if $c'$ is small enough depending on those constants, then we may ensure that, if $(u,s,t,w,r)$ is in the support of $\chi_{^c\mathcal{O}}$ and $u \notin \mathcal{U}_{\sigma}$ and $|(s,t)|\leq c'M^{-1}$, then necessarily
    \begin{equation}
        d(u,\mathcal{U}_{\sigma}) \gtrsim M^{-2},
    \end{equation}
    from which it follows that
    \begin{equation}
        |\nabla_{s,t,w,r}\Psi| \gtrsim M^{-2}
    \end{equation}
    
    \quad
    
    iii. If, $|(s,t)|\leq c'M^{-1}$ but $u \in \mathcal{U}_{\sigma}$, assume that $|w - \alpha| \leq c'' M^{-1}$, for $c''$ a small enough constant. Write
    \begin{equation}
        \nabla_{s,t} \Psi(u,s,t,w,r) = -h(u) + r\begin{pmatrix} 1 \\
        q_2(\sigma,w,1)\end{pmatrix} + r\begin{pmatrix}
            0\\
            \partial_{\theta}\varphi((t,\sigma),(0,\sigma),w) - q_2(\sigma,w,1)
        \end{pmatrix}.
    \end{equation}

    \myindent Now, from the definition of $\mathcal{C}_{\sigma}$ given by Lemma \ref{defcirclestatpoints}, there holds
    \begin{equation}
        \left|-h(u) + r\begin{pmatrix} 1 \\
        q_2(\sigma,w,1)\end{pmatrix} \right| \gtrsim |(u,r) - (u_{\sigma}(w),r_{\sigma}(w))|.
    \end{equation}
    
    \myindent If $c''$ is small enough, depending only on $c_{P_{\alpha}}$, since we are outside of the support of $\chi_{P_{\alpha}}$, we may ensure that 
    \begin{equation}\label{usefuleqnotOsig}
        |(u,r) - (u_{\sigma}(w),r_{\sigma}(w))| \gtrsim_{c_{P_{\alpha}}} M^{-2}.
    \end{equation}
    
    \myindent Now, thanks to \eqref{asdevphi}, to Lemma \ref{miracleequation}, and to Lemma \ref{thirdthetawderivaticevarphi}, there holds that
    \begin{equation}
        \left|\partial_{\theta}\varphi((t,\sigma),(0,\sigma),w) - q_2(\sigma,w,1)\right| \lesssim t^2 + |t||\sin(w)|.
    \end{equation}
    
    \myindent If $c'$ is small enough compared to $c_{P_{\alpha}}$, and thanks to \eqref{usefuleqnotOsig}, this is very small compared to $|(u,r) - (u,r)(w)|$. Ultimately, there holds
    \begin{equation}
        \left|\nabla_{s,t}\Psi\right| \gtrsim \left| -h(u) + \begin{pmatrix}
            1 \\
            q_2(\sigma,w,1)
        \end{pmatrix}\right| \gtrsim M^{-2}.
    \end{equation}
    
    iv. It remains to deal with the case $|(s,t)| \leq c'M^{-1}$, $u \in \mathcal{U}_{\sigma}$, and $|w| \geq c'' M^{-1}$. Now, thanks to the properties of $u_{\sigma}$ (see Lemma \ref{locinverseuw}, one can prove that
    \begin{equation}
        |u - u_{\sigma}(w)| \gtrsim \min_{\pm} \left(|\sin(w)| |w - w_{\pm}(u)|\right).
    \end{equation}
    
    \myindent In particular, since we are outside of the supports of $\chi_{\mathcal{C}_{\sigma}}$ and of $\chi_{P_{\alpha}}$, one may ensure that, either
    \begin{equation}
        |r - r_{\sigma}(w)| \gtrsim M^{-1},
    \end{equation}
    either
    \begin{equation}
        |u - u_{\sigma}(w)| \gtrsim M^{-1} |\sin(w)|.
    \end{equation}
    
    \myindent Now, choosing $c'$ small enough, we can, again, ensure that 
    \begin{equation}
        \left|\partial_{\theta}\varphi((t,\sigma),(0,\sigma),w) - q_2(\sigma,w,1)\right| \lesssim t^2 + |t||\sin(w)| \ll M^{-1} |\sin(w)|.
    \end{equation}
    
    \myindent Hence, we again find that
    \begin{equation}
        \left|\nabla_{s,t}\Psi\right| \gtrsim \left| -h(u) + \begin{pmatrix}
            1 \\
            q_2(\sigma,w,1)
        \end{pmatrix}\right| \gtrsim M^{-2}.
    \end{equation}
    
    \quad
    
    \myindent Now, the lemma simply follows by integration by parts thanks to \eqref{lowerboundnablastwrnotO}. Indeed, there holds, for any integer $K \geq 1$,
    \begin{equation}
        \lambda^2 \int \int \int e^{i\lambda \Psi} \chi_{^c \mathcal{O} b} du ds dt dw dr = \lambda^{2 - K} \int \int \int e^{i\lambda \Psi} \left(\left(\frac{\nabla_{s,t,w,r}\Psi \cdot \nabla_{s,t,w,r}}{|\nabla_{s,t,w,r}\Psi|^2} \right)^t \right)^K \left(\chi_{^c\mathcal{O}} b\right) du ds dt dw dr.
    \end{equation}
    
    \myindent By definition of $\chi_{^c\mathcal{O}}$, there holds
    \begin{equation}
        \left\| \nabla_{s,t,w,r}^K \chi_{^c\mathcal{O}} \right\|_{L^{\infty}} \lesssim M^{2K},
    \end{equation}
    and we have already seen many times that the derivatives of $b$ are uniformly bounded in $M$. Hence, thanks to \eqref{lowerboundnablastwrnotO}, there holds 
    \begin{equation}
        \left|\left(\left(\frac{\nabla_{s,t,w,r}\Psi \cdot \nabla_{s,t,w,r}}{|\nabla_{s,t,w,r}\Psi|^2} \right)^t \right)^K \left(\chi_{^c\mathcal{O}} b\right) \right| \lesssim M^{6K}.
    \end{equation}
    
    \myindent The lemma then follows by choosing $K = 3$. 
\end{proof}

\begin{remark}
    Observe that, for the contribution of $^c\mathcal{O}_{\sigma}$, we only integrate by parts in $(s,t,w,r)$. Hence, we don't need to have bounds on the $u$ derivatives of $\chi_{^c \mathcal{O}}$, or, even, that it is smooth in the $u$ variable. This is why we are able to define $\chi_{\mathcal{C}_{\sigma}}$ and $\chi_{\mathcal{E}_{\sigma,\alpha}}$ with an indicator function in $u$. On the contrary, for the estimate near $P_0$ and $P_{\pi}$, even if we isolate the variable $w$, we need to choose the localizer sufficiently smooth in the $w$ variable, and not directly an indicator function.
\end{remark}

\section{The case where we can use the bicharacteristic length parametrix}\label{secBicharact}

\myindent In this section, we derive the bounds on $\mathcal{I}_{\lambda,\delta,i}^{(j)}(\sigma,A,B)$, $i = 0,..,I, \ j = 1,...,J_i$, that is we prove estimates \eqref{IlowM} for $\bullet = (j)$ and \eqref{IBicharactlargeM}. We follow the same steps than for the estimate of $\mathcal{I}_{\lambda,\delta,i}^{(H)}(\sigma,A,B)$, that is we start with a detailed analysis of the phase $\Psi$ in Section \ref{subsec1Bicharact}, and give the resulting quantitative bounds on the integral in Section \ref{subsec2Bicharact}. There is not much difference with the case $\bullet = (H)$ regarding the oscillatory integral analysis, hence we will give most results without proofs, since the proofs are the same than what we have already done (they are even simpler in the case $\bullet = (j)$). The real difference with the case $\bullet = (H)$ is rather is the geometry of bicharacteristic curves at times $O(1)$. However, since the article is already long, we have chosen not to insist on this part of the analysis, and we will instead say a word about it in Appendix \ref{AppendixD}, hence this will be somehow hidden in the presentation.

\myindent Contrary to the case $\bullet = (H)$, there is no real difference between the cases $(A,B) = (0,0)$ or $(A,B) \neq (0,0)$. Hence, we adapt the definition of $M$ locally as
\begin{equation}\label{changedefM}
    M := |(A,B)| + 1.
\end{equation}

\subsection{Analysis of the phase}\label{subsec1Bicharact}

\myindent Following the strategy presented in Paragraph \ref{subsubsec33Micro}, let us, again, decompose the phase into
\begin{equation}
    \Psi(\sigma,A,B,u,s,t,r) = -\scal{h(u)}{(A,B)} + \Psi_0(\sigma,A,B,u,s,t,r),
\end{equation}
where
\begin{equation}
    \Psi_0(\sigma,A,B,u,s,t,r) = -\scal{(h(u)}{(s + s_i^{(j)},t + t_i^{(j)})} + r\left(|s + s_i^{(j)}| - \psi((t+ t_i^{(j)},\sigma),(0,\sigma)) \right)
\end{equation}
is independent of $A,B$. We recall that $\psi(x,y)$ is the bicharacteristic length function, defined by Definition \ref{defpsi} Hence, any phase analysis using the $(s,t,r)$ gradient of $\Psi$ is independent of $(A,B)$. Moreover, we give the analysis in the case where $s + s_i^{(j)} \geq 0$ on $\mathcal{R}_i^{(j)}$ without loss of generality.

\subsubsection{Reducing to a compact domain of integration}\label{subsubsec11Bicharact}

\myindent Very similarly, to Paragraph \ref{subsubsec12HormPhase}, and as we have argued in this paragraph and in Paragraph \ref{subsubsec33HormPhase}, the \textit{correspondence} equation \eqref{nablastPsi0eq0} cannot hold for those $r$ which are either close to zero, or large enough. Indeed, there holds, thanks to equation \eqref{nablastPsi0}
\begin{equation}
    \begin{split}
    \nabla_{s,t}\Psi_0 &= -h(u) + r\begin{pmatrix}
        1\\
        \partial_{\theta} \psi((t + t_i^{(j)},\sigma,),(0,\sigma))
    \end{pmatrix} \\
    &= -h(u) + r \begin{pmatrix}
        q_1(\sigma,\nabla_x \psi((t + t_i^{(j)},\sigma,),(0,\sigma)) \\
        q_2(\sigma,\nabla_x \psi((t + t_i^{(j)},\sigma,),(0,\sigma))
    \end{pmatrix}.
    \end{split}
\end{equation}

\myindent Hence, $\nabla_{s,t}\Psi_0$ can vanish if and only if 
\begin{equation}
    r = \frac{1}{p_1(\sigma,\nabla_x\psi((t + t_i^{(j)},\sigma,),(0,\sigma)))},
\end{equation}
 from which we can deduce the equivalent of Lemma \ref{nicebound!}.

\begin{lemma}\label{reducetocompactbicharactparam}
    There exists $\beta > 0$ such that for all $i= 0,...,I$, for all $\sigma \in \mathcal{I}_i$, for all $(u,s,t) \in \R/\ell \Z \times \tilde{\mathcal{R}}_i^{(j)}$, there holds
    \begin{equation}
        \forall r \notin [\beta, \beta^{-1}], \qquad |\nabla_{s,t}\Psi_0(\sigma,u,s,t,r)| \gtrsim 1 + |r|.
    \end{equation}
\end{lemma}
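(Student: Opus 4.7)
The proof will follow the same blueprint as Lemma \ref{nicebound!} in the Hörmander case, exploiting the explicit form of $\nabla_{s,t}\Psi_0$ displayed just above the lemma statement. The plan is to analyze separately the regimes $r$ small and $r$ large, showing that in each regime one of the two terms in $\nabla_{s,t}\Psi_0 = -h(u) + r(1,\partial_\theta \psi)^{\top}$ dominates, and to obtain the quantitative lower bound from compactness arguments.

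First, I would observe that $h(u)$ parameterizes the curve $\gamma = \{F_2 = 1\}$, which is a smooth simple closed curve not passing through the origin. Hence by compactness there exist constants $0 < c_0 \leq C_0 < \infty$ such that $c_0 \leq |h(u)| \leq C_0$ uniformly in $u \in \R/\ell\Z$. On the other hand, since $\psi$ is smooth on the support of $\chi_i^{(j)}$ (which is away from $(t,\sigma),(0,\sigma)$ being diagonal or antipodal, cf.\ Proposition \ref{bicharactparamprop} and Lemma \ref{propertypsi}), the quantity $\partial_\theta \psi((t+t_i^{(j)},\sigma),(0,\sigma))$ is uniformly bounded on the compact set $\mathcal{I}_i \times 2\tilde{\mathcal{R}}_i^{(j)}$, so there exists $C_1$ with $|(1,\partial_\theta \psi)| \leq C_1$ and of course $|(1,\partial_\theta \psi)| \geq 1$ from the first component alone.

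Next I would carry out the two regimes. For $r \leq \beta^{-1}$ with $\beta^{-1} \leq c_0/(2C_1)$, the reverse triangle inequality gives
\begin{equation}
|\nabla_{s,t}\Psi_0| \geq |h(u)| - r\,|(1,\partial_\theta\psi)| \geq c_0 - \frac{c_0}{2} = \frac{c_0}{2} \gtrsim 1 \gtrsim 1+|r|,
\end{equation}
uniformly in $(\sigma,u,s,t)$. Symmetrically, for $r \geq \beta$ with $\beta \geq 2C_0$, the first component of $\nabla_{s,t}\Psi_0$ is $r - h(u)_1$, so
\begin{equation}
|\nabla_{s,t}\Psi_0| \geq r\,|(1,\partial_\theta\psi)| - |h(u)| \geq r - C_0 \geq \frac{r}{2} \gtrsim 1+|r|.
\end{equation}
Choosing $\beta := \max(2C_0, 2C_1/c_0)$ simultaneously ensures both bounds and yields the lemma.

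There is no real obstacle here: unlike the Hörmander case where one also needs to use the asymptotic formula \eqref{asdevphi} for $\varphi$ to control $\nabla_x\varphi \sim \xi$, in the bicharacteristic length parametrix the dependence of the phase on $r$ is perfectly linear and decoupled from the angular information, so the estimate reduces to the triangle inequality plus compactness. The only point requiring slight care is making sure $\partial_\theta\psi$ is indeed bounded uniformly on the cuboid $2\tilde{\mathcal{R}}_i^{(j)}$, which is guaranteed by the compatibility condition in Proposition \ref{classification} (the cuboid stays away from the diagonal and antipodal singularities of $\psi$, where smoothness would fail).
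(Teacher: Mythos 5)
Your argument is correct and is essentially the route the paper intends: the paper only sketches this lemma (noting that $\nabla_{s,t}\Psi_0$ can vanish only when $r = 1/p_1(\sigma,\nabla_x\psi)$, which lies in a compact subinterval of $(0,\infty)$, and then "deducing the equivalent of Lemma \ref{nicebound!}"), and your triangle-inequality-plus-compactness estimate, using $c_0 \leq |h(u)| \leq C_0$ and the uniform bound on $\partial_\theta\psi$ on the cuboid where the bicharacteristic length parametrix is defined, is exactly the quantitative content being invoked. The two-regime choice of $\beta$ and the resulting bounds are correct as written.
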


\myindent Thus, we may focus on a region where $r$ is bounded away from zero and from $+ \infty$.

\subsubsection{The curve of $(s,t,r)$ stationary points of $\Psi_0$}\label{subsubsec12Bicharact}

\myindent Observe that
\begin{equation}
    \nabla_{s,t,r} \Psi_0 = \begin{pmatrix}
        -h(u)_1 + r \\
        -h(u)_2 + r \partial_{\theta}\psi((t+ t_i^{(j)},\sigma,),(0,\sigma)) \\
        s - \psi((t+ t_i^{(j)},\sigma,),(0,\sigma)).
    \end{pmatrix}
\end{equation}

\myindent In a sharp difference with the previous section, it is thus straightforward to compute the $(s,t,r)$ stationary points of $\Psi_0$. Indeed, there holds

\begin{lemma}
    For $t \in \mathcal{K}_i^{(j)}$, let $(u,s,r)(\sigma,t)$ be smoothly defined by
    \begin{equation}\label{srhousigmatbichlentgh}
    \begin{split}
        s(\sigma,t) &= \psi((t+ t_i^{(j)},\sigma,),(0,\sigma)) \\
        r(\sigma,t) &= \frac{1}{p_1(\sigma,\nabla_x \psi((t+ t_i^{(j)},\sigma,),(0,\sigma))} \\
        h(u(\sigma,t)) &= \frac{1}{p_1(\sigma,\nabla_x \psi((t+ t_i^{(j)},\sigma,),(0,\sigma))}\begin{pmatrix}
            1 \\
            \partial_{\theta}\psi((t+ t_i^{(j)},\sigma,),(0,\sigma))
        \end{pmatrix}.
    \end{split}
\end{equation}

\myindent Then $\mathcal{O}_{\sigma}$, the zero set of $\nabla_{s,t,r}\Psi_0$, is exactly the curve
\begin{equation}
    \mathcal{E} := \{(u(\sigma,t),s(\sigma,t),t,r(\sigma, t)) \qquad t \in \mathcal{K}_i^{(j)}\}.
\end{equation}
\end{lemma}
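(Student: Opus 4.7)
The plan is to solve the system $\nabla_{s,t,r}\Psi_0 = 0$ explicitly by exploiting the very rigid form of the bicharacteristic length phase together with the eikonal equation satisfied by $\psi$. Using the assumption (without loss of generality) that $s+s_i^{(j)}\geq 0$ on $\mathcal{R}_i^{(j)}$, the three components of the gradient are, as displayed just before the lemma,
\begin{equation*}
\partial_s\Psi_0 = -h(u)_1 + r, \qquad \partial_t\Psi_0 = -h(u)_2 + r\,\partial_\theta\psi, \qquad \partial_r\Psi_0 = s + s_i^{(j)} - \psi,
\end{equation*}
where $\psi = \psi((t+t_i^{(j)},\sigma),(0,\sigma))$ and $\partial_\theta\psi$ denotes its derivative in the angular slot. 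The vanishing of $\partial_r\Psi_0$ immediately fixes $s$ as the stated function of $(\sigma,t)$, so I would first dispose of this component and reduce to the two-dimensional vector equation $h(u) = r\,(1,\partial_\theta\psi)^T$, which is exactly of the \emph{correspondence} type analysed in Paragraph~\ref{subsubsec33Micro}; the rest of the proof consists in recycling that argument in this simpler setting.

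The key input is the eikonal equation $q_1(\sigma,\nabla_x\psi) = 1$ for the bicharacteristic length function (derived in Paragraph~\ref{subsubsec24Micro}), together with the identity $q_2(\sigma,\xi) = p_2(\sigma,\xi) = \Theta$ coming from Theorem~\ref{CdVthm}. Consequently, the direction vector appearing in the reduced equation coincides exactly with $(q_1,q_2)(\sigma,\nabla_x\psi)$. Since the curve $\gamma = \{F_2=1\}$ is, in $(q_1,q_2)$ coordinates, the image of $\{p_1=1\}$ and is parameterised by $u\mapsto h(u)$, the homogeneity of degree $1$ of $(q_1,q_2)$ shows that the ray $\R_+\cdot(1,\partial_\theta\psi)$ meets $\gamma$ at the unique point obtained by rescaling $\nabla_x\psi$ by $1/p_1(\sigma,\nabla_x\psi)$. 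This yields
\begin{equation*}
h(u) = \tfrac{1}{p_1(\sigma,\nabla_x\psi)}\begin{pmatrix}1\\ \partial_\theta\psi\end{pmatrix}, \qquad r = \tfrac{1}{p_1(\sigma,\nabla_x\psi)},
\end{equation*}
so both existence and uniqueness of $(u,r)$ in terms of $(\sigma,t)$ are established, with the stated formulas.

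Finally, I would check smoothness and the well-posedness of the construction. Hypothesis~\ref{nonintersectHyp} combined with the Classification Proposition~\ref{classification} ensures that for $(\sigma,t)\in\mathcal{I}_i\times\mathcal{K}_i^{(j)}$ there is a unique bicharacteristic joining $(0,\sigma)$ to $(t+t_i^{(j)},\sigma)$, so $\psi$ and $\nabla_x\psi$ are smooth (Lemma~\ref{propertypsi}); ellipticity of $q_1$ (Remark~\ref{Q1elliptic}) gives $p_1(\sigma,\nabla_x\psi)>0$, and smoothness of the inverse of $h$ away from $\partial\Gamma$ yields smoothness of $(u,s,r)(\sigma,t)$. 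The only point that might seem delicate is that the ray $\R_+\cdot(1,\partial_\theta\psi)$ should stay inside the open cone $\Gamma$ so that its intersection with $\gamma$ is well-defined, but this is immediate from the Clairaut-type bound $|\partial_\theta\psi| = |q_2(\sigma,\nabla_x\psi)| \leq q_1(\sigma,\nabla_x\psi) = 1$, strict away from the equator. I do not anticipate any genuine obstacle: the lemma is essentially a drastically simplified analogue, in the bicharacteristic length setting, of the circle/branch decomposition of $\mathcal{O}_\sigma$ carried out in Section~\ref{secHormPhase}, with the polar variables $(w,r)$ of that section collapsing into the single radial variable $r$.
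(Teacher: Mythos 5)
Your proposal is correct and follows essentially the same route as the paper: the paper's own justification (the discussion following the lemma, resting on Paragraph \ref{subsubsec33Micro}) likewise reads $\partial_r\Psi_0=0$ as the geometric equation fixing $s$ via the bicharacteristic length, and $\nabla_{s,t}\Psi_0=0$ as the correspondence equation fixing $u$ and $r$ through the eikonal identity $(1,\partial_\theta\psi)=(q_1,q_2)(\sigma,\nabla_x\psi)$ and the intersection of the ray with $\gamma$. Your write-up merely makes the eikonal/homogeneity step and the smoothness checks explicit, which is consistent with (and slightly more detailed than) the paper's sketch.
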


\myindent As we have already argued in Paragraph \ref{subsubsec33HormPhase}, it can be expected that $\mathcal{O}_{\sigma}$ is simply a curve parameterized by $t$ from the fact that the \textit{geometric} equation $\partial_{r}\Psi_0 = 0$ can occur if and only if there is a bicharacteristic of $q_1$ of length $s + s_i^{(j)}$ joining $(t+ t_i^{(j)},\sigma)$ to $(0,\sigma)$. Now, by definition of the bicharacteristic length $\psi$, for all $t \in \mathcal{K}_i^{(j)}$, we know that there is exactly one such bicharacteristic, of length $s(t,\sigma)$. Finally, we have already observed that the \textit{correspondence} equation $\nabla_{s,t}\Psi_0 = 0$ simply fixes the value of $u$ and $r$.

\subsubsection{Parameterization of the curve $\mathcal{E}$ by $u$}\label{subsubsec13Bicharact}

\myindent Since $\mathcal{E}$ is a curve of zeros of $\nabla_{s,t,r}\Psi$, similarly to what we have done above, in view of Theorem \ref{mixedVdCABZ}, we need to parameterize it by $u$, which we can do as long as $t \mapsto u(\sigma,t)$ is invertible, i.e. as long as the curve $\mathcal{E}$ is not vertical when projected in the $(u,t)$ plan. Now, as we have argued in Paragraph \ref{subsubsec33HormPhase}, the curve $\mathcal{E}$ is vertical if and only if $\sigma = 0$. Since this curve is moreover even in $\sigma$, it is quite natural that one can prove the following.

\begin{lemma}\label{derivativeofubicharact}
    The functions $u(\sigma,t), s(\sigma,t)$ have the following behaviour. 

    \myindent i) If $i \neq 0$,
    \begin{equation}
        \partial_t u(\sigma,t) = F(\sigma,t),
    \end{equation}
    where $F$ is a smooth non vanishing function.

    \myindent ii) If $i = 0$ ,
    \begin{equation}
            \partial_t u(\sigma,t) = \sigma^2 F(\sigma,t),
    \end{equation}
    where $F$ is a smooth non vanishing function.
\end{lemma}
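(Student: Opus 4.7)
The plan is to mimic the proof of Lemma \ref{locinverseuw}, differentiating the defining equation of $u(\sigma,t)$ and taking an appropriate determinant. Set $\tilde t := t + t_i^{(j)}$, and abbreviate
\begin{equation*}
A(\sigma,t) := p_1\bigl(\sigma, \nabla_x\psi((\tilde t,\sigma),(0,\sigma))\bigr), \qquad B(\sigma,t) := \partial_\theta \psi((\tilde t,\sigma),(0,\sigma)),
\end{equation*}
so that the defining equation \eqref{srhousigmatbichlentgh} reads $A(\sigma,t)\, h(u(\sigma,t)) = (1, B(\sigma,t))^T$. Differentiating in $t$ and taking the determinant of both sides against $h(u(\sigma,t))$ itself (recall that the $(1,B)$ part cancels from such a determinant), I obtain
\begin{equation*}
A^2\, \det\bigl(h'(u),h(u)\bigr)\, \partial_t u(\sigma,t) = -\partial_t B(\sigma,t) = -\partial_{\theta\theta}\psi((\tilde t,\sigma),(0,\sigma)).
\end{equation*}
By Lemma \ref{deth'hnotzero} the factor $\det(h'(u),h(u))$ is uniformly bounded away from $0$, and $A$ is bounded both above and below on $\mathcal{K}_i^{(j)}$. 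Hence the lemma reduces to analysing $\partial_{\theta\theta}\psi((\tilde t,\sigma),(0,\sigma))$.

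For case (i), $i \neq 0$, I would appeal to the explicit description of the bicharacteristic flow afforded by Lemma \ref{explicitbicharac}: the unique bicharacteristic curve joining $(0,\sigma)$ to $(\tilde t,\sigma)$ of length $\psi$ is characterized by the Clairaut integral of its initial covector, and both $\psi$ and $\partial_\theta\psi = q_2$ of this covector are smooth functions of this integral via the period and phase-shift functions $\tau(I),\omega(I)$ of Section \ref{subsec2CdV}. Tracking how the Clairaut integral varies with $\tilde t$ for fixed $\sigma\neq 0$, the non-vanishing of $\partial_{\theta\theta}\psi$ will be a consequence of the twist Hypothesis \ref{twisthypothesis} (i.e.\ $\omega'(I)\neq 0$), together with the non-intersection Hypothesis \ref{nonintersectHyp} which rules out any degeneracy from exceptional crossings of bicharacteristics. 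This is precisely the kind of quantitative bicharacteristic computation deferred to Appendix \ref{AppendixD}.

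For case (ii), $i = 0$, I would first handle $\sigma = 0$ directly: both endpoints $(0,0)$ and $(\tilde t,0)$ lie on the equator, which is itself a bicharacteristic curve of $q_1$ (by Lemma \ref{explicitbicharac} and the fact that $\omega(\pm 1)$ corresponds to bicharacteristics confined to the equator). Since the neighborhood $\mathcal{K}_i^{(j)}$ is chosen so that $\tilde t \in (0,\pi)$ (avoiding both $0$ and the antipodal value $\pi$), Hypothesis \ref{nonintersectHyp} forces the bicharacteristic realizing $\psi$ to be the equatorial one, traversed at constant angular speed. Therefore $\psi((\tilde t,0),(0,0))$ is linear in $\tilde t$, so $\partial_{\theta\theta}\psi|_{\sigma=0} \equiv 0$. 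Next, the symmetry Hypothesis \ref{symmetrichypothesis} implies that $\sigma\mapsto\psi((\tilde t,\sigma),(0,\sigma))$ is even in $\sigma$, hence so is $\sigma\mapsto\partial_{\theta\theta}\psi((\tilde t,\sigma),(0,\sigma))$, and Taylor expansion yields
\begin{equation*}
\partial_{\theta\theta}\psi((\tilde t,\sigma),(0,\sigma)) = \sigma^2\, G(\sigma,t)
\end{equation*}
for some smooth $G$.

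The remaining point, and the main obstacle in both cases, is showing $G(0,t) \neq 0$ (and, in case (i), the non-vanishing of $\partial_{\theta\theta}\psi$ itself). This is the genuinely geometric input: it amounts to computing $\partial^2_\sigma\partial_{\theta\theta}\psi|_{\sigma=0}$, which by Lemma \ref{explicitbicharac} reduces to an explicit expression involving $\tau'(1), \omega'(1)$ and the behavior of $f$ at $\sigma = 0$ (equivalently, the curvature of $\gamma_0$ at its intersection with the $q_1$-axis). Both non-vanishings should follow from the twist Hypothesis \ref{twisthypothesis}, which in the equivalent form \ref{twistV2} prevents any inflection of $\gamma_0$; I would carry out this computation along the lines of Appendix \ref{AppendixD} and of the analogous Lemma \ref{newhorriblelemma} for the Hörmander phase.
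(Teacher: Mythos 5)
Your reduction is the same as the paper's: differentiating the defining relation \eqref{srhousigmatbichlentgh} and using Lemma \ref{deth'hnotzero} to get $\partial_t u(\sigma,t)\simeq \partial_{\theta\theta}\psi((t+t_i^{(j)},\sigma),(0,\sigma))$ is exactly the first step of the paper's (sketched) proof, and your determinant computation makes it more explicit; your parity argument in case (ii) (equatorial bicharacteristic gives $d(0,\cdot)$ linear, evenness in $\sigma$ gives the factor $\sigma^2$) is a legitimate alternative to the way the paper extracts the $\sigma^2$ via the explicit near-equator expansion of Lemma \ref{secondlemmabicharact}.

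The genuine gap is at the heart of the lemma: the non-vanishing of $\partial_{\theta\theta}\psi$ for $\sigma\neq 0$ (case i) and of the coefficient $G(0,t)$ (case ii). You assert both ``should follow from the twist Hypothesis \ref{twisthypothesis}'' (equivalently the non-vanishing curvature of $\gamma_0$ in Hypothesis \ref{twistV2}), but that is not the operative mechanism and a proof built on it would not go through as envisioned. The round sphere is the test case: there $\omega'\equiv 0$ and $\gamma_0$ is a straight segment, yet a direct computation of the distance between two points of the same parallel gives $\partial_{tt}d=-\cos^2\sigma\,\sin^2\sigma\,(1-\cos t)^2/\sin^3 d\neq 0$ for $\sigma\neq 0$, $t\in(0,\pi)$, and near the equator $\partial_t d=1-\tfrac{\sigma^2}{2}\sec^2(t/2)+O(\sigma^4)$, so both non-vanishings hold with zero twist. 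Hence $\partial_{\theta\theta}\psi$ is not controlled by $\omega'(I)$ or by the curvature of $\gamma_0$ at $I=\pm1$, and one cannot conclude by ``tracking the Clairaut integral and invoking $\omega'\neq 0$.'' What the paper actually uses is different: in case (i) a geometric wavefront argument (Lemma \ref{firstlemmabicharact}) showing that the direction $\xi(t)$ of the connecting bicharacteristic rotates at a rate proportional to $\sin\alpha\neq 0$ because $\xi(t)$ cannot be horizontal when $\sigma\neq 0$, $t\neq 0$; in case (ii) the explicit ODE analysis of the flow near the equator (Lemma \ref{secondlemmabicharact}), where the non-vanishing constant is $\partial_{\Sigma\Sigma}q_1(0,\Theta_0,0)\neq 0$, guaranteed by Lemma \ref{LemmapartSigSigq_1} (ultimately $\partial_1 G\neq 0$), not by twist. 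The twist hypothesis enters this circle of ideas only indirectly, through Proposition \ref{norefocusq1norm} and Hypothesis \ref{nonintersectHyp}, to make $\psi$ well defined and smooth; the quantitative non-vanishing you need must be supplied by an argument of the above type, which your proposal does not provide.
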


\begin{proof}
    We only give a sketch of the proof. 
    
    \myindent Looking at the expression of $u(\sigma,t)$ given by \eqref{srhousigmatbichlentgh}, one can see that
    \begin{equation}
        \partial_t u(\sigma,t) \approx \partial_{\theta \theta}\psi((t + t_i^{(j)},\sigma,),(0,\sigma)).
    \end{equation}
    
    \myindent Hence, all amount to studying the bicharacteristic length function $\psi$ defined by Definition \ref{defpsi}. For $\sigma$ not close to $0$, one can prove that 
    \begin{equation}
        \partial_{\theta \theta} \psi((t + t_i^{(j)},\sigma,),(0,\sigma)) \neq 0,
    \end{equation}
    which yields the first part of the lemma. We leave this fact to a separate lemma in Appendix \ref{AppendixD}, for which we will give a geometrical proof (see Lemma \ref{firstlemmabicharact}).
    
    \myindent Now, for the second part, one can rigorously prove that, as $\sigma \to 0$,
    \begin{equation}
        \psi((t + t_i^{(j)},\sigma,),(0,\sigma)) = t + t_i^{(j)}+ C\sigma^2 f(\sigma,t+ t_i^{(j)})^2,
    \end{equation}
    where $C$ is a constant depending only on $\mathcal{S}$, and $f(\sigma,t)$ is a smooth function such that
    \begin{equation}
        f(0,t) = \tan\left(\frac{t}{2} \right).
    \end{equation}
    
    \myindent We leave this fact to a separate lemma in Appendix \ref{AppendixD}, with a detailed sketch of proof (see Lemma \ref{secondlemmabicharact}). 
\end{proof}

\myindent In particular, as long as $\sigma \neq 0$, we may parameterize $\mathcal{E}$ by $u$, and there holds

\begin{corollary}\label{uparameterisationofE}
    Assume that $\sigma \neq 0$. Let $C > 0$. Up to refining the partition $\mathfrak{Q}$, the function 
    \begin{equation}
        t \in \mathcal{K}_i^{(j)} \mapsto u(\sigma,t)
    \end{equation}
    is a smooth and even bijection from $\mathcal{K}_i^{(j)}$ to some interval $[u_1(\sigma), u_2(\sigma)]$, and its inverse
    \begin{equation}
        u \mapsto 
            t(u)
    \end{equation}
    satisfies the following.

    \myindent i) If $i \neq 0$,
    \begin{equation}
        \left|\left(\partial_u \right)(t(u))\right|\lesssim 1.
    \end{equation}

    \myindent ii) If $i = 0$, provided we choose $\mathcal{K}_i^{(j)}$ small enough (hence up to refining $\mathfrak{Q}$), there holds
    \begin{equation}\label{sndderivativegreaterthanfrstquared}
        \left|\left(\partial_u \right)^2(t(u))\right|\gtrsim \left|\left(\partial_u \right)(t(u))\right|^2 \geq C.
    \end{equation}
\end{corollary}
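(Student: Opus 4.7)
My plan is to derive the corollary directly from Lemma~\ref{derivativeofubicharact} via the inverse function theorem, so most of the content is already encoded in that lemma. First, observe that in either case the derivative $\partial_t u(\sigma, t)$ is nowhere vanishing on $\overline{\mathcal{K}_i^{(j)}}$: in case (i), $F$ is itself non-vanishing; in case (ii), we additionally use the hypothesis $\sigma \neq 0$ so that $\sigma^2 F(\sigma, t) \neq 0$. Hence $t \mapsto u(\sigma, t)$ is smooth and strictly monotonic on $\mathcal{K}_i^{(j)}$, defining a smooth bijection onto some interval $[u_1(\sigma), u_2(\sigma)]$ with smooth inverse $u \mapsto t(u)$, and the standard formulas for derivatives of an inverse give
\begin{equation}
\partial_u t(u) \;=\; \frac{1}{\partial_t u(\sigma, t(u))}, \qquad \partial_u^2 t(u) \;=\; -\frac{\partial_t^2 u(\sigma, t(u))}{(\partial_t u(\sigma, t(u)))^3}.
\end{equation}

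For case (i), the conclusion $|\partial_u t| \lesssim 1$ follows immediately, since by Lemma~\ref{derivativeofubicharact} the smooth non-vanishing function $F$ is bounded from below on the compact set $\overline{\mathcal{I}_i} \times \overline{\mathcal{K}_i^{(j)}}$.

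For case (ii), I first handle the largeness $|\partial_u t|^2 \geq C$. Since $\partial_t u(\sigma, t) = \sigma^2 F(\sigma, t)$ with $|F|$ bounded above and below, the first formula above gives $|\partial_u t(u)| \simeq \sigma^{-2}$, hence $|\partial_u t(u)|^2 \simeq \sigma^{-4}$. Refining $\mathfrak{Q}$ so that $\mathcal{I}_0 \subset \{|\sigma| \leq C^{-1/4}\}$ produces the desired lower bound for every $\sigma \in \mathcal{I}_0 \setminus \{0\}$. For the second bound, combining the two formulas above gives the identity
\begin{equation}
\frac{|\partial_u^2 t(u)|}{|\partial_u t(u)|^2} \;=\; \frac{|\partial_t^2 u(\sigma, t(u))|}{|\partial_t u(\sigma, t(u))|} \;=\; \frac{|\partial_t F(\sigma, t(u))|}{|F(\sigma, t(u))|},
\end{equation}
so it suffices to show that $|\partial_t F|/|F|$ is uniformly bounded from below on the relevant set.

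The main obstacle, and the reason the additional refinement of $\mathcal{K}_i^{(j)}$ is genuinely needed, is exactly this last ratio: if $F$ were locally constant in $t$, the bound would fail. To rule this out I would exploit the explicit $\sigma \to 0$ expansion underlying case (ii) of Lemma~\ref{derivativeofubicharact}, namely $\psi((t + t_i^{(j)}, \sigma), (0, \sigma)) = t + t_i^{(j)} + C \sigma^2 f(\sigma, t + t_i^{(j)})^2$ with $f(0, \tau) = \tan(\tau / 2)$ (cf.\ Appendix~\ref{AppendixD}), from which one reads off that $F(0, t)$ is a nontrivial smooth function of $t$ whose derivative vanishes at most at isolated points, none of which lie in $\mathcal{K}_0^{(j)}$ since by the classification Proposition~\ref{classification} the rectangle $\mathcal{R}_0^{(j)}$ is kept away from $(0, 0)$. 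Refining $\mathcal{K}_0^{(j)}$ so that $|\partial_t F(0, t)| \geq c_0 > 0$ on its closure, and then shrinking $\mathcal{I}_0$ further by continuity in $\sigma$, yields $|\partial_t F(\sigma, t)|/|F(\sigma, t)| \gtrsim 1$ uniformly, which completes the proof.
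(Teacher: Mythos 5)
Your proof is correct and takes essentially the same route as the paper's: both use the inverse-function formulas to reduce the key estimate to $|\partial_t^2 u|/|\partial_t u| = |\partial_t F|/|F| \gtrsim 1$ for $\sigma$ near $0$, and both obtain this from the explicit $\tan(\cdot/2)$ form of the $\sigma^2$-coefficient supplied by Appendix \ref{AppendixD} (your non-vanishing of $\partial_t F(0,\cdot)$ is exactly the paper's condition $(\partial_t)^2(\partial_\sigma)^2 u(t,0) \neq 0$, and case i) is immediate in both). One small caveat: the non-vanishing of $\partial_t F(0,\cdot)$ on $\mathcal{K}_0^{(j)}$ should be justified by the elementary computation (the $t$-derivative of $\tan(\tau/2)\sec^2(\tau/2)$ has no zeros on the relevant range), rather than by the avoidance of $(0,0)$, which is what keeps $F$ itself non-vanishing and keeps $\tau$ away from the blow-up at $\pi$.
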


\begin{proof}
    We give a word of the proof, since it is not straightforward.
    
    \myindent On the one hand, the case $i \neq 0$ is an immediate corollary of Lemma \ref{derivativeofubicharact}.
    
    \myindent On the other hand, in the case $i = 0$, Lemma \ref{derivativeofubicharact} yields that
    \begin{equation}
        |t'(u)| = \frac{1}{|\partial_t u(\sigma,t(u))| } \gtrsim \frac{1}{\sigma^2},
    \end{equation}
    which we can always choose larger than a fixed constant $C$ provided we choose $\mathcal{I}_0$ smaller. 
    
    \myindent The only subtlety is the claim on the second derivative, that is the left inequality in \eqref{sndderivativegreaterthanfrstquared}. Observe that 
    \begin{equation}
    \begin{split}
        \left(\partial_u \right)^2(t(u)) &= -\left(\frac{(\partial_t)^2 (u(t))}{((\partial_t)(u(t)))^3} \right) (t = t(u))\\
        \left(\partial_u \right)(t(u)) &= \left(\frac{1}{(\partial_t)(u(t))} \right) (t = t(u)).
    \end{split}
    \end{equation}

    \myindent Hence, the claim amounts to proving that for all $t$ and all $\sigma$ close to $0$, there holds
    \begin{equation}
        \left|\frac{(\partial_t)^2 u(t,\sigma)}{(\partial_t) u(t,\sigma)}\right| \gtrsim 1.
    \end{equation}

    \myindent Now, since both the first and the second derivatives of $t\mapsto u(t,\sigma)$ vanish when $\sigma = 0$ (since $t\mapsto u(t,0)$ is constant), and using the parity in $\sigma$, one can actually prove through Taylor expansion that all amounts to proving that
    \begin{equation}\label{partialtandttsigmagimauneq0}
        \begin{split}
            (\partial_t)^2 (\partial_{\sigma})^2 u(t,0) &\neq 0 \\
            (\partial_t) (\partial_{\sigma})^2 u(t,0) &\neq 0.
        \end{split}
    \end{equation}

    \myindent Now, as we have already mentioned, one can prove (see Lemma \ref{secondlemmabicharact}) that
    \begin{equation}
        \left(\partial_{\sigma} \right)^2 u(0,t) = C\tan\left(\frac{t + t_i^{(j)}}{2}\right)^2,
    \end{equation}
    where $C$ is a nonzero constant depending only on $\mathcal{S}$. Hence, \eqref{partialtandttsigmagimauneq0} follows from a simple computation.
\end{proof}

\subsubsection{The partial and full Hessians of $\Psi$ near the curve $\mathcal{E}$}\label{subsubsec14Bicharact}

\myindent Now, in order to apply Theorem \ref{mixedVdCABZ}, we need to study the $(s,t,r)$ Hessian of $\psi$ on the curve $\mathcal{E}$, as long as we are not too close to the equator, on which we have to resort to a different analysis. We give the following lemmas, which are respectively the equivalent of Lemmas \ref{exactstwrhohess} and \ref{FullHessiannearequator}.

\begin{lemma}\label{strhohessbichlentgh}
    There holds along the branch $\mathcal{E}$

    \myindent i) If $i \neq 0$,
    \begin{equation}\label{nondeghessbichparamawayfromeq}
        \det(\nabla_{s,t,r}^2 \Psi_0 (\sigma,s(\sigma,t),t,r(\sigma,t))) = F(\sigma,t),
    \end{equation}
    for some smooth nonvanishing function $F$.

    \myindent ii) If $i = 0$,
    \begin{equation}\label{nondeghessbichparamateq}
        \det(\nabla_{s,t,r}^2 \Psi_0 (\sigma,s(\sigma,t),t,r(\sigma,t))) = \sigma^2 F(\sigma,t),
    \end{equation}
    for some smooth nonvanishing function $F$.
\end{lemma}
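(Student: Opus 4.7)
The plan is to compute the $(s,t,r)$-Hessian of $\Psi_0$ directly, observe that its rather sparse structure reduces the determinant to a single geometric quantity---the second angular derivative of the bicharacteristic length $\psi$---and then appeal to the same geometric input that underlies Lemma \ref{derivativeofubicharact}, which is deferred to Appendix \ref{AppendixD}.

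First, assuming without loss of generality that $s+s_i^{(j)}\geq 0$ on $\mathcal{R}_i^{(j)}$, one has
\[
\Psi_0(\sigma,u,s,t,r) = -\langle h(u),(s+s_i^{(j)},t+t_i^{(j)})\rangle + r\bigl(s+s_i^{(j)} - \psi((t+t_i^{(j)},\sigma),(0,\sigma))\bigr).
\]
Differentiating twice, the only second derivatives in the block $(s,t,r)$ that do not vanish identically are $\partial_{sr}^2\Psi_0 = 1$, $\partial_{tr}^2\Psi_0 = \partial_\theta\psi$ and $\partial_{tt}^2\Psi_0 = r\,\partial_{\theta\theta}\psi$, where $\psi$ and its derivatives are evaluated at $((t+t_i^{(j)},\sigma),(0,\sigma))$. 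In other words,
\[
\nabla_{s,t,r}^2\Psi_0 = \begin{pmatrix} 0 & 0 & 1 \\ 0 & r\,\partial_{\theta\theta}\psi & \partial_\theta\psi \\ 1 & \partial_\theta\psi & 0 \end{pmatrix},
\]
and expanding along the first row yields immediately
\[
\det \nabla_{s,t,r}^2\Psi_0 = -\,r\,\partial_{\theta\theta}\psi((t+t_i^{(j)},\sigma),(0,\sigma)).
\]

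Next, restricting to the curve $\mathcal{E}$, the coefficient $r(\sigma,t) = 1/p_1(\sigma,\nabla_x\psi((t+t_i^{(j)},\sigma),(0,\sigma)))$ is smooth, strictly positive, and uniformly bounded above and below---this is precisely the ellipticity argument used in Paragraph \ref{subsubsec11Bicharact} to cut off the $r$-integration to a compact annulus. Therefore the lemma reduces to showing that $\partial_{\theta\theta}\psi((t+t_i^{(j)},\sigma),(0,\sigma))$ is (i) smooth and nowhere vanishing when $i\neq 0$, and (ii) factors as $\sigma^2\,G(\sigma,t)$ with $G$ smooth and nowhere vanishing when $i=0$.

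The hard part, exactly as acknowledged in the excerpt, is this pair of geometric statements about $\psi_{\theta\theta}$. They are the same inputs that drive Lemma \ref{derivativeofubicharact} (where the identification $\partial_t u(\sigma,t)\approx \partial_{\theta\theta}\psi$ is observed) and are established in Appendix \ref{AppendixD}: for $i\neq 0$, one appeals to a geometric argument on the bicharacteristic flow of $q_1$ away from the equator (the content of Lemma \ref{firstlemmabicharact}); for $i=0$, one uses the Taylor expansion $\psi((t+t_i^{(j)},\sigma),(0,\sigma)) = (t+t_i^{(j)}) + C\sigma^2 f(\sigma,t+t_i^{(j)})^2$ with $C\neq 0$ a constant depending only on $\mathcal{S}$ and $f(0,\theta)=\tan(\theta/2)$ (Lemma \ref{secondlemmabicharact}), after which two $\theta$-differentiations reveal the prefactor $\sigma^2$ together with a non-vanishing coefficient at $\sigma=0$, possibly after shrinking $\mathcal{K}_0^{(j)}$ to avoid the isolated zeros of the resulting explicit function of $t$. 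The elementary $3\times 3$ determinant expansion above then closes the argument in each case.
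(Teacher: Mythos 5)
Your proof is correct and follows essentially the same route as the paper: the paper's proof likewise writes out the sparse $3\times 3$ Hessian with only $\partial_{sr}^2\Psi_0=1$, $\partial_{tr}^2\Psi_0=\partial_\theta\psi$, $\partial_{tt}^2\Psi_0=r\,\partial_{\theta\theta}\psi$ nonzero, reduces the determinant to (a nonvanishing multiple of) $\partial_{\theta\theta}\psi$, and defers the two properties of $(\sigma,t)\mapsto\partial_{\theta\theta}\psi((t+t_i^{(j)},\sigma),(0,\sigma))$ to Appendix \ref{AppendixD}, exactly as you do. Your additional remarks (the explicit value $-r\,\partial_{\theta\theta}\psi$, the boundedness of $r(\sigma,t)$, and possibly shrinking $\mathcal{K}_0^{(j)}$) are harmless elaborations of the same argument.
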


\begin{proof}
    The lemma is straightforward once we observe that
    \begin{equation}
        \nabla_{s,t,r}^2 \Psi_0 (\sigma,s,t,r) = \begin{pmatrix}
            0 & 0 & 1 \\
            0 & r \partial_{\theta\theta} \psi((t+ t_i^{(j)},\sigma,),(0,\sigma)) & \partial_{\theta}\psi((t+ t_i^{(j)},\sigma,),(0,\sigma)) \\
            1 & \partial_{\theta}\psi((t+ t_i^{(j)},\sigma,),(0,\sigma)) & 0 \end{pmatrix}.
    \end{equation}
    
    \myindent Indeed, the lemma then follows from the properties of $(\sigma,t) \mapsto \partial_{\theta \theta} \psi((t+ t_i^{(j)},\sigma,),(0,\sigma))$ which we have already mentioned in the previous paragraph, and which are formalized in Appendix \ref{AppendixD}.
\end{proof}

\myindent When $\sigma$ is very small, similarly to what happens in Paragraph \ref{subsubsec24HormQuant}, we cannot apply Theorem \ref{mixedVdCABZ} and instead we just observe that the full $(u,s,t,r)$ Hessian of $\Psi$ is non degenerate near the branch $E$. Precisely, there holds the following lemma.

\begin{lemma}\label{fullhessneareqatorbichparam}
    Let
    \begin{equation}
        \mathcal{D} := \{(u,s,t,r) \ \text{such that} \ |u - u_0(0)| \lesssim 1\}.
    \end{equation}
    
    \myindent Then, provided
    \begin{equation}\label{smallsigmabichparam}
        |\sigma| \lesssim M^{-\frac{1}{2}},
    \end{equation}
    there holds on $\mathcal{D}$
    \begin{equation}
        \left|\det(\nabla_{u,s,t,r}^2\Psi (\sigma, A, B, u,s,t,r))\right| \gtrsim 1.
    \end{equation}
\end{lemma}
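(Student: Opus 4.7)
The plan is to mimic the proof of Lemma \ref{FullHessiannearequator}: I would expand the $4\times 4$ Hessian of $\Psi$ using the same algebraic block identity, identify an explicitly computable ``main'' term independent of $(A,B)$, and then show that the only $(A,B)$-dependent contribution is negligible in the regime $|\sigma|\lesssim M^{-1/2}$.

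First, I would write out the Hessian in the form
\begin{equation*}
\nabla^2_{u,s,t,r}\Psi=\begin{pmatrix}K&a&b&0\\a&0&0&1\\b&0&e&c\\0&1&c&0\end{pmatrix},
\end{equation*}
with $K=-\scal{h''(u)}{(A,B)+(s+s_i^{(j)},t+t_i^{(j)})}$, $a=-h'(u)_1$, $b=-h'(u)_2$, $c=\pm\partial_\theta\psi((t+t_i^{(j)},\sigma),(0,\sigma))$, and $e=\mp r\,\partial_{\theta\theta}\psi((t+t_i^{(j)},\sigma),(0,\sigma))$, the signs depending on the fixed sign of $s+s_i^{(j)}$ on $2\tilde{\mathcal{R}}_i^{(j)}$. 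Applying the identity used in \eqref{randomcomputationofdet}, the determinant reduces to
\begin{equation*}
\det\bigl(\nabla^2_{u,s,t,r}\Psi\bigr)=(ac-b)^2-Ke=\bigl(h'(u)_1\partial_\theta\psi\pm h'(u)_2\bigr)^2+r\,K\,\partial_{\theta\theta}\psi.
\end{equation*}

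Next, I would show that the first term is uniformly bounded below on $\mathcal{D}$, provided the implicit constants are chosen sufficiently small. At the reference point $(u,\sigma,t)=(u_0(0),0,0)$ the equator is itself a bicharacteristic of $q_1$, so $\psi((\tilde t,0),(0,0))=\tilde t$ and in particular $\partial_\theta\psi=1$; combined with \eqref{srhousigmatbichlentgh}, this forces $h(u_0(0))$ to be colinear to $(1,\pm 1)$, and Lemma \ref{deth'hnotzero} then identifies $h'(u_0(0))_1\pm h'(u_0(0))_2$ with $\det(h,h')/r$ up to sign, hence uniformly bounded from zero. Smoothness in $(u,\sigma,t)$ extends this lower bound throughout $\mathcal{D}$, provided the constant controlling $|u-u_0(0)|$ and the size of $\tilde{\mathcal{R}}_i^{(j)}$ are taken small enough (which may entail one further refinement of the partition $\mathfrak{Q}$).

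Finally, I would dispatch the ``error'' $r\,K\,\partial_{\theta\theta}\psi$. On the compact domain $r\in[\beta^{-1},\beta]$ given by Lemma \ref{reducetocompactbicharactparam} and $|K|\lesssim M$ trivially. The key quantitative input is the bound $|\partial_{\theta\theta}\psi((t+t_i^{(j)},\sigma),(0,\sigma))|\lesssim\sigma^2$, which follows from the fact that $\psi(\tilde t,0)=\tilde t$ exactly at $\sigma=0$ together with the reflection symmetry of $\mathcal{S}$ which makes $\psi$ even in $\sigma$; this is precisely the content of Lemma \ref{secondlemmabicharact} invoked in Appendix \ref{AppendixD} (and already used in the proof of Lemma \ref{derivativeofubicharact}). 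Combining, $|r K\,\partial_{\theta\theta}\psi|\lesssim M\sigma^2$, a quantity which can be made arbitrarily smaller than the fixed positive lower bound on $(ac-b)^2$ by choosing the implicit constant in $|\sigma|\lesssim M^{-1/2}$ small enough, yielding $|\det|\geq \frac{1}{2}(ac-b)^2\gtrsim 1$. The only mildly delicate point is establishing the $O(\sigma^2)$ bound on $\partial_{\theta\theta}\psi$, which is a geometric consequence of the equator being a closed bicharacteristic and of the $\sigma\to -\sigma$ symmetry.
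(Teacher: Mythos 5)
Your argument is correct and is essentially the paper's own proof: the same decomposition of the $4\times 4$ determinant into the $(A,B)$-independent squared $2\times 2$ minor (bounded below via \eqref{srhousigmatbichlentgh} and Lemma \ref{deth'hnotzero}, for $|\sigma|\ll 1$ and $u$ near $u_0(0)$) plus the term $K\det(\nabla^2_{s,t,r}\Psi_0)=O(M\sigma^2)$ coming from the $O(\sigma^2)$ bound on $\partial_{\theta\theta}\psi$ near the equator (the extension of Lemma \ref{strhohessbichlentgh} / Lemma \ref{secondlemmabicharact}), absorbed under $|\sigma|\lesssim M^{-1/2}$. The only cosmetic differences are that you re-derive the determinant formula from the explicit block identity and justify the $O(\sigma^2)$ bound via evenness in $\sigma$ rather than quoting it, which matches the paper's Appendix \ref{AppendixD} input.
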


\begin{proof}
    The proof is straightforward from the formula
    \begin{multline}
        \det(\nabla_{u,s,t,r}^2\Psi (\sigma, A, B, u,s,t,r)) = -\scal{h''(u)}{(A,B) + (s + s_{0}^{(j)},t+ t_0^{(j)})} \det(\nabla_{s,t,r}^2 \Psi_0(\sigma,s,t,r)) \\+ \begin{vmatrix}
            -h'(u)_1 & - h'(u)_2 \\
            1 & \partial_{\theta} \psi((t+ t_0^{(j)},\sigma,),(0,\sigma))
            \end{vmatrix}^2.
    \end{multline}
    
    \myindent Indeed, thanks to \eqref{srhousigmatbichlentgh} and to Lemma \ref{deth'hnotzero}, the second term is bounded away from zero as long as $|\sigma| \ll 1$ and $u$ is close enough to $u_0(0)$, while the first is of order $O(M\sigma^2)$ thanks to (an immediate extension of) Lemma \ref{strhohessbichlentgh}. Hence, provided $|\sigma| \ll M^{-\frac{1}{2}}$, the conclusion of the Lemma holds.
\end{proof}

\subsubsection{The remaining 1D phase function}\label{subsubsec15Bicharact}

\myindent In order to apply Theorem \ref{mixedVdCABZ} to the curve $\mathcal{E}$, when $|\sigma| \gtrsim M^{-\frac{2}{2}}$, we finally prove that the remaining 1D phase function has a nonvanishing $p$th derivative. Now, a subtle but important difference with the case of Paragraph \ref{subsubsec34HormPhase} is that we will be able to prove that only for $M \gg 1$. Ultimately, there are thus a finite number of value of $(A,B)$ for which we won't be able to prove better estimates, at least in the general case. It is still possible to have improved estimates, as we will explain in Section \ref{subsec1Further}.

\begin{lemma}\label{remainingphasefunctbichparam}
    Let $\sigma \neq 0$, and let the remaining 1D phase function be defined by
    \begin{equation}
    \begin{split}
        \Psi^{1D} : u \mapsto &\Psi(\sigma,A,B,u,s(\sigma,t(u)),t(u),r(\sigma,t(u)) \\
        &= -\scal{h(u)}{(s,t)(u) + (s_i^{(j)}, t_i^{j})+ (A,B)},
    \end{split}
    \end{equation}
    where $t(u)$ is the $u$ parameterization of $\mathcal{E}$ given by Corollary \ref{uparameterisationofE}. Up to refining $\mathfrak{Q}$, there exists a constant $M_0 > 0$ such that the following holds
    
    \myindent i) If $i \neq 0$, for all $(A,B)$ such that
    \begin{equation}
        M = |(A,B)|  \geq M_0,
    \end{equation}
    $\Psi^{1D}$ satisfies the Property $(VdC)_2$ (see Definition \ref{VdCN}) with constants $c_1 M, c_2 M$ for some universal constants $c_1, c_2 > 0$.
    
    \myindent ii) If $i = 0$, then $\Psi^{1D}$ satisfies the Property $(VdC)_3$ (see Definition \ref{VdCN}) with constants $c_1M, c_2M$ for some universal constants $c_1,c_2$.
\end{lemma}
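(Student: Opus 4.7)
The plan is to compute the first three derivatives of $\Psi^{1D}$ via the envelope structure of the stationary curve $\mathcal{E}$ and then partition $[u_1(\sigma), u_2(\sigma)]$ into finitely many subintervals on each of which one of these derivatives dominates in the sense of Definition \ref{VdCN}. Since $(s(u), t(u), r(u))$ annihilates $\nabla_{s,t,r}\Psi$ by the construction of $\mathcal{E}$, the envelope identity gives
\[
\partial_u \Psi^{1D}(u) = -\scal{h'(u)}{(s(u)+s_i^{(j)}, t(u)+t_i^{(j)}) + (A,B)},
\]
and iterated differentiation yields explicit formulas for $\partial_{uu}\Psi^{1D}$ and $\partial_{uuu}\Psi^{1D}$ involving $h''(u), h'''(u)$, the uniformly bounded vector $\star := (s(u)+s_i^{(j)}, t(u)+t_i^{(j)})$, and the derivatives $(s'(u),t'(u))$ and $(s''(u),t''(u))$.

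For part (i), Corollary \ref{uparameterisationofE}(i) provides $|s^{(k)}(u)|, |t^{(k)}(u)| \lesssim 1$ for $k = 1, 2$, so every contribution not involving $(A,B)$ is uniformly $O(1)$. Since $h'(u)$ and $h''(u)$ are orthogonal nonzero vectors at each point by the twist Hypothesis \ref{twistV2}, the compactness of the parameter interval yields a finite partition on which at every point either $|\scal{h'(u)}{(A,B)}| \gtrsim M$ or $|\scal{h''(u)}{(A,B)}| \gtrsim M$. Choosing $M_0$ large enough absorbs the $O(1)$ corrections; on subintervals where only the first derivative dominates, the bound $|\partial_{uu}\Psi^{1D}| \lesssim M$ follows from $|\scal{h''(u)}{(A,B)}| \leq \|h''\|_\infty M$, yielding Property $(VdC)_2$ with constants $c_1 M, c_2 M$.

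For part (ii), the key input is Corollary \ref{uparameterisationofE}(ii), namely $|t''(u)| \gtrsim |t'(u)|^2$ with both quantities as large as desired by refining $\mathcal{K}_0^{(j)}$. I plan to split into two regimes. In the moderate regime $|t'(u)|^2 \leq M$, all terms involving $t'$ or $t''$ are $O(M)$ and the argument of part (i) adapts to produce $(VdC)_2 \subset (VdC)_3$ with the same constants. In the large regime $|t'(u)|^2 > M$, the third derivative is dominated by $-\scal{h'(u)}{(s''(u), t''(u))}$, of size $\gtrsim |t''(u)| > M$ up to a geometric factor. The main obstacle will be establishing a uniform lower bound on this factor: from $h(u) \parallel (1, \partial_\theta \psi)$ one computes $(s'(u), t'(u)) = t'(u)(\partial_\theta \psi, 1)$ and $(s''(u), t''(u)) = t''(u)(\partial_\theta \psi, 1) + t'(u)^2 w$ for an explicit lower-order $w$, so the lower bound reduces to $|\scal{h'(u)}{(\partial_\theta \psi, 1)}| \gtrsim 1$. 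This geometric nondegeneracy should follow from Lemma \ref{deth'hnotzero} together with the non-parallelism of $(1,\partial_\theta \psi)$ and $(\partial_\theta \psi, 1)$ away from $\partial_\theta \psi = \pm 1$; at any residual exceptional $u$, the partition is refined so that $\partial_{uu}\Psi^{1D}$ takes over and provides the needed Van der Corput bound.
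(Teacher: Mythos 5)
Your part (i) and the overall architecture (envelope formulas for the first three derivatives, the orthogonal decomposition along $h'(u)\perp h''(u)$, the dichotomy $|t'(u)|^2\lessgtr M$ combined with $|t''(u)|\gtrsim|t'(u)|^2$ from Corollary \ref{uparameterisationofE}) are the same route the paper takes, which simply transplants the proof of Lemma \ref{Psi1DnearEalpha}. The genuine gap is in your treatment of the factor $\scal{h'(u)}{(\partial_t s,1)}=\scal{h'(u)}{(\partial_\theta\psi,1)}$ in case (ii). Knowing that $h'(u)$ is not parallel to $h(u)\parallel(1,\partial_\theta\psi)$ (Lemma \ref{deth'hnotzero}) and that $(1,\partial_\theta\psi)$ is not parallel to $(\partial_\theta\psi,1)$ does \emph{not} prevent $h'(u)$ from being orthogonal to $(\partial_\theta\psi,1)$, i.e.\ parallel to $(-1,\partial_\theta\psi)$; and in any case you need a \emph{uniform} lower bound, not mere nonvanishing. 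Your fallback at a hypothetical degeneracy point does not close this: the entire chain ``$|\partial_u\Psi^{1D}|,|\partial_{uu}\Psi^{1D}|\ll M\Rightarrow|t'(u)|\simeq M\Rightarrow|t''(u)|\gg M\Rightarrow$ third derivative large'' hinges on that same factor being of size one, and where it is small the dangerous regime is $|t'(u)|\,|\scal{h'}{(\partial_t s,1)}|\simeq M$ with $t'(u)$ huge, in which the competing term $t'(u)^2\scal{h'(u)}{(\partial_{tt}s,0)}$ in $\partial_{uuu}\Psi^{1D}$ is no longer dominated by $t''(u)\scal{h'(u)}{(\partial_t s,1)}$, so neither the second nor the third derivative is guaranteed to ``take over''.

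The correct resolution, which is what makes ``the same proof as Lemma \ref{Psi1DnearEalpha}'' work, is an exact identity rather than a transversality heuristic: on the stationary curve the correspondence equation reads $h(u)=r\,(\partial_s\phi_{bl},\partial_t\phi_{bl})/r=r\,(1,-\partial_t s)$, so that
\begin{equation}
	(\partial_t s,1)=\pm\,r^{-1}\,h(u)^{\perp},
	\qquad\text{hence}\qquad
	\bigl|\scal{h'(u)}{(\partial_t s,1)}\bigr|=r^{-1}\bigl|\det\bigl(h'(u),h(u)\bigr)\bigr|\gtrsim 1
\end{equation}
uniformly, by Lemma \ref{deth'hnotzero} (this is exactly the computation carried out in the proofs of Lemmas \ref{Psi1DnearEalpha} and \ref{zerosetnablaustrhoabeq0}; note that the displayed formulas of Section 7 hide this because of a sign convention for $\partial_\theta\psi$, which may have misled you into pairing $h'(u)$ with $(\partial_\theta\psi,1)$ rather than with $h(u)^{\perp}$). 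With this uniform bound there are no exceptional points, the implication $|\partial_{uu}\Psi^{1D}|\ll M\Rightarrow|t'(u)|\simeq M$ is restored, and your remaining step — that $t'(u)^2\scal{h'(u)}{(\partial_{tt}s,0)}$ is lower order — then follows near the equator since $\partial_{\theta\theta}\psi=O(\sigma^2)$ (or from $|t''|\gtrsim|t'|^2$ with a large constant after refining $\mathfrak{Q}$), so the rest of your argument goes through.
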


\begin{proof}
    We don't detail the proof since it is mostly similar to the proof of Lemma \ref{Psi1DnearEalpha}. The only difference is that, in the case i), then one needs only observe that, for $M$ large enough, $M \gg |t'(u)|$ since this last quantity is bounded, thanks to Corollary \ref{uparameterisationofE}. 

    \myindent However, for $M$ small, we cannot apply the strategy of the proof of Lemma \ref{Psi1DnearEalpha}, since it relies crucially on the fact that $|t''(u)| \gg |t'(u)|$. Now, for a given $i \neq 0$, and for small $M$, all quantities are $O(1)$, hence it is \textit{not} possible to find a simple analytical argument to find a nonzero derivative.
    
    \myindent In the case ii) however, up to refining the partition $\mathfrak{Q}$, we may ensure that, choosing the constant $C$ large enough, if $|t'(u)| \approx M$, then, first, $M$ needs to be very large, and then, $|t''(u)| \gtrsim |t'(u)^2| \gg M$ similarly to the proof of Lemma \ref{Psi1DnearEalpha}. This follows from \eqref{sndderivativegreaterthanfrstquared}.
\end{proof}

\subsection{Quantitative estimate of the oscillatory integral}\label{subsec2Bicharact}

\myindent Now, we can apply the phase analysis of the previous section to obtain a quantitative bound on $\mathcal{I}_{\lambda,\delta,i}^{(j)}(\sigma,A,B)$. The method is exactly the same than in Section \ref{subsec2HormQuant}, although it is even simpler since there is only a branch $\mathcal{E}$ of zeros of $\nabla_{s,t,r}\Psi_0$. Hence, we can directly give the following result.

\begin{lemma}
    Let $i \in \{0,...,I\}$, $\sigma \in \mathcal{I}_i$ and $j \in \{1,...,J_i\}$. There holds
    
    \myindent i. If $i\neq 0$ and $M\leq M_0$, where $M_0$ is defined by Lemma \ref{remainingphasefunctbichparam}, there holds
    \begin{equation}
        \mathcal{I}_{\lambda,\delta,i}^{(j)} (\sigma,A,B) = O_{M_0}(1).
    \end{equation}
    
    \myindent i. If $i \neq _0$ but $M\geq M_0$, there holds 
    \begin{equation}
        \mathcal{I}_{\lambda,\delta,i}^{(j)}(\sigma,A,B) = O_{M_0}\left(\lambda^{-\frac{1}{2}} M^{-\frac{1}{2}} + \lambda^{-\frac{3}{2}}\right).
    \end{equation}
    
    \myindent iii. If $i = 0$, and condition \eqref{smallsigmabichparam} doesn't hold, i.e. $|\sigma| \gtrsim^{-\frac{1}{2}}$, there holds
    \begin{equation}
        \mathcal{I}_{\lambda,\delta,i}^{(j)}(\sigma,A,B) = O\left(\lambda^{-\frac{1}{3}} M^3 + \lambda^{-\frac{3}{2}} M^6\right).
    \end{equation}
    
    \myindent iv. Finally, if $i = 0$, and condition \eqref{smallsigmabichparam} holds, i.e. $|\sigma|\lesssim M^{-\frac{1}{2}}$, there holds
    \begin{equation}
        \mathcal{I}_{\lambda,\delta,i}^{(j)} (\sigma,A,B) = O\left(\lambda^{-\frac{1}{2}} M^6\right).
    \end{equation}
\end{lemma}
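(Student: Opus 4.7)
The strategy is entirely parallel to Section \ref{subsec2HormQuant}, but substantially simpler since the zero set $\mathcal{O}_\sigma$ of $\nabla_{s,t,r}\Psi_0$ is now just the single smooth curve $\mathcal{E}$, with no branching points and no analogue of the circle $\mathcal{C}_\sigma$. First, using Lemma \ref{reducetocompactbicharactparam}, I integrate by parts in $(s,t)$ in the region where $r \notin [\beta,\beta^{-1}]$; as in Paragraph \ref{subsubsec11HormQuant}, this contributes only $O(\lambda^{-\infty})$ and reduces the analysis to $r \in [\tfrac12 \beta^{-1}, 2\beta]$, where the symbol estimate on $a$ gives $L^\infty$ bounds on all $(u,s,t,r)$-derivatives of the amplitude $b$ uniformly in $\lambda$ and $M$. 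On this compact domain I introduce a smooth localizer $\chi_{\mathcal{E}}$ adapted to a suitable neighborhood of $\mathcal{E}$ (depending on the case) and a complementary cutoff $\chi_{^c\mathcal{E}} = 1 - \chi_{\mathcal{E}}$; the latter is supported where $|\nabla_{s,t,r}\Psi_0| \gtrsim M^{-K}$ for a fixed power $K$, so repeated integration by parts in $(s,t,r)$ gives an $O(\lambda^{-N} M^{K'})$ contribution for any $N$, which is negligible.

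The main work is estimating the contribution of $\chi_{\mathcal{E}}$. In case i, when $i \neq 0$ and $M \leq M_0$, I take $\chi_{\mathcal{E}}$ adapted to a fixed compact neighborhood and bound the integral trivially by the measure of its support times $\|b\|_\infty$, yielding $O_{M_0}(1)$. In case ii, I apply Theorem \ref{mixedVdCABZ} isolating the variable $u$: Corollary \ref{uparameterisationofE}(i) provides the $u$-parameterization $t = t(u)$ of $\mathcal{E}$, Lemma \ref{strhohessbichlentgh} (formula \eqref{nondeghessbichparamawayfromeq}) gives $\mathcal{D}(\Psi), \mathcal{N}(\Psi) \lesssim 1$, and Lemma \ref{remainingphasefunctbichparam}(i) provides the $(VdC)_2$ property with constants $\simeq M$ on the 1D remaining phase. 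Since only the $u$-derivatives of $\Psi$ carry factors of $M$, Theorem \ref{mixedVdCABZ} produces the bound $\lambda^{-1/2} M^{-1/2}$, plus an $O(\lambda^{-3/2})$ stemming from the implicit asymptotic remainder. In case iii, the same strategy applies using Corollary \ref{uparameterisationofE}(ii), \eqref{nondeghessbichparamateq}, and Lemma \ref{remainingphasefunctbichparam}(ii), but with $p = 3$ and $\mathcal{D}(\Psi) \gtrsim \sigma^2 \gtrsim M^{-1}$, so tracking the powers of $\sigma^{-1} \lesssim M^{1/2}$ through the conclusion of Theorem \ref{mixedVdCABZ} gives $\lambda^{-1/3} M^3 + \lambda^{-3/2} M^6$.

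The delicate case is iv, where $|\sigma| \lesssim M^{-1/2}$ prevents the $u$-parameterization of $\mathcal{E}$ from satisfying uniform bounds; here I cannot use Theorem \ref{mixedVdCABZ} via $u$. Instead, exactly as in Lemma \ref{estimateIEsigmaalphaclosetoequator}, I rely on Lemma \ref{fullhessneareqatorbichparam} which yields that the full $(u,s,t,r)$-Hessian of $\Psi$ has determinant $\gtrsim 1$ on a neighborhood $\mathcal{D}$ of $\mathcal{E}$; after the anisotropic rescaling $\tilde u := M^{1/2}(u - u_0(0))$ which flattens the $O(M)$ coefficient in $\partial_u^2 \Psi$ while degrading $\mathcal{D}(\tilde\Psi)$ to $\gtrsim M^{-1}$, Theorem \ref{ABZthm} directly gives a bound of the form $\lambda^{-1/2}$ times a polynomial in $M$; the explicit counting, as in the $\bullet = (H)$ case, produces $\lambda^{-1/2} M^6$. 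The hardest part is really the bookkeeping of $M$-powers through the rescaling and through the constants $\mathcal{M}_{d+2}, \mathcal{N}_{d+1}, a_0, \delta_{\eps_0}$ in Theorem \ref{ABZthm}, but since all derivatives of $b$ are uniformly bounded and the rescaled $\Psi$ has uniformly bounded Hessian off the diagonal entry in $u$, this is routine. Summing the four contributions with the nonstationary remainder yields the claimed estimates.
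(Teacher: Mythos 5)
Your treatment of cases ii, iii and iv follows the paper's route (isolate $u$ and apply Theorem \ref{mixedVdCABZ} with the $(VdC)_2$/$(VdC)_3$ input from Lemma \ref{remainingphasefunctbichparam} and the Hessian bounds of Lemma \ref{strhohessbichlentgh}; near the equator, use Lemma \ref{fullhessneareqatorbichparam}, the rescaling $\tilde u = M^{1/2}(u-u_0)$ and Theorem \ref{ABZthm}), and the final exponents you report agree with the statement. However, there are two problems, one of which is fatal to case i.

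First, your bookkeeping of the amplitude is wrong: in the bicharacteristic length parametrix the symbol $a$ is of order $\tfrac12$ (one angle variable), not of order $0$ as in the Hörmander case, so on the compact region $r\in[\beta^{-1},\beta]$ one only has $\|\nabla^K_{u,s,t,r} b\|_{L^\infty}\lesssim_K \lambda^{1/2}$, \emph{not} a bound uniform in $\lambda$. The correct count is $\lambda$ (prefactor, $N=1$) times $\lambda^{1/2}$ (symbol) times the stationary-phase gain; this is exactly why the final bounds in ii--iv come out as stated, but your assertion of $\lambda$-uniformity means you have not actually tracked where this $\lambda^{3/2}$ is absorbed. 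Second, and this is the genuine gap: in case i you bound the localized integral ``trivially by the measure of its support times $\|b\|_\infty$.'' With the $\lambda$ prefactor and $\|b\|_\infty\lesssim\lambda^{1/2}$ this gives $O(\lambda^{3/2})$ (even under your own mistaken uniformity claim it would give $O(\lambda)$), which is nowhere near the claimed $O_{M_0}(1)$. A trivial bound cannot work here: for $M\leq M_0$ one must still exploit the oscillations in $(s,t,r)$. The paper does this by writing, for each fixed $u$, the inner integral $\lambda\int e^{i\lambda\Psi_0(\sigma,u,s,t,r)}b\,ds\,dt\,dr$ and applying the nondegenerate stationary phase Theorem \ref{hormstatphase}, using that $(s,t,r)\mapsto\Psi_0$ has a unique stationary point on $\mathcal{E}$ with uniformly nondegenerate Hessian by \eqref{nondeghessbichparamawayfromeq}; the resulting $\lambda^{-3/2}$ exactly compensates $\lambda\cdot\lambda^{1/2}$ and yields $O_{M_0}(1)$ uniformly, after which one integrates in $u$ over a bounded interval. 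You need to replace your case i argument by this (or an equivalent) three-dimensional stationary phase step.
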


\begin{proof} Before giving the proof, an important difference with the case of Section \ref{secHormQuant} is that $a$ is no longer a symbol of order zero, but a symbol of order $\frac{1}{2}$. Hence, there holds
\begin{equation}\label{nolosslambdabichpar1}
    |\nabla_{s,t}^K a(s,(t + t_i^{(j)}),\sigma,),(0,\sigma), \lambda r) |\lesssim_K \lambda^{\frac{1}{2}} r^{\frac{1}{2}},
\end{equation}
and, once we restrict to a compact domain of integration where $r \in [\beta^{-1}, \beta]$,
\begin{equation}
    \|\nabla_{s,t,r}^K a(s,(t+ t_i^{(j)},\sigma,),(0,\sigma),\lambda r)\|_{\infty} \lesssim_K \lambda^{\frac{1}{2}},
\end{equation}
where we a similar trick than for estimate \eqref{nolosslambdadera} : when one differentiates $a$ in $r$, one loses a factor $\lambda$, but gains a factor $\lambda^{-1} r^{-1}$ through the symbol estimate.

\quad

\myindent Turning to the estimates of the integral, the first step is to apply Lemma \ref{reducetocompactbicharactparam} in order to reduce to a compact domain of integration, exactly as in Paragraph \ref{subsubsec11HormQuant}, for any integer $K\geq 1$,
\begin{equation}
\begin{split}\label{reducetocompactnonzeroabbichparam}
   &\mathcal{I}_{\lambda,\delta,i}^{(j)}(\sigma,A,B) =\\ &\lambda \int \int \int e^{i\lambda \Psi(\sigma,A,B,u,s,t,r)} \chi(s,t) \chi_1(r) \hat{\rho}(\delta(s + s_i^{(j)}+ A, t + t_i^{(j)}+ B)) \\
   &a(s, (t+ t_i^{(j)},\sigma,),(0,\sigma), \lambda r) |\det(h'(u),h(u))| dudsdt dr
   \\ &+ O_K\left(\lambda^{\frac{3}{2}-K}\right),
\end{split}
\end{equation}
where $\chi_1$ localizes $r$ in $[\beta^{-1},\beta]$. Indeed, on the support of $1 - \chi_1$, Lemma \ref{reducetocompactbicharactparam} ensures that we can integrate by parts in $(s,t)$, and the count of powers of $\lambda$ follows from \eqref{nolosslambdabichpar1}. Hence, we wish to estimate the integral term in the RHS of \eqref{reducetocompactnonzeroabbichparam}. 

\myindent Let 
\begin{multline}
    b(\lambda,\delta,A,B,\sigma,u,s,t,r) := \\ \chi(s,t) \chi_1(r) \hat{\rho}(\delta(s+ s_i^{(j)} + A, t + t_i^{(j)}+ B)) a(s, (t+ t_i^{(j)},\sigma,),(0,\sigma), \lambda r) |\det(h'(u),h(u))|.
\end{multline}

\myindent Thanks to \eqref{nolosslambdabichpar1}, we know that $b$ is a smooth symbol such that
\begin{equation}
    \left\| \nabla_{u,s,t,r}^K b \right\|_{L^{\infty}} \lesssim_K \lambda^{\frac{1}{2}},
\end{equation}
where, most importantly, the upper bound is independent of $(\delta,A,B)$. 

\quad

\myindent We turn to the estimate of 
\begin{equation}
     \mathcal{I}(\lambda) := \lambda \int \int \int e^{i\lambda \Psi(\sigma,A,B,u,s,t,r)} b(\lambda,\delta,A,B,\sigma,u,s,t,r) dudsdtdr.
\end{equation}

\myindent The estimate now has to be divided into different cases, following the analysis of the phase that we gave in Section \ref{subsec1Bicharact}.

\myindent i) If $i \neq 0$ and $M \leq M_0$ where $M_0$ is defined by Lemma \ref{remainingphasefunctbichparam} (i.e. for a finite number of couples $(A,B)$), one can actually ignore the oscillations in $u$, and apply the usual stationary phase lemma. Indeed, for all $u$
\begin{equation}
    (s,t,r) \mapsto \Psi_0(\sigma,u,s,t,r)
\end{equation}
is a phase function with one stationary point at which the Hessian is uniformly nondegenerate, thanks to \eqref{nondeghessbichparamawayfromeq}. Theorem \ref{hormstatphase} thus yields that
\begin{equation}
    \begin{split}
        &\lambda \int \int \int e^{i\lambda \Psi(\sigma,A,B,u,s,t,r)}b dudsdt dr \\
        &= \int du  e^{-i\lambda \scal{h(u)}{(A,B)} }\left(\lambda\int \int e^{i\lambda \Psi_0(\sigma,u,s,t,r)} b dsdt dr\right) \\
        &= O_{M_0}(1)
    \end{split}
\end{equation}
uniformly. Indeed, the inner integral itself is $O(1)$, since the integration is over a 3D domain, yielding a factor $\lambda^{-\frac{3}{2}}$, exactly compensated by the factor $\lambda$ in front and the $\lambda^{\frac{1}{2}}$ which is hidden in $a$ being a symbol of order $\frac{1}{2}$.

\quad

\myindent ii) If $i\neq 0$ but $M \geq M_0$, we can again isolate the variable $u$, but this time rather apply Theorem \ref{mixedVdCABZ}, since its conditions hold thanks to Lemma \ref{remainingphasefunctbichparam}. Observing that, thanks to \eqref{nondeghessbichparamawayfromeq}, and with the notations introduced in Notation \ref{defM1+d} and Definition \ref{defHandN}, for all $1 \leq k \leq l$, 
\begin{equation}
\begin{split}
    &\mathcal{D}(\Psi) \gtrsim 1 \\
	&\mathcal{N}(\Psi) \lesssim 1 \\
	&\mathcal{M}_{k,l}^{(s,t,r)}(\Psi) \lesssim 1 \\
	& \mathcal{M}_{k,l}^{(s,t,r)}(\partial_u \Psi) \lesssim 1.
\end{split}
\end{equation}

\myindent In particular,  there is \textit{no loss} of powers of $M$, neither when applying Theorem \ref{mixedVdCABZ}, or when estimating the contribution of the integral of the zone where $\nabla_{s,t,r} \Psi_0$ doesn't vanish and we can integrate by parts in $(s,t,r)$ (see Paragraph \ref{subsubsec25HormQuant}). Hence, since the remaining phase function satisfies Property $(VdC)_2$ (see Definition \ref{VdCN}) with constants $c_1M,c_2M$ thanks to Lemma \ref{remainingphasefunctbichparam}, there finally holds
\begin{equation}
   \mathcal{I}(\lambda) 
    = O\left(\lambda^{-\frac{1}{2}}  M^{-\frac{1}{2}}\right) + O_N\left(\lambda^{\frac{3}{2} - N}\right).
\end{equation}

\quad

\myindent iii) If $i = 0$ and condition \eqref{smallsigmabichparam} doesn't hold i.e.  $|\sigma| \gtrsim M^{-\frac{1}{2}}$, a similar argument applies, thanks to Lemma \ref{remainingphasefunctbichparam}, where this times Property $(VdC)_3$ holds with constants $c_1M,c_2M$. However, this time, thanks to \eqref{nondeghessbichparamateq}, there holds that
\begin{equation}
\begin{split}
    &\mathcal{D}(\Psi) \gtrsim \sigma^2 \\
	&\mathcal{N}(\Psi) \lesssim \sigma^{-2},
\end{split}
\end{equation}
the bounds on the derivatives of $\Psi$ being unchanged. Hence, we actually have to be careful in tracking the powers of $\sigma$ which are lost in the estimates, both from the part of the integral controlled by Theorem \ref{mixedVdCABZ} and from the part where we can integrate by parts in $(s,t,r)$. We don't detail the computations, but, overall, one can prove that 
\begin{equation}
    \mathcal{I}(\lambda) = O\left(\lambda^{-\frac{1}{3}} M^{-\frac{1}{3}} \sigma^{-6}\right) + O_N\left(\lambda^{\frac{3}{2} - N} \sigma^{-4N}\right),
\end{equation}
and we can conclude to the upper bound using moreover that $\sigma \gtrsim M^{-\frac{1}{2}}$.

\quad

iv) Finally, if $i = 0$ and condition \eqref{smallsigmabichparam} holds, i.e. $|\sigma| \lesssim M^{-\frac{1}{2}}$, one can directly use Lemma \ref{fullhessneareqatorbichparam} along with Theorem \ref{ABZthm} to bound the integral around the branch $\mathcal{E}$. The interesting fact is that this lemma yields a neighborhood of $\mathcal{E}$ which is \textit{independent of $M$} on which Theorem \ref{ABZthm} applies. In particular, there is no problem in bounding the contribution of the integral of the zone where we can integrate by parts in $(s,t,r)$. Ultimately, if moreover one does a similar rescaling of the $u$ variable than in the proof of Lemma \ref{estimateIEsigmaalphaclosetoequator}, there holds
\begin{equation}
    \mathcal{I}(\lambda)
    = O\left(\lambda^{-\frac{1}{2}} M^6\right) + O_N\left(\lambda^{\frac{3}{2} - N}\right).
\end{equation}
\end{proof}

\section{The case $\bullet = (\pi)$ : antipodal Hörmander's parametrix}\label{secAntipod}

\myindent In this section, we derive the bounds on $\mathcal{I}_{\lambda,\delta,0}^{(\pi)}$, that is we prove estimates \eqref{IlowM} in the case $\bullet =(\pi)$ and \eqref{IAntipodlargeM}. We will follow the same steps than for the previous estimates, and, in particular, we start with a detailed analysis of the phase $\Psi$ in Section \ref{subsec1Antipod}, and we then give the resulting bounds on the integral in Section \ref{subsec2Antipod}. Now, the analysis is extremely similar the previous cases, hence we only mention what needs to be changes in the proofs. Indeed, as we mentioned in Paragraph \ref{subsubsec33Micro}, this case can be seen as a transition regime between the case $\bullet = (j)$ and the case $\bullet = (H)$, hence the only difficulty is to properly quantify threshold conditions in terms of $M$. Here, we again use the convention \eqref{changedefM}, that is we define
\begin{equation}
    M := |(A,B)| + 1.
\end{equation}

\subsection{Analysis of the phase}\label{subsec1Antipod}

\myindent Following the strategy presented in Paragraph \ref{subsubsec33HormPhase}, let us decompose the phase into
\begin{equation}
    \Psi(\sigma,A,B,u,s,t,w,r) = -\scal{h(u)}{(A,B)} + \Psi_0(\sigma,u,s,t,\xi),
\end{equation}
where
\begin{equation}
    \Psi_0(\sigma,u,s,t,w,r) = -\scal{h(u)}{(s + \pi,t + \pi)} + s q_1(\sigma,\xi) +  \varphi((t,-\sigma,),(0,\sigma),\xi),
\end{equation}
where we recall that $\varphi(x,y,w)$ is an abuse of notation for $\varphi(x,y,g(w))$, see Notation \ref{defwrho}.

\myindent Observe, in particular, that we have already obtained a bound for $\sigma = 0$ since in that case the integral exactly equals $\mathcal{I}_{\lambda,\delta,0}^{(H)}(0,A,B)$, which we analyzed in Sections \ref{secHormPhase} and \ref{secHormQuant}. Hence, we can fix $\sigma > 0$. We will follow the same steps as in the previous sections.

\subsubsection{Reducing to a compact domain of integration}\label{subsubsec11Antipod}

\myindent The first step is nearly identical to Paragraph \ref{subsubsec12HormPhase}, and close to Paragraph \ref{subsubsec11Bicharact} :  we observe that
\begin{equation}
    \begin{split}
    \nabla_{s,t}\Psi_0 &= -h(u) +  \begin{pmatrix}
        q_1(\sigma,\xi) \\
        \partial_{\theta}\varphi((t,-\sigma,),(0,\sigma),\xi)
    \end{pmatrix} \\
    &= -h(u) +  \begin{pmatrix}
         q_1(\sigma,\nabla_x\varphi)\\
        q_2(\sigma,\nabla_x \varphi)
    \end{pmatrix} ,
    \end{split}
\end{equation}
where the difference with \eqref{nablastpsi0expr} is that $\nabla_x \varphi$ is taken at $((t,-\sigma,),(0,\sigma),\xi)$ rather than $((t,\sigma,),(0,\sigma),\xi)$. Hence, \eqref{devnablaxphi} has to be changed a little into
\begin{equation}
    \nabla_x\varphi((t,-\sigma,),(0,\sigma),\xi) = \xi + O((|t| + |\sigma|)|\xi|),
\end{equation}
which doesn't affect the reasoning since $|\sigma| \ll 1$. Overall, there holds an equivalent of Lemma \ref{nicebound!}.

\begin{lemma}
    Let $\beta > 0$ be large enough. Then, there holds for any $(u,s,t,w,r) \in \R/\ell \Z \times \tilde{\mathcal{R}}_0^{(\pi)} \times \R^2$,
    \begin{equation}
        \forall |\xi| \notin \left[2\beta^{-1},\frac{1}{2}\beta\right] \qquad |\nabla_{s,t}\Psi_0(\sigma,u,s,t,w, r)| \gtrsim 1 + |\xi|.
    \end{equation}
\end{lemma}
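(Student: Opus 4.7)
The plan is to mirror the proof of Lemma \ref{nicebound!} essentially verbatim. The key point is that, although the antipodal Hörmander phase $\phi_\pi$ is built from $\phi_H$ through the smooth antipodal involution (see Lemma \ref{antipodparam} and \eqref{defantipodparam}), the resulting local phase function $\varphi((t,-\sigma),(0,\sigma),\xi)$ still solves the eikonal equation in its first spatial argument and still admits the near-diagonal expansion \eqref{asdevphi}. Consequently the same two-dimensional eikonal identity holds:
$$\nabla_{s,t}\Psi_0 = -h(u) + \begin{pmatrix} q_1(\sigma, \nabla_x\varphi((t,-\sigma),(0,\sigma),\xi)) \\ q_2(\sigma, \nabla_x\varphi((t,-\sigma),(0,\sigma),\xi)) \end{pmatrix},$$
in direct analogy with \eqref{nablastpsi0expr}.

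The next step is to exploit the smallness of $(t,\sigma)$: on $2\tilde{\mathcal{R}}_0^{(\pi)}$ one has $|t| \ll 1$, and because $\sigma \in \mathcal{I}_0$ with $\mathcal{I}_0$ a small interval centered at the equator (classification Proposition \ref{classification}), also $|\sigma| \ll 1$. Applying \eqref{asdevphi} to $\varphi(x',y',\xi)$ with $x'=(t,-\sigma)$, $y'=(0,\sigma)$ (so $|x'-y'| \lesssim |t|+|\sigma|$) gives
$$\nabla_x\varphi((t,-\sigma),(0,\sigma),\xi) = \xi + O\bigl((|t|+|\sigma|)|\xi|\bigr).$$
Together with the homogeneity of $(q_1,q_2)$ and ellipticity of $q_1$ (Corollary \ref{q1elliptic}), this ensures that the vector in the second summand of $\nabla_{s,t}\Psi_0$ has modulus comparable to $|\xi|$ and direction close to $\bigl(q_1(\sigma,\xi/|\xi|),q_2(\sigma,\xi/|\xi|)\bigr)$.

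I would then treat the two regimes separately. For $|\xi| \geq \beta/2$ the second summand has modulus $\simeq |\xi|$, while $h(u)$ stays uniformly bounded since $\gamma$ is compact; hence for $\beta$ large enough (depending on $\mathcal{S}$) one gets $|\nabla_{s,t}\Psi_0| \gtrsim |\xi|$. For $|\xi| \leq 2\beta^{-1}$ the second summand is $O(\beta^{-1})$, whereas $|h(u)|$ is uniformly bounded from below — indeed $\gamma$ lies in a fixed annulus $\{c_1 \leq |v| \leq c_2\}$ by ellipticity of $q_1$ — so for $\beta$ large enough $|\nabla_{s,t}\Psi_0| \gtrsim 1$.

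The only non-routine point, and thus the main (mild) obstacle, is checking that the expansion \eqref{asdevphi}, which is stated in a neighborhood of the Hörmander diagonal $x=y$, applies uniformly at the configuration $((t,-\sigma),(0,\sigma))$ arising here. This is immediate from the construction in Lemma \ref{antipodparam}: $\phi_\pi$ is obtained from $\phi_H$ by a smooth change of variable in one spatial slot, which does not spoil the quantitative control on $\varphi-\langle x-y,\xi\rangle$ as long as the two spatial arguments remain close — which they do, uniformly on $2\mathcal{Q}_0^{(\pi)}$, after possibly shrinking $\mathcal{I}_0$ and $\tilde{\mathcal{R}}_0^{(\pi)}$.
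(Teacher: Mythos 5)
Your proposal is correct and follows essentially the same route as the paper: the paper itself treats this lemma as the direct analogue of Lemma \ref{nicebound!}, using the identity $\nabla_{s,t}\Psi_0 = -h(u) + \bigl(q_1(\sigma,\nabla_x\varphi),\,q_2(\sigma,\nabla_x\varphi)\bigr)^{T}$ together with the modified expansion $\nabla_x\varphi((t,-\sigma),(0,\sigma),\xi) = \xi + O\bigl((|t|+|\sigma|)|\xi|\bigr)$, which is harmless since $|\sigma|\ll 1$ on $\mathcal{I}_0$. Your two-regime argument (second summand of size $\simeq|\xi|$ dominating the bounded $h(u)$ for $|\xi|\geq \beta/2$, and $|h(u)|\gtrsim 1$ dominating the $O(\beta^{-1})$ term for $|\xi|\leq 2\beta^{-1}$, the lower bound on $|h(u)|$ coming from $\gamma=\{F_2=1\}$ lying in a fixed annulus) is exactly the intended adaptation.
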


\myindent In particular, since we will be able to integrate by parts in $(s,t)$ outside of a neighborhood of $|\xi| \sim 1$, and find a contribution $O(\lambda^{-\infty})$ of this region, we may restrict the analysis to the region where
\begin{equation}
	|\xi| \in [\beta^{-1},\beta].
\end{equation}

\myindent In this region, we may use the \textit{same} polar change of coordinates that was introduced in Notation \ref{defwrho} and write
\begin{equation}
	\xi = r g_{\sigma}(w),
\end{equation}
so that, in particular, the phase $\Psi_0$ takes the nice form 
\begin{equation}
	\Psi_0 (\sigma,u,s,t,w,r) = -\scal{h'(u)}{(s + \pi,t + \pi)} + r(s + \varphi((t,-\sigma,),(0,\sigma),w)).
\end{equation}

\myindent We again define
\begin{equation}
    \mathcal{K}_{\sigma} :=\{(w,r), \ \text{such that} \ |rg_{\sigma}(w)| \in [\beta^{-1}, \beta]\}.
\end{equation}

\subsubsection{The set $\mathcal{O}_{\sigma}$ of $(s,t,w,r)$ stationary points of $\Psi_0$}\label{subsubsec12Antipod}

\myindent In this paragrah, we detail the geometry of the set $\mathcal{O}_{\sigma}$ of $(s,t,w,r)$ stationary points of $\Psi_0$, which was introduced in \ref{gendefOsigma}.

\myindent Following the presentation of Paragraph \ref{subsubsec33Micro}, we must first compute the solution of the \textit{geometric} equation
\begin{equation}\label{nablawrhopsieq0antipod}
    \nabla_{w,r}\Psi_0(\sigma,u,s,t,w,r) = 0,
\end{equation}

\myindent We recall that the meaning of this equality, which depends only on $(\sigma,s,t,w)$, is that there exists a bicharacteristic curve of $q_1$ joining $(t,-\sigma)$ to $(0,\sigma)$ in a time $s$, and its direction is $\nabla_x \varphi$, as in Figure \ref{FigureAntipod}.

\begin{figure}[h]
\includegraphics[scale = 0.5]{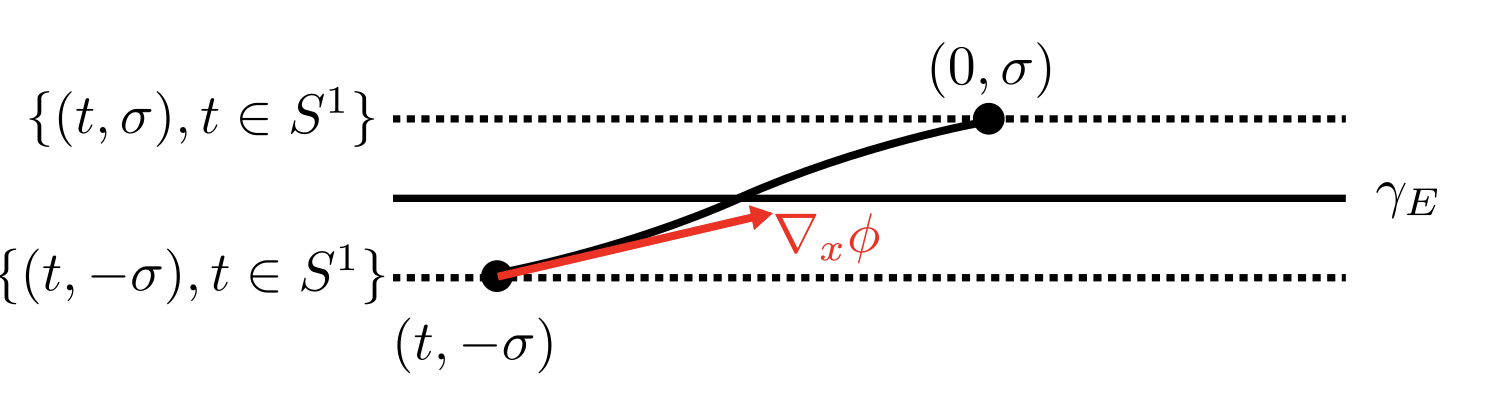}
\centering
\caption{The geometric equation for the Antipodal case}
\label{FigureAntipod}
\end{figure}

\myindent Now, for $\sigma > 0$, in order to have a geometric intuition of the set of those $(s,t,w)$ such that \eqref{nablawrhopsieq0antipod} holds, observe first that, for $t \ll -\sigma$, the situation is as in Figure \ref{tllsigma}, i.e., for any fixed $t$, there is one, and only one, bicharacteristic curve (of length $s \ll 1$) joining $(t,-\sigma)$ to $(0,\sigma)$, and it is \textit{nearly horizontal}. Now, this gives rise to \textit{two} solutions of \eqref{nablawrhopsieq0antipod}, say $(s_{\pm}(\sigma,t),t,w_{\pm}(\sigma,t))$, where $s_- = - s_+$. This corresponds to how we choose the orientation to travel on the curve.

\begin{figure}[h]
\includegraphics[scale = 0.5]{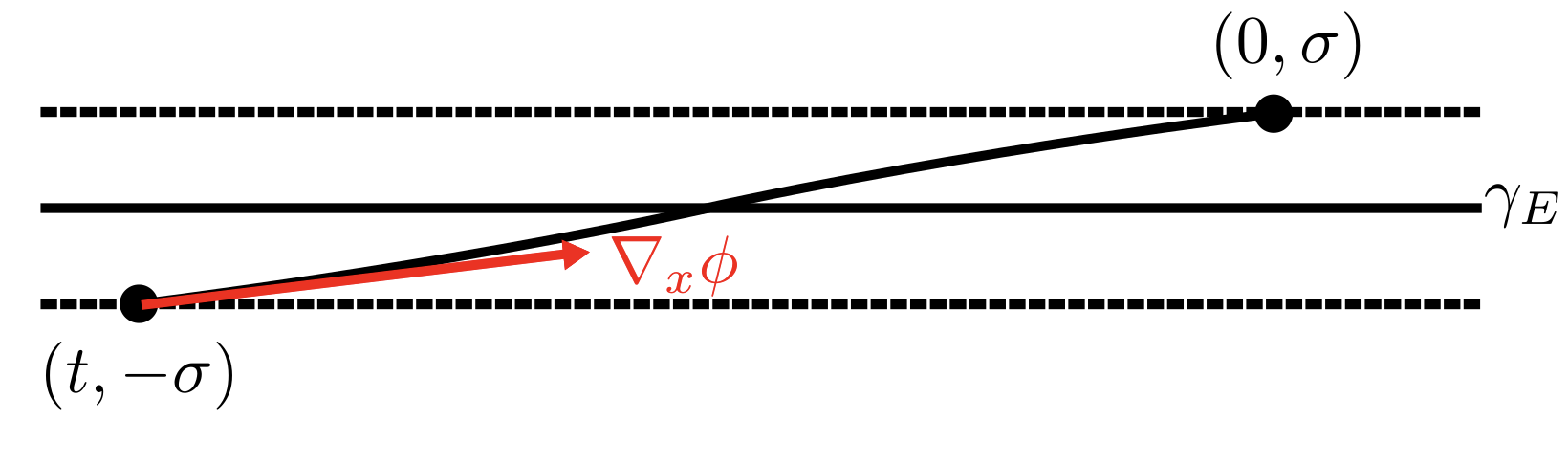}
\centering
\caption{Bicharacteristic curve joining $(t,-\sigma)$ to $(0,\sigma)$ when $t \ll -\sigma$}
\label{tllsigma}
\end{figure}

\myindent When $t$ becomes comparable to $\sigma$ and as it approaches $t = 0$ the curve becomes more and more vertical. Indeed, for $t \sim - \sigma$, the picture is as in Figure \ref{tsimsigma},

\begin{figure}[h]
\includegraphics[scale = 0.5]{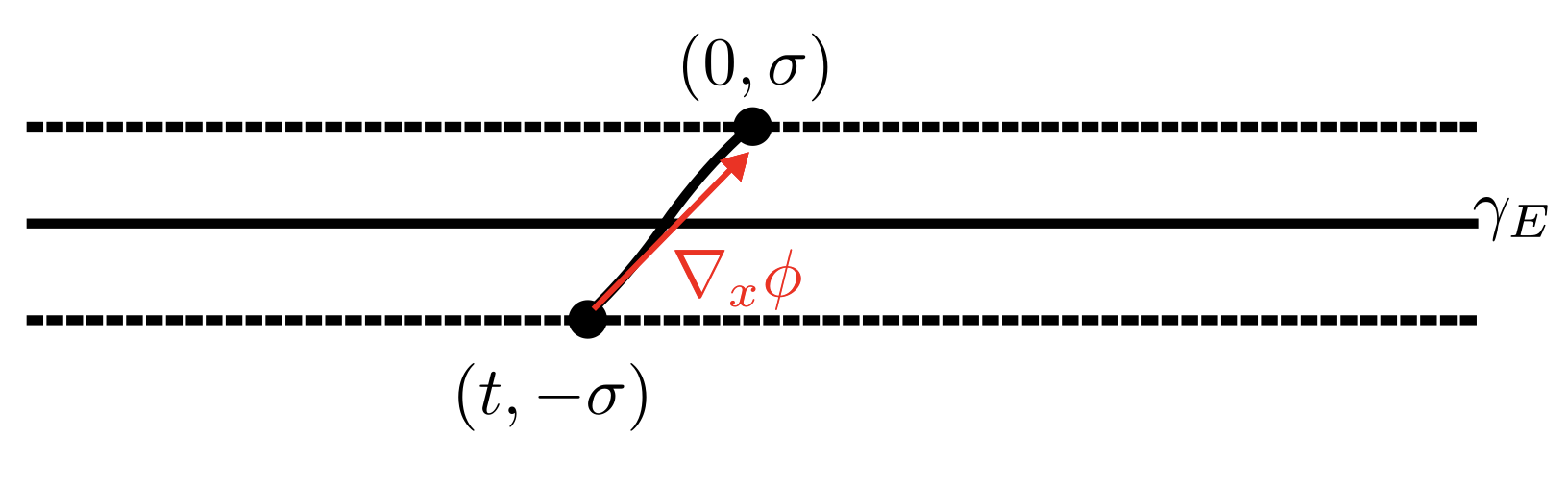}
\centering
\caption{Bicharacteristic curve joining $(t,-\sigma)$ to $(0,\sigma)$ when $t \sim -\sigma$}
\label{tsimsigma}
\end{figure}

while for $t= 0$, the situation is as in Figure \ref{teq0Figure}, since the meridian is the only bicharacteristic curve joining $(0,-\sigma)$ to $(0,\sigma)$. 

\begin{figure}[h]
\includegraphics[scale = 0.5]{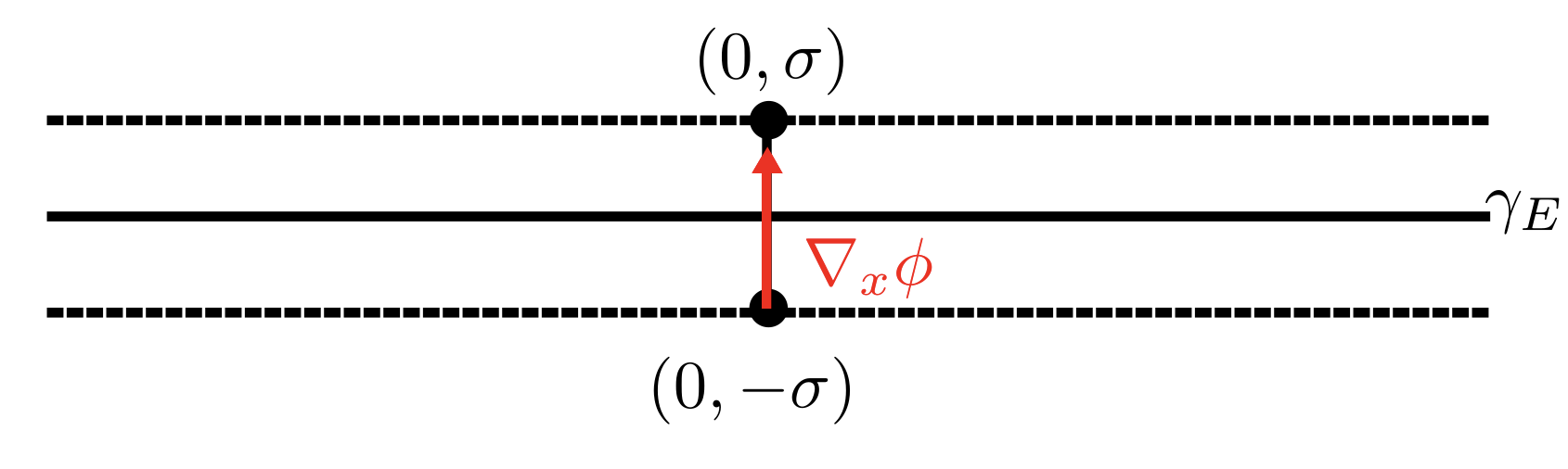}
\centering
\caption{Bicharacteristic curve joining $(0,-\sigma)$ to $(0,\sigma)$}
\label{teq0Figure}
\end{figure}

\myindent Finally, for nonnegative $t$, the same reasoning applies with an inversion of all signs. Ultimately, the zero set of $\nabla_{w,r}\Psi_0$ can be visualized, in the 3D space $(s,t,w)$, as a disjoint reunion of curves, see Figure \ref{txistructurenear0}

\myindent Still following the approach presented in Paragraph \ref{subsubsec33Micro}, we know moreover that we can view the equation
\begin{equation}
    \nabla_{s,t}\Psi_0(\sigma,u,s,t,w,r) = 0
\end{equation}
as a \textit{correspondence} equation, fixing the value of $u$ and $r$ once $(\sigma,s,t,w)$ are given.

\myindent Thanks to this correspondence, in order to visualize $\mathcal{O}_{\sigma}$, we may project it on the $(u,t)$ space and obtain, for $\sigma > 0$, Figure \ref{antipodut}.

\myindent In particular, one can prove that $\mathcal{O}_{\sigma}$ is always the disjoint reunion of two curves, which can be parameterized by $u$.

\begin{lemma}
    Let $\sigma > 0$. The set $\mathcal{O}_{\sigma}$ of zeros of $\nabla_{s,t,w,r}\Psi_0$ is the disjoint reunion of two smooth curves parameterized by $u$
    \begin{equation}
        \mathcal{E}^{\pm} := \left\{(u,s_{\pm}(\sigma,u), t(\sigma,u), w_{\pm}(\sigma,u), r_{\pm}(\sigma,u)) \qquad u \in [u_{\pi}(\sigma) + \sigma^2 u_1(\sigma), u_0(\sigma) - \sigma^2 u_1(\sigma)]\right\},
    \end{equation}
    where $u_1$ is a smooth positive function of $\sigma$, which doesn't vanish at $\sigma = 0$, and where $s_+ (\sigma,u) = -s_-(\sigma,u) \gtrsim \sigma > 0$. In particular, for all $u \in [u_{\pi}(\sigma) + \sigma^2 u_1(\sigma), u_0(\sigma) - \sigma^2 u_1(\sigma)]$, 
    \begin{equation}
        \Psi_u : (s,t,w,r) \in \{\pm s > 0\} \mapsto \Psi_0(u,s,t,w,r)
    \end{equation}
    has a unique smooth stationary point $(s_{\pm}(\sigma,u),t(\sigma,u), w_{\pm}(\sigma,u), r_{\pm}(\sigma,u))$.
\end{lemma}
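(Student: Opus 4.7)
The plan is to mimic the strategy used in Sections \ref{subsec2HormPhase} and \ref{subsec3HormPhase} for the Hörmander case, but adapting it to the fact that the endpoints $(t,-\sigma)$ and $(0,\sigma)$ lie on two different parallels. The key qualitative difference from the Hörmander case is that, for $\sigma>0$ fixed, every pair $(t,w)$ in the relevant range lies in a regime where there is exactly one (unoriented) bicharacteristic curve joining $(t,-\sigma)$ and $(0,\sigma)$, rather than a full circle of directions degenerating at $t=0$. Hence there is no analogue of the circle $\mathcal{C}_{\sigma}$, only two branches $\mathcal{E}^{\pm}$ corresponding to the two possible orientations. The analysis will proceed in three steps.

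First, I would solve the geometric equation $\nabla_{w,r}\Psi_0=0$, which, as explained in Paragraph \ref{subsubsec33Micro}, is equivalent to asking that there exist a bicharacteristic curve of $q_1$ of length $s$ joining $(t,-\sigma)$ to $(0,\sigma)$ and having direction $\nabla_x\varphi((t,-\sigma),(0,\sigma),w)$. For $\sigma>0$, the non-intersection of bicharacteristic curves (Hypothesis \ref{nonintersectHyp}) guarantees, together with the explicit Hamiltonian flow description of Lemma \ref{explicitbicharac}, that for each admissible $t$ there is exactly one such curve, giving rise to two oriented solutions $(s_{\pm}(\sigma,t),w_{\pm}(\sigma,t))$ exchanged by the reflection symmetry $w\mapsto-w$, with $s_-=-s_+$. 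Smoothness in $(\sigma,t)$ follows from the implicit function theorem applied to $\nabla_{w,r}\Psi_0$, whose $(w,r)$-Jacobian is non-degenerate off the degenerate Hörmander limit by the twist hypothesis and Lemma \ref{strangewayofsaying}.

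Second, as in Paragraphs \ref{subsubsec21HormPhase}, \ref{subsubsec31HormPhase} and \ref{subsubsec12Bicharact}, I would apply the correspondence equation $\nabla_{s,t}\Psi_0=0$. By the argument in Paragraph \ref{subsubsec33Micro} (equation \eqref{eqmodulusxi} and surrounding discussion), this equation uniquely determines $r_{\pm}(\sigma,t)$ and $u_{\pm}(\sigma,t)$ from $(\sigma,s,t,w)$ via the identification of the intersection of the curve $\gamma_0$ with the half-line generated by $\bigl(q_1(\sigma,\nabla_x\varphi),q_2(\sigma,\nabla_x\varphi)\bigr)$. This produces two disjoint smooth curves in $(u,s,t,w,r)$-space, naturally parameterized by $t$, and exhausts $\mathcal{O}_{\sigma}$.

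Third, I would reparameterize by $u$ and establish the quantitative bounds. The key claim is that $t\mapsto u_{\pm}(\sigma,t)$ is a smooth diffeomorphism onto $\bigl[u_{\pi}(\sigma)+\sigma^2 u_1(\sigma),\,u_0(\sigma)-\sigma^2 u_1(\sigma)\bigr]$. The $\sigma^2$ scaling emerges exactly as in Corollary \ref{deftpmu}: when $\sigma\to 0^+$, the antipodal configuration degenerates continuously to the Hörmander configuration at $\sigma=0$, where $\mathcal{E}^{\pm}$ limits to the union of the circle $\mathcal{C}_0$ and the two exceptional branches $\mathcal{E}_{0,\alpha}$. At each endpoint of the $u$-interval, the same geometric argument given in Corollary \ref{deftpmu} (interpreting $u-u_{\sigma}(\alpha)$ as the cosine of the angle between $\nabla_x\varphi$ and the horizontal axis, and using $\dot{\sigma}\sim-\Sigma$, $\dot{\Sigma}\sim\sigma$ for the Hamiltonian flow near the equator) yields the quadratic vanishing $u_{\pm}(\sigma,t)-u_{\sigma}(\alpha)\asymp \sigma^2 F(\sigma,t)$ with $F$ smooth and non-vanishing at $\sigma=0$, which is exactly what gives the cushion $\sigma^2 u_1(\sigma)$ with $u_1(0)\neq 0$. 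Finally, the lower bound $s_+(\sigma,u)\gtrsim\sigma$ follows from the observation that any bicharacteristic curve of $q_1$ joining points on the parallels $\{\sigma'=-\sigma\}$ and $\{\sigma'=\sigma\}$ must have length at least the distance between these parallels, which is $\simeq\sigma$ uniformly.

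The main obstacle will be making the quantitative statement $u_1(0)\neq 0$ rigorous. This requires an expansion of $u_{\pm}(\sigma,t)$ to second order in $\sigma$ around the Hörmander limit, along the lines of the argument after \eqref{detA'0}: one must verify, via Taylor expansion using the explicit formulas \eqref{defsrhoualpha} adapted to the antipodal phase $\phi_{\pi}$, that the $\sigma^2$-coefficient of $u_{\pm}(\sigma,t)-u_{\sigma}(\alpha)$ evaluated at the endpoints does not vanish. By the same mechanism as in the Hörmander analysis, this will reduce to a non-vanishing statement on $\partial_{\Sigma\Sigma}q_1$ along the meridian, which is already established in the appendix (Lemma \ref{LemmapartSigSigq_1}).
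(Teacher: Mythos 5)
Your route is the same one the paper (only) sketches for this lemma: solve the geometric equation to get, for each $t$ in the window, the unique unoriented bicharacteristic joining $(t,-\sigma)$ to $(0,\sigma)$ together with its two orientations, use the correspondence equation to fix $r$ and $u$, reparameterize by $u$, obtain the cushion $\sigma^2u_1(\sigma)$ from the fact that at the ends of the fixed $t$-window the connecting direction makes an angle $\asymp\sigma$ with the horizontal, and get $|s_\pm|\gtrsim\sigma$ because the curve must cross from the parallel $\{-\sigma\}$ to $\{+\sigma\}$ and its $q_1$-time is comparable to geodesic arclength by Lemma \ref{explicitbicharac}. That skeleton is correct, but two of your justifications do not work as written. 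First, you cannot invoke the implicit function theorem for $\nabla_{w,r}\Psi_0$ on the grounds that its ``$(w,r)$-Jacobian is non-degenerate'': at every zero one has $\partial_w\varphi=0$, so the $(w,r)$-Hessian $\begin{pmatrix} r\partial_{ww}\varphi & \partial_w\varphi\\ \partial_w\varphi & 0\end{pmatrix}$ is degenerate there — the same wrinkle handled in Lemma \ref{newhorriblelemma}. The correct splitting is that $\partial_r\Psi_0=0$ fixes $s=-\varphi((t,-\sigma),(0,\sigma),w)$, and the implicit function theorem is applied to $w\mapsto\partial_w\varphi((t,-\sigma),(0,\sigma),w)$, whose non-degeneracy $\partial_{ww}\varphi\neq0$ at the two solutions comes from the separation of the endpoints (it is of size comparable to the distance, hence $\gtrsim\sigma$), not from the twist hypothesis or Lemma \ref{strangewayofsaying}, which concern $h''$ and $\partial_wq_2$ and are irrelevant here. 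This is not cosmetic: this is exactly the quantity that degenerates as $\sigma\to0$ and forces the separate near-equator analysis later in Section \ref{secAntipod}.

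Second, the two oriented solutions at a fixed $t$ are exchanged by time reversal $\xi\mapsto-\xi$, i.e.\ $(s,w)\mapsto(-s,w+\pi)$, not by the reflection $w\mapsto-w$. Since $q_2(\sigma,-\xi)=-q_2(\sigma,\xi)$ while $q_1$ is even, this reversal mirrors $h(u)$ across the horizontal axis, so at fixed $t$ the two zeros lie over mirror-image values of $u$, not the same one. To see the two branches as graphs over one and the same $u$-interval, as the lemma asserts, you must combine the reversal with the meridian reflection $t\mapsto-t$ (under which $w\mapsto\pi-w$ and $|s_+|$ is even in $t$); it is this composite symmetry, and not $w\mapsto-w$ at fixed $t$, that produces the claimed structure, and getting it right matters for the bookkeeping of which $(s,t,w,r)$ sits over which $u$ in the subsequent Hessian and remaining-phase computations.
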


\myindent Now, the analysis is very different depending on whether $\sigma$ is large or small, compared, of course, to (a power of) $M$. These two cases are respectively extensions of the analysis of Section \ref{secBicharact} and \ref{secHormPhase}. Indeed, we have already seen in Paragraph \ref{subsubsec33Micro} that this case is a transition case between the main models treated in the said sections. We will derive the precise threshold on $\sigma$ in the analysis.

\subsubsection{The case where $\sigma$ is small : back to the H structure}\label{subsubsec13Antipod}

\myindent When $\sigma$ is small, the curves $u \mapsto t_{\pm}(\sigma,u)$ look like Figure \ref{antipodut}, which is very close to Figure \ref{Hormanderutateq}.

\myindent Thus, the analysis is an extension of the analysis in the case $\bullet = (H)$ presented in Section \ref{secHormPhase}. In particular, similarly to what we presented in that section, there are three zones of interest, which are represented on Figure \ref{ThreezonesFig} with a color code.

\begin{figure}[h]
\includegraphics[scale = 0.5]{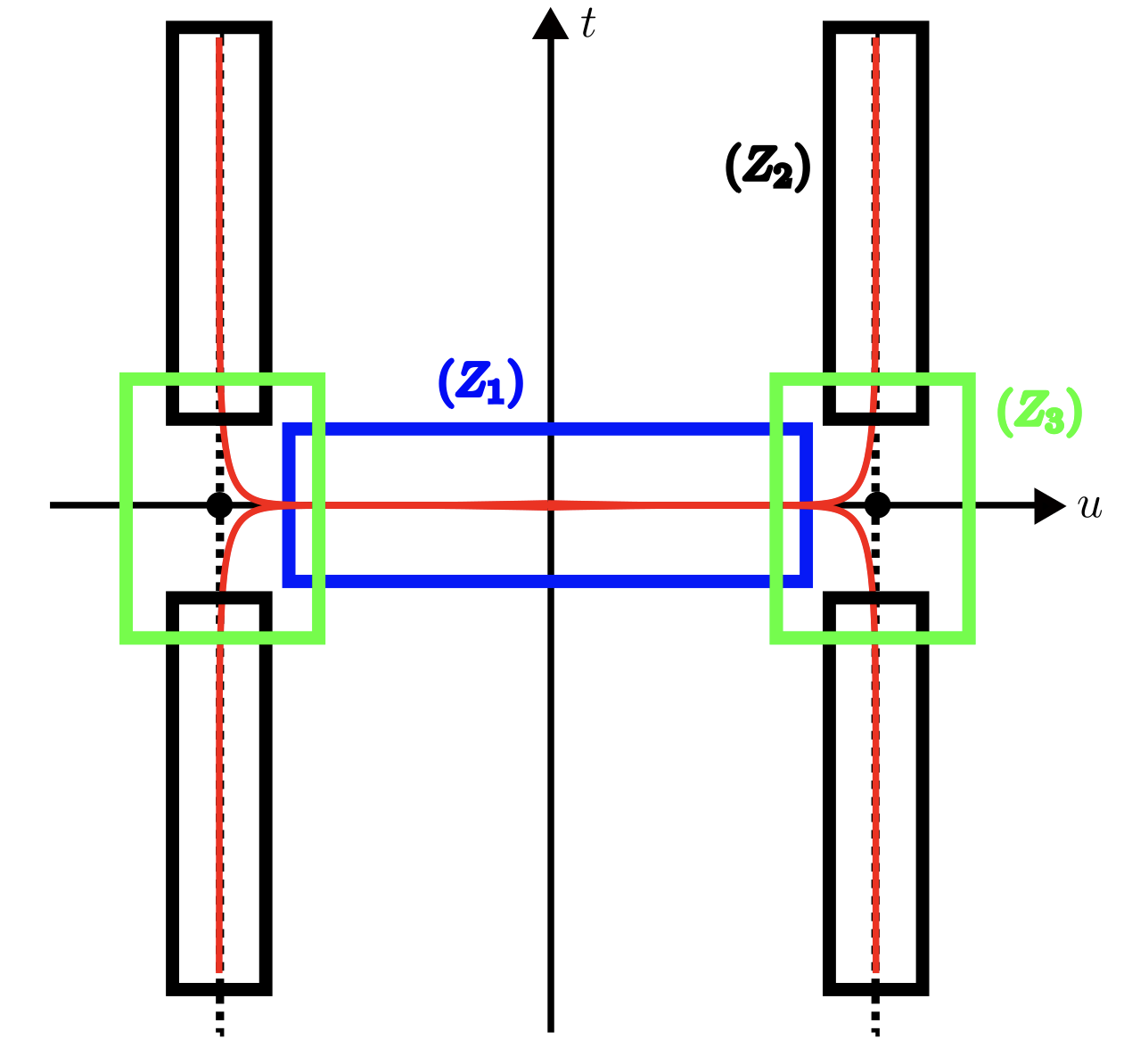}
\centering
\caption{The three zones of interest of $\mathcal{O}_{\sigma}$}
\label{ThreezonesFig}
\end{figure}

\myindent In the first zone, which is roughly represented by the blue part $(Z_1)$ in Figure \ref{ThreezonesFig}, the curve is very flat, which we will quantify by proving that the $(s,t,w,r)$ hessian of $\Psi_0$ is nondegenerate. Hence, we will be able to apply Theorem \ref{mixedVdCABZ} by isolating the $u$ variable, with exactly the same analysis than around the circle $\mathcal{C}_{\sigma}$ defined in Section \ref{subsec2HormPhase}.

\myindent In the second zone $(Z_2)$, which is represented by the black part in Figure \ref{ThreezonesFig}, the curve is nearly vertical. As we will see, there actually holds that the determinant of $\nabla_{s,t,w,r}^2\Psi_0$ is very small around that zone. Hence, we will be able to prove that the full $(u,s,t,w,r)$ hessian of $\Psi_0$ is nondegenerate around this zone, with the same analysis than when dealing with the branches $\mathcal{E}_{\alpha}$ for $\sigma$ defined in Section \ref{subsec3CdV}, in the case $\sigma$ close to zero, i.e. Lemma \ref{FullHessiannearequator}.

\myindent Lastly, for the third zone $(Z_3)$, which is represented by the green part in Figure \ref{ThreezonesFig}, we have to use to the same analysis than in Section \ref{subsec4HormPhase}. Namely, we need to isolate the $w$ variable. Since this zone is the most delicate, and since its size constraints the definition of the other zone, we start by detailing the analysis in $(Z_3)$.

\quad

\myindent The main point is that we can formulate a version of the dichotomy between Proposition \ref{delicatecase} and Lemma \ref{easycase}, which is adapted to this context. While the following proposition may seem exactly the same than the combination of Proposition \ref{delicatecase} and Lemma \ref{easycase}, it is very important to observe that, in Proposition \ref{delicatecase} and Lemma \ref{easycase}, $\Psi$ stands for the \textit{Hörmander parametrix}, while in the following proposition it stands for the \textit{antipodal Hörmander parametrix}, both defined in Section \ref{subsec2Micro}. We recall that $u_0(\alpha), r_0(\alpha)$ are defined by Lemma \ref{defcirclestatpoints}.

\begin{proposition}
    Let $\alpha = 0$ or $\alpha = \pi$. There exists a constant $c> 0$ depending on $\mathcal{S},\eps,\mathfrak{Q}$, such that, for all nonzero $(A,B)$, if
    \begin{equation}\label{smallsigmaantipod}
        \sigma \leq cM^{-1},
    \end{equation}
    then the following dichotomy holds.

    \myindent i. Either
    \begin{equation}
        |\scal{h'(u_{0}(\alpha))}{(A + \pi,B + \pi)}| \lesssim M^{-1}.
    \end{equation}

    \myindent In that case, define
    \begin{equation}
    \begin{split}
    	&I := \left\{w \in S^1 \ \text{such that} \ |w - \alpha| \lesssim M^{-1}\right\} \\
	    &U := \left\{ (u,s,t,r) \in (\R/\ell\Z) \times 
	2\tilde{\mathcal{R}}_0^{(\pi)} \times \R_+ \ \text{such that} \ \begin{cases}
	|u - u_0(\alpha)| \lesssim M^{-2} \\
	|(s,t,r) - (0,0, r_0(\alpha))| \lesssim M^{-1} \end{cases} \right\}.
	\end{split}
	\end{equation}
	
	\myindent Then, for all $w \in I$, the function
    \begin{equation}
        \Psi_w : (u,s,t,r)\in U \mapsto \Psi(\sigma,A,B,u,s,t,w,r)
    \end{equation}
    has a unique stationary point (i.e. $\nabla_{u,s,t,r}\Psi_w$ has a unique zero)
    \begin{equation}
        Z_{\sigma,A,B}(w) := (u,s,t,r)(w) \in U,
    \end{equation}
    at which its Hessian 
    \begin{equation}
    	H(w) := \nabla_{u,s,t,r}^2 \Psi(\sigma,A,B,u(w),s(w),t(w),w,r(w))
\end{equation}
	is nondegenerate. Moreover, the following estimates hold : first, using the Notations \ref{defM1+d} and \ref{defHandN}, for all integers $0\leq k \leq \ell$,
    \begin{equation}\label{controlustrhohess}
        \begin{split}
           	&\mathcal{M}_{k,l}^{(u,s,t,r)}(\Psi) \lesssim M \\
           	&\mathcal{D}(\Psi) \gtrsim 1\\
		    &\mathcal{N}(\Psi) \lesssim M.
        \end{split}
    \end{equation}

    \myindent Second, for all $w \in I$, for all $(u,s,t,r) \in U$, using the notation \eqref{definitionofM}, there holds
    \begin{equation}
    	\left\| M(w,(u,s,t,r)) - H(w) \right\| \lesssim \frac{1}{2} \|H(w)^{-1}\|^{-1} .
	\end{equation}
    
   \myindent Third, using the definition of the 1D remaining phase function introduced in \ref{defphi1D}, there holds 
    \begin{equation}\label{4thderivativesPsi1DdelicatecaseAntipod}
        \forall w \in I \qquad \left| (\partial_w)^4 \Psi^{1D} (\sigma,A,B,w) \right| \gtrsim M.
    \end{equation}
    
    \myindent ii. Or
    \begin{equation}
        |\scal{h'(u_0(\alpha))}{(A + \pi,B + \pi)}| \gtrsim  M^{-1}.
    \end{equation}

    \myindent In that case
    \begin{multline}
        \text{For all} \ (\sigma,u,s,t,w,r) \ \text{such that} \begin{cases}
            &|u - u_0(\alpha)| \lesssim M^{-2} \\
            &|(s,t)| \lesssim  M^{-1}
        \end{cases} \qquad \text{there holds}\\
        |\partial_u \Psi(\sigma,A,B,u,s,t,w,r)| \gtrsim  M^{-1}.
    \end{multline}
\end{proposition}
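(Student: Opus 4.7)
The plan is to adapt the entire machinery developed in Section \ref{subsec4HormPhase} for the ordinary Hörmander branching points $P_{\alpha}$. The key observation is structural: by the definition of the antipodal Hörmander phase $\phi_{\pi}(s,x,y,\xi) = \phi_{H}(s - \pi, x, \bar{y}, \xi)$, the local phase function $\Psi_0$ in the present case is, up to the constant shift $(s,t) \mapsto (s + \pi, t + \pi)$, built from $\varphi((t, -\sigma),(0,\sigma),w)$, which at $\sigma = 0$ coincides identically with the ordinary Hörmander phase at the equator. Therefore, at the candidate branching point $P_{\alpha} := (u_{0}(\alpha), 0, 0, \alpha, r_{0}(\alpha))$ one computes directly $\partial_{u}\Psi(P_{\alpha}) = -\scal{h'(u_{0}(\alpha))}{(A + \pi, B + \pi)}$, which is exactly the quantity governing the dichotomy. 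The smallness assumption $\sigma \lesssim M^{-1}$ ensures that the true branches $\mathcal{E}^{\pm}$ of $(s,t,w,r)$ stationary points, which have $|s_{\pm}| \gtrsim \sigma$, sit inside the small neighborhood $U$ of width $M^{-1}$, so the geometric picture of Section \ref{subsec4HormPhase} persists unchanged.

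The first technical step is to establish the analog of Lemma \ref{nondegenerateustrhohess}: on the neighborhood of $P_{\alpha}$ of size $M^{-1}$ in $(s,t,w,r)$ and $M^{-2}$ in $u$, the partial Hessian $\nabla_{u,s,t,r}^{2}\Psi$ is non-degenerate with $|\det| \gtrsim 1$ and inverse norm $\lesssim M$. This follows from the same $2\times 2$ sub-determinant computation, using that $\partial_{\theta\theta}\varphi((0,0),(0,0),\alpha) = 0$ (from Lemma \ref{miracleequation} at $\sigma = 0$) together with the fact that $\partial_{\theta\theta}\varphi((t, -\sigma),(0,\sigma),w)$ is $O(|\sigma| + |t| + |w - \alpha|^{2})$, which stays much smaller than $M^{-1}$ precisely because $\sigma \lesssim M^{-1}$. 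The same smallness then gives the anisotropic Lipschitz estimate (the analog of Lemma \ref{scalingofnieghb}): the third derivatives in directions other than $(u,u,u)$ remain independent of $M$, while $|(\partial_{u})^{3}\Psi| \lesssim M$, so the scaling $M^{-2}$ in $u$ and $M^{-1}$ in $(s,t,r)$ makes the Hessian oscillations fit inside $\frac{1}{2}\|H(w)^{-1}\|^{-1}$.

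Next, for case (i), the existence and uniqueness of $Z_{\sigma,A,B}(w)$ follow from the contraction mapping argument of Lemma \ref{exunicofzero}: define $F_{w}(y) := -\mathcal{H}(w)^{-1}\nabla_{y}\phi(w,0) - (\mathcal{H}(w)^{-1}\mathcal{M}(w,y) - I_{4})y$, check that $\|\nabla_{y}\phi(w,0)\| \lesssim c_{1}M^{-1}$ in the $u$-component (via the hypothesis on $\scal{h'(u_{0}(\alpha))}{(A+\pi,B+\pi)}$) together with $O(M^{-2})$ in the $t$-component coming from the smoothness of $\partial_{\theta}\varphi$ in $w$ near $\alpha$, and verify the invariance $F_{w}(U) \subset U$ with the same nested choice of small constants. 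The 1D remaining phase estimate \eqref{4thderivativesPsi1DdelicatecaseAntipod} is obtained by repeating the computation after \eqref{u''rho''s''t''0}: the vanishing of $\partial_{w}\Psi^{1D}, (\partial_{w})^{2}\Psi^{1D}, (\partial_{w})^{3}\Psi^{1D}$ at the critical $w$ comes from the same algebraic identities, and the non-vanishing of $(\partial_{w})^{4}\Psi^{1D}$ reduces to $t''(0) \partial_{ww\theta}\varphi \simeq M$, with $\partial_{ww\theta}\varphi((0,0),(0,0),\alpha) \neq 0$ by Lemma \ref{thirdthetawderivaticevarphi}. For case (ii), the differential inequality $|\partial_{u}\Psi - \partial_{u}\Psi(P_{\alpha})| \lesssim M|u - u_{0}(\alpha)| + |(s,t)| \lesssim c M^{-1}$ on the neighborhood of sizes $M^{-2}$ and $M^{-1}$ gives the lower bound $|\partial_{u}\Psi| \gtrsim M^{-1}$, exactly as in Lemma \ref{easycase}.

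The main obstacle I expect is a careful bookkeeping of how the antipodal sign flip in the second coordinate of $\varphi((t,-\sigma),(0,\sigma),w)$ interacts with the Taylor expansions of Lemma \ref{thirdthetawderivaticevarphi} and Lemma \ref{miracleequation}, which were stated at $((0,\sigma),(0,\sigma),w)$ rather than at the antipodal configuration. The antipodal refocalisation (Lemma \ref{antipodrefocus}) together with the symmetry of $\mathcal{S}$ should guarantee that the relevant $\sigma = 0$ identities transfer (one essentially computes at the same cotangent point via $\Phi_{\pi}^{q_{1}}$), but extracting the sharp size $\sigma \lesssim M^{-1}$ of the allowed regime — rather than say $\sigma \lesssim M^{-2}$, which would be wasteful — requires verifying that the only terms of order $\sigma M$ in $\nabla_{u,s,t,r}^{2}\Psi - \nabla_{u,s,t,r}^{2}\Psi(P_{\alpha})$ are indeed absorbed by the second-order smallness coming from the parity of the profile.
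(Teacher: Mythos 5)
Your proposal follows essentially the same route as the paper: the paper's own proof is exactly a sketch that repeats Proposition \ref{delicatecase} and Lemma \ref{easycase} with $\sigma$ added as a tracked variable, using the same decomposition $\det(\nabla^2_{u,s,t,r}\Psi) = D^2 + rKe$ with $e(0,0,0)=\partial_{\theta\theta}\varphi((0,0),(0,0),0)=0$, the same anisotropic $M^{-2}/M^{-1}$ neighborhood, the same contraction-mapping argument, and the same fourth-derivative computation via $\partial_{\theta ww}\varphi\neq 0$, while case (ii) reduces to the fact that $\partial_u\Psi$ does not see $\sigma$. The one point where your justification is imprecise is the $O(M^{-2})$ smallness of the $t$-component of $\nabla_{u,s,t,r}\Psi$ at the base point: smoothness of $\partial_\theta\varphi$ in $w$ only controls the $w$-dependence, whereas the a priori $O(\sigma)=O(M^{-1})$ correction coming from evaluating $\varphi$ at $((0,-\sigma),(0,\sigma),\alpha)$ must be upgraded to $O(\sigma^2)$ using the equatorial identity $\partial_{\theta\sigma}\varphi((0,0),(0,0),0)=0$, i.e.\ \eqref{secondmiracle} of Lemma \ref{miracleequation}; this is exactly the "only subtle point" the paper singles out, and it is what fixes the threshold at $\sigma\lesssim M^{-1}$, so your closing "parity" remark should be made concrete through that identity rather than left as an expected obstacle.
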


\begin{proof} We won't detail all the proof, but rather the main differences with the proof of Proposition \ref{delicatecase} and of Lemma \ref{easycase}. For the second case, the point is that
    \begin{equation}
        \begin{split}
            (u,s,t) \mapsto &\partial_u \Psi(\sigma,A,B,u,s,t) \\
            &= -\scal{h'(u)}{(A + \pi, B + \pi) + (s,t)}
        \end{split}
    \end{equation}
    is a smooth function which is \textit{independent} of $\sigma$. Hence, the same argument as for the proof of Lemma \ref{easycase} holds (in particular, the scaling in terms of powers of $M$ for $u$ and $(s,t)$ is the same).

    \myindent Hence, the only part to change is the first case. One can actually follow the same steps than for the proof of Proposition \ref{delicatecase}, adding $\sigma$ as a variable and tracking its contribution in terms of powers of $M$.
    
    \myindent First, for example, there holds
    \begin{equation}
        \begin{split}
            &\det(\nabla_{u,s,t,r}^2 \Psi(\sigma,A,B,u,s,t,w,r)) \\
            &= \begin{vmatrix}
                -\scal{h''(u)}{(A+\pi +s,B+\pi + t)} & -h'(u)_1 & -h'(u)_2 & 0 \\
                -h'(u)_1 & 0 & 0 & 1 \\
                -h'(u)_2 & 0 & r \partial_{\theta \theta} \varphi((t,-\sigma,),(0,\sigma),w) & \partial_{\theta} \varphi \\
                0 & 1 & \partial_{\theta}\varphi((t,-\sigma,),(0,\sigma),w) & 0 
            \end{vmatrix} \\
            &= \begin{vmatrix}
                -h'(u)_1 & 1 \\
                -h'(u)_2 & \partial_{\theta}\varphi ((t,-\sigma,),(0,\sigma),w)
            \end{vmatrix}^2 + r \left(\scal{h''(u)}{(A + \pi + s, B + \pi + t)}\right) (\partial_{\theta\theta}\phi)^2 \\
            &:= D(\sigma,t,u,w) + r K(A,B,u,s,t) e(\sigma,t,w),
        \end{split}
    \end{equation}
    where we use similar notations than in the proof of Lemma \ref{nondegenerateustrhohess}, while remaining cautious, again, that there is a difference in the definition of $\Psi$ (and more precisely $\varphi$).

    \myindent Now, there holds that
    \begin{equation}\label{firststepprf81}
        D(0,0,u_0,0) \neq 0,
    \end{equation}
    and that
    \begin{equation}\label{secondstepprf81}
        |D(\sigma, t, u,w) - D(0,0,u_0(\alpha),0)| \lesssim |(\sigma, t,u,w) -(0,0,u_0(\alpha),0)|.
    \end{equation}

    \myindent Similarly, there holds that
    \begin{equation}
        e(0,0,0) = \partial_{\theta\theta}\varphi((0,0),(0,0),0)=0,
    \end{equation}
    thanks to Lemma \ref{miracleequation}. In particular, the following scaling holds (remember that $r$ is bounded)
    \begin{equation}\label{thrdstepprf81}
        \left|r K(A,B,u,s,t)e(\sigma,t,w) \right| \lesssim M|(\sigma,u,s,t,w) - (0,u_0(\alpha),0,0,0)|.
    \end{equation}

    \myindent Ultimately, \eqref{firststepprf81}, \eqref{secondstepprf81},\eqref{thrdstepprf81} yield the following scaling 
    \begin{equation}
        \forall \begin{cases}
            |\sigma| \lesssim M^{-1} \\
            |u - u_0(\alpha)| \lesssim M^{-1} \\
            |(s,t,w)| \lesssim M^{-1} 
        \end{cases}, \qquad |\det(\nabla_{u,s,t,r}^2 \Psi(\sigma,A,B,u,s,t,w,r)) | \gtrsim 1.
    \end{equation}

    \myindent Next, an equivalent of Lemma \ref{scalingofnieghb} holds with exactly the same proof, and the scaling
    \begin{equation}
        \begin{cases}
            |(\sigma,s,t,w,r) - (0,0,0,\alpha,r_{\sigma}(\alpha))| &\lesssim  M^{-1} \\
            |u - u_0(\alpha)| \lesssim M^{-2}
        \end{cases}.
    \end{equation}

    \myindent Thirdly, the equivalent of Lemma \ref{exunicofzero} still holds. Indeed, the only subtle point is to prove that
    \begin{equation}
        \nabla_{u,s,t,r} \Psi(\sigma,A,B,u,s,t,w,r) = \begin{pmatrix}
            O(M^{-1}) \\
            0\\
            O(M^{-2}) \\
            0
        \end{pmatrix}.
    \end{equation}
    
    \myindent This follows from the fact that, thanks to Lemma \ref{miracleequation},
    \begin{equation}
        \partial_{\theta \sigma} \varphi ((0,0),(0,0),0) = 0.
    \end{equation}
    
    \myindent With similar techniques, one can check that \eqref{4thderivativesPsi1DdelicatecaseAntipod} still holds when $|\sigma| \lesssim M^{-1}$.
\end{proof}

\quad

\myindent Now, we turn to the first zone $(Z_1)$ in Figure \ref{ThreezonesFig}, that is the case where the curve is very flat. We give the following lemma, whose proof is very similar to that of Lemma \ref{explicitstrhohessonCsig} and Corollary \ref{norminversehessCsig} for the claim on the Hessian. Regarding the claim on the remaining phase function, one needs only observe that it is \textit{independent} of $\sigma$, hence we have already proved that claim (see the proof of Lemma \ref{quantitativestimateICx}).

\begin{lemma}
    Assume that condition \eqref{smallsigmaantipod} holds. Up to taking $c$ smaller, let
    \begin{equation}
        \mathcal{I} := \left\{u \in (u_0(\pi), u_0(0)) \ \text{such that} \ |u - u_0(\alpha)| \gtrsim M^{-2}, \ \alpha = 0, \pi\right\}.
    \end{equation}

    \myindent Then, on the fraction of the curves $\mathcal{E}^{\pm}$ defined by
    \begin{equation}
        \mathcal{C}^{\pm} := \{(u,s,t,r) \in \mathcal{E}^{\pm} \qquad \text{such that} \qquad u \in \mathcal{I}\},
    \end{equation}
    there holds on the one hand that the $(s,t,w,r)$ Hessian of $\Psi$, namely $\nabla_{s,t,w,r}^2\Psi$, is invertible, and the following estimates (see Notation \eqref{defM1+d} and Definition \ref{defHandN})
    \begin{equation}
        \begin{split}
            &\mathcal{M}_{3,3}^{(s,t,w,r)}(\Psi) \lesssim 1 \\
            &\mathcal{D}(\Psi) \gtrsim M^{-2}\\
            &\mathcal{N}(\Psi) \lesssim M.
        \end{split}
    \end{equation}

    \myindent Finally, the remaining phase function
    \begin{equation}
        \Psi^{1D} : u \in \mathcal{I} \mapsto \Psi(\sigma,A,B,u,s_{\pm}(\sigma,u),t(\sigma,u),w_{\pm}(\sigma,u),r_{\pm}(\sigma,u))
    \end{equation}
     satisfies the Property $(VdC)_3$ (see Definition \ref{VdCN}) with constants $c_1 M$, $c_2M$, for some universal $c_1,c_2 > 0$.
\end{lemma}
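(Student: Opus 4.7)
The plan is to mirror the argument of Lemma \ref{stwrhohessofPsionCx}, Corollary \ref{norminversehessCsig}, and the 1D analysis in the proof of Lemma \ref{quantitativestimateICx}, exploiting the crucial fact that the antipodal Hörmander phase $\phi_\pi(s,x,y,\xi) = \phi_H(s-\pi,x,\bar{y},\xi)$ reduces, at $\sigma = 0$, exactly to the Hörmander phase evaluated on the equator. Writing $\Psi_0 = -\langle h(u), (s+\pi, t+\pi)\rangle + r(s + \varphi((t,-\sigma),(0,\sigma),w))$, the bound $\mathcal{M}_{3,3}^{(s,t,w,r)}(\Psi) \lesssim 1$ is immediate from the explicit form (only third derivatives of $\varphi$ enter, and $r$ is restricted to a compact annulus), since the third derivatives in $(s,t,w,r)$ do not feel $(A,B)$.

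For the Hessian, I would compute $\nabla_{s,t,w,r}^2 \Psi_0$ along the curves $\mathcal{E}^{\pm}$ exactly as in \eqref{explicitstrhohessonCsig}. At $\sigma = 0$, by the identification $\phi_\pi|_{\sigma=0} = \phi_H|_{\sigma=0}$ (after the reflection in the $y$-variable), the calculation is identical and produces $\det\nabla^2_{s,t,w,r}\Psi_0 = r_\pm^2(\partial_w q_2(0,w_\pm,1))^2 \simeq \sin^2(w_\pm)$ via Lemma \ref{strangewayofsaying}. For $\sigma \neq 0$ but $\sigma \leq c M^{-1}$, the perturbative corrections to the Hessian are smoothly of order $O(\sigma) \leq O(M^{-1})$. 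On $\mathcal{I}$, the bound $|u - u_0(\alpha)| \gtrsim M^{-2}$ together with Lemma \ref{locinverseuw} yields $|\sin(w_\pm(\sigma,u))| \gtrsim M^{-1}$, so the unperturbed term dominates provided $c$ is chosen small enough. This gives $\mathcal{D}(\Psi) \gtrsim M^{-2}$. Exactly as in Corollary \ref{norminversehessCsig}, all cofactors of the Hessian are $O(|\sin(w_\pm)|)$, so the norm of the inverse Hessian is $\lesssim 1/|\sin(w_\pm)| \lesssim M$, giving $\mathcal{N}(\Psi) \lesssim M$.

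For the remaining 1D phase function, since the stationary conditions $\nabla_{s,t,w,r}\Psi_0 = 0$ determine $(s_\pm,t,w_\pm,r_\pm)$ as functions of $(\sigma,u)$, we have
\[
\partial_u \Psi^{1D}(u) = -\langle h'(u), (s_\pm(\sigma,u)+\pi, t(\sigma,u)+\pi) + (A,B)\rangle.
\]
On $\mathcal{I}$, $(s_\pm+\pi, t+\pi)$ is a smooth $O(1)$ function independent of $(A,B)$, while $(A,B)$ has magnitude $M \gg 1$. The analysis then proceeds exactly as in the proof of Lemma \ref{quantitativestimateICx}: the twist Hypothesis \ref{twistV2} gives that $h'(u)$ and $h''(u)$ are nonzero and orthogonal, hence one can partition $\mathcal{I}$ into finitely many sub-intervals on each of which either $|\langle h'(u),(A,B)\rangle| \gtrsim M$ together with $|\partial_u^2 \Psi^{1D}| \lesssim M$, or $|\langle h''(u),(A,B)\rangle| \gtrsim M$, yielding Property $(VdC)_2$ with constants $c_1 M, c_2 M$ and a fortiori $(VdC)_3$.

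The main obstacle is tracking the perturbation in $\sigma$ carefully: one must check that the $O(\sigma)$ corrections to the Hessian entries remain strictly below the $\sin^2(w_\pm) \gtrsim M^{-2}$ baseline uniformly on $\mathcal{I}$, which forces the choice $\sigma \leq c M^{-1}$ with $c$ sufficiently small. This is precisely the quantitative content of the phrase ``up to taking $c$ smaller'' appearing in the statement, and it is what makes the analysis at small $\sigma$ an extension, rather than a verbatim repetition, of the $\bullet = (H)$ analysis around $\mathcal{C}_\sigma$.
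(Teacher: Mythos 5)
Your overall route — redo the circle-case Hessian computation \eqref{explicitstrhohessonCsig} and Corollary \ref{norminversehessCsig} along $\mathcal{E}^{\pm}$, and run the 1D argument of Lemma \ref{quantitativestimateICx} — is the route the paper itself points to, so the architecture is right. But the step you yourself single out as the main obstacle is resolved incorrectly. You claim that the $\sigma$-corrections to the Hessian entries are $O(\sigma)\leq O(cM^{-1})$ and that they "remain strictly below the $\sin^2(w_\pm)\gtrsim M^{-2}$ baseline" once $c$ is small. That comparison is false for large $M$: $cM^{-1}\geq M^{-2}$ as soon as $M\geq 1/c$, so no choice of a fixed small $c$ makes a generic $O(\sigma)$ error smaller than an $M^{-2}$ quantity. (The same naive comparison is fine for $\mathcal{N}(\Psi)$, where the relevant scale is $|\sin w_\pm|\gtrsim M^{-1}$, but not for $\mathcal{D}(\Psi)$.) To get $\mathcal{D}(\Psi)\gtrsim M^{-2}$ you cannot treat the perturbation as an unstructured $O(\sigma)$ term: you must actually compute the determinant along $\mathcal{E}^{\pm}$, namely $r^2\left|\partial_{\theta\theta}\varphi\,\partial_{ww}\varphi-(\partial_{\theta w}\varphi)^2\right|$ with $\varphi$ taken at $((t,-\sigma),(0,\sigma),w_\pm)$, and use that the off-diagonal term is $\simeq \partial_w q_2(\sigma,w_\pm,1)\simeq \sin(w_\pm)$ up to corrections that themselves carry factors of $|t|$, $|w-\alpha|$ or $\sigma$ (via \eqref{asdevphi}, Lemma \ref{miracleequation} and Lemma \ref{thirdthetawderivaticevarphi}), while the product $\partial_{\theta\theta}\varphi\,\partial_{ww}\varphi$ is quadratically small in these quantities — exactly the kind of bookkeeping carried out in Lemmas \ref{exactstwrhohess}, \ref{FullHessiannearequator} and \ref{strangewayofsaying}. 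It is this vanishing structure (or a parity-in-$\sigma$ argument making the correction $O(\sigma^2)\leq c^2M^{-2}$), not the mere smallness of $c$ in $\sigma\leq cM^{-1}$, that lets the $\sin^2(w_\pm)$ term dominate on $\mathcal{I}$.

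A second, smaller gap: for the $(VdC)_3$ claim you argue "exactly as in Lemma \ref{quantitativestimateICx}", asserting only that $(s_\pm+\pi,t+\pi)$ is an $O(1)$ function. But the second derivative of $\Psi^{1D}$ contains the extra term $-\scal{h'(u)}{(\partial_u s_\pm,\partial_u t)}$, so the clean dichotomy between $|\scal{h'(u)}{(A,B)}|\gtrsim M$ and $|\scal{h''(u)}{(A,B)}|\gtrsim M$ requires a bound on the \emph{derivatives} $\partial_u(s_\pm,t)$ on $\mathcal{I}$, not just on the values. This is where "up to taking $c$ smaller" really enters for this part: the sharp turn of $\mathcal{E}^\pm$ occupies a $u$-window of size $\simeq \sigma^2\leq c^2M^{-2}$, so shrinking $c$ (relative to the implicit constant in $|u-u_0(\alpha)|\gtrsim M^{-2}$) pushes the steep portion, where $|\partial_u t|$ blows up, entirely inside the excluded neighborhoods, after which the Lemma \ref{quantitativestimateICx}-type partition does yield $(VdC)_2$, hence $(VdC)_3$, with constants $\simeq M$. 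As written, your proposal asserts rather than verifies this flatness on $\mathcal{I}$.
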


\quad

\myindent Finally, on the second zone $(Z_2)$, that is the portion of the curve with is very steep, the following equivalent of Lemma \ref{FullHessiannearequator} holds, the proof being exactly the same.
\begin{lemma}
    Assume that condition \eqref{smallsigmaantipod} holds, up to taking $c$ smaller. Let c' > 0 and
    
    \begin{equation}
	    \mathcal{D} := \left\{(u,s,t,w,r)\ \text{such that} \ \begin{cases}
	        |u - u_{0}(\alpha)| \lesssim_{c,c'} 1\\
	       |t| \geq c'M^{-1} \\
	        |s| \lesssim_{c,c'} 1 \\
	       |w - \alpha| \lesssim_{c,c'} M^{-1} \\
	       (w,r) \in \mathcal{K}_{\sigma}
	    \end{cases}\right\}.
	\end{equation}
	
	\myindent Then, for all $(u,s,t,w,r) \in \mathcal{D}$
	\begin{equation}
	    \left|\det(\nabla_{u,s,t,w,r}^2 \Psi(\sigma,A,B,u,s,t,w,r)) \right| \gtrsim_{c,c'} M^{-1}.
	\end{equation}
\end{lemma}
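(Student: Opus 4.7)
The plan is to mimic the argument of Lemma \ref{FullHessiannearequator} verbatim, the only differences being that the phase function involved is the antipodal Hörmander phase $\varphi((t,-\sigma),(0,\sigma),w)$ rather than $\varphi((t,\sigma),(0,\sigma),w)$, and that the smallness regime for $\sigma$ is now the \emph{stronger} condition $|\sigma|\lesssim M^{-1}$ coming from \eqref{smallsigmaantipod}. First, I would expand the determinant with the same cofactor manipulation used in \eqref{randomcomputationofdet}, isolating the $w$-row and $w$-column from the $(u,s,t,r)$-block to obtain
\begin{equation*}
\det(\nabla_{u,s,t,w,r}^2\Psi) = -\langle h''(u),(A,B)+(s,t)\rangle\, r^2 \begin{vmatrix} \partial_{\theta\theta}\varphi & \partial_{\theta w}\varphi \\ \partial_{\theta w}\varphi & \partial_{ww}\varphi \end{vmatrix} + r\,\partial_{ww}\varphi \begin{vmatrix} -h'(u)_1 & -h'(u)_2 \\ 1 & \partial_\theta \varphi \end{vmatrix},
\end{equation*}
where all derivatives of $\varphi$ are evaluated at $((t,-\sigma),(0,\sigma),w)$.

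The second step is to lower-bound the second term. At the base point $(u,s,t,w,r)=(u_0(\alpha),0,0,\alpha,r_0(\alpha))$, the $2\times 2$ cofactor equals $r_0(\alpha)^{-1}\det(h'(u_0(\alpha)),h(u_0(\alpha)))$, which is nonzero by Lemma \ref{deth'hnotzero} and the definition of $u_0(\alpha)$ in Lemma \ref{defcirclestatpoints}. By continuity, it stays bounded away from zero on $\mathcal{D}$. For the factor $\partial_{ww}\varphi$, the asymptotic expansion \eqref{asdevphi} together with Lemma \ref{miracleequation} (applied to the antipodal Hörmander phase, which, by construction \eqref{defantipodparam}, inherits the same small-distance structure relative to the base point $(\pi,-\sigma)$) gives $|\partial_{ww}\varphi((t,-\sigma),(0,\sigma),w)|\gtrsim |t|$ as long as $|w-\alpha|\ll |t|$. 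Hence the second term is $\gtrsim |t|\gtrsim c'M^{-1}$ on $\mathcal{D}$.

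The third step is to show that the first term is negligible compared to $c'M^{-1}$. Using Lemma \ref{thirdthetawderivaticevarphi} adapted to the antipodal base point (which gives the same scaling with $\sigma$ replaced by $-\sigma$, hence identical bounds), one has
\begin{equation*}
\begin{vmatrix} \partial_{\theta\theta}\varphi & \partial_{\theta w}\varphi \\ \partial_{\theta w}\varphi & \partial_{ww}\varphi \end{vmatrix} = O(\sigma^2 t^2) + O(\sigma w t) + O(w^2),
\end{equation*}
so in the regime $|\sigma|\lesssim cM^{-1}$, $|w-\alpha|\lesssim c' M^{-1}$, $|t|\lesssim 1$, this $2\times 2$ determinant is $O(c^2 M^{-2} + cc' M^{-2} + c'^{2}M^{-2})$. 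Multiplying by the $O(M)$ coefficient $\langle h''(u),(A,B)+(s,t)\rangle$ yields a contribution of order $O((c^2+cc'+c'^2)M^{-1})$, which can be made arbitrarily smaller than $c'M^{-1}$ by first fixing $c'$ and then taking $c$ sufficiently small (this is the place where we refine $c$ at the end). The main obstacle, though only mildly so, is precisely this bookkeeping of the implicit constants: one must choose $c$ small \emph{after} $c'$ is chosen, and verify that the antipodal analogue of Lemma \ref{thirdthetawderivaticevarphi} produces exactly the same $(\sigma,w,t)$-scaling (which it does, because the antipodal Hörmander parametrix is, via \eqref{defantipodparam}, a shifted Hörmander parametrix whose derivatives at the base point are controlled by Lemma \ref{miracleequation} with $\sigma\mapsto -\sigma$).
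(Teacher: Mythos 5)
Your proposal is correct and follows exactly the route the paper intends: the paper gives no separate argument for this lemma, stating only that the proof is ``exactly the same'' as that of Lemma \ref{FullHessiannearequator}, and your write-up is precisely that adaptation to the antipodal phase (same cofactor expansion as in \eqref{randomcomputationofdet}, lower bound $\gtrsim |t|$ on the $r\,\partial_{ww}\varphi$ term, smallness of the remaining term in the regime $|\sigma|\lesssim M^{-1}$, $|w-\alpha|\lesssim M^{-1}$). One small bookkeeping caveat: in $\mathcal{D}$ the $w$-window constant is an implicit constant depending on $(c,c')$ rather than $c'$ itself, so the $O(c'^2M^{-1})$ contribution in your third step should be handled by shrinking that implicit constant (together with $c$), exactly as the paper does with its auxiliary small constant $c'(c,C)$ in the proof of Lemma \ref{FullHessiannearequator}; with that reading your argument goes through.
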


\subsubsection{The case where $\sigma$ is large enough : remaining phase function}\label{subsubsec14Antipod}

\myindent The previous paragraph yields, through condition \eqref{smallsigmaantipod}, the threshold which quantifies the case $\sigma$ "small" or $\sigma$ "large". Hence, in this paragraph, we assume that the converse of condition \eqref{smallsigmaantipod} holds. Intuitively, when $\sigma$ is "large", the curve $u \mapsto t(\sigma,u)$, which we recall is given by Figure \ref{antipodut}, is not too sharp, and the same analysis than in Section \ref{secBicharact} can be performed, by simply isolating the $u$ variable and apply Theorem \ref{mixedVdCABZ}. Quantitatively, one can prove the two following crucial lemmas. 

\myindent First, regarding the $(s,t,w,r)$ Hessian of $\Psi_0$, following still the same strategy as before, we need to prove that $\mathcal{E}^{\pm}$ is a branch of \textit{nondegenerate} $(s,t,w,r)$ stationary points of $\Psi_0$. One can prove the following.

\begin{lemma}
    Let $\sigma > 0$. There holds
    \begin{equation}
        \forall u \in [u_{\pi}(\sigma) + \sigma^2 u_1(\sigma), u_0(\sigma) - \sigma^2 u_1(\sigma)], \qquad |\det(\nabla_{s,t,w,r}^2 \Psi_0(u,s_{\pm}(\sigma,u),t(\sigma,u), w_{\pm}(\sigma,u),r_{\pm}(\sigma,u)))| \gtrsim \sigma^2.
    \end{equation}
    
    \myindent Moreover, with the Notation \ref{defM1+d} and the Definition \ref{defHandN}, there holds for all integers $1\leq k \leq l$,
    \begin{equation}
    \begin{split}
        &\mathcal{M}_{k,l}(\Psi) \lesssim 1\\
        &\mathcal{D}(\Psi) \gtrsim \sigma^2 \\
        &\mathcal{N}(\Psi) \lesssim \sigma^{-2}.
    \end{split}
    \end{equation}
\end{lemma}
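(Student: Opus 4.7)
The plan is to follow the strategy of Lemma \ref{exactstwrhohess}, adapted to the antipodal parametrix. Computing explicitly the Hessian $\nabla_{s,t,w,r}^2\Psi_0$ along the branches $\mathcal{E}^{\pm}$, and using the same row/column manipulation as in \eqref{explicitstwrhohess} together with the stationary relation $\partial_w \varphi = 0$ on $\mathcal{E}^{\pm}$, the $4\times 4$ determinant collapses to
\[
\det\left(\nabla_{s,t,w,r}^2\Psi_0\right) = r_\pm(\sigma,u)^2 \det A(\sigma,u), \qquad A(\sigma,u) := \begin{pmatrix} \partial_{\theta\theta}\varphi & \partial_{\theta w}\varphi \\ \partial_{\theta w}\varphi & \partial_{ww}\varphi \end{pmatrix},
\]
where all derivatives of $\varphi$ are evaluated at $((t(\sigma,u),-\sigma),(0,\sigma),w_{\pm}(\sigma,u))$. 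Since $r_\pm$ is bounded away from zero and from infinity, the task reduces to a quantitative lower bound on $|\det A(\sigma,u)|$.

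The crucial geometric point is that $\det A$ is, up to multiplicative factors bounded away from zero, the Wronskian of the two Jacobi fields associated to varying the endpoint time $t$ and the initial direction $w$ along the bicharacteristic of $q_1$ joining $(0,\sigma)$ to $(t,-\sigma)$. For $\sigma = 0$ this bicharacteristic becomes a meridian and the corresponding endpoint map collapses onto the antipodal refocalization, so that $\det A(0,u)\equiv 0$. The symmetry $\sigma \mapsto -\sigma$ of $\mathcal{S}$, together with the corresponding symmetry of the antipodal Hörmander phase under $(y,\sigma)\mapsto(\bar y,-\sigma)$ implicit in \eqref{defantipodparam}, forces moreover $\partial_\sigma \det A|_{\sigma=0} \equiv 0$. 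Taylor expansion in $\sigma$ then yields $\det A(\sigma,u) = \sigma^2 F(\sigma,u)$ for a smooth $F$, and an explicit computation of $\partial_\sigma^2 \det A|_{\sigma=0}$ in the spirit of \eqref{detA'0} shows that $F$ does not vanish, giving $|\det A(\sigma,u)| \gtrsim \sigma^2$ uniformly on the parameter range of $\mathcal{E}^\pm$.

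The bound $\mathcal{M}_{k,l}(\Psi) \lesssim 1$ for $1\leq k\leq l$ is immediate: any $(s,t,w,r)$-derivative of $\Psi$ of order at least $2$ is independent of $(A,B)$ (the global phase $-\langle h(u),(A,B)\rangle$ depends only on $u$), so the quantities reduce to uniform bounds on derivatives of $\varphi$ over a compact set, as already observed in Paragraph \ref{subsubsec22HormQuant}. The bound $\mathcal{D}(\Psi)\gtrsim \sigma^2$ is then a direct consequence of the determinant estimate, and $\mathcal{N}(\Psi)\lesssim \sigma^{-2}$ follows from the cofactor formula $H^{-1} = (\det H)^{-1}\,\mathrm{Com}(H)^T$, using that each cofactor of the Hessian is uniformly bounded, as it involves only $r$ and first and second derivatives of $\varphi$.

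The main obstacle is the explicit computation of $\partial_\sigma^2 \det A|_{\sigma = 0}$: one must differentiate twice through the antipodal composition \eqref{defantipodparam}, in which $\varphi$ is pulled back by the symmetry $S$ on the $y$-variable, and then use the explicit third-order derivatives of the Hörmander phase provided by Lemma \ref{thirdthetawderivaticevarphi}. As in the proof of Lemma \ref{exactstwrhohess}, this bookkeeping will reduce to checking that a specific combination of $\partial_{\Sigma\Sigma}q_1$ and $\partial_\sigma q_1$ evaluated at the meridian direction is nonzero, which is exactly the content of Lemma \ref{LemmapartSigSigq_1}.
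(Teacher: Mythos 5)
The paper states this lemma without proof (``One can prove the following''), so there is no argument of the author's to compare against; judged on its own terms, your proposal contains a genuine gap at its central step. The algebraic reduction $\det\big(\nabla^2_{s,t,w,r}\Psi_0\big)=r_\pm^2\det A$ on the stationary set (mirroring \eqref{explicitstwrhohess}), and the bookkeeping for $\mathcal{M}_{k,l}$, $\mathcal{D}$, $\mathcal{N}$ (boundedness of all entries and cofactors, so $\mathcal{N}\lesssim\mathcal{D}^{-1}$), are fine.

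The problem is the claim $\det A(0,u)\equiv 0$, on which the whole factorization $\det A=\sigma^2F$ with $F$ nonvanishing rests. The bicharacteristics relevant to $\mathcal{E}^{\pm}$ are the \emph{short} arcs joining $(t,-\sigma)$ to $(0,\sigma)$ — the antipodal map has already been absorbed into the definition \eqref{defantipodparam}, which is why $\Psi_0$ involves $\varphi((t,-\sigma),(0,\sigma),w)$ with $|s|=|\varphi|\simeq|t|+\sigma$ small — not meridian arcs of length $\pi$, so there is no refocalization at $\sigma=0$, $t\neq0$. Concretely, for $u$ in the interior of $[u_{\pi}(\sigma)+\sigma^2u_1,\,u_0(\sigma)-\sigma^2u_1]$ the stationary point converges, as $\sigma\to0$, to a point of the circle $\mathcal{C}_0$ with direction $w$ bounded away from $0,\pi$, where (by the computation behind Lemma \ref{stwrhohessofPsionCx}, using $\partial_{ww}\varphi(x,x,\cdot)=0$) one has $\det A=-\big(\partial_wq_2(0,w,1)\big)^2\simeq-\sin^2 w\neq0$. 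A uniform factorization $\det A=\sigma^2F$ would instead force $|\det A|\simeq\sigma^2$ throughout the branch, which contradicts this order-one value in the middle, and also contradicts the paper's own small-$\sigma$ analysis of zone $(Z_1)$ in Paragraph \ref{subsubsec13Antipod}, where $\mathcal{D}(\Psi)\gtrsim M^{-2}$ independently of $\sigma$. The degeneration as $\sigma\to0$ is therefore not uniform in $u$: it occurs only near the two ends of the branch, where the direction $w_\pm(\sigma,u)$ approaches $0$ or $\pi$. A correct argument should show $|\det A|\gtrsim\sin^2\!\big(w_\pm(\sigma,u)\big)$ — i.e.\ that the product term $\partial_{\theta\theta}\varphi\,\partial_{ww}\varphi$ cannot cancel the $(\partial_{\theta w}\varphi)^2$ term, via a computation near $w=0,\pi$ in the spirit of Lemmas \ref{exactstwrhohess} and \ref{FullHessiannearequator} but for the separated base points $(t,-\sigma)$, $(0,\sigma)$ — and then use that the excluded $\sigma^2u_1(\sigma)$-neighborhoods of $u_0(\sigma)$, $u_\pi(\sigma)$ keep $|w_\pm(\sigma,u)|\gtrsim\sigma$ (the cosine-type relation between $u$ and the direction, as in Paragraph \ref{subsubsec22HormPhase}), which is exactly where the stated lower bound $\gtrsim\sigma^2$ comes from. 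Your appeal to Lemmas \ref{thirdthetawderivaticevarphi} and \ref{LemmapartSigSigq_1} is the right kind of input for that endpoint analysis, but it must be deployed locally near $w=0,\pi$, not through a global Taylor expansion in $\sigma$ at fixed $u$.
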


\myindent Moreover, regarding the remaining phase function, there holds

\begin{lemma}\label{Psi1Dlargesigantipod}
    Up to refining the partition $\mathfrak{Q}$, there exists a constant $M_0 > 0$ such that for all 
    \begin{equation}
        |(A,B)| = M \geq M_0,
    \end{equation}
    then the remaining phase function on the branches $\mathcal{E}^{\pm}$, defined by
    \begin{equation}
        \Psi^{1D} : u \in [u_{\pi}(\sigma) + \sigma^2 u_1(\sigma), u_0(\sigma) - \sigma^2 u_1(\sigma)] \mapsto \Psi(\sigma,A,B,u,s_{\pm}(\sigma,u),t(\sigma,u), w_{\pm}(\sigma,u), r_{\pm}(\sigma,u)),
    \end{equation}
    satisfies the Property $(VdC)_3$ (see Definition \ref{VdCN}), with constants $c_1M,c_2M$ for some universal $c_1,c_2 > 0$.
\end{lemma}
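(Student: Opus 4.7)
The plan is to follow the case-analysis strategy of Lemma~\ref{Psi1DnearEalpha} and Lemma~\ref{remainingphasefunctbichparam}. Since $\nabla_{s,t,w,r}\Psi$ vanishes along $\mathcal{E}^{\pm}$, the chain rule reduces $\partial_u\Psi^{1D}$ to the compact expression
\[
\partial_u \Psi^{1D}(u) = -\scal{h'(u)}{(s_\pm(\sigma,u)+\pi+A,\ t(\sigma,u)+\pi+B)},
\]
and the dependence on $(A,B)$ at each higher order enters only through $-\scal{h^{(k)}(u)}{(A,B)}$, modulated by terms involving $\partial_u^j s_\pm, \partial_u^j t$ for $j\leq k-1$. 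The unit-speed parametrization of $\gamma$ yields $h''(u)\perp h'(u)$ with $|h''(u)|\gtrsim 1$ uniformly by Hypothesis~\ref{twistV2}, so decomposing $(A,B) = a\,h'(u) + b\,h'(u)^{\perp}$ with $a^2+b^2 = M^2$, a smallness bound $|\partial_u\Psi^{1D}|\leq c_1 M$ forces $|a|\leq 2c_1 M$ once $M\geq M_0$ absorbs the $O(1)$ correction coming from $(s_\pm+\pi, t+\pi)$, and therefore $|b|\gtrsim M$ and $|\scal{h''(u)}{(A,B)}|\gtrsim M$.

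Before running the case analysis, I would establish the scaling of the stationary-curve derivatives on the target interval via the same Taylor-expansion and implicit-function arguments used in Lemma~\ref{newhorriblelemma} and Corollary~\ref{deftpmu}: one expects $|\partial_u^K s_\pm|,|\partial_u^K t|\lesssim \sigma^{-1}|u-u_\alpha(\sigma)|^{1/2-K}$, which is at most $O(M^{2K-1})$ in the regime $\sigma\gtrsim M^{-1}$ on the interval $[u_\pi(\sigma)+\sigma^2 u_1(\sigma),\ u_0(\sigma)-\sigma^2 u_1(\sigma)]$. I would also verify the transversality identity $|\scal{h'(u)}{(\partial_u s_\pm,\partial_u t)}|\simeq \sigma^{-1}|u-u_\alpha(\sigma)|^{-1/2}$, which follows from the stationary relation $h(u)=r_\pm (q_1,q_2)(\sigma,\nabla_x\varphi)$ combined with Lemma~\ref{deth'hnotzero}, exactly as in the proof of Lemma~\ref{Psi1DnearEalpha}.

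The central step is then to partition the interval into a uniformly bounded number of sub-intervals matching the branches of Definition~\ref{VdCN}. On the region where $|\partial_u\Psi^{1D}|\gtrsim M$ one uses the $p_i=1$ branch with the automatic bound $\|\partial_{uu}\Psi^{1D}\|_{L^\infty}\lesssim CM$; on the region where $|\partial_u\Psi^{1D}|\leq c_1 M$ and simultaneously $|\scal{h'(u)}{(\partial_u s_\pm,\partial_u t)}|\ll M$, the dominant term $-\scal{h''(u)}{(A,B)}$ gives $|\partial_{uu}\Psi^{1D}|\gtrsim M$, which is the $p_i=2$ branch; finally, in the endpoint neighborhood where $\partial_u s_\pm, \partial_u t$ reach size comparable to $M$, the square-law scaling $\partial_{uu}\sim(\partial_u)^2$ of the stationary curve makes $-\scal{h'(u)}{(\partial_{uu} s_\pm,\partial_{uu} t)}$ the dominant contribution to $\partial_{uuu}\Psi^{1D}$, yielding $|\partial_{uuu}\Psi^{1D}|\gtrsim M$ and activating the $p_i=3$ branch, which is precisely why the statement must assert $(VdC)_3$ rather than $(VdC)_2$. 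The main obstacle will be checking that these three sub-regions consistently cover the interval in a uniformly bounded number of pieces and that no cancellation destroys the lower bound on $\partial_{uuu}\Psi^{1D}$ in the endpoint regime; this is the step where refining $\mathfrak{Q}$ (to shrink $\mathcal{K}_0^{(\pi)}$) and choosing $M_0$ large enough are essential, since for $M<M_0$ the curve-geometry contributions and the $(A,B)$-contributions cannot be cleanly separated by the Taylor arguments of Corollary~\ref{deftpmu}.
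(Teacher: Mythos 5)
Your proposal is correct in outline and follows essentially the same route as the paper, which proves the lemma by rerunning the computation of Lemma \ref{Psi1DnearEalpha} with the twist of Lemma \ref{remainingphasefunctbichparam}: on the flat part of $\mathcal{E}^{\pm}$, where $t'(u)$ stays bounded, smallness of $\partial_u\Psi^{1D}$ forces $|\scal{h''(u)}{(A,B)}|\gtrsim M$ and hence $|\partial_{uu}\Psi^{1D}|\gtrsim M$ once $M\geq M_0$, while on the steep part the square law $|t''(u)|\gtrsim |t'(u)|^2$ pushes the lower bound onto the third derivative. The only slip is your claim that $\|\partial_{uu}\Psi^{1D}\|_{L^\infty}\lesssim CM$ is automatic on the region where $|\partial_u\Psi^{1D}|\gtrsim M$: near the endpoints the term $\scal{h'(u)}{(s'(u),t'(u))}\simeq t'(u)$ can exceed $M$, so, exactly as at the end of the proof of Lemma \ref{Psi1DnearEalpha}, the $p_i=1$ branch of Definition \ref{VdCN} must be reserved for sub-intervals where moreover $|t'(u)|\lesssim M$, the steep ones being assigned to $p_i=2$ or $3$ (harmless, since there $|\partial_{uu}\Psi^{1D}|\gtrsim |t'(u)|\gg M$).
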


\begin{proof}
    The proof follows the line of the proof of Lemma \ref{Psi1DnearEalpha}, with a similar twist than in the proof of Lemma \ref{remainingphasefunctbichparam}, that is we need to guarantee that, when $|t'(u)|$ is large enough, then $|t''(u)|$ is even larger. This would correspond to the very sharp part of the curve in Figure \ref{antipodut}. For the less sharp part, one observes that, since $t'(u)$ is bounded along it, as long as $M$ is large enough, then the second derivative of the 1D phase function cannot vanish (see the formulas of the proof of Lemma \ref{Psi1DnearEalpha}).
\end{proof}

\subsection{Quantitative estimate of the oscillatory integral}\label{subsec2Antipod}

\myindent From the analysis of the previous section, we know that we need to divide the quantitative estimate between two cases.

\myindent First, in the case where condition \eqref{smallsigmaantipod} holds, we have explained along Paragraph \ref{subsubsec13Antipod} that we recover the same situation than in the case $\bullet = (H)$ presented in Section \ref{secHormPhase}, and, in particular, the \textit{same} quantitative estimates (in terms of powers of $M$). Hence, we can actually perform the \textit{same} analysis than in Section \ref{secHormQuant}, and find the \textit{same} quantitative bound on the integral. Hence, we can prove the following lemma.

\begin{lemma}
    Assume that condition \eqref{smallsigmaantipod} holds. Then there holds
    \begin{equation}
        \mathcal{I}_{\lambda,\delta,0}^{(\pi)}(\sigma,A,B) = O_{c}\left(\lambda^{-\frac{1}{4}} M^5 + \lambda^{-\frac{1}{2}} M^{14} \right)
    \end{equation}
\end{lemma}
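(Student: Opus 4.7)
\myindent The plan is to mirror, step by step, the argument of Section \ref{subsec2HormQuant}, exploiting the structural analogy between the antipodal configuration (for $\sigma$ small) and the Hörmander configuration described in Paragraph \ref{subsubsec13Antipod}. First, I would apply the reduction of Paragraph \ref{subsubsec11Antipod} to restrict to $|\xi|\in[\beta^{-1},\beta]$, introduce the polar coordinates $\xi = r g_{\sigma}(w)$ of Notation \ref{defwrho}, and absorb the part $|\xi|\notin [\beta^{-1},\beta]$ into a $O(\lambda^{-\infty})$ error by integration by parts in $(s,t)$ (with the gain $\lambda^{\frac{1}{2}}$ from the symbol $a$ being of order $0$ already accounted for, as in Paragraph \ref{subsubsec11HormQuant}). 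On the remaining compact domain of integration, all derivatives of the resulting amplitude $b(\lambda,\delta,A,B,\sigma,u,s,t,w,r)$ are $O(1)$ uniformly in $\lambda,\delta,A,B$.

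\myindent Next, I would cover the compact domain by a partition of unity adapted to the three zones $(Z_1),(Z_2),(Z_3)$ of Figure \ref{ThreezonesFig}, plus a fourth piece supported away from $\mathcal{O}_{\sigma}$. On the fourth piece, the same nonstationary phase argument in $(s,t,w,r)$ as in Paragraph \ref{subsubsec26HormQuant} gives a contribution $O(\lambda^{-1}M^{9})$, absorbed by the two dominant terms. On zone $(Z_1)$, the Hessian and remaining phase estimates given in Paragraph \ref{subsubsec13Antipod} place us in the exact framework of Lemma \ref{quantitativestimateICx}: applying Theorem \ref{mixedVdCABZ} with the $u$ variable isolated and with $\Psi^{1D}$ satisfying $(VdC)_3$ with constants $c_1 M, c_2 M$, and using $\mathcal{D}(\Psi)\gtrsim M^{-2}$, $\mathcal{N}(\Psi)\lesssim M$, yields a bound of the form $O(\lambda^{-\frac{1}{3}} M^{7})$, again absorbed by the two dominant terms of the statement. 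On zone $(Z_2)$, I would mimic the proof of Lemma \ref{estimateIEsigmaalphaclosetoequator}: the full $(u,s,t,w,r)$ Hessian has determinant $\gtrsim M^{-1}$, so after the anisotropic rescaling $\tilde u = M^{\frac{1}{2}}(u-u_{0}(\alpha))$ that compensates the sole $O(M)$ entry of the Hessian, Theorem \ref{ABZthm} applies and produces the contribution $O(\lambda^{-\frac{1}{2}} M^{14})$.

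\myindent Zone $(Z_3)$, around each branching point $P_{\alpha}$, $\alpha=0,\pi$, is the one requiring the dichotomy of the proposition in Paragraph \ref{subsubsec13Antipod} and is where the worst term $\lambda^{-\frac{1}{4}}M^{5}$ arises. In the easy sub-case $|\scal{h'(u_0(\alpha))}{(A+\pi,B+\pi)}|\gtrsim M^{-1}$, I would integrate by parts in $u$ on a localizer adapted to the non-isotropic neighbourhood with $|u-u_0(\alpha)|\lesssim M^{-2}$, $|(s,t)|\lesssim M^{-1}$, to obtain a $O(\lambda^{-1} M^{3})$ contribution, exactly as in Lemma \ref{easycase}. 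In the delicate sub-case, I would apply the non-isotropic version of Theorem \ref{mixedVdCABZ} given by Remark \ref{nonisotropicmixedABZVdC} with the $w$ variable isolated: the existence and uniqueness of the inner stationary point $Z_{\sigma,A,B}(w)$ and the condition $\|M(w,\cdot)-H(w)\|\leq \tfrac{1}{2}\|H(w)^{-1}\|^{-1}$ are guaranteed by part i of the proposition, while the bound $|(\partial_w)^4 \Psi^{1D}|\gtrsim M$ of \eqref{4thderivativesPsi1DdelicatecaseAntipod} supplies Property $(VdC)_4$ with constants $O(1), cM$; feeding the estimates \eqref{controlustrhohess} into \eqref{quantitativeboundmixedABZVdC} gives the announced $O(\lambda^{-\frac{1}{4}} M^{5})$.

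\myindent Summing the four contributions, the two dominant terms are exactly $\lambda^{-\frac{1}{4}} M^{5}$ from the delicate sub-case of $(Z_3)$ and $\lambda^{-\frac{1}{2}} M^{14}$ from $(Z_2)$, proving the bound. The main obstacle is not conceptual but bookkeeping: one must check that the implicit constants controlling the partition of unity (analogous to $c_1,c_2,c_{P_{\alpha}}$ in Section \ref{secHormQuant}) can all be chosen compatibly so that, on each piece, either the strategy of Theorem \ref{mixedVdCABZ}, or of Theorem \ref{ABZthm}, or the $u$-integration by parts, genuinely applies on the support of the corresponding cutoff, and that the complementary piece falls in the non-stationary regime of Paragraph \ref{subsubsec26HormQuant} with the lower bound $|\nabla_{s,t,w,r}\Psi|\gtrsim M^{-3}$.
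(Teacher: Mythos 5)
Your proposal is correct and follows essentially the same route as the paper, whose proof of this lemma is precisely the observation that, under \eqref{smallsigmaantipod}, the phase analysis of Paragraph \ref{subsubsec13Antipod} reproduces the $(H)$-case structure and quantitative estimates, so the zone-by-zone argument of Section \ref{secHormQuant} (circle-type zone via Theorem \ref{mixedVdCABZ}, steep zone via the rescaling and Theorem \ref{ABZthm}, branching points via the dichotomy and Remark \ref{nonisotropicmixedABZVdC}, nonstationary remainder by integration by parts) carries over verbatim and yields the same two dominant terms $\lambda^{-\frac{1}{4}}M^{5}$ and $\lambda^{-\frac{1}{2}}M^{14}$. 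Your power counting and the absorption of the intermediate contributions are fine; only the parenthetical about a ``gain $\lambda^{\frac{1}{2}}$'' in the compactification step is off (the antipodal parametrix has a symbol of order $0$ with two angle variables, so no such gain occurs or is needed), but this does not affect the argument.
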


\myindent Second, in the case where the converse of condition \eqref{smallsigmaantipod} holds, we have seen in Paragraph \ref{subsubsec14Antipod} that we recover basically the same configuration than in Section \ref{secBicharact}. Hence, a similar conclusion holds, with the limitations that the threshold is here $|\sigma| \gtrsim M^{-1}$ whereas in Section \ref{secBicharact} it is $|\sigma|\gtrsim M^{-\frac{1}{2}}$. Overall, we can prove the following lemma.

\begin{lemma}
    Assume that the converse of condition \eqref{smallsigmaantipod} holds.
    
    \myindent i. If $|(A,B)| = M \leq M_0$, where $M_0$ is defined by Lemma \ref{Psi1Dlargesigantipod}, then there holds
    \begin{equation}
        \mathcal{I}_{\lambda,\delta,0}^{(\pi)}(\sigma,A,B) = O_{M_0}\left(1\right).
    \end{equation}
    
    \myindent ii. If $M \geq M_0$, then there holds
    \begin{equation}
        \mathcal{I}_{\lambda,\delta,0}^{(\pi)}(\sigma,A,B) = O_{M_0} \left(\lambda^{-\frac{1}{3}} M^6 + \lambda^{-\frac{1}{2}} M^8\right).
    \end{equation}
\end{lemma}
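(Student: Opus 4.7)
The plan is to adapt the strategy used in Section \ref{secBicharact} for the bicharacteristic length parametrix case, since Paragraph \ref{subsubsec14Antipod} establishes that, when $\sigma \gtrsim M^{-1}$, the geometry of $\mathcal{O}_\sigma$ is essentially a disjoint union of two curves $\mathcal{E}^\pm$ smoothly parameterized by $u$, and the corresponding partial Hessian and 1D remaining phase function estimates mirror those obtained in Section \ref{subsec1Bicharact}. The starting point will be, as in Paragraph \ref{subsubsec21HormQuant}, to apply the reduction of Paragraph \ref{subsubsec11Antipod}: using the nonvanishing lower bound on $|\nabla_{s,t}\Psi_0|$ outside of $|\xi| \in [\beta^{-1}, \beta]$, integrate by parts in $(s,t)$ to absorb the non-compact tails as an $O(\lambda^{-\infty})$ remainder, then change to the polar variables $(w,r)$ of Notation \ref{defwrho} so that the remaining integration is over a compact 5-dimensional domain in $(u,s,t,w,r)$ with an explicit symbol whose $(u,s,t,w,r)$-derivatives are bounded uniformly in $(A,B)$ (modulo the standard $\lambda^{1/2}$ gain absorbed by $\lambda^2$ in front, since $a$ is of order $0$ here).

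For the case $M \leq M_0$, since $\Psi^{1D}$ need not satisfy any $(VdC)_p$ estimate in this finite range, I would avoid isolating $u$ and simply treat $u$ as an inert variable of integration. For each fixed $u$, $(s,t,w,r)\mapsto \Psi_0(\sigma,u,s,t,w,r)$ has isolated nondegenerate stationary points on $\mathcal{E}^\pm$ (by the analysis preceding Lemma \ref{Psi1Dlargesigantipod}), with Hessian determinant $\gtrsim \sigma^2 \gtrsim M_0^{-2}$ uniformly; applying Theorem \ref{hormstatphase} in those four variables, combined with integration by parts in $(s,t,w,r)$ away from $\mathcal{E}^\pm$, yields a bound $O_{M_0}(1)$ after using that all implicit constants depend only on $\mathcal{S},\eps,\mathfrak{Q}$ and $M_0$.

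For the case $M \geq M_0$, I would apply Theorem \ref{mixedVdCABZ} with the $u$ variable isolated and $d = 4$ on each of the two connected components $\mathcal{E}^\pm$, using a smooth partition of unity adapted to the neighborhoods produced by Remark \ref{nonisotropicmixedABZVdC}. The key inputs are: the parameterization of $\mathcal{E}^\pm$ by $u$ (from the lemma preceding Paragraph \ref{subsubsec14Antipod}); the bounds $\mathcal{D}(\Psi)\gtrsim \sigma^2$, $\mathcal{N}(\Psi)\lesssim \sigma^{-2}$, $\mathcal{M}_{k,l}^{(s,t,w,r)}(\Psi)\lesssim 1$, $\mathcal{M}_{k,l}^{(s,t,w,r)}(\partial_u\Psi)\lesssim 1$ from the first lemma of Paragraph \ref{subsubsec14Antipod}; and the $(VdC)_3$ property of $\Psi^{1D}$ with constants $c_1 M, c_2 M$ furnished by Lemma \ref{Psi1Dlargesigantipod}. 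Plugging into the explicit upper bound \eqref{quantitativeboundmixedABZVdC} of Theorem \ref{mixedVdCABZ}, converting negative powers of $\sigma$ into positive powers of $M$ via $\sigma^{-1} \lesssim M$, produces the first summand $\lambda^{-1/3}M^6$. The contribution of the complement of a suitable neighborhood of $\mathcal{E}^\pm$ is handled, as in Paragraph \ref{subsubsec26HormQuant}, by integration by parts in $(s,t,w,r)$ using a quantitative lower bound on $|\nabla_{s,t,w,r}\Psi_0|$; choosing the order of integration by parts large enough, the cost is $\lambda^{-1/2} M^8$ after tracking the $M$-dependence of the localizer and of the symbol, giving the second summand.

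The main technical obstacle is the careful bookkeeping of the $\sigma$- and $M$-dependence in the localizers near $\mathcal{E}^\pm$: since Theorem \ref{mixedVdCABZ} is applied in the non-isotropic version of Remark \ref{nonisotropicmixedABZVdC}, the derivatives of the cutoff $\zeta$ scale like $\sigma^{-k}$ in the transverse directions, and one must verify that hypothesis \eqref{secondconditionnonisotropic} still holds on a neighborhood large enough to contain the support of $\zeta$ while preserving the factor $\mathcal{D}(\Psi)^{-1}\mathcal{N}(\Psi)^2\lesssim \sigma^{-6}$ obtained from the Hessian estimate. Once the threshold condition $\sigma \gtrsim M^{-1}$ is used to convert $\sigma^{-6} \lesssim M^6$, the announced polynomial control in $M$ follows, completing the two cases of the lemma.
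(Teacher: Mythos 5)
Your proposal follows essentially the same route as the paper, which proves this lemma only by reference: it invokes the phase analysis of Paragraph \ref{subsubsec14Antipod} (the Hessian bounds $\mathcal{D}(\Psi)\gtrsim\sigma^2$, $\mathcal{N}(\Psi)\lesssim\sigma^{-2}$ and the $(VdC)_3$ property of Lemma \ref{Psi1Dlargesigantipod}) and repeats the argument of Section \ref{secBicharact} — plain stationary phase in $(s,t,w,r)$ with $u$ inert for $M\leq M_0$, Theorem \ref{mixedVdCABZ} with $u$ isolated plus $\sigma^{-1}\lesssim M$ for $M\geq M_0$ — which is exactly what you do, and your power counting for the $O_{M_0}(1)$ and $\lambda^{-\frac{1}{3}}M^6$ terms is consistent with the paper's. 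The one loose point is your attribution of the second summand $\lambda^{-\frac{1}{2}}M^8$ entirely to integration by parts off $\mathcal{E}^{\pm}$: repeated integration by parts can only produce integer powers $\lambda^{2-K}$, so it yields terms like $\lambda^{-1}M^{4K}$, which are dominated by $\lambda^{-\frac{1}{2}}M^8$ only in the regime $M\lesssim\lambda^{1/8}$ (harmless for the application, where $M\lesssim\delta^{-1}$, but not literally the stated form); in the paper's scheme this half-power term more plausibly arises from a Theorem \ref{ABZthm}-type full-Hessian estimate near the steep portion of the branches, as in Lemma \ref{estimateIEsigmaalphaclosetoequator} and case iv of Section \ref{subsec2Bicharact}.
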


\section{Further results and conjectures}\label{secFurther}

\subsection{Lower bounds and asymptotic expansions}\label{subsec1Further}

\myindent In this section, we explain how to obtain lower bounds on the norm of the spectral projector \eqref{defPlambdadelta} using the \textit{same} analysis than above, see Corollary \ref{lowerbounds}.

\myindent Coming back to Paragraph \ref{subsubsec11Micro}, and to its notations, we have studied so far \textit{upper bounds} on the smoothed spectral projector $P^{\sharp}_{\lambda,\delta}$ defined by \eqref{deffld}. Precisely, let us recall that, thanks to formula \eqref{intexprflambdadelta}, we have studied the diagonal values of the Schwartz kernel of $P^{\sharp}_{\lambda,\delta}$, which we recall are given by
\begin{equation}\label{firstintexprlowerbounds}
    P^{\sharp}_{\lambda,\delta}(x,x) = \lambda \delta (2\pi)^{-2} \int_{\R^2} \hat{d\mu}(\lambda(s,t)) \hat{\rho}(\delta(s,t)) \left(e^{isQ_1} e^{itQ_2}\right)(x,x) dsdt.
\end{equation}

\myindent We recall that, in Paragraph \ref{subsubsec25Micro}, and using its notations, we have decomposed the integral \eqref{firstintexprlowerbounds} as 
\begin{equation}\label{sumexpressPsharpkernel}
    P^{\sharp}_{\lambda,\delta}(x,x) = \lambda \delta (2\pi)^{-2} \sum_{\bullet} \sum_{(A,B)\in (2\pi\Z)^2} (-1)^{\frac{A}{2\pi}} \mathcal{I}_{\lambda,\delta,i}^{(\bullet)} (\sigma,A,B) + O\left(\lambda^{-\frac{1}{3}} \delta^{-\frac{5}{3}}\right),
\end{equation}
where the sum $\sum_{\bullet}$ is taken on $\bullet = (H),(1,...,J_i)$, and additionally $\bullet = (\pi)$ in the case $i = 0$ (see Lemma \ref{smoothingcontrib}).

\myindent Now, what we have done in the previous sections is to choose $\rho$ such that $\hat{\rho}$ is compactly supported, and we have given upper bounds on the different $\mathcal{I}_{\lambda,\delta,i}^{(\bullet)} (\sigma,A,B)$ under that condition in Proposition \ref{intermediatethm}, allowing for the resummation process yielding the upper bound on Theorem \ref{mainthm} in the end of Paragraph \ref{subsubsec25Micro} (see \eqref{resummation}). However, we could twist a little this approach : indeed, $P^{\sharp}_{\lambda,\delta}$ is actually defined for any Schwartz function $\rho$. If, however, $\hat{\rho}$ is not compactly supported, one can generalize Proposition \ref{intermediatethm}, using the fact that, for any integers $k,K \geq 0$, there holds
\begin{equation}
    \forall (s,t) \in \R^2 \left| \left(\nabla^k \hat{\rho}\right)(\delta (s,t)) \right|_{L^{\infty}} \lesssim_{k,K,\rho} (1 + \delta|(s,t)|)^{-K}.
\end{equation}

\myindent In particular, when $|(A,B)| \gg \delta^{-1}$, we may always use this as an improvement for all the estimates, in the sense that, since on the support of integration of $\mathcal{I}_{\lambda,\delta,i}^{(\bullet)}(\sigma,A,B)$ there always holds
\begin{equation}
    \|\nabla^k \hat{\rho} \|_{L^{\infty}} \lesssim_{k,K,\rho} (1 + \delta M)^{-K},
\end{equation}
we can always transform bounds of the form
\begin{equation}
    O\left(\lambda^{-\tau} M^{K} \right),
\end{equation}
into bounds of the form (say)
\begin{equation}
    O_{\rho}\left(\lambda^{-\tau} \delta^{-K + 3}\right) M^{-3}.
\end{equation}

\myindent Thus, we could obtain an equivalent of the upper bounds of Proposition \ref{intermediatethm}, but which we could sum for all $(A,B) \in (2\pi \Z)^2$. The only issue is that we would need to change a little bit the computations, since, if one is careful, we have used a lot Theorem \ref{mixedVdCABZ}, with the technical assumption \eqref{technicalhyp} always satisfied since we chose $M = |(A,B)| \lesssim \delta^{-1} \ll \lambda^{-\frac{1}{3}}$. Hence, if we wished to obtain bounds for $\mathcal{I}_{\lambda,\delta,i}^{\bullet}(\sigma,A,B)$ for \textit{any} $(A,B)$, we would need to use the general version of Theorem \ref{mixedVdCABZ} given by Remark \ref{resultnotechnicalhypo}. Overall, it is possible without more efforts to prove the following.
\begin{proposition}\label{firstupperboundslowerbounds}
    There exists a constant $M_0 > 0$ such that the following holds for all $\lambda, \delta$ such that $\delta \gtrsim_{\mathcal{S},\eps,\mathfrak{Q}} \lambda^{-1}$, for $i \in \{0,...,I\}$, and for all $\sigma \in \mathcal{I}_i$.
    
    \myindent i. In the case where $\bullet, A,B = (H),0,0$, there holds
    \begin{equation}
        \mathcal{I}_{\lambda,\delta,i}^{(H)}(\sigma,0,0) = 2\hat{\rho}(0,0)c_W(\sigma) + O(\lambda^{-1}),
    \end{equation}
    where $c_W(\sigma) > 0$ is the constant of the pointwise Weyl law,i.e.
    \begin{equation}
        c_W(\sigma) = \int_{p_1(\sigma,\xi) \leq 1} d\xi.
    \end{equation}
    
    \myindent ii. In the case where $\bullet = (H)$ and $(A,B) \neq (0,0)$, or $M\geq M_0$ and $\bullet = (1,...,J_i)$ or $i = 0$ and $\bullet = (\pi)$, there holds
    \begin{equation}
         \mathcal{I}_{\lambda,\delta,i}^{(\bullet)}(\sigma,A,B) = O\left(\lambda^{-\tau}\delta^{-K}\right)M^{-3},
    \end{equation}
    where the constant $\tau \geq \frac{1}{4}$ and the integer $K \geq 0$ can be quantifiable depending on each case.
    
    \myindent iii. In the case where $M\leq M_0$, and $\bullet = (1,...,J_i)$, or $i = 0$ and $\bullet = (\pi)$, there holds
    \begin{equation}
        \mathcal{I}_{\lambda,\delta,i}^{(\bullet)}(\sigma,A,B) = O_{M_0}(1).
    \end{equation}
\end{proposition}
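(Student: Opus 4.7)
The plan is to revisit the proof of Proposition \ref{intermediatethm} and keep track of what actually uses the hypothesis that $\hat{\rho}$ has compact support. Part (i) is obtained by exactly the argument of Section \ref{subsec1HormQuant}: the asymptotic expansion \eqref{exactformulalocal} is derived via a plain stationary phase analysis on a compact region in the $\xi$-variable, and the factor $\hat{\rho}(0,0)$ simply comes from evaluating $\hat{\rho}$ at the single stationary point. Nothing in that argument requires $\hat{\rho}$ to be compactly supported; only that it is Schwartz and bounded together with its derivatives, which is built into Definition-Lemma \ref{deffld}. Part (iii) is likewise a direct translation of estimate \eqref{IlowM}: for a bounded set of $(A,B)$ the phase is non-degenerate in all variables after isolating $u$, and the integral is controlled by the classical stationary phase bound, uniformly in $\delta$, so the support of $\hat{\rho}$ plays no role.

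The content of Part (ii) is essentially that, whenever $M := |(A,B)| \geq M_0$ or $\bullet = (H), (A,B) \neq (0,0)$, the estimates of Proposition \ref{intermediatethm} can be repackaged as $O(\lambda^{-\tau} \delta^{-K}) M^{-3}$. For this, I would argue as follows. In the analysis of each oscillatory integral $\mathcal{I}_{\lambda,\delta,i}^{(\bullet)}(\sigma,A,B)$, the only place where the $\hat{\rho}(\delta(s+\tilde A, t + \tilde B))$ factor enters the upper bound is through the sup norm of $\hat{\rho}$ and its derivatives on the support of $\chi_i^{\bullet}(s,t)$. Since $\hat{\rho}$ is Schwartz, for any integer $N \geq 0$ we have
\begin{equation}
\left\| \nabla^k \hat{\rho}(\delta(s + \tilde A, t + \tilde B)) \right\|_{L^\infty(\operatorname{supp}\chi_i^\bullet)} \lesssim_{k,N,\rho} (1 + \delta M)^{-N}.
\end{equation}
Substituting this bound into the estimates of Paragraphs \ref{subsubsec22HormQuant}--\ref{subsubsec26HormQuant} (and their analogues in Sections \ref{secBicharact}, \ref{secAntipod}) produces a factor $(1+\delta M)^{-N}$ multiplying every previous bound of the form $\lambda^{-\tau} M^{K}$. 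For $M \leq \delta^{-1}$ we split $(1+\delta M)^{-N} = (1+\delta M)^{-(N+3)} (1+\delta M)^{3}$, absorbing $\delta^{-3}$ in the $\delta^{-K}$ factor and three copies of $M^{-1}$ into the announced $M^{-3}$, with $K$ and $N$ chosen large enough depending on the existing polynomial exponent in $M$. For $M \geq \delta^{-1}$, one further uses that $(1+\delta M)^{-N} \lesssim (\delta M)^{-N}$ and chooses $N$ larger than the polynomial degree of the previous bound.

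The main technical obstacle, and what I expect to take most of the work, is that some of the phase-type estimates of Sections \ref{secHormPhase}--\ref{secAntipod} were established via Theorem \ref{mixedVdCABZ} under the technical hypothesis \eqref{technicalhyp} (which required $(\mathcal N(\phi))^2 \mathcal M_{2,d+4}^{(y)}(\phi) \leq \lambda$). Since $\mathcal N(\Psi)$ and $\mathcal M^{(y)}$ typically grow like $M$, this hypothesis ceased to hold once $M$ reaches a certain positive power of $\lambda$; in the proof of Proposition \ref{intermediatethm} this was harmless because $M \lesssim \delta^{-1} \ll \lambda^{1/3}$. Here, on the contrary, $M$ may be as large as we like, so for the range $M \gtrsim \lambda^{\kappa}$ one must replace Theorem \ref{mixedVdCABZ} by its version without the technical hypothesis given in Remark \ref{resultnotechnicalhypo}; the corresponding bound may lose additional polynomial factors in $M$, but these are immediately swallowed by the Schwartz decay of $\hat{\rho}$ at the cost of an arbitrarily large negative power of $\delta M$, which is in turn traded against $\delta^{-K} M^{-3}$ exactly as above. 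Finally, the lower-order cases $\bullet = (j)$ and $\bullet = (\pi)$ with $M \geq M_0$ follow from the same scheme, since their estimates in Sections \ref{secBicharact}, \ref{secAntipod} were also obtained as stationary phase bounds with explicit polynomial dependence on $M$, and so can be improved to $O(\lambda^{-\tau}\delta^{-K})M^{-3}$ by exactly the same trick.
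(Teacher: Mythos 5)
Your proposal is correct and follows essentially the same route as the paper: the paper also observes that parts (i) and (iii) carry over verbatim since only Schwartz bounds on $\hat{\rho}$ are used, converts the $O(\lambda^{-\tau}M^{K})$ bounds of Proposition \ref{intermediatethm} into $O(\lambda^{-\tau}\delta^{-K'})M^{-3}$ via the decay $\|\nabla^k\hat{\rho}\|_{L^\infty}\lesssim (1+\delta M)^{-N}$ on the support of integration, and notes that for $(A,B)$ beyond the range where the technical hypothesis \eqref{technicalhyp} holds one must invoke the variant of Theorem \ref{mixedVdCABZ} given in Remark \ref{resultnotechnicalhypo}, whose extra polynomial losses in $M$ are absorbed by the same Schwartz decay.
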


\myindent 

\myindent Hence, since 
\begin{equation}
    \sum_{(A,B) \in (2\pi Z)^2} M^{-3} < \infty,
\end{equation}
formula \eqref{sumexpressPsharpkernel} and Proposition \ref{firstupperboundslowerbounds} yield that, for some constants $\tau,K > 0$,
\begin{equation}\label{nearasymptotPsharplowerbounds}
    P_{\lambda,\delta}^{\sharp}(x,x) = \lambda \delta (2\pi)^{-2} \left( 2 c_W(\sigma) \hat{\rho}(0,0) + \sum_{\bullet} \sum_{|(A,B)| \leq M_0} (-1)^{\frac{A}{2\pi}} \mathcal{I}_{\lambda,\delta,i}^{\bullet} (\sigma,A,B) + O\left(\lambda^{-\tau} \delta^{-K}\right) \right).
\end{equation}

\myindent Thus, we nearly have an \textit{asymptotic expansion} of the Schwartz kernel  $P_{\lambda,\delta}^{\sharp}(x,x)$. However, the problem is that we a priori only have proved that 
\begin{equation}
    \sum_{\bullet} \sum_{|(A,B)| \leq M_0} (-1)^{\frac{A}{2\pi}} \mathcal{I}_{\lambda,\delta,i}^{\bullet} (\sigma,A,B) = O_{M_0}(1),
\end{equation}
hence the two first terms in the asymptotic expansion are of the same order. In order to remove this problem, we prove the following lemma.

\begin{lemma}\label{O1too1lowerbounds}
    Assume that $\bullet = (1),...,(J_i)$ or $i = 0$ and $\bullet = (\pi)$. Assume that $M\leq M_0$. Then there holds
    \begin{equation}
        \mathcal{I}_{\lambda,\delta,i}^{\bullet}(\sigma,A,B) = o_{M_0}(1).
    \end{equation}
\end{lemma}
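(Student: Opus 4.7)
The plan is to improve the $O_{M_0}(1)$ bound from Section \ref{secBicharact} (case i, and the analogous bound in Section \ref{secAntipod} for the antipodal case) to $o_{M_0}(1)$ by exploiting the oscillations in the $u$ variable that were discarded in the earlier analysis. Since $\{(A,B)\in(2\pi\Z)^2:|(A,B)|\leq M_0\}$ is finite, it will suffice to prove, for each such fixed $(A,B)$, that $\mathcal{I}^{(\bullet)}_{\lambda,\delta,i}(\sigma,A,B)\to 0$ as $\lambda\to\infty$, uniformly in $\delta$ and in $\sigma\in\mathcal{I}_i$.

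The first step is to refine the stationary phase analysis behind the previous bound. Rather than estimating the inner integral in $(s,t,r)$ (or $(s,t,w,r)$) by $O(1)$ and dropping the $u$-oscillations, I plan to apply Theorem \ref{hormstatphase} at the unique smooth family of stationary points parametrized by $u$ (given by \eqref{srhousigmatbichlentgh} in the bicharacteristic case, and by the branches $\mathcal{E}^{\pm}$ of Paragraph \ref{subsubsec12Antipod} in the antipodal case), explicitly factoring out the oscillatory contribution $e^{i\lambda\Psi^{1D}(u)}$. This reduces the problem to showing
\begin{equation*}
\int e^{i\lambda\Psi^{1D}(\sigma,A,B,u)}\,c(\lambda,\delta,\sigma,u)\, du = o(1),
\end{equation*}
where $c$ is a smooth amplitude bounded together with all of its $u$-derivatives uniformly in $(\lambda,\delta,\sigma)$.

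The main step is to establish that, for each fixed $(A,B)$ with $|(A,B)|\leq M_0$, the 1D phase function $\Psi^{1D}$ satisfies Property $(VdC)_p$ of Definition \ref{VdCN} for some finite integer $p$ (possibly depending on $A,B$) with constants uniform in $\sigma\in\mathcal{I}_i$. By the envelope theorem, $\partial_u\Psi^{1D}(u)=-\scal{h'(u)}{(A,B)+(s,t)(\sigma,u)+(s_i^{(\bullet)},t_i^{(\bullet)})}$, and the higher derivatives admit explicit expressions analogous to \eqref{expressionpartial23Psi1D}. The strict convexity of $\gamma$ (Hypothesis \ref{twistV2}, which holds on $\mathfrak{S}$), combined with Lemma \ref{derivativeofubicharact} and the explicit structure of $(s,t)(\sigma,u)$, prevents simultaneous vanishing of the first few $u$-derivatives of $\Psi^{1D}$ uniformly in $\sigma$; this part of the argument proceeds by case analysis over the finitely many admissible $(A,B)$. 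Proposition \ref{usefulVdC} then delivers a $\lambda^{-1/p}$ decay for each such $(A,B)$, which is the desired $o_{M_0}(1)$.

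The main obstacle I anticipate is the case $i=0$, where the bicharacteristic geometry degenerates as $\sigma\to 0$ and the constants in the $(VdC)_p$ estimate threaten to blow up. The plan to handle this is to split the $\sigma$-domain via a smooth cutoff at a scale independent of $\lambda$: for $|\sigma|$ bounded away from zero, the $(VdC)_p$ argument above applies directly; for very small $|\sigma|$, one instead uses that the full $(u,s,t,r)$-Hessian of $\Psi$ becomes nondegenerate (cf.\ Lemma \ref{fullhessneareqatorbichparam}), so that Theorem \ref{ABZthm} can be applied directly to the full five-dimensional integral, yielding a $\lambda^{-1/2}$ decay. Taking the worst rate over the finite set of $(A,B)$ values and patching the two $\sigma$-regimes yields the desired uniform $o_{M_0}(1)$ bound.
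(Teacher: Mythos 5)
Your opening step --- applying the stationary phase theorem in the inner variables along the smooth family of stationary points parametrized by $u$, so as to reduce to a one-dimensional oscillatory integral with phase $\Psi^{1D}(u)=-\scal{h(u)}{(s,t)(u)+(A,B)}$ and a bounded smooth amplitude --- is exactly how the paper's proof begins, and restricting to the finitely many $(A,B)$ with $|(A,B)|\leq M_0$ is also the right reduction. The gap is in your main step: you assert that for each such fixed $(A,B)$ the phase $\Psi^{1D}$ satisfies Property $(VdC)_p$ of Definition \ref{VdCN} for some finite $p$, uniformly in $\sigma$, ``because strict convexity of $\gamma$ together with Lemma \ref{derivativeofubicharact} prevents simultaneous vanishing of the first few derivatives''. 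No argument is actually given for this, and it is precisely the point the paper flags as unavailable: for small fixed $(A,B)$ all quantities entering $\partial_u\Psi^{1D}$, $\partial_u^2\Psi^{1D}$, \dots are $O(1)$, and there is no scale separation of the type $|t''(u)|\gg|t'(u)|^2$ to exploit --- that separation is the whole mechanism behind Lemmas \ref{Psi1DnearEalpha}, \ref{remainingphasefunctbichparam} and \ref{Psi1Dlargesigantipod}, and it is exactly what fails when $M\lesssim 1$ (the proof of Lemma \ref{remainingphasefunctbichparam} says explicitly that for small $M$ ``it is not possible to find a simple analytical argument to find a nonzero derivative''). Since $\mathcal{S}$ is only smooth, not analytic, one cannot even rule out a priori a stationary point of infinite order without some genuine geometric input, so the claimed $(VdC)_p$ property, and hence the $\lambda^{-1/p}$ decay via Proposition \ref{usefulVdC}, does not follow as written.

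The paper's proof avoids this entirely by a softer argument: since only $o(1)$ is needed, not $O(\lambda^{-\tau})$, it suffices that the stationary points of $\Psi^{1D}$ be \emph{isolated}, by the Riemann--Lebesgue-type Lemma \ref{easyoscintlemma}. The equation $(\Psi^{1D})'(u)=0$ is then reinterpreted geometrically: using Proposition \ref{g'eq-omega} (so that $h'(u)$ is colinear to $(-\omega(I),1)$) and the explicit flow formula of Lemma \ref{explicitbicharac}, a stationary point corresponds to a geodesic loop, hence (by the symmetry of $\mathcal{S}$) to a periodic geodesic of period $\lesssim 1+|A|$; the twist Hypothesis \ref{twisthypothesis} then guarantees that only finitely many Clairaut integrals yield $\omega(I)$ rational with bounded denominator, so the stationary points are finite in number and isolated. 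This geometric characterization is the key idea missing from your proposal. (Your fallback for $i=0$ and very small $\sigma$ via the full Hessian, Lemma \ref{fullhessneareqatorbichparam} and Theorem \ref{ABZthm}, is consistent with the paper's earlier estimates, but it does not address the core cases --- the bicharacteristic regime with $i\neq 0$, and the antipodal regime with $\sigma$ not small compared to $M^{-1}$ --- where the $O_{M_0}(1)$ bound came from and where the geometric argument is what upgrades it to $o_{M_0}(1)$.)
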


\begin{proof}
    Fix $(A,B) \in (2\pi\Z)^2$. Coming back to the analysis of Sections \ref{secBicharact} and \ref{secAntipod}, what we actually proved in those sections is that, using the exact stationary phase lemma given by Theorem \ref{hormstatphase},
    \begin{equation}
    \mathcal{I}_{\lambda,\delta,i}^{\bullet}(\sigma,A,B) = \int du e^{i\lambda \Psi^{1D}(u)} f(u) + O_{A,B} (\lambda^{-1}),
    \end{equation}
    for some smooth function $f(u)$ and for the remaining 1D phase function
    \begin{equation}
        \Psi^{1D}(u) = -\scal{h(u)}{(s,t)(u) + (A, B)},
    \end{equation}
    where, by definition, $(s,t)(u)$ are such that the bicharacteristic of $q_1$ starting at $(t(u),\sigma)$ with direction $\xi$ such that
    \begin{equation}
        h(u) \ \text{is colinear to} \ \begin{pmatrix}\label{directionhulowerbounds}
            q_1(\sigma,\xi)\\
            q_2(\sigma,\xi)
        \end{pmatrix}
    \end{equation}
    reaches (the cotangent space at) $(0,\sigma)$ after a time $s(u)$\footnote{In this proof, observe that we don't note explicitly the influence of $(s_i^{\bullet},t_i^{\bullet})$. In particular, in the previous subsections, we typically defined locally some $(s(\sigma,u),t(\sigma,u))$ so that, in this proof, there would hold $(s(u),t(u)) = (s(\sigma,u),t(\sigma,u)) + (s_i^{\bullet},t_i^{\bullet})$.}.
    
    \myindent Now, the reason that we then roughly bounded the integral 
    \begin{equation}
        \int du e^{i\lambda \Psi^{1D}(u)} f(u)
    \end{equation}
    by $O(1)$ is that it is a priori not possible to have uniform useful lower bounds on the derivatives of $\Psi^{1D}$, as explained in Sections \ref{secBicharact} and \ref{secAntipod}, and thus we ultimately can't use Proposition \ref{usefulVdC} to gain a $O(\lambda^{-\tau})$ in the estimates.
    
    \myindent However, if we only wish to gain a $o(1)$, and not a $O(\lambda^{-\tau})$, we don't need uniform bounds on the derivatives. Indeed, we only need to prove that the stationary points of $u \mapsto \Psi^{1D}(u)$ are \textit{isolated}, see Lemma \ref{easyoscintlemma}. Now, this is actually true. Indeed, let us give a geometrical interpretation of the equation for a stationary point, namely 
    \begin{equation}\label{psi'eqzerolowerbounds}
        (\Psi^{1D})'(u) = 0.
    \end{equation}
    
    \myindent Using the fact that, as observed before,
    \begin{equation}
        (\Psi^{1D})'(u) = (\partial_u \Psi)(A,B,u,s(u),t(u)) = -\scal{h'(u)}{(s(u),t(u)) + (A,B)},
    \end{equation}
    we thus find that \eqref{psi'eqzerolowerbounds} is equivalent to
    \begin{equation}\label{partuPsieq0lowerbounds}
        \scal{h'(u)}{(s(u),t(u)) + (A ,B)} = 0.
    \end{equation}
    
    \myindent Now, by definition of the curve $\gamma_0$ (see Definition \ref{deflilgammas}), $h'(u)$ is the tangent to the curve $\gamma_0$ at $h(u)$. Using Proposition \ref{g'eq-omega}, and equation \eqref{directionhulowerbounds}, we thus know that
    \begin{equation}
        h'(u) \ \text{is colinear to} \ (-\omega(I),1),
    \end{equation}
    where $I$ is the Clairaut integral of the direction $\xi$ such that \eqref{directionhulowerbounds} holds. Coming back to \eqref{partuPsieq0lowerbounds}, we ultimately find that \eqref{psi'eqzerolowerbounds} is equivalent to
    \begin{equation}\label{relationstlowerbounds}
        -\omega(I)(s(u) + A) + t(u) + B= 0,
    \end{equation}
    where $I$ is the Clairaut integral of the direction of the unique bicharacteristic joining $(t(u),\sigma)$ to $(0,\sigma)$ in time $s(u)$. This last assertion means that
    \begin{equation}\label{firstgeometricallowerbounds}
        P\left(\Phi^{q_1}_{s(u)} \left(t(u),\sigma,I,\pm \sqrt{1 - I^2 f(\sigma)^{-2}}\right)\right) = (0,\sigma).
    \end{equation}
    
    \myindent Since the Hamiltonian flow of $q_1$ is $2\pi$-periodic, and since $A \in 2\pi \Z$, this implies that
    \begin{equation}\label{secondgeometricallowerbounds}
        P\left(\Phi^{q_1}_{s(u) +  A} \left(t(u),\sigma,I,\pm \sqrt{1 - I^2 f(\sigma)^{-2}}\right)\right) = (0,\sigma).
    \end{equation}
    
    \myindent Now, we know that the Hamiltonian flow $\Phi^{q_1}_s$ is given by the explicit formula of Lemma \ref{explicitbicharac}. Hence, \eqref{secondgeometricallowerbounds} is equivalent to
    \begin{equation}\label{thirdgeometricallowerbounds}
        P\left(\Phi_{\frac{(s(u) + A)}{2\pi} \tau(I)} \circ \Phi^{p_2}_{-(s(u) + A )\omega (I)} \left(t(u),\sigma,I,\pm \sqrt{1 - I^2 f(\sigma)^{-2}}\right)\right) = (0,\sigma).
    \end{equation}
    
    \myindent Now, from relation \eqref{relationstlowerbounds}, we know that
    \begin{equation}
        -(s(u) + A )\omega(I) = -t(u) - B
    \end{equation}.
    
    \myindent Hence, \eqref{thirdgeometricallowerbounds} is equivalent to 
    \begin{equation}\label{fourthgeometricallowerbounds}
        P\left(\Phi_{\frac{(s(u) + A)}{2\pi} \tau(I)} \circ \Phi^{p_2}_{-t(u) - B} \left(t(u),\sigma,I,\pm \sqrt{1 - I^2 f(\sigma)^{-2}}\right)\right) = (0,\sigma).
    \end{equation}
    
    \myindent However, $t \mapsto \Phi^{p_2}_t$ is by definition the rotation around the axis. Hence, since $B \in 2\pi \Z$, there holds that
    \begin{equation}
        \Phi^{p_2}_{-t(u) - B} \left(t(u),\sigma,I,\pm \sqrt{1 - I^2 f(\sigma)^{-2}}\right) = \left(0,\sigma,I,\pm \sqrt{1 - I^2 f(\sigma)^{-2}}\right).
    \end{equation}
    
    \myindent Thus,  \eqref{fourthgeometricallowerbounds} can finally be written as
    \begin{equation}
        P\left(\Phi_{\frac{(s(u) + A)}{2\pi} \tau(I)} \left(0,\sigma,I,\pm \sqrt{1 - I^2 f(\sigma)^{-2}}\right)\right) = (0,\sigma).
    \end{equation}
    
    \myindent Now, this equation means that the geodesic $\gamma(I)$ \textit{self-intersects}, after a time $\frac{s(u) + A}{2\pi} \tau(I)$, i.e. that it is a \textit{geodesic loop}. Since we assume that $\mathcal{S}$ is symmetric, we have already observed that this means that $\gamma(I)$ is periodic, and that a period is $2 \times \frac{s(u) + A}{2\pi} \tau(I)$.
    
    \myindent Overall, we have thus proved the very important following geometrical fact : for some universal constant $K > 0$,
    \begin{multline}
        \left(\Psi^{1D}\right)'(u) = 0 \ \text{if and only if} \\
        \text{the geodesic with Clairaut integral} \ I(u) \ \text{is \textit{periodic} of period smaller that} \ K(1+A).
    \end{multline}
    
    \myindent Here, the Clairaut integral $I(u)$ is defined by 
    \begin{equation}
        h(u) \ \text{colinear to} \ \begin{pmatrix}
            q_1(\sigma, I, \pm \sqrt{1 - I^2 f(\sigma)^{-2}})\\
            q_2(\sigma, I, \pm \sqrt{1 - I^2 f(\sigma)^{-2}})
        \end{pmatrix}.
    \end{equation}

    \myindent Now, to conclude the proof, we need only observe that, since $u \to I(u)$ is a continuous one-to-one correspondence on the interval that we consider, then the stationary points of $\Psi^{1D}$ are isolated because there is a finite number of Clairaut integrals $I \in [-1,1]$ such that the associated geodesic is periodic with period smaller that $K(1 + A)$. Indeed, this last fact follows from the fact that, thanks to the analysis of Paragraph \ref{subsec2CdV}, a geodesic with Clairaut integral $I$ is periodic, with period smaller than $T$, if and only if $\omega(I)$ is a rational number with denominator smaller that $CT$, for some universal constant $C$. Now, with the twist Hypothesis \ref{twisthypothesis}, there are necessarily a finite number of such $I$ in $[-1,1]$.
    \end{proof}
    
\begin{remark}\label{nablaPsieq0interpretation}
    With the analysis of Paragraph \ref{subsubsec33Micro} and with this proof, we are actually finally in a position to interpret the stationary points of the phase $\Psi$ in the full $(u,s,t,\eta)$, where the phase $\Psi$ is defined by \eqref{abstractdefofPsi}. Indeed, we have proved that there holds
    \begin{equation}
        \nabla_{u,s,t,\eta} \Psi(\sigma,A,B,u,s,t,w,\eta) = 0,
    \end{equation}
    if and only if
    
    \myindent i. The \textit{geometric equation} holds, i.e. the bicharacteristic of $q_1$ starting at $(t + t_i^{\bullet},\sigma)$ with direction 
    \begin{equation}
        \nabla_x \phi\left((t + t_i^{\bullet},\sigma),(0,\sigma),\frac{\eta}{|\eta|}\right) 
    \end{equation}
    reaches (the cotangent space at) $(0,\sigma)$ after a time $s + s_i^{\bullet}$.
    
    \myindent ii. The \textit{correspondence equation} holds, i.e. $u$ and $|\eta|$ are fixed by 
    \begin{equation}
        h(u) = |\eta| \begin{pmatrix}
            q_1\left(\sigma, \nabla_x \phi\left((t + t_i^{\bullet},\sigma),(0,\sigma),\frac{\eta}{|\eta|}\right) \right) \\
            q_2 \left(\nabla_x \phi\left((t + t_i^{\bullet},\sigma),(0,\sigma),\frac{\eta}{|\eta|}\right)\right)
        \end{pmatrix}.
    \end{equation}
    
    \myindent iii. If $I$ is the Clairaut integral of the direction $\nabla_x \phi\left((t + t_i^{\bullet},\sigma),(0,\sigma),\frac{\eta}{|\eta|}\right)$, then the \textit{geodesic} with Clairaut integral $\omega(I)$ is \textit{periodic}, and, precisely, that it self-intersects after a time
    \begin{equation}
        \frac{s + s_i^{\bullet} + A}{2\pi} \tau(I).
    \end{equation}
    
    \myindent iv. Finally, $B$ is fixed by the equation
    \begin{equation}
        -(s + s_i^{\bullet} + A)\omega(I) + t + t_i^{\bullet} + B = 0.
    \end{equation}
    
    \quad
    
    \myindent In other words, the stationary points of $\Psi$ correspond exactly to the \textit{periodic geodesics} (which are the same than the geodesic loops in the case of a symmetric surface of revolution, see Section \ref{subsubsec12CdV}). 
\end{remark}

\myindent Overall, Lemma \ref{O1too1lowerbounds} and formula \eqref{nearasymptotPsharplowerbounds} yield the following exact asymptotic expansion of the Schwartz kernel of $P^{\sharp}_{\lambda,\delta}$. 

\begin{proposition}\label{expansionPsharpxx}
    For all $\eps > 0$, and for all $x = (\theta,\sigma) \in K_{\eps}$, there holds
    \begin{equation}
        P_{\lambda,\delta}^{\sharp}(x,x) = 2 (2\pi)^{-2} c_W(\sigma) \hat{\rho}(0,0) \lambda \delta \left(1 + o_{M_0}(1) + O_{M_0} \left(\lambda^{-\tau} \delta^{-K} \right)\right),
    \end{equation}
    where the constant $\tau,K > 0$ are quantifiable.
\end{proposition}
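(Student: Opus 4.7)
The plan is to assemble the statement as a direct consequence of the decomposition \eqref{sumexpressPsharpkernel} together with Proposition \ref{firstupperboundslowerbounds} and Lemma \ref{O1too1lowerbounds}. I start from \eqref{nearasymptotPsharplowerbounds}, which already exhibits the main term $2 c_W(\sigma) \hat\rho(0,0)$ plus the finite sum $\sum_\bullet \sum_{|(A,B)| \leq M_0} (-1)^{A/(2\pi)} \mathcal{I}_{\lambda,\delta,i}^\bullet(\sigma,A,B)$ plus a tail $O(\lambda^{-\tau}\delta^{-K})$, all inside the prefactor $\lambda\delta(2\pi)^{-2}$. So the only thing left to do is to show that the finite sum is $o_{M_0}(1)$ when $M_0$ is fixed.

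To handle the finite sum, I split it according to the type of parametrix. For $\bullet = (H)$ and $(A,B) \neq (0,0)$, case (ii) of Proposition \ref{firstupperboundslowerbounds} applies and already yields $O(\lambda^{-\tau}\delta^{-K}) M_0^{-3}$, which can be swept into the $O(\lambda^{-\tau}\delta^{-K})$ tail by adjusting constants. For the remaining cases, namely $\bullet \in \{(1),\ldots,(J_i)\}$ (and additionally $\bullet=(\pi)$ when $i=0$) with $|(A,B)|\leq M_0$, Lemma \ref{O1too1lowerbounds} is precisely the statement that each such $\mathcal{I}_{\lambda,\delta,i}^\bullet(\sigma,A,B)$ is $o_{M_0}(1)$ as $\lambda\to\infty$. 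Summing over the finitely many $(A,B)$ with $|(A,B)|\leq M_0$ and the finitely many choices of $\bullet$, the total is still $o_{M_0}(1)$.

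Combining, I obtain
\begin{equation*}
P_{\lambda,\delta}^\sharp(x,x) = \lambda\delta(2\pi)^{-2}\Big(2 c_W(\sigma)\hat\rho(0,0) + o_{M_0}(1) + O_{M_0}(\lambda^{-\tau}\delta^{-K})\Big),
\end{equation*}
after absorbing the smoothing remainder $O(\lambda^{-1/3}\delta^{-5/3})$ from \eqref{sumexpressPsharpkernel} into the $\lambda\delta\cdot O(\lambda^{-\tau}\delta^{-K})$ term (possible by lowering $\tau$ and raising $K$, since $\delta\gtrsim\lambda^{-1}$). Factoring out the main coefficient yields the claimed expansion.

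The only genuinely nontrivial input is Lemma \ref{O1too1lowerbounds}, whose proof has already been given in the excerpt and whose heart is the geometric identification of the zeros of $(\Psi^{1D})'$ with Clairaut integrals $I$ such that the geodesic $\gamma(I)$ is periodic of period $\lesssim 1+A$; the twist Hypothesis \ref{twisthypothesis} then guarantees finitely many such $I$, hence isolated stationary points, hence $o_{M_0}(1)$ via Lemma \ref{easyoscintlemma}. Beyond invoking that fact, the proof of Proposition \ref{expansionPsharpxx} is a bookkeeping exercise: no new estimate, no new integration by parts, and no new analysis of the phase is required.
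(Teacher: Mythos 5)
Your proposal is correct and follows essentially the same route as the paper: the paper obtains Proposition \ref{expansionPsharpxx} directly by combining formula \eqref{nearasymptotPsharplowerbounds} (itself a consequence of \eqref{sumexpressPsharpkernel} and Proposition \ref{firstupperboundslowerbounds}, with the smoothing remainder absorbed into the $O(\lambda^{-\tau}\delta^{-K})$ tail) with Lemma \ref{O1too1lowerbounds} applied to the finitely many terms with $|(A,B)|\leq M_0$. Your extra bookkeeping (sweeping the $\bullet=(H)$, $(A,B)\neq(0,0)$ terms into the tail via case (ii)) matches what the paper does implicitly, so there is nothing to add.
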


\myindent We are now in a position to prove lower bounds on the spectral projector $P_{\lambda,\delta}$.

\begin{corollary}\label{lowerbounds}
    Let $\mathcal{S} \in \mathfrak{S}$ (see Definition \ref{defmathfrakS}).Let $K$ be any compact subset of $\mathcal{S}$ which doesn't contain any of the poles. There holds
    \begin{equation}
        \forall \delta \geq \lambda^{-\kappa} \qquad  \|P_{\lambda,\delta}\|_{L^2(\mathcal{S}) \to L^{\infty}(K)} \gtrsim_{\mathcal{S},\eps} \lambda^{\frac{1}{2}} \delta^{\frac{1}{2}},
    \end{equation}
    where $\eps$ is the distance between $K$ and the poles, and where $\kappa > 0$. Moreover, it is possible to find a uniform explicit lower bound for $\kappa$, and it is possible to improve the constant $\kappa$ depending on $K$.
\end{corollary}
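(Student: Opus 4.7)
The plan is to convert the asymptotic of Proposition \ref{expansionPsharpxx} on the smoothed projector into a genuine pointwise lower bound on $P_{\lambda,\delta}(x,x)$, and then to invoke Lemma \ref{bddnormbykernel}. The starting observation is that the choice of the bump function $\rho$ in Definition-Lemma \ref{deffld} is almost free: any smooth nonnegative $\rho$ on $\R^2$ with $\rho \geq 1$ on $B(0,3)$ will do. I would therefore select $\rho$ to be moreover \emph{compactly supported}, say $\mathrm{supp}(\rho) \subset B(0,R)$. For this choice, $\rho_\delta$ is supported in $B(0,R\delta)$, hence $f_{\lambda,\delta} = d\mu_\lambda \ast \rho_\delta$ is supported in the $R\delta$-neighborhood of $\{\sqrt{F_2}=\lambda\}$, and a direct computation (the arclength of $\{\sqrt{F_2}=\lambda\}$ inside a ball of radius $R\delta$ is $O(\delta)$, while $\|\rho_\delta\|_{L^\infty} \lesssim \delta^{-1}$) shows $\|f_{\lambda,\delta}\|_{L^\infty} \lesssim_{\rho} 1$ uniformly in $\lambda,\delta$.

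The second step is to compare $P^\sharp_{\lambda,\delta}$ and $P_{\lambda,K\delta}$ for some constant $K=K(\mathcal{S},R)$. Writing everything in the joint eigenbasis of $(Q_1,Q_2)$ (in which both $f_{\lambda,\delta}(Q_1,Q_2)$ and the sharp projector are diagonal), and using that the support of $f_{\lambda,\delta}$ only meets joint eigenvalues with $\sqrt{F}$ in a window $[\lambda - K\delta, \lambda + K\delta]$ (after absorbing the $F-F_2$ error as in the proof of Definition-Lemma \ref{deffld}), I obtain
\begin{equation*}
    P^\sharp_{\lambda,\delta}(x,x) \leq \|f_{\lambda,\delta}\|_{L^\infty}\, P_{\lambda, K\delta}(x,x) \lesssim P_{\lambda, K\delta}(x,x).
\end{equation*}
Combining this with the asymptotic $P^\sharp_{\lambda,\delta}(x,x) \gtrsim_{\mathcal{S},\eps} \lambda\delta$ furnished by Proposition \ref{expansionPsharpxx}, valid as soon as $\delta \geq \lambda^{-\kappa'}$ with $\kappa' > 0$ chosen so that $\lambda^{-\tau}\delta^{-K_0}=o(1)$ and $o_{M_0}(1)$ is less than one-half of the leading coefficient, gives $P_{\lambda,K\delta}(x,x) \gtrsim \lambda\delta \simeq \lambda(K\delta)$. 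A trivial rescaling $\delta \to \delta/K$ then yields $P_{\lambda,\delta}(x,x) \gtrsim_{\mathcal{S},\eps} \lambda\delta$ uniformly for $x \in K$ and $\delta \geq \lambda^{-\kappa}$ with $\kappa = \kappa'$ (up to an absolute constant). Finally, since $P_{\lambda,\delta}$ is a self-adjoint orthogonal projection, $P_{\lambda,\delta}P_{\lambda,\delta}^\ast = P_{\lambda,\delta}$, so Lemma \ref{bddnormbykernel} gives $\|P_{\lambda,\delta}\|_{L^2(\mathcal{S})\to L^\infty(K)}^2 = \sup_{x\in K} P_{\lambda,\delta}(x,x) \gtrsim \lambda\delta$, which is the claimed lower bound.

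The main obstacle is to verify that Proposition \ref{expansionPsharpxx} really is available for the compactly supported $\rho$ just selected, since the resummation in Paragraph \ref{subsubsec25Micro} relies on Schwartz decay of $\hat\rho$. For $\rho \in \mathcal{C}^\infty_c$, $\hat\rho$ is an entire function whose restriction to $\R^2$ is Schwartz, so all decay estimates used in the proof of Proposition \ref{firstupperboundslowerbounds} (and hence Proposition \ref{expansionPsharpxx}) continue to apply, with implicit constants depending on seminorms of $\hat\rho$ that are finite once $\rho$ is fixed; this is the technical check that has to be carried out in detail. The explicit value of $\kappa$ is governed by the pair $(\tau,K_0)$ coming from the upper bounds in Proposition \ref{firstupperboundslowerbounds}, and the improvement of $\kappa$ for compact sets $K$ staying away from the equator follows at once, by tracing which of the refined $i \neq 0$ estimates of Proposition \ref{intermediatethm} enter, so the optimized exponent for $\kappa$ on $K$ is inherited from the corresponding upper bound side without additional work.
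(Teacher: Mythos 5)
Your argument is correct and follows essentially the same route as the paper: pick a compactly supported bump $\rho$, dominate the smoothed diagonal kernel by that of the sharp projector via a pointwise comparison in the joint spectral variables of $(Q_1,Q_2)$, invoke Proposition \ref{expansionPsharpxx} (whose validity for Schwartz $\rho$ with non-compactly-supported $\hat\rho$ is precisely what Proposition \ref{firstupperboundslowerbounds} provides), and conclude with Lemma \ref{bddnormbykernel}. The only cosmetic difference is that you enlarge the spectral window to $K\delta$ and rescale, whereas the paper takes $\rho$ supported in a sufficiently small ball so that $\mathds{1}_{[\lambda-\delta,\lambda+\delta]}(\sqrt{F}) \gtrsim f_{\lambda,\delta}$ directly.
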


\begin{proof}
    \myindent Using the exact result of Lemma \ref{bddnormbykernel}, let us write
    \begin{equation}
        \|P_{\lambda,\delta}\|^2_{L^2(\mathcal{S})\to L^{\infty}(K)} = \sup_{x\in K} \mathds{1}_{[\lambda-\delta, \lambda + \delta]}(\sqrt{-\Delta})(x,x)
    \end{equation}

    Now, with a similar proof than the proof of Definition-Lemma \ref{deffld}, we can prove that, if $\rho$ is a smooth nonzero bump function, which is nonnegative and compactly supported on a sufficiently small ball, then, for all $(q_1,q_2) \in \R^2$, if $F$ is defined by \eqref{defFinThmCdV},
    \begin{equation}
        \mathds{1}_{[\lambda-\delta, \lambda + \delta]}(\sqrt{F(q_1,q_2)}) \gtrsim f_{\lambda,\delta}(q_1,q_2),
    \end{equation}
    where $f_{\lambda,\delta}$ is defined by \eqref{defflambdadelta}. In particular, using Theorem \ref{CdVthm} this means that, for all $x$,
    \begin{equation}
        \mathds{1}_{[\lambda-\delta, \lambda + \delta]}(\sqrt{-\Delta})(x,x) \gtrsim P_{\lambda,\delta}^{\sharp}(x,x),
    \end{equation}
    where we recall that the smoothed spectral projector $P_{\lambda,\delta}^{\sharp}$ is defined by
    \begin{equation}
        \tilde{P}_{\lambda,\delta}^{\sharp} = \sqrt{f_{\lambda,\delta}} (Q_1,Q_2).
    \end{equation}
    
    \myindent We can then conclude the proof using Proposition \ref{expansionPsharpxx}, and $\kappa := \frac{\tau}{K}$, and the fact that
    \begin{equation}
        \hat{\rho}(0,0) = \int_{\R^2} \rho > 0.
    \end{equation}
    
    \myindent The meaning of the last sentence in the proposition is simply that the constants $\tau, K$ in Proposition \ref{expansionPsharpxx} are quantifiable, thus $\kappa$ is quantifiable as well. Moreover, local or global improvements in $\kappa$ are directly obtained from local or global improvements in the constants $\tau,K$ of Proposition \ref{expansionPsharpxx}.
\end{proof}

\begin{remark}
    Regarding lower bounds at the poles, as we mentioned in the introduction, a consequence of \cite{donnelly1978g} is that there exists a constant $R_0 > 0$ depending on $\mathcal{S}$ such that, if $K$ is a compact subset of $\mathcal{S}$ which contains at least one of the poles, then for all $\lambda_0$, there exists a $\lambda$ with $|\lambda - \lambda_0|\leq R_0$ such that
    \begin{equation}
        \|P_{\lambda,\delta} \|_{L^2(\mathcal{S}) \to L^{\infty}(K)} \geq \|P_{\lambda,0}\|_{L^2(\mathcal{S}) \to L^{\infty}(K)} \gtrsim \lambda^{\frac{1}{2}}.
    \end{equation}
\end{remark}

\subsection{Improvements, optimality, and where we see the (conjectured) dominant term}\label{subsec2Further}

\myindent In this section, we discuss possible improvements on the results, and what would be optimal results. Precisely, we ask the following question
\begin{equation}
    \text{What is the largest constant} \ \kappa > 0 \ \text{such that Theorem \ref{mainthm} holds for}\ \delta > \lambda^{-\kappa}\ ?,
\end{equation}
and we will also discuss as a corollary the similar question for lower bounds, i.e. Corollary \ref{lowerbounds}. Since we are only able to give conjectures for the moment, this section is not written very rigorously, and is intended to be rather heuristic.

\myindent For the moment, the main limiting factor for improvements is the estimates of Proposition \ref{intermediatethm}, and, precisely, the exponents appearing on $M$ in the bounds of the different $\mathcal{I}_{\lambda,\delta,i}^{\bullet}(\sigma,A,B)$, as explained in the resummation process, see \eqref{resummation}. In order to prove the different bounds of Proposition \ref{intermediatethm}, we have always ultimately used abstract theorems, such as Theorem \ref{mixedVdCABZ} or Theorem \ref{ABZthm}. The idea, to improve the estimates of Proposition \ref{intermediatethm}, is to be more careful in the estimates for each of the $\mathcal{I}_{\lambda,\delta,i}^{\bullet}(\sigma,A,B)$, i.e. to replicate the proof for example of Theorem \ref{mixedVdCABZ} but using the precise geometry in each case, to track exactly how many powers of $M$ one loses in the oscillatory integral estimates, namely when splitting the integrals into different zones, and when integrating by parts or bounding by absolute value on each. 

\myindent In order to have intuition on what one could expect from this method, we can use the a priori estimates of Theorem \ref{hormstatphase}. Indeed, let us consider, for example, the \textit{worst} term, in terms of powers of $\lambda$, that is the factor $O\left(\lambda^{-\frac{1}{4}} M^5\right)$ which appears in estimate \eqref{estIHAB}. We recall that this term comes from the case where we estimate near the branching points $P_{\alpha}$, in the case where $|\partial_u(P_{\alpha})| \lesssim M^{-1}$, see Paragraph \ref{subsubsec25HormQuant}. We recall that we isolated the variable $w$, in order to write
\begin{equation}\label{isolatewimprove}
    \mathcal{I}_{P_{\alpha}} = \int dw e^{i\lambda\Psi^{1D}(w)} \left(\lambda^2\int \int e^{i\lambda(\Psi - \Psi^{1D})} \chi_{P_{\alpha}} b du ds dt dr\right).
\end{equation}

\myindent Now, for the inner integral, we have proved in Proposition \ref{delicatecase} that the phase has exactly one stationary point $Z_{\sigma,A,B}(w)$, at which the Hessian is nondegenerate. Hence, the usual nondegenerate stationary phase Theorem \ref{hormstatphase} yields a priori that 
\begin{multline}\label{innerintegralimprove}
    \lambda^2\int \int e^{i\lambda(\Psi - \Psi^{1D})} \chi_{P_{\alpha}} b du ds dt dr \\= \frac{(2\pi)^{2}e^{i\frac{\pi}{4} sgn(\nabla_{u,s,t,r}^2\Psi(Z(w)))} }{|\det (\nabla_{u,s,t,r}^2 \Psi (Z(w)))|^{\frac{1}{2}}} b(Z(w)) + O_{\Psi} (\lambda^{-1}) =: f(w) + O_{\Psi}(\lambda^{-1}).
\end{multline}

\myindent Now, thanks to Proposition \ref{delicatecase}, and particularly to (an immediate generalization of) Lemma \ref{nondegenerateustrhohess}, there holds
\begin{equation}
    |\det (\nabla_{u,s,t,r}^2 \Psi (Z(w)))| \simeq 1.
\end{equation}

\myindent Hence, thanks to the lower bound on the fourth derivative of $\Psi^{1D}$ given by \eqref{4thderivativesPsi1Ddelicatecase}, the Van der Corput Lemma \ref{VdCthm} yields (roughly, we ignore $f'$)
\begin{equation}
    \int e^{i\lambda \Psi^{1D}(w)} f(w) dw = O(\lambda^{-\frac{1}{4}} M^{-\frac{1}{4}}).
\end{equation}

\myindent Now, since it is possible that the first three derivatives of $\Psi^{1D}$ vanish at the same time (see Paragraph \ref{subsubsec45Hormphase}), it is reasonable to conjecture that this bound cannot be improved. Overall, we thus conjecture that the $O\left(\lambda^{-\frac{1}{4}} M^5\right)$ in \eqref{estIHAB} could be improved up to
\begin{equation}\label{improvedIPalpha}
    \mathcal{I}_{P_{\alpha}}= O\left(\lambda^{-\frac{1}{4}} M^{-\frac{1}{4}}\right),
\end{equation}
and that this term appears if and only if condition \eqref{smallpartuPsiPalpha} holds, i.e. if and only if
\begin{equation}\label{1DsetABcondition}
    |\scal{h'(u_{\sigma}(\alpha))}{(A,B)}| \lesssim (1 + |(A,B)|)^{-1}.
\end{equation}

\myindent We claim that we are actually able to prove the upper bound \eqref{improvedIPalpha}. While we have not done all the computations, we conjecture that one could similarly improve most of the estimates of Proposition \ref{intermediatethm}, at least in the case $i\neq 0$. Hence, we claim the following, which we choose to keep vague
\begin{equation}
    \text{The constant} \ \frac{1}{32} \ \text{in Theorem \ref{mainthm} can be quantitatively improved, at least away from the equator}.
\end{equation}

\myindent Rather than giving the best improvement that we have been able to prove, we focus on a conjecture regarding the \textit{optimal} result. Indeed, it is not so useful to obtain quantitative improvements, if they are not optimal. Now, from similar intuition that what has been presented above, we actually believe that the dominant term in the estimates for large $M$ is exactly given by the contribution of $\mathcal{I}_{P_{\alpha}}$, in the case where \eqref{1DsetABcondition} holds, at least away from the equator. Indeed, it is the only term of order $O(\lambda^{-\frac{1}{4}})$, where all the other terms are at least of order $O(\lambda^{-\frac{1}{3}})$. Roughly, we thus conjecture that there should hold, with the notations introduced in Definition-Lemma \ref{deffld}, and using \eqref{nearasymptotPsharplowerbounds} and Lemma \ref{O1too1lowerbounds}, for all $x = (\theta, \sigma)$, at least where $\sigma \notin \mathcal{I}_0$,
\begin{multline}\label{conjecturedPsharpimprove}
    P_{\lambda,\delta}^{\sharp}(x,x) = \lambda \delta (2\pi)^{-2} \bigg(2 c_W(\sigma)\hat{\rho}(0,0) + o_{M_0}(1) +  \sum_{\alpha = 0,\pi} \sum_{0 < |(A,B)| \lesssim \delta^{-1},  |\scal{h'(u_{\sigma}(\alpha))}{(A,B)}|\lesssim |(A,B)|^{-1}} (-1)^{\frac{A}{2\pi}} \mathcal{I}_{P_{\alpha}} \\
    +\ \text{lower order terms}\bigg).
\end{multline}

\myindent Now, observing than the set of $(A,B) \in (2\pi \Z)^2$ such that \eqref{1DsetABcondition} holds is \textit{1-dimensional}, in the sense that, for all real $K > - 1$,
\begin{equation}
     \sum_{0 < |(A,B)| \lesssim \delta^{-1}  |\scal{h'(u_{\sigma}(\alpha))}{(A,B)}|\lesssim |(A,B)|^{-1}} M^{K} \lesssim \delta^{-K - 1},
\end{equation}
we recover from \eqref{improvedIPalpha} and \eqref{conjecturedPsharpimprove} the conjecture, for $x = (\theta,\sigma)$ away from both the Poles and the equator,
\begin{equation}
    P_{\lambda,\delta}^{\sharp}(x,x) = \lambda \delta (2\pi)^{-2} \left(2 c_W(\sigma)\hat{\rho}(0,0) + o_{M_0}(1) + O_{M_0}\left(\lambda^{-\frac{1}{4}} \delta^{-\frac{3}{4}}
    +\right)\ \text{lower order terms}\right),
\end{equation}
where the lower order terms should typically be of the form
\begin{equation}
    O(\lambda^{-\tau} \delta^{-K}),
\end{equation}
for some constants $\tau,K > 0$ such that $\frac{\tau}{K} \geq \frac{1}{3}$. 

\myindent As a conclusion, from this very non rigorous numerology, we conjecture the following.
\begin{conjecture}
    Let $\mathcal{S} \in \mathfrak{S}$ (see Definition \ref{defmathfrakS}), let $\eps > 0$, and let $K$ be any compact set of $\mathcal{S}$ which is at a distance at least $\eps$ from both the Poles and the equator. Then, Theorem \ref{mainthm} holds up to $\lambda^{-\frac{1}{3}}$, i.e.
    \begin{equation}
        \forall \delta \geq \lambda^{-\frac{1}{3}} \qquad \|P_{\lambda,\delta} \|_{L^2(\mathcal{S}) \to L^{\infty}(K)} \lesssim_{\mathcal{S},\eps} \lambda^{\frac{1}{2}} \delta^{\frac{1}{2}}.
    \end{equation}
    
    \myindent Furthermore, we conjecture that the lower bound of Corollary \ref{lowerbounds} also holds up to $\lambda^{-\frac{1}{3}}$.
    
    \myindent Finally, we conjecture that, for \textit{any} $\mathcal{S}$, the constant $\frac{1}{3}$ is \textit{optimal}, in the sense that Theorem \ref{mainthm} doesn't hold up to $\lambda^{-\tau}$ for any $\tau > \frac{1}{3}$.
\end{conjecture}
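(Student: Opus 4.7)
The plan is to refine the analysis of Proposition \ref{intermediatethm} near the branching points $P_\alpha$, which the numerology in Section \ref{subsec2Further} identifies as responsible for the dominant contribution. Concretely, I would first sharpen the estimate for $\mathcal{I}_{P_\alpha}$ in the delicate case \eqref{smallpartuPsiPalpha}: isolating the variable $w$ as in \eqref{isolatewimprove}, I apply the exact Hörmander stationary phase of Theorem \ref{hormstatphase} to the inner $(u,s,t,r)$-integral. Since Proposition \ref{delicatecase} yields $\mathcal{D}(\Psi) \gtrsim 1$ uniformly in $M$ and all derivatives of the amplitude are bounded, the inner integral equals a smooth function $f(w)$ plus an $O(\lambda^{-1})$ remainder; the crucial first step is to verify that $f$ and its low-order $w$-derivatives are bounded \emph{uniformly in $M$}. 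This should rest on the anisotropic scaling $|u-u_\sigma(\alpha)|\lesssim M^{-2}$, $|(s,t,r)-(0,0,r_\sigma(\alpha))|\lesssim M^{-1}$ of the localization $\chi_{P_\alpha}$, which exactly absorbs the $M$-losses that occur when differentiating $\Psi$ in $u$ (and for which Remark \ref{nonisotropicmixedABZVdC} was engineered).

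Applying Proposition \ref{usefulVdC} with the bound \eqref{4thderivativesPsi1DdelicatecaseAntipod} on $(\partial_w)^4 \Psi^{1D} \gtrsim M$ over an interval of length $|I|\simeq M^{-1}$ should then give the improved estimate $\mathcal{I}_{P_\alpha} = O(\lambda^{-1/4} M^{-1/4})$. The key observation is that this contribution is nonzero only when the Diophantine condition \eqref{1DsetABcondition} holds, cutting $(2\pi\Z)^2$ down to a one-dimensional tubular neighborhood of the line $\R (h'(u_\sigma(\alpha)))^\perp$. Consequently,
\begin{equation*}
\sum_{\substack{|(A,B)|\lesssim \delta^{-1} \\ |\scal{h'(u_\sigma(\alpha))}{(A,B)}|\lesssim |(A,B)|^{-1}}} |\mathcal{I}_{P_\alpha}| \lesssim \lambda^{-1/4} \sum_{M\lesssim \delta^{-1}} M^{-1/4} \cdot M^{-1} \cdot M \lesssim \lambda^{-1/4} \delta^{-3/4},
\end{equation*}
which is $O(1)$ precisely when $\delta \gtrsim \lambda^{-1/3}$. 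I would next revisit every other block of Proposition \ref{intermediatethm} away from the equator, checking by the same refined stationary phase (with Theorem \ref{hormstatphase} replacing the quantitative Theorem \ref{ABZthm} wherever the Hessian is nondegenerate uniformly in $M$) that each $\mathcal{I}_{\lambda,\delta,i}^{\bullet}(\sigma,A,B)$ satisfies a bound of the form $O(\lambda^{-\tau} M^{-\tau'})$ with $\tau/(K+\tau') \geq 1/3$ upon resummation; the estimates near $\mathcal{C}_\sigma^\pm$ yield $\lambda^{-1/2}M^{-1/2}$ and near $\mathcal{E}_{\sigma,\alpha}^\pm$ (for $|\sigma|\gtrsim \eps$) yield $\lambda^{-1/3}M^{-1/3}$, both comfortably resummable beyond the $\lambda^{-1/3}$ threshold.

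The matching lower bound of Corollary \ref{lowerbounds} at the scale $\lambda^{-1/3}$ follows at once from the refined asymptotic \eqref{conjecturedPsharpimprove}, since Lemma \ref{O1too1lowerbounds} already guarantees that the finite sum of correction terms is $o(1)$ and hence cannot cancel the leading $2(2\pi)^{-2} c_W(\sigma)\hat\rho(0,0) \lambda\delta$. The most delicate part is the \emph{optimality}: one must exhibit, for every simple symmetric surface of revolution and every $\tau>1/3$, a sequence $\lambda_j\to\infty$ and directions $(A_j,B_j)$ satisfying \eqref{1DsetABcondition} along which the sum in \eqref{conjecturedPsharpimprove} attains size $\lambda_j^{1/2} \delta^{1/2}\cdot \lambda_j^{(\tau-1/3)/2}$. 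The natural route is to build, following Colin de Verdière's quasimode construction \cite{de1977quasi}, coherent superpositions of joint eigenfunctions of $(Q_1,Q_2)$ concentrated on the \emph{unstable} periodic geodesics associated to rational values of $\omega(I)$ near the fold $I=\pm 1$, where the lack of transverse curvature of $\gamma_0$ forces the $M^{-1/4}$ gain, and no better, in the Van der Corput step; this is the precise geometric obstruction that pins down the exponent $1/3$. The main obstacle of the whole program is controlling the refined stationary phase \emph{uniformly} along the Diophantine set where \eqref{1DsetABcondition} is saturated — one must integrate by parts in $u$ inside the error term of Theorem \ref{hormstatphase} to recover the factor $M^{-1}$ that would otherwise be lost when $\partial_u \chi_{P_\alpha}$ hits the amplitude at scale $M^2$.
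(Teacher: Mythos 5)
The statement you are addressing is a \emph{conjecture}: the paper offers no proof of it, only the heuristic numerology of Section \ref{subsec2Further}, and your proposal essentially reproduces that heuristic (improve $\mathcal{I}_{P_\alpha}$ to $O(\lambda^{-\frac14}M^{-\frac14})$ under \eqref{1DsetABcondition}, exploit the one-dimensionality of that set of $(A,B)$, resum to $\lambda^{-\frac14}\delta^{-\frac34}$) rather than closing it. The two points on which the whole program hinges are exactly the ones you leave open. First, the uniform-in-$M$ control of the inner stationary-phase output $f(w)$ and, crucially, of $f'(w)$ near $P_\alpha$: Theorem \ref{hormstatphase} gives a remainder whose constant depends on the phase and amplitude, and here the amplitude contains $\chi_{P_\alpha}$, whose $u$-derivative is of size $M^2$, while differentiating in $w$ also brings in $Z_{\sigma,A,B}'(w)$; you name this as ``the main obstacle'' but do not resolve it, and flagging the obstacle is not a proof. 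Second, your optimality mechanism is misidentified. Under the twist Hypothesis \ref{twisthypothesis} the curve $\gamma_0$ has nowhere-vanishing curvature, so there is no ``lack of transverse curvature at the fold $I=\pm1$''; the fourth-order degeneracy forcing the $M^{-\frac14}$ gain lives at the branching points $P_\alpha$, i.e.\ at the fold caustic of the Lagrangian torus over $(0,\sigma)$, and the paper's optimality heuristic proceeds by summing the coherent contributions over the line $\mathcal{D}_\alpha=\{k(A_0,B_0)\}$ at latitudes where the horizontal geodesic is periodic, along a subsequence of $\lambda$ — not by Colin de Verdi\`ere quasimodes, whose construction in \cite{de1977quasi} attaches to \emph{stable} closed geodesics and in any case would still have to be converted into a lower bound beating $\lambda^{\frac12}\delta^{\frac12}$.

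There is also a concrete numerical gap in your treatment of the non-dominant blocks. Summing a bound $\lambda^{-\frac13}M^{-\frac13}$ over the full lattice $\{(A,B):\ 0<|(A,B)|\lesssim\delta^{-1}\}$ costs $\sum_{M\lesssim\delta^{-1}} M\cdot M^{-\frac13}\simeq\delta^{-\frac53}$, so this term is $O(1)$ only for $\delta\gtrsim\lambda^{-\frac15}$, not $\lambda^{-\frac13}$; your claim that the $\mathcal{E}_{\sigma,\alpha}^{\pm}$ contribution is ``comfortably resummable beyond the $\lambda^{-1/3}$ threshold'' is therefore false as stated. To reach $\lambda^{-\frac13}$ you would need either the stronger bound $\lambda^{-\frac12}M^{-\frac12}$ (which is exactly borderline, since it resums to $\lambda^{-\frac12}\delta^{-\frac32}$), or a reduction of the relevant $(A,B)$ to a one-dimensional set as for $P_\alpha$, and neither is established in your sketch. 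Until the uniform stationary-phase constants, the improved branch bounds, and a genuine lower-bound construction are supplied, the statement remains, as in the paper, a conjecture.
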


\myindent We now justify the last assertion of the conjecture, i.e. the (conjectured) optimality of $\frac{1}{3}$. In order to do so, recall that we have conjectured that the dominant term is given by the $\mathcal{I}_{P_{\alpha}}$, in the case where
\begin{equation}
    \scal{h'(u_{\sigma}(\alpha))}{(A,B)} \lesssim |(A,B)|^{-1}.
\end{equation}

\myindent Let us study the case where there actually holds exactly
\begin{equation}\label{exactconditionimprove}
    \scal{h'(u_{\sigma}(\alpha)}{(A,B)} = 0.
\end{equation}

\myindent Coming back to the analysis of Section \ref{secHormPhase}, and using the notations of this section, and in particular of Section \ref{subsec4HormPhase}, this means that
\begin{equation}
    \partial_u \Psi(P_{\alpha}) = 0,
\end{equation}
which ultimately means that $P_{\alpha}$ is a "true" stationary point of $\Psi$ in the full $(u,s,t,w,r)$ variable, see the discussion of Paragraph \ref{subsubsec42Hormphase}. Now, in that case, we have actually proved in Paragraph \ref{subsubsec45Hormphase} that there exactly holds
\begin{equation}\label{vanish3derivesimprove}
    \begin{split}
        &\left(\Psi^{1D}\right)'(0) = \left(\Psi^{1D}\right)''(0) = \left(\Psi^{1D}\right)^{(3)}(0) = 0 \\
        &\left|\left(\Psi^{1D}\right)^{(4)}(0)\right| \gtrsim M.
    \end{split}
\end{equation}

\myindent Now, coming back to formula \eqref{isolatewimprove}, and using formula \eqref{innerintegralimprove}, we know that, a priori
\begin{equation}\label{1Doscintimprove}
    \mathcal{I}_{P_{\alpha}} = \int dw e^{i\lambda \Psi^{1D}(w)} f(w) + O_{\Psi}(\lambda^{-1}).
\end{equation}

\myindent Hence, if we use, instead of the upper bound given by the Van der Corput Lemma (see Theorem \ref{VdCthm}), the asymptotic expansion of the oscillatory integral \eqref{1Doscintimprove} under the condition \eqref{vanish3derivesimprove}, which is given for example in \cite{stein1993harmonic}[Proposition 3, Chapter VIII], we thus find that, for some positive constant $C_{\sigma,A,B}$ wich is bounded away from zero and from $+\infty$ independently of $M$,
\begin{equation}\label{IPalphaoptimal}
    \mathcal{I}_{P_{\alpha}} = C_{\sigma,A,B}\lambda^{-\frac{1}{4}} M^{-\frac{1}{4}} e^{i\lambda \Psi^{1D}(0)} + O_{\Psi}\left(\lambda^{-\frac{1}{2}}\right),
\end{equation}
where we could use \cite{stein1993harmonic}[Remark 1.3.4, 2, Chapter VIII] to observe that the term of order $\lambda^{-\frac{1}{2}}$ vanish, and that the remainder is of order $O_{\Psi}\left(\lambda^{-\frac{3}{4}}\right)$, but thus is of no importance to the analysis since we only give conjectures.

\myindent Now, we are in position to \textit{compute} the $\sigma,\alpha$ for which the exact equality \eqref{exactconditionimprove} holds. Indeed, with exactly the same analysis which led to Remark \ref{nablaPsieq0interpretation}, this equality holds if and only if 
\begin{multline}\label{conditionimproveoptimal}
    \text{The \textit{geodesic} starting at} \ (0,\sigma) \ \text{with \textit{horizontal} direction is periodic, of period} \ \frac{A}{2\pi} \tau(I_{\sigma,\alpha}),\\
    \text{and there holds} \ \frac{A}{B} = \omega(I_{\sigma,\alpha}).
\end{multline}

\myindent Here, $I_{\sigma,\alpha}$ is the Clairaut integral of the horizontal direction in the cotangent space $T_{(0,\sigma)}^{*}(\mathcal{S})$, with a sign, i.e., thanks to \eqref{normsurfrev},
\begin{equation}
    I_{\sigma,0} = f(\sigma) \qquad I_{\sigma,\pi} = - f(\sigma).
\end{equation}

\myindent Now, the $(\sigma,A,B)$ such that \eqref{conditionimproveoptimal} holds are easy to compute. Indeed, \eqref{conditionimproveoptimal} can hold for some $(A,B) \in (2\pi \Z)^2 \backslash\{(0,0)\}$ if and only if the geodesic $\gamma(I_{\sigma,\alpha})$ (we use the notation $\gamma(I)$ from Section \ref{subsec2CdV}) starting at $(0,\sigma)$ with horizontal direction is periodic. If it is the case, then we know that $\omega(I_{\sigma,0}) = - \omega(I_{\sigma,\pi})$ is a rational number (see Section \ref{subsec2CdV}). Hence, the sets
\begin{equation}
    \mathcal{D}_{\alpha} := \left\{(A,B) \in (2\pi \Z)^2\backslash\{(0,0)\} \ \text{such that} \ \frac{A}{B} = \omega(I_{\sigma,\alpha})\right\}
\end{equation}
are exactly two straight lines, of slope $\omega(I_{\sigma,\alpha})$. Moreover, if $(A,B) \in \mathcal{D}_{\alpha}$, then there holds necessarily that $\frac{A}{2\pi}\tau(I_{\sigma,\alpha})$ is a period of the geodesic $\gamma(I_{\sigma,\alpha})$. Hence, ultimately, there exists $(A_0,B_0) \in (2\pi \Z)^2 \backslash\{(0,0)\}$ such that 
\begin{equation}
    \mathcal{D}_0 = \{k(A_0,B_0), \ k \in \Z\backslash\{0\}\} \qquad \mathcal{D}_{\pi} = \{k(A_0,-B_0), \ k \in \Z\backslash\{0\}\}.
\end{equation}

\myindent Coming back to the conjectured expansion \eqref{conjecturedPsharpimprove}, and with the formula \eqref{IPalphaoptimal}, and with the observation that since $h(u_{\sigma}(0))$ and $h(u_{\sigma}(\pi))$ are symmetric with respect to the horizontal axis there holds
\begin{equation}
    \scal{h(u_{\sigma}(0))}{(A_0,B_0)} = \scal{h(u_{\sigma}(\pi))}{(A_0,-B_0)},
\end{equation}
it is reasonable to conjecture that, if the geodesic $\gamma(I_{\sigma,0})$ starting at $(0,\sigma)$ with horizontal direction is periodic, and, finally, if $A_0 \in 4\pi \Z$, then there holds
\begin{multline}
     P_{\lambda,\delta}^{\sharp}(x,x) = \lambda \delta (2\pi)^{-2} \bigg(2 c_W(\sigma)\hat{\rho}(0,0) + o_{M_0}(1) +  2\lambda^{-\frac{1}{4}} \sum_{0 < |k| \lesssim \delta^{-1}} C_{\sigma,k} e^{-ik\lambda \scal{h(u_{\sigma}(0)}{(A_0,B_0)}} k^{-\frac{1}{4}} \\
    +\ \text{lower order terms}\bigg).
\end{multline}

\myindent Thus, ultimately, since there is at least an unbounded sequence of $\lambda$ for which
\begin{equation}
    \lambda \scal{h(u_{\sigma}(0))}{(A_0,B_0)} \in 2\pi \Z,
\end{equation}
we have thus found that, for an unbounded sequence of $\lambda$, there should hold
\begin{multline}
     P_{\lambda,\delta}^{\sharp}(x,x) = \lambda \delta (2\pi)^{-2} \bigg(2 c_W(\sigma)\hat{\rho}(0,0) + o_{M_0}(1) +  2\lambda^{-\frac{1}{4}} \sum_{0 < |k| \lesssim \delta^{-1}} C_{\sigma,k} k^{-\frac{1}{4}} \\
    +\ \text{lower order terms}\bigg).
\end{multline}

\myindent Now, since the constants $C_{\sigma,k}$ are uniformly bounded away from zero, observe that there holds
\begin{equation}
    2\lambda^{-\frac{1}{4}} \sum_{0 < |k| \lesssim \delta^{-1}} C_{\sigma,k} k^{-\frac{1}{4}} \gtrsim \lambda^{-\frac{1}{4}} \delta^{-\frac{3}{4}}.
\end{equation}

\myindent Since there is a \textit{dense} set of $\sigma$ such that the geodesic starting at $(0,\sigma)$ with horizontal direction is periodic, and such that $A_0 \in 4\pi \Z$ (it means that the geodesic wraps and even number of times around the vertical axis before passing again through the starting point for the first time), 
this is ultimately why we expect that $\frac{1}{3}$ is in general the optimal constant.

\quad

\myindent To conclude this section, we give two final remarks.

\myindent First, let us observe that, in the case of the simplest QCI geometry, which is the regular flat torus $\mathbb{T}^2$, one can go \textit{above} the constant $\frac{1}{3}$, and it is conjectured that the best constant is at least $1$ (see \eqref{conjGermain}). Now, for the flat torus $\mathbb{T}^2$, with coordinates $(x,y)$, one could similarly factorize the Laplacian as
\begin{equation}
    -\Delta = \left(i\partial_x\right)^2 + \left(i\partial_y \right)^2 =: Q_1^2 + Q_2^2,
\end{equation}
where there holds
\begin{equation}
    e^{2i\pi Q_i} = Id. 
\end{equation}

\myindent Hence, a similar kind of analysis would hold (we can't use the usual parametrices since $Q_1,Q_2$ are not elliptic, but actually everything is explicit in this case), but, ultimately, the major difference is that, in the case of the flat torus, there \textit{cannot} hold that two bicharacteristic curves of $Q_1,Q_2$ are tangent. On the contrary, we have seen that, for surfaces of revolution, it can happen that a bicharacteristic curve of $q_1$, a bicharacteristic curve of $q_2$ (i.e. a parallel), and even a \textit{periodic} geodesic can \textit{all} be tangent at the same point. Thus, we expect that it is ultimately this sort of degeneracy of the periodic bicharacteristics which would lead to different behaviors for surfaces of revolution. On a more abstract level, we thus conjecture that the particular geometry of \textit{unstable periodic geodesics} should play a key role in the estimates, especially for the question of optimal estimates.

\quad

\myindent Second, let us comment on the role of the equator, which we have excluded in the conjectures. On the one hand, we have no reason to expect that the optimal constant should be different near the equator. However, it seems more difficult to improve the bounds, since, as we have seen along Sections \ref{secHormPhase} to \ref{secAntipod}, the bounds deteriorate near the equator. Hence, we expect similar conjectures than the one we gave in this section to hold actually also near the equator, but we expect the proof to be even more technical.

\myindent On the other hand, let us note that the equator yields an upper bound on the optimal constant in Theorem \ref{mainthm}. Indeed, from the literature on quasimodes (see the references in the introduction) since the equator $\gamma_E$ is a \textit{stable} periodic geodesic of $\mathcal{S}$, for all $\eps > 0$, it should be possible to build a sequence of eigenfunctions $\phi^{k}$, corresponding to an unbounded sequence of eigenvalues $\lambda^{k}$, such that, on the equator,
\begin{equation}
    \|\phi^{k}\|_{L^{\infty}(\gamma_E)} \gtrsim \left(\lambda^{k}\right)^{\frac{1}{4} - \eps}.
\end{equation}

\myindent In particular, for any $\eps > 0$, the upper bound of Theorem \ref{mainthm} necessarily fails for $\delta \simeq \lambda^{-\frac{1}{2} - \eps}$. Hence, the optimal constant near the equator, and thus for $\mathcal{S}\backslash\{N,S\}$, is necessarily smaller than or equal to $\frac{1}{2}$.

\subsection{Removing hypotheses, and general QCI geometries}\label{subsec3Further}

\myindent In this section, we discuss the many simplifying hypotheses that we have introduced on the class of surfaces of revolution for which we are able to prove Theorem \ref{mainthm}, or Corollary \ref{lowerbounds}. In other words, we discuss how to enlarge the set $\mathfrak{S}$ on which those results still hold.

\myindent First, the non intersection of bicharacteristic curves Hypothesis \ref{nonintersectHyp} is not necessary. Indeed, if we only assume that $\mathcal{S}$ satisfies the twist Hypothesis \ref{twisthypothesis}, then one can actually prove that, even though there might be some exceptional \textit{intersections} of bicharacteristic curves, as discusses in Appendix \ref{AppendixB3}, there are no exceptional \textit{crossings} of bicharacteristic curves (see Definition \ref{crossingDef}), i.e., if two bicharacteristic curves of $q_1$ intersect outside of the antipodal refocalisation (see Lemma \ref{antipodrefocus}) at some point $y$, they do not pass through $y$ at the same time. Hence, since, ultimately, we only need to construct \textit{local} parametrices, we could still use the bicharacteristic length parametrix constructed in Paragraph \ref{subsubsec24Micro}, but we would only need to change the definition $\psi(x,y)$, which would no longer be a globally defined bicharacteristic length function (see Definition \ref{defpsi}), but rather locally defined. Indeed, the construction of Paragraph \ref{subsubsec24Micro} is actually local. Hence, the non intersection of bicharacteristic curves Hypothesis \ref{nonintersectHyp} is not necessary, and, with a little work to extend the technical results of Appendix \ref{AppendixD} to locally defined bicharacteristic length functions $\psi(x,y)$, we expect that one can extend the results to
\begin{equation}
    \mathfrak{S} := \{\mathcal{S} \qquad \text{simple, symmetric, and satisfying the twist Hypothesis \ref{twisthypothesis}} \}.
\end{equation}

\myindent Second, one could try to remove the twist Hypothesis \ref{twisthypothesis}, and to assume a less restrictive \textit{generic} hypothesis, in the sense of Colin de Verdière, see Remark \ref{defgeneric}. If, however, one was still to assume the non intersection of bicharacteristic curves Hypothesis \ref{nonintersectHyp}, most of the analysis would still apply. The main problem would be that, with the parameterization $u\mapsto h(u)$ of $\gamma$ defined by Lemma \ref{deflilgammas}, there could thus hold, for a finite number of values of $u$, $h''(u) = 0$. Looking at the quantitative estimates of Sections \ref{secHormPhase} to \ref{secAntipod}, most of our results would no longer hold, and we would need to replace them with versions where we take more derivatives of the different functions. For example, Lemma \ref{Psi1DnearEalpha} should be replaced by the equivalent lemma, but $\Psi^{1D}$ would only satisfy the property $(VdC)_4$ (see Definition \ref{VdCN}). More importantly, for the conjectured dominant term, i.e. for the estimate near the branching points $P_{\alpha}$ (see Section \ref{subsec4HormPhase} and Paragraph \ref{subsubsec25HormQuant}), the 1D remaining phase function $\Psi^{1D}$ could vanish up to the fifth derivative, and would only satisfy the property $(VdC)_6$. Hence, the number of derivatives that we need to compute would be greatly increased, and, thus, the numerology in powers of $\lambda,M,\delta$ which appear in the estimates would change. Overall, replacing the twist Hypothesis \ref{twisthypothesis} by a generic hypothesis (see Remark \ref{defgeneric}) seems possible, if we still assume for example a non intersection of bicharacteristic hypothesis, or even a non crossing of bicharacteristic hypothesis, but we believe that it would greatly increase the technicity of the proof without adding many new ideas. 

\myindent On the contrary, allowing for crossings of bicharacteristic curves seems like a new conceptual challenge. In particular, this means removing the \textit{symmetry} hypothesis (see Definition \ref{symmetrichypothesis}), or, ultimately, trying to deal with generic simple surfaces of revolution in the sense of Colin de Verdière, i.e. of Remark \ref{defgeneric}, without any other asumption. Indeed, the point is that we would no longer have explicit parametrices such as the one we built in Section \ref{subsec2Micro}. Instead, one should try to work directly with an abstract parametrix, and see if the quantitative analysis could still hold. 

\myindent Now, for this last problem, the difficulty is probably equivalent to the difficulty of dealing directly with a general  Quantum Completely Integrable (QCI) manifold. Hence, we present a possible approach for this general problem. Let $M$ be a compact smooth Riemannian manifold of dimension $d$, which is QCI, in the sense of \cite{de1980spectre}. We recall the assumptions made is this article. The setting is that there exists $d$ pseudo-differential operators of order $1$ with vanishing sub principal symbols $P_1 = \sqrt{-\Delta}, P_2,...,P_d$ such that, on the one hand,
\begin{equation}
    \forall i,j \qquad [P_i, P_j] = 0. 
\end{equation}

\myindent On the other hand, if $\mathcal{A}$ is the algebra generated by the $f(p_1,...,p_d)$, where $f \in \mathcal{C}^{\infty}(\R^d\backslash\{0\})$ is positively homogeneous, then, there exists generators $q_1,...,q_d$ of $\mathcal{A}$ such that

\myindent i. The Hamiltonian flow of the $q_j$ are $2\pi$-periodic.

\myindent ii. The fibers of $p=(p_1,...,p_d) : T^*M\backslash\{0\} \to \R^d \backslash\{0\}$ are connected. Moreover, if $z \to g\cdot z$ is the action of the torus $G := \mathbb{T}^d$ on $T^*M\backslash\{0\}$ defined by the Hamiltonian flows of the $q_j$, then there is a dense open set $\Omega \subset T^*M\backslash\{0\}$ such that, for all $z_0 \in \Omega$ $g\in G \mapsto g\cdot z_0$ is injective.

\myindent With these hypothesis, \cite{de1980spectre}[Theorem 3.1] yields that there exists $d$ commuting pseudodifferential operators of order 1 $Q_1,...,Q_d$, such that the principal symbol of $Q_j$ is $q_j$, and such that
\begin{equation}
    \forall j = 1,...,d, \qquad e^{2i\pi Q_j} = e^{2i\pi \mu_j} Id,
\end{equation}
where $\mu_j \in \frac{1}{4}\Z$ is a Maslov index (see \cite{souriau1975explicit,hormander2007symplectic}), and such that, finally,
\begin{equation}
    -\Delta= F(Q_1,...,Q_d),
\end{equation}
for
\begin{equation}
    F \sim F_2 + F_0 + F_{-1} +...
\end{equation}
a smooth classical symbol on $\R^d \backslash 0$, with vanishing 1-homogeneous component. Without loss of generality, we assume that $F_2$ doesn't vanish on $\R^d$.

\myindent In this setting, we could thus have a similar integral bound for the spectral projector $P_{\lambda,\delta}$ (see \eqref{defPlambdadelta}) than the one constructed in Section \ref{subsec1Micro}. Indeed, let $d\mu$ be the superficial measure on the hypersurface $\Sigma := \{F_2 = 1\}$. Let us assume that $\Sigma$ satisfies the following \textit{finite-type degeneracy} hypothesis.
\begin{definition}\label{finitetypedegen}
    Let $\Sigma$ be a smooth compact hypersurface of $\R^d$. We say that $\Sigma$ is degenerate at most of order $N\geq 2$ if, for any $q\in \Sigma$, there exists a unit speed curve $\gamma : (-\eps,\eps)\to \Sigma$ such that
    \begin{equation}
           \text{there exists}\ i \in \{2,...,N\} \ \text{such that}\ \gamma^{(i)}(q) \neq 0.
    \end{equation}
\end{definition}

\myindent Let $\rho$ be a smooth bump function. Let $d\mu_{\lambda} := \lambda^{d-1}d\mu(\lambda^{-1} \cdot)$ and $\rho_{\delta} := \delta^{-(d-1)} \rho(\delta^{-1} \cdot)$. Then, with similar notations than the one of Definition-Lemma \ref{deffld}, we define the smoothed projector
\begin{equation}
    P_{\lambda,\delta}^{\sharp} := (d\mu_{\lambda} \ast \rho_{\delta})(Q_1,...,Q_d).
\end{equation}

\myindent Let us define $\omega$ the set of those points $x\in M$ such that $T_x^{*}M \cap \Omega$ is of codimension at most 1. For an appropriate choice of $\rho$, for any compact subset $K$ of the set $\omega$, there holds
\begin{equation}
\begin{split}
    \|P_{\lambda,\delta}\|_{L^2(M) \to L^{\infty}(K)}^2 &\lesssim \left\|P_{\lambda,\delta}^{\sharp}\right\|_{L^2(M) \to L^{\infty}(K)} ^2\\ &\lesssim \lambda^{d-1} \delta \sup_{x\in K}  \int_{\R^d} \hat{d\mu}(\lambda(s_1,...,s_d))\hat{\rho}(\delta(s_1,...,s_d)) \left(e^{is_1 Q_1}...e^{is_d Q_d}\right)(x,x) ds_1...ds_d.
\end{split}
\end{equation}

\myindent Now, provided we are able to find a suitable parametrix for the Schwartz kernel 
\begin{equation}
    (s_1,...,s_d) \mapsto \left(e^{is_1 Q_1}...e^{is_d Q_d}\right)(x,x),
\end{equation}
we expect that our method should extend, and ultimately yield the following result.

\begin{conjecture}
    Let $M$ be a compact smooth manifold of dimension $d$, which is QCI in the sense introduced above. Assume that the hypersurface $\Sigma$ introduced above is degenerate at most of order $N \geq 2$, in the sense of Definition \ref{finitetypedegen}. Then, there exists a $\kappa = \kappa(N,d) > 0$ such that, if $K$ is any compact subset of the set $\omega$ introduced above, then
    \begin{equation}
        \forall \delta \geq \lambda^{-\kappa} \qquad \|P_{\lambda,\delta}\|_{L^2(M) \to L^{\infty}(K)} \lesssim \lambda^{\frac{d-1}{2}} \delta^{\frac{1}{2}}.
    \end{equation}
\end{conjecture}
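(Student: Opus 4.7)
The plan is to follow exactly the architecture of the proof of Theorem \ref{mainthm}, with three main technical blocks that must be generalized. First, starting from the integral representation already given in the conjecture's preamble, I would choose $\rho$ with $\hat{\rho}$ compactly supported and reduce the $L^2\to L^\infty$ bound to estimating, for each $x\in K$, an oscillatory integral in the variables $\vec{s}=(s_1,\ldots,s_d)$. Thanks to $e^{2i\pi Q_j}=e^{2i\pi\mu_j}Id$, the kernel $(e^{is_1Q_1}\cdots e^{is_dQ_d})(x,x)$ is $2\pi$-periodic in each $s_j$ up to phase, so after a periodic partition of unity adapted to a fixed covering $\mathfrak{Q}$ of $[-\pi,\pi]^d$, the integral decomposes as a sum over $\vec{A}\in(2\pi\Z)^d$ of local oscillatory pieces $\mathcal{I}(\lambda,\delta,\vec{A})$. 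The overall decay in $\lambda$ must come from $\hat{d\mu}$ via the finite-type degeneracy of $\Sigma$: by a higher-dimensional van der Corput argument applied to $\hat{d\mu}(\lambda\vec{s})=\int_\Sigma e^{-i\lambda\scal{q}{\vec{s}}}d\mu(q)$, Definition \ref{finitetypedegen} gives $|\hat{d\mu}(\lambda\vec{s})|\lesssim \lambda^{-\frac{1}{N}}|\vec{s}|^{-\frac{1}{N}}$, which governs the best power $\kappa(N,d)$ attainable.

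Second, and this is the heart of the argument, I would build three local parametrices for $e^{is_1Q_1}\cdots e^{is_dQ_d}$. Near $\vec{s}=0$ one uses Hörmander's parametrix applied to each factor and composes, producing a phase of the form $\sum_j s_jq_j(x,\xi)+\varphi(x,y,\xi)$. Away from $\vec{s}=0$, the hypothesis that $T_x^*M\cap\Omega$ has codimension at most $1$ (so $x\in\omega$) ensures that, generically, the set of directions $\xi$ such that $\Phi^{q_d}_{s_d}\circ\cdots\circ\Phi^{q_1}_{s_1}(x,\xi)\in T_x^*M$ has dimension $\leq 1$, and Lemma \ref{minnumberofdim} then lets one write a parametrix with a single angle variable $r\in\R_+$ and phase of the form $r(|s_1|-\psi(x,\vec{s}'))$, where $\psi$ is a local analogue of the bicharacteristic length function. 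Finally, at the remaining points, where several directions coalesce (the analogues of the antipodal refocalisation and caustics), one must build ad hoc parametrices akin to Lemma \ref{antipodparam}, which will exist provided the joint Hamiltonian action has sufficiently regular fold-type singularities.

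Third, for each parametrix region, the local piece $\mathcal{I}(\lambda,\delta,\vec{A})$ becomes an oscillatory integral with phase
\begin{equation}
\Psi(x,\vec{A},u,\vec{s},\eta)=-\scal{h(u)}{\vec{s}+\vec{A}}+\phi_{\mathrm{loc}}(x,\vec{s},\eta),
\end{equation}
where $u\mapsto h(u)$ parametrizes $\Sigma$. The strategy of Section \ref{subsec3Micro} applies: decompose $\Psi=\Psi_G+\Psi_0$ with $\Psi_G=-\scal{h(u)}{\vec{A}}$, compute the zero set $\mathcal{O}_x$ of $\nabla_{\vec{s},\eta}\Psi_0$ via the geometric and correspondence equations, parametrize it by $u$ away from branching points, and apply the quantitative phase estimate of Theorem \ref{mixedVdCABZ}. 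The property $(VdC)_p$ of the one-dimensional remaining phase will hold with $p\leq N+1$ thanks to the finite-type degeneracy of $\Sigma$. At the analogues of the branching points $P_\alpha$ of Section \ref{subsec4HormPhase}, one isolates an angle variable rather than $u$ and uses the non-isotropic scaling of Remark \ref{nonisotropicmixedABZVdC}. Once each $|\mathcal{I}(\lambda,\delta,\vec{A})|\lesssim \lambda^{-\tau}M^K$ with $M=|\vec{A}|$ is established, summing over $|\vec{A}|\lesssim \delta^{-1}$ yields the bound with $\kappa=\kappa(N,d)$ equal to the worst ratio $\tau/(K+d-1)$ appearing in the analysis.

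The main obstacle is the parametrix construction in the presence of higher-dimensional and potentially more complex caustic sets. In the surface-of-revolution case we had only the antipodal refocalisation and isolated branching points $P_0,P_\pi$ on each $\mathcal{O}_\sigma$; in higher dimension and for generic QCI manifolds, the locus where fibers of $p=(p_1,\ldots,p_d)$ degenerate can be much larger, and classifying the normal forms of $\phi_{\mathrm{loc}}$ near such loci is delicate — it requires a genuine extension of the fold/cusp analysis of Colin de Verdière and Duistermaat to all dimensions. A secondary obstacle is keeping the $M$-losses polynomial near these caustic loci, since Theorem \ref{mixedVdCABZ} and its non-isotropic variant must be iterated in several stages, and a priori nothing prevents the exponent $K$ from growing with $d$. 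Finally, a conceptual difficulty is the absence, in the general QCI setting, of the symmetry that allowed us to invoke Hypothesis \ref{nonintersectHyp}: there one may need to control genuine crossings of bicharacteristic curves via $u$-integration by parts on the global phase $\Psi_G$, which is the most technical part and will determine the explicit value of $\kappa(N,d)$.
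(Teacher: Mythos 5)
First, be aware that the statement you are proving is stated in the paper as a \emph{conjecture}: the paper offers no proof of it, only the setup (the operators $Q_1,\ldots,Q_d$, the smoothed projector built from the superficial measure on $\Sigma=\{F_2=1\}$, and the resulting integral bound) together with the remark that ``provided we are able to find a suitable parametrix \ldots\ we expect that our method should extend.'' Your proposal follows exactly that envisioned route --- compactly supported $\hat{\rho}$, periodicity of the $e^{is_jQ_j}$, a periodic partition indexed by $\vec{A}\in(2\pi\Z)^d$, local parametrices, the quantitative stationary-phase machinery of Theorem \ref{mixedVdCABZ} with the finite-type degeneracy of Definition \ref{finitetypedegen} supplying the decay, and a final resummation over $|\vec{A}|\lesssim\delta^{-1}$ --- so there is no divergence of method to report.

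However, what you have written is a program, not a proof, and the obstacles you candidly list at the end are precisely the reasons the statement is only a conjecture. Concretely: (i) the construction of parametrices away from $\vec{s}=0$ is not secured --- Lemma \ref{minnumberofdim} only tells you the minimal number of angle variables once you know the dimension of the intersection of the canonical relation with the fibers, and in the general QCI setting nothing plays the role of Hypothesis \ref{nonintersectHyp} or of the explicit antipodal symmetry behind Lemma \ref{antipodparam}, so the caustic loci need not be isolated fold points and no classification of the local normal forms of $\phi_{\mathrm{loc}}$ is provided; (ii) the claim that the remaining 1D phase satisfies $(VdC)_p$ with $p\leq N+1$ is asserted, not derived --- in the surface case this required the detailed branch analysis of Section \ref{subsec4HormPhase} (vanishing of three derivatives, nonvanishing of the fourth), and you give no analogue; (iii) the polynomial-in-$M$ control of all constants, which is the entire point of Theorem \ref{mixedVdCABZ} and Remark \ref{nonisotropicmixedABZVdC}, is exactly the step that can fail near higher-dimensional caustics, and you acknowledge that nothing prevents the losses from degenerating there; (iv) even the decay $|\hat{d\mu}(\lambda\vec{s})|\lesssim(\lambda|\vec{s}|)^{-1/N}$ needs an argument, since Definition \ref{finitetypedegen} only provides, at each point of $\Sigma$, one curve with a nonvanishing derivative of some order $\leq N$, and turning this pointwise, directional information into a uniform bound on the Fourier transform of the surface measure in all directions requires a compactness-and-slicing argument you do not supply. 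Until these four points are addressed the chain of estimates never closes, so the proposal should be presented as a strategy consistent with the paper's own expectations rather than as a proof.
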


\subsection{The pointwise Weyl law}\label{subsec4Further}

\myindent To conclude this article, we are convinced that the same methods yield the following quantitative improvement on the remainder of the pointwise Weyl law \eqref{locWeyl}, in the same setting of surfaces of revolution satisfying a number of hypotheses.

\begin{conjecture}
    Let $\mathcal{S}\in \mathfrak{S}$ (see Definition \ref{defmathfrakS}). There is a quantifiable constant $\kappa > 0$ such that, if $K_{\eps}$ is the set of those points at a distance at least $\eps$ from both poles of $\mathcal{S}$, then
    \begin{equation}
         \forall x \in K_{\eps}, \qquad N(\lambda,x) = (2\pi)^{-2} c_W(x) \lambda^2 + O_{\mathcal{S},\eps}(\lambda^{1 - \kappa}),
    \end{equation}
    where $N(\lambda,x)$ is defined by \eqref{defNlambdax}.
\end{conjecture}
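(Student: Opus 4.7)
The plan is to combine the microlocal machinery of this paper with a Tauberian argument based on Theorem \ref{mainthm}. Let $\chi \in \mathcal{C}_c^{\infty}(\R)$ be nonnegative with $\int \chi = 1$, set $\chi_{\delta}(t) := \delta^{-1} \chi(t/\delta)$, and introduce the smoothed pointwise density
\begin{equation*}
    m(\lambda, x) := \chi_{\delta}(\sqrt{-\Delta} - \lambda)(x, x) = \sum_j \chi_{\delta}(\lambda - \lambda_j) |\phi_j(x)|^2.
\end{equation*}
A direct Stieltjes computation shows that $\int_0^{\Lambda} \chi_\delta(\lambda_j - \lambda) d\lambda$ equals $\mathds{1}_{0 < \lambda_j < \Lambda}$ up to an $O(1)$ error supported in $|\lambda_j - \Lambda| \lesssim \delta$ (the region $\lambda_j \approx 0$ contributes only $O(1)$ in total). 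Applying Lemma \ref{bddnormbykernel} and Theorem \ref{mainthm} to control the near-$\Lambda$ error,
\begin{equation*}
    N(\Lambda, x) = \int_0^{\Lambda} m(\lambda, x) d\lambda + O(P_{\Lambda, \delta}(x, x)) = \int_0^{\Lambda} m(\lambda, x) d\lambda + O_{\mathcal{S}, \eps}(\Lambda \delta),
\end{equation*}
provided $\delta \geq \Lambda^{-1/32}$.

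Next, I would analyze $\int_0^{\Lambda} m(\lambda, x) d\lambda$ by adapting the machinery of Sections \ref{secMicro}--\ref{secAntipod}. Writing $m(\lambda, x) = \tilde{g}_{\lambda, \delta}(Q_1, Q_2)(x, x)$ with $\tilde{g}_{\lambda, \delta}(q_1, q_2) = \chi_\delta(\sqrt{F(q_1, q_2)} - \lambda)$, the Fourier transform admits, after the polar change of coordinate $q = r h(u)$ adapted to $\gamma$ (see Definition \ref{deflilgammas}) and the rescaling $r = \lambda + \delta r'$, the form
\begin{equation*}
    \hat{\tilde{g}}_{\lambda, \delta}(s, t) = \lambda \int_0^\ell e^{-i\lambda h(u) \cdot (s, t)} \hat{\chi}\bigl(\delta h(u) \cdot (s, t)\bigr) \left|\det(h'(u), h(u))\right| du + \text{(lower-order)}.
\end{equation*}
Inserting this into the representation $m(\lambda, x) = (2\pi)^{-2} \int \hat{\tilde{g}}_{\lambda, \delta}(s, t) (e^{is Q_1} e^{it Q_2})(x, x) ds dt$, applying the partition of unity and parametrices of Proposition \ref{classification}, and exchanging orders of integration, one obtains a decomposition entirely analogous to Lemma \ref{smoothingcontrib}, with pieces $\mathcal{J}^{\bullet}_{\Lambda, \delta, i}(\sigma, A, B) := \int_0^{\Lambda} \mathcal{I}^{\bullet}_{\lambda, \delta, i}(\sigma, A, B) d\lambda$. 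The crucial new feature is that the phase $e^{i\lambda \Psi(\sigma, A, B, u, s, t, \xi)}$ from \eqref{oscintexpr} is now integrated over $\lambda \in [0, \Lambda]$; one integration by parts in $\lambda$ produces a boundary term $\Lambda^N e^{i\Lambda \Psi}/(i\Psi)$ times the amplitude, supplying an extra factor $|\Psi|^{-1}$ in each estimate. Running the stationary phase analysis of Sections \ref{secHormPhase}--\ref{secAntipod} on these modified integrals --- and in particular exploiting that $\Psi_G(u, A, B) = -\scal{h(u)}{(A, B)}$ has size $\sim |(A, B)|$ away from a thin set --- I expect to obtain an expansion
\begin{equation*}
    \int_0^{\Lambda} m(\lambda, x) d\lambda = (2\pi)^{-2} c_W(\sigma) \Lambda^2 + O\bigl(\Lambda^{2 - \kappa_1} \delta^{-K_1}\bigr)
\end{equation*}
for some quantifiable $\kappa_1, K_1 > 0$. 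Choosing $\delta = \Lambda^{-\kappa_1/(K_1 + 1)}$ balances the two errors and yields $N(\Lambda, x) = (2\pi)^{-2} c_W(\sigma) \Lambda^2 + O(\Lambda^{2 - \kappa})$ with $\kappa = \kappa_1/(K_1 + 1)$.

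The main obstacle will be the quantitative treatment of the finitely many exceptional pieces $\mathcal{J}^{\bullet}_{\Lambda, \delta, i}(\sigma, A, B)$ with $M = |(A, B)| \leq M_0$, for which Proposition \ref{intermediatethm} gives only the non-quantitative $O_{M_0}(1)$ bound (refined to $o_{M_0}(1)$ in Lemma \ref{O1too1lowerbounds}). For the pointwise Weyl law each such term must be controlled with a polynomial rate in $\Lambda$. The key idea is to combine the gain $|\Psi|^{-1}$ from $\lambda$-integration with Remark \ref{nablaPsieq0interpretation}: stationary points of $\Psi$ in the full $(u, s, t, \eta)$ variables correspond bijectively to closed geodesics of $\mathcal{S}$, and the twist Hypothesis \ref{twisthypothesis} ensures that only finitely many closed geodesics have period at most $2\pi M_0$. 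For generic $x \in K_\eps$ the value $\Psi^*$ of $\Psi$ at each such stationary point (essentially the action of the closed geodesic) is bounded away from zero, so a one-dimensional stationary phase analysis in $\lambda$ of the boundary term yields a quantitative $O(\Lambda^{\alpha_j}/|\Psi^*_j|)$ decay rather than $o(1)$. A subsidiary difficulty is to reproduce, in the $\lambda$-integrated setting, the delicate non-isotropic analysis of Section \ref{subsec4HormPhase} near the branching points $P_\alpha$ --- which already contributes the conjectured dominant error in the spectral projector analysis --- but this should be tractable by combining Proposition \ref{delicatecase} with the extra $|\Psi|^{-1}$ factor, at the cost of careful bookkeeping of powers of $M$.
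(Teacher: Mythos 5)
First, a framing point: the paper does not prove this statement — it appears only as a conjecture in Section \ref{subsec4Further}, precisely because the steps you defer with ``I expect to obtain'' and ``should be tractable'' are the ones the authors themselves did not carry out. Your outline (Tauberian reduction via Lemma \ref{bddnormbykernel} and Theorem \ref{mainthm}, then a $\lambda$-integrated version of the oscillatory-integral analysis) is the natural program, but as written it has two concrete gaps. The first is that your bookkeeping does not close. Term-by-term integration of Proposition \ref{intermediatethm} gives errors of size $\int_0^\Lambda \lambda\cdot\lambda^{-\tau}\delta^{-K}\,d\lambda \sim \Lambda^{2-\tau}\delta^{-K}$ with $\tau\leq 1$ in every case treated in the paper, which is worse than the classical $O(\Lambda)$ remainder; the only possible source of improvement is the oscillation $e^{i\lambda\Psi}$ in the $\lambda$-variable, and your own concluding display claims only $N(\Lambda,x)=\text{main}+O(\Lambda^{2-\kappa})$ with $\kappa=\kappa_1/(K_1+1)<1$, which is weaker than H\"ormander's bound and does not prove the conjecture (it requires $O(\Lambda^{1-\kappa})$). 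To repair this you must show that integration by parts in $\lambda$ gains a full factor $1/|\Psi|$ with $|\Psi|\gtrsim 1$ (and $\gtrsim M$ for the high-$M$ pieces, using that $|\scal{h(u^*)}{(A,B)}|\gtrsim M$ at critical points via Lemma \ref{deth'hnotzero}); that in turn requires controlling $\lambda$-derivatives of all amplitudes, including the remainders produced by Theorem \ref{mixedVdCABZ} and Theorem \ref{ABZthm}, which the paper only bounds at fixed $\lambda$ — a parameter-differentiated version of Theorem \ref{mixedVdCABZ} with explicit $M$-dependence is needed and is not routine. You should also check that the resulting choice of $\delta$ stays in the range $\delta\geq\Lambda^{-1/32}$ where Theorem \ref{mainthm} applies.

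The second, more serious gap is the treatment of the finitely many pieces with $M\leq M_0$. Saying the critical value $\Psi^*$ is bounded away from zero ``for generic $x\in K_\eps$'' is insufficient: the conjecture is uniform over $K_\eps$. Moreover, the mechanism you invoke is not the right one: after exchanging the $\lambda$- and $u$-integrations, bounding the inner $\lambda$-integral by $\Lambda/|\Psi^{1D}(u)|$ and integrating in $u$ fails, because $\Psi^{1D}(u)=-\scal{h(u)}{(s,t)(u)+(s_i^{(j)},t_i^{(j)})+(A,B)}$ can vanish at non-stationary values of $u$, and $1/|\Psi^{1D}|$ is at best logarithmically integrable there, yielding $O(\Lambda\log\Lambda)$ rather than a polynomial gain. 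What is actually needed is a lower bound on $\max\left(|\Psi^{1D}(u)|,|(\Psi^{1D})'(u)|\right)$, uniform in $u$, in $\sigma$ over the closure of each $\mathcal{I}_i$, and over the finitely many pieces — including the degenerate configurations near the branching points $P_\alpha$ and near the equator. This amounts to the quantitative statement that the action at a critical point equals the length of the corresponding closed geodesic (Remark \ref{nablaPsieq0interpretation}) and is therefore bounded below by the systole, upgraded by a compactness argument in $\sigma$; the paper's Lemma \ref{O1too1lowerbounds} only gives the qualitative fact that critical points are isolated, with no rate, and producing this uniform rate is exactly the missing ingredient that kept the statement a conjecture. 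Until these two points are established with explicit exponents, your proposal is a plausible program, not a proof.
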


\appendix

\section{Technical results}\label{AppendixA}

\subsection{Derivatives of $q_1$}\label{AppendixA1}

\myindent In this section, we give useful formulas for the derivatives of $q_1$.

\begin{lemma}[First order derivatives of $q_1$]\label{dersigq1lemma}
    For all $\xi = (\Theta,\Sigma) \in \R^2$ such that $|\xi|\sim 1$, there holds
    \begin{equation}\label{firstformulaq1}
        \partial_{\Sigma}q_1(\sigma,\Theta,\Sigma) \simeq \Sigma.
    \end{equation}
    \myindent In particular, $\partial_{\Sigma}q_1(\sigma,\xi) = 0$ if and only if $\Sigma = 0$, that is if and only if $\xi$ is horizontal.
    
    \myindent As a consequence, there holds
    \begin{equation}\label{secondformulaq1}
        \partial_{\Theta} q_1(\sigma,\Theta,0) = q_1(\sigma,1,0).
    \end{equation}
    
    \myindent Finally, for $|\Theta|$ bounded away from zero and from $+\infty$, there holds that
    \begin{equation}\label{thirdformula}
        \partial_{\sigma} q_1(\sigma,\Theta,\Sigma) \simeq \sigma.
    \end{equation}
\end{lemma}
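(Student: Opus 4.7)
The plan is to exploit the explicit representation $q_1 = G(p_1,p_2)$ from Lemma \ref{formulaofq1}, where $p_1(\sigma,\Theta,\Sigma) = \sqrt{\Theta^2/f^2(\sigma) + \Sigma^2}$ and $p_2(\sigma,\Theta,\Sigma) = \Theta$, and then chase the chain rule. Throughout, I will use Lemma \ref{dG1}, which guarantees that $\partial_1 G$ is bounded away from zero on $\Gamma$, and the fact that $p_1 \simeq 1$ in the region $|\xi| \simeq 1$.

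For \eqref{firstformulaq1}, the chain rule gives
\begin{equation*}
\partial_\Sigma q_1(\sigma,\Theta,\Sigma) = \partial_1 G(p_1,\Theta) \cdot \partial_\Sigma p_1(\sigma,\Theta,\Sigma) = \partial_1 G(p_1,\Theta) \cdot \frac{\Sigma}{p_1(\sigma,\Theta,\Sigma)}.
\end{equation*}
Hence the quotient $\partial_\Sigma q_1/\Sigma = \partial_1 G(p_1,\Theta)/p_1$ is smooth and uniformly bounded away from zero and infinity on $|\xi|\simeq 1$, which is exactly the meaning of $\partial_\Sigma q_1 \simeq \Sigma$. From this the vanishing criterion $\partial_\Sigma q_1(\sigma,\xi) = 0 \iff \Sigma = 0$ follows.

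For \eqref{secondformulaq1}, I will invoke Euler's identity for the 1-homogeneous function $q_1$: for all $(\Theta,\Sigma)$,
\begin{equation*}
q_1(\sigma,\Theta,\Sigma) = \Theta \partial_\Theta q_1(\sigma,\Theta,\Sigma) + \Sigma \partial_\Sigma q_1(\sigma,\Theta,\Sigma).
\end{equation*}
Specializing to $\Sigma=0$ yields $q_1(\sigma,\Theta,0) = \Theta \partial_\Theta q_1(\sigma,\Theta,0)$, and positive 1-homogeneity gives $q_1(\sigma,\Theta,0) = \Theta\, q_1(\sigma,1,0)$ for $\Theta>0$, from which \eqref{secondformulaq1} is immediate.

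Finally, for \eqref{thirdformula}, the chain rule produces
\begin{equation*}
\partial_\sigma q_1(\sigma,\Theta,\Sigma) = \partial_1 G(p_1,\Theta) \cdot \partial_\sigma p_1(\sigma,\Theta,\Sigma) = -\partial_1 G(p_1,\Theta) \cdot \frac{\Theta^2 f'(\sigma)}{f^3(\sigma)\, p_1(\sigma,\Theta,\Sigma)}.
\end{equation*}
The key point is then to analyze $f'(\sigma)$: by Definition \ref{symmetrichypothesis} and Definition \ref{simplehypothesis}, $f$ is even in $\sigma$ with $f(0)=1$, $f'(0)=0$, and $f''(0) < 0$. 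Therefore $f'(\sigma) = \sigma\, \tilde f(\sigma)$ where $\tilde f$ is smooth, even, and satisfies $\tilde f(0) = f''(0) < 0$; up to shrinking the neighborhood of the equator in the partition $\mathfrak{Q}$ (which is already fixed throughout the article), $\tilde f$ is uniformly bounded away from zero on the domain of interest. Combined with $\partial_1 G$, $p_1$, and $f$ being bounded above and below by positive constants whenever $|\Theta|$ is bounded away from $0$ and $\infty$, this establishes $\partial_\sigma q_1/\sigma$ is smooth and of one sign, i.e. $\partial_\sigma q_1 \simeq \sigma$. No step is really an obstacle; the only subtlety is keeping track of which uniform bounds require $|\Theta|$ bounded away from zero (relevant only for \eqref{thirdformula}).
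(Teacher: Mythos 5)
Your proof follows essentially the same route as the paper's: differentiate $q_1 = G(p_1,p_2)$ via the chain rule, use Lemma \ref{dG1} for $\partial_1 G$ together with the explicit formula \eqref{normsurfrev} for the derivatives of $p_1$, and deduce \eqref{secondformulaq1} from $1$-homogeneity (your Euler-identity computation just spells out what the paper calls immediate). The only point to tighten is \eqref{thirdformula}: instead of shrinking a neighborhood of the equator, invoke the simplicity hypothesis (Definition \ref{simplehypothesis}), which makes $\sigma = 0$ the unique and nondegenerate zero of $f'$, so that $f'(\sigma)/\sigma$ is smooth and nonvanishing on the whole compact domain $\mathcal{L}$ — this is exactly how the paper obtains $f'(\sigma)\simeq\sigma$ globally rather than only near $\sigma=0$.
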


\begin{proof}
We recall that, thanks to Lemma \ref{formulaofq1},
\begin{equation}
    q_1(\sigma,\Theta,\Sigma,) = G(p_1(\sigma,\Theta,\Sigma),\Theta),
\end{equation}
where $\partial_1G \neq 0$ thanks to Lemma \ref{dG1}. Hence,
\begin{equation}\label{partsig}
    \partial_{\Sigma}q_1 = \partial_1 G \times \partial_{\Sigma} p_1.
\end{equation}

\myindent Now, we compute, thanks to \eqref{normsurfrev},
\begin{equation}
    \partial_{\Sigma}p_1(\sigma,\Theta,\Sigma)= \frac{\Sigma}{p_1(\sigma,\Theta,\Sigma)}.
\end{equation}

\myindent This obviously concludes the proof of formula \eqref{firstformulaq1}. Formula \eqref{secondformulaq1} immediately follows, thanks to the homogeneity of $q_1$.

\myindent Regarding formula \eqref{thirdformula}, let us write, similarly,
\begin{equation}
    \partial_{\sigma} q_1 = \partial_1 G \times \partial_{\sigma} p_1.
\end{equation}

\myindent Now, in order to conclude, one needs only observe that, thanks to \eqref{normsurfrev},
\begin{equation}
    \partial_{\sigma} p_1(\sigma,\Theta,\Sigma) = \frac{-\frac{\Theta^2}{f(\sigma)^3}}{p_1(\sigma,\Theta,\Sigma)} \times f'(\sigma),
\end{equation}
and that, thanks to the simplicity of $\mathcal{S}$ (see Definition \ref{simplehypothesis}), there holds
\begin{equation}
    f'(\sigma) \simeq \sigma.
\end{equation}
\end{proof}

\begin{lemma}\label{LemmapartSigSigq_1}
    For all $\Theta \neq 0$, there holds
    \begin{equation}
        \partial_{\Sigma\Sigma} q_1(\sigma,\Theta,0) \simeq \Theta^{-1}.
    \end{equation}
\end{lemma}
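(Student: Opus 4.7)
The plan is to compute $\partial_{\Sigma\Sigma}q_1(\sigma,\Theta,0)$ explicitly by differentiating twice the factorisation $q_1(\sigma,\Theta,\Sigma)=G(p_1(\sigma,\Theta,\Sigma),\Theta)$ given by Lemma \ref{formulaofq1}, and then evaluating at $\Sigma = 0$. This mirrors the proof of Lemma \ref{dersigq1lemma}.

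First, starting from $\partial_\Sigma q_1 = \partial_1 G(p_1,\Theta)\cdot \partial_\Sigma p_1$ and $\partial_\Sigma p_1 = \Sigma/p_1$ (which were both computed in the proof of Lemma \ref{dersigq1lemma}), I would differentiate once more in $\Sigma$ using the chain rule to obtain
\begin{equation}
\partial_{\Sigma\Sigma} q_1 = \partial_{11} G(p_1,\Theta) \cdot \frac{\Sigma^2}{p_1^2} + \partial_1 G(p_1,\Theta) \cdot \left(\frac{1}{p_1} - \frac{\Sigma^2}{p_1^3}\right).
\end{equation}

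Next, I would specialise to $\Sigma = 0$, at which the first summand vanishes and the parenthesis collapses to $1/p_1(\sigma,\Theta,0)$. Using the explicit formula \eqref{normsurfrev} for the norm of cotangent vectors, $p_1(\sigma,\Theta,0) = |\Theta|/f(\sigma)$, so that
\begin{equation}
\partial_{\Sigma\Sigma} q_1(\sigma,\Theta,0) = \partial_1 G\!\left(\tfrac{|\Theta|}{f(\sigma)},\Theta\right) \cdot \frac{f(\sigma)}{|\Theta|}.
\end{equation}

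Finally, I would conclude by noting that $f(\sigma)$ is smooth and uniformly bounded away from zero on $\mathcal{L}$ (by the simplicity hypothesis), while $\partial_1 G$ is homogeneous of degree $0$ on $\Gamma$ and uniformly bounded away from zero by Lemma \ref{dG1}, hence also bounded above by compactness on the unit circle of $\Gamma$. These two facts together yield $\partial_{\Sigma\Sigma} q_1(\sigma,\Theta,0) \simeq |\Theta|^{-1}$, i.e.\ the claimed equivalence. The proof is essentially a one-step calculation with no genuine obstacle; the only point requiring a little care is the invocation of homogeneity to upgrade the lower bound on $\partial_1 G$ in Lemma \ref{dG1} to a two-sided bound.
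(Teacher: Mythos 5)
Your proposal is correct and follows essentially the same route as the paper: differentiate the factorisation $q_1 = G(p_1,\Theta)$ twice in $\Sigma$, note that $\partial_{\Sigma}p_1(\sigma,\Theta,0)=0$ kills the $\partial_{11}G$ term, and conclude from $\partial_{\Sigma\Sigma}p_1(\sigma,\Theta,0)=1/p_1(\sigma,\Theta,0)$ together with Lemma \ref{dG1}. The extra details you supply (the explicit value $p_1(\sigma,\Theta,0)=|\Theta|/f(\sigma)$ and the two-sided bound on $\partial_1 G$ via homogeneity and compactness) are exactly what the paper leaves implicit.
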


\begin{proof}
    Let us again write, as in \eqref{partsig},
    \begin{equation}
        \partial_{\Sigma}q_1 = \partial_1 G \times \partial_{\Sigma} p_1.
    \end{equation}
    
    \myindent Hence,
    \begin{equation}
        \partial_{\Sigma\Sigma} q_1(\sigma,\Theta,0) = \partial_1 G \times \partial_{\Sigma \Sigma} p_1(\sigma,\Theta,0) + \partial_{11} G \times \left(\partial_{\Sigma} p_1(\sigma,\Theta,0)\right)^2.
    \end{equation}
    
    \myindent Now, we have already observed in the proof of Lemma \ref{dersigq1lemma} that
    \begin{equation}
        \partial_{\Sigma}p_1(\sigma,\Theta,0) = 0.
    \end{equation}
    
    \myindent Moreover, there holds
    \begin{equation}
        \partial_{\Sigma \Sigma}p_1(\sigma,\Theta,0) = \frac{1}{p_1(\sigma,\Theta,0)}.
    \end{equation}
    
    \myindent Along with Lemma \ref{dG1}, this concludes the proof.
\end{proof}

\subsection{Derivatives of $\varphi$}\label{AppendixA2}

\myindent In this section, we give useful formulas for the derivatives of $\varphi$, which is defined by Theorem \ref{Hormthm}.

\begin{lemma}\label{miracleequation}
    For all $x = (\theta,\sigma)$, there holds
    \begin{equation}
        \partial_{\theta\theta}\varphi(x,x,\pm 1,0) = 0.
    \end{equation}
    
    \myindent Moreover, if $x = (\theta,0)$ is on the equator, there holds
    \begin{equation}\label{secondmiracle}
        \partial_{\theta\sigma}\varphi(x,x,\pm 1,0) = 0.
    \end{equation}
\end{lemma}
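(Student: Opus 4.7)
The strategy is to exploit the eikonal equation $q_1(x,\nabla_x\varphi(x,y,\xi)) = q_1(y,\xi)$ satisfied by $\varphi$ (cf.\ Theorem \ref{Hormthm}), together with the fact that $\varphi(x,y,\xi) - \langle x-y,\xi\rangle$ vanishes to second order at $x = y$. From the asymptotic expansion \eqref{asdevphi} I first note that, at $y = x$, $\nabla_x\varphi(x,x,\xi) = \xi$. This is the base point at which we will differentiate.

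\textbf{First part.} I would differentiate the eikonal equation once with respect to the first (i.e.\ $x$) variable. In local coordinates $x = (\theta,\sigma)$, writing $\eta = \nabla_x\varphi(x,y,\xi)$, this gives
\begin{equation*}
\partial_{x_\theta} q_1(x,\eta) + \nabla_\eta q_1(x,\eta) \cdot \partial_{x_\theta}\nabla_x\varphi = 0.
\end{equation*}
Because $\mathcal{S}$ has rotational symmetry, $q_1$ is independent of $\theta$, so $\partial_{x_\theta}q_1 \equiv 0$. Evaluating at $y = x$, $\xi = (\pm 1, 0)$, so that $\eta = \xi$, I next observe that Lemma \ref{dersigq1lemma} gives $\partial_\Sigma q_1(\sigma,\pm 1,0) = 0$, while $\partial_\Theta q_1(\sigma,\pm 1,0) = q_1(\sigma,\pm 1,0) \neq 0$ by ellipticity. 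Thus $\nabla_\eta q_1(x,\pm 1,0)$ is a nonzero multiple of $(1,0)$, and the eikonal identity above forces the first component of $\partial_{x_\theta}\nabla_x\varphi$ to vanish, i.e.\ $\partial_{\theta\theta}\varphi(x,x,\pm 1,0) = 0$.

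\textbf{Second part.} For the statement at the equator, I differentiate the eikonal equation with respect to $x_\sigma$ instead, obtaining
\begin{equation*}
\partial_{x_\sigma} q_1(x,\eta) + \nabla_\eta q_1(x,\eta) \cdot \partial_{x_\sigma}\nabla_x\varphi = 0.
\end{equation*}
At $y = x$ with $x = (\theta,0)$ on the equator, and $\xi = (\pm 1,0)$, formula \eqref{thirdformula} of Lemma \ref{dersigq1lemma} yields $\partial_\sigma q_1(0,\pm 1,0) = 0$ (this is where the equator hypothesis enters). Combined with the same observation as before that the $\Sigma$-component of $\nabla_\eta q_1(0,\pm 1,0)$ vanishes while the $\Theta$-component does not, this again forces the first component of $\partial_{x_\sigma}\nabla_x\varphi$ to vanish, i.e.\ $\partial_{\theta\sigma}\varphi(x,x,\pm 1,0) = 0$.

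\textbf{Remarks.} There is no real obstacle here: the proof is a direct manipulation of the eikonal equation, relying only on two simple ingredients, namely the independence of $q_1$ from $\theta$ (rotational symmetry) and the vanishing of $\partial_\Sigma q_1$ and $\partial_\sigma q_1$ at the horizontal direction / equator from Lemma \ref{dersigq1lemma}. Both identities can be viewed as reflecting the geometric fact that $\nabla_\eta q_1(\sigma,\pm 1, 0)$ is horizontal, i.e., the bicharacteristic curve starting at $(x,\pm 1,0)$ leaves $x$ tangentially to the parallel, which is itself a geodesic when $x$ lies on the equator.
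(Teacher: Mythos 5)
Your proposal is correct and follows essentially the same route as the paper's proof: differentiate the eikonal equation $q_1(x,\nabla_x\varphi(x,y,\xi)) = q_1(y,\xi)$ in $x$ at $y=x$, $\xi = (\pm 1,0)$, and then use $\partial_\theta q_1 = 0$ (rotational symmetry), $\partial_\Sigma q_1(\sigma,\pm1,0)=0$ and $\partial_\Theta q_1(\sigma,\pm1,0)\neq 0$ from Lemma \ref{dersigq1lemma}, plus $\partial_\sigma q_1(0,\pm1,0)=0$ for the equator case. The only cosmetic difference is that the paper writes the full differentiated gradient as the $2\times 2$ system \eqref{usefeq1} and reads off both conclusions from its two rows, whereas you differentiate separately in $\theta$ and in $\sigma$; these are the same computation.
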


\begin{proof}
    The problem is that we are not in position to compute $\nabla_x^2\varphi(x,x,0,\pm1)$ using only the expansion \eqref{asdevphi}. Instead, we really need to return to the definition of $\varphi$, that is we need to use the \textit{eikonal equation} \eqref{eikeq}, in the form
    \begin{equation}
        q_1(x,\nabla_x\varphi(x,y,\pm1, 0)) = q_1(y,\pm1,0).
    \end{equation}

    \myindent Differentiating in $x$, and using that, thanks to \eqref{asdevphi},
    \begin{equation}
        \nabla_x \varphi(x,x,\pm1,0) = \begin{pmatrix}
        \pm 1 \\ 0
        \end{pmatrix},
    \end{equation}
    we find that
    \begin{equation}\label{usefeq1}
        \begin{pmatrix}
            \partial_{\sigma} q_1(\sigma,\pm1,0) \\
            \partial_{\theta}q_1(\sigma,\pm1,0)
        \end{pmatrix} + \begin{pmatrix}
            \partial_{\sigma\sigma} \varphi(x,x,\pm 1,0) & \partial_{\sigma\theta} \varphi(x,x,\pm1,0)\\
            \partial_{\sigma\theta} \varphi(x,x,\pm1,0) & \partial_{\theta\theta} \varphi(x,x,\pm1,0)
        \end{pmatrix}\begin{pmatrix}
            \partial_{\Sigma}q_1(\sigma,\pm1,0)\\
            \partial_{\Theta}q_1(\sigma,\pm1,0)
        \end{pmatrix} = 0.
    \end{equation}

    \myindent Now, we have already proved that 
    \begin{equation}\label{usefeq2}
        \partial_{\Sigma}q_1(\sigma,\pm1,0) = 0
    \end{equation}
    in \eqref{dersigq1lemma}. Moreover, we know that, for $\Theta > 0$,
    \begin{equation}
        q_1(\sigma,\pm\Theta,0) = \Theta q_1(\sigma,\pm 1,0).
    \end{equation}
    
    \myindent Hence,
    \begin{equation}\label{usefeq3}
        \partial_{\Theta}q_1(\sigma,\pm1,0) = q_1(\sigma,\pm1,0) \neq 0,
    \end{equation}
    where we use the \textit{ellipticity} of $q_1$ given by \eqref{q1elliptic}. Finally, observe that
    \begin{equation}
        q_1(\sigma,\pm1,0) = G(p_1(\sigma,\pm1,0),\pm1)
    \end{equation}
    doesn't depend on $\theta$ since $p_1$ doesn't depend on $\theta$ (\eqref{defp1}). Hence,
    \begin{equation}\label{usefeq4}
        \partial_{\theta}q_1(\sigma,\pm1,0) = 0.
    \end{equation}

    \myindent Now, \eqref{usefeq1}, \eqref{usefeq2},\eqref{usefeq3},\eqref{usefeq4} ensure altogether that
    \begin{equation}\label{miracle3}
        \partial_{\theta\theta}\varphi(x,x,\pm1,0) = 0.
    \end{equation}
    
    \myindent Since, moreover, there holds that, thanks to \eqref{thirdformula},
    \begin{equation}
        \partial_{\sigma}q_1(0,\pm 1,0) = 0,
    \end{equation}
    \eqref{usefeq1}, \eqref{usefeq2},\eqref{usefeq3},\eqref{usefeq4} also imply \eqref{secondmiracle}.
\end{proof}

\begin{lemma}[The third $(\theta,w)$ derivatives of $\varphi$]\label{thirdthetawderivaticevarphi}
    There holds
    
    \begin{equation}\label{localthrdderivatives}
		\begin{cases}
			\partial_{\theta \theta \theta} \varphi((0,\sigma,),(0,\sigma),0) &= - \frac{\partial_{\Sigma \Sigma} q(\sigma,1,0) (\partial_{\sigma} q (\sigma, 1,0))^2}{(\partial_{\Theta} q(\sigma,1,0))^3} \\
			\partial_{\theta \theta w} \varphi((0,\sigma,),(0,\sigma),0) &= - \frac{\mathfrak{l}_{\sigma}}{2\pi}\frac{\partial_{\Sigma \Sigma} q(\sigma,1,0) \partial_{\sigma} q (\sigma, 0, 1)}{(\partial_{\Theta} q(\sigma,1,0))^2}  \\
			\partial_{\theta w w} \varphi((0,\sigma,),(0,\sigma),0) &= - \left(\frac{\mathfrak{l}_{\sigma}}{2\pi}\right)^2 \frac{\partial_{\Sigma \Sigma} q_1(\sigma, g_{\sigma}(0))}{\partial_{\Theta} q_1(\sigma, g_{\sigma}(0))}  \\
			\partial_{w w w} \varphi((0,\sigma,),(0,\sigma),0) &= 0.
		\end{cases}
	\end{equation}
\end{lemma}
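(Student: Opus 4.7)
The plan is to compute the Taylor expansion of $(t, w) \mapsto \varphi((t, \sigma), (0, \sigma), g_{\sigma}(w))$ about the origin by combining the diagonal vanishing $\varphi(y, y, \xi) = 0$ (a consequence of \eqref{asdevphi}) with the eikonal equation $q_1(\sigma_x, \nabla_x \varphi) = q_1(\sigma_y, \xi)$. Since $q_1(x, \xi) = q_1(\sigma, \xi)$ is $\theta$-independent and the two $\sigma$-coordinates coincide in our setting, the eikonal reduces to $q_1(\sigma, F(t, w)) \equiv 1$ where $F := (F^{\Theta}, F^{\Sigma}) = \nabla_x \varphi((t, \sigma), (0, \sigma), g_{\sigma}(w))$. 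Moreover \eqref{asdevphi} gives $F(0, w) = g_{\sigma}(w)$, so $F(0, 0) = (c_0, 0)$ with $c_0 = 1/\partial_{\Theta} q_1(\sigma, 1, 0)$, and $g_{\sigma}'(0) = (0, \alpha)$ with $|\alpha| = \mathfrak{l}_{\sigma}/(2\pi)$ (the tangent to $\{q_1 = 1\}$ at its rightmost point, which is orthogonal to $\nabla_{\xi} q_1 = (\partial_{\Theta} q_1, 0)$ by Lemma \ref{dersigq1lemma}). The first two identities come for free: restricting to $t = 0$ yields $\varphi((0, \sigma), (0, \sigma), g_{\sigma}(w)) \equiv 0$ and hence $\partial_{www}\varphi((0, \sigma), (0, \sigma), 0) = 0$; and differentiating $q_1(\sigma, g_{\sigma}(w)) \equiv 1$ twice in $w$ at $w = 0$ gives $\alpha^2 \partial_{\Sigma\Sigma} q_1 + \partial_{\Theta} q_1 \cdot g_{\sigma}''(0)_1 = 0$, producing exactly the stated value for $\partial_{\theta w w}\varphi = \partial_w^2 F^{\Theta}(0, 0) = g_{\sigma}''(0)_1$.

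For the two $\partial_{\theta}$-heavy identities I would expand the eikonal to second order. Differentiating $q_1(\sigma, F) \equiv 1$ in $t$ at $(0, 0)$ and using $\partial_{\Sigma} q_1(\sigma, 1, 0) = 0$ (Lemma \ref{dersigq1lemma}) forces $\partial_t F^{\Theta}(0, 0) = 0$, which recovers Lemma \ref{miracleequation}. The missing off-diagonal value $\partial_t F^{\Sigma}(0, 0)$ is obtained by differentiating the \emph{unrestricted} eikonal $q_1(\sigma_x, F(x, y, \xi)) = q_1(\sigma_y, \xi)$ in $\sigma_x$ and invoking Schwarz: one finds $\partial_t F^{\Sigma}(0, 0) = \partial_{x_1 x_2}\varphi = -\partial_{\sigma} q_1/\partial_{\Theta} q_1 =: b_0$ at $g_{\sigma}(0)$. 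Solving $q_1(\sigma, F) = 1$ locally for $F^{\Theta}$ in terms of $F^{\Sigma}$ then yields
\begin{equation*}
F^{\Theta}(t, w) - c_0 = -\frac{\partial_{\Sigma\Sigma} q_1}{2\,\partial_{\Theta} q_1}\,\bigl(F^{\Sigma}(t, w)\bigr)^2 + O(|(t, w)|^3),
\end{equation*}
and plugging in $F^{\Sigma}(t, w) = b_0 t + \alpha w + O(|(t, w)|^2)$ lets me read off $\partial_t^2 F^{\Theta}(0, 0) = -b_0^2 \partial_{\Sigma\Sigma} q_1/\partial_{\Theta} q_1$ and $\partial_{tw} F^{\Theta}(0, 0) = -b_0 \alpha \partial_{\Sigma\Sigma} q_1/\partial_{\Theta} q_1$; after substituting the values of $b_0$ and $\alpha$ these are exactly $\partial_{\theta\theta\theta}\varphi$ and $\partial_{\theta\theta w}\varphi$, evaluated at $g_{\sigma}(0)$.

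The main obstacle is bookkeeping rather than conceptual: all $q_1$-derivatives appearing above naturally live at $g_{\sigma}(0) = (c_0, 0)$, while the statement evaluates two of them at $(\sigma, 1, 0)$. Consistent use of the homogeneity of $q_1$—namely $c_0 = 1/\partial_{\Theta} q_1(\sigma, 1, 0)$ together with $\partial_{\sigma} q_1$, $\partial_{\Sigma\Sigma} q_1$, $\partial_{\Theta} q_1$ being respectively $1$-, $(-1)$-, and $0$-homogeneous in $\xi$—converts between the two evaluation points and accounts for the powers of $\partial_{\Theta} q_1$ appearing in the denominators of the stated formulas.
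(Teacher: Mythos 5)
Your proof is correct and follows essentially the same route as the paper: the vanishing of $\partial_{www}\varphi$ and the formula for $\partial_{\theta ww}\varphi$ come from the diagonal identity in \eqref{asdevphi} together with differentiating $q_1(\sigma,g_\sigma(w))=1$ twice in $w$, and the two $\theta$-heavy identities come from implicit differentiation of the eikonal equation \eqref{eikeq}, with the same key off-diagonal input $\partial_{\sigma\theta}\varphi((0,\sigma),(0,\sigma),0)=-\partial_\sigma q_1/\partial_\Theta q_1$ obtained by differentiating the unrestricted eikonal in the $\sigma_x$ direction (this is the paper's \eqref{usefeq1}). The only difference is organizational: you extract both second-order coefficients at once by writing the level curve $\{q_1(\sigma,\cdot)=1\}$ as a graph $\Theta=h(\Sigma)$ with $h'(0)=0$, $h''(0)=-\partial_{\Sigma\Sigma}q_1/\partial_\Theta q_1$ and substituting the linear part of $F^\Sigma$, whereas the paper differentiates the component identity $\partial_{\sigma\theta}\varphi\,\partial_\Sigma q_1+\partial_{\theta\theta}\varphi\,\partial_\Theta q_1=0$ once more in $w$ and in $\theta$; the two computations are equivalent. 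Two bookkeeping caveats: the sign of $\partial_{\theta\theta w}\varphi$ requires fixing $g_\sigma'(0)=(0,+\mathfrak{l}_\sigma/2\pi)$ from the anticlockwise orientation in Notation \ref{defwrho} (you only pin down $|\alpha|$), and your closing claim that homogeneity converts the evaluation point $g_\sigma(0)$ into $(\sigma,1,0)$ exactly is not quite literal: the combination in the $\partial_{\theta\theta w}$ formula is $0$-homogeneous in $\xi$, so there the two evaluation points agree, but the $\partial_{\theta\theta\theta}$ combination is $1$-homogeneous, so converting from $g_\sigma(0)=q_1(\sigma,1,0)^{-1}(1,0)$ to $(\sigma,1,0)$ leaves a residual factor $q_1(\sigma,1,0)^{-1}=\left(\partial_\Theta q_1(\sigma,1,0)\right)^{-1}$. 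Your derivation, like the paper's own computation, naturally lands at $g_\sigma(0)$; the silent switch of evaluation point is a normalization slip already present in the paper's statement and is immaterial to the ways the lemma is used (non-vanishing and order in $\sigma$), so it does not constitute a gap in your argument.
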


\begin{proof}
    First, using \eqref{asdevphi}, it is obvious that, for all $w$, there holds
    \begin{equation}
        \partial_{www}\varphi((0,\sigma,),(0,\sigma),0) = 0.
    \end{equation}
    
    \quad
    
    \myindent Second, still using \eqref{asdevphi}, there holds
    \begin{equation}\label{localusefuleqpartthetaww1}
        \partial_{\theta w w} \varphi((0,\sigma,),(0,\sigma),0) = \scal{v_H}{g_{\sigma}''(0)},
    \end{equation}
    where $v_H = \begin{pmatrix}
        1 \\ 0
    \end{pmatrix}$.
    
    \myindent Since the curve $\mathcal{N}_{\sigma}$ defined by \eqref{defNsigma} is symmetric, $v_H$ and $g_{\sigma}''(0)$ are colinear (indeed, $g_{\sigma}''(0)$ is orthogonal to $\mathcal{N}_{\sigma}$ at $g_{\sigma}(0)$). Moreover, looking at the direction of $g_{\sigma}''(0)$, there holds 
    \begin{equation}\label{localusefuleqpartthetaww2}
        \scal{v_H}{g_{\sigma}''(0)} = -|g_{\sigma}''(0)|.
    \end{equation}
    
    \myindent Now, let us differentiate two times in $w$ the equation
    \begin{equation}
        q_1(\sigma,g_{\sigma}(w)) = 1,
    \end{equation}
    and evaluate at $w = 0$. This yields
    \begin{equation}\label{localusefuleqpartthetaww}
        \nabla_{\xi}q_1(\sigma,g_{\sigma}(0)) \cdot g_{\sigma}''(0) + g_{\sigma}'(0) \cdot \nabla_{\xi}^2 q_1(\sigma, g_{\sigma}(0)) \cdot g_{\sigma}'(0) = 0.
    \end{equation}
    
    \myindent Since moreover there holds in coordinates $\Theta,\Sigma$ that
    \begin{equation}
    \begin{split}
        &g_{\sigma}'(0) = \begin{pmatrix}
            0\\
            \frac{\mathfrak{l}_{\sigma}}{2\pi} 
        \end{pmatrix} \\ 
        &g_{\sigma}''(0) = \begin{pmatrix}
            -|g_{\sigma}''(0)| \\
            0
        \end{pmatrix},
    \end{split}
    \end{equation}
    equation \eqref{localusefuleqpartthetaww} reduces to 
    \begin{equation}
        -\partial_{\Theta}q_1(\sigma,g_{\sigma}(0)) |g_{\sigma}''(0)| + \partial_{\Sigma \Sigma}q_1 (\sigma, g_{\sigma}(0)) \left(\frac{\mathfrak{l}_{\sigma}}{2\pi}\right)^2 = 0,
    \end{equation}
    which obviously yields, using \eqref{localusefuleqpartthetaww1} and \eqref{localusefuleqpartthetaww2}, that
    \begin{equation}
        \begin{split}
            \partial_{\theta w w} \varphi((0,\sigma,),(0,\sigma),0) &= -|g_{\sigma}''(0)| \\
            &= -\left(\frac{\mathfrak{l}_{\sigma}}{2\pi}\right)^2 \frac{\partial_{\Sigma \Sigma} q_1(\sigma, g_{\sigma}(0))}{\partial_{\Theta} q_1(\sigma, g_{\sigma}(0))}.
        \end{split}
    \end{equation}
    
    \quad
    
    \myindent Third, in order to compute $\partial_{\theta\theta w} \varphi$, we can no longer rely on \eqref{asdevphi}. Instead, we have to resort to the eikonal equation \eqref{eikeq}, similarly to the proof of Lemma \ref{miracleequation}. Using \eqref{usefeq1}, we write
    \begin{equation}\label{localusefuleqpartthetathetaw}
        \partial_{\sigma \theta} \varphi((t,\sigma,),(0,\sigma),w) \partial_{\Sigma} q_1(\sigma, \nabla_x \varphi) + \partial_{\theta \theta} \varphi((t,\sigma,),(0,\sigma),g_{\sigma}(w)) \partial_{\Theta} q_1(\sigma,\nabla_x \varphi)) = 0.
    \end{equation}
    
    \myindent Now, we want to differentiate this equality in $w$, and to evaluate at $w = 0$. Since we already know, thanks to Lemmas \ref{dersigq1lemma} and \ref{miracleequation}, that
    \begin{equation}\label{localzeroderivativesppartthetathetaw}
    \begin{split}
        &\partial_{\Sigma}q_1(\sigma,g_{\sigma}(0)) = 0 \\
        &\partial_{\theta \theta} \varphi((0,\sigma,),(0,\sigma),g_{\sigma}(0)) = 0,
    \end{split}    
    \end{equation}
    we can already see, using moreover \eqref{asdevphi} that differentiating equation \eqref{localusefuleqpartthetathetaw} in $w$, and evaluating at $w = 0$, will yield
    \begin{equation}\label{localusefuleq1partthetathetaw}
        \partial_{\sigma \theta} \varphi((0,\sigma,),(0,\sigma),0) \left(\partial_{\Sigma} \nabla_{\xi} q_1 (\sigma, g_{\sigma}(0))\right) \cdot g_{\sigma}'(0) + \partial_{\theta \theta w} \varphi((0,\sigma,),(0,\sigma),0) \partial_{\Theta} q_1(\sigma, g_{\sigma}(0)) = 0.
    \end{equation}
    
    \myindent Now, using \eqref{usefeq1}, one finds that
    \begin{equation}\label{localusefuleq2partthetathetaw}
        \partial_{\sigma \theta} \varphi((0,\sigma,),(0,\sigma),0) = -\frac{\partial_{\sigma}q_1(\sigma,g_{\sigma}(0))}{\partial_{\Theta} q_1(\sigma, g_{\sigma}(0))}.
    \end{equation}
    
    \myindent Moreover, by definition, there holds in coordinates $(\Theta,\Sigma)$ that
    \begin{equation}
        g_{\sigma}'(0) = \begin{pmatrix}
        0\\
        \frac{\mathfrak{l}_{\sigma}}{2\pi} 
        \end{pmatrix}.
    \end{equation}
    
    \myindent Hence,
    \begin{equation}\label{localusefuleq3partthetathetaw}
        \left(\partial_{\Sigma} \nabla_{\xi} q_1 (\sigma, g_{\sigma}(0))\right) \cdot g_{\sigma}'(0) = \frac{\mathfrak{l}_{\sigma}}{2\pi}\partial_{\Sigma \Sigma} q_1(\sigma, g_{\sigma}(0)).
    \end{equation}
    
    \myindent Thus, equations \eqref{localusefuleq1partthetathetaw}, \eqref{localusefuleq2partthetathetaw}, and \eqref{localusefuleq3partthetathetaw}, yield together that
    \begin{equation}
        \partial_{\theta \theta w} \varphi((0,\sigma,),(0,\sigma), g_{\sigma}(0)) = -\frac{\mathfrak{l}_{\sigma}}{2\pi} \frac{\partial_{\Sigma \Sigma} q(\sigma,1,0) \partial_{\sigma} q (\sigma, 1,0)}{(\partial_{\Theta} q(\sigma,1,0))^2}.
    \end{equation}
    
    \quad
    
    \myindent Fourth, in order to compute $\partial_{\theta \theta \theta} \varphi$, we have no choice but to iterate the implicit argument of Lemma \ref{miracleequation}. We can actually start from \eqref{localusefuleqpartthetathetaw}. We want to differentiate this equation, and evaluate it at $t = 0$. Using again \eqref{localzeroderivativesppartthetathetaw}, and \eqref{asdevphi}, there holds
    \begin{equation}\label{localeqpartthetathetatheta}
        \partial_{\sigma \theta} \varphi \left(\partial_{\Sigma} \nabla_{\xi} q_1\right) \cdot \partial_{\theta} \nabla_x \varphi + \partial_{\theta \theta \theta} \varphi \partial_{\Theta} q_1 = 0,
    \end{equation}
    where all the quantities involving $\varphi$ are taken at $(0,\sigma,),(0,\sigma), g_{\sigma}(0)$, and all the quantities involving $q_1$ are taken at $(\sigma, g_{\sigma}(0))$. Since there holds moreover that, thanks to Lemma \ref{miracleequation},
    \begin{equation}
        \partial_{\theta} \nabla_x \varphi((0,\sigma,),(0,\sigma), g_{\sigma}(0)) = \begin{pmatrix}
            0\\
            \partial_{\sigma \theta} \varphi
        \end{pmatrix},
    \end{equation}
    the equation \eqref{localeqpartthetathetatheta} reduces to
    \begin{equation}
        (\partial_{\sigma \theta}\varphi)^2 \partial_{\Sigma \Sigma} q_1 + \partial_{\theta \theta \theta} \varphi \partial_{\Theta} q_1 = 0,
    \end{equation}
    where again, all the quantities involving $\varphi$ are taken at $(0,\sigma,),(0,\sigma), g_{\sigma}(0)$ and all the quantities involving $q_1$ are taken at $(\sigma, g_{\sigma}(0))$. One can then conclude, using \eqref{localusefuleq2partthetathetaw}, that 
    \begin{equation}
        \partial_{\theta \theta \theta} \varphi((0,\sigma,),(0,\sigma),0) = - \frac{\partial_{\Sigma \Sigma} q(\sigma,1,0) (\partial_{\sigma} q (\sigma, 1,0))^2}{(\partial_{\Theta} q(\sigma,1,0))^3}.
    \end{equation}
\end{proof}

\subsection{A technical lemma}\label{AppendixA3}
\myindent In this section, we state and prove the following simple lemma.

\begin{lemma}\label{easyoscintlemma}
    Let $I \subset \R$ be an interval, let $\phi \in \mathcal{C}^{\infty}(I)$, and let $a \in \mathcal{C}_0^{\infty}(I)$. Assume that
    \begin{equation}
        \text{the stationary points of}\ \phi\ \text{are isolated}.
    \end{equation}
    
    \myindent Define the oscillatory integral
    \begin{equation}
        \mathcal{I}(\lambda) := \int_I e^{i\lambda \phi(x)} a(x) dx.
    \end{equation}
    
    \myindent There holds
    \begin{equation}
        \mathcal{I}(\lambda) \to 0, \qquad \lambda \to \infty.
    \end{equation}
\end{lemma}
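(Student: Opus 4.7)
The plan is to prove this by a standard compactness plus integration-by-parts argument, splitting the domain of integration into a small neighborhood of the stationary points (where we bound by absolute value) and its complement (where $\phi'$ is uniformly nonzero, allowing us to integrate by parts).

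First I would reduce to the compact set $K := \operatorname{supp}(a) \subset I$. The set of stationary points $S := \{x \in K \ \text{such that} \ \phi'(x) = 0\}$ is a closed subset of the compact set $K$, and by hypothesis it consists of isolated points; hence $S$ is finite, say $S = \{x_1, \ldots, x_N\}$. Now fix any $\eta > 0$. I would construct a smooth cutoff $\chi_\eta \in \mathcal{C}_0^\infty(I)$ such that $\chi_\eta \equiv 1$ on a neighborhood of $S$ and $\operatorname{supp}(\chi_\eta) \subset \bigcup_{j=1}^N (x_j - \eta, x_j + \eta)$, with $0 \leq \chi_\eta \leq 1$.

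Then I would split
\begin{equation}
\mathcal{I}(\lambda) = \int_I e^{i\lambda \phi(x)} a(x) \chi_\eta(x) dx + \int_I e^{i\lambda \phi(x)} a(x) (1 - \chi_\eta(x)) dx =: \mathcal{I}_1(\lambda,\eta) + \mathcal{I}_2(\lambda,\eta).
\end{equation}
For the first piece, I would bound trivially by absolute value:
\begin{equation}
|\mathcal{I}_1(\lambda,\eta)| \leq 2N \eta \|a\|_{L^\infty} =: C \eta,
\end{equation}
where $C$ depends on $a$ and $N$ but not on $\lambda$ or $\eta$. For the second piece, on $\operatorname{supp}(a(1-\chi_\eta))$, which is a compact subset of $K$ disjoint from $S$, the continuous function $|\phi'|$ attains a positive minimum, say $c_\eta > 0$. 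Hence I can integrate by parts once, using the standard identity $e^{i\lambda\phi} = \frac{1}{i\lambda \phi'} \frac{d}{dx}(e^{i\lambda\phi})$, to obtain
\begin{equation}
\mathcal{I}_2(\lambda,\eta) = -\frac{1}{i\lambda} \int_I e^{i\lambda \phi(x)} \frac{d}{dx}\left(\frac{a(x)(1-\chi_\eta(x))}{\phi'(x)}\right) dx,
\end{equation}
with no boundary term since $a(1-\chi_\eta)$ is compactly supported in $I$. The integrand is bounded by a constant $D_\eta$ depending on $\eta$ (through $c_\eta$, $\|\phi''\|_{L^\infty}$ on the relevant set, and $\|\chi_\eta'\|_{L^\infty}$), so $|\mathcal{I}_2(\lambda,\eta)| \leq D_\eta / \lambda$.

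Combining these two estimates gives $|\mathcal{I}(\lambda)| \leq C \eta + D_\eta / \lambda$. To conclude, given any $\delta > 0$, I first choose $\eta$ small enough so that $C \eta < \delta/2$, then take $\lambda$ large enough so that $D_\eta / \lambda < \delta/2$; this yields $|\mathcal{I}(\lambda)| < \delta$ for all large enough $\lambda$, which is precisely the claim. I do not expect any serious obstacle: the only mildly subtle point is the use of compactness to extract both the finiteness of $S$ and the uniform lower bound $c_\eta$ on $|\phi'|$ away from $S$.
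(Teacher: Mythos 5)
Your proof is correct and follows essentially the same route as the paper's: both split the integral into small neighborhoods of the (finitely many, by compactness and isolation) stationary points, bounded trivially by a quantity proportional to the neighborhood size, plus the complementary region where the non-stationary phase yields decay in $\lambda$, and then let the neighborhood size go to zero. The only difference is that you make the decay on the non-stationary region explicit via a smooth cutoff and one integration by parts, where the paper simply invokes the standard non-stationary phase fact.
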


\begin{proof}
    Because the stationary points of $\phi$ are isolated, there is a finite number of stationary points in the support of $\phi$, say $x_1,...,x_N$. Let $\eps > 0$. Let $I_1,...,I_N$ be intervals of size $\eps$ centered at $x_1,...,x_N$ respectively. Then, on the one hand, for $i = 1,...,N$,
    \begin{equation}
        \left|\int_{I_i} e^{i\lambda \phi(x)} a(x) dx \right| \leq \|a\|_{L^{\infty}} \eps.
    \end{equation}
    
    \myindent On the other hand, since $\phi$ is not stationary on $J := I \backslash (I_1\cup... \cup I_N)$, there holds
    \begin{equation}
        \int_J e^{i\lambda \phi(x)} a(x) dx \to 0, \qquad \lambda \to \infty.
    \end{equation}
    
    \myindent Overall, there holds
    \begin{equation}
        \limsup_{\lambda \to \infty} |\mathcal{I}(\lambda)| \leq N \|a\|_{L^{\infty}} \eps.
    \end{equation}
        
    \myindent This concludes the proof since $\eps$ is arbitrary.
\end{proof}

\section{Proof of Proposition \ref{norefocusq1norm}}\label{AppendixB}

\subsection{First part of the proposition}\label{AppendixB1}

\myindent We first prove the first part of the proposition, that is the fact that, if 
\begin{equation}\label{localq1leq1}
    \{\xi \in \R^2 \qquad q_1(x,\xi) \leq 1\}
\end{equation}
is strictly convex, then $\mathcal{S}$ satisfie the non intersection of bicharacteristic curve Hypothesis \ref{nonintersectHyp}.

\begin{proof}
    Fix $x \in \mathcal{S}$, and fix two distinct directions $(x,\xi)\neq (x,\eta) \in S_x^{*}\mathcal{S}$. We want to prove that $t\mapsto \Phi^{q_1}_t(x,\xi)$ and $s \mapsto \Phi^{q_1}_s(x,\eta)$ do not intersect outside of $\{x,\bar{x}\}$. Assume that this is false, i.e. that there exists $y \notin \{x,\bar{x}\}$ at which those two curves intersect. Then, $x$ cannot be one of the Poles, since, thanks to Lemma \ref{explicitbicharac}, the bicharacteristic curves of $q_1$ passing through the Poles are exactly the meridians, which don't intersect outside of the Poles.

    \myindent Now, using the periodicity of the flow of $q_1$, and up to changing $\xi$ or $\eta$ or both into their opposite, we can assume without loss of generality that there exist $0 < s,t < \pi$ such that
    \begin{equation}
        P(\Phi_t^{q_1}(x,\xi)) = P(\Phi_s^{q_1}(x,\eta)) = y.
    \end{equation}

    \myindent From the explicit formula \eqref{explicitbicharac}, we can further reduce, up to exchanging $x$ and $\bar{x}$, and thanks to the symmetries of $\mathcal{S}$, to the following situation : on the one hand $(x,\xi)$ and $(x,\eta)$ both point towards the North, i.e. for some Clairaut integrals $I\neq J\in[-1,1]$ there holds
    \begin{equation}
        \begin{split}
            (x,\xi) &= \left(\theta, \sigma, I, \sqrt{1 - f(\sigma)^{-2}I^2}\right)\\
            (x,\eta) &= \left(\theta, \sigma, J, \sqrt{1 - f(\sigma)^{-2}J^2}\right).
        \end{split}
    \end{equation}
    
    \myindent On the other hand, this remains true along the trajectories between $x$ and $y$, i.e. the tangent to $u\mapsto \Phi_u^{q_1}(x,\xi)$ (resp $u \mapsto \Phi_u^{q_1}(x,\eta)$) points towards the North for $u\in [0,t]$ (resp $u \in [0,s]$). This is the situation of Figure \ref{inversionangle}. The important assumption is that the direction of travel (pointing the North or South Pole) don't strictly change on the portion of trajectory that we are interested in. We moreover assume that $y$ is the \textit{first} intersection of the bicharacteristic curves, i.e. they don't intersect on the portion $[x,y]$ of their trajectories.
    
    \begin{figure}[h]
\includegraphics[scale = 0.4]{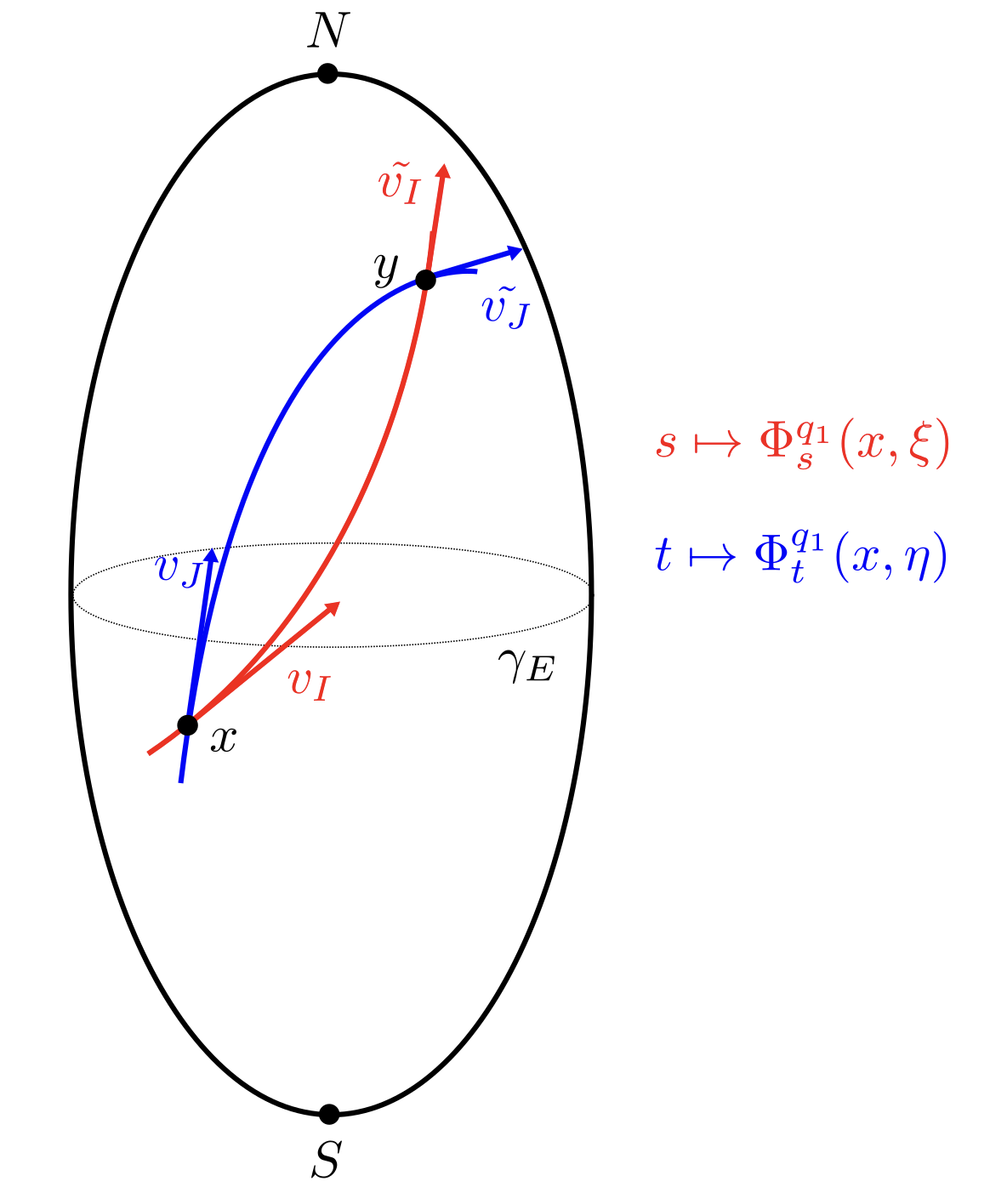}
\centering
\caption{Intersection of bicharacteristic curves outside of the antipodal refocalisation}
\label{inversionangle}
\end{figure}

    \myindent The point is that we can thus find the following explicit formula, using the conservation of $p_1$ and $p_2$ by the Hamiltonian flow, and the conservation of the sign of the $\Sigma$ coordinate, thanks to our reduction. Writing $y = (\tilde{\theta},\tilde{\sigma})$, there holds
    \begin{equation}
        \begin{split}
            \Phi_t^{q_1}(x,\xi) &= \left(\tilde{\theta},\tilde{\sigma},I,\sqrt{1 - f(\tilde{\sigma})^{-2}I^2}\right) \\
            \Phi_s^{q_1}(x,\eta) &= \left(\tilde{\theta},\tilde{\sigma},J,\sqrt{1 - f(\tilde{\sigma})^{-2}J^2}\right).
        \end{split}
    \end{equation}
    
    \myindent In particular, the angle between the directions of $\Phi_t^{q_1}(x,\xi)$ and $\Phi_s^{q_1}(x,\eta)$ is of the \textit{same sign} as the angle between $(x,\xi)$ and $(x,\eta)$. Precisely, there holds
    \begin{equation}\label{det1}
        \det(\begin{pmatrix}
            I \\
            \sqrt{1 - f(\sigma)^{-2}I^2}
        \end{pmatrix},\begin{pmatrix}
            J \\
            \sqrt{1 - f(\sigma)^{-2}J^2}
        \end{pmatrix}) \det(\begin{pmatrix}
            I \\
            \sqrt{1 - f(\tilde{\sigma})^{-2}I^2} 
        \end{pmatrix},\begin{pmatrix}
            J\\
            \sqrt{1 - f(\tilde{\sigma})^{-2}J^2} 
        \end{pmatrix}) > 0,
    \end{equation}
    or, more geometrically, seen as half-line of the upper half-plane of $\R^2$ in coordinates $(\Theta,\Sigma)$, the directions $\R_+^{*} \begin{pmatrix}
            I\\
            \sqrt{1 - f(\tilde{\sigma})^{-2}I^2}
        \end{pmatrix}$ and $\R_+^{*}\begin{pmatrix}
            J\\
            \sqrt{1 - f(\tilde{\sigma})^{-2}J^2}
        \end{pmatrix}$ are in the same order than the directions $\R_+^{*} \begin{pmatrix}
            I\\
            \sqrt{1 - f(\sigma)^{-2}I^2}
        \end{pmatrix}$ and $\R_+^{*}\begin{pmatrix}
            J\\
            \sqrt{1 - f(\sigma)^{-2}J^2}
        \end{pmatrix}$ in the sense of Figure \ref{conservationangle}.
        
        \begin{figure}[h]
\includegraphics[scale = 0.3]{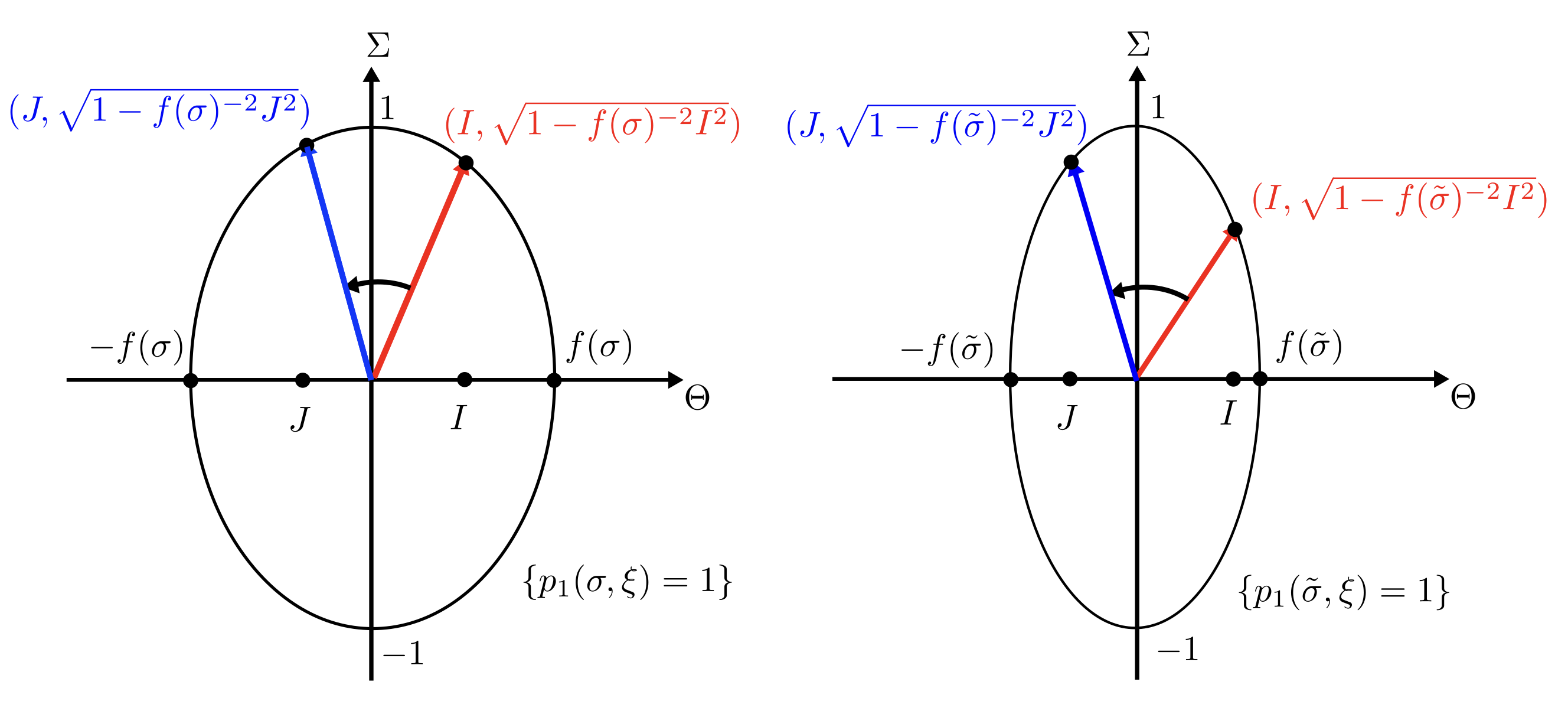}
\centering
\caption{The conservation of the sign of the angle along trajectories}
\label{conservationangle}
\end{figure}

        \myindent Now, in sharp contrast, there is an \textit{inversion} of the sign of the angle between the directions of the velocity vectors of the bicharacteristic curves between $x$ and $y$. Precisely, if we denote $v_I \in S_x\mathcal{S}$ (resp $v_J$) the velocity of the bicharacteristic curve $t \mapsto P(\Phi_t^{q_1}(x,\xi))$ (resp $s\mapsto P(\Phi_s^{q_1}(x,\eta))$) at $x$, and $\tilde{v}_I \in S_y\mathcal{S}$  (resp $\tilde{v}_J$) the velocity of the bicharacteristic curve $t \mapsto P(\Phi_t^{q_1}(x,\xi))$ (resp $s\mapsto P(\Phi_s^{q_1}(x,\eta))$) at $y$, there holds
        \begin{equation}\label{det2}
            \det(v_I, v_J) \det(\tilde{v}_I, \tilde{v}_J) < 0,
        \end{equation}
        which can be read from Figure \ref{inversionangle} (we use here the fact that $y$ is the first point of intersection along the bicharacteristic curves, and the fact that, thanks to our assumptions, the bicharacteristic curves point towards the North on all of the portion $[x,y]$).
        
        \quad
        
        \myindent Now, using the formula for the velocity vector along the Hamiltonian flow of a function on the cotangent bundle, projected on the manifold, i.e. \eqref{explicithamiltonianflow}, we can \textit{compute} the velocity vectors as follows
        \begin{equation}\label{exprofVIJ}
            \begin{split}
            &v_I = \nabla_{\xi} q_1\left(\sigma, I, \sqrt{1 - f(\sigma)^{-2} I^2}\right) \\
            &v_J = \nabla_{\xi} q_1\left(\sigma, J, \sqrt{1 - f(\sigma)^{-2} J^2}\right)\\
            &\tilde{v}_I = \nabla_{\xi} q_1\left(\tilde{\sigma}, I, \sqrt{1 - f(\tilde{\sigma})^{-2} I^2}\right)\\
            &\tilde{v}_J = \nabla_{\xi} q_1\left(\tilde{\sigma}, J, \sqrt{1 - f(\tilde{\sigma})^{-2} J^2}\right).
            \end{split}
        \end{equation}
        
        \myindent Hence, from \eqref{exprofVIJ} and \eqref{det2}, and comparing with \eqref{det1}, we can conclude to a contradiction provided we prove the fundamental fact that, for all $\sigma$ and for all $ \xi_1, \xi_2 = (\Theta,\Sigma),(\tilde{\Theta},\tilde{\Sigma})$ in the upper half-plane (i.e. $\Sigma,\tilde{\Sigma} \geq 0$) there holds that the angle between $\xi_1$ and $\xi_2$ has the \textit{same sign} than the angle between $\nabla_{\xi}q_1(\sigma,\xi_1)$ and $\nabla_{\xi} q_1(\sigma, \tilde{\xi_2})$ i.e.
        \begin{equation}\label{convexityofNsigma}
            \det(\xi_1,\xi_2) \det(\nabla_{\xi}q_1(\sigma,\xi_1), \nabla_{\xi} q_1(\sigma,\xi_2)) \geq 0.
        \end{equation}
        
        \myindent Now, for all $\xi \in \R^2$, the direction of $\nabla_{\xi} q_1(\sigma,\xi)$ is simply the normal direction to the curve $\mathcal{N}_{\sigma} := \{q_1 = 1\}$ at its intersection point with $\R_+^* \xi$, see Figure \ref{visualisationnablaxiq}.
         \begin{figure}[h]
\includegraphics[scale = 0.5]{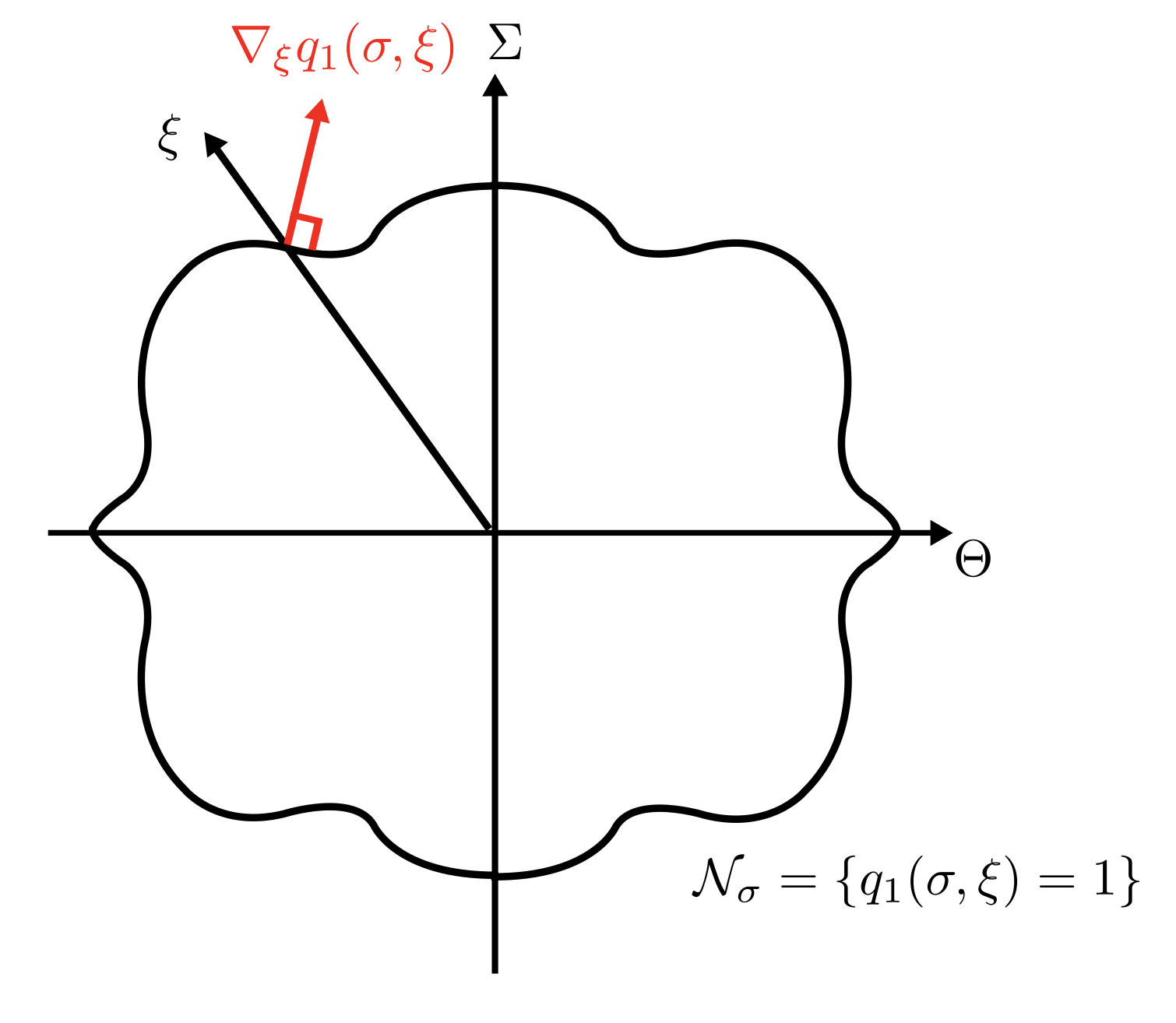}
\centering
\caption{Visualisation of $\nabla_{\xi}q_1(x,\xi)$}
\label{visualisationnablaxiq}
\end{figure}
        
        \myindent Hence, equation \eqref{convexityofNsigma} is simply a \textit{convexity} equation, i.e. it means that the domain $\{q_1 \leq 1\}$ is \textit{convex} (at least in the upper half-plane), see Figure \ref{convexityofNsigmaFigure}. This concludes the proof of the first part of Proposition \ref{norefocusq1norm}.
\end{proof}
        
\begin{figure}[h]
\includegraphics[scale = 0.5]{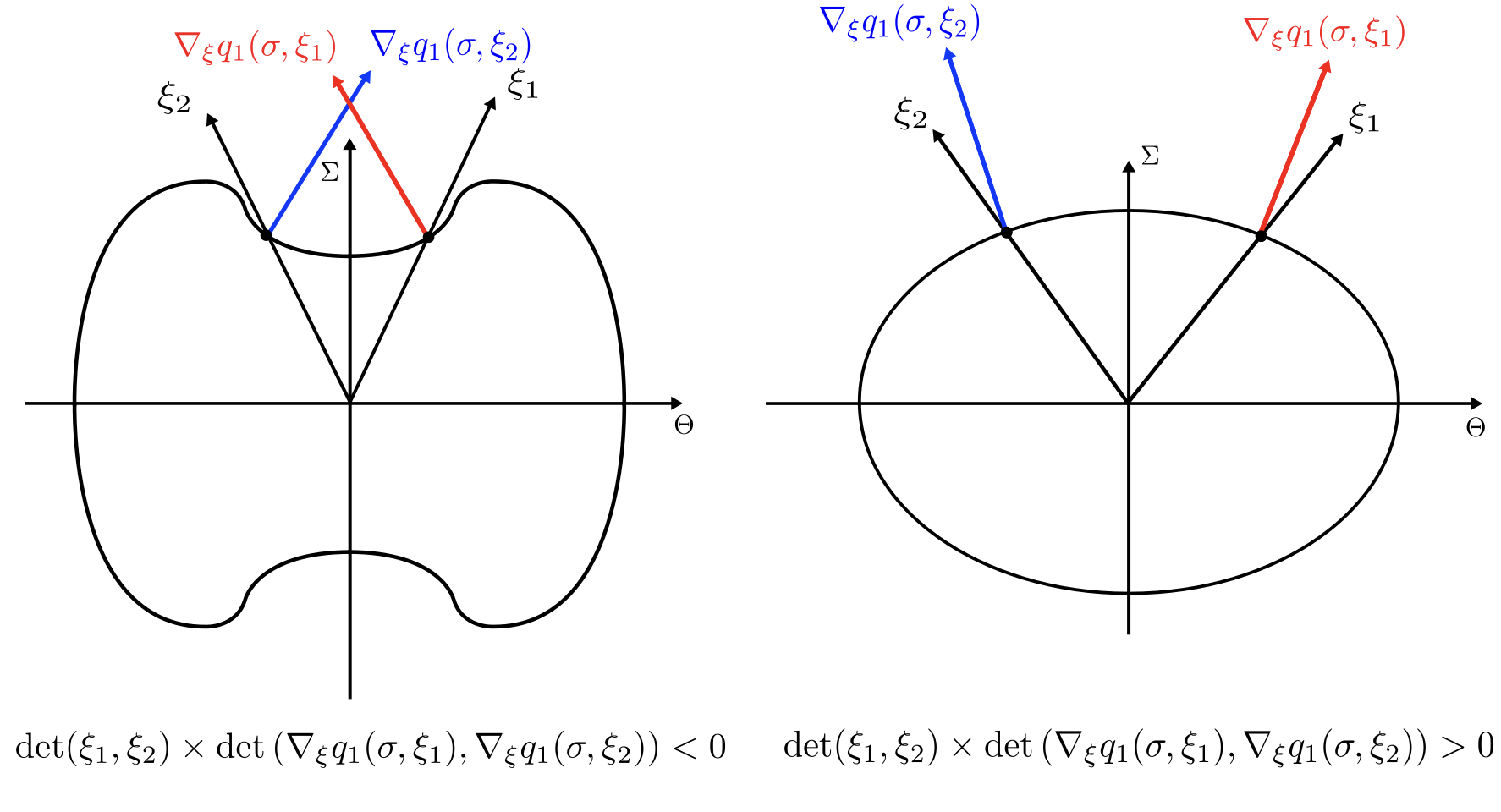}
\centering
\caption{The convexity of $\{q_1 \leq 1\}$}
\label{convexityofNsigmaFigure}
\end{figure}

\subsection{Second part of the proposition}\label{AppendixB2}

\myindent We now prove the second part of the proposition.

\begin{proof}
    Let $\mathcal{N}_{\sigma}^+$ be the intersection between the curve $\mathcal{N}_{\sigma}$ and the strict upper right quadrant of $\R^2$, i.e.
    \begin{equation}\label{defNsig+}
        \mathcal{N}^+_{\sigma} = \{(\Theta,\Sigma) \in (\R_+^*)^2 \qquad q_1(\sigma,\Theta,\Sigma) = 1\}.
    \end{equation}
    
    \myindent We first claim that this curve has an everywhere non zero curvature in the two cases of the proposition. In order to prove that fact, we introduce a parameterization of $\mathcal{N}_{\sigma}^+$ which may seem odd, but which will help greatly with the computations. 
    
    \myindent First, let $\mathcal{E}$ be the intersection of the curve $\{\xi \in \R^2, \ p_1(\sigma,\xi) = 1\}$ with the strict upper right quadrant of $\R^2$, i.e.
    \begin{equation}
        \mathcal{E} := \{\xi\in (\R_+^*)^2 \qquad p_1(\sigma,\xi) = 1\}.
    \end{equation}
    
    \myindent Then, since $q_1$ is a smooth non vanishing homogeneous function of degree 1, there is a one-to-one correspondence say
    \begin{equation}
        A\in \mathcal{E} \mapsto h(A) \in \mathcal{N}_{\sigma}^+,
    \end{equation}
    
    \myindent Hence, we need only parameterize the curve $\mathcal{E}$. Since we are in the strict upper right quadrant, we can always parameterize it by $p_2 \mapsto A(p_2)$ (recall that $p_2(\sigma,\Theta,\Sigma) = \Theta$), i.e.
    \begin{equation}
        \mathcal{E} = \left\{(p_2, \sqrt{1 - f^{-2}(\sigma)p_2^2}, \qquad p_2 \in (0, f(\sigma))\right\}.
    \end{equation}
    
    \myindent Ultimately, we have a one-to-one parameterization of the curve $\mathcal{N}_{\sigma}^+$ by $p_2 \in (0,f(\sigma))$, say $p_2 \mapsto h(A(p_2))$. Now, observe that, for all $A\in \mathcal{E}$,
    \begin{equation}
        \frac{\nabla q_1}{|\nabla q_1|} (h(A)) = \frac{\nabla q_1}{|\nabla q_1|} (A),
    \end{equation}
    since $\frac{\nabla q_1}{|\nabla q_1|}$ is a homogeneous function of degree 0 on $\R^2\backslash(0,0)$ and since $A$ and $h(A)$ are positively colinear by construction. Hence, in order to prove the claim, we need only prove that the function
    \begin{equation}
        p_2 \in (0, f(\sigma)) \mapsto \frac{\nabla q_1}{|\nabla q_1|} (A(p_2))
    \end{equation}
    has an everywhere nonzero derivative. 
    
    \myindent Now, the interest of this reduction is that, thanks to Lemma \ref{formulaofq1}, we have a simple expression of $\nabla q_1$ on $\mathcal{E}$. Indeed, observe that this lemma yields that
    \begin{equation}
        q_1(\sigma,\Theta,\Sigma) = G(p_1(\sigma,\Theta,\Sigma), \Theta) = p_1 g\left(\frac{p_2}{p_1}\right),
    \end{equation}
    where for $I \in (0,1)$, $g$ is the function introduced by Bleher in \cite{bleher1994distribution}[Equation (6.3)], i.e.
    \begin{equation}\label{defglocal}
        g(I) = I + \pi^{-1} \int_{\sigma_-(I)}^{\sigma_+(I)} \sqrt{1 - f(s)^{-2} I^2} ds.
    \end{equation}
    
    \myindent In particular, since $\partial_{\Sigma}p_2 = 0$ and $\partial_{\Theta} p_2 = 1$ we find that 
    \begin{equation}
        \nabla q_1 = \begin{pmatrix}
            \partial_{\Theta} q_1\\
            \partial_{\Sigma} q_1
        \end{pmatrix} =  \begin{pmatrix}
            \left[g\left(\frac{p_2}{p_1}\right) - \frac{p_2}{p_1} g'\left(\frac{p_2}{p_1}\right)\right] \partial_{\Theta} p_1 + g'\left(\frac{p_2}{p_1} \right) \\
            \left[g\left(\frac{p_2}{p_1}\right) - \frac{p_2}{p_1} g'\left(\frac{p_2}{p_1}\right)\right] \partial_{\Sigma} p_1 
        \end{pmatrix}.
    \end{equation}
    
    \myindent Now, on $\mathcal{E}$, there holds $p_1 = 1$. In particular, using \eqref{normsurfrev}, there holds 
    \begin{equation}
        \begin{pmatrix}
            \partial_{\Theta} p_1\\
            \partial_{\Sigma} p_1
        \end{pmatrix} = \begin{pmatrix}
            f(\sigma)^{-2} p_2 \\
            \sqrt{1 - f(\sigma)^{-2} p_2^2}
        \end{pmatrix}.
    \end{equation}
    
    \myindent Hence, on $\mathcal{E}$, there holds
    \begin{equation}\label{explicitnablaq1}
        \nabla q_1 (A(p_2)) = \begin{pmatrix}
            \left[ g(p_2) - p_2 g'(p_2)\right] f(\sigma)^{-2} p_2 + g'(p_2)  \\
            [g(p_2) - p_2 g'(p_2)] \sqrt{1 - f(\sigma)^{-2} p_2^2} 
        \end{pmatrix}.
    \end{equation}
    
    \myindent Thus, using the local notations $f := f(\sigma)$ and $p := p_2$, what we want is to prove that the function
    \begin{equation}
        p\in(0,f) \mapsto \frac{\nabla q_1}{|\nabla q_1|}(A(p)) = \frac{1}{|\nabla q_1|(A(p))}\begin{pmatrix}
            [g(p) - p g'(p)] f^{-2} p + g'(p) \\
            [g(p) - pg'(p)] \sqrt{1 - f^{-2} p^2} 
        \end{pmatrix}
    \end{equation}
    has a nonzero derivative. Now, since it is a function which takes value in the circle, this is equivalent to proving that, for all $p \in (0,f)$,
    \begin{equation}
        \det(\nabla q_1(A(p)), \frac{d}{d p} \left(\nabla q_1 (A(p))\right)) \neq 0.
    \end{equation}
    
    \myindent Using the explicit expression \eqref{explicitnablaq1}, this is equivalent to proving that 
    \begin{equation}
        \det(\begin{pmatrix}
            [g - pg'] f^{-2} p + g'  \\
            [g - pg']\sqrt{1 - f^{-2} p^2} 
        \end{pmatrix}, \begin{pmatrix}
            - p g'' f^{-2} p + [g - pg'] f^{-2} + g'' \\
            -pg''\sqrt{1 - f^{-2}p^2} -\frac{[g-pg']f^{-2} p}{\sqrt{1 - f^{-2} p^2}}
        \end{pmatrix})
    \end{equation}
    doesn't vanish, where we don't note the variable $p$ in the argument of the functions $g,g',g''$. Multiplying the second line by $\sqrt{1 - f^{-2} p^2}$, this is again equivalent to proving that, for $p\in(0,f)$
    \begin{equation}
        \begin{vmatrix}
            [g - pg'] f^{-2} p + g'&   g''(1 - f^{-2} p^2) + [g - pg'] f^{-2} \\
            [g-pg'](1 - f^{-2}p^2) & -pg''(1 - f^{-2}p^2) - [g-pg']f^{-2}p
        \end{vmatrix} \neq 0,
    \end{equation}
    or, after developing the determinant, we need only prove that, for $p\in(0,f)$,
    \begin{equation}
        -g\left(g''(1 - f^{-2} p^2) + [g - pg'] f^{-2} \right) \neq 0.
    \end{equation}
    
    \myindent From \eqref{defglocal}, we see that $g$ is positive on $(0,f)$. Hence, we need to prove that, for $p\in(0,f)$,
    \begin{equation}\label{ultimatereductionlocal}
        g''(1 - f^{-2} p^2) + [g - pg'] f^{-2} \neq 0.
    \end{equation}
    
    \myindent Now, on the one hand, we recall that, thanks to \eqref{explicitnablaq1}, 
    \begin{equation}
        [g - pg'] = \frac{1}{\sqrt{1 - f^{-2} p^2}} \partial_{\Sigma}q_1.
    \end{equation}
    
    \myindent Moreover, as we have already used many times,
    \begin{equation}
        \partial_{\Sigma}q_1 = \partial_1 G \partial_{\Sigma} p_1,
    \end{equation}
    where $\partial_1 G > 0$ (see Lemma \ref{dG1}), and $\partial_{\Sigma} p_1$ is positive since we are on the upper right quadrant (see \eqref{normsurfrev}). Overall, there holds that
    \begin{equation}
        [g - pg'] f^{-2} > 0.
    \end{equation}
    
    \myindent In particular, coming back to equation \eqref{ultimatereductionlocal}, we already see that, if
    \begin{equation}
        \forall p \in (0,f), \qquad g''(p) \geq 0,
    \end{equation}
    then we have proved the claim. Now, we recall that, thanks to Lemma \ref{g'eq-omega}, there holds
    \begin{equation}
        g'(I) = -\omega(I),
    \end{equation}
    hence,
    \begin{equation}
        g''(I) = - \omega'(I),
    \end{equation}
    which is thus nonnegative if \eqref{caseomega'neg} is satisfied. 
    
    \quad
    
    \myindent Now, since there holds, in the case of the round sphere,
    \begin{equation}
        g(I) \equiv 1 \qquad g'(I) \equiv 0 \qquad g''(I) \equiv 0,
    \end{equation}
    then we can expect that \eqref{ultimatereductionlocal} still holds if $\omega'$ is not nonnegative but $\mathcal{S}$ is close to the sphere in the sense that $\omega'$ is not too large. Indeed, if $\omega'\geq 0$, there holds first 
    \begin{equation}
        g'(p) = -\omega(p) = -\int_0^p \omega'(I) dI \leq 0.
    \end{equation}
    
    \myindent Hence, for all $p\in(0,f)$, there holds
    \begin{equation}
        g''(1 - f^{-2}p^2) + [g- pg'] f^{-2} \geq g(p) - \omega'(p) > g(p) - 1.
    \end{equation}
    
    \myindent Now, observe that, from formula \eqref{defglocal}, there holds
    \begin{equation}
        g(1) \geq 1.
    \end{equation}
    
    \myindent Since, moreover, $g'\leq 0$, we thus find that, for all $p \in (0,f)$,
    \begin{equation}
        g(p) \geq g(1) \geq 1.
    \end{equation}
    
    \myindent This ensures ultimately that \eqref{ultimatereductionlocal} holds.
    
    \quad
    
    \myindent In order to conclude the proof, we need only prove that, if the curve $\mathcal{N}_{\sigma}^+$ has an everywhere non zero curvature, then $\mathcal{D} := \{q_1(\sigma,\xi) \leq 1\}$ is strictly convex. Now, since $q_1$ is even in both $\Theta$ and $\Sigma$, thanks to Lemma \ref{formulaofq1} and to \eqref{normsurfrev}, the boundary of $\mathcal{D}$ on each of the four quadrants is obtained from $\mathcal{N}_{\sigma}^+$ by a rotation. Since $\mathcal{N}_{\sigma}$ is smooth, there holds necessarily that the curvature of $\mathcal{N}_{\sigma}^+$ is in fact everywhere positive, and thus that the curvature of $\mathcal{N}_{\sigma}$ is everywhere positive (except maybe at the four points of intersection with the axis), i.e. that $\mathcal{D}$ is strictly convex.
\end{proof}

\subsection{Discussion of the hypotheses}\label{AppendixB3}

\myindent In this section, we briefly discuss Proposition \ref{norefocusq1norm}, in order to justify its hypotheses.

\myindent First, if $\omega'(I)$ is not bounded, it is possible that the set
\begin{equation}
    \mathcal{D} := \{\xi \in \R^2 \qquad q_1(x,\xi) \leq 1\}
\end{equation}
is \textit{not} convex, in particular when $x$ is near the equator. Indeed, in the case of a very thin oblong surface of revolution, i.e., with the Definition \ref{defellipsoids}, in the case where $\mathcal{S} = \mathcal{E}(1,b)$ with $b  \gg 1$, one can prove rigorously that, if $x$ is on the equator, then the set $\mathcal{D}$ looks like the left figure in Figure \ref{convexityofNsigmaFigure} provided $b$ is large enough.

\myindent Now, the geometric meaning of this fact is that the bicharacteristic curves of $q_1$ starting at $x_0 \in \gamma_E$ with near vertical initial direction $(x_0,\xi) \in T_{x_0}^*\mathcal{S}$ will at first turn "in the wrong direction". Precisely, let us choose a direction $\xi = (\Theta,\Sigma)$ very close to the vertical direction, i.e. $0 < \Theta << \Sigma$. Then, locally, while the geodesic $t \mapsto P(\Phi_t(x_0,\xi))$ starts by turning to the right, (see the black curve in Figure \ref{intersectoblongFigure}), i.e. $\dot{\theta} > 0$, the bicharacteristic curve $t\mapsto P(\Phi^{q_1}_t(x_0,\xi))$ starts by turning to the left, (see the red curve in Figure \ref{intersectoblongFigure}), i.e. $\dot{\theta} < 0$. This can be computed by the explicit formula for the Hamiltonian flow \eqref{explicithamiltonianflow}.

\begin{figure}[h]
\includegraphics[scale = 0.5]{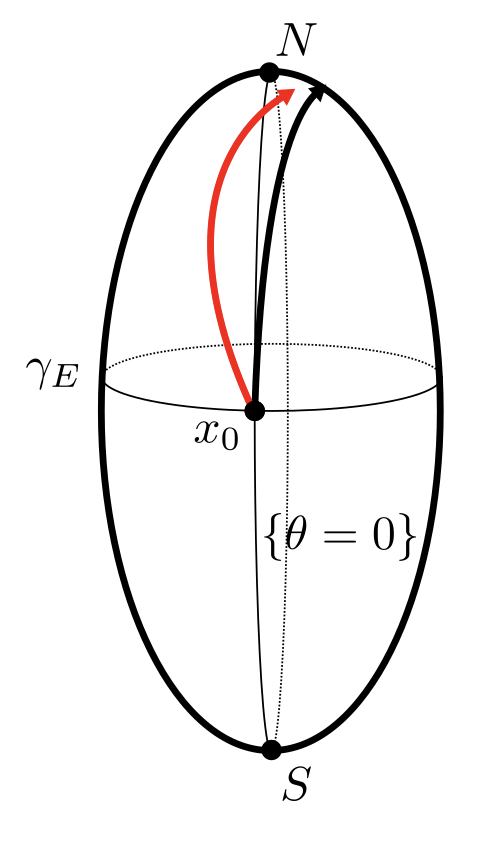}
\centering
\caption{Intersection of bicharacteristic curves for an oblong ellipsoid of revolution}
\label{intersectoblongFigure}
\end{figure}

\myindent However, after some time, when the bicharacteristic curve $t\mapsto P(\Phi_t^{q_1}(x_0,\xi))$ reaches high latitudes (i.e. $\sigma$ is close to $\frac{L}{2})$), its velocity vector rotates, and it eventually turns to the right, as depicted in Figure \ref{intersectoblongFigure} (red curve). In particular, as one can see on this figure, the bicharacteristic curves necessarily intersects the meridian passing through $x_0$ outside of the antipodal point $\bar{x_0}$. Hence, if the set $\mathcal{D}$ is \textit{not} convex, there are always some exceptional intersections of bicharacteristic curves.

\myindent This phenomenon can be interpreted by the explicit equation for the bicharacteristics of $q_1$ given by Lemma \ref{explicitbicharac}. Indeed, find $0 < I \ll 1$ the Clairaut integral of $(x_0,\xi)$. Then, the bicharacteristic $t \mapsto \Phi_t^{q_1}(x_0,\xi)$ is given by
\begin{equation}
    t\mapsto \Phi_{\frac{t}{2\pi} \tau(I)} \circ \Phi^{p_2}_{-t\omega(I)}(x_0,\xi).
\end{equation}

\myindent Now, since the geodesic $t\mapsto \Phi_{\frac{t}{2\pi}\tau(I)}(x_0,\xi)$ is very close to a meridian, it starts by turning very slowly to the right, i.e. $\dot{\theta}$ is initially very small, as seen on Figure \ref{intersectoblongFigure} (black curve). On the contrary, the influence of the rotation of angle $-t\omega(I)$ takes over since $\omega(I)$ is very large. Hence, even if the total phase shift along the bicharacteristic curve $t\mapsto P(\Phi_t^{q_1}(x_0,\xi))$ is exactly $\pi > 0$ after a time $t = \pi$, the phase shift starts by being nonpositive for $0 < t \ll 1$. In particular, there is necessarily a time $0 < t < \pi$ at which the phase shift is exactly $0$, which corresponds to an intersection with the meridian, as seen on Figure \ref{intersectoblongFigure}.

\section{Proof of Theorem \ref{mixedVdCABZ}}\label{AppendixC}

\myindent Before turning to the proof of the Theorem, let us insist that, especially for Appendix \ref{AppendixC4}, we use very similar methods than in \cite{alazard2017stationary}, which was our main source of inspiration.

\subsection{First reductions and the method of integration by parts}\label{AppendixC1}
\myindent First, we start by writing
\begin{equation}
\begin{split}
    \mathcal{I}(\lambda) = \int_I dx e^{i\lambda \phi^{1D}(x)} \times \left(\int_U dy e^{i\lambda (\phi_x(y) - \phi^{1D}(x))} \zeta(x,y) a(x,y)\right) \\
    =: \int_I dx e^{i\lambda \phi^{1D}(x)} J(\lambda,x)
\end{split}
\end{equation}

\myindent Now, since $\phi^{1D}$ satisfies the property $(VdC)_p$ with constants $C,c$, we can apply Proposition \ref{usefulVdC}. We define
\begin{equation}\label{defAB}
    \begin{split}
        &\sup_x |J'(\lambda,x)| =: A(\lambda) \\
        &\sup_x |J(\lambda,x)| =: B(\lambda).
    \end{split}
\end{equation}

\myindent Then, there holds thanks to Proposition \ref{usefulVdC} that
\begin{equation}\label{boundIbyAB}
    \mathcal{I}(\lambda) \lesssim (c\lambda)^{-\frac{1}{p}}\left(1 + |I| \frac{C}{c}\right)\left(|I|A(\lambda) + B(\lambda)\right).
\end{equation}

\myindent Hence, all amounts to bounding $A(\lambda)$ and $B(\lambda)$. We start with reducing $A(\lambda)$ to a similar form than $B(\lambda)$. Indeed, the problem is that, upon computing $J'(\lambda,x)$, using that, by definition, 
\begin{equation}\label{nablaphixyxeqzero}
        \nabla_y \phi(x,y(x)) = 0,
\end{equation}
we find that
\begin{equation}
    \begin{split}
		J'(\lambda,x) &= \int dy e^{i\lambda (\phi(x,y) - \phi(x,y(x)))} \zeta(x,y) a(x,y) \times i\lambda\left(\partial_x \phi(x,y) - \partial_x \phi(x,y(x)) \right)\\
    &+ \int dy e^{i\lambda (\phi(x,y) - \phi(x,y(x)))} \partial_x\left(\zeta(x,y) a(x,y)\right)\\
    &= J_1(\lambda,x) + J_2(\lambda,x).
    \end{split}
\end{equation}

\myindent Hence, there is a priori an undesirable power $\lambda$ in $J_1$. Now, there holds that
\begin{equation}
    \left(\zeta(x,y)a(x,y)(\partial_x \phi(x,y) - \partial_x \phi(x,y(x)))\right)(y = y(x)) = 0,
\end{equation}
so the mains order term in the usual stationary phase Lemma \ref{hormstatphase} yields that
\begin{equation}
    |J_1(\lambda,x)| = O_{\phi,\zeta,a}(\lambda^{-\frac{d}{2}}).
\end{equation}

\myindent However, the standard methods for obtaining quantitative upper bounds for an oscillatory integral of the form
\begin{equation}
    \int dy e^{i\lambda \Phi(y)} b(y) dy,
\end{equation}
where $\Phi$ has a nondegenerate stationary point $y_0$ and
\begin{equation}
    b(y) = O(|y - y_0|),
\end{equation}
usually yield only a bound of the form
\begin{equation}
    O(\lambda^{-\frac{d}{2} - \frac{1}{2}}),
\end{equation}
see for example \cite{stein1993harmonic}[Equation (23), Chapter VIII].

\myindent The idea, to recover a full $\lambda^{-1}$, is to integrate by parts in $y$. Indeed, assume for a moment that there exists a smooth vector field $v(x,y)$ such that
\begin{equation}\label{desiredformdiffpartxphi}
		\partial_x \phi(x,y) - \partial_x \phi(x,y(x)) = v(x,y) \cdot \nabla_y \phi(x,y).
	\end{equation}
	
\myindent Then, we may absorb the $\lambda$ by integrating by parts : there holds
	\begin{equation}\label{intbyparts}
		\begin{split}
			J_1(\lambda,x) &= \int dy  \left(v\cdot \nabla_y\right)\left(e^{i\lambda(\phi(x,y) - \phi(x,y(x)))}\right) \zeta(x,y) a(x,y) \\
			&= - \int dy e^{i\lambda(\phi(x,y) - \phi(x,y(x)))} \nabla_y \cdot \left(\zeta(x,y) a(x,y) v(x,y) \right) ,
		\end{split}
	\end{equation}
    to which we can apply usual stationary phase analysis to find a quantitative $O(\lambda^{-\frac{d}{2}})$ bound.
    
    \subsection{The vector field of the integration by parts $v(x,y)$ }\label{AppendixC2}

    \myindent Now, we turn to the construction of such a vector field $v$. Let us first write
	\begin{equation}\label{firstformdiffpartxphi}
		\begin{split}
			\partial_x \phi(x,y) - \partial_x \phi(x,y(x)) &= \left(\int_0^1 \left(\partial_x \nabla_y \phi\right)(x,y(x) + t(y - y(x)))dt\right) \cdot (y-y(x)) \\
			&=: w(x,y) \cdot (y-y(x)).
		\end{split}
	\end{equation}
	
	\myindent We observe moreover that, using \eqref{nablaphixyxeqzero}, there holds
	\begin{equation}\label{definitionofM}
		\begin{split}
			\nabla_y \phi(x,y) &= \left(\int_0^1 \left(\nabla_y ^2 \phi\right)(x,y(x) + t(y - y(x)))dt\right)  (y-y(x)) \\
			&=: M(x,y) (y - y(x)).
		\end{split}
	\end{equation}
	
	\myindent In particular, with the Definition \ref{defHandN},
	\begin{equation}
		M(x,y(x)) = \nabla_y^2\phi (x,y(x)) = H(x).
	\end{equation}
	
	\myindent As a consequence, for $y$ close to $y(x)$, $M(x,y)$ is invertible. Indeed, we recall the following standard lemma.
	
	\begin{lemma}[Neumann series]\label{inversematrixlemma}
    Let $A \in GL_d(\C)$ and $B \in \mathcal{M}_d(\C)$. Assume that
    \begin{equation}
        \|A - B\| \leq \frac{1}{2}\|A^{-1}\|^{-1}
    \end{equation}
    for any operator norm $\|\cdot\|$. Then there holds
    \begin{equation}
        \begin{split}
            B &\in GL_d(\C) \\
            \text{and} \qquad \|B^{-1}\| &\lesssim \|A^{-1}\|.
        \end{split}
    \end{equation}
\end{lemma}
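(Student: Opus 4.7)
The plan is to reduce the statement to the standard Neumann series argument for perturbations of the identity. First I would write the factorization
\begin{equation}
B = A - (A - B) = A\bigl(I - A^{-1}(A-B)\bigr),
\end{equation}
which is valid since $A$ is invertible. The invertibility of $B$ is then equivalent to the invertibility of the matrix $I - A^{-1}(A-B)$, and any bound on the inverse of the latter immediately translates into a bound on $\|B^{-1}\|$ via $B^{-1} = (I - A^{-1}(A-B))^{-1} A^{-1}$.

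Next I would control the operator $T := A^{-1}(A-B)$. By submultiplicativity of the operator norm and the hypothesis, there holds
\begin{equation}
\|T\| \leq \|A^{-1}\| \, \|A - B\| \leq \frac{1}{2}.
\end{equation}
In particular $\|T\| < 1$, so the Neumann series $\sum_{n \geq 0} T^n$ converges absolutely in $\mathcal{M}_d(\C)$. Standard computation then shows that $I - T$ is invertible with $(I - T)^{-1} = \sum_{n \geq 0} T^n$, and
\begin{equation}
\bigl\|(I - T)^{-1}\bigr\| \leq \sum_{n \geq 0} \|T\|^n \leq \sum_{n \geq 0} 2^{-n} = 2.
\end{equation}

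Combining these two steps, $B$ is invertible and
\begin{equation}
\|B^{-1}\| = \bigl\|(I - T)^{-1} A^{-1}\bigr\| \leq \bigl\|(I - T)^{-1}\bigr\| \, \|A^{-1}\| \leq 2 \|A^{-1}\|,
\end{equation}
which yields both conclusions of the lemma with an explicit constant. There is no serious obstacle: the only thing to verify carefully is that the operator norm used is submultiplicative and that the Neumann series converges in the finite-dimensional setting, both of which are immediate. The whole argument is a few lines and produces the explicit constant $2$ in the implicit $\lesssim$.
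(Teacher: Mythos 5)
Your proof is correct, and it is precisely the standard Neumann series argument that the paper intends (the paper simply names the lemma ``Neumann series'' and cites it as standard without writing out a proof). The explicit constant $2$ you obtain is exactly what the implicit $\lesssim$ hides, so nothing further is needed.
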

	
	\myindent Now, with the Notation \ref{defM1+d}, observe that
    \begin{equation}
        \|M(x,y) - H(x)\| \leq \mathcal{M}^{(y)}_{3,3}(\phi)|y-y(x)|.
    \end{equation}

    \myindent Hence, if 
    \begin{equation}\label{yclosetoyofx}
        |y - y(x)| \leq \frac{1}{2} \left(\mathcal{M}_{3,3}^{(y)}(\phi)\|H(x)^{-1}\|\right)^{-1},
    \end{equation}
    then there holds, using Lemma \ref{inversematrixlemma},
    \begin{equation}\label{estnormM-1}
            M(x,y) \in GL_d(\R) \qquad
            \text{and} \qquad \|M(x,y)^{-1}\| \lesssim \|H(x)^{-1}\| \lesssim N(\phi).
    \end{equation}

    \myindent Now, condition \eqref{yclosetoyofx} is \textit{always} satisfied for $(x,y) \in supp(\zeta)$ by definition of $\zeta$. Hence, on $supp(\zeta)$, we may write
    \begin{equation}
        y-y(x) = M(x,y)^{-1}\nabla_y\phi(x,y),
    \end{equation}
    and hence, using \eqref{firstformdiffpartxphi}, and the symmetry of $M^{-1}(x,y)$, there holds
	\begin{equation}
		\begin{split}
			\partial_x \phi(x,y) - \partial_x \phi(x,y(x)) &= w(x,y) \cdot \left(M^{-1}(x,y) \nabla_y\phi(x,y)\right) \\
			&= \left(M^{-1}(x,y) w(x,y)\right) \cdot \nabla_y \phi(x,y).
		\end{split}
	\end{equation}
	
	\myindent Hence, \eqref{desiredformdiffpartxphi} holds with
	\begin{equation}\label{defvectorfield}
		v(x,y) := M^{-1}(x,y) w(x,y).
	\end{equation}
	
	\quad

    \myindent We now give bounds on the derivatives of $v(x,y)$. We find that
	\begin{equation}
		\mathcal{M}^{(y)}_{0,\ell}(v) \lesssim \sum_{m_1 + m_2 = \ell} \mathcal{M}^{(y)}_{0,m_1}(w) \mathcal{M}^{(y)}_{0,m_2}(M^{-1}).
	\end{equation}
	
	\myindent Now, on the one hand, it is obvious using \eqref{firstformdiffpartxphi} that
	\begin{equation}
		\mathcal{M}^{(y)}_{0,m}(w) \lesssim \mathcal{M}^{(y)}_{1,m+1}(\partial_x \phi).
	\end{equation}
	
	\myindent On the other hand, a quick induction argument yields that the following (rough !) bound holds
	\begin{equation}
		\mathcal{M}^{(y)}_{0,m}(M^{-1}) \lesssim \|M(x,y)^{-1}\|^{m+1} \left(\mathcal{M}_{0,m}^{(y)}(M)\right)^m.
	\end{equation}
	
	\myindent Using the definition of $M$ \eqref{definitionofM}, there holds
    \begin{equation}\label{estNmM}
		\mathcal{M}_{0,m}^{(y)}(M) \lesssim \mathcal{M}_{2,m+2}^{(y)}(\phi).
	\end{equation}
	
	\myindent Hence, using \eqref{estNmM} and \eqref{estnormM-1}, there holds
	\begin{equation}
		\mathcal{M}^{(y)}_{0,m}(M^{-1}) \lesssim \mathcal{N}(\phi)^{m+1} \left(\mathcal{M}^{(y)}_{2,m+2}(\phi)\right)^m.
	\end{equation}
	
	\myindent Thus, we ultimately find that
	\begin{equation}\label{controlNlv}
    \begin{split}
		\mathcal{M}^{(y)}_{0,\ell}(v) &\lesssim \sum_{m_1 + m_2 = \ell} \mathcal{N}(\phi)^{m_1+1}\left(\mathcal{M}^{(y)}_{2,m_1+2}(\phi)\right)^{m_1} \mathcal{M}^{(y)}_{1,m_2 + 1}(\partial_x \phi) \\
        &\lesssim \mathcal{M}_{1,\ell+1}^{(y)}(\partial_x \phi) \left(\mathcal{N}(\phi)\right)^{\ell+1} \left(\mathcal{M}_{2,l+2}^{(y)}(\phi)\right)^{\ell}.
    \end{split}
	\end{equation}

    \subsection{Uniform model $\mathcal{J}(\lambda,x)$ for the oscillatory integral which is to bound}\label{AppendixC3}

    \myindent Now, thanks to the construction of $v(x,y)$, we can write $J_1(\lambda,x)$ in the form
    \begin{equation}
        J_1(\lambda,x) = e^{-i\lambda \phi(x,y(x)))}\int dy e^{i\lambda\phi(x,y)} b_1(x,y),
    \end{equation}
    where 
    \begin{equation}
        b_1(x,y) := \nabla_y \cdot(\zeta(x,y) a(x,y) v(x,y)).
    \end{equation}

    \myindent In particular, thanks to \eqref{controlNlv}, there holds
    \begin{equation}\label{boundb1}
        \begin{split}
            \mathcal{M}_{0,\ell}^{(y)}(b_1) &\lesssim \sum_{m_1 + m_2 + m_3 = \ell+1} \mathcal{M}_{0,m_1}^{(y)}(\zeta) \mathcal{M}_{0,m_2}^{(y)}(a) \mathcal{M}_{0,m_3}^{(y)}(v) \\
            &\lesssim \sum_{m_1 + m_2 + m_3 = \ell+1} \left(\mathcal{M}_{3,3}^{(y)}(\phi) \mathcal{N}(\phi)\right)^{m_1} \mathcal{M}_{0,m_2}(a) \mathcal{M}_{1,m_3+1}^{(y)}(\partial_x \phi) \left(\mathcal{N}(\phi)\right)^{m_3+1} \left(\mathcal{M}_{2,m_3+2}^{(y)}(\phi)\right)^{m_3} \\
            &\lesssim \mathcal{M}_{0,\ell+1}(a) \mathcal{M}_{1,\ell+2}^{(y)}(\partial_x \phi) \left(\mathcal{N}(\phi)\right)^{\ell+2} \left(\mathcal{M}_{2,\ell+3}^{(y)}(\phi)\right)^{\ell+1}.
        \end{split}
    \end{equation}
    
    \quad

    \myindent Similarly, one can write
    \begin{equation}
        J_2(\lambda,x) = e^{-i\lambda \phi(x,y(x)))}\int dy e^{i\lambda\phi(x,y)} b_2(x,y),
    \end{equation}
    where 
    \begin{equation}
        \begin{split}
            b_2(x,y) &= \partial_x (\zeta(x,y) a(x,y)) \\
            &= \partial_x \zeta (x,y) a(x,y) + \zeta(x,y) \partial_x a(x,y).
        \end{split}
    \end{equation}

    \myindent Since $\zeta(x,\cdot)$ is a smooth localizer inside a ball of center $y(x)$ and of radius $\sim \left(\mathcal{M}_{3,3}^{(y)}(\phi) N(\phi)\right)^{-1}$, one can further bound
    \begin{equation}
        \mathcal{M}_{0,m}^{(y)}(\partial_x \zeta) \lesssim \left(\sup_x |y'(x)|\right) \left(\mathcal{M}_{3,3}^{(y)}(\phi) N(\phi)\right)^{m + 1}.
    \end{equation}

    \myindent Now, differentiating in $x$ the equation \eqref{nablaphixyxeqzero}, there holds
    \begin{equation}
        y'(x) = - H(x)^{-1} (\partial_x \nabla_y \phi)(x,y(x)).
    \end{equation}

    \myindent Hence there holds
    \begin{equation}
        |y'(x)| \lesssim \mathcal{N}(\phi) \mathcal{M}_{1,1}^{(y)}(\partial_x \phi).
    \end{equation}

    \myindent Thus, bounding roughly, there holds
    \begin{equation}\label{boundb2}
        \mathcal{M}_{0,m}^{(y)}(b_2) \lesssim \left(\mathcal{N}(\phi)\right)^{m+2} \mathcal{M}_{1,1}(\partial_x \phi) \left(\mathcal{M}_{3,3}^{(y)}(\phi)\right)^{m} \left(\mathcal{M}_{0,m}^{(y)}(a) + \mathcal{M}_{0,m}^{(y)}(\partial_x a) \right).
    \end{equation}
    
    \quad

    \myindent Finally,
    \begin{equation}
        J(\lambda,x) = e^{-i\lambda\phi(x,y(x))} \int e^{i\lambda \phi(x,y)} \zeta(x,y)a(x,y)
    \end{equation}
    where
    \begin{equation}\label{boundb3}
        \mathcal{M}_{0,m}^{(y)} (\zeta a) \lesssim \left(\mathcal{N}(\phi) \mathcal{M}_{3,3}^{(y)}(\phi)\right)^{m}\mathcal{M}_{0,m}^{(y)}(a).
    \end{equation}

    \quad

    \myindent Hence, bounds \eqref{boundb1}, \eqref{boundb2} and \eqref{boundb3} yield that we need only bound
    \begin{equation}
        \mathcal{J}(\lambda,x) := \int dy e^{i\lambda\phi(x,y)}b(y),
    \end{equation}
    where 
    \begin{equation}\label{boundb}
        \mathcal{M}_{0,m}(b) \lesssim \left(\mathcal{M}_{0,m+1}^{(y)}(a) + \mathcal{M}_{0,m}^{(y)}(\partial_x a)\right) \mathcal{M}_{1,m+2}^{(y)}(\partial_x \phi) \left(\mathcal{N}(\phi)\right)^{m+2} \left(\mathcal{M}_{2,m+3}^{(y)}(\phi)\right)^{m+1},
    \end{equation}
    and where $b(y)$ is supported in the ball $\mathcal{B}(x)$ defined by \eqref{defballmixedvdcabz}.

    \subsection{Bounding $\mathcal{J}(\lambda,x)$}\label{AppendixC4}
    
    \myindent Now, let $\eps > 0$ be a small constant, let $\chi \in \mathcal{C}_0^{\infty}(\R^d)$ be a smooth bump function such that $\chi \equiv 1$ on the ball $B(0,1)$ and $\chi = 0$ outside of $B(0,2)$. Consider the following partition of unity
    \begin{equation}
        1 = \chi(\eps^{-1} \nabla_y \phi(x,y)) + (1 - \chi(\eps^{-1}\nabla_y \phi(x,y))) =: \chi_1(y) + \chi_2(y),
    \end{equation}
    so that, in particular, for all two integer $k\geq 0$, and for $i = 1,2$,
    \begin{equation}\label{boundderchiilocal}
        \mathcal{M}_{0,k}^{(y)}(\chi_i) \lesssim \eps^{-k} \left(\mathcal{M}_{0,k}^{(y)}(\phi)\right)^k.
    \end{equation}
    
    \myindent Then, we can divide $\mathcal{J}(\lambda,x)$ into two parts, namely
    \begin{equation}
        \mathcal{J}(\lambda,x) = \int dy e^{i\lambda \phi(x,y)} b(y) \chi_1(y) + \int dy e^{i\lambda \phi(x,y)} b(y) \chi_2(y) =: \mathcal{J}_1(\lambda,x) + \mathcal{J}_2(\lambda,x).
    \end{equation}
    
    \myindent On the one hand, in order to bound $\mathcal{J}_1(\lambda,x)$, let us introduce the change of variable
    \begin{equation}\label{changeofvariablewy}
        w := H(x) (y-y(x)),
    \end{equation}
    where we recall that $H(x)$ is defined by Definition \ref{defHandN}. Then, using moreover the fact that, by the definition of $M$ \eqref{definitionofM}, there holds
    \begin{equation}
        \nabla_y\phi(x,y) = M(x,y)(y-y(x)),
    \end{equation}
    we may write
    \begin{equation}
        \mathcal{J}_1(\lambda,x) = |\det H(x)|^{-1} \int e^{i\lambda\phi(x,y(w))} \chi(\eps^{-1} M(x,y(w)) H(x)^{-1} w) b(H(x)^{-1}w) dw.
    \end{equation}
    
    \myindent Hence, bounding by the absolute value, there holds
    \begin{equation}\label{firstcontrolmathcalJ1}
        |\mathcal{J}_1(\lambda,x)| \leq |\det H(x)|^{-1} \mathcal{M}_{0,0}^{(y)}(b) \int \chi(\eps^{-1} M(x,y(w)) H(x)^{-1} w).
    \end{equation}
    
    \myindent Now, observe that, thanks to \eqref{estnormM-1}, there holds
    \begin{equation}
        |M(x,y(w)) H(x)^{-1} w| \geq \left\|H(x) M(x,y(w))^{-1}\right\|^{-1} |w| \gtrsim |w|.
    \end{equation}
    
    \myindent In particular, $w\mapsto \chi(\eps^{-1} M(x,y(w))^{-1}w) H(x)^{-1} w)$ is supported inside a ball 
    \begin{equation}
        \{w \in \R^d \qquad |w| \lesssim \eps\}.
    \end{equation}
    
    \myindent Hence, from \eqref{firstcontrolmathcalJ1}, we find that
    \begin{equation}\label{boundmathcalJ1}
    \begin{split}
        |\mathcal{J}_1(\lambda,x)| &\lesssim |\det H(x)|^{-1} \mathcal{M}_{0,0}^{(y)}(b) \eps^d \\
        &\lesssim \eps^d |\det H(x)^{-1}| \left(\mathcal{M}_{0,1}^{(y)}(a) + \mathcal{M}_{0,0}^{(y)}(\partial_x a)\right) \mathcal{M}_{1,2}^{(y)}(\partial_x \phi) \left(\mathcal{N}(\phi)\right)^{2}\mathcal{M}_{2,3}^{(y)}(\phi)
    \end{split}
    \end{equation}
    
    \quad
    
    \myindent On the other hand, in order to bound $\mathcal{J}_2(\lambda,x)$, we observe that, on the support of $\chi_2$, we can integrate by parts in $y$, since $\nabla_y \phi$ doesn't vanish. Indeed, define the smooth vector field
    \begin{equation}
        X := \frac{\nabla_y \phi \cdot \nabla_y}{|\nabla_y\phi|^2},
    \end{equation}
    such that
    \begin{equation}
        e^{i\lambda \phi} = \frac{1}{i\lambda} X(e^{i\lambda \phi}).
    \end{equation}
    
    \myindent Then, there holds, for any integer $K\geq 1$
    \begin{equation}
        \mathcal{J}_2(\lambda,x) = \frac{1}{(i\lambda)^K} \int e^{i\lambda \phi} \left(X^*\right)^K \left(\chi_2 b\right),
    \end{equation}
    where $X^*$ is the adjoint of $X$, namely
    \begin{equation}
        X^* f(y) = -\nabla_y \cdot\left(\frac{\nabla_y \phi}{|\nabla_y\phi|^2} f\right) = -\frac{\left(\nabla_y\phi\right)^t\cdot \nabla_y^2\phi \cdot \nabla_y \phi}{|\nabla_y\phi|^4} f - \frac{\nabla_y\phi \cdot \nabla_y f}{|\nabla_y \phi|^2}.
    \end{equation}
    
    \myindent A tedious induction, based on this formula, yields that, for all $K\geq 1$, and for all function $f(y)$,
    \begin{equation}
        \left|\left(X^*\right)^K f(y)\right| \lesssim \sum_{m_1 + m_2 = K} \mathcal{M}_{0,m_1}(f) \frac{\left(\mathcal{M}_{0,m_2+1}^{(y)}(\phi)\right)^{m_2}}{|\nabla_y\phi|^{K+m_2}}.
    \end{equation}
    
    \myindent In particular, applying this last formula to $f = \chi_2 b$, and using the a priori estimates on $\chi_2$ \eqref{boundderchiilocal}, and on $b$ \eqref{boundb}, there holds
    \begin{equation}
    \begin{split}
        \left|\left(X^*\right)^K \left(\chi_2 b\right)\right| &\lesssim \chi_2 \sum_{m_1 + m_2 + m_3 = K} \eps^{-m_1} \left(\mathcal{M}_{0,m_1}^{(y)}(\phi)\right)^{m_1} \left(\mathcal{N}(\phi)\right)^{m_2} \left(\mathcal{M}_{2,m_2 + 3}^{(y)}(\phi)\right)^{m_2} \frac{\left(\mathcal{M}^{(y)}_{0, m_3 + 1}(\phi)\right)^{m_3}}{|\nabla_y \phi|^{K + m_3}}\\
        &\times \left(\mathcal{M}_{0,K+1}^{(y)}(a) + \mathcal{M}_{0,K}^{(y)}(\partial_x a)\right) \mathcal{M}_{1,K+2}^{(y)}(\partial_x \phi) \left(\mathcal{N}(\phi)\right)^2 \mathcal{M}_{2,K+3}^{(y)}(\phi) \\
        &\lesssim \chi_2 \sum_{m_1 + m_2 + m_3 = K} \left(\mathcal{N}(\phi)\right)^{m_1} \frac{1}{\eps^{m_2} |\nabla_y \phi|^{K + m_3}}\\
        &\times \left(\mathcal{M}_{0,K+1}^{(y)}(a) + \mathcal{M}_{0,K}^{(y)}(\partial_x a)\right) \mathcal{M}_{1,K+2}^{(y)}(\partial_x \phi) \left(\mathcal{N}(\phi)\right)^2\left( \mathcal{M}_{2,K+3}^{(y)}(\phi)\right)^{K+1}.
    \end{split}
    \end{equation}
    
    \myindent Using, again, the change of variable \eqref{changeofvariablewy}, there holds that, for $K > d$,
    \begin{equation}
        \int \chi_2 \frac{dy}{|\nabla_y \phi|^{K + m_3}} \lesssim |\det H(x)|^{-1} \eps^{-K - m_3 + d},
    \end{equation}
    from which we deduce that
    \begin{equation}\label{boundmathcalJ2}
    \begin{split}
        |\mathcal{J}_2(\lambda,x)| &\lesssim \lambda^{-K} \sum_{m_1 + m_2 + m_3 = K} \left(\mathcal{N}(\phi)\right)^{m_1} \eps^{-K - m_2 - m_3 + d} \\
        &\times \left(\mathcal{M}_{0,K+1}^{(y)}(a) + \mathcal{M}_{0,K}^{(y)}(\partial_x a)\right) \mathcal{M}_{1,K+2}^{(y)}(\partial_x \phi) \left(\mathcal{N}(\phi)\right)^2 \left( \mathcal{M}_{2,K+3}^{(y)}(\phi)\right)^{K+1}|\det H(x)|^{-1} \\
        &\lesssim \lambda^{-K} \left( \mathcal{M}_{2,K+3}^{(y)}(\phi)\right)^{K+1} \left(\left(\mathcal{N}(\phi)\right)^K \eps^{-K + d} + \eps^{-2K + d}\right)\\
        &\times \left(\mathcal{M}_{0,K+1}^{(y)}(a) + \mathcal{M}_{0,K}^{(y)}(\partial_x a)\right) \mathcal{M}_{1,K+2}^{(y)}(\partial_x \phi) \left(\mathcal{N}(\phi)\right)^2  |\det H(x)|^{-1}.
    \end{split}
    \end{equation}
    
    \myindent Now, comparing the bound on $\mathcal{J}_1(\lambda,x)$ \eqref{boundmathcalJ1} and on $\mathcal{J}_2(\lambda,x)$ \eqref{boundmathcalJ2}, we are able to choose $\eps$ optimally, depending on the dominant term. It is natural to impose that
    \begin{equation}\label{conditionforeps}
        \max\left(\lambda^{-K} \left( \mathcal{M}_{2,K+3}^{(y)}(\phi)\right)^K \left(\mathcal{N}(\phi)\right)^K \eps^{-K}, \lambda^{-K} \left( \mathcal{M}_{2,K+3}^{(y)}(\phi)\right)^K\eps^{-2K}\right) = 1.
    \end{equation}
    
    \myindent In particular, we see that the term in the max which gives the condition for $\eps$ depends on whether or not there holds
    \begin{equation}
        \mathcal{N}(\phi) \left(\mathcal{M}_{2,K+3}^{(y)} (\phi) \right)^{\frac{1}{2}} \leq \lambda^{\frac{1}{2}}.
    \end{equation}
    
    \myindent This motivates hypothesis \eqref{technicalhyp} in the theorem. With this hypothesis, we may thus choose, with $K = d+1$,
    \begin{equation}
        \eps = \lambda^{-\frac{1}{2}}  \left( \mathcal{M}_{2,d+4}^{(y)}(\phi)\right)^{\frac{1}{2}},
    \end{equation}
    and ultimately find the bound
    \begin{equation}
        |\mathcal{J}(\lambda,x)| \lesssim \lambda^{-\frac{d}{2}}  \left( \mathcal{M}_{2,d+4}^{(y)}(\phi)\right)^{\frac{d}{2} + 1} |\det H(x)|^{-1} \left(\mathcal{N}(\phi)\right)^2\left(\mathcal{M}_{0,d+2}^{(y)}(a) + \mathcal{M}_{0,d+1}^{(y)}(\partial_x a)\right) \mathcal{M}_{1,d+3}^{(y)}(\partial_x \phi).
    \end{equation}
    
    \myindent In particular, $A(\lambda)$ and $B(\lambda)$ defined by \eqref{defAB} satisfy similar bounds, and, this, coming back to estimate \eqref{boundIbyAB}, concludes the proof of the theorem.

\begin{remark}\label{resultnotechnicalhypo}
    If the technical hypothesis \eqref{technicalhyp} doesn't hold, i.e. if, for some $K$,
    \begin{equation}\label{notechnicalhypo}
        \mathcal{N}(\phi) \left(\mathcal{M}_{2,K+3}^{(y)} (\phi) \right)^{\frac{1}{2}} \geq \lambda^{\frac{1}{2}},
    \end{equation}
    then one can still concludes the proof, but the winning term in the $\max$ \eqref{conditionforeps} is not always the second one. Hence, one needs to choose 
    \begin{equation}
        \eps = \lambda^{-1} \mathcal{M}_{2, d+4}^{(y)}(\phi) \mathcal{N}(\phi).
    \end{equation}
    
    \myindent Overall, this would yield the bound
    \begin{equation}
        |\mathcal{J}(\lambda,x)| \lesssim \lambda^{-d} \left(\mathcal{M}_{2,d+4}^{(y)} (\phi)\right)^{d + 1} |\det(H(x))|^{-1} \left(\mathcal{N}(\phi)\right)^{d + 2} \left(\mathcal{M}_{0,d+2}^{(y)}(a) + \mathcal{M}_{0,d+1}^{(y)}(\partial_x a)\right) \mathcal{M}_{1,d+3}^{(y)}(\partial_x \phi),
    \end{equation}
    and, ultimately, the conclusion of the theorem should be changed to
    \begin{multline}
        |\mathcal{I}(\lambda)|\lesssim_d \lambda^{-d - \frac{1}{p}} c^{-\frac{1}{p}} \left(1 + |I|\left(1+ \frac{C}{c}\right)\right)^2 \left(\mathcal{D}(\phi)\right)^{-1}\left(\mathcal{N}(\phi)\right)^{d + 2} \left(\mathcal{M}_{2,d+4}^{(y)}(\phi)\right)^{d+ 1}\\
        \times \mathcal{M}_{1,d+3}^{(y)}(\partial_x \phi) \left(\mathcal{M}_{0,d+2}^{(y)}(a) + \mathcal{M}_{0,d+1}^{(y)}(\partial_x a)\right).
    \end{multline}
    
    \myindent While the exponent $\lambda^{-d}$ in this upper bound seems artificially a better bound than the usual $\lambda^{-\frac{d}{2}}$, the bound is worse because of \eqref{notechnicalhypo}.
\end{remark}

\section{Study of the bicharacteristic length function}\label{AppendixD}

\myindent In this section, we give some useful technical results on the bicharacteristic length function $\psi$ defined by Definition \ref{defpsi}. Precisely, we study the function
\begin{equation}\label{defdsigmat}
    d(\sigma,t) := \psi((t,\sigma),(0,\sigma)),
\end{equation}
which is defined for $t\in S^1$ and $\sigma \in \left(-\frac{L}{2}, \frac{L}{2}\right)$, and smooth if moreover $t \neq 0$, thanks to Lemma \ref{propertypsi}.

\subsection{Away from the equator}\label{AppendixD1}

\myindent In this section, we prove the following lemma.

\begin{lemma}\label{firstlemmabicharact}
    The function $d(\sigma,t)$ defined by \eqref{defdsigmat} satisfies
    \begin{equation}
        \forall \sigma \left(-\frac{L}{2}, \frac{L}{2}\right) \backslash\{0\}, \ \forall t \in (-\pi,\pi)\backslash\{0\}, \qquad \partial_{tt} d(\sigma,t) \neq 0.
    \end{equation}
\end{lemma}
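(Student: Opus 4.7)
The plan is to interpret $\partial_t d(\sigma,t)$ as the conserved Clairaut integral of the unique bicharacteristic from $(0,\sigma)$ to $(t,\sigma)$, and then derive $\partial_{tt} d\neq 0$ from an inverse function theorem argument combined with Hypothesis \ref{nonintersectHyp}.

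First I would use the eikonal equation \eqref{reducedeikonal} to rewrite $\partial_t d$ geometrically. By the construction of $\psi$ in Paragraph \ref{subsubsec24Micro}, the map $x\mapsto\nabla_x\psi(x,y)$ parametrizes, at each $x$, the incoming momentum of the $q_1$-bicharacteristic from $y$, normalized so that $q_1(x,\nabla_x\psi)=1$. Writing this momentum as $(\Theta(t),\Sigma(t))$ at $x=(t,\sigma)$ in local coordinates yields $\partial_t d(\sigma,t)=\Theta(t)$; and since $q_2=\Theta$ commutes with $q_1$ (Theorem \ref{CdVthm}), the function $\Theta(t)$ is exactly the (conserved) $q_2$-value of the bicharacteristic joining $(0,\sigma)$ to $(t,\sigma)$. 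I denote this quantity $\eta(t)$ in the remainder.

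Next I would introduce an angular return map. For $\eta$ in the open interval of $q_2$-values corresponding to directions in $S^*_{(0,\sigma)}\mathcal{S}$ whose bicharacteristic is not tangent to the parallel $\sigma$, set $\mathcal{T}(\eta)$ to be the angular coordinate at which the bicharacteristic of $q_1$ starting at $(0,\sigma)$ with $q_2$-value $\eta$ (with a fixed, consistent choice of the sign of $\Sigma$) first returns to the parallel $\sigma$. Smoothness of $\mathcal{T}$ on this interval follows from the implicit function theorem applied to the equation $\sigma(s;\eta)=\sigma$, whose crossings are transverse (the $\Sigma$-component being nonzero at the return point in this regime). By construction, $\mathcal{T}(\eta(t))=t$.

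The key observation is that $\mathcal{T}$ is injective thanks to Hypothesis \ref{nonintersectHyp}. Indeed, if $\eta_1\ne\eta_2$ gave $\mathcal{T}(\eta_1)=\mathcal{T}(\eta_2)=t^\ast$, the two distinct bicharacteristics from $(0,\sigma)$ determined by these two $q_2$-values would both pass through $(t^\ast,\sigma)$; but for $\sigma\neq 0$ the point $(t^\ast,\sigma)$ is not the antipode $(\pi,-\sigma)$ of $(0,\sigma)$, contradicting the non-intersection hypothesis. Since Lemma \ref{propertypsi} ensures that $d(\sigma,t)$ is smooth for $t\in(-\pi,\pi)\setminus\{0\}$, so is $\eta(t)=\partial_t d(\sigma,t)$. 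Differentiating the identity $\mathcal{T}(\eta(t))=t$ then gives $\mathcal{T}'(\eta(t))\,\eta'(t)=1$, which forces $\partial_{tt} d(\sigma,t)=\eta'(t)=1/\mathcal{T}'(\eta(t))\neq 0$, as claimed.

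The main obstacle will be the clean setup of the return map $\mathcal{T}$: identifying the correct open interval of admissible $\eta$, making a consistent sign choice of $\Sigma$ across the two branches $t\in(0,\pi)$ and $t\in(-\pi,0)$, and verifying smoothness at the transverse return crossings using the explicit formula for the Hamiltonian flow of $q_1$ from Lemma \ref{explicitbicharac}. Once this bookkeeping is in place, the geometric injectivity argument and the chain rule identity deliver the conclusion at once.
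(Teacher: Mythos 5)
Your proposal is sound and reaches the conclusion by a route that is recognizably different from, though dual to, the paper's. Both arguments start from the same identification: $\partial_t d(\sigma,t)$ is the $\Theta$-component of $\nabla_x\psi$ at $(t,\sigma)$, i.e.\ the conserved $q_2$-value of the unique bicharacteristic joining $(0,\sigma)$ to $(t,\sigma)$. From there the paper differentiates the forward map directly: it introduces the wavefront curve $Q=\{\psi(\cdot,(0,\sigma))=s(t)\}$, shows by a tilt computation that moving the target along the parallel by $\delta t$ displaces the crossing point on $Q$ by $\delta t\sin\alpha + O(\delta t^2)$, concludes that $\frac{d}{dt}\xi(t)$ is proportional to $-\sin\alpha\neq 0$, and then projects onto $q_2$ using that the non-horizontal part of $\mathcal{N}_\sigma$ is a graph over $\Theta$. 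You instead invert: you build the first-return map $\mathcal{T}$ of the $q_1$-flow to the parallel $\{\sigma\}$ as a function of the $q_2$-value, obtain its smoothness from the implicit function theorem at a transverse crossing, and then the identity $\mathcal{T}(\eta(t))=t$ together with the chain rule forces $\eta'(t)=\partial_{tt}d\neq 0$ without computing any rate. The nondegeneracy you need (nonzero $\Sigma$-component at the return point) is exactly the paper's $\sin\alpha\neq 0$, so both proofs consume the same geometric input; yours buys a softer conclusion step (a smooth one-sided inverse of a smooth map automatically has nonvanishing derivative), while the paper's tilt computation additionally exhibits $\partial_{tt}d$ as proportional to the tilt, which is slightly more quantitative.

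Two points to tighten. First, the identity $\mathcal{T}(\eta(t))=t$ requires checking that the $\psi$-realizing bicharacteristic meets the parallel $\{\sigma\}$ exactly once in flow time $(0,\pi)$ (so that the arrival at $(t,\sigma)$ is indeed the first return) and that the horizontal initial direction is excluded (so that $\eta(t)$ lies in the open domain of $\mathcal{T}$ and the crossing is transverse); both follow from the oscillation between $\sigma_{\pm}(I)$, one oscillation per period, combined with the antipodal refocalisation $\Phi^{q_1}_{\pi}(x,\xi)=(\bar x,\bar\xi)$ of Lemmas \ref{explicitbicharac} and \ref{antipodrefocus}. Second, the injectivity of $\mathcal{T}$ that you deduce from Hypothesis \ref{nonintersectHyp} is not what drives the conclusion — an injective smooth map may well have a vanishing derivative — the work is done by the composition identity and smoothness; what the hypothesis is really needed for is the uniqueness of the connecting bicharacteristic and the smoothness of $\psi$ off $Z_{\psi}$ (Lemma \ref{propertypsi}), which make $\eta(t)$ well defined and smooth in the first place.
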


\begin{proof}
    Without loss of generality, we fix $\sigma \in \left(0,\frac{L}{2}\right)$, and we do the proof for $t \in (0, \pi)$. Let us denote
    \begin{equation}\label{xieqnablaxpsiAppendixD}
        \xi(t) := -\nabla_x \psi((t,\sigma),(0,\sigma)).
    \end{equation}
    
    \myindent Thanks to the proof of Proposition \ref{bicharactparamprop}, we know that, on the one hand,
    \begin{equation}
        q_1(\sigma,\xi(t)) = 1,
    \end{equation}
    i.e. $\xi(t) \in \mathcal{N}_{\sigma}$ (see \eqref{defNsigma}). On the other hand, we also know, thanks to this proof, that $\xi(t)$ is the direction of the unique bicharacteristic curve of $q_1$ joining $(t,\sigma)$ to $(0,\sigma)$ in time $s(t) = d(\sigma,t)$ by definition. We first claim
    \begin{equation}\label{xidottneq0AppendixD}
        \forall t\in(0,\pi), \ \frac{d}{dt} \xi(t) \neq 0.
    \end{equation}
    
    \myindent Indeed, let us fix $t \in (0,\pi)$, and let us introduce, as in the proof of Proposition \ref{bicharactparamprop}, the wavefront set at time $s(t)$ of the bicharacteristic flow of $q_1$ starting at $(0,\sigma)$, i.e.
    \begin{equation}
        Q := \{x \in \mathcal{S} \ \text{such that} \ \psi(x,(0,\sigma)) = s(t)\} = \{P(\Phi_{s(t)}^{q_1}(0,\sigma,\xi)), \ \xi \in \mathcal{N}_{\sigma} \}.
    \end{equation}
    
    \myindent Then, by definition, $x = (t,\sigma) \in Q$, and, around $x$, $Q$ is a smooth curve, whose normal at $x$ is $-\xi(t)$. In particular, if $\delta t$ is an infinitesimal, we see that the bicharacteristic curve joining $(0,\sigma)$ to $(t + \delta t, \sigma)$ passes through $Q$ at a point $x + \delta x$ such that
    \begin{equation}
        \delta x = \delta t \sin(\alpha) + O((\delta t)^2),
    \end{equation}
    where $\alpha$ is the angle between $-\xi(t)$ and the horizontal direction, as can be seen on Figure \ref{QtiltedFigure}.
    
    \begin{figure}[h]
\includegraphics[scale = 0.5]{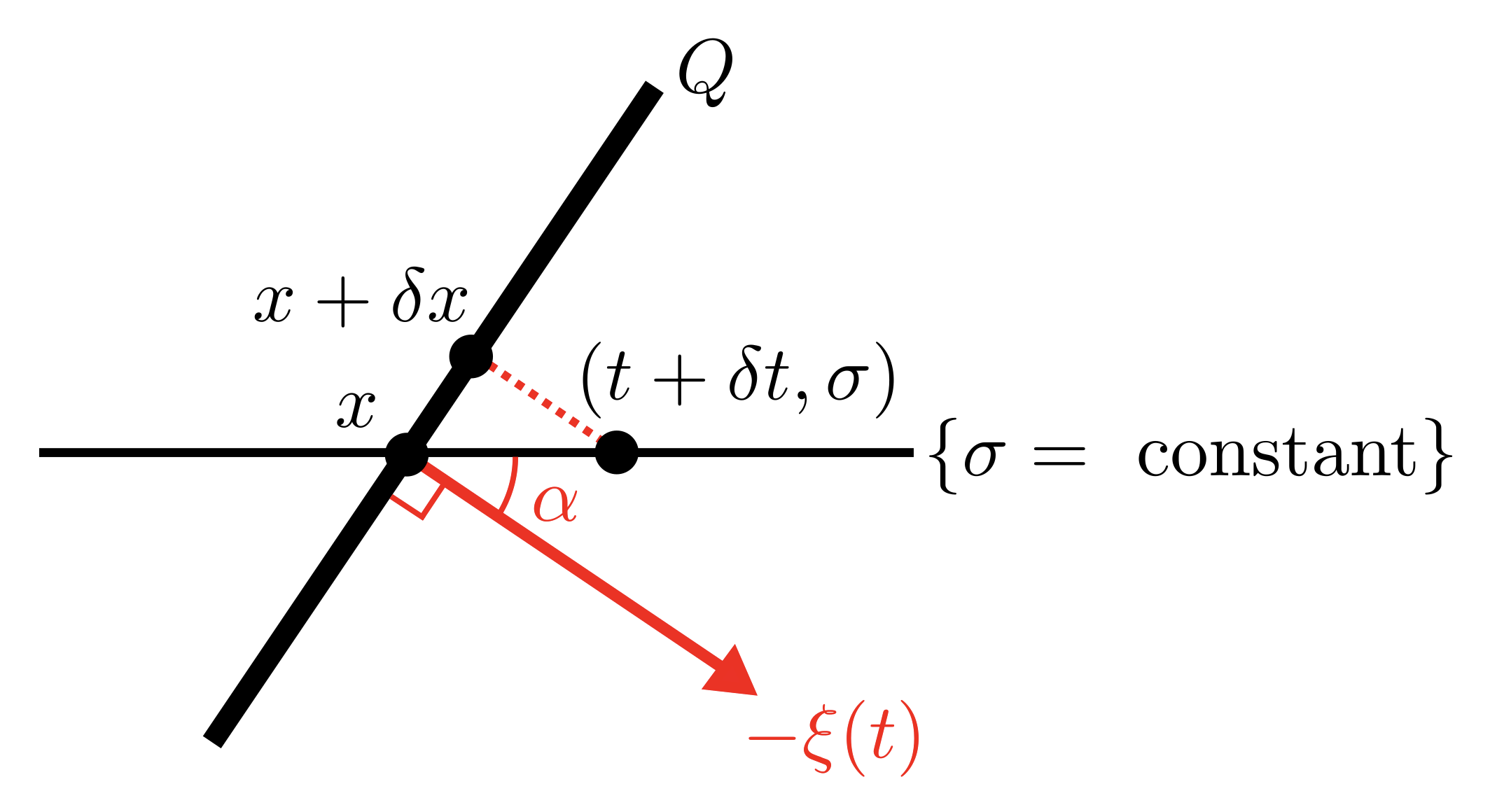}
\centering
\caption{The wavefront $Q$ around $x$}
\label{QtiltedFigure}
\end{figure}
    
    \myindent Thus, because the map 
    \begin{equation}
        \xi \in \mathcal{N}_{\sigma} \mapsto P(\Phi_{s(t)}^{q_1})\in Q
    \end{equation}
    is locally around $(\xi(t))$ a smooth bijection, we can thus conclude that the direction of the bicharacteristic curve joining $(0,\sigma)$ to $(t + \delta t,\sigma)$ is $-\xi(t) - \delta \xi \in \mathcal{N}_{\sigma}$ such that
    \begin{equation}
        -\delta \xi \ \text{is proportional to} \ \left(\delta t \sin(\alpha) + O((\delta t)^2)\right),
    \end{equation}
    or, in other words, that
    \begin{equation}
        \frac{d}{dt} \xi(t) \ \text{is proportional to} \ -\sin(\alpha).
    \end{equation}
    
    \myindent To conclude the proof of \eqref{xidottneq0AppendixD}, there only remains to observe that $\sin(\alpha) \neq 0$, because $\sigma \neq 0$ and $t\neq 0$ hence the direction $\xi(t)$ cannot be horizontal. 
    
    \myindent Now, since, again, $\xi(t)$ is not horizontal, then the projection on the first coordinate
    \begin{equation}
        \xi \in \mathcal{N}_{\sigma} \mapsto q_2(\sigma,\xi)
    \end{equation}
    is locally around $\xi(t)$ a smooth bijection (see for example the arguments of Paragraph \ref{subsubsec22HormPhase}). Thus, \eqref{xieqnablaxpsiAppendixD} and \eqref{xidottneq0AppendixD} yields ultimately that
    \begin{equation}
        \frac{d}{dt} \left(\partial_{\theta} \psi((t,\sigma),(0,\sigma))\right) \neq 0,
    \end{equation}
    or, in other words, that
    \begin{equation}
        \partial_{tt} d(\sigma,t) \neq 0.
    \end{equation}
\end{proof}

\subsection{Near the equator}\label{AppendixD2}

\myindent In this section, we give the following local expansion of the function $d$ around the equator $\sigma = 0$. We will give the main steps of the proof, without detailing all the computations.

\begin{lemma}\label{secondlemmabicharact}
    There exists a constant $C$ depending only on $\mathcal{S}$ such that
    \begin{equation}
        \partial_t d (\sigma,t) = 1 - C \sigma^2 f(\sigma,t)^2,
    \end{equation}
    where $f(\sigma,t)$ is a smooth function on $\left(-\frac{L}{2}, \frac{L}{2} \right) \times \left((-\pi,\pi)\backslash\{0\}\right)$ such that
    \begin{equation}
        f(0,t) = \tan\left(\frac{t}{2}\right).
    \end{equation}
\end{lemma}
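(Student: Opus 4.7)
The plan is to exploit the reflection symmetry of $\mathcal{S}$ with respect to the equator to reduce the problem to the computation of a single second $\sigma$-derivative at $\sigma=0$, and then to extract that derivative from a perturbative analysis of the bicharacteristic flow near the equator, combining the eikonal equation satisfied by $\psi$ with the explicit factorisation of the Hamiltonian flow of $q_1$ from Lemma \ref{explicitbicharac}.

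I would begin by noting that the reflection $(\theta,\sigma)\mapsto(\theta,-\sigma)$ is an isometry of $\mathcal{S}$ commuting with the rotational action, so the bicharacteristic flow of $q_1$ is equivariant under it. Thus $\psi((t,\sigma),(0,\sigma))=\psi((t,-\sigma),(0,-\sigma))$, and by Lemma \ref{propertypsi} the function $d(\sigma,t)$ is smooth in $\sigma$ (for $t\neq 0$) and even in $\sigma$. At $\sigma=0$ the equator is itself a geodesic, and the closed-form formula of Lemma \ref{formulaofq1} together with the identity $g(\pm 1)=1$ (which follows from $\sigma_\pm(\pm 1)=0$ in the integral defining $g$) imply that the equator is a closed bicharacteristic curve of $q_1$ whose natural parameter coincides with the arclength parameter $\theta$. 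Under Hypothesis \ref{nonintersectHyp} this is the only bicharacteristic joining $(t,0)$ and $(0,0)$ for $t\in(0,\pi)$, so $d(0,t)=t$. Taylor-expanding in $\sigma$ one obtains $d(\sigma,t)=t+\sigma^2 D_2(t)+O(\sigma^4)$ and $\partial_t d(\sigma,t)=1+\sigma^2 D_2'(t)+O(\sigma^4)$, and the lemma reduces to showing $D_2'(t)=-C\tan^2(t/2)$ for some constant $C=C(\mathcal{S})>0$: once this is in hand, one defines $f(\sigma,t)$ away from $\sigma=0$ by $f^2=(1-\partial_t d)/(C\sigma^2)$ and extends it across $\sigma=0$ by selecting the branch of the square root equal to $\tan(t/2)$ there, which is possible since $\tan(t/2)$ does not vanish on $(-\pi,\pi)\setminus\{0\}$.

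To compute $D_2'(t)$ I would combine two complementary pieces of information. First, writing the initial cotangent of the bicharacteristic from $(t,\sigma)$ to $(0,\sigma)$ as $(\Theta^\ast(\sigma,t),\Sigma^\ast(\sigma,t))$, one has $\partial_t d=\Theta^\ast$, and the parity inherited from the symmetry yields $\Theta^\ast=1+\sigma^2 a_2(t)+O(\sigma^4)$, $\Sigma^\ast=\sigma b_1(t)+O(\sigma^3)$, so $D_2'(t)=a_2(t)$. Inserting into the eikonal identity $q_1(\sigma,\Theta^\ast,\Sigma^\ast)=1$ and using $\partial_\Theta q_1(0,1,0)=1$ together with $\partial_\Sigma q_1(0,1,0)=0$ and $\partial_\sigma q_1(0,1,0)=0$ from Lemma \ref{dersigq1lemma} and the Hessian constant $\kappa=\partial_{\Sigma\Sigma}q_1(0,1,0)$ from Lemma \ref{LemmapartSigSigq_1} gives an algebraic identity reducing $a_2(t)$ to a quadratic expression in $b_1(t)$. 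Second, to compute $b_1(t)$ I would invoke Lemma \ref{explicitbicharac} to rewrite the bicharacteristic from $(t,\sigma)$ to $(0,\sigma)$ as a rotation composed with a true geodesic; for $\sigma$ small the corresponding geodesic has Clairaut integral close to $f(\sigma)$ and oscillates in latitude as $\sigma(u)\simeq\sigma_0\sin(\sqrt\alpha u+\phi_0)$ with $\alpha=-f''(0)>0$, advancing in $\theta$ at unit angular speed at leading order, and the requirement that it return to the parallel $\sigma$ after angular shift $t$ forces $b_1(t)$ to depend on $t$ through $\tan(t/2)$ by an elementary chord-length type computation on the oscillation.

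The main obstacle is this last matching step: one must track two small parameters simultaneously, namely $\sigma$ and the deviation of the Clairaut integral from $f(\sigma)$, keep quantitative control of the period $\tau(I)$ and phase shift $\omega(I)$ to first order near $I=1$, and verify that all constant-in-$t$ contributions produced by the eikonal expansion are reconciled by the geometric analysis into the clean $\tan^2(t/2)$ form. Here the twist hypothesis and the nondegeneracy $f''(0)\neq 0$ enter essentially, both to ensure that the map $t\leftrightarrow(s,I)$ is locally invertible and to guarantee that $\omega'$ does not degenerate at $I=1$, in contrast with the round-sphere situation of Example \ref{tauomegsphere}. Once $b_1(t)\propto\tan(t/2)$ has been established, the identity $D_2'(t)=-C\tan^2(t/2)$ follows from the algebraic relation above, and the construction of $f$ described in the second paragraph completes the proof.
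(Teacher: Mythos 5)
Your overall strategy (expand in $\sigma$ using the parity, convert the initial covector information through the level-set relation $q_1(\sigma,\Theta^*,\Sigma^*)=1$ with the derivative identities of Lemmas \ref{dersigq1lemma} and \ref{LemmapartSigSigq_1}, and extract the $t$-dependence from a return-to-the-parallel condition for an approximately harmonic latitudinal oscillation) is the same skeleton as the paper's argument. But the one step that actually produces $\tan(t/2)$ is exactly the step you leave as ``an elementary chord-length type computation'', and as you have set it up it gives the wrong answer. If the near-equatorial \emph{geodesic} oscillates in latitude with frequency $\sqrt{\alpha}$, $\alpha=-f''(0)$, per unit arclength and advances in $\theta$ at unit speed, then imposing ``return to the parallel after angular shift $t$'' on the geodesic yields $b_1(t)\propto\tan\bigl(\sqrt{\alpha}\,t/2\bigr)$, not $\tan(t/2)$; these agree only for the round sphere. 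The point is that $t$ is the angular separation of the endpoints along the \emph{bicharacteristic of $q_1$}, whose angular advance differs from the geodesic's by the rotation $\Phi^{p_2}_{-s\omega(I)}$ in Lemma \ref{explicitbicharac}, and whose latitudinal phase over $q_1$-time $s$ is the geodesic phase over arclength $\tfrac{s}{2\pi}\tau(I)$ with $\tau(1)=2\pi/\sqrt{\alpha}$. Only after these two compensations (the factor $\tau(I)$ and the correction by $\omega(I)$, with $2\pi\omega(1)=2\pi/\sqrt{\alpha}-2\pi$ on the chosen branch) does the oscillation have frequency exactly $1$ when measured against $t$, which is what forces $\tan(t/2)$. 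The paper sidesteps this bookkeeping by linearizing the Hamiltonian flow of $q_1$ itself, where the frequency-one statement is the nontrivial normalization $\partial_{\Sigma\Sigma}q_1(0,\Theta_0,0)\,\partial_1G(1,1)\,\Theta_0\,|f''(0)|=1$ (a consequence of the $2\pi$-periodicity encoded in $G$, proved with the computations of Appendix \ref{AppendixA1}); in your route this identity reappears as the cancellation between $\sqrt{\alpha}$, $\tau(I)$ and $\omega(I)$, and it must be proved, not assumed. Relatedly, your remark that the twist hypothesis ``enters essentially'' here is off target: the lemma and the paper's proof do not use it (the sphere satisfies the same expansion), which suggests the mechanism you are invoking for the $\tan(t/2)$ is not yet the right one.

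Two smaller points. First, your eikonal step produces, besides $-\tfrac{1}{2}\partial_{\Sigma\Sigma}q_1(0,1,0)\,b_1(t)^2$, a constant-in-$t$ contribution $\tfrac{1}{2}\partial_{\sigma\sigma}q_1(0,1,0)\,\sigma^2$ (and the paper's $|g_\sigma(0)|=1+O(\sigma^2)$ plays the same role); on the round sphere one checks directly that the $\sigma^2$-coefficient of $\partial_t d$ is $-\tfrac12-\tfrac12\tan^2(t/2)$, so the clean identity $D_2'(t)=-C\tan^2(t/2)$ you aim for cannot hold literally and this constant must be carried along (or absorbed, as in the paper, into the prefactor). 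Second, defining $f$ by $f^2=(1-\partial_t d)/(C\sigma^2)$ requires $1-\partial_t d>0$ away from the equator and a smooth choice of square root; this is fine (it follows from $|q_2|<q_1$ off the boundary of $\Gamma$), but it should be said. The essential gap, however, is the frequency bookkeeping above: without it your argument yields $\tan\bigl(\sqrt{-f''(0)}\,t/2\bigr)$ in place of $\tan(t/2)$.
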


\begin{proof}
    We give the proof for $\sigma \geq 0$ and $t\in (0,\pi)$. Using similar notations than for the proof of Lemma \ref{firstlemmabicharact}, we define, for $|\sigma| << 1$ and $t\in (0,\pi)$, 
    \begin{equation}
        \xi(\sigma,t) \in \mathcal{N}_{\sigma}
    \end{equation}
    the direction of the unique bicharacteristic joining $(0,\sigma)$ to $(t,\sigma)$ in a time $s(\sigma,t) \in (0,\pi)$ (observe that there is a sign difference with the definition of $\xi(t)$ in the proof of Lemma \ref{firstlemmabicharact}). As we have already argued in this proof, there holds
    \begin{equation}
        \partial_t d(\sigma,t) = q_2(\sigma,\xi(\sigma,t)).
    \end{equation}
    
    \myindent Now, let us write 
    \begin{equation}\label{defSigmasigmatAppendixD2}
        \xi(\sigma,t) = (\Theta(\sigma,t), \Sigma(\sigma,t)) \in \mathcal{N}_{\sigma}.
    \end{equation}
    
    \myindent Since $|\sigma| << 1$, we know a priori that $|\Sigma(\sigma,t)| << 1$. Hence, there holds (see Notation \ref{defwrho})
    \begin{equation}\label{firstexpressionAppendixD2}
        \partial_t d(\sigma,t) = \Theta(\sigma,t) = |g_{\sigma}(0)|\left(1 - \frac{1}{2}\left(\Sigma(\sigma,t)\right)^2 + O\left(\left(\Sigma(\sigma,t)\right)^4\right)\right).
    \end{equation}
    
    \myindent Now, let us study the bicharacteristic of $q_1$ starting at $x = (0,\sigma)$ with direction $\xi =(\Theta,\Sigma) \in \mathcal{N}_{\sigma}$ such that
    \begin{equation}
        0 < \Sigma \lesssim \sigma.
    \end{equation}
    
    \myindent We denote
    \begin{equation}
        \Phi_s^{q_1}(x,\xi) = (\theta(t),\sigma(t), \Theta(t),\Sigma(t)).
    \end{equation}
    
    \myindent Using, for example, Lemma \ref{explicitbicharac}, one can prove that, for all times,
    \begin{equation}\label{sigcontrol}
        |(\sigma(t),\Sigma(t))| \lesssim \sigma.
    \end{equation}
    
    \myindent Using the explicit formula for the Hamiltonian flow \eqref{explicithamiltonianflow}, we know that there holds along this bicharacteristic
    \begin{equation}
        \dot{\sigma}(t) = \partial_{\Sigma} q_1(\sigma(t),\Theta(t),\Sigma(t)) = \left(\partial_{\Sigma \Sigma} q_1(\sigma(t),\Theta(t),0) \right) \Sigma(t) + O\left(\Sigma(t)^2\right).
    \end{equation}
    
    \myindent Using similar techniques than in Appendix \ref{AppendixA1}, one can prove moreover that
    \begin{equation}
        \partial_{\Sigma \Sigma} q_1(\sigma(t),\Theta(t),0) = \partial_{\Sigma \Sigma} q_1(0,\Theta_0,0) + O\left((\sigma(t) + \Sigma(t))^2\right),
    \end{equation}
    where $\Theta_0$ is such that
    \begin{equation}
        g_0(0) = \begin{pmatrix}
            \Theta_0 \\
            0
        \end{pmatrix}.
    \end{equation}
    
    \myindent Overall, using moreover \eqref{sigcontrol}, one can prove that
    \begin{equation}
        \dot{\sigma}(t) = \partial_{\Sigma \Sigma} q_1(0,\Theta_0,0) \Sigma(t) + O(\sigma^2).
    \end{equation}
    
    \myindent Now, one can rigorously prove that this equality can be differentiated, and that there holds
    \begin{equation}
        \ddot{\sigma}(t) = \partial_{\Sigma \Sigma} q_1(0,\Theta_0,0) \dot{\Sigma}(t) + O(\sigma^2).
    \end{equation}
    
    \myindent Thanks to \eqref{explicithamiltonianflow}, we know that
    \begin{equation}
        \dot{\Sigma}(t) = -\partial_{\sigma} q_1(\sigma(t),\Theta(t),\Sigma(t)).
    \end{equation}
    
    \myindent Now, using the representation of Lemma \ref{formulaofq1}, there holds
    \begin{equation}
        \partial_{\sigma} q_1(\sigma(t),\Theta(t),\Sigma(t)) = (\partial_1 G)(p_1(\sigma(t),\Theta(t),\Sigma(t)), \Theta(t)) \partial_{\sigma} p_1(\sigma(t),\Theta(t),\Sigma(t)).
    \end{equation}
    
    \myindent There holds, using \eqref{normsurfrev} and \eqref{sigcontrol},
    \begin{equation}
        \partial_{\sigma} p_1(\sigma(t),\Theta(t),\Sigma(t)) = \frac{-\frac{\Theta(t)^2}{f(\sigma)^3} f'(\sigma)}{ p_1(\sigma(t),\Theta(t),\Sigma(t))} = \Theta_0 |f''(0)| \sigma + O(\sigma^2). 
    \end{equation}
    
    \myindent Moreover, there holds similarly
    \begin{equation}
        (\partial_1 G)(p_1(\sigma(t),\Theta(t),\Sigma(t)), \Theta(t)) = \partial_1 G (1,1) + O(\sigma^2).
    \end{equation}
    
    \myindent Overall, we thus find that
    \begin{equation}
        \begin{split}
            \ddot{\sigma}(t) &= \partial_{\Sigma\Sigma}q_1(0,\Theta_0,0) \dot{\Sigma}(t) + O(\sigma^2) \\
            &= -\left(\partial_{\Sigma\Sigma}q_1(0,\Theta_0,0) \partial_1 G(1,1) \Theta_0 |f''(0)|\right) \sigma + O(\sigma^2).
        \end{split}
    \end{equation}
    
    \myindent Now, using similar techniques than the one used in Appendix \ref{AppendixA}, one can prove that 
    \begin{equation}
        \partial_{\Sigma\Sigma}q_1(0,\Theta_0,0) \partial_1 G(1,1) \Theta_0 |f''(0)| = 1.
    \end{equation}
    
    \myindent Finally, we see that $\sigma(t)$ solves the differential equation
    \begin{equation}
        \ddot{\sigma}(t) + \sigma(t) = O(\sigma^2).
    \end{equation}
    
    \myindent Integrating explicitly this equation, and since the time interval is compact, there holds, for some constants $\alpha,\beta$.
    \begin{equation}\label{reprsigmatAppendixD2}
        \sigma(t) = \alpha \cos(t) + \beta \sin(t) + O(\sigma^2).
    \end{equation}
    \myindent In order to find $\alpha$ and $\beta$, we may use the fact 
    \begin{equation}
    \begin{split}
        &\sigma(0) = \sigma \\
        &\dot{\sigma}(0) = (\partial_{\Sigma\Sigma}q_1(0,\Theta_0,0)) \Sigma + O(\sigma^2).
    \end{split}
    \end{equation}
    \myindent Hence, we find that
    \begin{equation}
    \begin{split}
        &\alpha = \sigma + O(\sigma^2)\\
        &\beta = c^{-1}\Sigma + O(\sigma^2),
    \end{split}  
    \end{equation}
    where
    \begin{equation}
        c := \partial_{\Sigma\Sigma}q_1(0,\Theta_0,0) \neq 0.
    \end{equation}
    
    \myindent Now, if we choose $\Sigma = \Sigma(\sigma,t)$ (see \eqref{defSigmasigmatAppendixD2}), then, by definition,
    \begin{equation}
        \sigma(t) = \sigma.
    \end{equation}
    
    \myindent This equation, along with the representation \eqref{reprsigmatAppendixD2}, yields the following equation for $\Sigma(\sigma,t)$ 
    \begin{equation}
        \sigma \cos(t) + c^{-1}\Sigma(\sigma,t) \sin(t) + O(\sigma^2) = \sigma.
    \end{equation}
    
    \myindent We thus find that
    \begin{equation}
        \Sigma(\sigma,t) = c \sigma \frac{1 - \cos(t)}{\sin(t)} + O\left(\frac{\sigma^2}{\sin(t)}\right).
    \end{equation}
    
    \myindent Observe that, on any compact subset of $(0,\pi)$, $\frac{1}{\sin(t)}$ is bounded. Hence, as long as we restrict locally to a compact subset of $(0,\pi)$, we may write
    \begin{equation}
        \Sigma(\sigma,t) = c\sigma \frac{1 - \cos(t)}{\sin(t)} + O(\sigma^2).
    \end{equation}
    
    \myindent Using that
    \begin{equation}
        \frac{1 - \cos(t)}{\sin(t)} = \tan\left(\frac{t}{2}\right),
    \end{equation}
    we may further reduce to
    \begin{equation}
        \Sigma(\sigma,t) = c \sigma \tan\left(\frac{t}{2}\right) + O(\sigma^2),
    \end{equation}
    where the remainder is locally uniform on any compact subset of $(0,\pi)$. Coming back to equation \eqref{firstexpressionAppendixD2}, we finally find that
    \begin{equation}
        \partial_t d(\sigma,t) = |g_{\sigma}(0)| \left(1 - \frac{1}{2} c^2 \sigma^2 \tan^2\left(\frac{t}{2} \right) + 0(\sigma^4)\right).
    \end{equation}
    
    \myindent In order to finally complete the proof of the Lemma, there only remains to observe that
    \begin{equation}
        |g_{\sigma}(0)| = 1 + O(\sigma^2).
    \end{equation}
\end{proof}

\printbibliography

@article{weyl1912asymptotische,
  title={Das asymptotische Verteilungsgesetz der Eigenwerte linearer partieller Differentialgleichungen (mit einer Anwendung auf die Theorie der Hohlraumstrahlung)},
  author={Weyl, Hermann},
  journal={Mathematische Annalen},
  volume={71},
  number={4},
  pages={441--479},
  year={1912},
  publisher={Springer}
}

@article{levitan1953asymptotic,
  title={On the asymptotic behavior of the spectral function of a self-adjoint differential equation of the second order and on expansion in eigenfunctions},
  author={Levitan, Boris Moiseevich},
  journal={Izvestiya Rossiiskoi Akademii Nauk. Seriya Matematicheskaya},
  volume={17},
  number={4},
  pages={331--364},
  year={1953},
  publisher={Russian Academy of Sciences, Steklov Mathematical Institute of Russian~…}
}

@book{sogge2017fourier,
  title={Fourier integrals in classical analysis},
  author={Sogge, Christopher D},
  volume={210},
  year={2017},
  publisher={Cambridge University Press}
}

@book{hormander2009analysis,
  title={The analysis of linear partial differential operators IV: Fourier integral operators},
  author={H{\"o}rmander, Lars},
  year={2009},
  publisher={Springer}
}

@book{duistermaat1996fourier,
 %title={Fourier integral operators},
  %author={Duistermaat, Johannes Jisse and Hormander, Lars},
  %volume={2},
  %year={1996},
  %publisher={Springer}
%}

@article{ivrii1980second,
  title={Second term of the spectral asymptotic expansion of the Laplace-Beltrami operator on manifolds with boundary},
  author={Ivrii, V Ya},
  journal={Functional Analysis and Its Applications},
  volume={14},
  number={2},
  pages={98--106},
  year={1980},
  publisher={Springer}
}

@article{sogge1988concerning,
  title={Concerning the Lp norm of spectral clusters for second-order elliptic operators on compact manifolds},
  author={Sogge, Christopher D},
  journal={Journal of functional analysis},
  volume={77},
  number={1},
  pages={123--138},
  year={1988},
  publisher={Elsevier}
}

@article{sogge2001riemannian,
  title={Riemannian manifolds with maximal eigenfunction growth},
  author={Sogge, Christopher D},
  journal={S{\'e}minaire {\'E}quations aux d{\'e}riv{\'e}es partielles (Polytechnique) dit aussi" S{\'e}minaire Goulaouic-Schwartz"},
  pages={1--16},
  year={2001}
}

@article{toth2003norms,
  title={Norms of modes and quasi-modes revisited},
  author={Toth, John A and Zelditch, Steve},
  journal={Contemporary Mathematics},
  volume={320},
  pages={435--458},
  year={2003},
  publisher={Providence, RI: American Mathematical Society}
}

@article{germain2022bounds,
  title={Bounds for spectral projectors on generic tori},
  author={Germain, Pierre and Rydin Myerson, Simon L},
  journal={Mathematische Annalen},
  pages={1--37},
  year={2022},
  publisher={Springer}
}

@article{germain2022bounds2,
  title={Bounds for spectral projectors on the Euclidean cylinder},
  author={Germain, Pierre and Myerson, Simon},
  journal={Comptes Rendus. Math{\'e}matique},
  volume={360},
  number={G11},
  pages={1257--1262},
  year={2022}
}

@article{germain2023spectral,
  title={Spectral projectors, resolvent, and Fourier restriction on the hyperbolic space},
  author={Germain, Pierre and L{\'e}ger, Tristan},
  journal={Journal of Functional Analysis},
  volume={285},
  number={2},
  pages={109918},
  year={2023},
  publisher={Elsevier}
}

@article{bleher1994distribution,
  title={Distribution of energy levels of a quantum free particle on a surface of revolution},
  author={Bleher, Pavel M},
  journal={Duke Math. J.},
  volume={76},
  number={1},
  pages={45--93},
  year={1994}
}

@article{de1980spectre,
  title={Spectre conjoint d'op{\'e}rateurs pseudo-diff{\'e}rentiels qui commutent: II. Le cas int{\'e}grable},
  author={Colin de Verdiere, Yves},
  journal={Mathematische Zeitschrift},
  volume={171},
  pages={51--73},
  year={1980},
  publisher={Springer-Verlag}
}

@article{donnelly2001bounds,
  title={Bounds for eigenfunctions of the Laplacian on compact Riemannian manifolds},
  author={Donnelly, Harold},
  journal={Journal of Functional Analysis},
  volume={187},
  number={1},
  pages={247--261},
  year={2001},
  publisher={Elsevier}
}

@book{bourgain1993eigenfunctiona,
  title={Eigenfunction bounds for compact manifolds with integrable geodesic flow},
  author={Bourgain, Jean},
  year={1993},
  publisher={Institut des Hautes Etudes Scientifique}
}

@article{schlag2010decay,
  title={Decay for the wave and Schr{\"o}dinger evolutions on manifolds with conical ends, Part I},
  author={Schlag, Wilhelm and Soffer, Avy and Staubach, Wolfgang},
  journal={Transactions of the American Mathematical Society},
  volume={362},
  number={1},
  pages={19--52},
  year={2010}
}

@article{duistermaat1975spectrum,
  title={The Spectrum of Positive Elliptic Operators and Periodic Bicharacteristics.},
  author={Duistermaat, JJ and Guillemin, VW},
  journal={Inventiones mathematicae},
  volume={29},
  pages={39--80},
  year={1975}
}

@article{hormander1968spectral,
  title={The spectral function of an elliptic operator},
  author={H{\"o}rmander, Lars},
  journal={Acta Mathematica},
  volume={121},
  number={1},
  pages={193--218},
  year={1968},
  publisher={Springer}
}

@article{colin1973spectre,
  title={Spectre du laplacien et longueurs des g{\'e}od{\'e}siques p{\'e}riodiques. I},
  author={Colin de Verdi{\`e}re, Y},
  journal={Compositio Mathematica},
  volume={27},
  number={1},
  pages={83--106},
  year={1973}
}

@article{colin1973spectreb,
  title={Spectre du Laplacien et longueurs des g{\'e}od{\'e}siques p{\'e}riodiques. II},
  author={Colin de Verdi{\`e}re, Yves},
  journal={Compositio Mathematica},
  volume={27},
  number={2},
  pages={159--184},
  year={1973}
}

@book{duistermaat1996fourierb,
  title={Fourier integral operators},
  author={Duistermaat, Johannes Jisse and Hormander, Lars},
  volume={2},
  year={1996},
  publisher={Springer}
}

@article{sogge2011blowup,
  title={About the blowup of quasimodes on Riemannian manifolds},
  author={Sogge, Christopher D and Toth, John A and Zelditch, Steve},
  journal={Journal of Geometric Analysis},
  volume={21},
  number={1},
  pages={150--173},
  year={2011},
  publisher={Springer}
}

@article{sogge2002riemannian,
  title={Riemannian manifolds with maximal eigenfunction growth},
  author={Sogge, Christopher D and Zelditch, Steve},
  journal={Duke Mathematical Journal},
  volume={114},
  number={3},
  pages={387--437},
  year={2002},
  publisher={Duke University Press}
}

@article{sogge2016focal,
  title={Focal points and sup-norms of eigenfunctions},
  author={Sogge, Christopher D and Zelditch, Steve},
  journal={Revista Matem{\'a}tica Iberoamericana},
  volume={32},
  number={3},
  pages={971--994},
  year={2016}
}

@article{sogge2016focalb,
  title={Focal points and sup-norms of eigenfunctions II: the two-dimensional case},
  author={Sogge, Christopher D and Zelditch, Steve},
  journal={Revista matem{\'a}tica iberoamericana},
  volume={32},
  number={3},
  pages={995--999},
  year={2016}
}

@article{toth2002riemannian,
  title={Riemannian manifolds with uniformly bounded eigenfunctions},
  author={Toth, John A and Zelditch, Steve},
  journal={Duke Mathematical Journal},
  volume={111},
  number={1},
  pages={97--132},
  year={2002},
  publisher={Duke University Press}
}

@inproceedings{toth2003lp,
  title={Lp norms of eigenfunctions in the completely integrable case},
  author={Toth, John A and Zelditch, Steve},
  booktitle={Annales Henri Poincare},
  volume={4},
  number={2},
  pages={343--368},
  year={2003},
  organization={Birkhauser Verlag Basel}
}

@article{germain2023l2,
  title={L2 to Lp bounds for spectral projectors on thin intervals in Riemannian manifolds},
  author={Germain, Pierre},
  journal={arXiv preprint arXiv:2306.16981},
  year={2023}
}

@article{berard1977wave,
  title={On the wave equation on a compact Riemannian manifold without conjugate points},
  author={B{\'e}rard, Pierre H},
  journal={Mathematische Zeitschrift},
  volume={155},
  number={3},
  pages={249--276},
  year={1977},
  publisher={Springer}
}

@article{colin1974parametrix,
  title={Param{\'e}trix de l'{\'e}quation des ondes et int{\'e}grales sur l'espace des chemins},
  author={Colin de Verdi{\`e}re, Y},
  journal={S{\'e}minaire {\'E}quations aux d{\'e}riv{\'e}es partielles (Polytechnique) dit aussi" S{\'e}minaire Goulaouic-Schwartz"},
  pages={1--12},
  year={1974}
}

@article{de1977quasi,
  title={Quasi-modes sur les vari{\'e}t{\'e}s Riemanniennes},
  author={Colin de Verdiere, Yves },
  journal={Inventiones mathematicae},
  volume={43},
  number={1},
  pages={15--52},
  year={1977},
  publisher={Springer-Verlag Berlin/Heidelberg}
}

@techreport{souriau1975explicit,
  title={Explicit construction of the Maslov index. Applications},
  author={Souriau, J-M},
  year={1975},
  institution={Centre National de la Recherche Scientifique}
}

@article{hormander2007symplectic,
  title={Symplectic Geometry},
  author={H{\"o}rmander, Lars},
  journal={The Analysis of Linear Partial Differential Operators III: Pseudo-Differential Operators},
  pages={268--347},
  year={2007},
  publisher={Springer}
}

@article{hormanderanalysis,
  title={The Analysis of Linear Partial Differential Operators I: Distribution Theory and Fourier Analysis},
  author={H{\"o}rmander, Lars},
  year={2003},
  publisher={Springer}
}

@article{strichartz1972functional,
  title={A functional calculus for elliptic pseudo-differential operators},
  author={Strichartz, Robert S},
  journal={American Journal of Mathematics},
  volume={94},
  number={3},
  pages={711--722},
  year={1972},
  publisher={JSTOR}
}

@article{hormander1971fourier,
  title={Fourier integral operators. I},
  author={H{\"o}rmander, Lars},
  journal={Acta Mathematica},
  volume={127},
  pages={79--183},
  year={1971},
  publisher={International Press of Boston}
}

@article{duistermaat1972fourier,
  title={Fourier integral operators. II},
  author={Duistermaat, JJ and H{\"o}rmander, L},
  journal={Acta Mathematica},
  volume={128},
  pages={183--269},
  year={1972},
  publisher={International Press of Boston}
}

@article{verdiere1979spectre,
  title={Sur le spectre des op{\'e}rateurs elliptiques {\`a} bicaract{\'e}ristiques toutes p{\'e}riodiques},
  author={Colin de Verdi{\'e}re, Y},
  journal={Comment. Math. Helvetici},
  volume={54},
  pages={508--522},
  year={1979}
}

@article{donnelly1978g,
  title={G-spaces, the asymptotic splitting of L 2 (M) into irreducibles},
  author={Donnelly, Harold},
  journal={Mathematische Annalen},
  volume={237},
  pages={23--40},
  year={1978},
  publisher={Springer}
}

@article{alazard2017stationary,
  title={A stationary phase type estimate},
  author={Alazard, Thomas and Burq, Nicolas and Zuily, Claude},
  journal={Proceedings of the American Mathematical Society},
  volume={145},
  number={7},
  pages={2871--2880},
  year={2017}
}

@article{hardy1917average,
  title={The average order of the arithmetical functions P (x) and $\Delta$ (x)},
  author={Hardy, GH},
  journal={Proceedings of the London Mathematical Society},
  volume={2},
  number={1},
  pages={192--213},
  year={1917},
  publisher={Oxford University Press}
}

@article{hardy1917dirichlet,
  title={On Dirichlet's divisor problem},
  author={Hardy, Gr H},
  journal={Proceedings of the London Mathematical Society},
  volume={2},
  number={1},
  pages={1--25},
  year={1917},
  publisher={Oxford University Press}
}

@incollection{huxley2002integer,
  title={Integer points, exponential sums and the Riemann zeta function},
  author={Huxley, Martin N},
  booktitle={Surveys in Number Theory},
  pages={109--124},
  year={2002},
  publisher={AK Peters/CRC Press}
}

@article{canzani2023weyl,
  title={Weyl remainders: an application of geodesic beams},
  author={Canzani, Yaiza and Galkowski, Jeffrey},
  journal={Inventiones mathematicae},
  volume={232},
  number={3},
  pages={1195--1272},
  year={2023},
  publisher={Springer}
}

@article{canzani2019growth,
  title={On the growth of eigenfunction averages: Microlocalization and geometry},
  author={Canzani, Yaiza and Galkowski, Jeffrey},
  journal={Duke Mathematical Journal},
  volume={168},
  number={16},
  pages={2991--3055},
  year={2019},
  publisher={Duke University Press}
}

@article{canzani2021eigenfunction,
  title={Eigenfunction concentration via geodesic beams},
  author={Canzani, Yaiza and Galkowski, Jeffrey},
  journal={Journal f{\"u}r die reine und angewandte Mathematik (Crelles Journal)},
  volume={2021},
  number={775},
  pages={197--257},
  year={2021},
  publisher={De Gruyter}
}

@inproceedings{hassell2015improvement,
  title={Improvement of eigenfunction estimates on manifolds of nonpositive curvature},
  author={Hassell, Andrew and Tacy, Melissa},
  booktitle={Forum Mathematicum},
  volume={27},
  number={3},
  pages={1435--1451},
  year={2015},
  organization={De Gruyter}
}

@article{hezari2016lp,
  title={Lp norms, nodal sets, and quantum ergodicity},
  author={Hezari, Hamid and Rivi{\`e}re, Gabriel},
  journal={Advances in Mathematics},
  volume={290},
  pages={938--966},
  year={2016},
  publisher={Elsevier}
}

@article{blair2017refined,
  title={Refined and microlocal Kakeya--Nikodym bounds of eigenfunctions in higher dimensions},
  author={Blair, Matthew D and Sogge, Christopher D},
  journal={Communications in Mathematical Physics},
  volume={356},
  pages={501--533},
  year={2017},
  publisher={Springer}
}

@article{blair2018concerning,
  title={Concerning Toponogov’s theorem and logarithmic improvement of estimates of eigenfunctions},
  author={Blair, Matthew D and Sogge, Christopher D},
  journal={Journal of Differential Geometry},
  volume={109},
  number={2},
  pages={189--221},
  year={2018},
  publisher={Lehigh University}
}

@article{blair2019logarithmic,
  title={Logarithmic improvements in L\^{} p L p bounds for eigenfunctions at the critical exponent in the presence of nonpositive curvature},
  author={Blair, Matthew D and Sogge, Christopher D},
  journal={Inventiones mathematicae},
  volume={217},
  pages={703--748},
  year={2019},
  publisher={Springer}
}

@article{cooke1971cantor,
  title={A Cantor-Lebesgue theorem in two dimensions},
  author={Cooke, Roger},
  journal={Proceedings of the American Mathematical Society},
  volume={30},
  number={3},
  pages={547--550},
  year={1971}
}

@article{zygmund1974fourier,
  title={On Fourier coefficients and transforms of functions of two variables},
  author={Zygmund, Antoni},
  journal={Studia Mathematica},
  volume={50},
  number={2},
  pages={189--201},
  year={1974}
}

@article{bourgain1993eigenfunction,
  title={Eigenfunction bounds for the Laplacian on the n-torus},
  author={Bourgain, Jean},
  journal={International Mathematics Research Notices},
  volume={1993},
  number={3},
  pages={61--66},
  year={1993},
  publisher={Hindawi Publishing Corporation}
}

@article{bourgain2013moment,
  title={Moment inequalities for trigonometric polynomials with spectrum in curved hypersurfaces},
  author={Bourgain, Jean},
  journal={Israel Journal of Mathematics},
  volume={193},
  number={1},
  pages={441--458},
  year={2013},
  publisher={Springer}
}

@article{bourgain2013improved,
  title={Improved estimates for the discrete Fourier restriction to the higher dimensional sphere},
  author={Bourgain, Jean and Demeter, Ciprian},
  journal={Illinois Journal of Mathematics},
  volume={57},
  number={1},
  pages={213--227},
  year={2013},
  publisher={Duke University Press}
}

@article{bourgain2015new,
  title={New bounds for the discrete Fourier restriction to the sphere in 4D and 5D},
  author={Bourgain, Jean and Demeter, Ciprian},
  journal={International Mathematics Research Notices},
  volume={2015},
  number={11},
  pages={3150--3184},
  year={2015},
  publisher={Oxford University Press}
}

@article{bourgain2015proof,
  title={The proof of the l 2 decoupling conjecture},
  author={Bourgain, Jean and Demeter, Ciprian},
  journal={Annals of mathematics},
  pages={351--389},
  year={2015},
  publisher={JSTOR}
}

@inproceedings{germain2022boundsBook,
  title={Bounds for spectral projectors on tori},
  author={Germain, Pierre and Myerson, Simon L Rydin},
  booktitle={Forum of Mathematics, Sigma},
  volume={10},
  pages={e24},
  year={2022},
  organization={Cambridge University Press}
}

@article{demeter2024l2,
  title={L2 to Lp bounds for spectral projectors on the Euclidean two-dimensional torus},
  author={Demeter, Ciprian and Germain, Pierre},
  journal={Proceedings of the Edinburgh Mathematical Society},
  volume={67},
  number={2},
  pages={431--459},
  year={2024},
  publisher={Cambridge University Press}
}

@article{hickman2020uniform,
  title={Uniform $L^p $ Resolvent Estimates on the Torus},
  author={Hickman, Jonathan},
  journal={Mathematics Research Reports},
  volume={1},
  pages={31--45},
  year={2020}
}

@article{iwaniec1995norms,
  title={$L^{\infty}$ norms of eigenfunctions of arithmetic surfaces},
  author={Iwaniec, Henryk and Sarnak, Peter},
  journal={Annals of Mathematics},
  volume={141},
  number={2},
  pages={301--320},
  year={1995},
  publisher={JSTOR}
}

@article{buttcane2017fourth,
  title={On the fourth moment of Hecke--Maass forms and the random wave conjecture},
  author={Buttcane, Jack and Khan, Rizwanur},
  journal={Compositio Mathematica},
  volume={153},
  number={7},
  pages={1479--1511},
  year={2017},
  publisher={London Mathematical Society}
}

@article{humphries2018equidistribution,
  title={Equidistribution in shrinking sets and $L^4$-norm bounds for automorphic forms},
  author={Humphries, Peter},
  journal={Mathematische Annalen},
  volume={371},
  pages={1497--1543},
  year={2018},
  publisher={Springer}
}

@article{humphries2022p,
  title={$L^p$-Norm Bounds for Automorphic Forms via Spectral Reciprocity},
  author={Humphries, Peter and Khan, Rizwanur},
  journal={arXiv preprint arXiv:2208.05613},
  year={2022}
}

@article{stein1993harmonic,
  title={Harmonic analysis: real-variable methods, orthogonality, and oscillatory integrals},
  author={Stein, EM},
  journal={Princeton Mathematical Series},
  volume={43},
  year={1993}
}

@article{tomas1975restriction,
  title={A restriction theorem for the Fourier transform},
  author={Tomas, Peter A},
  journal={Bulletin of the American Mathematical Society},
  volume={81},
  number={2},
  pages={477--478},
  year={1975}
}

@article{eswarathasan2024pointwise,
  title={Pointwise Weyl Laws for Quantum Completely Integrable Systems},
  author={Eswarathasan, Suresh and Greenleaf, Allan and Keeler, Blake},
  journal={arXiv preprint arXiv:2411.10401},
  year={2024}
}

@book{pressley2010elementary,
  title={Elementary differential geometry},
  author={Pressley, Andrew N},
  year={2010},
  publisher={Springer Science \& Business Media}
}

@book{ivrii2013microlocal,
  title={Microlocal analysis and precise spectral asymptotics},
  author={Ivrii, Victor},
  year={2013},
  publisher={Springer Science \& Business Media}
}

@article{duistermaat1974oscillatory,
  title={Oscillatory integrals, Lagrange immersions and unfolding of singularities},
  author={Duistermaat, Johannes J},
  journal={Communications on Pure and Applied Mathematics},
  volume={27},
  number={2},
  pages={207--281},
  year={1974},
  publisher={Wiley Online Library}
}

@inproceedings{colin1976regularite,
  title={R{\'e}gularit{\'e} lipschitzienne et solutions de l'{\'e}quation des ondes sur une vari{\'e}t{\'e} riemannienne compacte},
  author={Colin de Verdi{\`e}re, Y and Frisch, M},
  booktitle={Annales scientifiques de l'{\'E}cole Normale Sup{\'e}rieure},
  volume={9},
  number={4},
  pages={539--565},
  year={1976}
}

@article{colin2010remainder,
  title={On the remainder in the Weyl formula for the Euclidean disk},
  author={Colin de Verdi{\`e}re, Yves},
  journal={S{\'e}minaire de th{\'e}orie spectrale et g{\'e}om{\'e}trie},
  volume={29},
  pages={1--13},
  year={2010}
}

@article{chabert2025infty,
  title={On the $L^{\infty}$ norms of spectral projectors on shrinking intervals: the cases of some spheres of revolution and of the Euclidean disk},
  author={Chabert, Ambre and Colin de Verdière, Yves},
  journal={arXiv preprint arXiv:2510.17295},
  year={2025}
}

@misc{chabert2026zonalstatesimprovedlinfty,
      title={Zonal states and improved $L^\infty$ bounds for eigenfunctions of magnetic Laplacians on hyperbolic surfaces}, 
      author={Ambre Chabert and Thibault Lefeuvre},
      year={2026},
      eprint={2603.12177},
      archivePrefix={arXiv},
      primaryClass={math.AP},
      url={https://arxiv.org/abs/2603.12177}, 
}

@unpublished{chabert2026l4,
title={$L^4$ norm of spectral projectors on polynomially small frequency intervals for some completely integrable surfaces},
author={Ambre Chabert},
note={{\em In preparation.}}}

@article{arnold1976local,
  title={Local normal forms of functions},
  author={Arnold, Vladimir Igorevic},
  journal={Inventiones mathematicae},
  volume={35},
  number={1},
  pages={87--109},
  year={1976},
  publisher={Springer-Verlag Berlin/Heidelberg}
}

\end{document}